\documentclass[11pt, a4paper]{article}

\usepackage[utf8]{inputenc}
\usepackage[american]{babel}
\usepackage{amsmath,amssymb,amsthm,color}
\usepackage{calrsfs}
\usepackage[shortlabels]{enumitem}
\usepackage{comment}
\usepackage{braket}
\usepackage{graphicx} 
\usepackage[hidelinks]{hyperref}
\usepackage{authblk}
 \makeatletter

\def\blfootnote{\gdef\@thefnmark{}\@footnotetext}

\makeatother

\title{Coarse rigidity in von Neumann algebras via derivations}
\author[1]{Manish Kumar}
\author[2]{Melchior Wirth}
\affil[1]{KU Leuven, Department of Mathematics, Celestijnenlaan 200B, 3001 Leuven, Belgium, manish.kumar@kuleuven.be}
\affil[2]{Mathematical Institute, Leipzig University,  Augustusplatz 10, 04109 Leipzig, Germany, melchior.wirth@uni-leipzig.de}
\date{}

\newcommand{\cL}{\mathcal L}

\renewcommand{\epsilon}{\varepsilon}
\renewcommand{\phi}{\varphi}

\newcommand{\dom}{\operatorname{dom}}
\renewcommand{\Re}{\operatorname{Re}}
\renewcommand{\Im}{\operatorname{Im}}

\newcommand{\tr}{\operatorname{tr}}

\newcommand{\abs}[1]{\lvert#1\rvert}
\newcommand{\norm}[1]{\lVert#1\rVert}
\numberwithin{equation}{section}

\newtheorem{theorem}{Theorem}
\numberwithin{theorem}{section}
\newtheorem{proposition}[theorem]{Proposition}
\newtheorem{lemma}[theorem]{Lemma}
\newtheorem{corollary}[theorem]{Corollary}
\newtheorem{question}[theorem]{Question}

\theoremstyle{definition}
\newtheorem{definition}[theorem]{Definition}
\newtheorem{notation}[theorem]{Notation}

\theoremstyle{remark}
\newtheorem{example}[theorem]{Example}
\newtheorem{remark}[theorem]{Remark}

\newcommand{\R}{\mathbb{R}}
\newcommand{\C}{\mathbb{C}}

\newcommand{\A}{\mathsf A}
\newcommand{\B}{\mathsf B}
\renewcommand{\H}{\mathsf{H}}
\newcommand{\K}{\mathsf{K}}
\newcommand{\M}{\mathsf{M}}
\newcommand{\N}{\mathsf{N}}

\newcommand{\Q}{\mathsf{Q}}
\newcommand{\I}{\mathcal{I}}
\newcommand{\op}{\mathrm{op}}
\newcommand{\Ad}{\mathrm{Ad}}

\DeclareMathOperator{\Op}{op}
\DeclareMathOperator{\Lim}{Lim}
\DeclareMathOperator{\alg}{alg}

\DeclareMathOperator{\id}{id} 
\DeclareMathOperator{\Null}{Null}

\begin{document}

\maketitle

\begin{abstract}   
We propose a notion of relative coarse rigidity for $\sigma$-finite von Neumann algebras  through modular derivations and their associated GNS-symmetric quantum Markov semigroups,  extending Peterson’s framework of $L^2$-rigidity to the nontracial setting. Our main result shows that the absence of relatively amenable summands forces the associated deformations to converge on relative commutants in ultrapowers. For diffuse von Neumann algebras with separable predual, this yields a dichotomy between the presence of an amenable summand and rigidity in terms   of central sequences.
These  results give a novel unified approach for proving indecomposability properties such as fullness, relative $\omega$-solidity and relative stable solidity.  
Our methods apply to examples such as amalgamated free products, operator-valued free Araki-Woods factors 
and cocycle crossed products.
\end{abstract}

\blfootnote{\noindent 
\textit{2020 Mathematics Subject Classification.} 46L57, 46L10}
\blfootnote{\textit{Key words and phrases:} quantum Markov semigroups, deformation and rigidity, solidity, type $\mathrm{III}$ von Neumann algebra}

\section{Introduction}

Rigidity and amenability are two opposing themes in the study of groups and von Neumann algebras. Kazhdan's property (T)  is a basic example of rigidity:  approximate invariance must eventually become
uniform.  For a discrete group $G$, property (T) means that  every net  of normalized positive-definite functions on $G$ that converges pointwise to $1$ must converge uniformly on $G$.
 For von Neumann algebras, group representations are replaced by Hilbert correspondences, while
positive-definite functions are replaced by unital completely positive maps. 
Using this analogy, Connes and
Jones introduced property (T) for von Neumann algebras \cite{CJ85}.
For  tracial von Neumann algebras, property (T) can be described with trace-preserving unital completely
positive maps. If such maps converge pointwise to the identity in the $L^2$-norm, then property (T) requires
uniform convergence on the unit ball.

Relative property  (T)  is analogously defined for a subgroup $H\leq G$ (resp. for an inclusion $\N\subseteq\M$ of tracial von Neumann algebras) by requiring uniform convergence only on $H$ (resp.  on the unit ball of $\N$) \cite{dHV89, Pop06, PP05, Pop86}.
A cohomological formulation of relative property  (T)  for $H\leq G$ can be obtained by considering Schoenberg semigroups where one requires that  a semigroup $(\psi_t)_{t>0}$ of normalized  positive definite functions  converges uniformly on $H$ as $t\to 0$. The Delorme--Guichardet theorem gives an equivalent criterion in terms of vanishing of first cohomology.



In  von Neumann algebras, the analogues of $1$-cocycles are densely defined  closable derivations into Hilbert correspondences. Peterson established a cohomological characterization of property  (T)  for separable II$_1$ factors by requiring that any symmetric derivation with `nice' domain must be inner. This is the von Neumann algebraic analogue of vanishing of first cohomology. At the heart of this equivalence are results of Sauvageot \cite{Sau90} and Cipriani-Sauvageot \cite{CS03}, which establish a bijective relation between densely defined real closable derivations and semigroups of tracially symmetric unital completely positive maps.

In his seminal work \cite{Pet09}, Peterson introduced the concept of $L^2$-rigidity by restricting the allowed family of deformations to those arising from derivations with values in amplifications of the
coarse correspondence, which  can be viewed as an analogue for the vanishing
of 1-cohomology into the left regular representation of a group, equivalently vanishing of   the first $L^2$-Betti number. This  is weaker than property (T), but it still gives strong structural information.  In
particular, Peterson showed that coarse derivations become asymptotically trivial on the commutant of a
nonamenable subfactor. He used this fact to prove fullness and primeness for large classes of finite factors \cite{Pet09}. 

The aim of this paper is to extend this method to general $\sigma$-finite von Neumann algebras, including type III
algebras. A direct use of the tracial definition is not possible. If one fixes a faithful normal state and asks for
uniform convergence on the whole unit ball in the corresponding $L^2$-norm, the modular automorphism
group creates a serious obstruction. Such a condition is too restrictive in the genuinely type III setting. The nontracial replacement comes from GNS-symmetric quantum Markov semigroups and modular derivations. The results of Sauvageot \cite{Sau90} and  the second author \cite{Wir22, Wir24} associate these semigroups with densely defined closable derivations
taking values in Tomita correspondences, which are a special type of  correspondences  with extra structure resembling the modular group and conjugation on a Tomita
algebra (see Section \ref{sec:modular derivation} for the relevant definitions).  This gives a deformation theory that respects the modular structure.

Using this relation, we  propose the notion of {\em relative coarse rigidity} motivated by Peterson’s notion of $L^2$-rigidity \cite{Pet09}, which requires deformations arising from derivations into amplifications of the coarse bimodule to converge uniformly in $L^2$-norm on the unit ball of the subalgebra under consideration. 
Using the associated bounded resolvents allows one to formulate rigidity without imposing a domain condition on that subalgebra. In the tracial setting, this uniform convergence is equivalent to strong convergence of the induced resolvents on $L^2(\Q^\omega)$. The ultrapower formulation therefore provides a natural starting point for extending this approach to the nontracial setting, with modular derivations ensuring compatibility with the modular structure. To obtain a relative notion, we require the deformations to fix a reference algebra $\B$ and allow the derivations to take values in bimodules that are weakly coarse relative to $\B$. Relative coarse rigidity then expresses the requirement that all such deformations become asymptotically trivial on $\Q^\omega$.

 To state it more precisely, we first introduce some notation.
Let $\M$ be a $\sigma$-finite von Neumann algebra, let $\H$ be a Tomita  $\M$-$\M$-correspondence and  let $\delta:\dom\delta\subseteq L^2(\M)\to\H$ be a  densely defined $\phi$-modular derivation for a fixed normal faithful state $\phi$ on $\M$.
For $\alpha>0$, write $\theta_\alpha^{(2)}$ for the resolvent operator $\frac{\alpha}{\alpha+\delta^\ast\bar\delta}$, and  let $(\theta_\alpha^{(2)})^\omega$ be the ultrapower implementation  on $L^2(\M^\omega)$ for a free ultrafilter $\omega\in\beta\mathbb N\setminus\mathbb N$. Below, we use the notation $\Null(\bar\delta)$ for the set $\{x\in \M\mid x\phi^{1/2}\in \ker\bar\delta\}$ (See Section \ref{sec:modular derivation} and \ref{sec:definition of coarse regifity} for  precise notation).
\begin{definition}[{Definition \ref{def: L2 rigidity}}]\label{def:coarse_rigidity_Introduction}
    Let $\Q,\B\subseteq\M$ be inclusions of von Neumann algebras with expectation. We say that $\Q$ is {\em coarsely rigid in $\M$ relative to $\B$} if for every $\omega\in\beta\mathbb N\setminus\mathbb N$ and for every  modular derivation $\delta\colon \dom(\delta)\subset L^2(\M)\to \H$ with $\B\subseteq\Null(\bar\delta)$  and ${_\M}\H_\M\prec {_\M}L^2(\M)\otimes_\B\K\otimes_\B L^2(\M)_\M$ for some $\B$-$\B$-correspondence $\K$, we have that $(\theta_\alpha^{(2)})^\omega$ converges strongly to the identity on $L^2(\Q^\omega
)$ as $\alpha
\to\infty$. We say that $\M$ is {\em coarsely rigid} if $\M$ is coarsely rigid in $\M$ relative to $\mathbb C$.
\end{definition}
Thus relative coarse rigidity means that every deformation whose gradient correspondence is weakly coarse over $\B$ becomes asymptotically trivial on $\Q^\omega$.
If $(\M,\tau)$ is a tracial von Neumann algebra, if $\B=\mathbb C$ and if we only consider the Hilbert modules to be amplifications of the coarse correspondence,  the definition agrees with $L^2$-rigidity of $\M$ from \cite{Pet09} (see Proposition \ref{prop:equivalent conditions for L2-rigidity}).

We develop a modular derivation method for relative commutant rigidity in $\sigma$-finite von Neumann algebras. The method yields a central-sequence characterization of the absence of amenable summands and criteria for relative omega-solidity and stable solidity, with applications to cocycle crossed products, amalgamated free products, and operator-valued free Araki-Woods algebras. We now highlight the results of this paper.

The  main result of this paper is  Theorem \ref{main_theorem} which connects relative amenability with coarse rigidity. In simple terms, if an expected subalgebra $\Q$ of $\M$  has no  amenable summand relative to $\B$, then every allowed coarse deformation as in Definition \ref{def:coarse_rigidity_Introduction} must `vanish'
asymptotically on the algebra commuting with $\Q$. A particular case of this theorem for amenable  $\B$ reads as follows:

\begin{theorem}[Corollary \ref{cor:dichotomy result in amenable case}]\label{main_theorem_amenable_for introduction}
   If $\B,\Q\subseteq\M$ are inclusions of $\sigma$-finite von Neumann algebras with expectations such that $\B$ is amenable and $\Q$ does not have any non-zero amenable summand, then $\Q'\cap\M$ is coarsely rigid relative to $\B$.
\end{theorem}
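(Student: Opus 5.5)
This is a corollary of the main Theorem \ref{main_theorem}. That theorem says that if $\Q\subseteq\M$ has no non-zero summand that is amenable relative to $\B$, then $\Q'\cap\M$ is coarsely rigid in $\M$ relative to $\B$. So it suffices to show that, when $\B$ is amenable, a summand of $\Q$ that is amenable relative to $\B$ is already amenable.

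First, the reduction. Suppose $p\in\mathcal Z(\Q)$ is a non-zero projection such that $\Q p$ is amenable relative to $\B$ inside $p\M p$. We want to conclude that $\Q p$ is amenable, contradicting the hypothesis. Once this is done, $\Q$ has no summand amenable relative to $\B$, and Theorem \ref{main_theorem} applies directly.

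To prove the transitivity step, we use the characterization of relative amenability via weak containment of correspondences (the nontracial analogue of Popa–Ozawa, presumably established in the preliminaries). Relative amenability of $\Q p$ with respect to $\B$ gives
\[
{}_{\Q p}L^2(p\M p)_{\Q p}\prec {}_{\Q p}L^2(p\M)\otimes_\B L^2(\M p)_{\Q p}.
\]
Amenability of $\B$ gives ${}_\B L^2(\B)_\B\prec {}_\B L^2(\B)\otimes L^2(\B)_\B$. Since Connes fusion $L^2(p\M)\otimes_\B(\cdot)\otimes_\B L^2(\M p)$ preserves weak containment, we get
\[
L^2(p\M)\otimes_\B L^2(\M p)\prec L^2(p\M)\otimes L^2(\M p),
\]
and the latter is weakly contained in the coarse correspondence of $p\M p$, restricted to $\Q p$. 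Hence ${}_{\Q p}L^2(p\M p)_{\Q p}$ is weakly coarse. Because $\Q p\subseteq p\M p$ is with expectation, $L^2(\Q p)$ is a sub-correspondence of $L^2(p\M p)$ restricted to $\Q p$. Therefore $L^2(\Q p)\prec L^2(\Q p)\otimes L^2(\Q p)$, which says that $\Q p$ is amenable.

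Alternatively, one can argue directly with conditional expectations. Compose a $\Q p$-central conditional-expectation-type map $\langle \M,e_\B\rangle\to \Q p$ (or into $p\M p$) with a hypertrace coming from amenability of $\B$, extended via the basic construction. This produces a $\Q p$-central state on $B(L^2(\M))$ that is normal on $\Q p$. Which of the two routes is used should follow whichever definition of relative amenability the paper adopts.

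The main obstacle is this transitivity step in the nontracial, non-factor setting. One has to handle the corner $p$, and the fact that Connes fusion over $\B$ depends on the choice of faithful normal states. I would take care of this by fixing the states compatibly with the expectations onto $\B$ and $\Q$. The rest is a direct appeal to Theorem \ref{main_theorem}.
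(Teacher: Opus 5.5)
\textbf{There is a genuine gap: you misquote Theorem \ref{main_theorem}.} Your reduction treats the theorem as saying ``no summand amenable relative to $\B$ implies $\Q'\cap\M$ coarsely rigid relative to $\B$.'' The theorem actually has two further hypotheses.
\begin{itemize}
\item $\H$ must be weakly contained in an $\langle\M,\B\rangle$-$\langle\M,\B\rangle$ correspondence. This is harmless: Lemma \ref{lem:ext_bimodule_action} supplies it for $L^2(\M)\otimes_\B\K\otimes_\B L^2(\M)$.
\item The minimal Stinespring correspondences of the semigroup must satisfy ${}_\M(\H_{\Phi_t})_\B\prec{}_\M(\K_t)_\B$ for some $\langle\M,\B\rangle$-$\B$ correspondences $\K_t$.
\end{itemize}
The definition of coarse rigidity relative to $\B$ quantifies over all admissible $\delta$ and gives you no control over $\H_{\Phi_t}$. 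The proof genuinely needs this control: it is what bounds $\sum_i\zeta_\alpha(x_i^\ast)\otimes\zeta_\alpha(x_i)^{\mathrm{op}}$ in norm by $\sum_i x_i^\ast\otimes_\B x_i^{\mathrm{op}}$ in the Hahn--Banach step. So ``the rest is a direct appeal'' is not right. Amenability of $\B$ must be used a second time, via Example \ref{ex:weak_containment_amenable}. That example shows every $\M$-$\B$ correspondence is weakly contained in ${}_\M L^2(\M)\otimes_\B(L^2(\B)\otimes L^2(\B))_\B$, and this carries a left $\langle\M,\B\rangle$-action by Lemma \ref{lem:ext_bimodule_action}.

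A smaller point: the first alternative of the theorem produces $p\in\mathcal Z(\Q'\cap\M)$, not $p\in\mathcal Z(\Q)$. You therefore need $p\Q\cong r\Q$, where $r\in\mathcal Z(\Q)$ is the central projection with $(1-r)\Q=\ker(x\mapsto px)$. The paper notes this after the definition of complete nonamenability.

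\textbf{Your transitivity step is not justified as written.} The paper defines relative amenability by a norm-one projection $\langle\M,\B\rangle\to\A$ extending $E_\A$. It does not establish the weak-containment characterization you invoke for general expected subalgebras. It only cites the equivalence with relative semidiscreteness when $\A$ is semifinite or $\A=\M$, and $p\Q$ may be a type III proper subalgebra.

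Your fallback with central states and hypertraces fails in type III. A $p\Q$-central state that is normal on $p\Q$ restricts to a normal tracial state on $p\Q$, and no such state exists there. The nontracial replacement is the KMS condition of Theorem \ref{thm:conditions for relative amenability}.

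The clean argument is this. $\langle\M,\B\rangle=(J\B J)'$ is injective because $\B$ is, so there is a conditional expectation $F\colon\mathbb B(L^2(\M))\to\langle\M,\B\rangle$. Compose the compression of $F$ by $p$ with the norm-one projection $p\langle\M,\B\rangle p\to p\Q$; this shows $p\Q$ is injective.

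With these repairs your route works. The paper takes a shorter path that avoids relative amenability entirely:
\begin{itemize}
\item For amenable $\B$ one has $L^2(\M)\otimes_\B\K\otimes_\B L^2(\M)\prec L^2(\M)\otimes L^2(\M)$, so every admissible $\delta$ is weakly coarse.
\item Corollary \ref{cor:dichotomy result in amenable case} then applies directly. This is the case $\B=\mathbb C$, where the Stinespring condition is automatic and the first alternative is literally an amenable summand.
\item One concludes using $(\Q'\cap\M)^\omega\subseteq\Q'\cap\M^\omega$.
\end{itemize}
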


Theorem \ref{main_theorem} is the main rigidity principle used throughout the paper.  Nonamenability on one side therefore forces rigidity on every algebra
that commutes with it. This is exactly the form needed for applications to central sequences, fullness,
solidity, and stable solidity.
We obtain the following result for nonamenable factors with Property $\Gamma$, which is the non-tracial form of Peterson's spectral-gap argument (see Sec. \ref{sec:coarse rigidity and spectral gap} for the term mixing used below). 

\begin{theorem}[Corollary \ref{cor:fullness and rigidity}]
     If $\M$ is  a nonamenable  factor with separable predual which is not full, in particular,  if $\M$ is a type III$_0$ nonamenable factor, then $\M$ is  rigid with respect to modular derivations taking values in mixing weakly coarse correspondences.
\end{theorem}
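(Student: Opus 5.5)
Since $\M$ is not full and has separable predual, I will first produce a nontrivial central sequence. By Connes' characterization of fullness for $\sigma$-finite factors with separable predual, non-fullness means the asymptotic centralizer $\M_\omega=\M'\cap\M^\omega$ (taken inside the Ocneanu ultrapower) is nontrivial for some free ultrafilter $\omega$. In the type III$_0$ case this is automatic, because type III$_0$ factors are never full (their flow of weights is nontrivial). Hence there is a sequence $(x_n)$, bounded and centralizing $\M$ asymptotically, which we may take with $x_n\in\M$, $\phi(x_n)=0$ and $\norm{x_n\phi^{1/2}}_2=1$. Its class in $\M^\omega$ commutes with $\M$ and is orthogonal to $\C$.

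The main step is to apply the main rigidity result, Theorem \ref{main_theorem}, in its amenable form (Corollary \ref{cor:dichotomy result in amenable case}), taking $\Q=\M$, $\B=\C$ and using $\M$ nonamenable. Since $\M$ is a factor, nonamenability means $\M$ has no nonzero amenable summand. The theorem then says that for every modular derivation $\delta$ with values in a weakly coarse correspondence, $(\theta^{(2)}_\alpha)^\omega\to\id$ strongly on $L^2(\M'\cap\M^\omega)$. I will have to check that the ultrapower formulation in Theorem \ref{main_theorem} covers relative commutants inside $\M^\omega$ and not only $\Q'\cap\M$; the abstract says it does ("converge on relative commutants in ultrapowers"). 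Consequently $\norm{\theta_\alpha^{(2)}(x_n\phi^{1/2})-x_n\phi^{1/2}}_2\to0$ along $\omega$, uniformly in the sense of the ultrapower. In other words, the deformation fails to move the central sequence.

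It remains to show that this forces $\theta^{(2)}_\alpha\to\id$ uniformly on the unit ball of $\M$, which is what rigidity of $\M$ in the sense of Definition \ref{def: L2 rigidity} requires. This is where the mixing hypothesis enters, and I expect it to be the hard step. Following Peterson's spectral-gap argument, I would suppose rigidity fails. Then there are $\alpha_k\to\infty$ and $y_k$ in the unit ball of $\M$ (equivalently, an element $y$ of the unit ball of $\M^\omega$) with $\norm{\delta\text{-energy}}$ bounded below, i.e. $\theta_{\alpha}^{(2)}(y)$ stays away from $y$. The mixing property of $\H$ (Section \ref{sec:coarse rigidity and spectral gap}) should say that the deformation's defect $\tilde\delta_\alpha(y)=\delta\,\theta_\alpha(y)$-type vectors, viewed in $\H^\omega$, are asymptotically orthogonal to anything produced by left/right multiplication by elements of $\M$ tending weakly to zero.

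I would then build a new sequence, obtained by a diagonal choice over $n$ and $k$, in $\M'\cap\M^\omega$ that is still moved by the deformation. The natural candidates are products $x_n y_k$, or conjugations such as $u_n y_k u_n^\ast$ with unitaries $u_n$ taken from the central sequence. The mixing property, together with the Leibniz rule for $\delta$ and the resolvent estimates $\norm{\delta\theta_\alpha(\xi)}^2\le\alpha\norm{\xi}^2$-type bounds, keeps the deformation defect of this new sequence from cancelling. That contradicts the second paragraph. Controlling the modular twist in the Leibniz rule is where the Tomita structure and the choice of $x_n$ in the centralizer-type asymptotic algebra (so that $\sigma^\phi$ acts trivially on it) are needed.
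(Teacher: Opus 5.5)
Your first two steps are correct and coincide with the paper. Non-fullness gives $\M'\cap\M^\omega\neq\C$. Theorem \ref{main_theorem} with $\Q=\M$, $\B=\C$ then gives $\tilde\delta^\omega_\alpha(z\psi^{1/2})\to0$ for every $z\in\M'\cap\M^\omega$, where $\psi=\phi^\omega$.

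\textbf{The gap is the final step.} Your contradiction needs an element of $\M'\cap\M^\omega$ that the deformation moves, but neither candidate lies there. The product $x_ny_k$ does not asymptotically commute with $\M$, because $y_k$ does not. If $u=(u_n)^\omega$ is central, then $uyu^\ast=y$ in $\M^\omega$ for $y\in\M$, so $u_ny_ku_n^\ast$ simply returns $y$. Nothing in your outline turns a non-central $y$ into an asymptotically central element, so the argument does not close.

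\textbf{The missing idea.} The conjugation should act on the vector $\eta=\tilde\delta_\alpha(y\phi^{1/2})\in\H$, not on $y$, and the argument is direct rather than by contradiction.
\begin{enumerate}
\item A normalized $x_n$ with $\phi(x_n)=0$ is not enough; you need a unitary $u=(u_n)^\omega\in\M'\cap(\M^\omega)^{\psi}$ with $E_\M(u)=0$. The paper gets it from \cite[Lemma 2.5]{HR15}: once $\M'\cap\M^\omega\neq\C$, the centralizer part $\M'\cap(\M^\omega)^\psi$ is diffuse. (This centralizer part, not all of $\M'\cap\M^\omega$, is the asymptotic centralizer $\M_\omega$ you invoked.)
\item Since $\zeta_\alpha(u_n)\to0$ weakly along $\omega$, mixing gives $\langle\zeta_\alpha(u_n^\ast)\eta\zeta_\alpha(u_n),\eta\rangle\to0$. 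Hence $\norm{\eta}\le\lim_{n\to\omega}\norm{\zeta_\alpha(u_n^\ast)\eta\zeta_\alpha(u_n)-\eta}$.
\item By Lemma \ref{lem: the main technical lemma}, $\zeta_\alpha(u_n^\ast)\eta\zeta_\alpha(u_n)$ differs from $\tilde\delta_\alpha(u_n^\ast y\phi^{1/2}u_n)$ by at most $10\norm{\tilde\delta_\alpha(\phi^{1/2}u_n)}^{1/2}+10\norm{\tilde\delta_\alpha(u_n^\ast\phi^{1/2})}^{1/2}$.
\item Because $u$ commutes with $y$ and with $\psi^{1/2}$, we have $u_n^\ast y\phi^{1/2}u_n\to y\phi^{1/2}$ along $\omega$.
\item Combining these, $\norm{\tilde\delta_\alpha(y\phi^{1/2})}\le 20\norm{\tilde\delta^\omega_\alpha(u\psi^{1/2})}^{1/2}$ uniformly over the unit ball of $\M$. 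The right side tends to $0$ by your step 2, and Proposition \ref{prop: uniform convergence of GNS-symmetric maps} transfers the conclusion to $(\M^\omega)_1$.
\end{enumerate}

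Each property of $u$ is used. Unitarity together with $u\in(\M^\omega)^\psi$ makes conjugation fix $y\psi^{1/2}$ and identifies $\lim_{n\to\omega}\norm{\tilde\delta_\alpha(\phi^{1/2}u_n)}$ with $\norm{\tilde\delta^\omega_\alpha(u\psi^{1/2})}$. The condition $E_\M(u)=0$ is what lets you invoke mixing. Centrality lets the main theorem apply to $u\psi^{1/2}$. The modular twist in the Leibniz rule is absorbed by Lemma \ref{lem: the main technical lemma}, which holds for arbitrary $x,y\in\M$; the centralizer condition is not there to control it.
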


A consequence of Theorem \ref{main_theorem_amenable_for introduction} is the following sharp dichotomy result of the absence of amenable summand and rigidity in terms of central sequences. 
This is reminiscent of the classical group-theoretic result that amenability and  Property (T) cannot occur simultaneously for  infinite discrete groups. 

\begin{theorem}[Theorem \ref{thm:dichotomy result for subalgebras}]\label{thm:dichotomy result_Introduction}
     Let $\M$ be a diffuse von Neumann algebra with separable predual, and let $\omega\in\beta\mathbb N\setminus\mathbb N$. Exactly one of the following holds true:
     \begin{enumerate}
         \item $\M$ has a non-zero amenable summand.
         \item For every modular derivation  $\delta:\dom(\delta)\subseteq L^2(\M)\to \H$ with $\H\prec L^2(\M)\otimes L^2(\M)$, $(\theta_\alpha^{(2)})^\omega$ converges strongly to the identity on $L^2(\M'\cap\M^\omega)$ as $\alpha\to\infty$.
     \end{enumerate}
 \end{theorem}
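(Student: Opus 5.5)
We prove that (1) and (2) cannot both fail, and that they cannot both hold.

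\emph{If (1) fails then (2) holds.} Suppose $\M$ has no non-zero amenable summand. We apply Theorem \ref{main_theorem_amenable_for introduction}, in its ultrapower form (Theorem \ref{main_theorem}), with $\B=\mathbb C$ (amenable) and $\Q=\M$. The conclusion we need is about the central sequence algebra $\M'\cap\M^\omega$, not merely $\M'\cap\M$. The main theorem is proved by showing that deformations converge on relative commutants in ultrapowers, so I expect it to give directly that $(\theta^{(2)}_\alpha)^\omega\to\id$ strongly on $L^2(\Q'\cap\M^\omega)$ whenever $\Q$ has no amenable summand. If the stated version only covers $\Q'\cap\M$, I would instead apply it to the inclusion $\M\subseteq\M^\omega$, which is with expectation. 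This uses two facts: $\M$ has no amenable summand inside $\M^\omega$, and the derivation $\delta$ lifts to a modular derivation $\delta^\omega$ on $L^2(\M^\omega)$ whose resolvent is $(\theta_\alpha^{(2)})^\omega$ and whose correspondence is still weakly coarse. Taking $\B=\mathbb C$, the hypothesis $\B\subseteq\Null(\bar\delta)$ is automatic, since $\bar\delta(\phi^{1/2})=0$ for modular derivations.

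\emph{(1) and (2) are incompatible.} Suppose $\M$ has a non-zero amenable summand $\M p$, with $p$ a central projection. We must build one modular derivation $\delta$ with $\H\prec L^2(\M)\otimes L^2(\M)$ such that $(\theta_\alpha^{(2)})^\omega$ does not converge to the identity on $L^2(\M'\cap\M^\omega)$.

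Step one is to find central sequences. Since $\M p$ is amenable, diffuse and has separable predual, it is hyperfinite and has non-trivial central sequences. More precisely, $\M p$ is not full, and a diffuse amenable algebra has $(\M p)'\cap(\M p)^\omega$ diffuse. So we can choose unitaries or projections $(u_n)$ in $\M p$ that are asymptotically central and asymptotically free from $\phi$. Typical choices are projections $e_n$ in a hyperfinite filtration with $\phi(e_n)=1/2$, and we would arrange them to asymptotically commute with $\phi$ as well, so that they lie in the centralizer of the ultrapower.

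Step two is to build the derivation. Amenability gives that the coarse correspondence weakly contains the trivial correspondence on $\M p$. We then take the canonical ``coarse'' derivation $\delta(x)=x\otimes\xi-\xi\otimes x$, or, better for the modular setting, the derivation of a GNS-symmetric semigroup. An example is the semigroup generated by $\id-E$ for a conditional expectation $E$ onto a finite-dimensional or atomic subalgebra; more simply, $T_t=e^{-t}\id+(1-e^{-t})\phi(\cdot)1$, the depolarizing semigroup. Its derivation takes values in a subcorrespondence of $L^2(\M)\otimes L^2(\M)$. For this semigroup $\delta^\ast\bar\delta=\id-P_{\phi^{1/2}}$, so $\theta_\alpha^{(2)}=\frac{\alpha}{\alpha+1}$ on $\{\phi^{1/2}\}^\perp$. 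This converges uniformly to the identity, however, so it is useless as a counterexample.

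We therefore need a semigroup whose generator has unbounded spectrum on the central sequence. I would take a modular derivation adapted to the filtration $A_k$ of $\M p$ by finite-dimensional algebras, with generator $\sum_k c_k(\id-E_{A_k'\cap\M})$ and weights $c_k\to\infty$. These conditional expectations are expected to be $\phi$-preserving after a suitable choice of $\phi$ on the hyperfinite part. Each piece $\id-E$ with $E$ onto a finite-index relative commutant has a coarse gradient correspondence, because the Jones basic construction for finite-dimensional $A_k$ is coarse-like. We then choose the central sequence $x_k\in A_{k+1}\ominus A_k$ to be asymptotically central while carrying energy at least $c_k\|x_k\|^2$; this forces $\|(\theta_\alpha^{(2)})^\omega x-x\|\not\to0$. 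The main obstacle is exactly this construction: arranging $\phi$-compatibility, or GNS-symmetry, of the expectations in the type III case while keeping the correspondence weakly coarse. To avoid the type III issues, I would first try to reduce to the case where $\phi$ is a trace on a hyperfinite II$_1$ piece, or to pass through Connes' characterization $\M p\cong R$-type tensor decomposition $\M p\cong\M p\bar\otimes R$.
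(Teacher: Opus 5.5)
Your first direction is correct and is the paper's argument. Theorem~\ref{main_theorem} with $\B=\mathbb C$, $\Q=\M$ (Corollary~\ref{cor:dichotomy result in amenable case}) already gives convergence on $L^2(\M'\cap\M^\omega)$, so the detour through $\M\subseteq\M^\omega$ and a lifted $\delta^\omega$ is unnecessary.

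\textbf{The gap is in the converse: you never produce the required derivation.} The generator you propose, $\sum_k c_k(\id-E_{A_k'\cap\M})$, is not densely defined. Take $\mathcal R=\overline{\bigotimes}(\mathbb M_2,\tr_2)$ and $A_k=\mathbb M_2^{\otimes k}$. Every $x\in A_m$ satisfies $E_{A_k'\cap\mathcal R}(x)=\tau(x)1$ for $k\geq m$, so $\sum_k c_k\|x-E_{A_k'\cap\mathcal R}(x)\|_2^2=\infty$ unless $x$ is scalar. Since $\bigcap_k(A_k'\cap\mathcal R)=\mathcal R'\cap\mathcal R=\mathbb C$, the same holds for every non-scalar $x\in\mathcal R$. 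In particular, your central elements $x_k\in A_{k+1}\cap A_k'$ are fixed by the first $k$ expectations and get infinite energy from the rest, which is the opposite of what you want.

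With $E_{A_k}$ in place of $E_{A_k'\cap\M}$ the idea does work tracially. Then $x_k\in A_{k+1}\ominus A_k$ is an eigenvector of $\sum_j c_j(\id-E_{A_j})$ with eigenvalue $\sum_{j\le k}c_j\to\infty$, and $L^2(\mathcal R)\otimes_{A_j}L^2(\mathcal R)$ sits inside a multiple of the coarse bimodule. Even so, three things remain unaddressed:
\begin{itemize}
\item For an arbitrary amenable summand that is type III or not a factor, you need $\phi$-preserving, GNS-symmetric expectations, i.e.\ $\sigma^\phi$-invariant finite-dimensional filtrations. You flag this as the ``main obstacle'' and leave it open.
\item McDuff absorption, which you suggest as a workaround, is not available for diffuse type I summands.
\item You do not explain how a derivation on the summand $\M p$ becomes one on $\M$ whose target is weakly coarse over $\M$.
\end{itemize}

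\textbf{The missing idea is one you state but never use.} Since $z\M$ is amenable, the \emph{trivial} correspondence $L^2(z\M)$ is weakly coarse, so a modular derivation with values in $L^2(z\M)$ itself is admissible. The paper proceeds as follows.
\begin{itemize}
\item It takes $\delta=iD$, where $\alpha_t=e^{itD}$ is a $\phi$-preserving flow commuting with $\sigma^\phi$ (Lemma~\ref{lem: a criteria for existence of certain derivation}). Modularity is then automatic, and no filtration or conditional expectation is needed.
\item By Lemma~\ref{lem:hyperfinite factor absorption}, the type II/III parts of $z\M$ are of the form $\N\bar\otimes\mathcal R$, and the type I part is $\bigoplus_n\mathbb B(\K_n)\bar\otimes\A$. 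The flow $\bigotimes_n\Ad(e^{ith_n})$ acts on $\mathcal R$ (respectively the rotation flow $z_n\mapsto e^{i\lambda_n t}z_n$ on $\A\cong L^\infty(\mathbb T^{\mathbb N})$), with the identity on the other tensor factor.
\item The central sequences $u_n$ (respectively $v_n$) are eigenvectors of $D^2$ with eigenvalues tending to infinity. Hence $(\theta^{(2)}_\alpha)^\omega$ annihilates $(u_n)^\omega\in\M'\cap\M^\omega$ for every $\alpha$.
\item Finally, $\delta\oplus 0$ has target $L^2(z\M)\oplus\bigl(L^2((1-z)\M)\otimes L^2((1-z)\M)\bigr)$. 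This is weakly contained in $L^2(\M)\otimes L^2(\M)$ by Lemma~\ref{lem:direct sum of bimodule and their weak containment}.
\end{itemize}
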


A technical ingredient in Theorem \ref{main_theorem} is a nontracial characterization of relative amenability.  Recall from \cite{AD95, OP10} that if $\A,\B\subseteq\M$ are inclusions of von Neumann algebras with expectation,  $\A$ is {\em amenable relative to $\B$ inside $\M$} if there is a norm-one projection $E:\langle\M,\B\rangle\to \A$  such that $E_{|_\M}$ is normal and faithful, where  $\langle\M,\B\rangle$ denotes the Jones basic construction. In a finite von Neumann algebra, relative amenability can be detected by a central state on the basic construction \cite{OP10}. In the nontracial case, centrality must be replaced by a KMS relation, as the following result shows.

\begin{theorem}[Theorem \ref{thm:conditions for relative amenability}]
    Let $\A,\B\subseteq\M$ be  inclusions of $\sigma$-finite von Neumann algebras such that   $\A$ is globally invariant under $\sigma^\phi$ for some normal faithful state $\phi$ on $\M$. Then $\A$ is amenable relative to $\B$ inside $\M$ if and only if there exists a  state $\psi:\langle\M,\B\rangle\to \mathbb  C$,  such that $\psi_{|_{\M}}\leq C\varphi$ for some $C>0$, $\psi_{|_{\mathcal{Z}(\A'\cap\M)}}$ is faithful and satisfies 
     $\psi(Ta)=\psi(\sigma_i^\varphi(a)T)$
            for $T\in \langle\M,\B\rangle$ and $\sigma^\phi$-analytic $a\in \A$.
\end{theorem}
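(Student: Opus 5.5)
We prove the two implications separately, following the tracial argument of Ozawa--Popa \cite{OP10} and replacing centrality by the KMS relation throughout.

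\emph{From a conditional expectation to a KMS state.} Let $E:\langle\M,\B\rangle\to\A$ be a norm-one projection with $E|_\M$ normal and faithful. Since $\A$ is $\sigma^\phi$-invariant, Takesaki's theorem gives a $\phi$-preserving normal faithful conditional expectation $E_\A:\M\to\A$. We take $\psi=\phi\circ E$. Then $\psi|_\M=\phi\circ E|_\M$ is a normal state on $\M$, and we need the bound $\psi|_\M\le C\phi$. The natural route is to compare $E|_\M$ with $E_\A$. Both are conditional expectations onto $\A$, but not both need preserve $\phi$, so we first modify $E$. Averaging $E$ over the modular flow with an invariant mean (equivalently, composing with $E_\A$ on $\M$) should give an $E$ whose restriction to $\M$ is $E_\A$, and then $\psi|_\M=\phi$. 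Next, $E$ is an $\A$-bimodule map, so $\psi(Ta)=\phi(E(T)a)$ and $\psi(\sigma_i^\phi(a)T)=\phi(\sigma_i^\phi(a)E(T))$. These agree by the KMS condition for $\phi$ restricted to $\A$, using $\sigma^\phi|_\A=\sigma^{\phi|_\A}$. Faithfulness on $\mathcal Z(\A'\cap\M)$ follows because $E$ is faithful on $\M$ and $\phi$ is faithful.

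\emph{From a KMS state to a conditional expectation.} Given $\psi$, we produce $E$ by a Radon--Nikodym argument. For $T\in\langle\M,\B\rangle_+$, the functional $\A'\cap\M\ni x\mapsto\psi(x^{1/2}Tx^{1/2})$ is not the right object, so we work instead with the $\A$-valued map. For $T\in\langle\M,\B\rangle$, define $\psi_T(a)=\psi(Ta)$ on $\A$. For $T\ge0$ the KMS relation should yield $0\le\psi_T\le\|T\|\,\psi|_\A$, and $\psi|_\A\le C\phi|_\A$. Hence there is a unique $E(T)$ in the unit ball of $\A$ with $\psi(Ta)=\phi(E(T)a)$, via the noncommutative Radon--Nikodym theorem in its $\sigma_{-i/2}$-analytic, Connes--cocycle form. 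By construction $E$ is positive, unital up to normalisation, and $\A$-bimodular, where bimodularity uses the KMS relation again. Normality of $E|_\M$ follows from $\psi|_\M\le C\phi$.

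\emph{The main obstacle.} The step we expect to be hardest is making $E$ faithful and normalised so that $E|_\A=\id$. In general $\psi|_\A=\phi(h\,\cdot)$ for some positive $h$ affiliated with the centraliser. The KMS relation together with $\sigma^\phi$-invariance should force $h\in\mathcal Z(\A)\cap\A^\phi$, or more precisely $h\in\mathcal Z(\A'\cap\M)\cap\A$. Faithfulness of $\psi$ on $\mathcal Z(\A'\cap\M)$ then makes the support of $h$ equal to $1$. We then renormalise, setting $E'(T)=h^{-1/2}E(T)h^{-1/2}$ on spectral pieces of $h$, and patch these together with a maximality (Zorn) argument over central projections. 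This yields a conditional expectation onto $\A$ whose restriction to $\M$ is faithful and normal. Tomiyama's theorem then shows that $E'$ is a norm-one projection.
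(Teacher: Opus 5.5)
Your proof has a genuine gap in the converse direction, and a secondary flaw in the forward direction.

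\textbf{Forward direction.} This works once relative amenability is read as in Section~4, namely with $E|_\M=E_\A$ for the $\phi$-preserving expectation. (Independence of the chosen expectation is Isono's theorem.) Then $\psi=\phi\circ E$ gives $\psi|_\M=\phi$ directly, and your KMS computation is fine. The repair you propose is not valid, however. Averaging $\sigma^\phi_{-t}\circ E\circ\sigma^\phi_t$ with an invariant mean produces an expectation that commutes with $\sigma^\phi$, not one that preserves $\phi$: for $\A=\mathbb C$, every $\sigma^\phi$-invariant state is already a fixed point of the averaging. The average may also fail to be normal.

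\textbf{Converse direction.} The hypothesis only makes $\psi$ faithful on $\mathcal Z(\A'\cap\M)$. The Radon--Nikodym density of $\psi|_\M$ with respect to $\phi$ is an element $0\le a\le C$ of $\A'\cap\M$ (this is what the KMS relation gives), not of $\A$. Renormalising by the density $h\in\mathcal Z(\A)$ of $\psi|_\A$ cannot remove the kernel of $a$, so the expectation you build need not be faithful on $\M$. Concretely, take $\B=\M=\A\bar\otimes M_2(\mathbb C)$, $\phi=\phi_\A\otimes\tr$, $p=1\otimes e_{11}$ and $\psi=2\phi(p\,\cdot)$. All hypotheses hold: $\psi\le 2\phi$, $\psi$ is faithful on $\mathcal Z(\A'\cap\M)=\mathcal Z(\A)\otimes 1$, and the KMS relation holds because $p$ commutes with $\A$. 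Here $h=1$, and your construction returns $E=\id\otimes\omega$ with $\omega(m)=m_{11}$, which kills $1\otimes e_{22}$.

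\textbf{Missing idea.} You must first replace $\psi$ by a KMS state whose restriction to $\M$ is exactly $\phi$.
\begin{enumerate}
\item The maps $T\mapsto\psi(\sigma^\phi_{-i/2}(u)^*T\sigma^\phi_{-i/2}(u))$, for $\sigma^\phi$-analytic unitaries $u\in\A'\cap\M$, preserve the KMS relation and replace $a$ by $uau^*$.
\item By the weak relative Dixmier property, convex combinations of these push $a$ to $z=F(a)\in\mathcal Z(\A'\cap\M)$. This is exactly where faithfulness on the centre is used: it forces $z$ to have full support.
\item Compressing by $\chi_{(\epsilon,\infty)}(z)z^{-1/2}$ and letting $\epsilon\to 0$ then gives $\psi|_\M=\phi$.
\item $F$ need not be faithful on a semifinite properly infinite summand of $\A'\cap\M$, so that summand needs a separate argument. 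The paper cuts down by finite projections in $\A'\cap\M^\phi$.
\end{enumerate}

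Only then should $E$ be built, and not from $a\mapsto\psi(Ta)$. That functional is not positive for $T\ge 0$: already for $\psi=\phi$ and $T\in\M_+$ outside the centraliser it is not even hermitian, so your Radon--Nikodym step fails as stated. Build $E$ instead from the bounded sesquilinear form $(x\phi^{1/2},y\phi^{1/2})\mapsto\psi(x^*Ty)$ on $L^2(\A)$. The KMS relation shows its operator commutes with the right action of $\A$, so it lies in $\A$. It agrees with $E_\A$ on $\M$, so normality and faithfulness come for free.
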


 We apply our main theorem to prove  stable solidity and $\omega$-solidity results for a large class of von Neumann algebras. Our definition of {\em stable and $\omega$-solidity} is given with a new notion of relative diffuseness along a free ultrafilter  (see Definition \ref{def: new definition of diffussness} and Section \ref{sec:solidity}). For expected subalgebras $\A,\B$ of $\M$ and $\omega\in\beta\mathbb N\setminus\mathbb N$, we say that $\A$ is {\em diffuse relative to $\B$ along $\omega$} if there is a unitary $u\in\A^\omega$ such that $E_{\B^\omega}(x^\ast uy)=0$ for all $x,y\in \M$. For a finite $\A$ with separable predual, this notion agrees with the failure of Popa's intertwining-by-bimodules \cite{Pop06} (see Proposition \ref{prop: comparion of two inytertwining notion}). The ultraproduct formulation is designed to work with relatively
compact deformations in type III algebras.

After building the general theory on solidity, we apply those results to a few examples such as amalgamated free products with finite index, to operator-valued free Araki-Woods algebras, and to cocycle crossed products (see Sec. \ref{sec:Examples} for their construction).
The conclusion in the crossed product does not require
the action to preserve a faithful normal state, which is important for crossed products built from type III algebras (see also \cite{PV14, Iso25}). It proves some new results and also recovers some well-known results from the literature using a unified approach. We assemble three type of examples in the following theorem.

\begin{theorem}
In the following examples, for any $\omega\in \beta\mathbb N\setminus\mathbb N$,     $\M$ is $\omega$-solid and stably solid relative to $\B$ along $\omega$.
\begin{itemize}
\item (Theorem \ref{thm:solidity for amalgamated free products}) $\M=\M_1\bar\ast_\B\M_2$,  where $\M_1,\M_2$ are $\sigma$-finite von Neumann algebras with a common von Neumann subalgebra $\B$ with expectations such that $\mathrm{id}_{L^2(\M_j)}$ is compact relative to $\B$ for $j=1,2$. 
    \item (Theorem \ref{prop:omega solidity in crossed products}) $\M=\B\rtimes_{\alpha,\nu}\Gamma$, where $(\alpha,\nu)$ is a cocycle action of a discrete countable group $\Gamma$ on a von Neumann algebra $\B$  such that $\Gamma$ a proper 1-cocycle $c$ into a representation weakly contained in the left regular representation.
    \item (Corollary \ref{cor:solidity of free Araki-Woods with finitely generated bimodule}) $\M$ = $\Gamma(_\B\H_\B)$, the operator-valued free Araki–Woods von Neumann algebra , where $(\H,\mathcal J,(\mathcal U_t)_{t\in\mathbb R})$ is a Tomita correspondence over a von Neumann algebra $\B$ with separable predual such that $\H$ is a countable direct sum of Tomita sub-correspondences which are  finitely-generated as right $\B$-modules.
\end{itemize}
\end{theorem}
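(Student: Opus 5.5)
This theorem collects three statements proved later in the paper, so I would argue the three cases in parallel through a single solidity criterion. The criterion comes from the main rigidity theorem (Theorem \ref{main_theorem}) combined with a deformation that is compact relative to $\B$. In each example I will produce a $\phi$-modular derivation $\delta$ with the following three properties:
\begin{enumerate}
\item[(a)] $\B\subseteq\Null(\bar\delta)$;
\item[(b)] the target correspondence is weakly coarse over $\B$, i.e.\ ${}_\M\H_\M\prec {}_\M L^2(\M)\otimes_\B\K\otimes_\B L^2(\M)_\M$;
\item[(c)] the resolvents $\theta^{(2)}_\alpha$ are compact relative to $\B$, meaning that $\delta^\ast\bar\delta$ has relatively compact resolvent over $\B$.
\end{enumerate}

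The general step is as follows. Suppose $\A\subseteq\M$ is expected and diffuse relative to $\B$ along $\omega$, and that $\A'\cap\M$ (or $\A'\cap\M^\omega$ for $\omega$-solidity) has a summand with no amenable part relative to $\B$. Theorem \ref{main_theorem} then gives that $(\theta^{(2)}_\alpha)^\omega\to\mathrm{id}$ strongly on the relevant ultrapower $L^2$-space. Relative compactness (c), together with the unitary $u\in\A^\omega$ satisfying $E_{\B^\omega}(x^\ast u y)=0$, shows that $\theta_\alpha^{(2)}$ is asymptotically small on the vectors $u\xi$. This contradicts the convergence for large $\alpha$. Hence the commutant is amenable relative to $\B$, which is $\omega$-solidity. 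Stable solidity follows by applying the same argument to amplifications and corners $p(\M\bar\otimes B(\ell^2))p$, with the deformation tensored by the identity.

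The deformations in the three cases are the following.
\begin{itemize}
\item For $\M_1\bar\ast_\B\M_2$, I would use the word-length (free-length) derivation with values in $L^2(\M)\otimes_{\B}L^2(\M)$-type correspondences. These are coarse relative to $\B$ by the standard amalgamated free product computation. The generator acts on reduced words by length, and finite-index compactness of $\mathrm{id}_{L^2(\M_j)}$ over $\B$ gives (c).
\item For $\B\rtimes_{\alpha,\nu}\Gamma$, I would take the derivation $\delta(b u_g)=b u_g\otimes c(g)$ into $L^2(\M)\otimes_\B L^2(\M)\otimes\mathcal K_\pi$. Since $\pi\prec\lambda$, this is weakly coarse relative to $\B$. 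Properness of $c$ gives (c), and the derivation is modular because $\sigma^\phi$ acts only on the $\B$-coefficients and the $u_g$ up to a Radon--Nikodym cocycle. This last point needs checking when the action does not preserve the state.
\item For $\Gamma({}_\B\H_\B)$, I would use the number-operator (second-quantised) deformation. Finitely generated sub-correspondences make each spectral subspace finitely generated over $\B$, which gives (c) after taking a countable direct sum with a weighting.
\end{itemize}

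I expect the main obstacle to be verifying (c) in the nontracial setting. This means showing that relative compactness over $\B$, combined with diffuseness along $\omega$ in the ultraproduct sense, genuinely forces $\lim_{\alpha}\|(\theta^{(2)}_\alpha)^\omega(u\xi)\|$ below $\|u\xi\|$, while keeping track of modular weights. I would attempt this first by approximating the relatively compact operator by finite sums of $x e_\B y$ in the basic construction. Each $\langle u\xi, x e_\B y\, u\xi\rangle$ then vanishes by the diffuseness condition $E_{\B^\omega}(x^\ast u y)=0$.
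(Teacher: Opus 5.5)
Your overall architecture is the paper's: Theorem \ref{main_theorem}, plus the fact that operators in $C^\ast(\M e_\B\M)$ kill $u\xi$ whenever $E_{\B^\omega}(x^\ast u y)=0$ (Propositions \ref{prop:convergence of deformation with compact resolvent gives intertwining} and \ref{prop:convergence of deformation with compact resolvent gives intertwining on relative commutatant}), and the same three deformations. However, your checklist (a)--(c) omits a hypothesis of Theorem \ref{main_theorem}. The Stinespring correspondences of the semigroup must satisfy ${}_\M(\H_{\Phi_t})_\B\prec{}_\M L^2(\M)\otimes_\B(\K_t)_\B$ for all $t\geq 0$. This is a condition on $(\Phi_t)$, separate from your condition (b) on the target $\H$, and the proof cannot do without it. Through Remark \ref{rmk:weak_containment_deformations} and Lemma \ref{Norm estimate of involving certain ucp maps} it gives $\|\sum_i\zeta_\alpha(x_i^\ast)\otimes_\B\zeta_\alpha(x_i)^{\op}\|\leq\|\sum_i x_i^\ast\otimes_\B x_i^{\op}\|$, i.e.\ $g\leq f$ on the cone. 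That inequality is exactly what the Hahn--Banach step needs to produce the KMS state on $\langle\M,\B\rangle$. The condition is automatic for amenable $\B$ (Example \ref{ex:weak_containment_amenable}), but for general $\B$ it must be checked example by example. For crossed products, the GNS triple $(\rho_t,\K_t,\xi_t)$ of $g\mapsto e^{-t\|c(g)\|^2}$ together with Fell absorption gives $\H_{\Phi_t}\cong L^2(\M)\otimes_\B(L^2(\B)\otimes\K_t)$. For operator-valued free Araki--Woods algebras you need the rotation $\tilde T$ on $\H\oplus\H$ from Proposition \ref{prop:construction of QMS on Free-Araki Woods}. It is used both to know that $\mathcal F(e^{-tS})$ is implemented by a GNS-symmetric ucp map at all, and to obtain ${}_\M(\H_\Phi)_\B\subset{}_\M\mathcal F(\H)\otimes_\B\K_\B$.

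Three further corrections.

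\emph{Stable solidity.} Stable solidity relative to $\B$ quantifies over every $\N$ with separable predual: whenever $\Q\subseteq\M\bar\otimes\N$ satisfies $\Q'\cap(\M\bar\otimes\N)\npreceq^\omega_{\M\bar\otimes\N}\B\bar\otimes\N$, $\Q$ must be amenable relative to $\B\bar\otimes\N$. Amplifications $p(\M\bar\otimes\mathbb B(\ell^2))p$ cover only one choice of $\N$. You need $\delta\otimes\mathrm{id}_\N$, with all four hypotheses rechecked over $\B\bar\otimes\N$; for relative compactness use $xe_\B y\otimes1=(x\otimes1)e_{\B\bar\otimes\N}(y\otimes1)$. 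Then apply Theorem \ref{main_theorem} to a corner $q\Q$ with no summand amenable relative to $\B\bar\otimes\N$. Finally run the compactness argument on the ultrapower of $q(\Q'\cap(\M\bar\otimes\N))$, which sits inside $(q\Q)'\cap q(\M\bar\otimes\N)^\omega q$.

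\emph{$\omega$-solidity.} The roles in your general step are reversed. Theorem \ref{main_theorem} accepts only a subalgebra of $\M$, namely the $\Q$ that is completely nonamenable relative to $\B$. A statement like ``$\A'\cap\M^\omega$ has a summand with no amenable part'' is not an admissible input. The unitary to be excluded lies in $p(\Q'\cap\M^\omega)p$, not in some $\A^\omega$. Your compactness computation works verbatim there after polarising $E_{\B^\omega}(x^\ast ux)=0$.

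\emph{Crossed products.} The derivation should be $\partial(\delta_g\otimes\xi)=i\,\delta_g\otimes\xi\otimes c(g)$ with values in $L^2(\M)\otimes\K_\pi$, with left action $u_ga\otimes\pi_g$ and trivial right action on $\K_\pi$. Its weak coarseness over $\B$ comes from $\pi\prec\lambda_\Gamma$ and $L^2(\M)\otimes\ell^2(\Gamma)\cong L^2(\M)\otimes_\B L^2(\M)$. The modularity you flagged is immediate for three reasons: $\Delta_{\tilde\phi}^{it}$ acts on $\delta_g\otimes L^2(\B)$ by $d_g(t)\Delta_\phi^{it}$; $J_{\tilde\phi}$ swaps the $g$- and $g^{-1}$-pieces; and $c(g^{-1})=-\pi_{g^{-1}}c(g)$.
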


The article is organized as follows. Sec \ref{sec:preliminary} recalls standard forms, modular theory, GNS-symmetric quantum Markov semigroups, Tomita
correspondences, modular derivations, and ultraproducts. Sec \ref{sec:main_technical_lemma} contains the main technical Lemma on  estimates for the resolvent
deformations of  derivations which forms the base for the main result later in the paper. Sec \ref{sec:relative_amenability} contains results on relative amenability and gives the KMS characterization, which are of independent interest. We introduce the definition of coarse rigidity in Sec \ref{sec:definition of coarse regifity}  and compares it with Peterson's $L^2$-rigidity.  Sec \ref{sec:main theorem} contains the main result of this paper where we also study various norm estimates of relative tensor product of certain GNS symmetric ucp maps, and Sec. \ref{sec: a dichotomy result} derives a global dichotomy between von Neumann algebras with amenable summands and rigidity on central sequences. Sec \ref{sec:coarse rigidity and spectral gap} studies spectral gap and fullness. In Sec \ref{sec:relative compactness}, we recall some general facts about relative compactness.  After proposing a new notion of diffuseness in Sec \ref{sec:new notion of deffuness}, we prove several results on $\omega$-solidity and stable solidity in Sec \ref{sec:solidity}. We conclude with a few examples in Sec \ref{sec:Examples}.

\section{Preliminaries}\label{sec:preliminary}

\subsection{Basic Notation}
Let $\M$ be a von Neumann algebra. In this paper, all von Neumann algebras are $\sigma$-finite. We denote by $\M_\ast$  its predual which is identified with the space of normal linear functionals on $\M$.  We denote by $\mathcal{U}(\M)$ the group of unitaries on $\M$,  by $\mathcal{Z}(\M)$ the center of $\M$, and by $(\M)_1$ the unit ball of $\M$. For a positive functional $\phi\in \M_\ast^+$, we write $\|x\|_\phi=\phi(x^\ast x)^{1/2}$ and $\|x\|_\phi^\sharp=\phi(x^\ast x+xx^\ast)^{1/2}$ for $x\in \M$. We write $\M^{\op}$ for the opposite von Neumann algebra, and $x^{\op}$ for the corresponding element in $\M^{\op}$ for $x\in \M$. We denote by $1_\M$ the unit of $\M$.

By an inclusion $\N\subseteq\M$ of von Neumann algebras, we shall always mean a unital inclusion.
An inclusion $\N\subseteq\M$ is said to be {\em with expectation} if there is a normal faithful conditional expectation $E_\N:\M\to\N$.

\subsection{Standard form of von Neumann algebras}\label{SubSec: Standard form}
A {\em standard form} for a von Neumann algebra $\M$ is  a triple $(\H, \H_+, J)$ consisting of a Hilbert space $\H$ with $\M\subseteq B(\H)$, an anti-linear isometry $J$ with $J^2=1$ and a self-dual cone $\H_+$ in $\H$ such that $\M'=J\M J, J\xi=\xi$ and $aJaJ\xi\in \H_+$ for $a\in \M, \xi\in\H_+$. 

Let $(L^2(\M), L^2(\M)_+, J)$ be the canonical standard form of $\M$ \cite[Chapter 3]{Hia21}. The conjugation $J$ induces a bimodule action of $\M$ given by $\xi x=Jx^\ast J\xi.$ For $\xi\in L^2(\M)$, we will write  $\xi^\ast=J\xi$ and denote by $\xi\xi^\ast\in \M_\ast^+$ the positive functional on $\M$ given by $x\mapsto\langle\xi,x\xi\rangle$. For any positive functional $\phi\in \M_\ast^+$, there exists a unique $\phi^{1/2}\in L^2(\M)_+$ such that $\phi(x)=\langle\phi^{1/2}, x\phi^{1/2}\rangle$.
For each automorphism $\theta$ on $\M$, there is a unitary $U_\theta$ on $L^2(\M)$ such that $U_\theta(L^2(\M)_+)=L^2(\M)_+$, $U_\theta J=J U_\theta$ and $\theta(x)=U_\theta xU_\theta^\ast$ for $x\in \M$.

We also mention the following  Radon-Nikodym type theorem: If $\psi,\phi$ are positive linear functionals then  $\psi\leq C\phi$ for some $C>0$ if and only if there is  $a \in \M$ such that $0\leq a\leq C$ and $\psi(x)=\langle\varphi^{1/2}, x\varphi^{1/2}a\rangle=|\eta^\ast|^2(x)=\eta\eta^\ast(x)$, $x\in \M$ where $\eta=\varphi^{1/2}\sqrt{a}$.

\subsection{Modular operators}\label{sub:modular operator}
One realization of the standard form of a $\sigma$-finite von Neumann algebra $\M$ is the GNS space of a faithful normal state $\phi$ on $\M$.
Let $(L^2(\M,\phi),\pi_\phi,\xi_\phi)$ be its GNS representation. The modular operators $J_\phi$ and $\Delta_\phi$ are obtained from the polar decomposition of the closure of the operator
 $S_\phi$ on $L^2(\M,\phi)$ given by $x\xi_\phi\mapsto x^\ast\xi_\phi, x\in \M$ such that $S_\phi=J_\phi\Delta_\phi^{1/2}$. The triple $(L^2(\M,\phi), L^2(\M,\phi)_+, J_\phi)$ is a standard form  for $\M$ where $L^2(\M)_+=\overline{\Delta_\phi^{1/4}\Lambda_\phi(\M_+)}$. Moreover, $\Delta^{it}\pi_\phi(\M)\Delta^{-it}=\pi_\phi(\M)$  for $t\in\mathbb R$, and  $\sigma_t^\phi(x):=\pi_\phi^{-1}(\Delta_\phi^{it}\pi_\phi(x)\Delta_\phi^{-it})$ is called the {\em modular automorphism group} on $\M$ associated to $\phi$. 

 For any $z\in\mathbb C$, denote by $\dom(\sigma_z^\varphi)$ the set of all elements $x\in\M$ such that $t\mapsto\sigma_t^\phi(x)$ extends to a continuous function on the strip $\{\lambda\in\mathbb C; -\Im\alpha\leq \Im\lambda\leq \Im\alpha\}$ which is analytic on its interior. An element $x\in\M$ is {\em $\sigma^\phi$-analytic} if $x\in \cap_{z\in\mathbb C}\dom(\sigma^\phi_z)$. We denote by $\M_{\infty,\sigma^\varphi}$ (or simply $\M_\infty$ if the state is obvious)  the set of all $\sigma^\varphi$-analytic elements of $\M$. The set $\M_{\infty,\sigma^\phi}$ is a unital strongly dense $\ast$-subalgebra of $\M$. By Takesaki's theorem \cite[Theorem 5.6]{Hia21}, if $\B\subseteq\M$ is an inclusion of von Neumann algebras with a normal faithful conditional expectation $E_\B$ such that $\varphi=\phi\circ E_\B$, then $\sigma_t^\varphi(\B)=\B$ for all $t\in\mathbb R$ and     $\B_{\infty, \sigma^{\phi_{|_\B}}}=\M_{\infty,\sigma^\phi}\cap\B$.  Conversely, if $\B$ is globally invariant under $\sigma^\phi$ then there is a unique normal faithful conditional expectation $E:\M\to\B$ such that $\phi=\phi\circ E$.

For any $y\in \M$, we have $\varphi(y^\ast ay)\leq C\varphi(a)$ for all $a\in\M_+$ if and only if $y\in \dom(\sigma_{i/2}^\phi)$ with $\|\sigma_{i/2}^\phi(y)\|\leq C^{1/2}$ \cite[Proposition 2.14]{Str20}.

\subsection{GNS-symmetric quantum Markov semigroups}\label{subsec:QMS}

Let $\M$ be a von Neumann algebra. A \emph{quantum Markov semigroup} is a family $(\Phi_t)_{t\geq 0}$ of normal unital completely positive maps on $\M$ such that
\begin{itemize}
    \item $\Phi_0=\mathrm{id}_\M$, $\Phi_{s+t}=\Phi_s\Phi_t$ for $s,t\geq 0$,
    \item $\Phi_t(x)\to x$ in weak$^\ast$ topology as $t\to 0$ for all $x\in \M$.
\end{itemize}
The \emph{generator} of $(\Phi_t)_{t\geq 0}$ is the weak$^\ast$ densely defined and closed operator $\cL$ given by
\begin{align*}
    \dom(\cL)&=\{x\in \M\mid \lim_{t\to 0}\frac 1 t (x-\Phi_t(x))\text{ exists in weak$^\ast$ topology}\},\\
    \cL(x)&=\text{weak$^\ast$-}\lim_{t\to 0}\frac 1 t(x-\Phi_t(x)).
\end{align*}
For all $\lambda>0$, the operator $\cL+\lambda$ is invertible and its inverse is given by the weak$^\ast$ integral
\begin{equation*}
    (\cL+\lambda)^{-1}(x)=\int_0^\infty e^{-t\lambda}\Phi_t(x)\,dt
\end{equation*}
for $x\in\M$. For every $\epsilon>0$, the bounded normal operator $\cL(1+\epsilon\cL)^{-1}$ generates a quantum Markov semigroup $(\Phi_{\epsilon,t})_{t\geq 0}$ and $\Phi_{\epsilon,t}(x)\to \Phi_t(x)$ in weak$^\ast$ topology as $\epsilon\searrow 0$ for all $x\in\M$, $t\geq 0$.

Let $\phi$ be a faithful normal state on $\M$. A linear map $\Phi\colon \M\to \M$ is called \emph{GNS-symmetric} with respect to $\phi$ if
\begin{equation*}
    \phi(\Phi(x)^\ast y)=\phi(x^\ast\Phi(y)),\qquad x,y\in\M.
\end{equation*}
One important consequence of the GNS symmetry condition is that if a normal unital completely positive map is GNS-symmetric, then it commutes with the modular group $\sigma^\phi$ (see \cite[Proposition 2.1]{KFGV77} for example).

By definition, a GNS-symmetric ucp map preserves the state $\phi$. By Kadison--Schwarz, there exists a unique contractive operator $\Phi^{(2)}$ on $L^2(\M)$ such that $\Phi^{(2)}(x\phi^{1/2})=\Phi(x)\phi^{1/2}$. The operator $\Phi^{(2)}$ commutes strongly with $\Delta_\phi$, leaves $\phi^{1/2}$ invariant and satisfies $\Phi^{(2)}(L^2_+(\M))\subset L^2_+(\M)$. Moreover, under the canonical identification $L^2(\M\otimes M_n(\mathbb C))\cong L^2(\M)\otimes S^2_n$ one has $(\Phi^{(2)}\otimes\mathrm{id})(L^2_+(\M\otimes M_n(\mathbb C)))\subset L^2_+(\M\otimes M_n(\mathbb C))$.

A bounded linear map $T\colon L^2(\M)\to L^2(\M)$ is called a \emph{completely Markov operator} (with respect to $\phi^{1/2})$ if $T\phi^{1/2}=\phi^{1/2}$ and $(T\otimes\mathrm{id})(L^2_+(\M\otimes M_n(\mathbb C)))\subset L^2_+(\M\otimes M_n(\mathbb C))$ for all $n\in \mathbb N$. It follows from the techniques in \cite[Section 2]{Cip97} that the map $\Phi\mapsto \Phi^{(2)}$ is a bijection between the GNS-symmetric ucp maps on $\M$ and the symmetric completely Markov operators on $L^2(\M)$ that commute strongly with $\Delta_\phi$. Moreover, $(\Phi_t)_{t\geq 0}\mapsto (\Phi_t^{(2)})_{t\geq 0}$ is a bijection between the GNS-symmetric quantum Markov semigroups on $\M$ and the strongly continuous semigroups of symmetric completely Markov operators  on $L^2(\M)$ that commute strongly with $\Delta_\phi$ (see again \cite[Section 2]{Cip97}).

Moreover, we define $L^1(\M)=\M_\ast$ and $\Phi_t^{(1)}=(\Phi_t)_\ast$ for $t\geq 0$. The family $(\Phi_t^{(1)})_{t\geq 0}$ is a strongly continuous contraction semigroup on $L^1(\M)$ such that $\Phi_t^{(1)}(x\phi)=\Phi_t(x)\phi$ for all $x\in\M$, $t\geq 0$.

More generally, $(\Phi_t)_{t\geq 0}$ can also be extended to $L^p(\M)$ for $p\in[1,\infty]$, but we will only deal with the cases $p\in \{1,2,\infty\}$ in this article. Note that $\Phi_t^{(1)}$ and $\Phi_t$ do not depend on the choice of the invariant state $\phi$. The next lemma shows that the same is true for $p=2$.

\begin{lemma}
    If $(\Phi_t)_{t\geq 0}$ is a quantum Markov semigroup on $\M$ that is GNS-symmetric with respect to $\phi$ and $\psi$ is a faithful normal state on $\M$ that is invariant under $(\Phi_t)_{t\geq 0}$, then $\Phi_t^{(2)}(x\psi^{1/2})=\Phi_t(x)\psi^{1/2}$ for all $x\in\M$.
\end{lemma}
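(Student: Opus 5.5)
The idea is to reduce the statement to a bimodularity property of $\Phi_t^{(2)}$ over the fixed-point algebra of the semigroup, and then to show that $\psi^{1/2}$ lives in the $L^2$-space of that algebra. Fix $t\geq 0$ and write $\Phi=\Phi_t$.

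\emph{Step 1: the fixed-point algebra and its expectation.} Let $\N=\{x\in\M\mid \Phi_s(x)=x \text{ for all } s\geq 0\}$.

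For $a\in\N$, Kadison--Schwarz gives $\Phi_s(a^\ast a)\geq \Phi_s(a)^\ast\Phi_s(a)=a^\ast a$. Since $\phi\circ\Phi_s=\phi$, we get $\phi(\Phi_s(a^\ast a)-a^\ast a)=0$, and faithfulness of $\phi$ yields $\Phi_s(a^\ast a)=a^\ast a$. Hence $a^\ast a\in\N$, and $\N$ is a von Neumann subalgebra (it is weak$^\ast$ closed by normality). The same equality $\Phi_s(a^\ast a)=\Phi_s(a)^\ast\Phi_s(a)$, applied also to $a^\ast$, puts $\N$ in the multiplicative domain of each $\Phi_s$. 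Consequently $\Phi_s(axb)=a\Phi_s(x)b$ for $a,b\in\N$ and $x\in\M$.

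Since each $\Phi_s$ commutes with $\sigma^\phi$ (by \cite[Proposition 2.1]{KFGV77}), $\N$ is $\sigma^\phi$-invariant. By Takesaki's theorem there is therefore a unique normal faithful conditional expectation $E\colon\M\to\N$ with $\phi\circ E=\phi$.

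\emph{Step 2: $\psi\circ E=\psi$.} The symmetric contraction semigroup $(\Phi_s^{(2)})$ has ergodic means $A_T=\frac1T\int_0^T\Phi_s^{(2)}\,ds$. By the mean ergodic theorem these converge strongly to the projection $P$ onto the fixed vectors. This projection is $E^{(2)}$: it fixes exactly $L^2(\N)$, because $x\phi^{1/2}$ is fixed iff $x\in\N$, and a density argument handles general vectors.

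The averages $\frac1T\int_0^T\Phi_s(x)\,ds$ are bounded by $\|x\|$ and converge to $E(x)$ in $\|\cdot\|_\phi$. Because $\phi$ is faithful, bounded nets converging in $\|\cdot\|_\phi$ converge strongly, hence weak$^\ast$. Therefore $\psi(E(x))=\lim_T\frac1T\int_0^T\psi(\Phi_s(x))\,ds=\psi(x)$, using normality of $\psi$ and invariance of $\psi$ under the semigroup.

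\emph{Step 3: approximating $\psi^{1/2}$ and passing to the limit.} Since $\psi=\psi|_\N\circ E$, under the standard embedding $L^2(\N)\subseteq L^2(\M)$ (via $E$) we have $\psi^{1/2}=(\psi|_\N)^{1/2}\in L^2(\N)$. Because $\N\phi^{1/2}$ is dense in $L^2(\N)$, we can choose $a_n\in\N$ with $a_n\phi^{1/2}\to\psi^{1/2}$.

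For $x\in\M$, bimodularity gives
\[
\Phi^{(2)}(xa_n\phi^{1/2})=\Phi(xa_n)\phi^{1/2}=\Phi(x)a_n\phi^{1/2}.
\]
Letting $n\to\infty$ and using boundedness of $\Phi^{(2)}$ and of left multiplication by $x$ and by $\Phi(x)$, we obtain $\Phi^{(2)}(x\psi^{1/2})=\Phi(x)\psi^{1/2}$.

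I expect the main obstacle to be Step 2 together with the identification $\psi^{1/2}\in L^2(\N)$. This requires that the $L^2$-limit of the ergodic means is indeed $E^{(2)}$, and that the invariant state factors through $E$. The remaining steps are routine consequences of the multiplicative-domain argument.
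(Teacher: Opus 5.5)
Your proof is correct and follows the paper's route: the fixed-point algebra lies in the multiplicative domain, $\psi\circ E=\psi$, $\psi^{1/2}$ is approximated by vectors $a_n\phi^{1/2}$ with $a_n\in\N$, and bimodularity finishes the argument. The only difference is that the paper obtains the fixed-point algebra, the expectation $E$ as the limit of ergodic means, and hence $\psi\circ E=\psi$, in one stroke from Watanabe's ergodic theorem, whereas you derive these by hand via Kadison--Schwarz, Takesaki's theorem and the $L^2$ mean ergodic theorem. In that derivation, identifying the fixed vectors of $(\Phi_s^{(2)})$ with $\overline{\N\phi^{1/2}}$ is not quite a ``density argument'': you need that any weak$^\ast$ cluster point $y$ of the bounded averages $\frac1T\int_0^T\Phi_s(x)\,ds$ satisfies $y\phi^{1/2}=P(x\phi^{1/2})$, hence $y\in\N$ since $\phi^{1/2}$ is separating, so $P(\M\phi^{1/2})\subseteq\N\phi^{1/2}$ and every fixed vector $\xi=P\xi$ lies in $\overline{\N\phi^{1/2}}$.
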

\begin{proof}
    By \cite[Theorem A]{Wat79}, the fixed-point set $\B=\{x\in \M\mid \Phi_t(x)=x\text{ for all }t\geq 0\}$ is a von Neumann subalgebra of $\M$. For every $x\in\M$, the limit $E(x)=\lim_{T\to\infty}\frac 1 T\int_0^T \Phi_t(x)\,dt$ exists in the strong operator topology and $E$ is a faithful normal conditional expectation from $\M$ onto $\B$. Clearly, $\phi\circ E=\phi$ and $\psi\circ E=\psi$.

    Let $\phi_\B$, $\psi_\B$ denote the restrictions of $\phi$ and $\psi$ to $\B$. Since $\phi_\B^{1/2}\in L^2(\B)$ is cyclic, there exists a sequence $(b_n)$ in $\B$ such that $b_n\phi_\B^{1/2}\to \psi_\B^{1/2}$. If $\iota_\B\colon L^2(\B)\to L^2(\M)$ denotes the isometry that maps $b\phi_\B^{1/2}$ to $b\phi^{1/2}$, then $\iota_\B^\ast(x\phi^{1/2})=E(x)\phi_\B^{1/2}$ and
    \begin{equation*}
        \langle \iota_\B \psi_\B^{1/2},x\iota_\B\psi_\B^{1/2}\rangle=\langle \psi^{1/2},E(x)\psi^{1/2}\rangle=\psi(x).
    \end{equation*}
    Thus $\psi^{1/2}=\iota_\B \psi_\B^{1/2}=\lim_{n\to\infty}b_n \phi^{1/2}$ and
    \begin{equation*}
        \Phi_t^{(2)}(x\psi^{1/2})=\lim_{n\to\infty}\Phi_t(xb_n)\phi^{1/2}=\lim_{n\to \infty}\Phi_t(x)b_n\phi^{1/2}=\Phi_t(x)\psi^{1/2},
    \end{equation*}
    where we used that $\B$ is contained in the multiplicative domain of $\Phi_t$.
\end{proof}

The generator $\cL_p$ of $(\Phi_t^{(p)})$ is the closed densely defined operator given by
\begin{align*}
    \dom(\cL_p)&=\{a\in L^p(\M)\mid \lim_{t\to 0}\frac 1 t(a-\Phi_t^{(p)}(a))\text{ exists}\}\\
    \cL_p(a)&=\lim_{t\to 0}\frac 1 t(a-\Phi_t^{(p)}(a)).
\end{align*}
We call $\cL_p$ the $L^p$ generator of $(\Phi_t)_{t\geq 0}$.

In the case $p=2$, the operator $\cL_2$ is positive self-adjoint and $\Phi_t^{(2)}=e^{-t\cL_2}$ for $t\geq 0$ in the sense of functional calculus. As a positive self-adjoint operator, $\cL_2$ is uniquely determined by its quadratic form $\mathcal E$ given by
\begin{align*}
    \dom(\mathcal E)=\dom(\cL_2^{1/2}),\,\mathcal E(a)=\norm{\cL^{1/2}a}_2^2.
\end{align*}
The quadratic form $\mathcal E$ is called the \emph{quantum Dirichlet form associated with $(\Phi_t)_{t\geq 0}$}. As a consequence of the strong commutation of $\cL_2$ and $\Delta_\phi$, the form $\mathcal E$ is invariant under $(\Delta_\phi^{is})_{s\in\mathbb R}$. An intrinsic characterization of the quadratic forms that occur as quantum Dirichlet forms associated with a GNS-symmetric quantum Markov semigroup can be deduced from \cite[Section 4]{Cip97}.

For $\epsilon>0$, the quantum Dirichlet form associated with the quantum Markov semigroup $(\Phi_{\epsilon,t})_{t\geq 0}$ introduced above is the \emph{Moreau regularization} $\mathcal E_\epsilon$ of $\mathcal E$, given by
\begin{equation*}
    \dom(\mathcal E_\epsilon)=L^2(\M),\,\mathcal E_\epsilon(a)= \inf_{b\in \dom(\mathcal E)}\mathcal E(b)+\frac 1 \epsilon \norm{a-b}_2^2.
\end{equation*}
For every $\epsilon>0$, the net $(\mathcal E_\epsilon(a))_{\epsilon>0}$ is decreasing in $\epsilon$ with $\lim_{\epsilon\searrow 0}\mathcal E_\epsilon(a)=\mathcal E(a)$ if $a\in\dom(\mathcal E)$ and $\lim_{\epsilon\searrow 0}\mathcal E_\epsilon(a)=\infty$ otherwise.

Moreover, if $\cL_2$ is the $L^2$ generator of a GNS-symmetric quantum Markov semigroup with associated quantum Dirichlet form $\mathcal E$ and $\beta\in (0,1)$, then $\cL^\beta$ is the $L^2$ generator of a GNS-symmetric quantum Markov semigroup and the associated quantum Dirichlet form $\mathcal E^{(\beta)}$ is given by
\begin{align*}
    \dom(\mathcal E^{(\beta)})&=\left\{a\in L^2(\M): \int_0^\infty \epsilon^{-\beta }\mathcal E_\epsilon(a)\,d\epsilon<\infty\right\},\\
    \mathcal E^{(\beta)}(a)&=\frac{\sin \beta \pi}{\pi}\int_0^\infty \epsilon^{-\beta}\mathcal E_\epsilon(a)\,d\epsilon.
\end{align*}

\subsection{Hilbert bimodules, Tomita bimodules, Connes fusion tensor products}\label{subsec:bimodules}

Let $\M$, $\N$ be von Neumann algebras. An {\em $\M$-$\N$ correspondence}, also called {\em Hilbert bimodule},  is a Hilbert space $\H $ together with normal unital $\ast$-representations $\pi_l^{\H }\colon \M\to B(\H )$, $\pi_r^{\H }\colon \N^{\mathrm{op}}\to B(\H )$ with commuting images. We use bimodule notation and write $x\xi y$ for $\pi_l^{\H }(x)\pi_r^{\H }(y^{\mathrm{op}})\xi$ if $x\in \M$, $y\in\N$ and $\xi\in\H $.

For example, the standard Hilbert space $L^2(\M)$ is an $\M$-$\M$ correspondence with the standard representation of $\M$ as the left action and with the right action given by $\xi y=Jy^\ast J\xi$ for $y\in\M$ and $\xi\in L^2(\M)$.

Now let $\phi\in \M_\ast$ and $\psi\in \N_\ast$ be faithful normal states. A vector $\xi\in \H $ is called \emph{left $\psi$-bounded} if the map
\begin{equation*}
    \psi^{1/2}\N\to \H ,\,\psi^{1/2}y\mapsto \xi y
\end{equation*}
extends to a bounded linear operator $L_\psi(\xi)$ from $L^2(\N)$ to $\H $. By definition, $L_\psi(\xi)$ is a right $\N$-module map. The set of all left $\psi$-bounded vectors in $\H $ is denoted by $\mathcal D(\H ,\psi)$.

A vector $\xi\in \H $ is called \emph{right $\phi$-bounded} if the map
\begin{equation*}
    \M\phi^{1/2}\to\H ,\,x\phi^{1/2}\mapsto x\xi
\end{equation*}
extends to a bounded linear operator $R_\phi(\xi)\colon L^2(\M)\to \H $. By definition, $R_\phi(\xi)$ is a left $\M$-module map. The set of all right $\phi$-bounded vectors in $\H $ is denoted by $\mathcal D^\prime(\H ,\phi)$. A vector $\xi\in\H $ that is both left $\psi$-bounded and right $\phi$-bounded is called $(\phi,\psi)$-bounded.

For example, $\mathcal D(L^2(\M),\phi)=\M\phi^{1/2}$ with $L_\phi(x\phi^{1/2})=x$ and $\mathcal D^\prime(L^2(\M),\phi)=\phi^{1/2}\M$ with $R_\phi(\phi^{1/2}y)=Jy^\ast J$.

Now let $\Q$ be another von Neumann algebra with faithful normal state $\omega\in \Q_\ast$. If $\H $ is an $\M$-$\Q$ correspondence and $\K $ is a $\Q$-$\N$ correspondence, then
\begin{align*}
    s\colon (\mathcal D(\H ,\omega)\odot \K )\times (\mathcal D(\H ,\omega)\odot \K )&\to \mathbb C,\\
    (\xi_1\otimes \eta_1,\xi_2\otimes \eta_2)&\mapsto \langle \eta_1, \pi_l^{\K }(L_\omega(\xi_1)^\ast L_\omega(\xi_2))\eta_2\rangle
\end{align*}
is a positive semi-definite sesquilinear form. The \emph{relative tensor product} $\H \otimes_\omega\K $ is the Hilbert space obtained from $\mathcal D(\H ,\omega)\odot \K $ after separation and completion with respect to $s$. If $\xi\in \mathcal D(\H,\omega)$ and $\eta\in\K $, then the image of $\xi\otimes \eta$ in $\H \otimes_\omega\K $ is denoted by $\xi\otimes_\omega\eta$. The relative tensor product $\H \otimes_\omega \K $ becomes an $\M$-$\N$ correspondence with the bimodule actions $x(\xi\otimes_\omega\eta)y=x\xi\otimes_\omega \eta y$.

Up to isomorphism, the relative tensor product $\H\otimes_\omega \K$ does not depend on the choice of the faithful normal state $\omega$. If the precise realization is  immaterial, we also write $\H\otimes_\Q \K$ (or ${_\M\H}\otimes_\Q \K_\N$ if we want to stress the bimodule actions) for $\H\otimes_\omega\K$. We note that $L^2(\M)$ satisfies the absorption property ${_\M L^2(\M)}\otimes_\M \H_\N\cong {_\M \H_\N}$ and ${_\N \K}\otimes_\M L^2(\M)_\M\cong {_\N\K_\M}$ for every $\M$-$\N$ correspondence $\H$ and every $\N$-$\M$ correspondence $\K$.

If $S\colon \H_1\to\H_2$ is a bounded right $\Q$-module map and $T\colon \K _1\to\K _2$ is a bounded left $\Q$-module map, then there exists a unique bounded linear map $S\otimes_\omega T\colon \H _1\otimes_\omega\K _1\to\H_2\otimes_\omega \K _2$ such that $(S\otimes_\omega T)(\xi\otimes_\omega \eta)=S\xi\otimes_\omega T\eta$ for all $\xi\in\mathcal D(\H _\Q,\omega)$ and $\eta\in\K $. Moreover, $\norm{S\otimes_\omega T}\leq \norm{S}\norm{T}$ and $(S\otimes_\omega T)^\ast=S^\ast\otimes_\omega T^\ast$.

A \emph{Tomita correspondence} over $(\M,\phi)$ is a triple $(\H ,\mathcal J,(\mathcal U_t)_{t\in \mathbb R})$ consisting of an $M$-$M$ correspondence $\H $, an anti-unitary involution $\mathcal J\colon \H \to \H $ and a strongly continuous unitary group $(\mathcal U_t)_{t\in\mathbb R}$ on $\H $ such that
\begin{itemize}
    \item $\mathcal J(x\xi y)=y^\ast(\mathcal J\xi)x^\ast$ for $x,y\in \M$, $\xi\in\H $,
    \item $\mathcal U_t(x\xi y)=\sigma^\phi_t(x)(\mathcal U_t\xi)\sigma^\phi_t(y)$ for $x,y\in\M$, $\xi\in \H $, $t\in\mathbb R$,
    \item $\mathcal U_t\mathcal J=\mathcal J\mathcal U_t$ for $t\in\mathbb R$.
\end{itemize}
If $\H $ is a Tomita correspondence over $(\M,\phi)$, then the relative tensor powers $\H ^{\otimes_\phi n}$ become Tomita correspondences when endowed with the anti-unitary involution $\mathcal J^{(n)}$ and the strongly continuous unitary group $(\mathcal U^{(n)}_t)_{t\in\mathbb R}$ given by
\begin{align*}
    \mathcal J^{(n)}(\xi_1\otimes_\phi\dots\otimes_\phi \xi_n)&=\mathcal J\xi_n\otimes_\phi\dots \mathcal J\xi_1\\
    \mathcal U_t^{(n)}(\xi_1\otimes_\phi\dots\otimes_\phi \xi_n)&=\mathcal U_t\xi_1\otimes_\phi\dots\otimes_\phi \mathcal U_t\xi_n
\end{align*}
for $\xi_1,\dots,\xi_n\in \mathcal D(\H ,\phi)\cap \mathcal D^\prime(\H ,\phi)$.

Using an operator-valued semi-circular construction (see \cite{Shl99,KW25}), one can show that there exists a von Neumann algebra $\N$ containing $\M$ with a faithful normal conditional expectation $E\colon \N\to \M$ such that $\H\subset L^2(\N)\ominus L^2(\M)$ as $\M$-correspondences, $\mathcal J=J_{\phi\circ E}|_{\H}$ and $\mathcal U_t=\Delta_{\phi\circ E}^{it}|_{\H}$ for $t\in\mathbb R$.

\subsection{Modular derivations}\label{sec:modular derivation}

Let $\M$ be a von Neumann algebra and $\phi$ a faithful normal state on $\M$. The \emph{left Hilbert algebra} $\mathfrak A_\phi$ associated with $\phi$ is $M\phi^{1/2}$. The \emph{maximal Tomita algebra} associated with $\phi$ is $\mathfrak T_\phi=\{a\in\M \phi^{1/2}\cap \bigcap_{z\in\mathbb C}\dom(\Delta_\phi^z)\mid\Delta_\phi^z a\in M\phi^{1/2}\text{ for all }z\in\mathbb C\}$. We say that a linear subspace $\mathfrak T$ of $\mathfrak T_\phi$ is a \emph{unital Tomita subalgebra} if it is invariant under $J$, invariant under $\Delta_\phi^z$ for all $z\in\mathbb C$,  contains $\phi^{1/2}$ and if $a,b\in \mathfrak T$ implies $\pi_l(a)b=\pi_r(b)a\in \mathfrak T$.

Let $(\H ,\mathcal J,(\mathcal U_t))$ be a Tomita correspondence over $(\M,\phi)$. A linear map $\delta\colon \dom(\delta)\subset L^2(\M)\to \H $ is called a \emph{$\phi$-modular derivation} if
\begin{itemize}
    \item $\dom(\delta)$ is a unital Tomita subalgebra of $\mathfrak T_\phi$,
    \item $\delta(Ja)=\mathcal J\delta(a)$ for $a\in \dom(\delta)$,
    \item $\delta(\Delta_\phi^{iz}a)=\mathcal U_z\delta(a)$ for $a\in \dom(\delta)$ and $z\in\mathbb C$,
    \item $\delta(ab)=\pi_l(a)\delta(b)+\delta(a)\cdot J\pi_r(b)^\ast J$ for $a,b\in \dom(\delta)$.
\end{itemize}
We say that $\delta\colon\dom(\delta)\subset L^2(\M)\to\H$ is a \emph{modular derivation} if there exists a faithful normal state $\phi$ on $\M$, an anti-unitary involution $\mathcal J\colon \H\to \H$ and a strongly continuous unitary group $(\mathcal U_t)_{t\in\mathbb R}$ such that $(\H,\mathcal J,(\mathcal U_t)_{t\in\mathbb R})$ is a Tomita correspondence over $(\M,\phi)$ and $\delta$ is a $\phi$-modular derivation. Note that given $\delta$ and $\phi$, the Tomita correspondence structure is uniquely determined on the closed bimodule generated by $\operatorname{ran}\delta$.

Note that if $\delta$ is a densely defined $\phi$-modular derivation, then $\pi_l(\dom(\delta))^{\prime\prime}=\M$. Indeed, since $\dom(\delta)$ is invariant under $(\Delta_\phi^{it})_{t\in\mathbb R}$, the von Neumann subalgebra $\pi_l(\dom(\delta))^{\prime\prime}$ is invariant under $\sigma^\phi$. By Takesaki's theorem, there exists a $\phi$-preserving conditional expectation $E\colon \M\to \pi_l(\dom(\delta))^{\prime\prime}$ such that $E(x)\phi^{1/2}$ is the orthogonal projection of $x\phi^{1/2}$ onto $\overline{\dom(\delta)}$ for all $x\in\M$. If $\dom(\delta)$ is dense in $L^2(\M)$, then $x=E(x)\in \pi_l(\dom(\delta))^{\prime\prime}$ for all $x\in \M$.

If $\delta\colon\dom(\delta)\subset L^2(\M)\to\H $ is a densely defined closable $\phi$-modular derivation, then $\delta^\ast\bar\delta$ is a positive self-adjoint operator on $L^2(\M)$, and it was shown in \cite{Wir24} that $\delta^\ast\bar\delta$ is the $L^2$-generator of a GNS-symmetric quantum Markov semigroup $(\Phi_t)_{t\geq 0}$ on $\M$. The following result shows how the fixed-point algebra of $(\Phi_t)_{t\geq 0}$ can be recovered from $\delta$. 

\begin{lemma}
    If $\delta\colon\dom(\delta)\subset L^2(\M)\to\H $ is a densely defined closable $\phi$-modular derivation with associated GNS-symmetric quantum Markov semigroup $(\Phi_t)_{t\geq 0}$ and $x\in \M$, then $\Phi_t(x)=x$ for all $t\geq 0$ if and only if $x\phi^{1/2}\in \ker \delta$.
\end{lemma}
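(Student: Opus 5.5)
Note first that the statement should be read with $\ker\bar\delta$: $x\phi^{1/2}$ need not lie in $\dom(\delta)$, and the quadratic form of $\cL_2=\delta^\ast\bar\delta$ is $\mathcal E(a)=\norm{\bar\delta a}^2$ with $\dom(\mathcal E)=\dom(\bar\delta)$. The plan is to characterize the fixed points at the level of $L^2$ and then transfer to $\M$. The $L^2$ step is spectral theory. Since $\Phi_t^{(2)}=e^{-t\cL_2}$ with $\cL_2=\delta^\ast\bar\delta\geq 0$ self-adjoint, a vector $a\in L^2(\M)$ satisfies $\Phi_t^{(2)}a=a$ for all $t$ iff $a\in\ker\cL_2$. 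Moreover $\ker(\delta^\ast\bar\delta)=\ker\bar\delta$: if $\bar\delta a=0$ then clearly $\delta^\ast\bar\delta a=0$, and conversely $\cL_2 a=0$ gives $\norm{\bar\delta a}^2=\langle a,\cL_2a\rangle=0$.

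For the direction ($\Rightarrow$), suppose $\Phi_t(x)=x$ for all $t\ge0$. Then $\Phi_t^{(2)}(x\phi^{1/2})=\Phi_t(x)\phi^{1/2}=x\phi^{1/2}$, so $x\phi^{1/2}\in\ker\cL_2=\ker\bar\delta$ by the first paragraph.

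The converse is the main point. Suppose $x\phi^{1/2}\in\ker\bar\delta$. Then $\Phi_t^{(2)}(x\phi^{1/2})=x\phi^{1/2}$, hence $\Phi_t(x)\phi^{1/2}=x\phi^{1/2}$. Because $\phi$ is faithful, $\phi^{1/2}$ is separating for $\M$, and therefore $\Phi_t(x)=x$.

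I expect the only subtle step to be the identification $\ker\cL_2=\ker\bar\delta$, which requires that $\cL_2$ really is $\delta^\ast\bar\delta$ for the closure $\bar\delta$ (as established in \cite{Wir24}). Once that is in place, the rest follows from the separating property of $\phi^{1/2}$ and the intertwining $\Phi_t^{(2)}(x\phi^{1/2})=\Phi_t(x)\phi^{1/2}$ recalled in Section \ref{subsec:QMS}.
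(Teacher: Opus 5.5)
Your proof is correct and is essentially the paper's own argument. Both directions pass through $\ker\cL_2=\ker\bar\delta$: the paper likewise uses $\norm{\bar\delta(x\phi^{1/2})}^2=\langle x\phi^{1/2},\cL_2(x\phi^{1/2})\rangle$ and $\ker\bar\delta\subset\ker\cL_2$, together with $\Phi_t^{(2)}(x\phi^{1/2})=\Phi_t(x)\phi^{1/2}$ and the fact that $\phi^{1/2}$ is separating. Your reading of the statement with $\ker\bar\delta$ in place of $\ker\delta$ is also how the paper's proof treats it.
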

\begin{proof}
    If $\Phi_t(x)=x$ for all $t\geq 0$, then $\Phi_t^{(2)}(x\phi^{1/2})=x\phi^{1/2}$ for all $t\geq 0$ and thus $x\phi^{1/2}\in \ker \mathcal L_2$. Hence $\norm{\bar\delta(x\phi^{1/2})}^2=\langle x\phi^{1/2},\mathcal L_2(x\phi^{1/2})\rangle=0$.
    
    Conversely, if $x\phi^{1/2}\in \ker \bar\delta\subset \ker \mathcal L_2$, then $\Phi_t(x)\phi^{1/2}=\Phi_t^{(2)}(x\phi^{1/2})=x\phi^{1/2}$, hence $\Phi_t(x)=x$ for all $t\geq 0$.
\end{proof}

\begin{notation}
    If $\delta$ is a densely defined closable modular derivation, we write $\Null(\bar\delta)$ for $\{x\in \M\mid x\phi^{1/2}\in \ker\bar\delta\}$.
\end{notation} As discussed before, the space $\Null(\bar\delta)$ is a unital von Neumann subalgebra of $\M$ that is globally invariant under $\sigma^\phi$. Moreover, $\Phi_t$ is a $\Null(\bar\delta)$-bimodule map for all $t\geq 0$.

Conversely, let $(\Phi_t)_{t\geq0}$ be a GNS-symmetric quantum Markov semigroup on $\M$ with associated quantum Dirichlet form $\mathcal E$. It was shown in \cite{Wir22} that 
\begin{equation*}
    \mathfrak A_{\mathcal E}=\{a\in \M\phi^{1/2}\cap\bigcap_{z\in \mathbb C}\dom(\Delta_\phi^{z})\mid \Delta_\phi^z a\in \dom(\mathcal E)\text{ for all }z\in\mathbb C\}
\end{equation*}
is a unital Tomita subalgebra of $\mathfrak T_\phi$ and a form core for $\mathcal E$. Let $A_{\mathcal E}=\overline{J\pi_r(\mathfrak A_{\mathcal E})J}^{\norm\cdot}$, which is a weak$^\ast$-dense $C^\ast$-subalgebra of $\M$.

For $a,b\in \mathfrak A_{\mathcal E}$, the carré du champ $\Gamma(a,b)$ is defined as
\begin{equation*}
    \Gamma(a,b)\colon J\pi_r(\mathfrak A_{\mathcal E})J\to \mathbb C,\,J\pi_r(c)^\ast J\mapsto \frac 1 2(\mathcal E(a,bc)+\mathcal E(ac^\flat,b)-\mathcal E(c^\flat,a^\sharp b)),
\end{equation*}
where $a^\sharp =J\Delta_\phi^{1/2} a$ and $c^\flat=J\Delta_\phi^{-1/2}c$. The carré du champ extends to a bounded linear functional on $A_{\mathcal E}$ with $\norm{\Gamma(a,b)}_{A_{\mathcal E}^\ast}\leq \mathcal E(a)^{1/2}\mathcal E(b)^{1/2}$.

The quantum Markov semigroup $(\Phi_t)_{t\geq 0}$ or the associated quantum Dirichlet form $\mathcal E$ is called \emph{$\Gamma$-regular} if $\Gamma(a,b)$ extends to a normal linear functional on $\M$ for all $a,b\in\mathfrak A_{\mathcal E}$. In particular, every bounded quantum Dirichlet form is $\Gamma$-regular.

It was shown in \cite{Wir22} that $(\Phi_t)_{t\geq 0}$ is $\Gamma$-regular if and only if there exists a Tomita correspondence $(\H ,\mathcal J,(\mathcal U_t)_{t\in\mathbb R})$ over $(\M,\phi)$ and a closable densely defined $\phi$-modular derivation $\delta\colon \mathfrak A_{\mathcal E}\to \H $ such that the $L^2$-generator $\cL_2$ of $(\Phi_t)_{t\geq 0}$ is given by $\cL_2=\delta^\ast\bar\delta$. In this case,
\begin{equation*}
    \langle \delta(a),\delta(b)y\rangle=\Gamma(a,b)(y)
\end{equation*}
for $a,b\in \mathfrak A_{\mathcal E}$ and $y\in \M$.

If $\cL_2$ is the $L^2$ generator of a GNS-symmetric quantum Markov semigroup and $\beta\in (0,1)$, then the quantum Dirichlet form $\mathcal E^{(\beta)}$ described in Subsection \ref{subsec:QMS} is $\Gamma$-regular, even if the quantum Dirichlet form $\mathcal E$ itself is not. The proof of this fact in the tracially symmetric case in \cite[Section 10.4]{CS03} carries over to the GNS-symmetric case without much difficulty.

\subsection{The Ocneanu ultraproduct of von Neumann algebras}
We recall the notion of ultraproduct of von Neumann algebras and Banach spaces.  For more on ultraproducts, we refer the reader to \cite{AH14} and \cite{HI17}. 

Let $I$ be a directed set, and let $\omega$ be a cofinal ultrafilter on $I$, that is, $\{i\in I; i\geq i_0\}\in\omega$ for all $i_0\in I$. If $I=\mathbb N$, then $\omega$ is cofinal if and only if $\omega$ is free, that is, $\omega\in\beta\mathbb N\setminus\mathbb N$. Let $(X,\|\cdot\|)$ be a Banach space. The {\em  Banach space ultraproduct} $X^\omega$ of $X$ with respect to $\omega$ is defined as the quotient of $\ell^\infty(I,X)$ by the closed subspace 
\[\{(x_i)\in\ell^\infty(I,X); \lim_{i\to\omega}\|x_i\|=0\}\]
If $(x_i)\in \ell^\infty(I,X)$,  we denote its class in $X^\omega$ by $(x_i)^\omega$. The norm on $X^\omega$ is given by $\|(x_i)^\omega\|=\lim_{i\to\omega}\|x_i\|$. If $\H$ is a Hilbert space, then $\H^\omega$ is also a Hilbert space with the inner product given by 
$\langle(\xi_i)^\omega, (\eta_i)^\omega\rangle=\lim_{i\to\omega}\langle\xi_i,\eta_i\rangle.$ 

Now let $\M$ be a $\sigma$-finite von Neumann algebra. Define
\begin{align*}
   & \mathcal{I}_\omega(\M)=\{(x_i)\in \ell^\infty(I,\M); x_i\to 0\ast-\mbox{strongly as }i\to\omega\}\\
   & \mathcal{M}^\omega(\M)=\{(x_i)\in \ell^\infty(I,\M); (x_i)\I_\omega\subseteq\I_\omega, \I_\omega(\M)(x_i)_i\subseteq\I_\omega(\M)\}
\end{align*}
The multiplier algebra $\mathcal{M}^\omega(\M)$ is a $C^\ast$-algebra and $\I_\omega(\M)$ is a norm-closed two-sided ideal of $\mathcal{M}^\omega(\M)$. Following \cite{AH14}, we define the {\em Ocneanu  ultraproduct von Neumann algebra} $\M^\omega$ by $\M^\omega:=\mathcal{M}^\omega/\I_\omega$. The image of $(x_i)\in \mathcal{M}^\omega(\M)$ in $\M^\omega$ is denoted by $(x_i)^\omega$.

We regard $\M$ as a von Neumann subalgebra of $\M^\omega$ via the constant net $(x)^\omega$ for $x\in \M$. The map $E_\M:\M^\omega\to\M$ defined by $(x_i)^\omega\mapsto\lim_{i\to\omega}x_i$ (in $\sigma$-weak topology) is a faithful normal conditional expectation. Each  faithful normal state $\varphi$ on $\M$ gives rise to a faithful normal state $\phi^\omega:=\phi\circ E_\M$.

If $\N\subseteq\M$ is a subalgebra with expectation $E_\N$, then $\I_\omega(\N)\subseteq\I_\omega(\M)$ and $\mathcal{M}^\omega(\N)\subseteq\mathcal{M}^\omega(\M)$ so that we identify $\N^\omega=\mathcal{M}^\omega(\N)/\I_\omega(\N)$ with $(\mathcal{M}^\omega(\N)+\I_\omega(\M))/\I^\omega(\M)$ and hence regard $\N^\omega\subseteq\M^\omega$ as a von Neumann subalgebra with expectation $E_{\N^\omega}:(x_i)^\omega\mapsto(E_\N(x_i))^\omega$ for $(x_i)^\omega\in \M^\omega$.
If $p\in\M$ is a projection, then $p\M^\omega p\cong(p\M p)^\omega$ via the unital isomorphism  $\pi:p\M^\omega p \to  (p\M p)^\omega$ by $\pi (p(x_n)^\omega p)=(px_np)^\omega$.
The well-definedness  and injectivity of $\pi$ is easy to verify. For surjectivity, take any $(p x_n p)\in\mathcal{M}^\omega(p\M p)$, and show that $(px_n p)\in\mathcal{M}^\omega(\M)$ using the normal faithful state $\psi:=\phi(p\cdot p)+\phi((1-p)\cdot(1-p))$ for some fixed normal faithful state $\phi$ on $\M$. 

The standard space $L^2(\M^\omega)$ is a closed subspace of the Hilbert space ultrapower $L^2(\M)^\omega$.
The standard form for $\M^\omega$ is given by $(L^2(\M^\omega), L^2(\M^\omega)\cap (L^2(\M)_+)^\omega, J^\omega)$. We shall often use the following consequence of \cite[Proposition 4.11]{AH14}.
\begin{proposition}\label{prop: criteria for elements in the idealizer}
    Let $\M$ be a von Neumann algebra equipped with a normal faithful state $\varphi$, and let $\omega$ be a cofinal ultrafilter on a directed set $I$. For any net $(x_i)\in \ell^\infty(I,\M)$, we have $(x_i)\in\mathcal{M}^\omega(\M)$ if and only if for each $\epsilon>0$, there is $c>0$ and $(y_i)\in\mathcal{M}^\omega(\M)$ such that $\lim_{i\to\omega}\|x_i-y_i\|_\phi^\sharp\leq \epsilon$ and each $y_i\in \M(\sigma^\varphi, [-a,a])$.
\end{proposition}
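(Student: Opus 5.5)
The plan is to deduce the statement from \cite[Proposition 4.11]{AH14}. That result characterizes the multiplier algebra $\mathcal M^\omega(\M)$ in two ways: by spectral approximation with respect to $\sigma^\phi$, and by $\omega$-equicontinuity of the families of normal functionals $(x_i\phi)_i$ and $(\phi x_i)_i$. I would prove the two implications separately, using the spectral-smoothing operators $\sigma^\phi_f(x)=\int_{\mathbb R} f(t)\sigma^\phi_t(x)\,dt$ for the forward direction and equicontinuity for the converse.

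\emph{($\Rightarrow$).} Let $(x_i)\in\mathcal M^\omega(\M)$ and $\epsilon>0$.
\begin{enumerate}
\item Fix $f\in L^1(\mathbb R)$ with $\int f=1$ and $\hat f$ compactly supported, and put $f_a(t)=af(at)$. Then $\hat f_a$ is supported in some interval $[-a,a]$ (after rescaling), so $y_i:=\sigma^\phi_{f_a}(x_i)\in\M(\sigma^\phi,[-a,a])$ and $\|y_i\|\le\|f\|_1\|x_i\|$.
\item To see $(y_i)\in\mathcal M^\omega(\M)$, I use that $\mathcal M^\omega(\M)$ is invariant under $(x_i)\mapsto(\sigma^\phi_t(x_i))$ and that $\sigma^{\phi^\omega}_t((x_i)^\omega)=(\sigma^\phi_t(x_i))^\omega$ \cite{AH14}. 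Then $(y_i)^\omega=\sigma^{\phi^\omega}_{f_a}((x_i)^\omega)$, since the integral commutes with the quotient map by norm-boundedness and continuity.
\item For the estimate,
\[\lim_{i\to\omega}\|x_i-y_i\|_\phi^\sharp\le\int|f(s)|\,\|X-\sigma^{\phi^\omega}_{s/a}(X)\|^\sharp_{\phi^\omega}\,ds,\qquad X=(x_i)^\omega.\]
The integrand tends to $0$ as $a\to\infty$ by strong continuity of $\sigma^{\phi^\omega}$ on $\M^\omega$, and it is bounded by $2\|X\||f(s)|$. Dominated convergence therefore gives some $a$ with the right-hand side at most $\epsilon$.
\end{enumerate}

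\emph{($\Leftarrow$).} This is the step I expect to be the main obstacle, because $\mathcal M^\omega(\M)$ is not obviously closed under $\|\cdot\|^\sharp_\phi$-approximation along $\omega$. Let $(z_i)\in\I_\omega(\M)$. The easy half is
\[\|z_ix_i\phi^{1/2}\|\le\|z_iy_i\phi^{1/2}\|+\sup_i\|z_i\|\,\|(x_i-y_i)\phi^{1/2}\|,\]
whose $\omega$-limit is at most $C\epsilon$. The genuinely hard terms are $\|(x_i-y_i)z_i\phi^{1/2}\|$ and the adjoint term $\|(x_i-y_i)^\ast z_i^\ast\phi^{1/2}\|$, where $(x_i-y_i)$ acts on a moving vector rather than on $\phi^{1/2}$.

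To handle these, I would invoke the equivalence in \cite[Proposition 4.11]{AH14}: $(x_i)\in\mathcal M^\omega(\M)$ if and only if $(x_i\phi)_i$ and $(\phi x_i)_i$ are $\omega$-equicontinuous. This condition passes to $\|\cdot\|^\sharp$-limits because
\[\|x_i\phi-y_i\phi\|\le\|x_i-y_i\|_\phi\qquad\text{and}\qquad\|\phi x_i-\phi y_i\|\le\|x_i^\ast-y_i^\ast\|_\phi.\]
Equicontinuity up to $\epsilon$ for every $\epsilon$ then yields equicontinuity of $(x_i)$, and hence $(x_i)\in\mathcal M^\omega(\M)$.

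If the equicontinuity formulation is unavailable, I would instead write $z_i\phi^{1/2}=\phi^{1/2}\sigma^\phi_{-i/2}(\cdot)$-type expressions. This is possible only after first replacing $(z_i)$ by spectrally bounded elements, which can be done within $\I_\omega$ by the smoothing of the forward direction.
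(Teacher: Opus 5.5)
Your forward direction has a genuine gap at the identification $(y_i)^\omega=\sigma^{\phi^\omega}_{f_a}((x_i)^\omega)$. Both your membership claim in step 2 and the displayed estimate in step 3 rest on it.

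The quotient map is defined only on $\mathcal M^\omega(\M)$, and $t\mapsto(\sigma^\phi_t(x_i))_i$ is not norm-continuous in $\ell^\infty(I,\M)$. So there is no integral in $\mathcal M^\omega(\M)$ that could be commuted with the quotient map. What the step really requires is to move $\lim_{i\to\omega}$ inside $\int dt$.

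Knowing, for each fixed $t$, that $\lim_{i\to\omega}\|x_i-\sigma^\phi_t(x_i)\|^\sharp_\phi=\|X-\sigma^{\phi^\omega}_t(X)\|^\sharp_{\phi^\omega}$, with the right side continuous in $t$, does not allow this. Ultralimits do not commute with integrals even for uniformly bounded continuous functions whose ultralimit is continuous. For example, let $\omega$ be a free ultrafilter on $\mathbb N$ with $\lim_{n\to\omega}e^{2\pi i n t}=1$ for all $t\in\mathbb R$; such $\omega$ exists by simultaneous Dirichlet approximation. Then $h_n(t)=|e^{2\pi i n t}-1|$ satisfies $\lim_{n\to\omega}h_n\equiv 0$, while $\int|f|h_n\to\frac4\pi\|f\|_1$.

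What is missing is the $\omega$-equicontinuity
\[\lim_{\delta\to 0}\,\lim_{i\to\omega}\,\sup_{|t|\le\delta}\|x_i-\sigma^\phi_t(x_i)\|^\sharp_\phi=0\qquad\text{for }(x_i)\in\mathcal M^\omega(\M).\]
With it you can bypass the identification altogether. Put $h_i(t)=\|x_i-\sigma^\phi_t(x_i)\|^\sharp_\phi$. Minkowski's inequality and $\int f=1$ give
\[\|x_i-y_i\|^\sharp_\phi\le\int|f(s)|\,h_i(s/a)\,ds\le\|f\|_1\sup_{|t|\le S/a}h_i(t)+2\sqrt2\,\sup_j\|x_j\|\int_{|s|>S}|f(s)|\,ds,\]
and letting $i\to\omega$, then $a\to\infty$, then $S\to\infty$ yields the required estimate. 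However, this equicontinuity is not automatic for bounded families; it is equivalent to $\omega$-tightness of the spectral measures of $x_i\phi^{1/2}$ and $x_i^\ast\phi^{1/2}$ for $\log\Delta_\phi$. It is exactly where the multiplier hypothesis has to be used, and it is the substantive content of the Ando--Haagerup result. You must therefore either quote it from \cite{AH14} (or, as the paper does, quote the spectral approximation itself) or prove it.

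Two smaller points. First, membership $(y_i)\in\mathcal M^\omega(\M)$ does not need the identification. An element $y\in\M(\sigma^\phi,[-a,a])$ is entire with $\|\sigma^\phi_{\pm i/2}(y)\|\le e^{a/2}\|y\|$ and $y\phi^{1/2}=\phi^{1/2}\sigma^\phi_{i/2}(y)$, and $y^\ast$ lies in the same spectral subspace. Hence $\|y_iz_i\|^\sharp_\phi+\|z_iy_i\|^\sharp_\phi\le 2e^{a/2}\sup_j\|y_j\|\,\|z_i\|^\sharp_\phi$.

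Second, the converse is not an obstacle at all. The terms you call genuinely hard are trivial because $(y_i)\in\mathcal M^\omega(\M)$ is norm-bounded:
\[\|(x_i-y_i)z_i\phi^{1/2}\|\le(\sup_j\|x_j\|+\sup_j\|y_j\|)\,\|z_i\phi^{1/2}\|\to0,\]
and likewise for the adjoint term. Directly, for $(z_i)\in\I_\omega(\M)$ you have $\|x_iz_i\phi^{1/2}\|\le\|x_i\|\|z_i\phi^{1/2}\|$ and $\|z_i^\ast x_i^\ast\phi^{1/2}\|\le\|z_i^\ast y_i^\ast\phi^{1/2}\|+\sup_j\|z_j\|\,\|x_i-y_i\|^\sharp_\phi$, and symmetrically for $z_ix_i$. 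Taking $\lim_{i\to\omega}$ and then $\epsilon\to0$ gives $(x_i)\in\mathcal M^\omega(\M)$. The detour through an equicontinuity characterization of $(x_i\phi)_i$ and $(\phi x_i)_i$, whose precise form you would also have to pin down, is unnecessary.
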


\section{Main Technical Lemmas}\label{sec:main_technical_lemma}

Throughout this section, let $\M$ be a von Neumann algebra and $\phi$ a faithful normal state on $\M$.

\begin{lemma}\label{lem:analytic_core_generator}
    If $(\Phi_t)$ is a GNS-symmetric QMS on $\M$ with $L^2$-generator $\mathcal L_2$, then $\mathfrak A_{\mathcal E}\cap\dom(\cL_2)$ is dense in $\dom(\cL_2)$ with respect to the graph norm of $\cL_2$.
\end{lemma}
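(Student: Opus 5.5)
We prove density by regularizing a given element of $\dom(\cL_2)$ with two operators that both commute with $\cL_2$: a resolvent, which pushes vectors into $\dom(\cL_2)$ and hence into $\dom(\mathcal E)$, and a modular Gaussian smoothing, which produces analyticity for $\Delta_\phi$. Since $\mathfrak A_{\mathcal E}$ is a form core for $\mathcal E$ (\cite{Wir22}), we may start from $\mathfrak A_{\mathcal E}$ and only need to upgrade form convergence to graph-norm convergence.

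Fix $a\in\dom(\cL_2)$ and $\epsilon>0$. Put $b=(1+\cL_2)a\in L^2(\M)$. Choose $c\in\M\phi^{1/2}$ with $\|b-c\|_2<\epsilon$, which is possible since $\M\phi^{1/2}$ is dense. Then set
\[
a'=(1+\cL_2)^{-1}c .
\]
Because $(1+\cL_2)^{-1}$ is an isometry from $L^2$ onto $\dom(\cL_2)$ with the graph norm (up to equivalent norms), $a'$ is within $O(\epsilon)$ of $a$ in graph norm.

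We next check that $a'\in\M\phi^{1/2}$. Write $c=x\phi^{1/2}$. Then
\[
(1+\cL_2)^{-1}c=\int_0^\infty e^{-t}\Phi_t^{(2)}(x\phi^{1/2})\,dt=\Big(\int_0^\infty e^{-t}\Phi_t(x)\,dt\Big)\phi^{1/2},
\]
where the integral $\int_0^\infty e^{-t}\Phi_t(x)\,dt$ is a weak$^\ast$ integral and lies in $\M$.

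Now smooth in the modular direction. Define
\[
a''=\sqrt{n/\pi}\int_{\mathbb R}e^{-ns^2}\Delta_\phi^{is}a'\,ds .
\]
Since $\cL_2$ commutes strongly with $\Delta_\phi^{is}$, the operator $\Delta_\phi^{is}$ is a graph-norm isometry of $\dom(\cL_2)$ and is strongly continuous there. Hence $a''\to a'$ in graph norm as $n\to\infty$, and $a''\in\dom(\cL_2)$.

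It remains to show that $a''\in\mathfrak A_{\mathcal E}$. Writing $a'=y\phi^{1/2}$, we have
\[
a''=y_n\phi^{1/2},\qquad y_n=\sqrt{n/\pi}\int e^{-ns^2}\sigma^\phi_s(y)\,ds\in\M_\infty .
\]
Then $\Delta_\phi^z a''=\sigma^\phi_{-iz}(y_n)\phi^{1/2}\in\M\phi^{1/2}$ for all $z\in\mathbb C$, using the standard Gaussian analytic extension of $\sigma^\phi$. To see that $\Delta_\phi^z a''\in\dom(\mathcal E)$, shift the contour:
\[
\Delta_\phi^z a''=\sqrt{n/\pi}\int e^{-n(s+iz)^2}\Delta_\phi^{is}a'\,ds .
\]
This is an absolutely convergent Bochner integral in the graph norm of $\cL_2$, since $a'\in\dom(\cL_2)$ and the weight $|e^{-n(s+iz)^2}|$ is integrable. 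Therefore $\Delta_\phi^z a''\in\dom(\cL_2)\subseteq\dom(\mathcal E)$, and so $a''\in\mathfrak A_{\mathcal E}\cap\dom(\cL_2)$.

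The main obstacle is justifying the contour-shift identity $\Delta_\phi^z a''=\sqrt{n/\pi}\int e^{-n(s+iz)^2}\Delta_\phi^{is}a'\,ds$. The plan is to verify it on spectral subspaces of $\log\Delta_\phi$, where both sides are explicit Gaussian integrals, or else to pair both sides against analytic vectors for $\Delta_\phi$ and use that both sides are entire in $z$ and agree for real $z$. In either version, the strong commutation of $\cL_2$ with $\Delta_\phi$ is what lets this $L^2$ identity pass to the graph norm.
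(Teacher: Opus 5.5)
Your argument is correct, and it takes a different route from the paper. The paper's proof is a one-line appeal to an abstract core criterion, \cite[Lemma 3.7]{Wir22}. It uses two facts: $\mathfrak A_{\mathcal E}$ is dense in $\M\phi^{1/2}$, hence in $L^2(\M)$, which is a nontrivial result imported from \cite{Wir22}; and $\mathfrak A_{\mathcal E}$ is invariant under $(\Phi_t^{(2)})_{t\geq 0}$.

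You instead build the approximants by hand, in two steps.
\begin{itemize}
\item The resolvent step plays the role of the core theorem. It needs only the trivial density of $\M\phi^{1/2}$, together with the fact that $(1+\cL_2)^{-1}$ is the $L^2$-implementation of the ucp map $(1+\cL)^{-1}$. That map therefore preserves $\M\phi^{1/2}$.
\item The Gaussian smoothing along $\Delta_\phi^{is}$ then supplies the analyticity. In the paper this is hidden inside the density of $\mathfrak A_{\mathcal E}$ taken from \cite{Wir22}.
\end{itemize}
The paper's route buys brevity. Yours is self-contained and essentially reproves the density of $\mathfrak A_{\mathcal E}$ along the way. It also gives slightly more than required: the approximants are $y_n\phi^{1/2}$ with $y_n$ $\sigma^\phi$-analytic, and all their $\Delta_\phi^z$-images lie in $\M\phi^{1/2}\cap\dom(\cL_2)$, not merely in $\dom(\mathcal E)$.

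Two small remarks.
\begin{itemize}
\item Your opening sentence about $\mathfrak A_{\mathcal E}$ being a form core is never used and can be dropped.
\item The step you flag as the main obstacle is routine. By spectral calculus, $a''=e^{-(\log\Delta_\phi)^2/4n}a'$, so $\Delta_\phi^z a''=f(\log\Delta_\phi)a'$ with $f(\lambda)=e^{z\lambda-\lambda^2/4n}$ bounded. A bounded Borel function of $\log\Delta_\phi$ commutes strongly with $\cL_2$ and hence maps $\dom(\cL_2)$ into itself. No contour shift or graph-norm Bochner integral is needed.
\end{itemize}
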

\begin{proof}
    Since $\mathfrak A_{\mathcal E}$ is  dense in $\mathfrak A_{\phi}$, and is invariant under $(\Phi_t^{(2)})$, this follows from \cite[Lemma 3.7]{Wir22}.
\end{proof}

\begin{lemma}\label{lem:dom_L1_gen}
    Let $(\Phi_t)_{t\geq 0}$ be a GNS-symmetric quantum Markov semigroup with associated quantum Dirichlet form $\mathcal E$ and $L^1$-generator $\cL_1$. If $a\in \dom(\mathcal E)$ and there exists $b\in L^1(\M)$ such that
    \begin{equation*}
        \mathcal E(a,\phi^{1/4}x\phi^{1/4})=\tr(b^\ast x)
    \end{equation*}
    for all $x\in \M$ with $\phi^{1/4}x\phi^{1/4}\in \dom(\mathcal E)$, then $\phi^{1/4}a\phi^{1/4}\in \dom(\cL_1)$ and $\cL_1(\phi^{1/4}a\phi^{1/4})=b$.
\end{lemma}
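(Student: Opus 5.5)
We identify $\cL_1$ with the adjoint of the weak$^\ast$ generator $\cL$ of $(\Phi_t)_{t\geq 0}$ on $\M$. Since $(\Phi_t^{(1)})$ is the predual semigroup, $\cL_1$ is the preadjoint of $\cL$. Hence $c\in\dom(\cL_1)$ with $\cL_1(c)=b$ if and only if $\tr(b^\ast x)=\tr(c^\ast \cL(x))$ for all $x$ in a weak$^\ast$ core of $\cL$. (Here $\tr$ is the pairing between $L^1(\M)$ and $\M$, conjugate-linear in the first slot.)

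So we have to check, for $c=\phi^{1/4}a\phi^{1/4}$ and $x$ in a suitable core $\mathcal C\subseteq\dom(\cL)$, that
\begin{equation*}
\tr(c^\ast\cL(x))=\mathcal E(a,\phi^{1/4}x\phi^{1/4}).
\end{equation*}
Combined with the hypothesis, this gives $\tr(b^\ast x)=\tr(c^\ast\cL(x))$ on $\mathcal C$. Because $\cL$ is weak$^\ast$ closed and $\mathcal C$ is a core, the identity extends to all of $\dom(\cL)$, and the conclusion follows.

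The key link between the $L^\infty$, $L^2$ and $L^1$ pictures is the symmetric embedding $x\mapsto\phi^{1/4}x\phi^{1/4}$. By GNS-symmetry, $\Phi_t$ commutes with $\sigma^\phi$. Therefore $\Phi_t^{(2)}(\phi^{1/4}x\phi^{1/4})=\phi^{1/4}\Phi_t(x)\phi^{1/4}$, which follows from $\Phi^{(2)}(y\phi^{1/2})=\Phi(y)\phi^{1/2}$ applied to $y=\sigma_{-i/4}^\phi(x)$ for analytic $x$, and then by density for all $x$. Differentiating at $t=0$ shows that if $x\in\dom(\cL)$, then $\phi^{1/4}x\phi^{1/4}\in\dom(\cL_2)$ and $\cL_2(\phi^{1/4}x\phi^{1/4})=\phi^{1/4}\cL(x)\phi^{1/4}$.

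The differentiation step is where I expect the main obstacle. Weak$^\ast$ convergence of $\frac1t(x-\Phi_t(x))$ gives only weak convergence in $L^2$. This suffices, because the generator of a $C_0$-semigroup is also the weak generator. Alternatively, one can use the resolvent formula $(\cL+\lambda)^{-1}=\int_0^\infty e^{-\lambda t}\Phi_t\,dt$, which intertwines with $(\cL_2+\lambda)^{-1}$ under the embedding, and take as core $\mathcal C=(\cL+\lambda)^{-1}(\M)=\dom(\cL)$.

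For $x\in\dom(\cL)$, with $\phi^{1/4}x\phi^{1/4}\in\dom(\cL_2)\subseteq\dom(\mathcal E)$, we then compute
\begin{equation*}
\mathcal E(a,\phi^{1/4}x\phi^{1/4})=\langle a,\cL_2(\phi^{1/4}x\phi^{1/4})\rangle=\langle a,\phi^{1/4}\cL(x)\phi^{1/4}\rangle=\tr\big((\phi^{1/4}a\phi^{1/4})^\ast\cL(x)\big).
\end{equation*}
The last equality is the standard identity $\langle \xi,\phi^{1/4}y\phi^{1/4}\rangle=\tr((\phi^{1/4}\xi\phi^{1/4})^\ast y)$ in Haagerup $L^p$, first checked for $\xi$ in the Tomita algebra and then extended by continuity.

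This identity holds for every $x\in\dom(\cL)$, and such $x$ are admissible in the hypothesis. Hence $\tr(b^\ast x)=\tr(c^\ast\cL(x))$ on all of $\dom(\cL)$, so $c\in\dom(\cL^\ast)=\dom(\cL_1)$ with $\cL_1 c=b$. If needed, Lemma~\ref{lem:analytic_core_generator} supplies the analytic elements used in the density arguments above.
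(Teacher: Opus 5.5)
Your proposal is correct and follows essentially the same route as the paper. The paper tests the hypothesis against $x=(\cL+1)^{-1}(y)$ for $y\in\M$, which is all of $\dom(\cL)$, and then uses the same three ingredients you do: the intertwining $\phi^{1/4}(\cL+1)^{-1}(y)\phi^{1/4}=(\cL_2+1)^{-1}(\phi^{1/4}y\phi^{1/4})$, the form identity $\mathcal E(a,\cdot)=\langle a,\cL_2\,\cdot\rangle$ on $\dom(\cL_2)$, and $L^2$--$L^1$ trace duality. The only difference is in the last step: the paper concludes by resolvent duality, $\phi^{1/4}a\phi^{1/4}=(\cL_1+1)^{-1}(b+\phi^{1/4}a\phi^{1/4})$, whereas you use the pre-adjoint characterization of $\cL_1$. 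Your "$\dom(\cL^\ast)$" should be read as the domain of that pre-adjoint on $L^1(\M)$, since $\cL$ is only weak$^\ast$-densely defined.
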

\begin{proof}
    If $y\in \M$, then $(\cL+1)^{-1}(y)\in \dom(\cL)$ and $\phi^{1/4}(\cL+1)^{-1}(y)\phi^{1/4}=(\cL_2+1)^{-1}(\phi^{1/4}x \phi^{1/4})\in \dom(\cL_2)\subset \dom(\mathcal E)$. By assumption,
    \begin{equation*}
        \mathcal E(a,\phi^{1/4}(\cL+1)^{-1}(y)\phi^{1/4})=\tr(b^\ast (\cL+1)^{-1}(y))=\tr((\cL_1+1)^{-1}(b)^\ast y).
    \end{equation*}
    On the other hand,
    \begin{align*}
        \mathcal E(a,(\cL_2+1)^{-1}(\phi^{1/4}y\phi^{1/4}))
        &=\langle a,(1-(\cL_2+1)^{-1})(\phi^{1/4}y\phi^{1/4})\rangle_2\\
        &=\tr((1-(\cL_1+1)^{-1})(\phi^{1/4}a\phi^{1/4})^\ast y)
    \end{align*}
    Thus $\phi^{1/4}a\phi^{1/4}=(\cL_1+1)^{-1}(b+\phi^{1/4}a\phi^{1/4})\in \dom(\cL_1)$ and $\cL_1(\phi^{1/4}a\phi^{1/4})=b$.
\end{proof}

The following is a generalization of \cite[Theorem I.4.2.2]{BH91}.

\begin{proposition}\label{prop:char_Gamma_regular}
    Let $(\Phi_t)$ be a GNS-symmetric QMS on $M$ with $L^p$-generator $\cL_p$ for $p\in \{1,2\}$. The quantum Dirichlet form $\mathcal E$ associated with $(\Phi_t)$ is $\Gamma$-regular if and only if $a,b\in \dom(\cL_2)$ implies $ab\in\dom(\cL_1)$, and in this case
    \begin{equation*}
        \cL_1(ab)=\cL_2(a)b+a\cL_2(b)-2\Gamma(a^\ast,b),
    \end{equation*}
    where the products are taken in the Haagerup $L^2$ space.
\end{proposition}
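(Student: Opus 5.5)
We prove both implications by working on the level of quadratic forms and transferring statements between $L^2$ and $L^1$ with Lemma~\ref{lem:dom_L1_gen}. Here $a,b\in\dom(\cL_2)$ are multiplied in the Haagerup sense, so $ab\in L^1(\M)$. The guiding identity is the GNS-symmetric analogue of the Leibniz formula for the carré du champ. For $a,b,c$ in the Tomita algebra $\mathfrak A_{\mathcal E}$, the definition of $\Gamma$ can be rewritten (after using the $\Delta_\phi^{z}$-invariance of $\mathcal E$ to move the $\phi^{1/4}$-factors) as
\[
\mathcal E(a^\ast\!,\, b\,c\,\cdot)+\mathcal E(\cdot)\text{-terms}\;=\;2\,\Gamma(a^\ast,b)(c)+\dots,
\]
which is precisely the statement that $\tr\big((ab)\cL(c)\big)=\tr\big((\cL_2(a)b+a\cL_2(b)-2\Gamma(a^\ast,b))c\big)$ when everything is regular. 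So the plan is to establish this identity on a core and then pass to the limit.

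\textbf{($\Rightarrow$).} Assume $\mathcal E$ is $\Gamma$-regular, so $\Gamma(a^\ast,b)\in\M_\ast=L^1(\M)$. First take $a,b\in\mathfrak A_{\mathcal E}\cap\dom(\cL_2)$. For test elements $x\in\M$ with $\phi^{1/4}x\phi^{1/4}\in\dom(\mathcal E)$, and in fact first for $x$ with $\phi^{1/4}x\phi^{1/4}\in\mathfrak A_{\mathcal E}$, expand $\mathcal E(ab,\phi^{1/4}x\phi^{1/4})$ via the definition of $\Gamma$. This gives
\[
\tr\big((\cL_2(a)b+a\cL_2(b)-2\Gamma(a^\ast,b))^\ast x\big).
\]
Extend to all admissible $x$ by the form-core property of $\mathfrak A_{\mathcal E}$ together with the bound $\norm{\Gamma(a,b)}\le\mathcal E(a)^{1/2}\mathcal E(b)^{1/2}$. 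Lemma~\ref{lem:dom_L1_gen} then yields $ab\in\dom(\cL_1)$ together with the claimed formula.

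For general $a,b\in\dom(\cL_2)$, use Lemma~\ref{lem:analytic_core_generator} to choose $a_n\to a$ and $b_n\to b$ in graph norm inside $\mathfrak A_{\mathcal E}\cap\dom(\cL_2)$. The right-hand sides converge in $L^1$: Hölder handles $\cL_2(a_n)b_n$ and $a_n\cL_2(b_n)$, and the bound on $\Gamma$ handles the carré du champ term. Since $a_nb_n\to ab$ in $L^1$ and $\cL_1$ is closed, the conclusion follows.

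\textbf{($\Leftarrow$).} Assume $ab\in\dom(\cL_1)$ for all $a,b\in\dom(\cL_2)$. For $a,b\in\mathfrak A_{\mathcal E}\cap\dom(\cL_2)$, set
\[
\gamma:=\tfrac12\big(\cL_2(a)b+a\cL_2(b)-\cL_1(ab)\big)\in L^1(\M),
\]
and check by the same computation, now read backwards, that $\gamma$ agrees with $\Gamma(a^\ast,b)$ on $A_{\mathcal E}$. This shows $\Gamma(a^\ast,b)$ is normal for such $a,b$. To reach all of $\mathfrak A_{\mathcal E}$, first use resolvents: $a_\epsilon=(1+\epsilon\cL_2)^{-1}a$ lies in $\mathfrak A_{\mathcal E}\cap\dom(\cL_2)$, because the resolvent commutes with $\Delta_\phi^z$. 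Then use the continuity estimate
\[
\norm{\Gamma(a,b)-\Gamma(a',b')}\le \mathcal E(a-a')^{1/2}\mathcal E(b)^{1/2}+\mathcal E(a')^{1/2}\mathcal E(b-b')^{1/2},
\]
which holds in the norm of $A_{\mathcal E}^\ast$. Since $\M_\ast$ is norm closed in $A_{\mathcal E}^\ast$ (restriction is isometric on normal functionals by weak$^\ast$ density of $A_{\mathcal E}$ and Kaplansky), the limit $\Gamma(a^\ast,b)$ is normal.

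\textbf{Main obstacle.} The hard step is the algebraic identity linking $\mathcal E(ab,\phi^{1/4}x\phi^{1/4})$ to the three-term expression. The difficulty is bookkeeping of the modular factors: $\Gamma$ is defined through $J\pi_r(c)^\ast J$, $c^\flat$ and $a^\sharp$, whereas the Haagerup product pairs the symmetric embedding $\phi^{1/4}x\phi^{1/4}$. I would resolve this by working with Tomita-algebra elements, writing each product as $\pi_l(\cdot)$ or $\pi_r(\cdot)$ acting on analytic vectors, and using the $\Delta_\phi^{is}$-invariance of $\mathcal E$, analytically continued on $\mathfrak A_{\mathcal E}$, to shift the $\phi^{1/4}$ weights to the correct side.
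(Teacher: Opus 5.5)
Your proposal follows the paper's proof in both directions. The ingredients match: the weak identity on the Tomita algebra (the paper takes the modular bookkeeping from \cite[Remark 7.1]{Wir22}), Lemma~\ref{lem:dom_L1_gen} to land in $\dom(\cL_1)$, and graph-norm approximation via Lemma~\ref{lem:analytic_core_generator} with closedness of $\cL_1$. For the converse you likewise use $\Gamma(a,b)=\frac12\big(\cL_2(a)^\ast b+a^\ast\cL_2(b)-\cL_1(a^\ast b)\big)$ followed by a form-norm approximation. One notational point: $ab$ lies in $L^1(\M)$, so the form must be evaluated at an $L^2$-element. Write $a=\phi^{1/4}x\phi^{1/4}$ and $b=\phi^{1/4}y\phi^{1/4}$, and take $c=x\phi^{1/2}y\in\mathfrak A_{\mathcal E}$, so that $\phi^{1/4}c\phi^{1/4}=ab$. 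What you need is $\mathcal E(c,\phi^{1/4}z\phi^{1/4})=\tr(h^\ast z)$ with $h=\cL_2(a)b+a\cL_2(b)-2\Gamma(a^\ast,b)$; this is the form in which Lemma~\ref{lem:dom_L1_gen} applies.

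The step that does not work as justified is the extension from test elements $z$ (your $x$) with $\phi^{1/4}z\phi^{1/4}\in\mathfrak A_{\mathcal E}$ to all $z\in\M$ with $\phi^{1/4}z\phi^{1/4}\in\dom(\mathcal E)$. This extension is genuinely needed, because the proof of Lemma~\ref{lem:dom_L1_gen} tests against $z=(\cL+1)^{-1}(y)$, which is not analytic in general. The form-core property gives $z_n$ with $\phi^{1/4}z_n\phi^{1/4}\to\phi^{1/4}z\phi^{1/4}$ in form norm, and that handles the left-hand side. The right-hand side $\tr(h^\ast z_n)$, however, pairs a fixed $h\in L^1(\M)$ with $z_n\in\M$. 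Convergence of $\phi^{1/4}z_n\phi^{1/4}$ in $L^2$ controls this only for $h\in\phi^{1/4}L^2(\M)\phi^{1/4}$; to reach all of $L^1(\M)$ you need $\sup_n\norm{z_n}<\infty$ and $z_n\to z$ weak$^\ast$. The bound on $\Gamma$ does not help here, since $a$ and $b$ are fixed. What is missing is an approximation that is simultaneously bounded in $\M$, strongly convergent, and convergent in form norm; the paper takes this from \cite[Theorem 6.3]{Wir22}. For the test elements $z=(\cL+1)^{-1}(y)$ you could instead apply the resolvent to bounded Kaplansky approximants of $y$ from $\pi_l(\mathfrak A_{\mathcal E})$, using that the resolvent preserves $\mathfrak A_{\mathcal E}$.

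In the converse, your resolvent approximants $(1+\epsilon\cL_2)^{-1}a\in\mathfrak A_{\mathcal E}\cap\dom(\cL_2)$ and the norm-closedness of $\M_\ast$ in $A_{\mathcal E}^\ast$ identify the limit directly with $\Gamma(a^\ast,b)$. This is a little cleaner than the paper, which identifies its $L^1$-limit $h_{a,b}$ with $\Gamma(a,b)$ through $\bar\delta$. In the paper's framework, $\bar\delta$ is only available once $\Gamma$-regularity is known.
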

\begin{proof}
    Assume that $\mathcal E$ is $\Gamma$-regular. We first consider the case $a,b\in\mathfrak A_{\mathcal E}\cap\dom(\mathcal L_2)$. In particular, there are $x,y\in\M$ such that $a=\phi^{1/4}x\phi^{1/4}$, $b=\phi^{1/4}y\phi^{1/4}$. If $x\in\M$ with $\phi^{1/4}x\phi^{1/4}\in\mathfrak A_{\mathcal E}$, then it follows from \cite[Remark 7.1]{Wir22} that
    \begin{align*}
        \mathcal E((x\phi^{1/2}y)^\ast,\phi^{1/4}z\phi^{1/4})
        &=\mathcal E(\phi^{1/4}z^\ast \phi^{1/4},x\phi^{1/2}y)\\
        &=\mathcal E(a^\ast,bz)+\mathcal E(a^\ast z^\ast, b)-2\tr(\Gamma(a^\ast,b)y)\\
        &=\tr((\mathcal L_2(a)b+a\mathcal L_2(b)-2\Gamma(a^\ast b))z)
    \end{align*}
    for all $z\in\M$ with $\phi^{1/4}z\phi^{1/4}\in \mathfrak A_{\mathcal E}$. If we only have $z\in\M$ with $\phi^{1/4}z\phi^{1/4}\in \dom(\mathcal E)$, then it follows analogous to \cite[Theorem 6.3]{Wir22} that there exists a bounded sequence $(d_n)$ in $\M$ with $\phi^{1/4}d_n\phi^{1/4}\in\mathfrak A_{\mathcal E}$ such that $d_n\to z$ strongly and $\phi^{1/4}d_n\phi^{1/4}\to \phi^{1/4}z\phi^{1/4}$ with respect to the form norm of $\dom(\mathcal E)$. Hence $\mathcal E((x\phi^{1/2}y)^\ast,\phi^{1/4}z\phi^{1/4})=\tr((\mathcal L_2(a)b+a\mathcal L_2(b)-2\Gamma(a^\ast b))z)$ remains valid. Now we deduce from Lemma \ref{lem:dom_L1_gen} that $ab=\phi^{1/4}x\phi^{1/2}y\phi^{1/4}\in \dom(\cL_1)$ and $\cL_1(ab)=\cL_2(a)b+a\cL_2(b)-2\Gamma(a^\ast,b)$.

    In the general case $a,b\in \dom(\cL_2)$, we can choose sequences $(a_n)$, $(b_n)$ in $\mathfrak A_{\mathcal E}\cap \dom(\cL_2)$ such that $a_n\to a$, $b_n\to b$ in the graph norm of $\cL_2$ by Lemma \ref{lem:analytic_core_generator}. In particular, $a_n\to a$ and $b_n\to b$ in the form norm of $\mathcal E$, which implies $\Gamma(a_n^\ast,b_n)\to \Gamma(a^\ast,b)$ in $L^1(\M)$. Since $\cL_1$ is closed, we conclude $ab\in \dom(\cL_1)$ and $\cL_1(ab)=\cL_2(a)b+a\cL_2(b)-2\Gamma(a^\ast,b)$.

    Now assume conversely that $a,b\in\dom(\cL_2)$ implies $ab\in\dom(\cL_1)$. Let $a,b\in \mathfrak A_{\mathcal E}\cap\dom(\cL_2)$ and $y\in A_{\mathcal E}$. We have
    \begin{align*}
        \mathcal E(\phi^{1/2}y^\ast,\pi_l(a)^\ast b)
        &=\lim_{t\to 0}\frac 1 t\tr(y\phi^{1/2}(\pi_l(a)^\ast b-P_t^{(2)}(\pi_l(a)^\ast b)))\\
        &=\lim_{t\to 0}\frac1 t \tr(y(a^\ast b-P_t^{(1)}(a^\ast b)))\\
        &=\tr(y\cL_1(a^\ast b)).
    \end{align*}
    It follows from \cite[Remark 7.1]{Wir22} that 
    \begin{align*}
        \tr(\Gamma(a,b)y)&=\frac 1 2(\mathcal E(a,by)+\mathcal E(ay^\ast,b)-\mathcal E(\phi^{1/2}y^\ast,\pi_l(a)^\ast b))\\
        &=\frac 1 2\tr((\cL_2(a)^\ast b+a^\ast\cL_2(b)-\cL_1(a^\ast b))y).
    \end{align*}
    Therefore $\Gamma(a,b)=\frac1  2(\cL_2(a)^\ast b+a^\ast\cL_2(b)-\cL_1(a^\ast b))\in L^1(\M)$. If $a,b\in \dom(\mathcal E)$, then by Lemma \ref{lem:analytic_core_generator} and the density of $\dom(\cL_2)$ in $\dom(\mathcal E)$, there exist sequences $(a_n)$, $(b_n)$ in $\mathfrak A_{\mathcal E}\cap \dom(\cL_2)$ such that $a_n\to a$, $b_n\to b$ in the form norm. Since
    \begin{align*}
        \norm{\Gamma(a_m-a_n,b_m-b_n)}_1&\leq \norm{\Gamma(a_m-a_n,b_m)}_1+\norm{\Gamma(a_n,b_m-b_n)}_1\\
        &\leq \mathcal E(a_m-a_n)^{1/2}\mathcal E(b_m)^{1/2}+\mathcal E(a_n)^{1/2}\mathcal E(b_m-b_n)^{1/2}
    \end{align*}
    there exists $h_{a,b}\in L^1(\M)$ such that $\Gamma(a_n,b_n)\to h_{a,b}$ in $L^1(\M)$. Thus
    \begin{align*}
        \langle \bar\delta(a),\bar\delta(b)y\rangle_\H=\lim_{n\to\infty}\langle \bar\delta(a_n),\bar\delta(b_n)y\rangle=\lim_{n\to\infty}\tr(\Gamma(a_n,b_n)y)=\tr(h_{a,b}y)
    \end{align*}
    for all $y\in A_{\mathcal E}$, which implies $\Gamma(a,b)=h_{a,b}\in L^1(\M)$.
\end{proof}

\begin{lemma}
    If $(\Phi_t)_{t\geq0}$ is a $\Gamma$-regular GNS-symmetric quantum Markov semigroup on $\M$ with $L^2$-generator $\cL_2$ and $a,b\in\dom(\cL_2)$, then
    \begin{equation*}
        \norm{\cL_2(a)b+a\cL_2(b)-\cL_1(ab)}_1\leq 2\norm{\cL_2^{1/2}(a)}_2\norm{\cL_2^{1/2}(b)}_2.
    \end{equation*}
\end{lemma}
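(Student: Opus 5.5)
By Proposition \ref{prop:char_Gamma_regular}, applied with $a^\ast$ in place of $a$ (or directly, depending on the convention for the first slot of $\Gamma$), for $a,b\in\dom(\cL_2)$ we have $ab\in\dom(\cL_1)$ and
\begin{equation*}
    \cL_2(a)b+a\cL_2(b)-\cL_1(ab)=2\Gamma(a^\ast,b).
\end{equation*}
The claim therefore reduces to the bound
\begin{equation*}
    \norm{\Gamma(a^\ast,b)}_1\leq \norm{\cL_2^{1/2}(a)}_2\norm{\cL_2^{1/2}(b)}_2=\mathcal E(a)^{1/2}\mathcal E(b)^{1/2}.
\end{equation*}
Here we use that $\mathcal E(a^\ast)=\mathcal E(a)$. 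This holds because $\cL_2$ commutes with $J$, a consequence of GNS-symmetry and the fact that $\Phi^{(2)}_t$ preserves the positive cone.

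To prove the bound I would use the derivation picture. By $\Gamma$-regularity, the results of \cite{Wir22} recalled in Subsection \ref{sec:modular derivation} give a Tomita correspondence $\H$ and a closable $\phi$-modular derivation $\delta$ with $\cL_2=\delta^\ast\bar\delta$ and
\begin{equation*}
    \langle\delta(c),\delta(d)y\rangle=\Gamma(c,d)(y),\qquad c,d\in\mathfrak A_{\mathcal E},\ y\in\M.
\end{equation*}
For such $c,d$ and $y\in\M$, Cauchy--Schwarz gives
\begin{equation*}
    \abs{\Gamma(c,d)(y)}\leq\norm{\delta(c)}\,\norm{\delta(d)y}\leq\norm{y}\,\norm{\delta(c)}\,\norm{\delta(d)},
\end{equation*}
since the right action is by contractions. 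Taking the supremum over the unit ball of $\M$, and identifying $L^1(\M)$ with $\M_\ast$ isometrically, yields $\norm{\Gamma(c,d)}_1\leq\mathcal E(c)^{1/2}\mathcal E(d)^{1/2}$. This is just the estimate $\norm{\Gamma(a,b)}_{A_{\mathcal E}^\ast}\leq\mathcal E(a)^{1/2}\mathcal E(b)^{1/2}$ already quoted in the preliminaries, now with the norm taken over $\M$ because $\Gamma(c,d)$ is normal.

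Next I would extend to general $a,b\in\dom(\cL_2)$ by approximation, as in the proof of Proposition \ref{prop:char_Gamma_regular}. By Lemma \ref{lem:analytic_core_generator}, choose $a_n,b_n\in\mathfrak A_{\mathcal E}\cap\dom(\cL_2)$ converging to $a,b$ in the graph norm of $\cL_2$, hence also in the form norm. Since $\mathfrak A_{\mathcal E}$ is $J$-invariant, $a_n^\ast\to a^\ast$ in the form norm as well. The bilinear estimate above makes $\Gamma(a_n^\ast,b_n)$ Cauchy in $L^1(\M)$, with limit $\Gamma(a^\ast,b)$ as identified in that proof. Lower semicontinuity of the norm together with convergence of $\mathcal E(a_n)$ and $\mathcal E(b_n)$ then gives the bound in the limit. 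Alternatively, one can pass to the limit directly in the identity $\cL_2(a_n)b_n+a_n\cL_2(b_n)-\cL_1(a_nb_n)=2\Gamma(a_n^\ast,b_n)$: the left-hand side converges in $L^1(\M)$ by Hölder's inequality for the Haagerup products and closedness of $\cL_1$.

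The only delicate point I expect is bookkeeping: matching the sesquilinear convention in $\Gamma(a^\ast,b)$ to the inner product $\langle\delta(a^\ast),\delta(b)y\rangle$, and checking that $\mathcal E(a^\ast)=\mathcal E(a)$, which follows from $\delta\circ J=\mathcal J\circ\delta$ and $\mathcal J$ being isometric. The analysis itself is routine once Proposition \ref{prop:char_Gamma_regular} is available.
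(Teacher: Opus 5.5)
Your proposal is correct and follows the paper's proof. You rewrite the left side as $2\Gamma(a^\ast,b)$ via Proposition~\ref{prop:char_Gamma_regular}, take the $L^1$-norm as a supremum over the unit ball of $\M$, represent $\Gamma(a^\ast,b)(z)=\langle\delta(a^\ast),\delta(b)z\rangle$, and apply Cauchy--Schwarz with $\norm{\delta(a^\ast)}=\norm{\delta(a)}$. The only difference is that you spell out the approximation from $\mathfrak A_{\mathcal E}$ to general $a,b\in\dom(\cL_2)$, which the paper leaves implicit by working with $\bar\delta$ directly and relying on the identification $\Gamma(a,b)=h_{a,b}$ made in the proof of that proposition.
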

\begin{proof}
    By the previous lemma and the definition of the carré du champ,
    \begin{align*}
        \norm{\cL_2(a)b+a\cL_2(b)-\cL_1(ab)}_1&=2\sup_{\norm{z}\leq 1}\abs{\Gamma(a^\ast b)(z)}\\
        &=2\sup_{\abs{z}\leq 1}\abs{\langle\delta(a^\ast),\delta(b)z\rangle}\\
        &\leq 2\norm{\delta(a)}_{\H}\norm{\delta(b)}_{\H}\\
        &=2\norm{\cL_2^{1/2}(a)}_2\norm{\cL_2^{1/2}(b)}_2\qedhere
    \end{align*}
    as required.
\end{proof}

\begin{lemma}\label{lem:L2_L4_duality}
    If $b\in L^2(\M)$, then
    \begin{equation*}
        \norm{b}_2=\sup\{\abs{\tr(abc)}:a,c\in L^4(M),\,\norm{a}_4,\norm{c}_4\leq 1\}.
    \end{equation*}
\end{lemma}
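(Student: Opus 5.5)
We prove the two inequalities separately, working in the Haagerup $L^p$ spaces. These carry the trace functional $\tr$ on $L^1(\M)$ and satisfy the noncommutative Hölder inequality $\abs{\tr(xyz)}\leq\norm{x}_p\norm{y}_q\norm{z}_r$ whenever $1/p+1/q+1/r=1$.

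\emph{Upper bound.} Let $a,c\in L^4(\M)$ with $\norm{a}_4,\norm{c}_4\leq 1$. Hölder with exponents $(4,2,4)$ gives $\abs{\tr(abc)}\leq\norm{a}_4\norm{b}_2\norm{c}_4\leq\norm{b}_2$. Hence the supremum is at most $\norm{b}_2$.

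\emph{Lower bound.} We exhibit near-optimal (in fact optimal) $a,c$. Assume $b\neq 0$; otherwise there is nothing to prove. Take the polar decomposition $b=v\abs{b}$ in the Haagerup sense. Here $v\in\M$ is a partial isometry with $v^\ast v=s(\abs{b})$, and $\abs{b}\in L^2(\M)_+$. Put
\[
a=\norm{b}_2^{-1/2}\,\abs{b}^{1/2}v^\ast,\qquad c=\norm{b}_2^{-1/2}\,\abs{b}^{1/2}.
\]
Both lie in $L^4(\M)$, since $\abs{b}^{1/2}\in L^4(\M)$ and multiplying by an element of $\M$ preserves $L^4$.

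We check the norms. For $c$, $\norm{c}_4^4=\norm{b}_2^{-2}\tr(\abs{b}^2)=1$. For $a$, $\abs{a^\ast}^2=aa^\ast=\norm{b}_2^{-1}\abs{b}^{1/2}v^\ast v\abs{b}^{1/2}=\norm{b}_2^{-1}\abs{b}$, which uses that $v^\ast v$ is the support of $\abs{b}$. Therefore $\norm{a}_4=\norm{a^\ast}_4=1$.

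For the value of the functional, we use $v^\ast v\abs{b}=\abs{b}$ and the trace property $\tr(xy)=\tr(yx)$ for $x\in L^p$, $y\in L^{p'}$:
\[
\tr(abc)=\norm{b}_2^{-1}\tr\bigl(\abs{b}^{1/2}v^\ast v\abs{b}\abs{b}^{1/2}\bigr)=\norm{b}_2^{-1}\tr(\abs{b}^2)=\norm{b}_2.
\]
So the supremum is attained and equals $\norm{b}_2$.

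The only point needing care is the functional calculus in Haagerup $L^p$. One needs the polar decomposition, the identity $\abs{b}^{1/2}\in L^4$ with $\norm{\abs{b}^{1/2}}_4^4=\norm{b}_2^2$, and the trace property. All of these are standard facts about Haagerup $L^p$ spaces (see e.g. \cite{Hia21}), so we do not expect a genuine obstacle.
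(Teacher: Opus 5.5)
Your proposal is correct and follows the paper's proof essentially verbatim. You use Hölder with exponents $(4,2,4)$ for one inequality, and for the other the polar decomposition $b=v\abs{b}$ with the same choices $a=\norm{b}_2^{-1/2}\abs{b}^{1/2}v^\ast$ and $c=\norm{b}_2^{-1/2}\abs{b}^{1/2}$, which give $\abs{\tr(abc)}=\norm{b}_2^{-1}\tr(\abs{b}^2)=\norm{b}_2$ (the appeal to cyclicity of the trace is unnecessary here, since $abc=\norm{b}_2^{-1}\abs{b}^2$ directly).
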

\begin{proof}
    One inequality is a direct consequence of Hölder's inequality. To prove the reverse inequality, let $b=u\abs{b}$ be the polar decomposition of $b$ with $u\in c_\phi(\M)$ and $\abs{b}\in L^2(\M)$. We can assume $b\neq 0$. Let $a=\abs{b}^{1/2}u^\ast/\norm{b}_2^{1/2}$, $c=\abs{b}^{1/2}/\norm{b}_2^{1/2}$. We have $\norm{a}_4,\norm{c}_4\leq 1$ and
    \begin{equation*}
        \abs{\tr(abc)}=\frac 1{\norm{b}_2}\abs{\tr(\abs{b}^{1/2}u^\ast u\abs{b}\abs{b}^{1/2})}=\frac 1{\norm{b}_2}\tr(\abs{b}^2)=\norm{b}_2.\qedhere
    \end{equation*}
\end{proof}

For the following lemma, recall that if $\mathcal L_2$ is the $L^2$-generator of a GNS-symmetric quantum Markov semigroup, then $\mathcal L_2^{1/2}$ is also the $L^2$-generator of a GNS-symmetric quantum Markov semigroup as discussed in Subsection~\ref{subsec:QMS}.

\begin{lemma}\label{lem:prod_rule}
    If $\delta\colon \dom(\delta)\subset L^2(M)\to \H$ is a densely defined closable $\phi$-modular derivation, $a\in \dom(\bar\delta)\cap \M\phi^{1/2}$ and $b\in \dom(\bar\delta)\cap \phi^{1/2}\M$, then $\pi_l(a)b\in \dom(\bar\delta)$ and 
    \begin{equation*}
        \bar\delta(\pi_l(a)b)=\pi_l(a) \bar\delta(b)+\bar\delta(a)\cdot J\pi_r(b)^\ast J.
    \end{equation*}
\end{lemma}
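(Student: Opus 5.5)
The proof will be an approximation argument: approximate $a$ and $b$ by elements of $\dom(\delta)$, for which the product rule is part of the definition of a modular derivation, and then pass to the limit using the closedness of $\bar\delta$. A naive approximation in the graph norm of $\bar\delta$ is not enough, however. To control $\pi_l(a_n)b_n$ in $L^2(\M)$ we also need $\pi_l(a_n)$ to stay uniformly bounded in operator norm and to converge strongly. So the first task is a regularization with both properties.

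\textbf{Step 1: regularize with the semigroup.} Let $\mathcal E$ be the quantum Dirichlet form of $(\Phi_t)$, so $\cL_2=\delta^\ast\bar\delta$ and $\dom(\mathcal E)=\dom(\bar\delta)$ with $\mathcal E(\xi)=\norm{\bar\delta\xi}^2$. For $a=x\phi^{1/2}$ I would use the standard mollifier built from the resolvent and the modular group,
\[
a_\epsilon=\sqrt{\tfrac{1}{\epsilon\pi}}\int_{\mathbb R} e^{-s^2/\epsilon}\,\Delta_\phi^{is}\,\Phi^{(2)}_\epsilon(a)\,ds .
\]
This is the construction of \cite[Theorem 6.3]{Wir22}, already invoked in Proposition \ref{prop:char_Gamma_regular}. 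Then $a_\epsilon=x_\epsilon\phi^{1/2}$ with the following properties:
\begin{itemize}
\item $\norm{x_\epsilon}\le\norm{x}$, because $\Phi_\epsilon$ is unital completely positive and the Gaussian average of $\sigma^\phi_s$ is contractive;
\item $x_\epsilon\to x$ strongly$^\ast$;
\item $a_\epsilon\to a$ in the form norm, because $\Phi^{(2)}_\epsilon$ and $\Delta^{is}_\phi$ commute with $\cL_2$ and are contractive in form norm;
\item $a_\epsilon\in\mathfrak A_{\mathcal E}$.
\end{itemize}
Do the same for $b=\phi^{1/2}y$, using $J$: since $\bar\delta J=\mathcal J\bar\delta$, the element $Jb$ lies in $\dom(\bar\delta)\cap\M\phi^{1/2}$. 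This produces $b_\epsilon=\phi^{1/2}y_\epsilon$ with $\norm{y_\epsilon}\le\norm{y}$.

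\textbf{Step 2: the product rule on the core.} By \cite{Wir22}, $\bar\delta$ restricted to $\mathfrak A_{\mathcal E}$ is still a $\phi$-modular derivation, namely the closure-compatible one with the same $\cL_2$. We may therefore assume $\mathfrak A_{\mathcal E}\subseteq\dom(\bar\delta)$ with the product rule valid there, after possibly replacing $\delta$ by its extension to $\mathfrak A_{\mathcal E}$. The main obstacle is justifying this. If it cannot be quoted directly, I would instead approximate $a_\epsilon$ by elements of $\dom(\delta)$ in form norm. Here one uses that $\dom(\delta)$ is a form core together with a Kaplansky-type density for Tomita algebras, which keeps the left multipliers bounded. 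The product rule for $\delta$ is then transported through this approximation exactly as in Step 3. Either way we obtain, for these regularized elements,
\[
\bar\delta(\pi_l(a_\epsilon)b_\epsilon)=\pi_l(a_\epsilon)\bar\delta(b_\epsilon)+\bar\delta(a_\epsilon)\cdot J\pi_r(b_\epsilon)^\ast J .
\]

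\textbf{Step 3: pass to the limit.} Note that $\pi_l(a_\epsilon)=x_\epsilon$ and $J\pi_r(b_\epsilon)^\ast J$ acts on the right as $y_\epsilon$. The limits go as follows.
\begin{itemize}
\item $\pi_l(a_\epsilon)b_\epsilon=x_\epsilon\phi^{1/2}y_\epsilon\to x\phi^{1/2}y=\pi_l(a)b$ in $L^2$, since $x_\epsilon\to x$ strongly with uniform norm bounds and $b_\epsilon\to b$.
\item $x_\epsilon\bar\delta(b_\epsilon)\to x\bar\delta(b)$ in $\H$: write $x_\epsilon\bar\delta(b_\epsilon)-x\bar\delta(b)=x_\epsilon(\bar\delta(b_\epsilon)-\bar\delta(b))+(x_\epsilon-x)\bar\delta(b)$. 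The first term tends to zero by the uniform bound $\norm{x_\epsilon}\le\norm{x}$, the second by strong convergence of the normal left representation.
\item $\bar\delta(a_\epsilon)y_\epsilon\to\bar\delta(a)y$ in $\H$, by the same argument using the normal right action.
\end{itemize}
Since $\bar\delta$ is closed, $\pi_l(a)b\in\dom(\bar\delta)$ and the product rule holds, as claimed.
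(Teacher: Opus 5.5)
Your proposal follows the paper's proof: approximate $a$ and $b$ by elements of $\mathfrak A_{\mathcal E}$ via \cite[Theorem 6.3]{Wir22}, with form-norm convergence and uniformly bounded, strongly$^\ast$ convergent multipliers, then apply the product rule for $\bar\delta$ on $\mathfrak A_{\mathcal E}$, and conclude by closedness of $\bar\delta$. The step you flag as the main obstacle is exactly what the paper quotes as \cite[Proposition 4.6]{Wir24}; you need not replace $\delta$, since $\mathfrak A_{\mathcal E}\subseteq\dom(\mathcal E)=\dom(\bar\delta)$ holds automatically, so your Kaplansky-type fallback is unnecessary.
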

\begin{proof}
    Since $a$ is left-bounded and $b$ is right-bounded, we have $a=x\phi^{1/2}$ and $b=\phi^{1/2}y$ with $x=\pi_l(a)$ and $y=J\pi_r(b)^\ast J$. Let $\mathcal E$ be  the quantum Dirichlet form associated with $\bar\delta$. By \cite[Theorem 6.3]{Wir22} there exist sequences $(a_n)$, $(b_n)$ in $\mathfrak A_\mathcal E$ such that $a_n\to a$, $b_n\to b$ in $(\dom(\mathcal E),\norm{\cdot}_{\mathcal E})$ and $\pi_l(a_n)\to \pi_l(a)$, $\pi_r(b_n)\to \pi_r(b)$ in the strong operator topology.

    By \cite[Proposition 4.6]{Wir24}, one has
    \begin{equation*}
        \bar\delta(a_n b_n)=\pi_l(a_n)\bar\delta(b_n)+\bar\delta(a_n)\cdot J\pi_r(b_n)^\ast J.
    \end{equation*}
    Since $a_n b_n\to ab$ in $L^2(M)$ and the right side converges to $\pi_l(a)\bar\delta(b)+\bar\delta(a)\cdot J\pi_r(b)^\ast J$ in $\H $, the claim follows from the fact that $\bar\delta$ is closed.
\end{proof}

\begin{lemma}\label{lem:carre_du_champ_L2}
    If $\delta\colon \dom(\delta)\subset L^2(\M)\to \H $ is a densely defined closable modular derivation and $\mathcal L_2=\delta^\ast\bar\delta$, then
    \begin{align*}
        &\norm{x\mathcal L_2^{1/2}(\phi^{1/2}y)+\mathcal L_2^{1/2}(x\phi^{1/2})y-\mathcal L_2^{1/2}(x\phi^{1/2}y)}_2\\
        &\quad\leq 4\norm{x}^{1/2}\norm{y}^{1/2}\norm{\bar\delta(x\phi^{1/2})}^{1/2}\norm{\bar\delta(\phi^{1/2}y)}^{1/2}.
    \end{align*}
     for all $x,y\in \M$ with $x\phi^{1/2}$, $\phi^{1/2}y\in \dom(\bar\delta)$.
\end{lemma}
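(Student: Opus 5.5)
The plan is to reduce the $L^2$-estimate to an $L^1$-estimate for the Leibniz defect of the square-root semigroup, and then recover the $L^2$-norm by the $L^4$-duality of Lemma~\ref{lem:L2_L4_duality}. Write $a=x\phi^{1/2}$, $b=\phi^{1/2}y$, $D=\mathcal L_2^{1/2}$ and
\begin{equation*}
R(x,y)=xD(b)+D(a)y-D(ab).
\end{equation*}
By Subsection~\ref{subsec:QMS}, $D$ is the $L^2$-generator of a GNS-symmetric quantum Markov semigroup. By the last paragraph of Subsection~\ref{sec:modular derivation}, its Dirichlet form $\mathcal E^{(1/2)}$ is $\Gamma$-regular, so Proposition~\ref{prop:char_Gamma_regular} applies to it. It expresses $R$, up to the substitution of $D_1$ for $D$ in the last term, as $2\Gamma^{(1/2)}(a^\ast,b)$. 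The subsequent lemma then gives an $L^1$ bound of the form $2\norm{\mathcal L_2^{1/4}a}_2\norm{\mathcal L_2^{1/4}b}_2$.

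Before doing this, I will check that every term of $R$ makes sense under the hypotheses. Since $a\in\dom(\bar\delta)=\dom(D)$ and similarly for $b$, we have $D(a),D(b)\in L^2$. By Lemma~\ref{lem:prod_rule}, $ab=x\phi^{1/2}y\in\dom(\bar\delta)$, with $\bar\delta(ab)=x\bar\delta(b)+\bar\delta(a)y$. Then $D(ab)$ is defined and $\norm{D(ab)}_2\le\norm{x}\norm{\bar\delta b}+\norm{\bar\delta a}\norm{y}$.

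To pass from $L^1$ to $L^2$, I will use Lemma~\ref{lem:L2_L4_duality} and test $R$ against $c,d\in L^4(\M)$ with $\norm{c}_4,\norm d_4\le1$. I will try to write $\tr(cR d)$ as a carré du champ pairing in which the $L^4$ elements and the bounded elements $x,y$ are split symmetrically. For example, I would factor $x\phi^{1/2}=(x\phi^{1/4})\phi^{1/4}$ and absorb $\phi^{1/4}$-type factors into $c$ and $d$. Cauchy--Schwarz for $\Gamma$ should then produce a product of two square roots, of the shape $(\norm{x}\norm{\bar\delta b})^{1/2}(\norm y\norm{\bar\delta a})^{1/2}$. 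This is where the geometric-mean form of the right-hand side should come from.

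As a fallback, I will use the polar decomposition $\bar\delta=VD$, so that $D=V^\ast\bar\delta$. Combined with Lemma~\ref{lem:prod_rule}, this rewrites $R$ as a sum of commutators,
\begin{equation*}
R=[x,V^\ast]\,\bar\delta(b)+\bigl(V^\ast\bar\delta(a)\bigr)y-V^\ast\bigl(\bar\delta(a)y\bigr).
\end{equation*}
I would then look for a second, complementary bound on these commutators. Interpolating the trivial bound $\norm{x}\norm{\bar\delta b}+\norm{y}\norm{\bar\delta a}$ against that second bound would give the geometric mean. Rescaling $x\mapsto tx$, $y\mapsto y/t$ does not help, since both $R$ and the trivial bound are invariant under it.

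The main obstacle is producing the mixed factors $\norm{x}^{1/2}\norm{\bar\delta(\phi^{1/2}y)}^{1/2}$ and $\norm{y}^{1/2}\norm{\bar\delta(x\phi^{1/2})}^{1/2}$. The trivial estimate only gives the sum $\norm{x}\norm{\bar\delta b}+\norm y\norm{\bar\delta a}$, which does not imply the claim. So the crucial step is the $L^4$-pairing argument. That step will also likely require approximating $a$ and $b$ by elements of $\mathfrak A_{\mathcal E}$, as in the proof of Lemma~\ref{lem:prod_rule}, and then passing to the limit using the closedness of $D$.
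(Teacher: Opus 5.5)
\textbf{There is a genuine gap.} You have the right ingredients: $\Gamma$-regularity of $\mathcal E^{(1/2)}$, Proposition~\ref{prop:char_Gamma_regular} applied to $\mathcal L_2^{1/2}$, Lemma~\ref{lem:L2_L4_duality}, and a final approximation. But the step that actually produces the estimate is only announced (``I will try'', ``should then produce''), and your description of it points the wrong way.

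Cauchy--Schwarz does not create the mixed factors $\norm{x}^{1/2}\norm{\bar\delta(\phi^{1/2}y)}^{1/2}$. It decouples $x$ from $y$; the product is the same number, but the grouping matters for the mechanism. Set
\begin{equation*}
a_{x,y}=x^\ast\mathcal L_2^{1/2}(\phi^{1/2}y)+\mathcal L_2^{1/2}(x^\ast\phi^{1/2})y-\mathcal L_2^{1/2}(x^\ast\phi^{1/2}y),
\end{equation*}
so that your $R(x,y)=a_{x^\ast,y}$. Proposition~\ref{prop:char_Gamma_regular} gives $\phi^{1/4}a_{x,y}\phi^{1/4}=2\Gamma_{1/2}(\phi^{1/4}x\phi^{1/4},\phi^{1/4}y\phi^{1/4})$. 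Let $\delta_{1/2}$ be the derivation of the $\Gamma$-regular form $\mathcal E^{(1/2)}$. Then $\sum_{i,j}\tr(z_i^\ast\Gamma_{1/2}(\xi_i,\xi_j)z_j)=\norm{\sum_j\delta_{1/2}(\xi_j)z_j}^2\geq 0$, so the form
\begin{equation*}
s(x\otimes z_1,y\otimes z_2)=\tr(z_1^\ast\phi^{1/4}a_{x,y}\phi^{1/4}z_2)
\end{equation*}
on $\pi_l(\mathfrak A_{\mathcal E})\odot\M$ is positive semi-definite. Apply Cauchy--Schwarz for $s$, H\"older in the form $s(x\otimes z,x\otimes z)\leq\norm{a_{x,x}}_2\norm{\phi^{1/4}z}_4^2$, and Lemma~\ref{lem:L2_L4_duality}. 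This gives
\begin{equation*}
\norm{a_{x,y}}_2\leq\norm{a_{x,x}}_2^{1/2}\norm{a_{y,y}}_2^{1/2}.
\end{equation*}

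The diagonal terms are then controlled by exactly the trivial estimate you discarded. By Lemma~\ref{lem:prod_rule} and $\norm{\bar\delta(x^\ast\phi^{1/2})}=\norm{\bar\delta(\phi^{1/2}x)}$ (from $\bar\delta J=\mathcal J\bar\delta$), one has $\norm{a_{x,x}}_2\leq 4\norm{x}\norm{\bar\delta(\phi^{1/2}x)}$, and likewise for $y$. Applying this to $a_{x^\ast,y}$ gives the claim with constant $4$, first for $x,y\in\pi_l(\mathfrak A_{\mathcal E})$ and then by approximation.

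The trivial bound is indeed useless for the off-diagonal $R(x,y)$. It is all one needs once the positivity of the kernel has moved the problem to the diagonal, and this reduction is the missing idea. Without it neither of your routes closes:
\begin{itemize}
\item The $L^1$ estimate following Proposition~\ref{prop:char_Gamma_regular} only controls $\norm{\phi^{1/4}R\phi^{1/4}}_1$, in terms of $\mathcal L_2^{1/4}$, not $\norm{R}_2$.
\item The polar-decomposition fallback rests on a ``second, complementary bound'' that is never produced.
\end{itemize}
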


\begin{remark}
    If $x\phi^{1/2},\phi^{1/2}y\in \dom(\bar\delta)$, then $x\phi^{1/2}y\in \dom(\bar\delta)$ by Lemma \ref{lem:prod_rule}, so that the expression in the lemma is well-defined.
\end{remark}

\begin{proof}
    Let $\mathcal E$ be the quantum Dirichlet form associated with $\delta$. For $x,y\in\pi_l(\mathfrak A_{\mathcal E})$ define
    \begin{equation*}
        a_{x,y}=x^\ast\mathcal L_2^{1/2}(\phi^{1/2}y)+\mathcal L_2^{1/2}(x^\ast\phi^{1/2})y-\mathcal L_2^{1/2}(x^\ast\phi^{1/2}y).
    \end{equation*}
   By Proposition  \ref{prop:char_Gamma_regular} we have
    \begin{align*}
        \phi^{1/4}a_{x,y}\phi^{1/4}&=\phi^{1/4}x^\ast \phi^{1/4}\mathcal L_2^{1/2}(\phi^{1/4}y\phi^{1/4})+\mathcal L_2^{1/2}(\phi^{1/4}x^\ast \phi^{1/4})\phi^{1/4}y \phi^{1/4}\\
        &\quad-\mathcal L_2^{1/2}(\phi^{1/4}x^\ast\phi^{1/2}y\phi^{1/4})\\
        &=2\Gamma_{1/2}(\phi^{1/4}x\phi^{1/4},\phi^{1/4}y\phi^{1/4}),
    \end{align*}
    where $\Gamma_{1/2}$ is the carré du champ associated with $\mathcal L_2^{1/2}$.
    
    In particular, the sesquilinear form 
    \begin{align*}
        s\colon (\pi_l(\mathfrak A_{\mathcal E})\odot \M)\times (\pi_l(\mathfrak A_{\mathcal E})\odot \M)&\to \mathbb C,\\
        (x\otimes z_1,y\otimes z_2)&\mapsto \tr(z_1^\ast \phi^{1/4}a_{x,y}\phi^{1/4}z_2)
    \end{align*}
    is  positive semi-definite.
    
    By Cauchy--Schwarz and Lemma \ref{lem:L2_L4_duality}, we have
    \begin{align*}
        \norm{a_{x,y}}_2&=\sup\{\abs{\tr(z_1^\ast \phi^{1/4}a_{x,y}\phi^{1/4}z_2): z_1,z_2\in\M,\,\norm{\phi^{1/4}z_j}_4\leq 1}\\
        &=\sup_{\norm{\phi^{1/4} z_j}_4\leq 1}\abs{s(x\otimes z_1,y\otimes z_2)}\\
        &\leq \sup_{\norm{\phi^{1/4}z_1}_4\leq 1}s(x\otimes z_1)^{1/2}\sup_{\norm{\phi^{1/4}z_2}\leq 1}s(y\otimes z_2)^{1/2}\\
        &=\norm{a_{x,x}}_2^{1/2}\norm{a_{y,y}}_2^{1/2}.
    \end{align*}
    Moreover,
    \begin{align*}
        \norm{a_{x,x}}_2\leq 2\norm{\cL_2^{1/2}(\phi^{1/2}x)}_2\norm{x}+\norm{\cL_2^{1/2}(x^\ast \phi^{1/2}x)}_2
    \end{align*}
    By definition, $\norm{\cL_2^{1/2}(\phi^{1/2}x)}_2=\norm{\delta(\phi^{1/2}x)}^{1/2}$. Furthermore, by the product rule,
    \begin{align*}
        \norm{\cL_2^{1/2}(x^\ast \phi^{1/2}x)}_2=\norm{\delta(x^\ast\phi^{1/2})x+x^\ast \delta(\phi^{1/2}x)}\leq 4\norm{\delta(\phi^{1/2}x)}\norm{x},
    \end{align*}
    therefore,
    \begin{equation*}
        \norm{a_{x,x}}_2\leq 2\norm{x}\norm{\delta(\phi^{1/2}x)}.
    \end{equation*}
    If we combine this with the analogous bound for $\norm{a_{y,y}}_2$, we arrive at the desired bound for $x,y\in\pi_l(\mathfrak A_{\mathcal E})$.

    In the general case, when we only assume $x\phi^{1/2}$, $\phi^{1/2}y\in \dom(\bar\delta)$, the result follows via approximation as in \cite[Theorem 6.3]{Wir22}.
\end{proof}

\begin{lemma}\label{lem:bound_Stinespring}
    If  a ucp map $\Phi\colon \M\to \M$ is GNS-symmetric with respect to $\phi$  and $x,y\in \M$, then
    \begin{align*}
        \phi(\abs{\Phi(xy)-\Phi(x)y}^2)&\leq 2\norm{x}^2\phi(\abs{y}^2)^{1/2}\phi(\abs{y-\Phi(y)}^2)^{1/2}
    \end{align*}
    and
    \begin{equation*}
        \norm{y\Phi^{(2)}(\phi^{1/2}x)-\Phi^{(2)}(y\phi^{1/2}x)}^2\leq 2\norm{x}^2\phi(\abs{y}^2)^{1/2}\phi(\abs{y-\Phi(y)}^2)^{1/2}.
    \end{equation*}
\end{lemma}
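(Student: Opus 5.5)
We prove the first inequality through a Stinespring dilation of $\Phi$ and the Kadison--Schwarz inequality, and then obtain the second one by applying the first to the adjoint map with respect to $\phi$. Write $\Phi(z)=V^\ast\pi(z)V$ with $\pi$ a normal representation on $\K$ and $V$ an isometry, since $\Phi$ is unital. Then
\begin{equation*}
\Phi(xy)-\Phi(x)y=V^\ast\pi(x)\bigl(\pi(y)V-Vy\bigr),
\end{equation*}
so that
\begin{equation*}
\abs{\Phi(xy)-\Phi(x)y}^2\leq \norm{x}^2\,(\pi(y)V-Vy)^\ast(\pi(y)V-Vy).
\end{equation*}
Expanding the right-hand side and using $V^\ast V=1$ gives
\begin{equation*}
(\pi(y)V-Vy)^\ast(\pi(y)V-Vy)=\Phi(y^\ast y)-\Phi(y)^\ast y-y^\ast\Phi(y)+y^\ast y.
\end{equation*}

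Next we apply $\phi$. Since $\Phi$ is GNS-symmetric it preserves $\phi$, so $\phi(\Phi(y^\ast y))=\phi(y^\ast y)$, and symmetry also gives $\phi(\Phi(y)^\ast y)=\phi(y^\ast\Phi(y))$. The result is
\begin{equation*}
\phi(\abs{\Phi(xy)-\Phi(x)y}^2)\leq 2\norm{x}^2\,\Re\,\phi\bigl(y^\ast(y-\Phi(y))\bigr).
\end{equation*}
Cauchy--Schwarz for the sesquilinear form $(a,b)\mapsto\phi(a^\ast b)$ then bounds the right-hand side by $2\norm{x}^2\phi(\abs{y}^2)^{1/2}\phi(\abs{y-\Phi(y)}^2)^{1/2}$, which is the first inequality.

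For the second inequality, write $\Phi^{(2)}(\phi^{1/2}x)$ via the right action. GNS-symmetry means $\Phi^{(2)}$ is self-adjoint and commutes with $\Delta_\phi$ and $J$, so $J\Phi^{(2)}J=\Phi^{(2)}$. This gives $\Phi^{(2)}(\phi^{1/2}x)=J\Phi^{(2)}(x^\ast\phi^{1/2})=J(\Phi(x^\ast)\phi^{1/2})=\phi^{1/2}\Phi(x)$, where we used $\Phi(x^\ast)=\Phi(x)^\ast$. Hence the vector in question is $y\phi^{1/2}\Phi(x)-\Phi^{(2)}(y\phi^{1/2}x)$.

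To handle $\Phi^{(2)}(y\phi^{1/2}x)$, we pair against test vectors $z\phi^{1/2}w$ and move $\Phi^{(2)}$ across by self-adjointness. Alternatively, we rewrite it using $\phi^{1/2}x=\Delta_\phi^{1/2}$-twisted elements: first reduce to $x$ in the Tomita algebra, so that $\phi^{1/2}x=\sigma_{-i/2}(x)\phi^{1/2}$. Then $\Phi^{(2)}(y\phi^{1/2}x)=\Phi(y\sigma_{-i/2}(x))\phi^{1/2}$. Since $\Phi$ commutes with $\sigma^\phi$, we also have $y\phi^{1/2}\Phi(x)=y\sigma_{-i/2}(\Phi(x))\phi^{1/2}=y\Phi(\sigma_{-i/2}(x))\phi^{1/2}$. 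The squared norm is therefore $\phi(\abs{\Phi(yx')-y\Phi(x')}^2)$ with $x'=\sigma_{-i/2}(x)$.

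This is the mirror of the first inequality, with left and right multiplication swapped. Repeating the dilation argument with $V^\ast(V y-\pi(y)V)\pi(x')\ldots$ would give $\norm{x'}$ in place of $\norm{x}$, which is the wrong constant. This is the main obstacle: we need $\norm{x}$, not $\norm{\sigma_{-i/2}(x)}$.

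To avoid it, we work directly in $L^2$ and never pass through $\sigma_{-i/2}$. Let $\mathcal V:L^2(\M)\to\K\otimes_\M L^2(\M)$ be the $L^2$ Stinespring isometry, i.e.\ a dilation with $\Phi^{(2)}=\mathcal V^\ast\mathcal V$ that intertwines the right $\M$-action. Such a dilation exists because $\Phi^{(2)}$ is completely Markov and symmetric, so it arises from a correspondence; concretely one can use the GNS correspondence $\H_\Phi$ of $\Phi$ with right bounded vector. With this, $\Phi^{(2)}(\eta x)$ and $\Phi^{(2)}(\eta)x$ are compared through right multiplication by $x$, which has operator norm $\norm{x}$. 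The same algebra as in the first part then yields the bound
\begin{equation*}
2\norm{x}^2\,\Re\langle y\phi^{1/2},(1-\Phi^{(2)})y\phi^{1/2}\rangle,
\end{equation*}
followed by Cauchy--Schwarz exactly as before. Finally, a density argument removes the analyticity assumption on $x$. Setting up this right-module dilation correctly, and checking that it intertwines the right $\M$-action as needed, is the step where I expect the main care is needed.
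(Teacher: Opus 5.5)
Your first inequality is proved exactly as in the paper (Stinespring dilation, $\phi\circ\Phi=\phi$, GNS-symmetry, Cauchy--Schwarz). You also diagnose the obstacle in the second inequality correctly: putting the modular twist on $x$ produces $\norm{\sigma^\phi_{-i/2}(x)}$ instead of $\norm{x}$. The gap is your proposed repair, which cannot work as stated. A right-$\M$-linear $\mathcal V$ with $\Phi^{(2)}=\mathcal V^\ast\mathcal V$ does not exist. For an isometry, $\mathcal V^\ast\mathcal V=1$. More generally, $\mathcal V^\ast$ would also be right $\M$-linear, which forces $\Phi^{(2)}(\eta a)=\Phi^{(2)}(\eta)a$ for all $\eta$ and $a$. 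This already fails for $\Phi=\phi(\cdot)1$, where $\Phi^{(2)}$ is the projection onto $\mathbb C\phi^{1/2}$. So your intermediate bound $2\norm{x}^2\Re\langle y\phi^{1/2},(1-\Phi^{(2)})y\phi^{1/2}\rangle$ is the right target, but it is never actually derived. Note also that your closing density step concerns $x$, whereas in a working argument it is $y$ that must be taken analytic.

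The missing idea is to put the twist on $y$ rather than $x$. For $\sigma^\phi$-analytic $y$, set $w=\sigma^\phi_{i/2}(y)$, so that $y\phi^{1/2}=\phi^{1/2}w$. Using $\Phi^{(2)}(\phi^{1/2}a)=\phi^{1/2}\Phi(a)$, which you already derived, the vector becomes $\phi^{1/2}\bigl(w\Phi(x)-\Phi(wx)\bigr)$. Since $\norm{\phi^{1/2}a}^2=\phi(aa^\ast)$, you get
\begin{equation*}
\norm{y\Phi^{(2)}(\phi^{1/2}x)-\Phi^{(2)}(y\phi^{1/2}x)}^2=\phi\bigl(\abs{\Phi(x^\ast w^\ast)-\Phi(x^\ast)w^\ast}^2\bigr).
\end{equation*}
Now apply the first inequality to the pair $(x^\ast,w^\ast)$. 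The constant is $\norm{x^\ast}^2=\norm{x}^2$. The $y$-terms convert back exactly: $\phi(ww^\ast)=\norm{\phi^{1/2}w}^2=\phi(\abs{y}^2)$, and $\phi(\abs{w^\ast-\Phi(w^\ast)}^2)=\norm{\phi^{1/2}(w-\Phi(w))}^2=\phi(\abs{y-\Phi(y)}^2)$. The last equality uses $\phi^{1/2}\Phi(w)=\phi^{1/2}\sigma^\phi_{i/2}(\Phi(y))=\Phi(y)\phi^{1/2}$, since $\Phi$ commutes with $\sigma^\phi$. Finally, approximate a general $y$ by bounded analytic $y_n\to y$ strongly, for instance by Gaussian smoothing; all terms converge. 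This is what the paper does, phrased as: rewrite the first inequality in $L^2$ form, substitute $y\mapsto\sigma^\phi_{-i/2}(y)$, then replace $(x,y)$ by $(x^\ast,y^\ast)$ via $J$.
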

\begin{proof}
    Let $\Phi=V^\ast\pi(\cdot)V$ be a Stinespring dilation of $\Phi$. We have
    \begin{align*}
        \phi(\abs{\Phi(xy)-\Phi(x)y}^2)&=\norm{V^\ast\pi(x)(\pi(y)V-V y)\phi^{1/2}}^2\\
        &\leq \norm{x}^2\norm{(\pi(y)V-V y)\phi^{1/2}}^2\\
        &=\norm{x}^2(\phi(\Phi(y^\ast y))+\phi(y^\ast y)-2\Re\phi(\Phi(y)^\ast y)\\
        &=2\norm{x}^2\phi(y^\ast(y-\Phi(y))\\
        &\leq 2\norm{x}^2\phi(\abs{y}^2)^{1/2}\phi(\abs{y-\Phi(y)}^2)^{1/2}.
    \end{align*}
    Thus, if $y\in \M_{\infty,\sigma^\phi}$, then
    \begin{align*}
        &\norm{\Phi^{(2)}(x\phi^{1/2}\sigma^\phi_{i/2}(y))-\Phi^{(2)}(x\phi^{1/2})\sigma^\phi_{i/2}(y)}^2\\
        &\quad\leq 2\norm{x}^2\phi(\abs{\sigma^\phi_{i/2}(y)^\ast}^2)^{1/2}\norm{\phi^{1/2}\sigma^\phi_{i/2}(y)-\Phi^{(2)}(\phi^{1/2}\sigma^\phi_{i/2}(y))}.
    \end{align*}
    Replacing $y$ by $\sigma^\phi_{-i/2}(y)$, we get
    \begin{equation*}
        \norm{\Phi^{(2)}(x\phi^{1/2}y)-\Phi^{(2)}(x\phi^{1/2})y}^2
        \leq 2\norm{x}^2\phi(\abs{y^\ast}^2)^{1/2}\norm{\phi^{1/2}y-\Phi^{(2)}(\phi^{1/2}y)}.
    \end{equation*}
    If we replace $x$ by $x^\ast$ and $y$ by $y^\ast$, we obtain the second inequality in the lemma. For general $y\in \M$, not necessarily analytic, the result follows by standard approximation arguments.
\end{proof}

\begin{notation}\label{notation}
    If $(\H ,\mathcal J,(\mathcal U_t))_{t\in\mathbb R})$ is a Tomita correspondence over $(\M,\phi)$, $\delta\colon \dom(\delta)\subset L^2(\M)\to\H $ is a densely defined closable $\phi$-modular derivation and $\alpha>0$, then we write $\zeta_\alpha$, $\theta_\alpha$ for the unique GNS-symmetric ucp maps on $\M$ such that
    \[\zeta_\alpha^{(2)}=\alpha^{1/2}(\cL_2+\alpha)^{-1/2},\;\;\;\;\theta_\alpha^{(2)}=\alpha(\cL_2+\alpha)^{-1} \;\; and \;\;\rho_\alpha=1-\mathcal L_2^{1/2}(\mathcal L_2+\alpha)^{-1/2}\]
    Moreover, we write
    \[\tilde\delta_\alpha =\bar\delta\circ(\cL_2+\alpha)^{-1/2}=\alpha^{-1/2}\bar\delta\circ\zeta_\alpha^{(2)}.\]
\end{notation}
We warn the readers that, although the above notations are inspired from those of \cite{Pet09, OP10}, it may differ from the notations in those papers.

\begin{remark}\label{rmk:ucp_families}
    Let $(\Phi_t)_{t\geq 0}$ be a GNS-symmetric quantum Markov semigroup on $\M$. As discussed in \ref{subsec:QMS}, one has
    \begin{equation*}
        (\mathcal L+\alpha)^{-1}=\int_0^\infty e^{-\alpha t}\Phi_t\,dt
    \end{equation*}
    as a weak$^\ast$ integral. 
    Thus $\theta_\alpha=\alpha(\mathcal L+\alpha)^{-1}$ is a GNS-symmetric ucp map.
    With the help of the spectral theorem, one can show that (cf. \cite[Lemma 3.2]{Pet09})
    \begin{align*}
        \alpha^{1/2}(\mathcal L_2+\alpha)^{-1/2}&=\frac 1 \pi\int_0^\infty \theta^{(2)}_{\alpha (1+t)/t}\frac{dt}{(1+t)\sqrt t},\\
        1-\mathcal L_2^{1/2}(\mathcal L_2+\alpha)^{-1/2}&=\frac 1 \pi \int_0^\infty \theta^{(2)}_{t\alpha/(1+t)}\frac{dt}{(1+t)\sqrt t}
    \end{align*}
    in the strong operator topology. It follows from Cipriani's correspondence \cite[Section 2]{Cip97} that the ucp maps $\zeta_\alpha$ and $\eta_\alpha$ exist.
\end{remark}

For the following lemma, note that $\tilde\delta_\alpha^\ast\tilde\delta_\alpha=\cL_2(\cL_2+\alpha)^{-1}\leq 1$ for all $\alpha>0$ and $\tilde\delta_\alpha\to 0$ in strong operator topology as $\alpha\to\infty$.

\begin{lemma}\label{lem: the main technical lemma}
    Let $(\H ,\mathcal J,(\mathcal U_t)_{t\in\mathbb R})$ be a Tomita correspondence over $(\M,\phi)$ and let $\delta\colon \dom(\delta)\subset L^2(\M)\to \H $ be a densely defined closable $\phi$-modular derivation. For $x,y\in \M$ and $\alpha>0$, one has
    \begin{align*}                      
        \norm{\zeta_\alpha(x)\tilde\delta_\alpha (\phi^{1/2}y)-\tilde\delta_\alpha(x\phi^{1/2}y)}&\leq 10\norm{x}^{1/2}\norm{y}\norm{\tilde\delta_\alpha(x\phi^{1/2})}^{1/2},\\
        \norm{\tilde\delta_\alpha(x\phi^{1/2})\zeta_\alpha (y)-\tilde\delta_\alpha(x\phi^{1/2}y)}&\leq 10\norm{y}^{1/2}\norm{x}\norm{\tilde\delta_\alpha(\phi^{1/2}y)}^{1/2}.
    \end{align*}
    In particular, 
    \begin{align*}
        \lim_{\alpha\to\infty} \norm{\zeta_\alpha(x)\tilde\delta_\alpha(a)-\tilde\delta_\alpha(xa)}=0\\
        \lim_{\alpha\to\infty} \norm{\tilde\delta_\alpha(a) \zeta_\alpha(x)-\tilde\delta_\alpha(a x)}=0\\
    \end{align*}
    for all $x\in \M$, $a\in L^2(\M)$.
\end{lemma}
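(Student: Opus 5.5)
The plan is to follow Peterson's tracial argument (cf. \cite[Lemma 3.2]{Pet09}, \cite{OP10}), replacing the tracial product rule by Lemma \ref{lem:prod_rule}, Lemma \ref{lem:carre_du_champ_L2} and Lemma \ref{lem:bound_Stinespring}. We treat only the first inequality; the second follows by applying $\mathcal J$. Indeed, $\bar\delta J=\mathcal J\bar\delta$, $J$ commutes with $\cL_2$, and $\zeta_\alpha$ is $\ast$-preserving, so $\mathcal J\tilde\delta_\alpha(x\phi^{1/2}y)=\tilde\delta_\alpha(y^\ast\phi^{1/2}x^\ast)$.

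\textbf{Step 1: reduction to regular elements.} By density (\cite[Theorem 6.3]{Wir22}), it suffices to prove the bound for $x,y$ such that $x\phi^{1/2}$ and $\phi^{1/2}y$ lie in $\dom(\bar\delta)$. Both sides are continuous under the relevant approximations: $\tilde\delta_\alpha$ is bounded, and $\zeta_\alpha$ is normal.

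\textbf{Step 2: the product rule.} We write $\tilde\delta_\alpha=\bar\delta(\cL_2+\alpha)^{-1/2}$. Set $a=(\cL_2+\alpha)^{-1/2}$ and use the product rule of Lemma \ref{lem:prod_rule} on $\zeta_\alpha(x)\,\phi^{1/2}\,\alpha^{-1/2}\zeta_\alpha(y)$-type elements. Concretely, we decompose
\begin{align*}
\zeta_\alpha(x)\tilde\delta_\alpha(\phi^{1/2}y)-\tilde\delta_\alpha(x\phi^{1/2}y)
&=\alpha^{-1/2}\bigl[\zeta_\alpha(x)\bar\delta(\zeta^{(2)}_\alpha(\phi^{1/2}y))-\bar\delta(\zeta_\alpha(x)\zeta^{(2)}_\alpha(\phi^{1/2}y))\bigr]\\
&\quad+\alpha^{-1/2}\bar\delta\bigl(\zeta_\alpha(x)\zeta^{(2)}_\alpha(\phi^{1/2}y)-\zeta^{(2)}_\alpha(x\phi^{1/2}y)\bigr).
\end{align*}
By Lemma \ref{lem:prod_rule}, the first bracket equals $-\bar\delta(\zeta_\alpha(x)\phi^{1/2})\cdot(\text{right action of }\zeta_\alpha^{(2)}(\phi^{1/2}y))$. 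Its norm is at most $\norm{y}\,\alpha^{-1/2}\norm{\bar\delta(\zeta^{(2)}_\alpha(x\phi^{1/2}))}=\norm{y}\norm{\tilde\delta_\alpha(x\phi^{1/2})}$, up to the modular twist. That twist is harmless because $\zeta_\alpha$ commutes with $\sigma^\phi$. Since $\norm{\tilde\delta_\alpha}\le1$, this term is bounded by $\norm{x}^{1/2}\norm{y}\norm{\tilde\delta_\alpha(x\phi^{1/2})}^{1/2}$ times a constant.

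\textbf{Step 3: the second term (main obstacle).} The second term is $\norm{\tilde\delta_\alpha(\cdot)}$ of $\alpha^{1/2}$ times a commutator-type defect. Using $\norm{\alpha^{-1/2}\bar\delta\, b}^2=\langle b,\cL_2\alpha^{-1}b\rangle$, we must control $\cL_2^{1/2}\alpha^{-1/2}$ applied to $\zeta_\alpha(x)\zeta^{(2)}_\alpha(\phi^{1/2}y)-\zeta^{(2)}_\alpha(x\phi^{1/2}y)$.

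The first thing to try is to write $\alpha^{-1/2}\cL_2^{1/2}\zeta_\alpha^{(2)}=\cL_2^{1/2}(\cL_2+\alpha)^{-1/2}=1-\rho_\alpha$ and expand. Then use Lemma \ref{lem:carre_du_champ_L2} for the GNS-symmetric semigroup generated by $\cL_2^{1/2}$, i.e.\ its carré du champ $\Gamma_{1/2}$, together with Lemma \ref{lem:bound_Stinespring} applied to the ucp maps $\zeta_\alpha$ and $\rho_\alpha$. Lemma \ref{lem:bound_Stinespring} gives
\[
\norm{\zeta_\alpha(x)\zeta^{(2)}_\alpha(\phi^{1/2}y)-\zeta^{(2)}_\alpha(x\phi^{1/2}y)}^2\lesssim\norm{y}^2\norm{x}\,\norm{x\phi^{1/2}-\zeta^{(2)}_\alpha(x\phi^{1/2})}.
\]
In addition, $\norm{(1-\zeta^{(2)}_\alpha)(x\phi^{1/2})}\le\norm{\tilde\delta_\alpha(x\phi^{1/2})}$. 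This holds by the spectral inequality $(1-\sqrt{\alpha/(\lambda+\alpha)})^2\le\lambda/(\lambda+\alpha)$.

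This yields the square-root structure $\norm{x}^{1/2}\norm{\tilde\delta_\alpha(x\phi^{1/2})}^{1/2}$. I expect the hard part to be showing that the derivation part (not only the $L^2$ part) of this defect is bounded the same way. For that I plan to rewrite it as $a_{x,y}$-type expressions from Lemma \ref{lem:carre_du_champ_L2} applied with $\cL_2^{1/2}(\cL_2+\alpha)^{-1/2}$ in place of $\cL_2^{1/2}$, which again generates a GNS-symmetric semigroup. The constants are then summed to get at most $10$.

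\textbf{Step 4: the limits.} For $a=\phi^{1/2}y$ and $x\in\M$, the right-hand side tends to $0$ because $\tilde\delta_\alpha\to0$ strongly. For general $a\in L^2(\M)$, approximate $a$ by $\phi^{1/2}y$. Both expressions are uniformly Lipschitz in $a$, with constant $\norm{x}+1$, since $\norm{\tilde\delta_\alpha}\le1$ and $\norm{\zeta_\alpha(x)}\le\norm{x}$. The second limit is handled symmetrically.
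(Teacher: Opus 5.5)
Your reduction of the second inequality to the first via $\mathcal J$, your Step 2 and your Step 4 are fine. Your decomposition is also the one the paper uses: Lemma \ref{lem:prod_rule} applied to $\zeta_\alpha(x)\phi^{1/2}\zeta_\alpha(y)$ shows that your first bracket is exactly $-\tilde\delta_\alpha(x\phi^{1/2})\zeta_\alpha(y)$. Step 1 is not even needed, since $\zeta^{(2)}_\alpha$ maps $L^2(\M)$ into $\dom(\bar\delta)$.

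\textbf{The gap is Step 3, which carries the whole content of the lemma and which you leave open.} The $L^2$-bound you quote for $D=\zeta_\alpha(x)\phi^{1/2}\zeta_\alpha(y)-\zeta^{(2)}_\alpha(x\phi^{1/2}y)$ does not help. The quantity to control is $\norm{\alpha^{-1/2}\bar\delta(D)}=\alpha^{-1/2}\norm{\cL_2^{1/2}D}$, and $\cL_2^{1/2}$ is unbounded. Your proposed remedy is an approximate Leibniz rule for the bounded operator $\cL_2^{1/2}(\cL_2+\alpha)^{-1/2}=1-\rho^{(2)}_\alpha$. This does not reach the difficult term $\alpha^{-1/2}\cL_2^{1/2}(\zeta_\alpha(x)\phi^{1/2}\zeta_\alpha(y))$. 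Its natural preimage under $1-\rho^{(2)}_\alpha$ is $\alpha^{-1/2}(\cL_2+\alpha)^{1/2}(\zeta_\alpha(x)\phi^{1/2}\zeta_\alpha(y))$, which is not a product of controlled elements. So a Leibniz rule for that operator gives no information about this term.

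What is missing is Lemma \ref{lem:carre_du_champ_L2} for $\cL_2^{1/2}$ itself, applied to the pair $\zeta_\alpha(x),\zeta_\alpha(y)$. This is legitimate, since $\zeta_\alpha(x)\phi^{1/2},\phi^{1/2}\zeta_\alpha(y)\in\dom(\bar\delta)$. Combine it with the exact scaling
\[
\norm{\bar\delta(\zeta_\alpha(x)\phi^{1/2})}=\alpha^{1/2}\norm{\tilde\delta_\alpha(x\phi^{1/2})},\qquad \norm{\bar\delta(\phi^{1/2}\zeta_\alpha(y))}=\alpha^{1/2}\norm{\tilde\delta_\alpha(\phi^{1/2}y)}\leq\alpha^{1/2}\norm{y}.
\]
After taking square roots, the two factors $\alpha^{1/4}$ cancel the prefactor $\alpha^{-1/2}$. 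Hence the Leibniz defect of $\alpha^{-1/2}\cL_2^{1/2}$ at $\zeta_\alpha(x)\phi^{1/2}\zeta_\alpha(y)$ is at most $4\norm{x}^{1/2}\norm{y}\norm{\tilde\delta_\alpha(x\phi^{1/2})}^{1/2}$.

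That leaves the two Leibniz terms. The first, $\alpha^{-1/2}\cL_2^{1/2}(\zeta_\alpha(x)\phi^{1/2})\zeta_\alpha(y)$, has norm at most $\norm{y}\norm{\tilde\delta_\alpha(x\phi^{1/2})}$. For the second, subtract $\alpha^{-1/2}\cL_2^{1/2}\zeta^{(2)}_\alpha(x\phi^{1/2}y)$ and use $\alpha^{-1/2}\cL_2^{1/2}\zeta^{(2)}_\alpha=1-\rho^{(2)}_\alpha$; the difference equals
\[
(\zeta_\alpha(x)-x)\phi^{1/2}y+\rho^{(2)}_\alpha(x\phi^{1/2}y)-\zeta_\alpha(x)\rho^{(2)}_\alpha(\phi^{1/2}y).
\]
Lemma \ref{lem:bound_Stinespring} should be applied to this expression, not to $D$, with the ucp map $\rho_\alpha$ (after first replacing $\zeta_\alpha(x)$ by $x$). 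Finish with $\norm{(1-\zeta^{(2)}_\alpha)\xi}\leq\norm{\tilde\delta_\alpha\xi}$ and $\norm{(1-\rho^{(2)}_\alpha)\xi}=\norm{\tilde\delta_\alpha\xi}$.

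You name these ingredients in passing, but without this assembly the core estimate, and hence the lemma, is not proved.
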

\begin{proof}
    We may assume that $x$ and $y$ are analytic for $\sigma^\phi$. By the product rule for $\bar\delta$ from Lemma \ref{lem:prod_rule} we have
    \begin{equation*}
        \zeta_\alpha(x)\tilde\delta_\alpha(\phi^{1/2}y)=\alpha^{-1/2}\bar\delta(\zeta_\alpha(x)\phi^{1/2}\zeta_\alpha(y))-\tilde\delta_\alpha(x\phi^{1/2})\zeta_\alpha(y).
    \end{equation*}
    Note that $\norm{\tilde\delta_\alpha(x\phi^{1/2})\zeta_\alpha(y)}\leq \norm{\tilde\delta_\alpha(x\phi^{1/2})}\norm{y}$ since $\zeta_\alpha$ is a ucp map.

    To bound $\alpha^{-1/2}\bar\delta(\zeta_\alpha(x)\phi^{1/2}\zeta_\alpha(y))$, let $\bar\delta=V\cL_2^{1/2}$ be the polar decomposition of $\bar\delta$. As a consequence of Proposition \ref{prop:char_Gamma_regular} and Lemma \ref{lem:carre_du_champ_L2} we have
    \begin{align*}
        &V^\ast (\alpha^{-1/2}\bar\delta(\zeta_\alpha(x)\phi^{1/2}\zeta_\alpha(y)))\\
        &\quad=\alpha^{-1/2}\cL_2^{1/2}(\zeta_\alpha(x)\phi^{1/2}\zeta_\alpha(y))\\
        &\quad=\underbrace{\alpha^{-1/2}\zeta_\alpha (x)\cL_2^{1/2}(\phi^{1/2}\zeta_\alpha(y))}_{b_1}+\underbrace{\alpha^{-1/2}\cL_2^{1/2}(\zeta_\alpha(x)\phi^{1/2})\zeta_\alpha(y)}_{b_2}\\
        &\quad\quad-\underbrace{2\alpha^{-1/2}a_{\zeta_\alpha(x)^\ast,\zeta_\alpha(y)}}_{b_3},
    \end{align*}
    where $a_{\zeta_\alpha(x)^\ast,\zeta_\alpha(y)}\in L^2(\M)$ with 
    \begin{align*}
        \norm{a_{\zeta_\alpha(x)^\ast,\zeta_\alpha(y)}}_2
        &\leq 2\norm{\zeta_\alpha(x)}^{1/2}\norm{\zeta_\alpha(y)}^{1/2}\norm{\bar\delta(\phi^{1/2}\zeta_\alpha(x)^\ast)}^{1/2}\norm{\bar\delta(\phi^{1/2}\zeta_\alpha(y)}^{1/2}\\
        &\leq 2\alpha^{1/2}\norm{x}^{1/2}\norm{y}^{1/2}\norm{\tilde\delta_\alpha(\phi^{1/2}x)}^{1/2}\norm{\tilde\delta_\alpha(\phi^{1/2}y)}^{1/2}\\
        &\leq 2\alpha^{1/2}\norm{x}^{1/2}\norm{y}\norm{\tilde\delta_\alpha(\phi^{1/2}x)}^{1/2}.
    \end{align*}

    We have $\norm{b_2}\leq \norm{\alpha^{-1/2}\cL_2^{1/2}(\zeta_\alpha(x\phi^{1/2}))}\norm{y}=\norm{\tilde\delta_\alpha(x\phi^{1/2})}\norm{y}$ and
    \begin{align*}
        b_1-V^\ast\tilde\delta_\alpha(x\phi^{1/2}y)
        &=\alpha^{-1/2}(\zeta_\alpha(x)\cL_2^{1/2}\zeta_\alpha^{(2)}(\phi^{1/2}y)-\cL_2^{1/2}\zeta_\alpha^{(2)}(x\phi^{1/2}y))\\
        &=(\zeta_\alpha^{(2)}(x\phi^{1/2})y-x\phi^{1/2}y)+(\rho_\alpha^{(2)}(x\phi^{1/2}y)-\zeta_\alpha(x)\rho_\alpha^{(2)}(\phi^{1/2}y)).
    \end{align*}
    By the spectral theorem, if $E$ denotes the spectral measure of $\cL_2$, then
    \begin{align*}
        \norm{(\zeta_\alpha^{(2)}-1)(a)}^2
        &=\int_{[0,\infty)}(1-\alpha^{1/2}(\alpha+\lambda)^{-1/2})^2\,d\norm{E(\lambda)a}^2\\
        &\leq \int_{[0,\infty)}\frac{\lambda}{\alpha+\lambda}\,d\norm{E(\lambda)a}^2\\
        &=\alpha^{-1}\norm{\cL_2^{1/2}(\zeta_\alpha^{(2)}(a))}^2\\
        &=\norm{\tilde\delta_\alpha(a)}^2
    \end{align*}
    for all $a\in L^2(M)$. Thus
    \begin{equation*}
        \norm{\zeta_\alpha^{(2)}(x\phi^{1/2})\phi^{1/2}y-x\phi^{1/2}y}\leq \norm{\tilde\delta_\alpha(x\phi^{1/2})}\norm{y}.
    \end{equation*}
    Moreover,
    \begin{align*}
        &\norm{\rho_\alpha^{(2)}(x\phi^{1/2}y)-\zeta_\alpha(x)\rho_\alpha^{(2)}(\phi^{1/2}y)}\\
        &\quad\leq \norm{\rho_\alpha^{(2)}(x\phi^{1/2}y)-\rho_\alpha^{(2)}(x\phi^{1/2})\rho_\alpha(y)}+\norm{\rho_\alpha^{(2)}(x\phi^{1/2})-\zeta_\alpha^{(2)}(x\phi^{1/2})}\norm{y}.
    \end{align*}
    For the first summand on the right side, we can use the bound from Lemma~\ref{lem:bound_Stinespring} to get
    \begin{align*}
        \norm{\rho_\alpha^{(2)}(x\phi^{1/2}y)-\rho_\alpha^{(2)}(x\phi^{1/2})\rho_\alpha(y)}
        &\leq \norm{x}^{1/2}\norm{\phi^{1/2}y}\norm{x\phi^{1/2}-\rho_\alpha^{(2)}(x\phi^{1/2})}^{1/2}\\
        &\leq \norm{x}^{1/2}\norm{y}\norm{\tilde\delta_\alpha(x\phi^{1/2})}^{1/2}.
    \end{align*}
    Furthermore,
    \begin{align*}
        \norm{\rho_\alpha^{(2)}(x\phi^{1/2})-\zeta_\alpha^{(2)}(x\phi^{1/2})}
        &\leq \norm{\rho_\alpha(x\phi^{1/2})-x\phi^{1/2}}+\norm{x\phi^{1/2}-\zeta_\alpha^{(2)}(x\phi^{1/2})}\\
        &\leq 2\norm{\tilde\delta_\alpha(x\phi^{1/2})}.
    \end{align*}
    To conclude, we combine the bounds and note that $\norm{\tilde\delta_\alpha(x\phi^{1/2})}\leq \norm{x\phi^{1/2}}\leq \norm{x}$ since $\phi$ is a state.
\end{proof}

We can describe the above result in a more clean fashion using the language of ultrapowers. We first collect some facts. Let $\omega$ be a free ultrafilter on a directed set $I$.
Let $(\Phi_n)_{n\in I}$ be a net of $\phi$-symmetric ucp maps on a von Neumann algebra equipped with a normal faithful state $\phi$. 
 Let $\mathcal I_\omega=\{(x_n)\in\ell^\infty(I;\M):\norm{x_n}_\phi^\sharp\to 0\}$ and $\mathcal M^\omega$ the idealizer of $\mathcal I_\omega$ inside $\ell^\infty(I;M)$. If $(x_n)\in\ell^\infty(I;\M)$, then
    \begin{equation*}
        \norm{\Phi_n(x_n)}_\phi^\sharp=\phi(\Phi_n(x)^\ast\Phi_n(x)+\Phi_n(x)\Phi_n(x)^\ast)^{1/2}\leq \norm{x_n}_\phi^\sharp
    \end{equation*}
    by Kadison--Schwarz and $\phi\circ\Phi_n=\phi$. In particular, $\Phi(\mathcal I_\omega)\subset \mathcal I_\omega$.

    If $(x_n)\in\mathcal M^\omega$, then by Proposition \ref{prop: criteria for elements in the idealizer}, for every $\epsilon>0$ there exists $C>0$ and $(y_n)\in \ell^\infty(I;\M)$ such that $\lim_{n\to\omega}\norm{x_n-y_n}_\phi^\sharp<\epsilon$ and $y_n$ belongs to the spectral subspace $\M(\sigma^\phi,[-C,C])$. Since $\Phi_n$ commutes with $\sigma^\phi$, we have  $\Phi_n(y_n)\in \M(\sigma^\phi,[-C,C])$ and
    \begin{align*}
        \lim_{n\to\omega}\norm{\Phi_n(x_n)-\Phi_n(y_n)}_\phi^\sharp\leq \lim_{n\to\omega}\|x_n-y_n\|_\phi^\sharp<\epsilon.
    \end{align*} Thus, again by Proposition \ref{prop: criteria for elements in the idealizer}, $\Phi(x_n)\in\mathcal M^\omega$. Altogether we have shown that
    \[(\Phi_n)^\omega: (x_n)_n^\omega\mapsto (\Phi(x_n))_n^\omega\] is a well-defined map on $\M^\omega$. We moreover have $\phi^\omega\circ(\Phi_n)^\omega=(\phi\circ\Phi_n)^\omega=\phi^\omega$ and $(\Phi_n)^\omega\circ\sigma_t^{\phi^\omega}=\sigma_t^{\phi^\omega}\circ(\Phi_n)^\omega$. In particular, \[((\Phi_n)^\omega)^{(2)}:x(\phi^\omega)^{1/2}\mapsto(\Phi_n)^\omega(x)(\phi^\omega)^{1/2}, x\in \M^\omega\] is a well-defined positive map which clearly satisfies 
    \[(\Phi_n^{(2)})^\omega=((\Phi_n)^\omega)^{(2)}\;\;\;\mbox{on}\;\;L^2(\M^\omega).\] This also shows that $(\Phi_n^{(2)})^\omega$, which is a priori defined on $L^2(\M)^\omega$, leaves $L^2(\M^\omega)$ invariant.

If $T: L^2(\M)\to \H$ is a bounded linear map, then $T^\omega:(\xi_n)^\omega\mapsto(T\xi_n)^\omega$ is a well-defined operator from $L^2(\M^\omega)$ to $\H^\omega$ such that $\|T^\omega\|=\|T\|$ (since if $\lim_{n\to\omega}\|\xi_n\|=0$, then $\lim_{n\to\omega}\|T\xi_n\|=0$.) 
In particular, we consider $\tilde\delta_\alpha^\omega$ as an operator from $L^2(\M^\omega)$ to $\H^\omega$ in the sequel for  a densely defined closable $\phi$-modular derivation $\delta:\dom(\delta)\subseteq L^2(\M)\to\H$.

For $\xi\in L^2(\M)$, we have
\begin{align*}
    \|\tilde\delta_\alpha(\xi)\|^2=\langle\xi, \frac{\mathcal{L}_2}{\alpha+\mathcal{L}_2}\xi\rangle=\langle\xi, \xi-\theta_\alpha^{(2)}(\xi)\rangle\leq \|\xi\|\|\xi-\theta_\alpha^{(2)}(\xi)\|.
\end{align*}
On the other hand, since each $\theta_\alpha^{(2)}$ is a positive contraction, we have
\begin{align*}
    \|\xi-\theta_\alpha^{(2)}\xi\|^2=\langle\xi, (\mathrm{id}-\theta_\alpha^{(2)})^2\xi\rangle\leq \langle\xi, (\mathrm{id}-\theta_\alpha^{(2)})\xi\rangle=\|\tilde\delta_\alpha(\xi)\|^2.
\end{align*}
Therefore, for any $\|\cdot\|_2$-bounded set $S\subseteq L^2(\M)$, we have $\sup_{\xi\in S}\|\tilde\delta_\alpha(\xi)\|\to0$ if and only if $\sup_{\xi\in S}\|\xi-\theta_\alpha^{(2)}\xi\|\to0$ as $\alpha\to\infty$, a fact we will exploit quite often for the singleton $S=\{\xi\},\xi\in L^2(\M)$ or more generally for $S=\Q_1\psi^{1/2}$, where $\Q\subseteq\M$ is an inclusion of von Neumann algebras and  $\psi$ is any  normal faithful state on $\M$ such that $\Q$ is globally invariant under $\sigma^\psi$.
Moreover, if $\omega$ is a cofinal ultrafilter on a directed set, then for any $\|\cdot\|_2$-bounded subset $S\subseteq L^2(\M^\omega)$, one has $\sup_{\xi\in S}\|\tilde\delta_\alpha^\omega(\xi)\|\to0$ if and only if $\sup_{\xi\in S}\|(\theta_\alpha^{(2)})^\omega(\xi)-\xi\|\to0$ as $\alpha\to\infty$.

\begin{corollary}\label{cor; corollary to the technical lemma in the ultrapower language}
    Let $\delta:\dom\delta\to\H$ be a densely defined closable modular derivation and let $\omega$ be a cofinal ultrafilter on  a directed set. We have
    \begin{align*}
        &\lim_{\alpha\to\infty}\|\zeta_{\alpha}(x)\tilde\delta^\omega_{\alpha}(\xi)-\tilde\delta^\omega_{\alpha}(x\xi)\|=0\\
        &\lim_{\alpha\to\infty}\|\tilde\delta^\omega_{\alpha}(\xi)\zeta_{\alpha}(x)-\tilde\delta^\omega_{\alpha}(\xi x)\|=0
    \end{align*}
    for all $x\in \M$ and $\xi\in L^2(\M^\omega)$, where $\H^\omega$ is treated as a possibly non-normal bimodule over $\M$.
\end{corollary}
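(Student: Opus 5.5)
The plan is to deduce the corollary from Lemma~\ref{lem: the main technical lemma} by a uniform-bound argument, first on the dense set of ``bounded'' vectors in $L^2(\M^\omega)$ and then on all of $L^2(\M^\omega)$ by density. We treat only the first limit; the second is symmetric and uses the second inequality of the lemma.

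\textbf{Step 1 (special form).} Suppose $\xi=(\phi^\omega)^{1/2}y$ with $y=(y_n)^\omega\in\M^\omega$. Realize $\xi$ as $(\phi^{1/2}y_n)^\omega$ in $L^2(\M)^\omega$. Then $x\xi=(x\phi^{1/2}y_n)^\omega$ and $\tilde\delta_\alpha^\omega(\xi)=(\tilde\delta_\alpha(\phi^{1/2}y_n))^\omega$. Applying the first inequality of Lemma~\ref{lem: the main technical lemma} for each $n$ and passing to the limit along $\omega$ gives
\[
\|\zeta_\alpha(x)\tilde\delta^\omega_\alpha(\xi)-\tilde\delta^\omega_\alpha(x\xi)\|\le 10\,\|x\|^{1/2}\sup_n\|y_n\|\,\|\tilde\delta_\alpha(x\phi^{1/2})\|^{1/2}.
\]
The key feature is that the right-hand side no longer depends on $n$. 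It tends to $0$ as $\alpha\to\infty$ because $\tilde\delta_\alpha\to0$ strongly on $L^2(\M)$, and here it is only applied to the single vector $x\phi^{1/2}$.

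\textbf{Step 2 (bimodule structure).} We must check that the left action of $\M$ on $\H^\omega$, namely $z(\eta_n)^\omega=(z\eta_n)^\omega$, is well defined, and likewise on $L^2(\M)^\omega\supseteq L^2(\M^\omega)$. This is clear since $\|z\eta_n\|\le\|z\|\|\eta_n\|$. The action on $L^2(\M^\omega)$ agrees with the restriction of this coordinatewise action. There is one small point to verify: $L^2(\M^\omega)$ is indeed a closed subspace of $L^2(\M)^\omega$ on which the left action of $\M\subseteq\M^\omega$ is coordinatewise, and right multiplication by $y\in\M^\omega$ corresponds to $(\phi^{1/2}y_n)^\omega$. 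This is the standard identification from \cite{AH14}, and we take it as given.

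\textbf{Step 3 (density).} The vectors $(\phi^\omega)^{1/2}y$ with $y\in\M^\omega$ are dense in $L^2(\M^\omega)$ because $(\phi^\omega)^{1/2}$ is cyclic for the right action. All operators involved are uniformly bounded in $\alpha$: we have $\|\tilde\delta_\alpha^\omega\|\le1$, and left multiplication by $\zeta_\alpha(x)$ and by $x$ has norm at most $\|x\|$. Hence the map
\[
\xi\mapsto \zeta_\alpha(x)\tilde\delta^\omega_\alpha(\xi)-\tilde\delta^\omega_\alpha(x\xi)
\]
has norm at most $2\|x\|$, uniformly in $\alpha$. A standard $\epsilon/3$ argument then extends the convergence from the dense set to all $\xi\in L^2(\M^\omega)$.

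For the right version, the dense set is instead $y(\phi^\omega)^{1/2}$, using left cyclicity. The right action of $\M$ on $\H^\omega$ is again coordinatewise, and the uniform bound is $10\|x\|^{1/2}\sup_n\|y_n\|\,\|\tilde\delta_\alpha(\phi^{1/2}x)\|^{1/2}$.

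The main point needing care is Step 2, specifically the identification of $L^2(\M^\omega)$ inside $L^2(\M)^\omega$ compatible with left and right multiplication by elements of $\M^\omega$. The analytic content is entirely the $n$-independence of the bound in Lemma~\ref{lem: the main technical lemma}.
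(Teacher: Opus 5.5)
Your proposal is correct and is essentially the paper's proof. You approximate $\xi$ by $(\phi^\omega)^{1/2}y$ with $y=(y_n)^\omega\in\M^\omega$, apply the first inequality of Lemma~\ref{lem: the main technical lemma} coordinatewise to obtain the $n$-independent bound $10\|x\|^{1/2}\sup_n\|y_n\|\,\|\tilde\delta_\alpha(x\phi^{1/2})\|^{1/2}$, and absorb the approximation error through the $\alpha$-uniform bound $2\|x\|$. The only differences are that the paper does the $\epsilon$-approximation inline rather than as a separate density step, and it uses the identification $(\phi^\omega)^{1/2}y=(\phi^{1/2}y_n)^\omega$ in $L^2(\M)^\omega$ without comment, whereas you correctly flag it as the one point needing justification.
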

\begin{proof}
 We shall prove the first equality.   Let $x\in \M$ with $\|x\|_\infty\leq  1$  and let $\xi\in L^2(\M^\omega)$. Fix $\epsilon>0$ and choose $y\in\M^\omega$ such that $\|\xi-(\varphi^\omega)^{1/2} y\|\leq \epsilon$. Let  $(y_n)\in \mathcal{M}^\omega(\M)$ be  representatives of $y$, and let $C>0$ be a constant such that $C>\sup_n  \|y_n\|_\infty$. By Lemma \ref{lem: the main technical lemma}, we have
    \begin{align*}
     \|\zeta_{\alpha }(x)\tilde\delta^\omega_{\alpha }(\xi)-\tilde\delta^\omega_{\alpha }(x\xi)\|
     &\leq 2\epsilon \|x\|_\infty+ \|\zeta_{\alpha }(x)\tilde\delta^\omega_{\alpha }((\varphi^\omega)^{1/2}y)-\tilde\delta_{\alpha }(x(\varphi^\omega)^{1/2}y)\|\\
&=2\epsilon+\lim_{n\to\omega}\|\zeta_\alpha(x)\tilde\delta_\alpha(\varphi^{1/2}y_n)-\tilde\delta_\alpha(x\varphi^{1/2}y_n)\|\\
     &\leq 2\epsilon +10 C\|\tilde\delta_{\alpha }( x\varphi^{1/2})\|^{1/2}
    \end{align*}
 Because $\epsilon>0$ is arbitrary and $\tilde\delta_\alpha$ converges strongly to $0$ on $L^2(\M)$, the required assertion follows.
\end{proof}

\begin{lemma}\label{lemma:estimates on the values in derivation}
  Let $\delta:\dom\delta\subseteq L^2(\M)\to\H$ be a densely defined closable $\varphi$-modular derivation.   There exists $C>0$ such that for $\alpha\geq 1$ and $x,y\in \M$ with $x\phi^{1/2}\in \dom(\overline\delta)$ one has
    \begin{equation*}
        \norm{\tilde\delta_\alpha(x\phi^{1/2}y)-x\tilde\delta_\alpha(\phi^{1/2}y)}\leq C\alpha^{-1/4}(\norm{x}^{1/2}+\norm{\overline\delta(x\phi^{1/2})}^{1/2})\norm{\overline\delta(x\phi^{1/2})}^{1/2}\norm{y}.
    \end{equation*}
\end{lemma}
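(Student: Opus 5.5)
We argue as in the proof of Lemma \ref{lem: the main technical lemma}, but we keep track of the explicit decay in $\alpha$ that comes from the extra hypothesis $x\phi^{1/2}\in\dom(\bar\delta)$. By the standard approximation used there (and \cite[Theorem 6.3]{Wir22}), we may assume that $x$ and $y$ are $\sigma^\phi$-analytic and that $\phi^{1/2}y\in\dom(\bar\delta)$. In the general case the left-hand side depends continuously on $y$ through $\phi^{1/2}y$ and $x\phi^{1/2}y$, and the right-hand side only involves $\norm{y}$.

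We write $\tilde\delta_\alpha=\bar\delta(\cL_2+\alpha)^{-1/2}$ and split
\begin{align*}
\tilde\delta_\alpha(x\phi^{1/2}y)-x\tilde\delta_\alpha(\phi^{1/2}y)
&=\bar\delta\big((\cL_2+\alpha)^{-1/2}(x\phi^{1/2}y)-x(\cL_2+\alpha)^{-1/2}(\phi^{1/2}y)\big)\\
&\quad+\big[\bar\delta\big(x\,(\cL_2+\alpha)^{-1/2}(\phi^{1/2}y)\big)-x\,\bar\delta(\cL_2+\alpha)^{-1/2}(\phi^{1/2}y)\big].
\end{align*}
For the bracket we use the product rule, Lemma \ref{lem:prod_rule}. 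The vector $(\cL_2+\alpha)^{-1/2}(\phi^{1/2}y)=\alpha^{-1/2}\phi^{1/2}\zeta_\alpha(y)$ is right bounded, so the bracket equals $\alpha^{-1/2}\bar\delta(x\phi^{1/2})\zeta_\alpha(y)$. Its norm is at most $\alpha^{-1/2}\norm{\bar\delta(x\phi^{1/2})}\norm{y}$. Since $\alpha\ge1$, we have $\alpha^{-1/2}\le\alpha^{-1/4}$ and $\norm{\bar\delta(x\phi^{1/2})}\le (\norm{x}^{1/2}+\norm{\bar\delta(x\phi^{1/2})}^{1/2})\norm{\bar\delta(x\phi^{1/2})}^{1/2}$, so the bracket is bounded by the claimed quantity.

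The main work is the first term, which is the commutator $\bar\delta[(\cL_2+\alpha)^{-1/2},L_x]$ applied to $\phi^{1/2}y$. To handle it we use the integral representation from Remark \ref{rmk:ucp_families},
\[
(\cL_2+\alpha)^{-1/2}=\frac1\pi\int_0^\infty (\cL_2+\alpha+s)^{-1}s^{-1/2}\,ds,
\]
and the resolvent identity
\[
[(\cL_2+\beta)^{-1},L_x]=(\cL_2+\beta)^{-1}[L_x,\cL_2](\cL_2+\beta)^{-1},\qquad \beta=\alpha+s .
\]
Formally $[L_x,\cL_2]=[L_x,\delta^\ast\bar\delta]$ splits as $\delta^\ast(\bar\delta(x\phi^{1/2})\cdot\,)$ plus a term involving $\delta^\ast$ of the left action and $\bar\delta$. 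Composing with $\bar\delta(\cL_2+\beta)^{-1}$ on the left and using $\norm{\bar\delta(\cL_2+\beta)^{-1}\delta^\ast}\le1$ and $\norm{\bar\delta(\cL_2+\beta)^{-1}}\le\beta^{-1/2}$ gives a bound of the form $\beta^{-1/2}\norm{\bar\delta(x\phi^{1/2})}\,\norm{y}$ times a decaying factor. The obstacle is that $s^{-1/2}\beta^{-1/2}$ is not integrable, so we must not estimate this crudely. Instead we interpolate between two bounds:
\begin{itemize}
\item a trivial bound, giving $\norm{x}\norm{y}$ times $\beta^{-1}$ factors;
\item the derivation bound, which for the first-order piece gives $\norm{\bar\delta(x\phi^{1/2})}\norm{y}\beta^{-1/2}$.
\end{itemize}
Taking geometric means $\sqrt{\cdot}$, as in Lemma \ref{lem:carre_du_champ_L2}, produces integrands of order $s^{-1/2}\beta^{-3/4}$, which integrate to $C\alpha^{-1/4}$. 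This is exactly the origin of the factors $\alpha^{-1/4}$ and $\norm{\bar\delta(x\phi^{1/2})}^{1/2}(\norm{x}^{1/2}+\norm{\bar\delta(x\phi^{1/2})}^{1/2})$ in the statement.

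To make this rigorous without unbounded commutators, we would rather run the argument of Lemma \ref{lem: the main technical lemma} quantitatively. We apply $V^\ast$ (polar decomposition of $\bar\delta$) and decompose $\alpha^{-1/2}\cL_2^{1/2}\zeta^{(2)}_\alpha(x\phi^{1/2}y)$ via Lemma \ref{lem:carre_du_champ_L2}. The error term $a_{\cdot,\cdot}$ is bounded using the exact quantity $\norm{\bar\delta(x\phi^{1/2})}$ rather than $\norm{\tilde\delta_\alpha(x\phi^{1/2})}$, which gains a factor $\alpha^{-1/4}$ after the square root. The remaining terms are controlled by $\norm{\zeta^{(2)}_\alpha(x\phi^{1/2})-x\phi^{1/2}}\le\norm{\tilde\delta_\alpha(x\phi^{1/2})}\le\alpha^{-1/2}\norm{\bar\delta(x\phi^{1/2})}$ and by Lemma \ref{lem:bound_Stinespring}, whose square-root bound yields $\norm{x}^{1/2}\alpha^{-1/4}\norm{\bar\delta(x\phi^{1/2})}^{1/2}\norm{y}$. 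The hardest step will be this carré du champ term, since it must be bounded without losing the $\alpha$-decay.
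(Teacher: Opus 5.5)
Your rigorous route is essentially the paper's proof. Lemma~\ref{lem:prod_rule} peels off $\alpha^{-1/2}\bar\delta(x\phi^{1/2})\zeta_\alpha(y)$, and the remaining term $\alpha^{-1/2}\norm{\cL_2^{1/2}(\zeta_\alpha^{(2)}(x\phi^{1/2}y)-x\phi^{1/2}\zeta_\alpha(y))}$ splits into three pieces: $\alpha^{1/2}(x\rho_\alpha^{(2)}(\phi^{1/2}y)-\rho_\alpha^{(2)}(x\phi^{1/2}y))$, handled by Lemma~\ref{lem:bound_Stinespring}; the carr\'e du champ error for the product $x\phi^{1/2}\zeta_\alpha(y)$, handled by Lemma~\ref{lem:carre_du_champ_L2} together with $\norm{\bar\delta(\phi^{1/2}\zeta_\alpha(y))}\le\alpha^{1/2}\norm{y}$ (so $\alpha^{-1/2}\cdot\alpha^{1/4}=\alpha^{-1/4}$); and $\cL_2^{1/2}(x\phi^{1/2})\zeta_\alpha(y)$, bounded trivially. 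The resolvent-integral heuristic, the reduction to analytic $x,y$ and the term $\zeta_\alpha^{(2)}(x\phi^{1/2})-x\phi^{1/2}$ are not needed, and the carr\'e du champ lemma is applied to $x\phi^{1/2}\zeta_\alpha(y)$, not to $\zeta_\alpha^{(2)}(x\phi^{1/2}y)$, which is not of product form.
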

\begin{proof}
    By the product rule,
    \begin{equation*}
        x\tilde\delta_\alpha(\phi^{1/2}y)=\alpha^{-1/2}x\overline\delta(\phi^{1/2}\zeta_\alpha(y))=\alpha^{-1/2}\overline\delta(x\phi^{1/2}\zeta_\alpha(y))-\alpha^{-1/2}\overline{\delta}(x\phi^{1/2})\zeta_\alpha(y).
    \end{equation*}
    Thus
    \begin{align*}
        &\norm{\tilde\delta_\alpha(x\phi^{1/2}y)-x\tilde\delta_\alpha(\phi^{1/2}y)}\\
        &\quad\leq \alpha^{-1/2}\norm{\bar\delta(\zeta_\alpha(x\phi^{1/2}y)-x\phi^{1/2}\zeta_\alpha(y))}+\alpha^{-1/2}\norm{\bar\delta(x\phi^{1/2})\zeta_\alpha(y)}\\
        &\quad\leq \alpha^{-1/2}\norm{\bar\delta(\zeta_\alpha(x\phi^{1/2}y)-x\phi^{1/2}\zeta_\alpha(y))}+\alpha^{-1/2}\norm{\bar\delta(x\phi^{1/2})}\norm{y}.
    \end{align*}
    We have
    \begin{align*}
        &\norm{\bar\delta(\zeta_\alpha(x\phi^{1/2}y)-x\phi^{1/2}\zeta_\alpha(y))}\\
        &\quad=\norm{\mathcal L_2^{1/2}(\zeta_\alpha(x\phi^{1/2}y)-x\phi^{1/2}\zeta_\alpha(y))}\\
        &\quad\leq \norm{\mathcal L_2^{1/2}\zeta_\alpha(x\phi^{1/2}y)-x\mathcal L_2^{1/2}(\phi^{1/2}\zeta_\alpha(y))}\\
        &\qquad+\norm{x\mathcal L_2^{1/2}(\phi^{1/2}\zeta_\alpha(y))+\mathcal L_2^{1/2}(x\phi^{1/2})\zeta_\alpha(y)-\mathcal L_2^{1/2}(x\phi^{1/2}\zeta_\alpha(y))}\\
        &\qquad +\norm{\mathcal L_2^{1/2}(x\phi^{1/2})\zeta_\alpha(y)}.
    \end{align*}
   By Lemma \ref{lem:carre_du_champ_L2},  the second summand on the right side is bounded above by
    \begin{align*}
        4\norm{x}^{1/2}\norm{y}^{1/2}\norm{\bar\delta(x\phi^{1/2})}^{1/2}\norm{\bar\delta(\phi^{1/2}\zeta_\alpha(y)}^{1/2}\leq 4\alpha^{1/4}\norm{x}^{1/2}\norm{\bar\delta(x\phi^{1/2})}^{1/2}\norm{y}.
    \end{align*}
 For the third summand, we have
    \begin{align*}
        \norm{\mathcal L_2^{1/2}(x\phi^{1/2})\zeta_\alpha(y)}\leq \norm{\mathcal L_2^{1/2}(x\phi^{1/2})}\norm{y}=\norm{\bar\delta(x\phi^{1/2})}\norm{y}.
    \end{align*}
    It remains to bound the first summand. We have
    \begin{align*}
        \mathcal L_2^{1/2}\zeta_\alpha(x\phi^{1/2}y)-x\mathcal L_2^{1/2}(\phi^{1/2}\zeta_\alpha(y))&=\alpha^{1/2}x\rho_\alpha^{(2)}(\phi^{1/2}y)-\alpha^{1/2}\rho_\alpha^{(2)}(x\phi^{1/2} y).
    \end{align*}
    By Lemma \ref{lem:bound_Stinespring} we have
    \begin{align*}
        \norm{x\rho_\alpha^{(2)}(\phi^{1/2}y)-\rho_\alpha^{(2)}(x\phi^{1/2} y)}&\leq \sqrt 2\norm{y}\norm{x}^{1/2}\norm{\rho_\alpha^{(2)}(x\phi^{1/2})-x\phi^{1/2}}^{1/2}\\
        &=\sqrt 2 \norm{y}\norm{x}^{1/2}\norm{\tilde\delta_\alpha(x\phi^{1/2})}^{1/2}\\
        &\leq \alpha^{-1/4}\sqrt 2\norm{y}\norm{x}^{1/2}\norm{\bar\delta(x\phi^{1/2})}^{1/2}.
    \end{align*}
    Altogether, we have shown that
    \begin{align*}
        &\norm{\tilde\delta_\alpha(x\phi^{1/2}y)-x\tilde\delta_\alpha(\phi^{1/2}y)}\\
        &\quad\leq (2\alpha^{-1/2}\norm{\bar\delta(x\phi^{1/2})}^{1/2}+(4+\sqrt 2)\alpha^{-1/4}\norm{x}^{1/2})\norm{\bar\delta(x\phi^{1/2})}^{1/2}\norm{y}.\qedhere
    \end{align*}
\end{proof}

\begin{lemma}\label{lem: density of analytic unitaries}
    If $\M$ is a von Neumann algebra and $\alpha\colon \mathbb R\curvearrowright \M$ is a point-weak$^\ast$ continuous action by $\ast$-isomorphisms, then the set of $\alpha$-analytic unitaries is strongly dense in $\mathcal U(\M)$.
\end{lemma}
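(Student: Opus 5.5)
The plan is to pass through the Cayley transform and smooth the self-adjoint generator by convolving with a Gaussian. Let $u\in\mathcal U(\M)$ and fix a faithful normal state $\psi$ on $\M$. By the Borel functional calculus we may write $u=e^{ih}$ with $h=h^\ast\in\M$ and $\norm{h}\leq\pi$. The task then reduces to approximating $h$ strongly by $\alpha$-analytic self-adjoint elements, and exponentiating the approximants.

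\textbf{Step 1: smoothing.} For $n\in\mathbb N$ define
\begin{equation*}
h_n=\sqrt{n/\pi}\int_{\mathbb R}e^{-nt^2}\alpha_t(h)\,dt
\end{equation*}
as a weak$^\ast$ integral. Each $h_n$ is self-adjoint, satisfies $\norm{h_n}\leq\norm h$, and is entire $\alpha$-analytic, with
\begin{equation*}
\alpha_z(h_n)=\sqrt{n/\pi}\int e^{-n(t-z)^2}\alpha_t(h)\,dt .
\end{equation*}
Since $t\mapsto\alpha_t(h)$ is point-weak$^\ast$ continuous, $h_n\to h$ weak$^\ast$. To get strong convergence, I would use $\psi(\abs{h_n-h}^2)=\psi(h_n^2)-2\Re\psi(h_nh)+\psi(h^2)$ together with the operator convexity estimate $h_n^2\leq\sqrt{n/\pi}\int e^{-nt^2}\alpha_t(h^2)\,dt$ (Kadison--Schwarz for the averaging ucp map). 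The right-hand side converges weak$^\ast$ to $h^2$, so
\begin{equation*}
\limsup_n\psi(\abs{h_n-h}^2)\leq\psi(h^2)-2\psi(h^2)+\psi(h^2)=0 .
\end{equation*}
Hence $h_n\to h$ strongly, and since everything is self-adjoint, also $\ast$-strongly.

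\textbf{Step 2: exponentiate.} Set $u_n=e^{ih_n}=\sum_k (ih_n)^k/k!$, which is unitary. The analytic elements form a $\ast$-algebra, and $\alpha_z$ is multiplicative on it. The partial sums are analytic with $\norm{\alpha_z(h_n^k)}\leq\norm{\alpha_z(h_n)}^k$, and $\norm{\alpha_z(h_n)}$ is finite and locally bounded in $z$. The series $\sum_k i^k\alpha_z(h_n)^k/k!$ therefore converges in norm, locally uniformly in $z$. A norm limit of analytic extensions that converges locally uniformly is analytic, so $u_n$ is $\alpha$-analytic. Finally, on bounded sets the map $x\mapsto e^{ix}$ is strongly continuous for self-adjoint $x$; this follows because multiplication is jointly strongly continuous on bounded sets, so the polynomial partial sums converge, and the tail is uniformly small in norm. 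This gives $u_n\to u$ strongly.

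The main obstacle is Step 1's passage from weak$^\ast$ to strong convergence. The Kadison--Schwarz trick above handles it cleanly for the faithful normal state $\psi$, which exists because $\M$ is $\sigma$-finite. A second point requiring care is the analyticity of $u_n$: the exponential series has to be controlled uniformly on strips, not just pointwise in $z$.
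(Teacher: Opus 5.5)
Your proposal is correct and is essentially the paper's own proof: write $u=e^{ih}$ by functional calculus, smooth $h$ with the Gaussian average $\sqrt{n/\pi}\int_{\mathbb R}e^{-nt^2}\alpha_t(h)\,dt$, and exponentiate. Your Kadison--Schwarz estimate (for strong convergence) and the locally uniform norm convergence of the exponential series (for analyticity) fill in what the paper leaves as ``standard approximation arguments'' and ``$\exp$ is entire''; one cosmetic point is that no Cayley transform is involved, since you use the logarithm and exponential, as the paper does.
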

\begin{proof}
    If $u\in \mathcal U(\M)$, then there exists $x\in \M$ self-adjoint with spectrum contained in $[0,2\pi]$ such that $u=e^{ix}$. By standard approximation arguments, there is a sequence $(x_n)$ of self-adjoint elements in $M$ that are analytic for $\alpha$, satisfy $0\leq x_n\leq 2\pi$ for all $n\in\mathbb N$ and converge to $x$ strongly. For example, one can take $x_n=\sqrt{\frac n \pi}\int_{\mathbb R}e^{-nt^2}\alpha_t(x)\,dt$. If we let $u_n=e^{ix_n}$, then $u_n$ is unitary, analytic for $\alpha$ since $\exp$ is entire and $u_n\to u$ strongly by the strong continuity of functional calculus.
\end{proof}

\section{On relative amenability of subalgebras}\label{sec:relative_amenability}
In this section, we recall relative amenability and give
several equivalent criteria. These criteria may be known
to experts, nonetheless we include them below for completeness. We first recall the basic construction and its
standard form.

\subsection{Jones basic construction}
Let $\M$ be a von Neumann algebra in the standard form $(L^2(\M), L^2(\M)_+, J)$ and let $\B$ be a unital subalgebra of $\M$. The  {\em Jones  basic construction} is the von Neumann algebra  $\langle\M,\B\rangle:=(J\B J)'\cap B(L^2(\M))$. Its elements are precisely the right $\B$-linear maps on $L^2(\M)$.
Consider $L^2(\M)$ as an $\langle\M,\B\rangle$-$\B$ correspondence via the action:
 \[x\xi y=xJy^*J\xi, \;\;\; x\in \langle\M,\B\rangle, y\in \B\]
 so that $\overline{L^2(\M)}$ becomes a $\B$ -$\langle\M,\B\rangle$ correspondence.
 The Connes fusion tensor product $L^2(\M)\otimes_\B\overline{L^2(\M)}$  becomes the standard Hilbert space for $\langle\M,\B\rangle$ where the  positive cone is generated by $\xi\otimes_\B\bar\xi$ for $\xi\in L^2(\M)$  left bounded and the conjugation operator $J_{\langle\M,\B\rangle}$ is determined by $J_{\langle\M,\B\rangle}(\xi\otimes\bar\eta)=\eta\otimes_\B\bar\xi$ for $\xi,\eta\in L^2(\M)$ (see \cite{Sau83}).

For any $T,S\in \langle\M,\B\rangle$, write $T\otimes_\B S^{\op}$ for the unique operator on $L^2(\M)\otimes_\B \overline{L^2(\M)}$ determined by $\xi\otimes_\B\bar\eta\mapsto T\xi\otimes_\B \overline{S^\ast \eta}.$ 
 Denote by $\rho:\M\otimes_{\text{alg}} \M^{\Op}\to B(L^2(\M)\otimes_\B L^2(\M))$ the left-right action of $\M$ on $L^2(\M)\otimes_\B L^2(\M)$. Denote by $\M\otimes_{\B,\min}\M^{\Op}$ and $\M\overline{\otimes}_\B\M^{\Op}$ the following algebras:
 \begin{align*}
     &\M\otimes_{\B,\min}\M^{\Op}=C^\ast(\rho(\M\otimes_{\text{alg}}\M^{\Op})),\qquad \M\overline{\otimes}_\B\M^{\Op}=W^\ast(\rho(\M\otimes_{\text{alg}}\M^{\Op})).
 \end{align*}
 We now assume that $\M$ is $\sigma$-finite and  
 there is a normal faithful conditional expectation $E_\B:\M\to\B$. Then there is  a normal faithful state  $\varphi$ on $\M$ such that $\varphi\circ E_\B=\varphi$. Let $(L^2(\B), L^2(\B)_+, J_\B)$ be the standard form for $\B$.   If we denote $\xi_\varphi\in L^2(\B)_+$ and $\phi^{1/2}\in L^2(\M)_+$   the unique vectors such that $\varphi(b)=\langle\xi_\varphi,b\xi_\phi\rangle$ for $b\in \B$ and $\phi(x)=\langle\phi^{1/2},x\phi^{1/2}\rangle$ for $x\in\M$, then the assignment $b\xi_\phi\mapsto b\varphi^{1/2}$ provides a natural inclusion from $L^2(\B)$ to $L^2(\M)$. Moreover, $e_\B:x\phi^{1/2}\mapsto E_\B(x)\phi^{1/2}$ is an orthogonal projection from $L^2(\M)$ onto $L^2(\B)$, called the {\em Jones projection}. Remark that the above inclusion and the projection $e_\B$ are independent of the choice of $\phi$.

 In this case, we have $_\B L^2(\M)_{\langle\M,\B\rangle}\cong {_\B}\overline{L^2(\M)}_{\langle\M,\B\rangle}$ via the map $\xi\mapsto\overline{J\xi}$; consequently
 \[_{\langle\M,\B\rangle} L^2(\M)\otimes_\B L^2(\M)_{\langle\M,\B\rangle}\cong {_{\langle\M,\B\rangle}}L^2(\M)\otimes_\B \overline{L^2(\M)}_{\langle\M,\B\rangle}.\]
 Thus $(L^2(\M)\otimes_\B L^2(\M), (L^2(\M)\otimes_\B L^2(\M)_+, \tilde J)$ is the standard form for $\langle\M,\B\rangle$. 
Moreover, one can  identify $\langle\M,\B\rangle^{\Op}$ with $J\langle\M,\B\rangle J$ via the map $T^{\Op}\mapsto JT^\ast J$. 
 In particular, we have
 \[x(\xi\otimes_\B \eta)y=x\xi\otimes_\B Jy^\ast J\eta=(x\otimes_\B Jy^\ast J)(\xi\otimes_\B\eta),\;\;\; x,y\in \langle\M,\B\rangle\]
 that is, $\M\otimes_{\min,\B}\M^{\op}=C^\ast\{x\otimes_\B Jy^\ast J; x,y\in \M\}$. 
Furthermore, there is a normal semifinite faithful weight  $\hat{\phi}$ on $\langle\M,\B\rangle$ such that $\hat{\phi}(x^\ast e_\B x)=\phi(x^\ast x),x\in \M$.

The following lemma describes the structure of an $\langle\M,\B\rangle$-correspondence which extends the bimodule actions of $\M$.

\begin{lemma}\label{lem:ext_bimodule_action}
    Let $\B\subset \M$ be an inclusion of von Neumann algebras with expectation and let $\mathsf P=\langle\M,\B\rangle$. If $\H$ is an $\M$-$\B$ correspondence, then there exists a normal $\ast$-homomorphism $\rho\colon \mathsf P\to \pi_r^\H(\B)^\prime$ that extends $\pi_l^\H$ if and only if there exists a $\B$-$\B$ correspondence $\K$ such that ${_\M\H_\B}\cong {_\M L^2(\M)}\otimes_\B \K_\B$.

    Moreover, if $\H$ is an $\M$-$\M$ correspondence, then there exist normal $\ast$-homo\-morphisms $\rho_l\colon \mathsf P\to B(\H)$, $\rho_r\colon \mathsf P^{\mathrm{op}}\to B(\H)$ with commuting images that extend $\pi_l^\H$ and $\pi_r^\H$, respectively, if and only if there exists a $\B$-$\B$ correspondence $\K$ such that $_\M \H_\M\cong {_\M L^2(\M)}\otimes_\B \K\otimes_\B {L^2(\M)_\M}$.
\end{lemma}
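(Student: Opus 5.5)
The plan is to handle the first statement and then deduce the second by applying it on both sides. Throughout I use $\langle\M,\B\rangle=(J\B J)'$ and the Jones projection $e_\B$. I also use the standard fact that the central support of $e_\B$ in $\mathsf P$ is $1$, so $\mathsf P e_\B \mathsf P$ spans a weak$^\ast$-dense ideal. Finally, $e_\B\mathsf P e_\B=\B e_\B$, since $e_\B x e_\B=E_\B(x)e_\B$ for $x\in\M$ and $\M e_\B\M$ is dense.

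For the easy direction of the first statement, suppose ${_\M\H_\B}\cong{_\M L^2(\M)}\otimes_\B\K_\B$. Any $T\in\mathsf P$ is a bounded right $\B$-module map on $L^2(\M)$. By the functoriality of the relative tensor product recalled in Subsection~\ref{subsec:bimodules}, $T\otimes_\B 1_\K$ is then a well-defined bounded operator. The map $\rho(T)=T\otimes_\B 1$ is a unital $\ast$-homomorphism commuting with the right $\B$-action on $\K$. It extends the left $\M$-action, and it is normal because it is spatially given. Concretely, $L^2(\M)\otimes_\B\K$ is a quotient of the relative tensor construction, on which $\mathsf P$ acts as an amplification-type representation.

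For the converse direction, assume such a $\rho$ exists and set $p=\rho(e_\B)$, a projection in $\pi_r^\H(\B)'$. I define $\K=p\H$. It carries a right $\B$-action, since $p$ commutes with $\pi_r(\B)$. It also carries a left $\B$-action $b\mapsto\rho(be_\B)=\rho(e_\B b e_\B)$, which is normal and unital on $p\H$ because $e_\B b=be_\B$ for $b\in\B$. Next I define $U\colon x\phi^{1/2}\otimes_\B\eta\mapsto x\eta$ for $x\in\M$ and $\eta\in\K$. On the dense set of left-bounded vectors $x\phi^{1/2}$, with $L(x\phi^{1/2})^\ast L(y\phi^{1/2})=E_\B(x^\ast y)$, the inner products match:
\[
\langle x\eta,y\eta'\rangle=\langle\eta,\rho(e_\B x^\ast y e_\B)\eta'\rangle=\langle\eta,E_\B(x^\ast y)\eta'\rangle .
\]
Hence $U$ is an isometry, and it is an $\M$-$\B$ bimodule map. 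Its range is $\overline{\rho(\M e_\B)\H}$. This equals $\H$ because $\rho$ is normal and unital and the central support of $e_\B$ in $\mathsf P$ is $1$, so $\bigvee_{x}\rho(x e_\B x^\ast)=1$. The main point to check is this fullness step: normality of $\rho$ is exactly what lets the central support identity survive in $B(\H)$.

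For the bimodule statement, the easy direction is the same construction on both tensor legs. Here $\mathsf P^{\op}$ acts on the right factor $L^2(\M)_\M$ through the identification $\mathsf P^{\op}\cong J\mathsf P J$, which consists of the left $\B$-module maps on $L^2(\M)$. For the converse, I first apply the first part to $\H$ viewed as an $\M$-$\B$ correspondence, using $\rho_l$, whose image commutes with $\rho_r(\mathsf P^{\op})\supseteq\pi_r(\B)$. This gives $\H\cong L^2(\M)\otimes_\B\K_0$ with $\K_0=\rho_l(e_\B)\H$.

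Because the images of $\rho_l$ and $\rho_r$ commute, $\rho_r$ restricts to $\K_0$ and makes it a $\B$-$\M$ correspondence with a normal extension of the right action to $\mathsf P^{\op}$. I then apply the mirror image of the first part to $\K_0$, using $\rho_r(e_\B^{\op})$ and the conjugate correspondence. This yields $\K_0\cong\K\otimes_\B L^2(\M)$ with $\K=\rho_l(e_\B)\rho_r(e_\B^{\op})\H$. Finally, associativity of Connes fusion gives $\H\cong L^2(\M)\otimes_\B\K\otimes_\B L^2(\M)$ as $\M$-$\M$ correspondences.
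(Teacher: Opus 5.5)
Your proof is correct, but it reaches the lemma by a different route from the paper's.

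\textbf{The paper's route.} The paper proves only the bimodule version, in one stroke, via Morita equivalence. It views $\H$ as a $\mathsf P$-$\mathsf P$ correspondence through $\rho_l,\rho_r$ and sets $\K={_\B L^2(\M)}\otimes_{\mathsf P}\H\otimes_{\mathsf P}L^2(\M)_\B$. It then recovers $\H$ from $L^2(\M)\otimes_\B\K\otimes_\B L^2(\M)$ using associativity together with Sauvageot's isomorphism ${_{\mathsf P}}L^2(\M)\otimes_\B L^2(\M)_{\mathsf P}\cong{_{\mathsf P}}L^2(\mathsf P)_{\mathsf P}$.

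\textbf{Your route.} You realize $\K$ concretely as the corner $\rho_l(e_\B)\rho_r(e_\B^{\op})\H$ and write down the unitary $x\phi^{1/2}\otimes_\B\eta\mapsto x\eta$ by hand. This needs only $e_\B xe_\B=E_\B(x)e_\B$ and $\bigvee_{x\in\M}xe_\B x^\ast=1$. Up to canonical isomorphism, fusing $\H$ over $\mathsf P$ with ${_\B}L^2(\M)_{\mathsf P}$ is exactly this compression, so the two $\K$'s coincide.

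\textbf{Comparison.} The paper's argument is shorter and treats both actions symmetrically. It also exhibits the lemma as a formal consequence of the $\B$--$\mathsf P$ Morita equivalence implemented by $L^2(\M)$. Yours is more elementary and pinpoints where normality of $\rho$ enters, namely the fullness step. It also actually proves the one-sided statement that the paper only calls ``similar''. Finally, it only ever fuses over $\B$. This last point matters: $\mathsf P$ need not be $\sigma$-finite (e.g.\ $\B=\mathbb C$ with $L^2(\M)$ non-separable). So the paper's fusion over $\mathsf P$ implicitly needs the weight version of the relative tensor product, not the state-based one of Subsection~\ref{subsec:bimodules}.

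\textbf{Two routine points to make explicit.}
\begin{enumerate}
\item Normality of $T\mapsto T\otimes_\B 1$ in the easy direction. Embed ${_\B}\K$ into an amplification $L^2(\B)\otimes\ell^2(I)$. Then $T\otimes_\B 1$ becomes $(T\otimes 1)\tilde q$ for a projection $\tilde q\in J\B J\bar\otimes\mathbb B(\ell^2(I))$, which is clearly normal in $T$.
\item Right $\M$-linearity in the bimodule part. Your first unitary is also right $\M$-linear, because $\pi_r^\H(\M)$ commutes with $\pi_l^\H(\M)$ and preserves $\rho_l(e_\B)\H$. This is what makes the final isomorphism one of $\M$-$\M$ correspondences before you invoke associativity.
\end{enumerate}
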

\begin{proof}
We prove only the second part; the first is similar. 
Fix a faithful normal state $\omega$ on $\B$. If $_\M\H_\M={_\M L^2(\M)}\otimes_\omega \K\otimes_\omega {L^2(\M)_\M}$, one can define $\rho_l(S)=S\otimes_\omega 1_\K\otimes_\omega 1_{L^2(\M)}$ and $\rho_r(T^{\mathrm{op}})=1_{L^2(\M)}\otimes_\omega 1_\K\otimes_\omega J_\M T^\ast J_\M$.

    Conversely, let $_{\mathsf P}\H_{\mathsf P}$ be the $\mathsf P$-$\mathsf P$ correspondence defined by $\rho_l$ and $\rho_r$ and let $_\B \K_\B={_\B L^2(\M)}\otimes_{\mathsf P}\H\otimes_{\mathsf P} L^2(\M)_\B$. We have
    \begin{align*}
        {_\mathsf P}L^2(\M)\otimes_\B \K\otimes_\B L^2(\M)_{\mathsf P}&\cong {_\mathsf P}L^2(\M)\otimes_\B L^2(\M)\otimes_{\mathsf P}\H\otimes_{\mathsf P}L^2(\M)\otimes_\B L^2(\M)_{\mathsf P}\\
        &\cong{_\mathsf P}L^2(\mathsf P)\otimes_{\mathsf P}\H\otimes_{\mathsf P} L^2(\mathsf P)_{\mathsf P}\\
        &\cong{_\mathsf P}\H_{\mathsf P},
    \end{align*}
    where we used the isomorphism ${_\mathsf P}L^2(\M)\otimes_\B L^2(\M)_{\mathsf P}\cong {_\mathsf P}L^2(\mathsf P)_{\mathsf P}$ from \cite[Proposition 3.1]{Sau83}.
\end{proof}

Now let $\Phi$ be a $\phi$-symmetric ucp map on $\M$, and let 
 $\B\subseteq\M$ be an inclusion of von Neumann algebras  such that ${\Phi}_{|_\B}=\id_\B$. Then  $\Phi^{(2)}$ is a $\B$-$\B$ linear map so that $\Phi^{(2)}\in\B'\cap \langle\M,\B\rangle.$ Indeed, ${\Phi}_{|_\B}=\id_\B$ means that $\B$ lies in the multiplicative domain of $\Phi$ so that  $\Phi(b_1xb_2)=b_1\Phi(x)b_2$ for all $x\in \M, b_1,b_2\in \B$. Thus the $\phi$-symmetry of $\Phi$ yields  
\begin{align*}
   \Phi^{(2)}(\phi^{1/2}xb)=\phi^{1/2}\Phi(xb)=\phi^{1/2}\Phi(xb)=\phi^{1/2}\Phi(x)b=\Phi^{(2)}(\phi^{1/2}x)b,
\end{align*}
and since $\phi^{1/2}\M$ is dense in $L^2(\M)$, it follows that $\Phi^{(2)}$ is right $\B$-linear. Similarly, density of $\M\phi^{1/2}$ in $L^2(\M)$ will prove that $\Phi^{(2)}$ is left $\B$-linear.

Thus if $(\Phi_j)_{j\in J}$ is a net of GNS symmetric ucp maps on   $\M$ such that $(\Phi_j)_{|_\B}=\id_\B$ and  $\omega$ is a free ultrafilter on the directed set $J$, then $(\Phi_j^\omega)^{(2)}$ is $\B^\omega$-bilinear and thus $(\Phi_j^\omega)^{(2)}\in \langle\M^\omega,\B^\omega\rangle$ (see Subsec. \ref{subsec:QMS} for the notations).

\subsection{Relative amenability}
Let $\M$ be a von Neumann algebra, and let $\A,\B$ be unital subalgebras of $\M$ such that $\A$ admits a normal faithful conditional expectation $E_\A$. Following Ozawa-Popa \cite{OP10},
we say  that  $\A$ is {\em  amenable relative to $\B$ inside $\M$}, and write $\A\lessdot_\M\B$, if there exists a norm-one projection
$E:\langle\M,\B\rangle\to \A$ such that $E_{|_{\M}}=E_\A.$ 

 If $\B$ admits a normal faithful semifinite operator-valued weight, then Isono \cite[Theorem 3.2]{Iso22} shows  that the above definition does not depend on the choice of the expectation $E_\A$.
 One also defines a notion of semi-discreteness of $\A$ relative to $\B$ and  the two notions are equivalent when $\A$ is semifinite \cite[Lemma A.5]{Iso22} or when $\A=\M$ \cite[Corollary A.2]{BMO20}.
 
The following Proposition and its Corollary give a characterization of relative amenability in terms of  certain states on $\langle\M,\B\rangle$. This is a non-tracial version of the first three statements of \cite[Theorem 2.1]{OP10} with $\A$-central states being replaced by states satisfying  KMS-type conditions.

\begin{proposition}\label{Prop:Equivalent conditions on relative amenability in terms of states with extra assumption on summand}
    Let $\M$ be a  von Neumann algebra equipped with a normal faithful state $\varphi$, and let $\A$ be a subalgebra of $\M$  with $\varphi$-preserving expectation $E_\A$. Denote by $\M_\infty$ the algebra of $\sigma^\varphi$-analytic vectors of $\M$. Assume that $\A'\cap\M$ has no summand that is both semifinite and properly infinite. For a   $C^\ast$-algebra $\mathcal{B}$ containing $\M$ as a unital subalgebra, the following are equivalent:
    \begin{enumerate}
     \item There exists a norm-one projection $\Phi$ from $\mathcal{B}$ onto $\A$ such that $\Phi_{|_{\M}}=E_\A$.\label{existence of a conditional expectation}
      \item There exists a state $\psi:\mathcal{B}\to\mathbb C$ and a constant $C>0$ such that $\psi_{|_{\mathcal{Z}(\A'\cap\M)}}$ is faithful and   
        \[\psi_{|_{\M}}\leq C\varphi, \;\;\;\;\psi(Tb)=\psi(\sigma_i^\varphi (b)T)\]
        for $T\in \mathcal{B}$ and $b\in \A\cap \M_\infty$. \label{existence of a state dominated by phi}
        \item There exists a state $\psi:\mathcal{B}\to\mathbb C$ and a positive element  $z\in \mathcal{Z}(\A'\cap\M)$ with full support  such that  
       \[\psi(x)=\varphi(xz),\;\;\;\;\;\psi(Tb)=\psi(\sigma_i^\varphi(b)T)\]
        for  all $x\in \M$, $T\in \mathcal{B}$ and $b\in \A\cap \M_\infty$. \label{existence of a state conjugated by a centralizer element}
     \item There exists a  state $\psi:\mathcal{B}\to \mathbb  C$ such that $\psi_{|_{\M}}=\varphi$ and satisfies 
     \[\psi(Tb)=\psi(\sigma_i^\varphi(b)T)\]
            for $T\in \mathcal{B}$ and $b\in \A\cap \M_\infty$.\label{existence of a state whose restriction is phi}
    \end{enumerate}
    \end{proposition}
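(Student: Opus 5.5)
I would prove the cycle $(1)\Rightarrow(4)\Rightarrow(2)\Rightarrow(3)\Rightarrow(1)$. The hypothesis on $\A'\cap\M$ should be needed only in the last step.

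\emph{$(1)\Rightarrow(4)$.} Given $\Phi$, put $\psi=\varphi\circ\Phi$. Then $\psi_{|_\M}=\varphi\circ E_\A=\varphi$. By Tomiyama's theorem, a norm-one projection onto $\A$ is an $\A$-bimodule map, so $\Phi(Tb)=\Phi(T)b$ and $\Phi(\sigma_i^\varphi(b)T)=\sigma_i^\varphi(b)\Phi(T)$ for $b\in\A\cap\M_\infty$. Since $E_\A$ is $\varphi$-preserving, $\varphi_{|_\A}$ has modular group $\sigma^\varphi_{|_\A}$. The KMS condition for $\varphi$ on $\A$, namely $\varphi(ab)=\varphi(\sigma_i^\varphi(b)a)$, then gives $\psi(Tb)=\psi(\sigma^\varphi_i(b)T)$.

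\emph{$(4)\Rightarrow(2)$.} This is trivial: take $C=1$, and note that $\varphi$ is faithful on the center.

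\emph{$(2)\Rightarrow(3)$.} By the Radon--Nikodym statement in Subsection~\ref{SubSec: Standard form}, there is $0\le a\le C$ with $\psi(x)=\langle\varphi^{1/2},x\varphi^{1/2}a\rangle$. We need $a$ to lie in $\mathcal Z(\A'\cap\M)$. Apply the KMS relation with $T=x\in\M$ and $b\in\A\cap\M_\infty$. Comparing with the KMS relation of $\varphi$ shows that the Connes cocycle of $\psi_{|_\M}$ relative to $\varphi$ commutes with $\A$; in other words, $\psi_{|_\M}$ restricted to $\A$ is KMS for $\sigma^\varphi$. The standard commutation theorem (Pedersen--Takesaki) would then identify $\psi_{|_\M}=\varphi(\cdot\, z)$ with $z\ge0$ affiliated to, and here bounded in, $\A'\cap\M_\varphi$.

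Centrality of $z$ in $\A'\cap\M$ does not follow directly. I would force it by averaging $\psi$ over unitaries $u\in\mathcal U(\A'\cap\M)$ that are $\sigma^\varphi$-analytic (Lemma~\ref{lem: density of analytic unitaries}), using $\psi_u=\psi(u^\ast\cdot\,u)$ twisted by the modular group so that the KMS relation is preserved. I would then take a weak$^\ast$ limit point of convex combinations, via a Dixmier-type averaging in the finite part. This is where the assumption that $\A'\cap\M$ has no semifinite properly infinite summand enters. On type III summands, the center-valued averaging should work because the relevant averages of $\varphi(\cdot\,z)$ are still dominated by $C\varphi$. On finite summands, the Dixmier property gives a central element. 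Faithfulness on the center is preserved by the averaging, so $z$ has full support.

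\emph{$(3)\Rightarrow(1)$.} This is the main step. Since $z$ has full support and lies in $\mathcal Z(\A'\cap\M)$, set $z_n=z^{-1/2}1_{[1/n,\infty)}(z)$ and replace $\psi$ by the states $\psi_n(T)=\psi(z_nTz_n)/\psi(z_n^2)$, or work directly on $1_{[1/n,\infty)}(z)$. Note that $z$ commutes with $\A$, and that $z$ lies in $\M_\varphi$ because $\varphi(\cdot\, z)$ is KMS. These together keep the KMS relation with respect to $\A$ and give restriction to $\M$ equal to a compression of $\varphi$.

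Next I would build $\Phi$ by duality. For $T\in\mathcal B_+$, the functional $a\mapsto\psi(a^{1/2}\,T\,a^{1/2})$, or more precisely the modular-twisted version $b\mapsto\psi(\sigma_{-i/2}^\varphi(b)^\ast T\sigma_{-i/2}^\varphi(b))$ on analytic $b$, is dominated by $\|T\|\psi$ and hence by $\|T\|\varphi_{|_\A}$. The KMS relation guarantees that it is positive and bounded. Applying Radon--Nikodym in the form ``$\psi\le C\varphi$ iff density in $\A$'' on $L^2(\A)$ yields a unique $\Phi(T)\in\A$ with $\psi(b^\ast Tc)$ expressed through $\varphi(b^\ast\Phi(T)c)$, with analytic twists. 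Positivity, unitality, $\A$-bimodularity and $\Phi_{|_\M}=E_\A$ follow from uniqueness. Unitality gives norm one.

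Patching over the projections $1_{[1/n,\infty)}(z)$ uses centrality of $z$ in $\A'\cap\M$. The obstacle I expect is making the bimodule/analytic-continuation bookkeeping rigorous on the non-analytic part. To handle it, I would define $\Phi$ first on $\M_\infty$-twisted pairings and extend by density, as in Lemma~\ref{lem:bound_Stinespring}.
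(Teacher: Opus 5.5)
Your steps $(1)\Rightarrow(4)$ and $(4)\Rightarrow(2)$ are correct. Your $(3)\Rightarrow(1)$ is a heavier but workable version of the paper's $(3)\Rightarrow(4)\Rightarrow(1)$, and no patching is needed there. The unnormalized functionals $\psi(z_\epsilon\cdot z_\epsilon)$ with $z_\epsilon=\chi_{(\epsilon,\infty)}(z)z^{-1/2}$ have mass at most $1$, and any weak$^\ast$ limit point restricts to exactly $\varphi$ on $\M$. The untwisted form $(x\xi_\varphi,y\xi_\varphi)\mapsto\psi(x^\ast Ty)$ on $\A\xi_\varphi$ is then bounded by Cauchy--Schwarz, and by the KMS relation it defines an operator commuting with $J_\varphi\A J_\varphi$.

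The genuine gap is in $(2)\Rightarrow(3)$. First, your identification $\psi_{|_\M}=\varphi(\cdot\,z)$ with $z\in\A'\cap\M_\varphi$ via Pedersen--Takesaki is false. The KMS relation only involves $b\in\A$, so it says nothing about $\sigma^\varphi$-invariance of $\psi_{|_\M}$; for $\A=\mathbb C$ it is vacuous, and $\psi_{|_\M}$ is an arbitrary functional below $C\varphi$. What Radon--Nikodym plus KMS actually give is $\psi(x)=\langle\varphi^{1/2},x\varphi^{1/2}a\rangle$ with $0\le a\le C$ and $a\in\A'\cap\M$. You get this by comparing $\psi(yb)$ with $\psi(\sigma^\varphi_i(b)y)$, which yields $a\sigma^\varphi_{i/2}(b)=\sigma^\varphi_{i/2}(b)a$. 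In general $a$ is not in the centralizer. The twisted averages $\psi(\sigma^\varphi_{-i/2}(u)^\ast\,\cdot\,\sigma^\varphi_{-i/2}(u))$ replace $a$ by $uau^\ast$. So what you must produce is a central element of $\overline{\mathrm{co}}^{wk}\{uau^\ast:u\in G\}$ that still has full support, where $G$ is the group of analytic unitaries of $\A'\cap\M$.

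That is the missing idea, and your justification for it does not work:
\begin{itemize}
\item Domination by $C\varphi$ only bounds weak$^\ast$ limit points; it gives neither centrality nor support.
\item The averages do not preserve $\psi$ on $\mathcal Z(\A'\cap\M)$: one has $\psi_u(q)=\varphi(uqau^\ast)\neq\varphi(qa)=\psi(q)$ in general, because $u\notin\M_\varphi$. So ``faithfulness on the center is preserved'' is unfounded.
\item Plain Dixmier averaging can send nonzero positive elements to $0$ in properly infinite algebras, type III included. In a type III factor, for a projection $p\neq0,1$ there are unitaries with $\frac1n\sum_{k=1}^n u_kpu_k^\ast=\frac1n$, so $0$ lies in the Dixmier hull of $p$.
\end{itemize}

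What is needed is the weak relative Dixmier property (\cite[Theorem 2.2]{Iso25}, \cite[Theorem 5.2]{Mar25}). If $\A'\cap\M$ has no semifinite properly infinite summand, there is a \emph{faithful} normal conditional expectation $F\colon\A'\cap\M\to\mathcal Z(\A'\cap\M)$ with $F(a)\in\overline{\mathrm{co}}^{wk}\{uau^\ast:u\in G\}$. This deep result, not a domination estimate, is where the hypothesis enters. Put $z=F(a)\in\mathcal Z(\A'\cap\M)\subseteq\M_\varphi$. A weak$^\ast$ limit point of the averaged functionals then restricts to $\varphi(\cdot\,z)$ on $\M$ and keeps the KMS relation. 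Full support comes from the faithfulness of $F$. If $p$ is the kernel projection of $z$, then $F(pap)=pzp=0$ forces $pa=0$. Hence $\psi(p)=\varphi(pa)=0$, and $p=0$ because $\psi$ is faithful on the center.
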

\begin{proof}
    (\ref{existence of a conditional expectation})$\implies$(\ref{existence of a state dominated by phi}) Take $\psi=\varphi\circ \Phi$.
    
    (\ref{existence of a state dominated by phi})$\implies$(\ref{existence of a state conjugated by a centralizer element}).  The condition $\psi_{|_\M}\leq C \varphi$ implies that there exists $a\in \M_+$ with    $0\leq a\leq C$ such that   $\psi(x)=\langle \varphi^{1/2},x\varphi^{1/2}a\rangle$  for $x\in \M$ (\cite[Proposition 2.3]{Str20}).
    We claim that $a\in \A'\cap\M$. Indeed, if $b\in \A,y\in \M$ and $b$ is analytic, then
    \begin{align*}
    \langle \varphi^{1/2}, y\varphi^{1/2}\sigma_{i/2}^\varphi(b)a\rangle&= \langle \varphi^{1/2}, yb\varphi^{1/2}a\rangle=\psi( yb)
    =\psi(\sigma_i^\varphi(b) y)\\
    &=\langle \sigma_{-i}^\varphi(b^\ast)\varphi^{1/2}, y\varphi^{1/2}a\rangle=\langle\varphi^{1/2},y \varphi^{1/2}a\sigma_{i/2}^\varphi(b)\rangle
    \end{align*}
    and since $y\in \M$  is arbitrary, it follows that $a\sigma_{i/2}^\varphi(b)=\sigma_{i/2}^\varphi(b)a$ for all $b\in \A\cap\M_\infty$. Since $\A\cap\M_\infty$ is globally invariant under $\sigma_z^\varphi, z\in \mathbb C$ and since $\A\cap\M_\infty$ is weakly dense in $\A$, our claim follows.

Now  denote by $G$ the subgroup of $\mathcal{U}(\A'\cap\M)$ consisting of $\sigma^\varphi$-analytic unitaries. Since $\A'\cap\M$ is globally invariant under $\sigma^\varphi$, it follows from Lemma \ref{lem: density of analytic unitaries} that $G$ is $\sigma$-weak dense in $\mathcal{U}(\A'\cap\M)$.
By weak Dixmier property (\cite[Theorem 2.2]{Iso25}, \cite[Theorem 5.2]{Mar25}), there is a normal faithful conditional expectation $F:\A'\cap\M\to\mathcal{Z}(\A'\cap\M)$ such that 
\begin{align*}
 F(a)\in \overline{\text{co}}^{\text{wk}}\{uau^\ast; u\in G\},
\end{align*}
where $\overline{\text{co}}^{\text{wk}}$ denotes the weak closure of the convex hull of a set. This is where we needed the fact that $\A'\cap\M$ contains no semifinite and properly infinite summand, otherwise $F$ need not be faithful.
Set
\[z=F(a).\]
Since $a\neq0$ is positive,  the faithfulness of $F$ yields $z\neq 0$; hence $\phi(z)\neq0$. We will define a new positive functional $\tilde\psi$ on $\mathcal{B}$ which will satisfy $\tilde\psi(x)=\varphi(zx)$ for $x\in \M$ and  the remaining
conditions in the statement. By normalizing $\tilde\psi$ by $\phi(z)$, we will get the required state.

 Let $\{a_j\}_{j\in \Lambda}$ be a net in $\text{co}\{uau^\ast; u\in G\}$ such that $z=\text{wk}-\lim_j a_j$ for some directed set $\Lambda$. Write  $a_j=\sum_{u\in F_j}\lambda_u uau^\ast$ for some finite subset $F_j$ of $G$ and nonnegative scalars $\{\lambda_u; u\in F_j\}$ with $\sum_{u\in F_j}\;\lambda
_u=1.$ For each $j\in \Lambda$, consider the positive linear functional $\psi_j$ on $\mathcal{B}$ defined by
\begin{align*}
    \psi_j(T)=\sum_{u\in F_j}\lambda_u\psi(\sigma_{-i/2}^\varphi(u)^\ast T\sigma_{-i/2}^\varphi(u)),\;\;T\in \mathcal{B}.
\end{align*}
For $x\in \M$, we have
\begin{align*}
    \psi_j(x)&=\sum_{u\in F_j}\lambda_u\psi(\sigma_{-i/2}^\varphi(u)^\ast x\sigma_{-i/2}^\varphi(u))=\sum_{u\in F_j}\lambda_u\langle\varphi^{1/2}, \sigma_{-i/2}(u)^\ast x\sigma_{-i/2}^\varphi(u)\varphi^{1/2}a\rangle\\
    &=\sum_{u\in F_j}\lambda_u\langle  \sigma_{-i/2}^\varphi(u)\varphi^{1/2}, x\sigma_{-i/2}^\varphi(u)\varphi^{1/2}a\rangle=\sum_{u\in F_j}\lambda_u\langle  \varphi^{1/2}u, x\varphi^{1/2}ua\rangle\\
    &=\sum_{u\in F_j}\lambda_u\langle\varphi^{1/2}, x\varphi^{1/2}uau^\ast\rangle=\langle\varphi^{1/2}, x\varphi^{1/2}a_j\rangle.
\end{align*}
Since $\|a_j\|\leq \|a\|\leq C$, we get ${\psi_j}_{|_\M}\leq C\varphi$; in particular, $\|\psi_j\|=\psi_j(1)\leq C$. Let $\tilde\psi$ be a weak$^\ast$ limit point of the net $\{\psi_j\}_{j\in \Lambda}$, which exists because the net $\psi_j$ is uniformly bounded. Since $\{a_j\}_{j\in \Lambda}$ converges to $z$ and since $z\in \mathcal{Z}(\A'\cap\M)\subseteq\M^\varphi$,  it follows for all $x\in \M$ that
\begin{align*}
    \tilde\psi(x)&=\Lim_j\psi_j(x)\\
    &=\Lim_j\langle\varphi^{1/2},x\varphi^{1/2}a_j\rangle\\
    &=\langle\varphi^{1/2}, x\varphi^{1/2}z\rangle\\
    &=\phi(xz)\\
    &=\varphi(z^{1/2}xz^{1/2}).
\end{align*}
Moreover, for all $T\in \mathcal{B}$ and $b\in \A\cap\M_\infty$, since $\sigma_{i/2}^\varphi(u)$ commutes with $b$ for all $u\in G$, we obtain
\begin{align*}
  \psi(\sigma_{-i/2}^\varphi(u)^\ast Tb\sigma_{-i/2}^\varphi(u))&=\psi(\sigma_{-i/2}^\varphi(u)^\ast T\sigma_{-i/2}^\varphi(u)b)\\
  &=\psi(\sigma_i^\varphi(b)\sigma_{-i/2}^\varphi(u)^\ast T\sigma_{-i/2}^\varphi(u))\\
  &=\psi(\sigma_{-i/2}^\varphi(u)^\ast \sigma_i^\varphi(b)T\sigma_{-i/2}^\varphi(u))
\end{align*}
so that $\psi_j(Tb)=\psi_j(\sigma_i^\varphi(b)T)$ and thus $\tilde\psi(Tb)=\tilde\psi(\sigma_i^\varphi(b)T)$. Finally, if $p$ is the kernel projection of $z$ then $p\in \mathcal{Z}(\A'\cap\M)\subseteq\M^\varphi$ and we have
\begin{align*}
    \psi(p)&=\langle\varphi^{1/2}, p\varphi^{1/2}a\rangle\\
    &=\langle\varphi^{1/2}, \varphi^{1/2}pa\rangle\\
    &=\phi(pa).
\end{align*}
However, by bimodularity of $F$, we have $F(pap)=pF(a)p=pzp=0$. Since $pap\geq0$, by faithfulness of $F$, we have $pap=0$; hence $pa=0$. Thus we get $\psi(p)=0$,
and since $\psi$ is faithful on $\mathcal{Z}(\A'\cap\M)$, it follows that $p=0$  proving that $z$ has full support. 

(\ref{existence of a state conjugated by a centralizer element})$\implies$(\ref{existence of a state whose restriction is phi}) Let $\psi$ and $z$ be as in the statement of (\ref{existence of a state conjugated by a centralizer element}). Set $z_\epsilon=\chi_{(\epsilon,\infty)}z^{-1/2}$ and define the positive functional $\psi_\epsilon$ on $\mathcal{B}$ by $\psi_\epsilon(T)=\psi(z_\epsilon Tz_\epsilon)$, $T\in\mathcal{B}$. Since
\[\psi_\epsilon(1)=\psi(z_\epsilon^{2})=\varphi(z_\epsilon^2z)=\varphi(\chi_{(\epsilon,\infty)}(z))\leq \varphi(1)\] it follows that $\psi_\epsilon$ is a uniformly bounded net. Let $\tilde\psi=\Lim_{\epsilon\to 0}\psi_\epsilon$ be a weak$^\ast$ limit of $\psi_\epsilon$. Since $z$ has full support, $\chi_{(\epsilon,\infty)}(z)\to 1$ $\sigma$-weakly as $\epsilon\to0$ which further yields 
\begin{align*}
    \tilde\psi(x)=\Lim_{\epsilon\to0}\varphi(z_\epsilon xz_\epsilon z)=\Lim_{\epsilon\to0}\varphi(\chi_{(\epsilon,\infty)}(z)x)=\varphi(x),\;\;\forall x\in \M.
\end{align*}Moreover, if $b\in\A\cap\M_\infty$  and $T\in \mathcal{B}$, then since $z$ commutes with $\A$, it follows that
    \begin{align*}
        \psi_\epsilon(Tb)&=\psi(z_\epsilon Tbz_\epsilon)\\
        &=\psi(z_\epsilon Tz_\epsilon b)\\
        &=\psi(\sigma^\phi_i(b)z_\epsilon Tz_\epsilon)\\
        &=\psi(z_\epsilon\sigma^\phi_i(b)Tz_\epsilon)\\
        &=\psi_\epsilon(\sigma^\phi_i(b)T),
    \end{align*}
    and this property is clearly preserved in the weak$^\ast$ limit.

$(\ref{existence of a state whose restriction is phi})\implies(\ref{existence of a conditional expectation})$ 
Denote by $\xi_\varphi$ the cyclic and separating  vector in $L^2(\A,\varphi_{|_\A})$ such that $\varphi(b)=\langle\xi_\varphi,b\xi_\varphi\rangle$ for $b\in \A$.
For every $T\in \mathcal{B}$, consider the sesquilinear form on $\A\xi_\varphi\times \A\xi_\varphi$ given by
    \[(x\xi_\varphi, y\xi_\varphi)\mapsto \psi(x^*Ty),\;\;\;x,y\in \A.\]
    This form is bounded because
    \begin{align*}
        |\psi(x^*Ty)|^2\leq \|T\|^2\psi(x^*x)\psi(y^*y)= \|T\|^2\varphi(x^*x)\varphi(y^*y)=\|T\|^2\|x\xi_\varphi\|^2\|y\xi_\varphi\|^2.
    \end{align*}
    Hence there exists an operator $\Phi(T)\in B(L^2(\A,\varphi))$ with $\norm{\Phi(T)}\leq \norm{T}$ such that 
    \[\langle x\xi_\varphi, \Phi(T)y\xi_\varphi\rangle=\psi(x^*Ty),\;\;\;x,y\in \A.\]
    We identify $L^2(\A,\varphi_{|_\A})$ as a subspace of $L^2(\M,\varphi)$ via the inclusion $b\xi_\varphi\mapsto b\varphi^{1/2}$ and write $J_\varphi$ for the modular conjugation operator  in  $L^2(\A,\varphi_{|_\A})$.
   To show that $\Phi(T)\in \A$, one calculates the following  for all $x,y,z\in \A$ with $y\in\A\cap\M_\infty$:
    \begin{align*}
        \langle x\xi_\varphi, \Phi(T)J_\varphi yJ_\varphi z\xi_\varphi\rangle&=\langle x\xi_\varphi, \Phi(T)z\sigma_{-i/2}^\varphi(y^*)\xi_\varphi\rangle\\
        &=\psi(x^*Tz\sigma_{-i/2}^\varphi(y^*))\\
        &=\psi(\sigma_{i/2}^\varphi(y^*)x^*Tz)\;\;\;\;(\text{by hypothesis})\\
        &=\langle x\sigma_{-i/2}^\varphi(y)\xi_\varphi, \Phi(T)z\xi_\varphi\rangle\\
        &=\langle J_\varphi y^*J_\varphi x\xi_\varphi, \Phi(T)z\xi_\varphi\rangle\\
        &=\langle x\xi_\varphi, J_\varphi yJ_\varphi \Phi(T)z\xi_\varphi\rangle
    \end{align*}
    which shows that $\Phi(T)J_\varphi yJ_\varphi=J_\varphi yJ_\varphi \Phi(T)$. Since $\A\cap\M_\infty$ is weakly dense in $
\A$, it follows that $\Phi(T)\in (J_\varphi\A J_\varphi)'\cap B(L^2(\A,\varphi_{|_\A}))= \A$. For $a\in \M$, we moreover have
    \begin{align*}
        \langle x\xi_\varphi,\Phi(a)z\xi_\varphi\rangle&=\psi(x^*az)\\
        &=\varphi(x^\ast a z)\\
        &=\langle x\varphi^{1/2},az\varphi^{1/2}\rangle\\
        &=\langle x\varphi^{1/2}, E_\A(a)z\varphi^{1/2}\rangle\\
        &
        =\langle x\xi_\varphi, E_\A(a)z\xi_\varphi\rangle
    \end{align*}
    proving that $\Phi(a)=E_\A(a)$. Therefore, $\Phi\colon \mathcal{B}\to \A$ is a norm-one projection such that $\Phi_{|_\M}=E_\A$.
\end{proof}

  We can remove in the previous Proposition  the condition that $\A'\cap\M$ contains no semifinite properly infinite summand in the following result.

\begin{theorem}\label{thm:conditions for relative amenability}
  Let $\M$ be a  von Neumann algebra equipped with a normal faithful state $\varphi$, and let $\A$ be a subalgebra of $\M$  with $\varphi$-preserving expectation $E_\A$. Denote by $\M_\infty$ the algebra of $\sigma^\varphi$-analytic vectors of $\M$.  For a   $C^\ast$-algebra $\mathcal{B}$ containing $\M$ as a unital subalgebra, the following are equivalent:
    \begin{enumerate}
     \item There exists a norm-one projection $\Phi$ from $\mathcal{B}$ onto $\A$ such that $\Phi_{|_{\M}}=E_\A$.\label{cor:existence of a conditional expectation1}
      \item There exists a state $\psi:\mathcal{B}\to\mathbb C$ and a constant $C>0$ such that $\psi_{|_{\mathcal{Z}(\A'\cap\M)}}$ is faithful and   
        \[\psi_{|_{\M}}\leq C\varphi, \;\;\;\;\psi(Tb)=\psi(\sigma_i^\varphi (b)T)\]
        for $T\in \mathcal{B}$ and $b\in \A\cap \M_\infty$. \label{cor:existence of a state dominated by phi}
     \item There exists a  state $\psi:\mathcal{B}\to \mathbb  C$ such that $\psi_{|_{\M}}=\varphi$ and satisfies 
     \[\psi(Tb)=\psi(\sigma_i^\varphi(b)T)\]
            for $T\in \mathcal{B}$ and $b\in \A\cap \M_\infty$.\label{cor:existence of a state whose restriction is phi}
    \end{enumerate}
\end{theorem}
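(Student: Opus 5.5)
The implications (1)$\Rightarrow$(2) (take $\psi=\varphi\circ\Phi$) and (3)$\Rightarrow$(1) (the sesquilinear form construction) were proved in Proposition \ref{Prop:Equivalent conditions on relative amenability in terms of states with extra assumption on summand} without using the hypothesis on $\A'\cap\M$. So only (2)$\Rightarrow$(3) needs a new argument. There the hypothesis entered only through the weak Dixmier property, which yields a faithful conditional expectation $F$ onto $\mathcal Z(\A'\cap\M)$ with $F(a)$ in the closed convex hull of analytic unitary conjugates.

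To remove the hypothesis, I would decompose by a central projection. Let $p\in\mathcal Z(\A'\cap\M)$ be the central projection onto the semifinite properly infinite summand of $\A'\cap\M$, and put $q=1-p$. Since $\A'\cap\M$ is $\sigma^\varphi$-invariant, its center lies in $\M^\varphi$; in particular $p,q$ commute with $\A$ and are fixed by $\sigma^\varphi$.

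On the $q$-corner I would run the earlier argument. Restricting $\psi$ and replacing $\M$ by $q\M q$ and $\A$ by $\A q$, the relative commutant $(\A q)'\cap q\M q=q(\A'\cap\M)q$ has no semifinite properly infinite summand. This produces a functional $\psi_q$ on $q\mathcal Bq$ (extended by compression $T\mapsto \psi_q(qTq)$) that restricts to $\varphi(q\,\cdot\, q)$ on $\M$ and satisfies the KMS relation for $b\in\A\cap\M_\infty$. The KMS relation survives compression because $q$ commutes with $\A$ and is $\sigma^\varphi$-fixed, so $qbq=bq$ and $\sigma_i^\varphi(b)q=q\sigma_i^\varphi(b)$.

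On the $p$-corner I would avoid the Dixmier averaging and tensor with a type $\mathrm{II}_1$ or matrix trick instead. Since $p(\A'\cap\M)p$ is properly infinite, there are isometries $v_n$ in it with orthogonal ranges summing to $p$. If I can arrange these to lie in the centralizer $\M^\varphi$, the functionals $T\mapsto\psi(v_n^\ast T v_n)$ satisfy the KMS relation and have comparable densities. I would then use the density $a$ from (2) together with a Cesàro average over unitaries built from the $v_n$ to produce a density in $\mathcal Z$.

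A cleaner alternative is to apply the Proposition to $\tilde\M=\M\bar\otimes R$ with $R$ the hyperfinite $\mathrm{II}_1$ factor, $\tilde\A=\A\otimes 1$, $\tilde{\mathcal B}=\mathcal B\otimes_{\min}R$ and state $\varphi\otimes\tau$. This does not remove the properly infinite summand, however, so I would instead use $\A\bar\otimes R$. Then $(\A\bar\otimes R)'\cap(\M\bar\otimes R)=(\A'\cap\M)\otimes 1$, which still fails. The better fix is to tensor $\M$ with a type $\mathrm{III}_1$ factor $\mathsf R_\infty$ with state $\chi$, keeping $\tilde\A=\A\otimes 1$. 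The relative commutant $(\A'\cap\M)\bar\otimes\mathsf R_\infty$ then has no semifinite summand, since its type $\mathrm{III}$ part is everything. The state $\psi\otimes\chi$ on $\mathcal B\otimes_{\min}\mathsf R_\infty$ satisfies (2) relative to $\varphi\otimes\chi$: for $b\in\A$, $\sigma^{\varphi\otimes\chi}_i(b\otimes1)=\sigma^\varphi_i(b)\otimes 1$, and the KMS relation follows from that of $\psi$ on elementary tensors together with continuity. Faithfulness on $\mathcal Z((\A'\cap\M)\bar\otimes\mathsf R_\infty)=\mathcal Z(\A'\cap\M)\otimes 1$ is inherited.

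The Proposition then gives a norm-one projection $\tilde\Phi\colon\mathcal B\otimes_{\min}\mathsf R_\infty\to\A\otimes1$ extending $E_\A\otimes\chi$. Composing with the inclusion $T\mapsto T\otimes1$ yields $\Phi$ as in (1), and hence (3). The main obstacle I expect is the verification that $\psi\otimes\chi$ is a well-defined state on the minimal tensor product dominated by $C(\varphi\otimes\chi)$ on $\M\bar\otimes\mathsf R_\infty$. It is only dominated on the algebraic tensor product, so I would extend the domination by normality of $\varphi\otimes\chi$ and the Radon–Nikodym description of functionals $\le C\varphi$.
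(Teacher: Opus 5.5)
Your reduction to (2)$\Rightarrow$(3) is sound, and so is the idea of eliminating the semifinite properly infinite part of $\A'\cap\M$ by tensoring with a type III factor. Indeed, $(\A\otimes1)'\cap(\M\bar\otimes\mathsf R_\infty)=(\A'\cap\M)\bar\otimes\mathsf R_\infty$ is of type III with center $\mathcal Z(\A'\cap\M)\otimes1$, and both faithfulness on the center and the KMS relation pass to $\psi\otimes\chi$.

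The gap is in how you invoke Proposition~\ref{Prop:Equivalent conditions on relative amenability in terms of states with extra assumption on summand}. It needs a $C^\ast$-algebra containing the von Neumann algebra $\tilde\M=\M\bar\otimes\mathsf R_\infty$ as a unital subalgebra. But $\mathcal B\otimes_{\min}\mathsf R_\infty$ only contains $\M\otimes_{\min}\mathsf R_\infty$, the norm closure of the algebraic tensor product, not $\tilde\M$. This is not cosmetic. The proof of (2)$\Rightarrow$(3) there writes $\psi|_{\tilde\M}$ through a density $a\in\tilde\M$. It then averages $\psi$ over $T\mapsto\psi(\sigma_{-i/2}(u)^\ast T\sigma_{-i/2}(u))$, where $u$ runs over analytic unitaries of $(\A'\cap\M)\bar\otimes\mathsf R_\infty$, and these are in general not in $\mathcal B\otimes_{\min}\mathsf R_\infty$.

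The obstacle you flag is not the real one. Domination on $\M\otimes_{\min}\mathsf R_\infty$ is automatic, since $(C\varphi-\psi|_\M)\otimes\chi\geq 0$. Extending $\psi|_\M\otimes\chi$ normally to $\tilde\M$ only gives a functional on $\tilde\M$. What you need is a KMS state on one algebra containing both $\mathcal B\otimes 1$ and $\tilde\M$.

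The missing step is to make the inclusion $\M\subseteq\mathcal B$ normal before tensoring.
\begin{itemize}
\item Since $\psi|_\M\leq C\varphi$ is normal, pass to $\mathcal B^{\ast\ast}$. Let $e$ be the image of the central projection $z\in\M^{\ast\ast}$ with $\M^{\ast\ast}z\cong\M$.
\item Then $e$ commutes with $\M$, and $x\mapsto xe$ is a normal unital embedding of $\M$ into the von Neumann algebra $e\mathcal B^{\ast\ast}e$.
\item Moreover $\psi^{\ast\ast}(1-e)=0$, and $\psi^{\ast\ast}|_{e\mathcal B^{\ast\ast}e}$ is a normal state still satisfying (2). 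The KMS relation extends to $\mathcal B^{\ast\ast}$ by weak$^\ast$-continuity, and it survives compression because $Te=T$ and $e$ commutes with $\A$.
\item Now $e\mathcal B^{\ast\ast}e\bar\otimes\mathsf R_\infty$ contains $\M e\bar\otimes\mathsf R_\infty$. The state $\psi^{\ast\ast}\bar\otimes\chi$ is normal, and the KMS relation passes from elementary tensors by normality.
\item Finally, $\Phi(T)=\tilde\Phi(eTe\otimes 1)$ gives (1).
\end{itemize}

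With this repair your argument is a genuine alternative to the paper's. The paper keeps $\mathcal B$ fixed and exhausts the semifinite properly infinite summand by finite projections in $\A'\cap\M^\varphi$ on which $\psi$ remains faithful on the center (Lemma~\ref{lem:existence of certain finite projections}). It applies the Proposition to the finite corners, takes an ultralimit of the resulting states, and glues this with the complementary corner. Your route trades that lemma and the limiting argument for the bidual reduction and the fact that tensoring with a type III factor yields a type III algebra. Once you tensor globally, your preliminary corner decomposition and the isometry sketch are unnecessary.
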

\begin{proof}
    We only need to prove (\ref{cor:existence of a state dominated by phi})$\implies$(\ref{cor:existence of a state whose restriction is phi}) because the other implications  follow the same procedure as in Proposition \ref{Prop:Equivalent conditions on relative amenability in terms of states with extra assumption on summand}. 
    Let $\psi:\mathcal{B}\to\mathbb C$ be a given state on $\mathcal{B}$ satisfying the hypotheses of (\ref{existence of a state dominated by phi}).
    
    Let $q\in \mathcal{Z}(\A'\cap\M)$ be  the maximal projection such that $q(\A'\cap\M)$ is semifinite and properly infinite so that $(1-q)(\A'\cap\M)$ contains no semifinite properly infinite summand. If $q=0$,  the conclusion follows from Proposition \ref{Prop:Equivalent conditions on relative amenability in terms of states with extra assumption on summand}; so assume that $q\neq 0$.  We emphasize that $q\in \M^\varphi$ and we will repeatedly use the fact $\sigma_z^\varphi(q)=q$ for $z\in\mathbb C$.  Denote by \[\varphi_q:=\varphi(q\cdot q)/\varphi(q)\] the state on $q\M q$. 
By Lemma \ref{lem:existence of certain finite projections} below, there is a sequence of  projections in  the centralizer $q_n\in q(\A'\cap\M)^{\phi_{q}}=q(\A'\cap\M^\phi)$ such that $q_n\to q$ $\sigma$-strongly, each $q_nq(\A'\cap\M)q_n$ is a finite von Neumann algebra and $\psi(q_n\cdot q_n)_{|_{q_n\mathcal{Z}(\A'\cap\M)}}$ is faithful.

Consider the  states $\psi_n$ and $\varphi_n$ on the compression algebras $q_n\mathcal{B} q_n$ and $q_n\M q_n$ respectively given  by  \[\psi_n(\cdot)=\psi(q_n\cdot q_n)/\psi(q_n) \qquad\mbox{ and }\qquad \varphi_n:=\varphi(q_n\cdot q_n)/\varphi(q_n).\]
  By virtue of $q_n$ being in  $\M^\varphi$ and commuting with $\A$,   each $\psi_n$ satisfies 
   \[{\psi_n}_{|_{q_n\M q_n}}\leq C\frac{\phi(q_n)}{\psi(q_n)}\varphi_{n}\qquad\mbox{and}\qquad\psi_n(Tb)=\psi_n(\sigma_i^{\varphi_n}(b)T)\] for all $T\in q_n\mathcal{B} q_n$ and $\sigma^{\varphi_n}$-analytic $b\in q_n\A$. Since each
   $q_n(\A'\cap\M)q_n=(q_n\A)'\cap q_n\M q_n$ is finite and  ${\psi_n}_{|_{q_n\mathcal{Z}(\A'\cap\M)}}$ is faithful, Proposition \ref{Prop:Equivalent conditions on relative amenability in terms of states with extra assumption on summand} yields a new state, say $\zeta_n$ on $q_n\mathcal{B}  q_n$, such
   that
   \[{{\zeta}_n}_{|_{q_n\M q_n}}=\varphi_n\qquad\mbox{and}\qquad \zeta_n(Tb)=\zeta_n(\sigma_i^{\varphi_n}(b)T)\]
   for all $T\in q_n\mathcal{B} q_n$ and $\sigma^{\varphi_n}$-analytic $b\in q_n\A$.
   Let $\omega$ be a free ultrafilter on $\mathbb N$ and define the state $\zeta:q\mathcal{B} q\to\mathbb C$ by
   \[\zeta(T)=\lim_{n\to\omega}\zeta_n(q_n Tq_n),\;\;T\in q\mathcal{B} q.\]
Since $q_n\to q$ $\sigma$-strongly,  \[\zeta(x)=\lim_{n\to\omega}\varphi_n(q_n xq_n)=\varphi(qxq)/\varphi(q)=\varphi_q(x)\] for all $x\in q\M q.$ Since  $q_n\in q(\A'\cap\M^\varphi)$, we note that if   $b\in q\A q$ is $\sigma^{\varphi_q}$-analytic,  each $q_nbq_n$ is $\sigma^{\varphi_n}$ - analytic such that $\sigma_i^{\varphi_n}(q_n bq_n)=q_n\sigma_i^{\varphi_q}(b)q_n$, and since $q_n$ commutes with $\A$, the KMS condition of each $\zeta_n$  yields the following
\begin{align*}
    \zeta(Tb)=\zeta(\sigma_i^{\varphi_q}(b)T),\;\;\;\forall  T\in q\mathcal{B} q.
\end{align*}
If $q=1$, we are done. Otherwise, let $p:=1-q\neq 0$ and consider the state $\varphi_p:=\varphi(p\cdot p)/\varphi(p)$ on $p\M p$. Since $p\A'\cap p\M p=p(\A'\cap\M)$ contains no semifinite properly infinite summand by the maximality of $q$, we are now again in the position to invoke Proposition \ref{Prop:Equivalent conditions on relative amenability in terms of states with extra assumption on summand} for the unital inclusion $p\M p\subseteq p\mathcal{B} p$ to obtain a state $\eta$ on $p\mathcal{B} p$  satisfying 
\[\eta_{|_{p\M p}}=\varphi_{p}\qquad\mbox{and}\qquad\eta(Tb)=\eta(\sigma_i^{\varphi_p}(b)T)\] for all $T\in p\mathcal{B} p$ and $\sigma^{\varphi_p}$ - analytic $b\in p\A p$. This finishes the proof because the state $\tilde\psi:\mathcal{B}\to\mathbb C$ by
\[\tilde\psi(T)=\varphi(q)\zeta(qT q)+\varphi(p)\eta(pTp), \;\;T\in \mathcal{B}\] satisfies the required conditions.
\end{proof}

\begin{lemma}\label{lem:existence of certain finite projections}
  Let $\phi, \psi$ be normal positive functionals on a $\sigma$-finite semifinite von Neumann algebra $\N$ satisfying $\psi\leq \phi$ and such that $\phi$ and the restriction $\psi_{|_{\mathcal{Z}(\N)}}$ are faithful. There exists a sequence $\{q_n\}_{n\geq1}$ of projections in the centralizer $\N^\phi$ such that $q_n\to 1$ $\sigma$-strongly,  $q_n\N q_n$ is finite and $\psi(q_n\cdot q_n)$ is faithful on $\mathcal{Z}(q_n\N q_n)$ for each $n\geq 1$.
  \end{lemma}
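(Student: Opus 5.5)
We will construct the projections from a faithful normal semifinite trace together with the modular theory of $\phi$. Since $\N$ is semifinite, fix a faithful normal semifinite trace $\tau$ on $\N$. Then $\phi=\tau(h\,\cdot)$ for a positive nonsingular operator $h$ affiliated with $\N$, and $\N^\phi=\{h\}'\cap\N$. The spectral projections $e_k=\chi_{[1/k,k]}(h)$ lie in $\N^\phi$ and increase to $1$ strongly, because $h$ is nonsingular. Each $e_k$ is $\tau$-finite, since $\tau(e_k)\le k\,\tau(he_k)=k\,\phi(e_k)<\infty$. Consequently $e_k\N e_k$ is finite: it carries the faithful normal finite trace $\tau|_{e_k\N e_k}$.

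Finiteness of the corner therefore comes for free. The remaining work is to arrange faithfulness of $\psi(q\,\cdot\,q)$ on $\mathcal Z(q\N q)=q\mathcal Z(\N)$. For a projection $q\in\N$, write $z(q)$ for its central support. For a central projection $z$, the element $zq\in\mathcal Z(q\N q)$ vanishes exactly when $z\perp z(q)$. So faithfulness means $\psi(qzq)>0$ for every nonzero central projection $z\le z(q)$. For $q=e_k$ this can fail, because $\psi$ may concentrate away from $e_k$ inside a given central summand.

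To repair this, use $\psi\le\phi$ to write $\psi(x)=\langle\phi^{1/2},x\phi^{1/2}a\rangle$ with $0\le a\le 1$, as in the Radon--Nikodym statement of Subsection~\ref{SubSec: Standard form}. One option is to also include spectral projections of an appropriately symmetrized density of $\psi$ relative to $\phi$. A cleaner route, which I will try first, is to modify $e_k$ centrally.
\begin{itemize}
\item Let $z_k$ be the largest central projection with $\psi(e_kz_k)=0$. This exists: take the supremum of all central $z$ with $\psi(e_kz)=0$, and use normality and positivity of $\psi$ (if $\psi(e_kz)=\psi(e_kz')=0$, then $\psi(e_k(z\vee z'))\le\psi(e_kz)+\psi(e_kz')=0$, using $e_k(z\vee z')\le e_kz+e_kz'$).
\item Set $q_k=e_k(1-z_k)$. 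This is a projection in $\N^\phi$, since central projections lie in the centralizer, and $q_k\N q_k$ is still finite.
\item Check faithfulness. If $z\le z(q_k)$ is a nonzero central projection with $\psi(q_kz)=0$, then $\psi(e_kz(1-z_k))=0$. By maximality this forces $z(1-z_k)\le z_k$, so $z(1-z_k)=0$. Since $z(q_k)\le 1-z_k$, we get $z=0$.
\end{itemize}

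The main obstacle is the convergence $q_k\to1$, equivalently $z_k\to0$ strongly. Suppose a nonzero central $z$ lies below $z_k$ for infinitely many $k$. For those $k$ we have $\psi(e_kz)=0$, and since $e_kz\to z$ strongly, normality gives $\psi(z)=0$, contradicting faithfulness of $\psi$ on $\mathcal Z(\N)$. This by itself only rules out a common central piece, so it does not yet give $z_k\to0$.

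To conclude, I would check that $z_k$ is decreasing. If $\psi(e_{k+1}z)=0$ then $\psi(e_kz)=0$, because $\psi(e_kz)\le\psi(e_{k+1}z)$ as $e_kz\le e_{k+1}z$ for commuting projections $e_k,e_{k+1}$ and $z$. Hence $z_{k+1}\le z_k$. The decreasing limit $z_\infty$ is central, and $\psi(e_kz_\infty)=0$ for all $k$, so $\psi(z_\infty)=0$ and $z_\infty=0$. Therefore $q_k=e_k(1-z_k)\to1$ strongly; since these are projections, the convergence also holds $\sigma$-strongly. Taking $q_n$ to be this sequence gives all the required properties.
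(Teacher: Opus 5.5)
Your proof is correct and is essentially the paper's argument. Your $1-z_k$ is exactly the support $f_n$ of the central functional $z\mapsto\psi(ze_n)$ used there, so your $q_k$ coincides with the paper's $q_n=e_nf_n$, and your argument that $z_k\downarrow 0$ by monotonicity and normality is the same as the paper's estimate $\psi(1-f)\le\psi(1)-\lim_n\psi(e_n)=0$. The only cosmetic difference is that you take $\chi_{[1/k,k]}(h)$ rather than $1_{(1/n,\infty)}(a)$ as the $\tau$-finite spectral projections.
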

  \begin{proof}
   Let $\tau$ be a normal faithful semifinite weight on $\N$, and let $a$ be the positive operator affiliated to $\N$ such that $\phi(x)=\tau(ax),\;\;x\in \N.$   
   For each $n\geq 1$, set $e_n={1}_{(1/n,\infty)}(a)\in \N$ and define the positive functionals $\mu_n$ on $\mathcal{Z}(\N)$ by 
   \begin{align*}
       \mu_n(z)=\psi(ze_n),\;\;z\in\mathcal{Z}(\N).
   \end{align*}
   Since $\phi$ is faithful, $a$ has full support; hence $e_n\to 1$ $\sigma$-strongly as $n\to\infty$. Moreover, $\psi(e_n)\uparrow\psi(1)$, so $\psi(e_n)\neq0$ for large enough $n$. We will therefore assume that $\psi(e_n)\neq0$ for all $n\geq1$, so that each $\mu_n$ is a non-zero functional. 
   Let $f_n$ be the support of $\mu_n$ in $\mathcal{Z}(\N)$ and set 
   \[q_n=e_nf_n.\]
   We show that $q_n$ satisfies the required assertion. Clearly, $e_n\in \N^\phi$ as $e_na=ae_n$, and since $f_n\in \mathcal{Z}(\N)\subseteq\N^\phi$, it follows that $q_n\in\N^\phi$.
   
  Write $b_n=g_n(a)$, where $g_n$ is the function on $[0,\infty)$ given by
  $g_n(t)=\frac{1}{t}1_{(1/n,\infty)}(t)$, $ t\geq 0$.
  Then $\tau(e_n)=\tau(ab_n)=\psi(b_n)<\infty$, which means each $e_n$ is a finite projection. Hence each $q_n$ is a finite projection so that $q_n\N q_n$ is a finite von Neumann algebra. 
  
  Clearly,  $\psi(q_n\cdot q_n)_{|_{\mathcal{Z}(q_n\N q_n)}}$ is faithful. Indeed, we have $\mathcal{Z}(q_n\N q_n)=q_n\mathcal{Z}(\N)$ and  if  $z\in\mathcal{Z}(\N)$ is a positive element such that $\psi(q_nz)=0$, then
  \begin{align*}
      \mu_n(z)=\mu_n(zf_n)=\psi(zf_ne_n)=\psi(zq_n)=0,
  \end{align*}
  showing that $zf_n=0$ as $f_n$ is the support of $\mu_n$; hence $zq_n=0$. 

 Finally we show that $f_n\to1$ $\sigma$-strongly. Since $e_n\leq e_{n+1}$, we have $\mu_n\leq \mu_{n+1}$ as positive functionals. This implies $f_n\leq f_{n+1}$ as well as $q_n\leq q_{n+1}$. Let $f_n\to f$ $\sigma$-strongly which exists by the monotonicity of $f_n$. Using $\psi(e_nf_n)=\mu_n(f_n)=\mu_n(1)=\psi(e_n)$ and the inequality $f_n\geq e_nf_n$, we obtain
 \begin{align*}
     \psi(1-f)&=\psi(1)-\lim_{n\to\infty}\psi(f_n)\\
     &\leq \psi(1)-\lim_{n\to\infty}\psi(e_nf_n)\\
&     =\psi(1)-\lim_{n\to\infty}\psi(e_n)\\
&=0.
 \end{align*}
 Since $f\in \mathcal{Z}(\N)$ and $\psi_{|_{\mathcal{Z}(\N)}}$ is faithful, we infer $f=1$. Also since $e_n\uparrow1$ $\sigma$-strongly,  we conclude that $q_n=e_nf_n\uparrow1$ $\sigma$-strongly. 
  \end{proof}

\begin{remark}
    If we take $\mathcal{B}=\langle\M,\B\rangle$ in Theorem \ref{thm:conditions for relative amenability},  we get the equivalent conditions of relative amenability in terms of certain states satisfying KMS type conditions.

Note that $\B$ is amenable if and only if the basic construction $\langle \M,\B\rangle$ is amenable. Indeed, amenability of $\B$ is equivalent to amenability of $J\B J$, and hence to amenability of its commutant
$(J\B J)'=\langle \M,\B\rangle.
$

Now suppose  that $\B$ is amenable. Then there exists a conditional expectation
\[
F:\mathbb B(L^2(\M))\longrightarrow \langle \M,\B\rangle.
\]If $\A$ is amenable relative to $\B$ inside $\M$, there is a conditional expectation
$E:\langle \M,\B\rangle\to \A$
whose restriction to $\M$ is $E_{\A}$. Consequently,
$E\circ F:\mathbb B(L^2(\M))\longrightarrow \A $ is a conditional expectation, showing that $\A$ is amenable.
Conversely,  if $\A$ is amenable, then by the injectivity of $\A$, the conditional expectation $E_{\A}:\M\to\A$ extends to a conditional expectation
$\widetilde E:\langle \M,\B\rangle\longrightarrow\A$, which shows that $\A$ is amenable relative to $\B$ inside $\M$.
\end{remark}

As a corollary of Theorem \ref{thm:conditions for relative amenability}, we obtain the following characterization of injectivity of $\sigma$-finite von Neumann algebras, which is a generalization of the characterization for finite von Neumann algebras in terms of hypertraces (see \cite[Theorem 5.1]{Con76}).

\begin{corollary}
    Let $\M\subset \mathbb B(\H)$ be a $\sigma$-finite von Neumann algebra with faithful normal state $\phi$. The von Neumann algebra $\M$ is amenable if and only if there exists a state $\psi\colon \mathbb B(\H)\to\mathbb C$ such that $\psi|_\M=\phi$ and
    \begin{equation*}
        \psi(Tx)=\psi(\sigma^\phi_i(x)T)
    \end{equation*}
    for $T\in\mathbb B(\H)$ and $x\in \M_{\infty,\sigma^\phi}$.
\end{corollary}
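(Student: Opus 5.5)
The plan is to apply Theorem \ref{thm:conditions for relative amenability} with $\A=\M$, $E_\A=\id_\M$, and $\mathcal B=\mathbb B(\H)$. With these choices, condition (\ref{cor:existence of a conditional expectation1}) asks for a norm-one projection $\Phi\colon\mathbb B(\H)\to\M$ with $\Phi|_\M=\id_\M$. By Tomiyama's theorem such a projection is exactly a conditional expectation onto $\M$. So condition (\ref{cor:existence of a conditional expectation1}) says precisely that $\M\subset\mathbb B(\H)$ is injective.

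Condition (\ref{cor:existence of a state whose restriction is phi}) of the theorem, for $\A=\M$, reads as follows: there is a state $\psi$ on $\mathbb B(\H)$ with $\psi|_\M=\phi$ and $\psi(Tb)=\psi(\sigma_i^\phi(b)T)$ for all $T\in\mathbb B(\H)$ and $b\in\M\cap\M_\infty=\M_{\infty,\sigma^\phi}$. This is verbatim the condition in the corollary. The equivalence (\ref{cor:existence of a conditional expectation1})$\iff$(\ref{cor:existence of a state whose restriction is phi}) of Theorem \ref{thm:conditions for relative amenability} therefore gives: $\M$ admits a conditional expectation from $\mathbb B(\H)$ if and only if such a $\psi$ exists. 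The hypotheses of the theorem hold, since $\id_\M$ is trivially a $\phi$-preserving faithful normal conditional expectation onto $\A=\M$.

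The remaining step is to identify the existence of a conditional expectation $\mathbb B(\H)\to\M$ with amenability of $\M$. Amenability in the paper's sense, as in the preceding Remark, means injectivity. Injectivity is independent of the faithful representation. In fact, for any unital normal representation on $\H$, $\M$ is injective if and only if there is a projection of norm one from $\mathbb B(\H)$ onto $\M$: one direction is the definition, and the other follows from Arveson extension of $\id_\M$ through $\mathbb B(\H)$. If the given $\M\subset\mathbb B(\H)$ is not assumed faithful or unital, one can first reduce to the unital faithful case. In the paper's convention, $\M\subset\mathbb B(\H)$ is a von Neumann algebra, hence unital and faithfully represented, so no reduction is needed.

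I do not expect a genuine obstacle; the work is already done in Theorem \ref{thm:conditions for relative amenability}. The only points to check are the following. The equivalence there is stated for an arbitrary $C^\ast$-algebra $\mathcal B\supseteq\M$ unitally, and $\mathbb B(\H)$ qualifies. The implication (\ref{cor:existence of a state whose restriction is phi})$\implies$(\ref{cor:existence of a conditional expectation1}) constructs $\Phi(T)$ inside $B(L^2(\A,\phi))$ and uses the identification of $\M$ there. The conditional expectation it produces lands in $\M$ (through the standard representation), and its restriction to $\M$ is the identity. That is all condition (\ref{cor:existence of a conditional expectation1}) requires.
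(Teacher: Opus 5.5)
Your proposal is correct and is the intended argument. The paper presents this corollary as an immediate specialization of Theorem~\ref{thm:conditions for relative amenability} to $\A=\M$, $E_\A=\id_\M$ and $\mathcal B=\mathbb B(\H)$. Condition (1) becomes injectivity via Tomiyama, and condition (3) is verbatim the state condition in the corollary. Only (1)$\Leftrightarrow$(3) is used, and both directions are direct: take $\psi=\phi\circ\Phi$ one way, and use the sesquilinear-form construction the other way. So the reduction involving semifinite properly infinite summands in the theorem's proof plays no role here.
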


\begin{remark}
The same argument shows the following result: Let $(A,\alpha)$ be a $C^\ast$-dynamical system. For a $\beta$-KMS state $\phi$ of $\alpha$ ($\beta\in \mathbb R$), the following properties are equivalent:
\begin{enumerate}[(i)]
    \item For every faithful representation $\pi:A\to  B(H)$, there exists a state $\psi\colon B(H)\to \mathbb C$ such that $\psi\circ\pi=\phi$ and
    \begin{equation*}
        \psi(aT)=\psi(T\alpha_{i\beta}(a))
    \end{equation*}
    for all $T\in B(H)$ and $\alpha$-analytic $a\in A$.
    \item For every faithful representation $A\subset B(H)$, there exists a ucp map $\Phi\colon B(H)\to \pi_\phi(A)^{\prime\prime}$ such that $\Phi|_A=\pi_\phi$.
\end{enumerate}
\end{remark}

\section{On the definition of coarse rigidity}\label{sec:definition of coarse regifity}
In \cite{Pet09}, Peterson introduced the notion of $L^2$-rigidity for tracial von Neumann algebras. The convergence condition underlying this notion, formulated here for derivations on $(\M,\tau)$ itself, requires that for every densely defined, closable, $\tau$-symmetric real derivation
$\delta:\dom\delta\subseteq L^2(\M,\tau)
\longrightarrow
\bigl(L^2(\M,\tau)\otimes L^2(\M,\tau)\bigr)^{\oplus\infty},$
the associated resolvents $\theta_\alpha^{(2)}$ converge to the identity uniformly on ${x\tau^{1/2}\in\M,\ |x|\leq1}$ as $\alpha\to\infty$.

In this section, we characterize this uniform convergence through strong convergence of the induced operators on the $L^2$-space of the ultrapower. This formulation provides a natural framework for extending the rigidity condition to the nontracial setting, where modular theory plays an essential role. We then introduce \emph{coarse rigidity}, using deformations associated with modular derivations and allowing their target correspondences to be weakly contained in the coarse correspondence.

We begin by collecting some facts about GNS-symmetric unital completely positive maps.

\begin{proposition}
 \label{prop: uniform convergence of GNS-symmetric maps}
    Let $(\Psi_i)$ be a net of GNS-symmetric ucp maps on a $\sigma$-finite von Neumann algebra $\M$ and let $\omega$ be a cofinal ultrafilter on a directed set. For every inclusion $\Q\subseteq\M$ and every  normal faithful state $\varphi$ on $\M$,  the following are equivalent:
    \begin{enumerate}
        \item $\sup_{x\in\Q_1}{\|\Psi_i^{(2)}(x\varphi^{1/2})-x\varphi^{1/2}}\|\to^i 0$,
        \item  $\sup_{x\in(\Q^\omega)_1}{\|(\Psi_i^{(2)})^\omega(x\psi^{1/2})-x\psi^{1/2}}\|\to^i 0$, where $\psi=\varphi^\omega$.
    \end{enumerate}
\end{proposition}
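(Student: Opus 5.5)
The plan is to prove the two implications separately. (2)$\Rightarrow$(1) is essentially a restriction to constant sequences. (1)$\Rightarrow$(2) goes by computing the ultrapower norm coordinatewise along a good choice of representatives. Throughout, $(\Psi_i^{(2)})^\omega$ denotes, for each fixed $i$, the ultrapower of the single map $\Psi_i^{(2)}$ (constant net) with respect to $\omega$. By the discussion after Corollary \ref{cor; corollary to the technical lemma in the ultrapower language}, it coincides on $L^2(\M^\omega)$ with $((\Psi_i)^\omega)^{(2)}$, and it leaves $L^2(\M^\omega)$ invariant. As in the rest of the paper, $\Q^\omega$ is understood as a subalgebra of $\M^\omega$ via $\mathcal M^\omega(\Q)+\I_\omega(\M)$.

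\textbf{(2)$\Rightarrow$(1).} View $\Q\subseteq\Q^\omega$ via constant nets, so that $\Q_1\subseteq(\Q^\omega)_1$. For $x\in\Q$ we have $x\psi^{1/2}=(x\varphi^{1/2})^\omega$ in $L^2(\M)^\omega$, since $\psi=\varphi\circ E_\M$ and $E_\M$ is the identity on constants. Also $(\Psi_i^{(2)})^\omega$ acts coordinatewise, so
\[
\|(\Psi_i^{(2)})^\omega(x\psi^{1/2})-x\psi^{1/2}\|=\|\Psi_i^{(2)}(x\varphi^{1/2})-x\varphi^{1/2}\|.
\]
Taking suprema over $\Q_1$ gives (1) from (2) at once.

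\textbf{(1)$\Rightarrow$(2).} Let $x\in(\Q^\omega)_1$.

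\emph{Step 1.} Choose a representative $(x_n)\in\mathcal M^\omega(\Q)$ with $x_n\in\Q$ and $\|x_n\|\le 1$ for all $n$. Starting from an arbitrary bounded representative $(y_n)$, set $x_n=y_n\,g(|y_n|)$ with $g(t)=\min(1,1/t)$. By the continuity of functional calculus in the Ocneanu ultrapower (\cite{AH14}), $(x_n)$ still lies in $\mathcal M^\omega(\Q)$ and represents $x\,g(|x|)=x$. The last equality holds because $\|x\|\le 1$.

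\emph{Step 2.} Check that under the embedding $L^2(\M^\omega)\subseteq L^2(\M)^\omega$ one has $x\psi^{1/2}=(x_n\varphi^{1/2})^\omega$. This is the standard identification of $L^2(\M^\omega)$ with the closure of $\{(x_n\varphi^{1/2})^\omega\}$ (\cite{AH14}). The inner products agree because $\psi(x^\ast y)=\lim_{n\to\omega}\varphi(x_n^\ast y_n)$.

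\emph{Step 3.} Combining Steps 1 and 2 with the coordinatewise action of $(\Psi_i^{(2)})^\omega$, we get
\[
\|(\Psi_i^{(2)})^\omega(x\psi^{1/2})-x\psi^{1/2}\|=\lim_{n\to\omega}\|\Psi_i^{(2)}(x_n\varphi^{1/2})-x_n\varphi^{1/2}\|\le\sup_{y\in\Q_1}\|\Psi_i^{(2)}(y\varphi^{1/2})-y\varphi^{1/2}\|.
\]
The right-hand side is independent of $x$ and tends to $0$ in $i$ by (1). This yields (2).

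The main obstacle is Steps 1 and 2: producing norm-one representatives that lie inside $\Q$, and justifying the identification of $x\psi^{1/2}$ with the ultraproduct vector $(x_n\varphi^{1/2})^\omega$. If the functional-calculus truncation in Step 1 causes trouble, the fallback is to accept representatives with $\|x_n\|\le 1+\epsilon$ and let $\epsilon\to 0$. Once those two points are settled, the estimate in Step 3 is immediate.
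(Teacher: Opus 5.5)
Your proof is correct and follows the paper's argument. The direction (2)$\Rightarrow$(1) goes through constant sequences, and the direction (1)$\Rightarrow$(2) chooses bounded representatives in $\mathcal M^\omega(\Q)$ and computes the ultrapower norm coordinatewise. The only cosmetic difference is in the choice of representatives. The paper takes representatives with $\sup_n\|x_n\|\le 2$ and bounds by the supremum over the ball of radius $2$, which is twice the supremum over $\Q_1$ by linearity. Your functional-calculus truncation gives norm-one representatives directly; it is valid because the quotient map $\mathcal M^\omega(\Q)\to\Q^\omega$ is a $*$-homomorphism of unital C$^*$-algebras.
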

    \begin{proof}
        Since $\Q\subseteq\Q^\omega$ and $(x\phi^{1/2})^\omega=x\psi^{1/2}$ for $x\in \Q$, $(2)\implies(1)$ is obvious. Conversely,  if $a\in \Q^\omega$ with $\|a\|_\infty\leq 1$, then there exists $(x_n)\in\mathcal{M}^\omega(\Q)$  such that $a=(x_n)_n^\omega $ and $\sup_n\norm{x_n}\leq 2$.
So we have
    \begin{align*}
        \norm{(\Psi_i^{(2)})^\omega(a\psi^{1/2})-a\psi^{1/2}}_2&=\lim_{n\to\omega}\norm{\Psi_i^{(2)}(x_n\phi^{1/2})-x_n\phi^{1/2}}\\
        &\leq \sup_{y\in\Q, \norm{y}\leq 2}\norm{\Psi_i^{(2)}(y\phi^{1/2})-y\phi^{1/2}}.
    \end{align*}
    The right side is independent of $a\in (\Q^\omega)_1$ and converges to zero along $i$.
    \end{proof}

We record the following simple fact, although we will not need it in this paper. See \cite{AH14} for the notation $\Q(\sigma^\phi, [-c, c])$.
\begin{proposition}
    Let $(\Psi_i)$ be a net of GNS-symmetric ucp maps on a $\sigma$-finite von Neumann algebra $\M$ and let $\omega\in\beta\mathbb N\setminus\mathbb N$. For every inclusion $\Q\subseteq\M$ and a normal faithful state $\varphi$ on $\Q$ such that $\Q$ is globally invariant under $\sigma^\varphi$,  the following are equivalent:
    \begin{enumerate}
        \item $\sup\{\|\Psi_i^{(2)}(x\varphi^{1/2})-x\varphi^{1/2}\|; x\in \Q(\sigma^\phi, [-c,c]), \|x\|\leq 1\}\to^i 0$ for every $c>0$,
        \item  $\sup\{\|(\Psi_i^{(2)})^\omega(y\varphi^{1/2})-y\psi^{1/2}\|; y\in \Q^\omega(\sigma^\psi, [-c,c]),\|x\|\leq 1\}\to^i 0$ for every $c>0$, where $\psi=\varphi^\omega$.
    \end{enumerate}
\end{proposition}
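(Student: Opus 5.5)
The plan mirrors the proof of Proposition \ref{prop: uniform convergence of GNS-symmetric maps}, with the unit ball replaced by the spectrally localized unit balls. The key fact I will use is that $\Psi_i^{(2)}$ commutes with $\Delta_\phi$. Hence on vectors $x\phi^{1/2}$ with $x$ in a spectral subspace the relevant norms are controlled uniformly in terms of $c$.

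(2)$\implies$(1) is immediate. A constant sequence $x\in\Q(\sigma^\phi,[-c,c])$ lies in $\Q^\omega(\sigma^\psi,[-c,c])$ with the same norm. Also $(\Psi_i^{(2)})^\omega(x\psi^{1/2})=(\Psi_i^{(2)}(x\phi^{1/2}))^\omega$. (I read the typo $\|x\|\le 1$ in (2) as $\|y\|\le1$, and $y\varphi^{1/2}$ as $y\psi^{1/2}$.)

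For (1)$\implies$(2), fix $c>0$ and $y\in\Q^\omega(\sigma^\psi,[-c,c])$ with $\|y\|\le 1$. The step I expect to be the main obstacle is finding a representative sequence $(y_n)$ of $y$ with $y_n\in\Q(\sigma^\phi,[-c',c'])$ and $\sup_n\|y_n\|\le 2$, for some $c'$ depending only on $c$ (say $c'=c+1$).

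I would build this representative by smoothing. Take any representative $(x_n)\in\mathcal M^\omega(\Q)$ with $\sup_n\|x_n\|\le 2$. Choose an $L^1$ function $f$ whose Fourier transform $\hat f$ equals $1$ on $[-c,c]$ and is supported in $[-c-1,c-1]$, and set
\[
y_n=\int f(t)\,\sigma_t^\phi(x_n)\,dt .
\]
Then $y_n\in\Q(\sigma^\phi,[-c-1,c+1])$, by invariance of $\Q$ under $\sigma^\phi$. Since $\sigma^\psi$ acts componentwise on $\M^\omega$ (Ando--Haagerup), $(y_n)^\omega=\int f(t)\sigma_t^\psi(y)\,dt=y$, because $y$ has spectrum in $[-c,c]$. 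The norm bound is $\|y_n\|\le 2\|f\|_1$, not $2$. To obtain a uniform constant I would then rescale: the uniform convergence in (1) holds on balls of any radius by linearity, so a radius $2\|f\|_1$ is harmless. This also gives $(y_n)\in\mathcal M^\omega$ automatically.

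With such a representative, the conclusion follows:
\[
\|(\Psi_i^{(2)})^\omega(y\psi^{1/2})-y\psi^{1/2}\|=\lim_{n\to\omega}\|\Psi_i^{(2)}(y_n\phi^{1/2})-y_n\phi^{1/2}\|\le 2\|f\|_1\sup_{x}\|\Psi_i^{(2)}(x\phi^{1/2})-x\phi^{1/2}\|,
\]
where the supremum runs over $x\in\Q(\sigma^\phi,[-c-1,c+1])$ with $\|x\|\le 1$. The right-hand side is independent of $y$ and tends to $0$ along $i$ by (1) applied with $c+1$.
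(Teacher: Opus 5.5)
Your proof is correct and follows the paper's route: (2)$\Rightarrow$(1) via constant sequences, and (1)$\Rightarrow$(2) by choosing a bounded representative $(y_n)$ of $y$ inside a fixed spectral subspace of $\Q$ and evaluating $\|(\Psi_i^{(2)})^\omega(y\psi^{1/2})-y\psi^{1/2}\|$ as $\lim_{n\to\omega}\|\Psi_i^{(2)}(y_n\phi^{1/2})-y_n\phi^{1/2}\|$. The only difference is that the paper quotes \cite[Proposition 2.3]{HI17} for the representative (window $[-4c,4c]$, no norm bound stated), while you build it by Fourier smoothing and track the norm explicitly, which is cleaner; in that construction, take $\hat f$ supported in $[-c-1,c+1]$ (not $[-c-1,c-1]$). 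Also, the identity $(y_n)^\omega=\int f(t)\sigma_t^\psi(y)\,dt$ needs more than $\sigma_t^\psi=(\sigma_t^\phi)^\omega$ for each fixed $t$: to pass the ultralimit through the integral you need the $\omega$-equicontinuity of $t\mapsto\sigma_t^\phi(x_n)\phi^{1/2}$ for $(x_n)\in\mathcal M^\omega$ from Ando--Haagerup. Finally, the commutation of $\Psi_i^{(2)}$ with $\Delta_\phi$ that you announce is never used, and it need not hold if the $\Psi_i$ are symmetric for a different state.
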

    \begin{proof}
        Since $\Q_{\infty,\sigma^\phi}\subseteq\Q^\omega_{\infty,\sigma^\psi}$, $(2)\implies(1)$ is obvious.  For the converse, let $y\in \Q^\omega(\sigma^\psi, [-c,c])$, then by \cite[Proposition 2.3]{HI17} there is a sequence $y_n\in \Q(\sigma^\varphi, [-4c, 4c])$ such that $(y_n)\in \mathcal{M}^\omega(\Q)$ and $y=(y_n)^\omega$. Therefore, 
        \begin{align*}
            \|(\Psi_i^{(2)})^\omega(y\psi^{1/2})-y\psi^{1/2}\|&=\lim_{n\to\omega}\|\Psi_i^{(2)}(y_n\phi^{1/2})-y_n\varphi^{y_n\varphi^{1/2}}\|\\
            &\leq \sup\{\|\Psi_i^{(2)}(z\varphi^{1/2})-z\psi^{1/2}\|; z\in \Q^\omega(\sigma^\psi, [-4c,4c])\}
        \end{align*}
        and the right hand side is independent of $y$ and goes to $0$ along $i$.
    \end{proof}

\begin{lemma}\label{lem:uniform convergence gives ultrapower pointwise convergence}
    Let $\omega$ be a cofinal ultrafilter on a directed set $\Lambda$, let $(\Psi_k)$ be a net of GNS-symmetric ucp maps on $\M$, let $\phi$ be a normal faithful state on $\M$ and let $\Q$ be a subalgebra of $\M$ globally invariant under $\sigma^\phi$.  If
    \begin{equation*}
        \lim_{k}\sup\{\norm{\Psi_k^{(2)}(a\phi^{1/2})-a\phi^{1/2}}: a\in \dom(\sigma_{\pm i/2}^\phi)\cap\Q, \|\sigma_{\pm i/2}^\phi(a)\|\leq 1\}= 0,
    \end{equation*}
    then $(\Psi_k^{(2)})^\omega$ converges strongly on $L^2(\Q^\omega)$.
\end{lemma}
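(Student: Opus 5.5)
The limit in the statement is to be understood as the identity: I will show that $(\Psi_k^{(2)})^\omega\xi\to\xi$ for every $\xi\in L^2(\Q^\omega)$. The argument mirrors the proof of Proposition~\ref{prop: uniform convergence of GNS-symmetric maps}, with the unit ball replaced by elements of bounded $\sigma$-spectrum. Put $\psi=\phi^\omega$. Since $\Q$ is globally invariant under $\sigma^\phi$, Takesaki's theorem gives a $\phi$-preserving expectation onto $\Q$. Hence $\Q^\omega\subseteq\M^\omega$ is an expected subalgebra, globally invariant under $\sigma^\psi$. Each $(\Psi_k^{(2)})^\omega$ is a contraction and leaves $L^2(\M^\omega)$ invariant, as discussed before Corollary~\ref{cor; corollary to the technical lemma in the ultrapower language}. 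It therefore suffices, by an $\epsilon/3$-argument, to prove convergence on a dense subset of $L^2(\Q^\omega)$.

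\emph{Step 1 (dense set).} I take $D=\bigcup_{c>0}\Q^\omega(\sigma^\psi,[-c,c])\,\psi^{1/2}$. Its density in $L^2(\Q^\omega)$ follows by smoothing. For $y\in\Q^\omega$, the elements $y_f=\int f(t)\sigma_t^\psi(y)\,dt$ with $\hat f$ compactly supported lie in spectral subspaces. They converge to $y$ $\sigma$-strongly along an approximate unit of such $f$, so $y_f\psi^{1/2}\to y\psi^{1/2}$. Finally, $\Q^\omega\psi^{1/2}$ is dense in $L^2(\Q^\omega)$.

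\emph{Step 2 (lifting with uniform analytic bounds).} Let $y\in\Q^\omega(\sigma^\psi,[-c,c])$ with $\|y\|\le 1$. By \cite[Proposition 2.3]{HI17}, as in the preceding proposition, $y=(y_n)^\omega$ with $(y_n)\in\mathcal M^\omega(\Q)$, $y_n\in\Q(\sigma^\phi,[-4c,4c])$, and $\sup_n\|y_n\|\le C_0$. Elements with spectrum in $[-4c,4c]$ are entire analytic. Since $\Q$ is $\sigma^\phi$-invariant, $\sigma_z^\phi(y_n)\in\Q$, and by the Paley--Wiener estimate $\|\sigma^\phi_{\pm i/2}(y_n)\|\le e^{2c}\|y_n\|\le e^{2c}C_0=:K$. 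Thus $y_n/K$ belongs to the set appearing in the hypothesis for all $n$.

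\emph{Step 3 (estimate).} Write $s_k$ for the supremum in the hypothesis. Then
\[
\|(\Psi_k^{(2)})^\omega(y\psi^{1/2})-y\psi^{1/2}\|=\lim_{n\to\omega}\|\Psi_k^{(2)}(y_n\phi^{1/2})-y_n\phi^{1/2}\|\le K s_k\longrightarrow 0 .
\]
The first equality uses $(\Psi_k^{(2)})^\omega=((\Psi_k)^\omega)^{(2)}$ and $y\psi^{1/2}=(y_n\phi^{1/2})^\omega$ in $L^2(\M)^\omega$. Together with Step 1 and uniform contractivity, this gives strong convergence to the identity on $L^2(\Q^\omega)$.

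The main obstacle is Step 2. One needs the Ando--Haagerup/Houdayer--Isono lifting of spectral-subspace elements of $\Q^\omega$ to sequences that stay in $\Q$, not merely in $\M$, with uniformly bounded spectrum. I will get this by applying the cited result to $\Q$ itself with the state $\phi|_\Q$, using that $(\phi|_\Q)^\omega=\psi|_{\Q^\omega}$ and that $\sigma^\psi$ restricts to $\sigma^{\psi|_{\Q^\omega}}$ under the expectation.
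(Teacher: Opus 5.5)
Your proposal is correct and follows the same route as the paper. You approximate $x\psi^{1/2}$ by $y\psi^{1/2}$ with $y=(y_n)^\omega\in\Q^\omega$ whose representatives have uniformly bounded $\norm{\sigma^\phi_{\pm i/2}(y_n)}$, rescale so the hypothesis gives $\lim_{n\to\omega}\norm{\Psi_k^{(2)}(y_n\phi^{1/2})-y_n\phi^{1/2}}\to 0$, and conclude by contractivity and an $\epsilon$-argument. The paper simply asserts that such $y$ exists, whereas you justify it via spectral subspaces, the lifting of \cite[Proposition 2.3]{HI17} applied to $\Q$, and the bound $\norm{\sigma^\phi_{\pm i/2}(y_n)}\le e^{2c}\norm{y_n}$.
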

\begin{proof}
 Write $\psi=\varphi^\omega$ and note that $\Q^\omega$ is globally invariant under $\sigma^\psi$.  Fix
        $x=(x_n)\in\Q^\omega$. Since $\Q^\omega\psi^{1/2}$ is norm dense in $L^2(\Q^\omega)$, it suffices to show that $(\Psi_k^{(2)})^\omega(x\psi^{1/2})$ converges to $x\psi^{1/2}$. For every $\epsilon>0$, there exist $y=(y_n)^\omega\in \Q^\omega$ such that each $y_n$ is $\sigma^\varphi$-analytic, $y$ is $\sigma^\psi$-analytic, $\|x\psi^{1/2}-y\psi^{1/2}\|\leq \epsilon$ and $\sup_n\|\sigma^\varphi_{\pm i/2}(y_n)\|_\infty<\infty$. Thus we have
    \begin{align*}
       \lim_{k}\|(\Psi_k^{(2)})^\omega(y\psi^{1/2})-y\psi^{1/2}\|&=\lim_{k}\lim_{n\to\omega}\|\Psi_k^{(2)}(y_n\varphi^{1/2})-y_n\varphi^{1/2}\|\\
       &\leq \lim_{k}\sup_n\|\Psi_k^{(2)}(y_n\varphi^{1/2})-y_n\varphi^{1/2}\|\\
       &=0 
    \end{align*}
where the last equality is by the hypothesis.  By the contractiveness of $\Psi_k^{(2)}$, this implies that 
   \begin{align*}
            \lim_{k} \|(\Psi_k^{(2)})^\omega(x\psi^{1/2})-x\psi^{1/2}\|\leq 2\epsilon+ \lim_{k}\|(\Psi_k^{(2)})^\omega(y\psi^{1/2})-y\psi^{1/2}\|=2\epsilon
   \end{align*}
   and we get our required assertion as $\epsilon>0$ is arbitrary.

\end{proof}

\begin{proposition}\label{prop:equivalent conditions for L2-rigidity}
 Let $\delta:\dom\delta\subseteq L^2(\M)\to\H$ be a modular derivation, and let $\theta_\alpha$ be the associated deformation. For an inclusion $\Q\subseteq\M$ with expectation, the following are equivalent:
    \begin{enumerate}
    \item For any normal faithful state $\varphi$ on $\M$ such that $\Q$ is globally invariant under $\sigma^\varphi$, we have
    \begin{align*}
        \lim_{\alpha\to\infty}\|\theta_\alpha^{(2)}(x\varphi^{1/2})-x\varphi^{1/2}\|; x\in\dom(\sigma_{\pm i/2}^\varphi)\cap\Q, \|\sigma^\varphi_{\pm i/2}(x)\|_\infty\leq 1\}=0.
    \end{align*}\label{st:for any faithful state}
    \item For some normal faithful state $\varphi$ on $\M$ such that $\Q$ is globally invariant under $\sigma^\varphi$, we have
    \begin{align*}
        \lim_{\alpha\to\infty}\|\theta_\alpha^{(2)}(x\varphi^{1/2})-x\varphi^{1/2}\|; x\in\dom(\sigma_{\pm i/2}^\varphi)\cap\Q, \|\sigma^\varphi_{\pm i/2}(x)\|_\infty\leq 1\}=0.
    \end{align*}\label{st:for some faithful state}
        \item $(\theta_\alpha^{(2)})^\omega$ converges strongly on the set $L^2(\Q^\omega)$ as $\alpha\to\infty$, for every cofinal ultrafilter $\omega$ on a directed set.\label{st: for any ultrafilter}
        \item $(\theta_\alpha^{(2)})^\omega$ converges strongly on the set $L^2(\Q^\omega)$ as $\alpha\to\infty$, for some $\omega\in \beta \mathbb N\setminus \mathbb N$.\label{st:for some ultrafilter}
    \end{enumerate}
    \end{proposition}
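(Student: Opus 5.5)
We prove the cycle (\ref{st:for any faithful state})$\implies$(\ref{st:for some faithful state})$\implies$(\ref{st: for any ultrafilter})$\implies$(\ref{st:for some ultrafilter})$\implies$(\ref{st:for some faithful state}), and then close the loop with a state-independence argument. The implication (\ref{st:for any faithful state})$\implies$(\ref{st:for some faithful state}) is trivial. Such a state exists because $\Q$ is with expectation: take $\phi\circ E_\Q$ for a faithful normal state $\phi$. The implication (\ref{st:for some faithful state})$\implies$(\ref{st: for any ultrafilter}) is exactly Lemma \ref{lem:uniform convergence gives ultrapower pointwise convergence}, applied to the net $\Psi_\alpha=\theta_\alpha$, since $\theta_\alpha^{(2)}$ does not depend on the invariant state (first lemma of Subsection \ref{subsec:QMS}). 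Here we use that $\phi$ is $\theta_\alpha$-invariant, or else re-express $\theta_\alpha^{(2)}$ in the $\phi$-picture. The implication (\ref{st: for any ultrafilter})$\implies$(\ref{st:for some ultrafilter}) is trivial.

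For (\ref{st:for some ultrafilter})$\implies$(\ref{st:for some faithful state}) we argue by contradiction. Suppose the uniform convergence fails for the given $\phi$. Then there are $\epsilon>0$, $\alpha_n\to\infty$ and $x_n\in\Q$ with $\|\sigma^\phi_{\pm i/2}(x_n)\|\le1$ and $\|\theta_{\alpha_n}^{(2)}(x_n\phi^{1/2})-x_n\phi^{1/2}\|\ge\epsilon$.

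The bound on $\sigma^\phi_{\pm i/2}(x_n)$ gives $\phi(x_n^*ax_n)\le\phi(a)$ and likewise for $x_n^\ast$. This is the Strätila-type criterion quoted in Subsection \ref{sub:modular operator}. By Proposition \ref{prop: criteria for elements in the idealizer}, or rather the Ando--Haagerup characterization via such domination, $(x_n)\in\mathcal M^\omega(\Q)$. Hence $x=(x_n)^\omega\in\Q^\omega$.

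Strong convergence on $L^2(\Q^\omega)$, together with the monotonicity of $\alpha\mapsto\|\tilde\delta_\alpha\xi\|$ (spectral calculus: $\lambda/(\lambda+\alpha)$ decreases in $\alpha$) and the equivalence between $\|\tilde\delta_\alpha\xi\|\to0$ and $\|\theta^{(2)}_\alpha\xi-\xi\|\to0$, gives for each fixed $m$ and all $n\ge m$
\[
\|\tilde\delta_{\alpha_n}(x_n\phi^{1/2})\|\le\|\tilde\delta_{\alpha_m}(x_n\phi^{1/2})\|.
\]
Taking $\lim_{n\to\omega}$ yields $\lim_{n\to\omega}\|\tilde\delta_{\alpha_n}(x_n\phi^{1/2})\|\le\|\tilde\delta^\omega_{\alpha_m}(x\psi^{1/2})\|$, which tends to $0$ as $m\to\infty$. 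This contradicts $\epsilon^2\le\|\tilde\delta_{\alpha_n}(x_n\phi^{1/2})\|^2$.

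Finally, (\ref{st:for some faithful state})$\implies$(\ref{st:for any faithful state}) follows from the chain just proved. Given any other admissible $\phi'$, statement (\ref{st: for any ultrafilter}) holds, and (\ref{st:for some ultrafilter}) does not depend on the state. Rerunning the contradiction argument with $\phi'$ in place of $\phi$ gives (\ref{st:for any faithful state}). The only state-dependent input there is that $L^2(\Q^\omega)$, $\theta^{(2)}_\alpha$ and $(x_n)^\omega$ are computed consistently in the $\phi'$-picture, and the lemma from Subsection \ref{subsec:QMS} provides this when $\phi'$ is $\theta_\alpha$-invariant. The main obstacle is exactly this point: $\theta_\alpha$ is GNS-symmetric for the derivation's state, not a priori for an arbitrary $\phi'$. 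I would handle it by transporting through the canonical standard form $L^2(\M)$. There $\theta_\alpha^{(2)}$ is canonical and $x\phi'^{1/2}=\phi'^{1/2}\sigma^{\phi'}_{-i/2}(x)$-type identities hold, and I would check that the membership argument via Proposition \ref{prop: criteria for elements in the idealizer} uses only $\phi'$.
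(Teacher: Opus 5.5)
Your argument is correct and is essentially the paper's own proof. The only nontrivial step is the contradiction argument: the bounds $\|\sigma^\varphi_{\pm i/2}(x_n)\|\le 1$ give $(x_n)\in\mathcal M^\omega(\Q)$ by a direct two-sided estimate in $\|\cdot\|_\varphi^\sharp$. This estimate also needs $\sup_n\|x_n\|\le 1$, which comes from the maximum principle and is required for $(x_n)\in\ell^\infty$, so you should state it. Monotonicity in $\alpha$ of the resolvent deformation then finishes, and the paper runs this directly for an arbitrary admissible $\varphi$ to obtain (\ref{st:for some ultrafilter})$\Rightarrow$(\ref{st:for any faithful state}). The state-dependence obstacle you flag does not arise: $\theta_\alpha^{(2)}$ is a fixed operator on the canonical $L^2(\M)$, and neither Lemma~\ref{lem:uniform convergence gives ultrapower pointwise convergence} nor the contradiction argument uses invariance of $\varphi$ under $\theta_\alpha$.
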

    \begin{proof}
        $(\ref{st:for any faithful state})\implies(\ref{st:for some faithful state})$ is obvious.
        
        $(\ref{st:for some faithful state})\implies(\ref{st: for any ultrafilter})$ follows from Lemma \ref{lem:uniform convergence gives ultrapower pointwise convergence}.
        
   $(\ref{st: for any ultrafilter})\implies(\ref{st:for some ultrafilter})$ is obvious.

  $(\ref{st:for some ultrafilter})\implies(\ref{st:for any faithful state})$  Suppose that $(\ref{st:for any faithful state})$ fails for some normal faithful state $\varphi$ on $\M$ for which $\Q$ is globally invariant under $\sigma^\varphi$, so  there exist $\delta>0$,  a sequence  $(\alpha_n)$ in $\mathbb N$ and a sequence $x_{n}\in\dom(\sigma^\phi_{i/2})\cap \dom(\sigma^\phi_{-i/2})\cap\Q$ with $\norm{\sigma^\phi_{\pm i/2}(x_n)}\leq 1$  such that $\|\theta_{\alpha_n}^{(2)}(x_{n}\varphi^{1/2})-x_n\varphi^{1/2}\|\geq \delta$.
  
We show that $(x_n)\in\mathcal{M}^\omega(\Q)$. First note that by the maximum principle for analytic functions, $\norm{x_n}\leq \max\{\norm{\sigma^\phi_{-i/2}(x_n)},\norm{\sigma^\phi_{i/2}(x_n)}\}\leq 1$. Now, if $(y_n)\in\Q$ is a net such that $\|y_n\|_\varphi^\sharp\to0$ as $n\to\omega$, then we have
    \begin{align*}
        \phi(y_n^\ast x_n^\ast x_n y_n+x_n y_n y_n^\ast x_n^\ast)
        &\leq \norm{x_n}^2\phi(y_n^\ast y_n)+\norm{\sigma^\phi_{-i/2}(x_n)}^2\phi(y_n y_n^\ast)\\
        &\leq \phi(y_n^\ast y_n+y_n y_n^\ast)\\
        &\to 0
    \end{align*}
    and
    \begin{align*}
        \phi(x_n^\ast y_n^\ast y_n x_n+y_n x_n x_n^\ast y_n^\ast)
        &\leq \norm{\sigma^\phi_{i/2}(x_n)}^2\phi(y_n^\ast y_n)+\norm{x_n}^2\phi(y_n y_n^\ast)\\
        &\leq \phi(y_n^\ast y_n+y_n y_n^\ast)\\
        &\to 0.
    \end{align*}
  Set $x=(x_n)^\omega\in\Q^\omega$. By assumption, $(\theta_{\alpha}^{(2)})^\omega(x(\varphi^\omega)^{1/2})$  converges to $x(\varphi^\omega)^{1/2}$ as $\alpha\to\infty$, so there is some $\alpha>0$ such that
    \begin{equation*}
\lim_{n\to\omega}\|{\theta_{\alpha}^{(2)}(x_n\varphi^{1/2})-x_n\varphi^{1/2}}\|=  \|(\theta_{\alpha}^{(2)})^\omega  (x(\varphi^\omega)^{1/2})-x(\varphi^\omega)^{1/2}\|\leq  \delta/2.
    \end{equation*}
However, for $\alpha_n\geq\alpha$, we have
\begin{align*}
  \|\theta^{(2)}_{\alpha}(x_n\varphi^{1/2})-x_n\varphi^{1/2}\|\geq  \|\theta^{(2)}_{\alpha_n}(x_n\varphi^{1/2})-x_n\varphi^{1/2}\|\geq\delta
\end{align*}
leading to a contradiction, where the last inequality follows because $(\theta_\beta^{(2)}-id)^2$ is decreasing; that is,  for any
$\beta\geq\alpha$ and $\xi\in L^2(\M)$, if $E$ denotes the spectral projection of $\mathcal{L}_2$, then
    \begin{align*}
       \|\theta_{\alpha}^{(2)}(\xi)-\xi\|^2
        &=\int_{[0,\infty)}\left\lvert1-\frac{\alpha}{\lambda+\alpha}\right\rvert^2\,d\norm{E(\lambda)\xi}^2\\
        &=\int_{[0,\infty)}\frac{\lambda^2}{(\lambda+\alpha)^2}\,d\norm{E(\lambda)\xi}^2\\
        &\geq \int_{[0,\infty)}\frac{\lambda^2}{(\lambda+\beta)^2}\,d\norm{E(\lambda)\xi}^2\\
        &=\|\theta_\beta^{(2)}(\xi)-\xi\|^2.
    \end{align*}    
This completes the proof. 
    \end{proof}

The following is the proposed definition of relative coarse rigidity. Note in the definition that we allow all modular derivations to be `vanishing'. However, one can also define the rigidity by allowing only $\phi$-modular derivations for a fixed normal faithful state $\phi$ and then get a weaker notion of rigidity.

\begin{definition}\label{def: L2 rigidity}
Let $\M$ be a $\sigma$-finite von Neumann algebra and let $\Q\subseteq 1_\Q\M 1_\Q$, $\B\subseteq \M$   be inclusions of von Neumann algebras with expectations. We say that $\Q$ is {\em coarsely rigid relative to $\B$ inside $\M$} if for every modular derivation $\delta\colon \dom(\delta)\subset L^2(\M)\to \H$  with $\B\subseteq\Null(\bar\delta)$ and ${_\M}\H_\M\prec {_\M}L^2(\M )\otimes_\B\K\otimes_\B L^2(\M)_\M$ for some $\B$-$\B$ correspondence $\K$, the associated deformation $(\theta_\alpha^{(2)})^\omega$ converges strongly on $L^2(\Q^\omega)$  to the identity as $\alpha\to\infty$   for some  free ultrafilter $\omega$ on $\mathbb N$. We call a von Neumann algebra $\M$  {\em coarsely rigid} if $\M\subseteq\M$ is coarsely rigid relative to $\mathbb C$.
\end{definition}

\begin{example}\label{ex:comparision of different rigidity notion}
\begin{enumerate}
    \item  If $\M$ is a tracial von Neumann algebra with Property (T), then $\M$ is coarsely rigid. More generally, if $\Q\subseteq\M$ is an inclusion of tracial von Neumann algebras with relative Property (T) (see \cite{Pop06}), then the inclusion $\Q\subseteq\M$ is  coarsely rigid relative to $\B$ for every subalgebra $\B$ of $\M$.
    
  However, Property (T) is a stronger notion  than coarse rigidity.  For example,  take $\M=L\mathbb F_2\bar\otimes L\mathbb F_2$, where $\mathbb F_2$ is the free group of 2 symbols. Then $\M$  does not have Property (T). Indeed this is because $L\mathbb F_2$ does not have Property (T), e.g. the trace preserving ucp map determined by the assignment $u_g\otimes u_h\mapsto e^{-t|g|}u_g\otimes u_h$, $g,h\in \mathbb F_2$ does not converge uniformly on the unit ball of $L\mathbb F_2\bar\otimes 1$ in $L^2$-norm as $t\to0$. 
    
    On the other hand, $\M$ is coarsely rigid by Corollary \ref{cor: tensor product of nonamenable factors are coarse rigid} below.
  \item   If $\M$ is a coarsely rigid tracial von Neumann algebra, then $\M$ is $L^2$-rigid in the sense of \cite{Pet09}. However, coarse rigidity is a strictly stronger notion than $L^2$-rigidity.
  
  For example, take $\M=L\mathbb F_2\bar\otimes\mathcal{R}$, where  $\mathcal{R}$ is the separable hyperfinite II$_1$ factor. Since $\M$ is a nonamenable nonprime factor, it follows from \cite[Corollary 4.6]{Pet09} that  $\M$ is $L^2$-rigid. 
  
  On the other hand, $\M$ is not coarsely rigid. Indeed, since  $L\mathbb F_2$ is not $L^2$-rigid (\cite[Remark 3.2]{Pet09}), there is a trace-symmetric derivation 
$\delta:\dom(\delta)\subseteq \ell^2(\mathbb F_2)\to \left(\ell^2(\mathbb F_2)\otimes \ell^2(\mathbb F_2)\right)^{\oplus \infty}$ such that the associated deformation $(\theta_\alpha^{(2)})^\omega$ does not converge strongly to the identity on $L^2((L\mathbb F_2)^\omega)$, as $\alpha\to\infty$, for any $\omega\in \beta\mathbb N\setminus\mathbb N$ (in fact, one can take the derivation associated to the length function). Now consider  the modular derivation $\partial:=\delta\otimes 1$ on $L^2(\M)$ which takes values in $\left(\ell^2(\mathbb F_2)\otimes \ell^2(\mathbb F_2)\right)^{\oplus \infty}\otimes L^2(\mathcal{R})$ and whose associated deformation is $\eta_\alpha:=\theta_\alpha\otimes \id_{\mathcal{R}}$. Clearly, $(\eta_\alpha^{(2)})^\omega$ does not converge strongly to the identity on $L^2(\M^\omega)$,  even if $\left(\ell^2(\mathbb F_2)\otimes \ell^2(\mathbb F_2)\right)^{\oplus \infty}\otimes L^2(\mathcal{R})$ is weakly contained in $L^2(\M)\otimes L^2(\M)$ as $\M$-correspondence because $\mathcal{R}$ is amenable.
 \label{ex:L2 rigid bur not coarse rigid}

 \item More generally, for any $L^2$-rigid tracial von Neumann algebra $\N$ and any  hyperfinite factor $\Q$ (of type II$_\infty$ or of type III) the algebra $\M=\N\bar\otimes \Q$ is not coarsely rigid by the same argument as above.
 \item If a diffuse von Neumann algebra $\M$ admits a modular derivation  whose associated deformations $\theta_\alpha^{(2)}$ are compact for each $\alpha>0$, then $\M$ is not coarsely rigid by Proposition \ref{prop:convergence of deformation with compact resolvent gives atomicity}. For example, $\M=\Gamma(U_t, \H_{\mathbb R})$ if the free Araki-Woods factors for $\dim\H_{\mathbb R}<\infty$ (see Sec \ref{sec:Examples} for the construction.)

 \item By the same argument as in \cite[Remark 4.2]{Pet09}, one shows that $\M_1\bar\ast\M_2$ is not coarsely rigid for any diffuse von Neumann algebras $\M_1,\M_2$ (see Sec \ref{sec:Examples} for construction of a derivation in the coarse bimodule).
\end{enumerate}
\end{example}

\section{The main theorem}\label{sec:main theorem}
This section contains one of the main results of the paper, Theorem \ref{main_theorem}. We first prove several norm estimates involving GNS symmetric ucp maps and their tensor products.

\subsection{Some norm estimates involving ucp maps}

 We collect some well-known norm estimate results about factorizable maps. The following lemma is taken from \cite[Corollary 23.11]{Pis}.
\begin{lemma}\label{An inequality on the norm of certain operator}
 If $\pi_\ell$ and $\pi_r$ are (not necessarily normal)  $\ast$-homomorphisms of $\M$ and $\M^\op$ respectively with commuting ranges, then $\norm{\sum_i\pi_\ell(x_i)\pi_r(x_i^\ast)}\leq \|\sum_ixJx_iJ\|$  for any $x_i, 1\leq i\leq n$. In particular, if  $\H$ is a Hilbert $\M$-correspondence, then we have $\norm{\sum_i\pi_\H(x_i^\ast\otimes x_i^{\Op})}\leq \norm{\sum_i x_i^\ast Jx_i^\ast J}$.  
\end{lemma}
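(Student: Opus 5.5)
The plan is to pass through the maximal tensor product $\M\otimes_{\max}\bar\M$. The key input is the fact from \cite[Chapter 23]{Pis}: for elements of the special ``diagonal'' form $t=\sum_i x_i\otimes\overline{x_i}$, the max-norm is computed exactly by the standard representation. I read $\pi_r(x_i^\ast)$ as $\pi_r((x_i^\ast)^{\op})$, so the target inequality is $\norm{\sum_i\pi_\ell(x_i)\pi_r((x_i^\ast)^{\op})}\le\norm{\sum_i x_iJx_iJ}$.

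\emph{Step 1: convert to a representation of $\M\otimes_{\max}\bar\M$.} Identify $\M^{\op}$ with the conjugate algebra $\bar\M$ via the $\ast$-isomorphism $y^{\op}\mapsto\overline{y^\ast}$. Since $\pi_\ell$ and $\pi_r$ have commuting ranges, the universal property of the maximal tensor product gives a $\ast$-representation $\pi_\ell\times\pi_r\colon\M\otimes_{\max}\bar\M\to B(\H)$. Under this identification $(x_i^\ast)^{\op}$ corresponds to $\overline{x_i}$. Hence
\[
\sum_i\pi_\ell(x_i)\pi_r((x_i^\ast)^{\op})=(\pi_\ell\times\pi_r)\Big(\sum_i x_i\otimes\overline{x_i}\Big),
\]
and its norm is at most $\norm{\sum_i x_i\otimes\overline{x_i}}_{\max}$. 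No normality is needed here, which is why the lemma allows non-normal homomorphisms.

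\emph{Step 2: compute the max-norm in the standard form.} Consider the standard representation $x\otimes\overline{y}\mapsto xJyJ$ on $L^2(\M)$. It is a representation of $\M\otimes_{\max}\bar\M$, because $y\mapsto JyJ$ is a $\ast$-anti-isomorphism onto $\M'$, so $\overline{y}\mapsto JyJ$ is a $\ast$-representation of $\bar\M$. It sends $t=\sum_i x_i\otimes\overline{x_i}$ to $\sum_i x_iJx_iJ$. The claim, which is \cite[Corollary 23.11]{Pis}, is the equality
\[
\Big\|\sum_i x_i\otimes\overline{x_i}\Big\|_{\max}=\Big\|\sum_i x_iJx_iJ\Big\|.
\]
The inequality ``$\ge$'' is immediate since the standard representation is one particular representation of $\M\otimes_{\max}\bar\M$. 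For ``$\le$'' I would follow Pisier's route. First, by Haagerup's result, the max-norm of such diagonal tensors is controlled by a supremum of $\abs{\sum_i f(x_ia)\,\overline{g(x_ib)}}$-type expressions over states. Second, each such expression is realized by vectors in the positive cone $L^2(\M)_+$, since every normal state has a representative $\xi$ there with $J\xi=\xi$. Third, for the reduction to normal states, use that $\M$ is a von Neumann algebra so that the bidual/normal-part argument in \cite{Pis} applies.

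\emph{Step 3: the ``in particular'' part.} Take $\pi_\ell=\pi_l^\H$ and $\pi_r=\pi_r^\H$, which have commuting ranges by the definition of a correspondence. Then $\pi_\H(x_i^\ast\otimes x_i^{\op})$ is $\pi_\ell(x_i^\ast)\pi_r(x_i^{\op})$. Apply the first part with $x_i$ replaced by $x_i^\ast$; since $(x_i^\ast)^\ast=x_i$, the factor $\pi_r(x_i^{\op})$ matches, and the right-hand side becomes $\norm{\sum_i x_i^\ast Jx_i^\ast J}$.

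The main obstacle is Step 2, the upper bound of the max-norm by the standard representation for diagonal tensors. This genuinely uses the self-dual cone and the symmetry $J\xi=\xi$ of positive vectors, and it fails for non-diagonal tensors. I would not reprove it but quote \cite[Corollary 23.11]{Pis} directly, after checking that the identification of $\M^{\op}$ with $\bar\M$ matches Pisier's convention.
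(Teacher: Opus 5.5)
Your proposal is correct and matches the paper: the paper gives no argument of its own and simply quotes \cite[Corollary 23.11]{Pis}. Your reduction through the identification $y^{\op}\mapsto\overline{y^\ast}$ and the universal property of $\M\otimes_{\max}\bar\M$ is exactly the bookkeeping needed to read that corollary in the paper's notation, including the silent correction of $\sum_i xJx_iJ$ to $\sum_i x_iJx_iJ$. The specialization to correspondences by replacing $x_i$ with $x_i^\ast$ is also right, and since you quote the corollary anyway, the heuristic sketch of the ``$\le$'' direction in Step 2 is not needed.
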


\begin{lemma}\label{Norm estimate of involving certain ucp maps}
    Let $\B\subseteq\M$ be a unital inclusion of von Neumann algebras. For $i\in \{1,2\}$ let $\H_i$ be an $\M$-$\B$ correspondence, let $\K_i$ be a $\B$-$\B$ correspondence such that $_\M(\H_i)_\B\prec {_\M L^2(\M)}\otimes_\B (\K_i)_\B$ and let $V_i:L^2(\M)\to\H_i$ be  right $\B$-linear contractions. If $\theta_i:\M\to \langle\M,\B\rangle$ is the completely positive map given by $\theta_i(x)=V_i^\ast\pi_l^{\H_i}(x) V_i$, $x\in\M$, then we have
    \begin{align*}
        \left\lVert\sum_{j=1}^n\theta_1(x_j^\ast)\otimes_\B\theta_2(x_j)^{\Op}\right\rVert\leq \left\lVert\sum_{j=1}^n x_j^\ast\otimes_\B x_j^{\Op}\right\rVert
    \end{align*}
 for $x_i\in\M, 1\leq i\leq n$,   where $x\otimes_\B y^{\Op}$ is the operator on $L^2(\M)\otimes_\B L^2(\M)$ as defined before.
\end{lemma}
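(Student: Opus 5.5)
The plan is to reduce the estimate to Lemma \ref{An inequality on the norm of certain operator} by realizing $\sum_j\theta_1(x_j^\ast)\otimes_\B\theta_2(x_j)^{\op}$ as a compression of an operator of the form $\sum_j\pi_\ell(x_j^\ast)\pi_r(x_j^{\op})$ acting on a correspondence built from $\H_1$ and $\H_2$.

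\emph{First step.} Consider the Connes fusion $\H_1\otimes_\B\overline{\H_2}$. Since $\H_1$ is an $\M$-$\B$ correspondence, $\overline{\H_2}$ is a $\B$-$\M$ correspondence, so this fusion is an $\M$-$\M$ correspondence. Because $V_1,V_2$ are right $\B$-linear contractions, the operator $V:=V_1\otimes_\B\overline{V_2}\colon L^2(\M)\otimes_\B\overline{L^2(\M)}\to \H_1\otimes_\B\overline{\H_2}$ is a well-defined contraction. Here I use the tensor-product-of-module-maps fact recalled in Subsection~\ref{subsec:bimodules}; the conjugate of a right $\B$-linear map is left $\B$-linear.

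A direct check on elementary tensors $\xi\otimes_\B\bar\eta$ should then give
\[
V^\ast\Bigl(\sum_j \pi_l^{\H_1}(x_j^\ast)\otimes_\B\overline{\pi_l^{\H_2}(x_j^\ast)}\Bigr)V=\sum_j\theta_1(x_j^\ast)\otimes_\B\theta_2(x_j)^{\op},
\]
using the convention $T\otimes_\B S^{\op}\colon \xi\otimes\bar\eta\mapsto T\xi\otimes\overline{S^\ast\eta}$ together with $\theta_2(x_j)^\ast=\theta_2(x_j^\ast)$. Hence the left-hand side of the claim is bounded by the norm of $\sum_j\pi_\ell(x_j^\ast)\pi_r(x_j^{\op})$ on $\H_1\otimes_\B\overline{\H_2}$, where $\pi_\ell,\pi_r$ are the left and right $\M$-actions.

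\emph{Second step.} Bound that norm using the weak containment. From ${_\M}\H_1{}_\B\prec {_\M}L^2(\M)\otimes_\B\K_1$ and the analogous statement for $\overline{\H_2}$, continuity of Connes fusion with respect to weak containment gives
\[
\H_1\otimes_\B\overline{\H_2}\prec L^2(\M)\otimes_\B\K_1\otimes_\B\overline{\K_2}\otimes_\B L^2(\M).
\]
The latter correspondence carries commuting actions of $\langle\M,\B\rangle$ and $\langle\M,\B\rangle^{\op}$ extending the $\M$-actions, by Lemma \ref{lem:ext_bimodule_action}. Equivalently, it is a $\langle\M,\B\rangle$-$\langle\M,\B\rangle$ correspondence. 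Applying Lemma \ref{An inequality on the norm of certain operator} with $\langle\M,\B\rangle$ in place of $\M$, realized on its standard space $L^2(\M)\otimes_\B L^2(\M)$, bounds the norm by $\|\sum_j x_j^\ast\, \tilde J x_j^\ast \tilde J\|$ computed there. With the identification recalled before Lemma~\ref{lem:ext_bimodule_action}, this operator is exactly $\sum_j x_j^\ast\otimes_\B x_j^{\op}$. Weak containment preserves norms of elements of the algebraic tensor product $\M\odot\M^{\op}$ from above, so the bound transfers back to $\H_1\otimes_\B\overline{\H_2}$.

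\emph{Main obstacle.} I expect the hardest part to be justifying the weak-containment transfer through the fusion products and the passage to $\langle\M,\B\rangle$. In particular, one must check that Pisier's inequality applied to the basic construction yields precisely the operator $\sum_j x_j^\ast\otimes_\B x_j^{\op}$, with the correct placement of adjoints and opposites. A fallback is to avoid fusion altogether: dilate each $\theta_i$ through $\langle\M,\B\rangle$ via Lemma \ref{lem:ext_bimodule_action}, so that $\theta_i$ extends to a ucp map defined on $\langle\M,\B\rangle$ by weak containment. One then argues that $\theta_1\otimes_\B\theta_2^{\op}$ is contractive on the minimal $\B$-tensor product, as a product of ucp maps on a relative tensor product.
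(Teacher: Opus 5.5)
Your proposal is correct and follows the paper's proof essentially step for step. You compress by $V_1\otimes_\B\overline{V_2}$ to reduce to $\sum_j\pi_\ell(x_j^\ast)\pi_r(x_j^{\op})$ on $\H_1\otimes_\B\overline{\H_2}$, transfer the bound through the weak containment $\H_1\otimes_\B\overline{\H_2}\prec L^2(\M)\otimes_\B\K_1\otimes_\B\overline{\K_2}\otimes_\B L^2(\M)$ (the paper cites \cite[Lemma 1.7]{AD95} for this), and finish with Lemma~\ref{An inequality on the norm of certain operator} for $\langle\M,\B\rangle$ acting on $L^2(\M)\otimes_\B L^2(\M)\cong L^2(\langle\M,\B\rangle)$ via \cite[Proposition 3.1]{Sau83}; the fallback route you sketch at the end is not needed.
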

\begin{proof}
    Since $V_i$ is right $\B$-linear, it is clear that $\theta_i(x)\in \langle\M,\B\rangle$. By definition,
    \begin{equation*}
        \theta_1(x)\otimes_\B \theta_2(y)^{\mathrm{op}}=(V_1^\ast\otimes_\B V_2^\ast)(\pi_l^{\H_1}(x)\otimes_\B \pi_r^{\bar\H_2}(y^{\Op}))(V_1\otimes_\B V_2)
    \end{equation*}
    for all $x,y\in \M$.

    Since $\norm{V_1^\ast\otimes_\B V_2^\ast}=\norm{V_1\otimes_\B V_2}\leq \norm{V_1}\norm{V_2}\leq 1$, we get
    \begin{equation*}
        \left\lVert \sum_{j=1}^n \theta_1(x_j)^\ast\otimes_\B \theta_2(x_j)^\op\right\rVert\leq \left\lVert\sum_{j=1}^n \pi_l^{\H_1}(x_j)^\ast\otimes_\B \pi_r^{{\bar \H_2}}(x_j^\op) \right\rVert.
    \end{equation*}
    Since ${_\M (\H_1)}\otimes_\B (\overline{H_2})_\M\prec _\M L^2(\M)\otimes_\B \K_1\otimes_\B \overline{\K_2}\otimes_\B L^2(\M)_\M$ by \cite[Lemma 1.7]{AD95}, we obtain
    \begin{equation*}
        \left\lVert\sum_{j=1}^n \pi_l^{\H_1}(x_j)^\ast\otimes_\B \pi_r^{{\bar \H_2}}(x_j^\op) \right\rVert\leq \left\lVert\sum_{j=1}^n \pi_l^{L^2(\M)\otimes_\B \K_1}(x_j)^\ast\otimes_\B \pi_r^{{\bar\K_2\otimes_\B L^2(\M)}}(x_j^\op) \right\rVert.
    \end{equation*}
    Finally, as the left action of $\M$ on $_\M L^2(\M)\otimes_\B \K_1$ and the right action on $\bar\K_2\otimes_\B L^2(\M)_\M$ can be extended to actions of $\langle \M,\B\rangle$ and ${_{\langle M,B\rangle}L^2(\M)}\otimes_\B L^2(\M)_{\langle M,B\rangle}\cong {_{\langle M,B\rangle}L^2(\langle \M,\B\rangle)_{\langle M,B\rangle}}$ by \cite[Proposition 3.1]{Sau83}, the claim follows from Lemma \ref{An inequality on the norm of certain operator}.
\end{proof}

\begin{remark}\label{rmk:weak_containment_deformations}
    Let $\Phi\colon \M\to \M$ be a normal ucp map and $(\pi,\H,V)$ a minimal Stinespring triple for $\theta$, that is, $\H$ is a Hilbert space, $\pi\colon \M\to \mathbb B(\H)$ is a normal unital $\ast$-homomorphism and $V\colon L^2(\M)\to \H$ is an isometry such that $\text{span}\{\pi(x)V\eta\mid x\in \M,\eta\in L^2(\M)\}$ is dense in $\H$ and $\Phi=V^\ast\pi(\cdot)V$. One can turn $\H$ into an $\M$-$\M$ correspondence by defining $\pi_l^\H=\pi$ and
    \begin{equation*}
        \pi_r^\H(y^\op)\pi(x)V\eta=\pi(x)V(\eta y)
    \end{equation*}
    for $x,y\in \M$ and $\eta\in L^2(\M)$. We call $\H$ with these bimodule actions the {\em minimal Stinespring correspondence} and denote it by $\H_\Phi$.

    Now let $(\Phi_t)_{t\geq 0}$ be a GNS-symmetric quantum Markov semigroup and recall the definitions of the families of ucp maps  $(\zeta_\alpha)_{\alpha>0}$, $(\theta_\alpha)_{\alpha>0}$ and $(\rho_\alpha)_{\alpha>0}$ from Remark~\ref{rmk:ucp_families}. It follows easily from the integral formulas given there that
    \begin{align*}
        {_\M(\H_{\theta_\alpha})_\M}&\subset \int_{\mathbb R_+} ^\oplus {_\M (\H_{\Phi_t})_\M}\,\alpha e^{-\alpha t}dt,\\
        {_\M(\H_{\zeta_\alpha})_\M}&\subset \int_{\R_+}^\oplus {_\M(\H_{\theta_{\alpha(1+t)/t}})_\M}\frac{dt}{\pi(1+t)\sqrt t},\\
        {_\M(\H_{\rho_\alpha})_\M}&\subset \int_{\mathbb R_+}^\oplus {_\M(\H_{\theta_{t\alpha/(1+t)}})_\M}\frac{dt}{\pi(1+t)\sqrt t}.
    \end{align*}
    In particular, if there exists a $\B$-$\B$ correspondence $\K_t$ such that ${_\M (\H_{\Phi_t})_\B}\prec _\M L^2(\M)\otimes_\B (\K_t)_\B$ for all $t\geq 0$, then the same is true for $\H_{\eta_\alpha}$, $\H_{\zeta_\alpha}$ and $\H_{\theta_\alpha}$.
\end{remark}

The condition $_\M\H_\B\prec {_\M L^2(\M)}\otimes_\B \K_\B$ from the previous lemma is satisfied in various instances. 


\begin{example}\label{ex:weak_containment_amenable}
    If $\B$ is amenable, then $ {_\B}L^2(\B)_\B\prec  {_\B}L^2(\B)\otimes L^2(\B)_\B$, so for any $\M$-$\B$ correspondence $\H$ we have \begin{align*}
        {_\M}\H_{\B}&\cong  {_\M}\H\otimes_\B L^2(\B)_\B\prec  {_\M}\H\otimes_\B L^2(\B)\otimes L^2(\B)_\B\\
        &\cong {_\M}\H\otimes L^2(\B)_\B\prec  {_\M}L^2(\M)\otimes L^2(\B)_\B\\
        &\prec  {_\M L^2(\M)}\otimes_\B L^2(\B)\otimes L^2(\B)_\B
    \end{align*}
    where we  used the fact that ${_\M\H_{\mathbb C}}\subset {_\M (L^2(\M)}\otimes \ell^2(S))_{\mathbb C}\prec  {_\M L^2(\M)}_{\mathbb C}$  by the representation theory of von Neumann algebras, where $S$ is a possibly uncountable set.
\end{example}

\begin{example}
Let $G$ be a discrete group, $H$ a subgroup of $G$ and $\pi\colon G\to \mathcal U(\tilde\H)$ a unitary representation. If we let $\M=L(G)$, $\B=L(H)$, then $\H=\tilde\H\otimes\ell^2(G)$ becomes an $\M$-$\M$ correspondence when endowed with the bimodule actions given by $\pi_l^\H(\lambda(g))=\pi(g)\otimes\lambda(g)$ and $\pi_r^\H(\lambda(g)^{\op})=1\otimes \rho(g^{-1})$. With the unitary $U\colon \H\to \H$, $\eta\otimes 1_g\mapsto \pi(g^{-1})\eta\otimes 1_g$ we get $U\pi_l^\H(x)U^\ast=1\otimes x$ and $U\pi_r^\H(\lambda(g)^{\mathrm{op}})U^\ast=\pi(g^{-1})\otimes \rho(g^{-1})$. It follows that $U^\ast(1\otimes T)U\in \pi_r^\H(\B^{\op})^\prime$ for $T\in \langle\M,\B\rangle$. Thus there exists a $\B$-$\B$ correspondence $\K$ such that ${_\M\H}_\B\cong {_\M L^2(\M)}\otimes_\B \K_\B$ by Lemma~\ref{lem:ext_bimodule_action}.
\end{example}

\subsection{Self-polar form on von Neumann algebras}

We also need some facts about the theory of self-polar forms on von Neumann algebras.

Let $\M$ be a $\sigma$-finite von Neumann algebra. A sesquilinear form $s:\M\times\M\to\mathbb C$ is called  a {\em self-polar form} if
 $s$ is bi-positive, that is, $s(x,y)\geq0$ for all $x,y\in \M_+$, 
    $s$ is positive-definite, that is, $s(x,x)\geq 0$ for all $x\in \M$
and  $\phi:x\mapsto s(x,1)$ is a normal state on $\M$ such that for all $\psi\in \M_\ast^+$ with $0\leq \psi\leq \phi$, there is $a\in\M$ with $0\leq a\leq 1$ satisfying $\psi(x)=s(x,a), \forall x\in \M$.

The form $s$ is linear in the first variable while anti-linear in the second. For any normal faithful state $\phi$ on $\M$, there is a canonical self-polar form $s_\phi$ associated to $\phi$ given by 
\begin{align*}
    s_\phi(x,y)=\langle\phi^{1/2}, x\phi^{1/2}y^\ast\rangle,\;\;\forall x,y\in\M.
\end{align*}
The following theorem due to Woronowicz  plays a crucial role in the sequel.
\begin{theorem}[{\cite[Theorem 1.1]{Wor74}}]\label{A result of Woronowicz on self-polar forms}
    Let $\M$  be a von Neumann algebra with a normal faithful state $\phi$. Let $s:\M\times\M\to\mathbb C$ be a bipositive positive-definite sesquilinear form such that $s(x,1)=\phi(x),  x\in \M$. Then $s(x,x)\leq s_\phi(x,x)$ for all $x\in \M$.
\end{theorem}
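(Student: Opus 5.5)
The plan is to turn $s$ into a positive unital map $T$ on $\M$ that is symmetric for $s_\phi$, and then show that $T$ acts as a positive contraction on the Hilbert space completion of $(\M,s_\phi)$. This yields $s(x,x)=s_\phi(x,Tx)\le s_\phi(x,x)$.

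\emph{Step 1 (Radon--Nikodym).} Fix $y\in\M_+$ and consider the functional $\psi_y(x)=s(x,y)$. By bipositivity, $s(x,y)\ge 0$ and $s(x,\norm{y}-y)\ge0$ for $x\in\M_+$. Since $s(x,1)=\phi(x)$, this gives
\begin{equation*}
0\le \psi_y\le \norm{y}\,\phi .
\end{equation*}
The Radon--Nikodym statement recalled in Subsection~\ref{SubSec: Standard form} then yields a unique $T(y)\in\M$ with $0\le T(y)\le\norm{y}$ and
\begin{equation*}
s(x,y)=\langle\phi^{1/2},x\phi^{1/2}T(y)\rangle=s_\phi(x,T(y)).
\end{equation*}
Uniqueness comes from the fact that $\phi^{1/2}$ is cyclic and separating. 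Extending through $y=\sum_k i^k y_k$ with $y_k\ge0$, and using that $s$ is antilinear in $y$ and $s_\phi$ is antilinear in its second slot, we obtain a linear positive map $T\colon\M\to\M$ with $s(x,y)=s_\phi(x,Ty)$ for all $x,y$. Moreover $T(1)=1$, because $s(x,1)=\phi(x)=s_\phi(x,1)$.

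\emph{Step 2 (symmetry and positivity).} A positive semidefinite sesquilinear form on a complex space is Hermitian. Hence
\begin{equation*}
s_\phi(x,Ty)=\overline{s_\phi(y,Tx)}=s_\phi(Tx,y),
\end{equation*}
using that $s_\phi$ is itself Hermitian. In particular $\phi\circ T=\phi$. Now let $\mathcal K$ be the completion of $\M$ for the inner product $s_\phi$; via $x\mapsto \phi^{1/4}x\phi^{1/4}$, which means $\Delta_\phi^{1/4}(x\phi^{1/2})$, this is just $L^2(\M)$. Positive definiteness of $s$ gives $s_\phi(x,Tx)\ge0$. Thus $T$ defines a densely defined, symmetric, positive operator $\tilde T$ on $\mathcal K$. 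It then remains only to prove that $\tilde T$ is bounded with $\norm{\tilde T}\le1$: then $0\le\tilde T\le1$, and
\begin{equation*}
s(x,x)=\langle x,\tilde Tx\rangle_{\mathcal K}\le \norm{x}_{\mathcal K}^2=s_\phi(x,x).
\end{equation*}

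\emph{Step 3 (contractivity, the main obstacle).} We need to show that a merely positive (not completely positive) unital, $\phi$-preserving, KMS-symmetric map is contractive on the symmetric $L^2$-embedding. Kadison--Schwarz is not available, so I would use interpolation instead.
\begin{itemize}
\item On $L^\infty$, $T$ is contractive by Russo--Dye for positive unital maps.
\item On $L^1$, the predual action $x\phi\mapsto T(x)\phi$ is contractive, because $T$ preserves $\phi$ and is symmetric.
\end{itemize}
More precisely, consider the symmetric embedding $x\mapsto \phi^{1/2p}x\phi^{1/2p}$ of $\M$ into $L^p(\M)$. The symmetry from Step 2 says exactly that $T$ is compatible with this embedding at $p=1$ (it is the adjoint of itself under the KMS pairing). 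Complex interpolation (Kosaki/Haagerup--Junge--Xu) between $p=\infty$ and $p=1$ then gives contractivity at $p=2$, which is $\norm{\tilde T}\le1$ on $\mathcal K$.

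If the interpolation step turns out to be delicate for positive maps, I would use a fallback. For positive $\tilde T$, boundedness by $1$ follows from $\langle x,\tilde T x\rangle\le\norm{x}^2$ on the positive cone together with a Cauchy--Schwarz argument for the bipositive form. Concretely, bound $|s(x,y)|^2\le s(x,x)s(y,y)$ and iterate: $\langle x,\tilde T^{2^n}x\rangle$ is controlled by $\norm{\tilde T^{2^n}x}\cdots$, and then use that $\tilde T^{k}$ is unital and $\phi$-preserving, so its $L^\infty$-norm stays at most $1$. The spectral radius of $\tilde T$ is therefore at most $1$, and since $\tilde T$ is self-adjoint this gives $\norm{\tilde T}\le1$.
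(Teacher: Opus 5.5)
The paper does not prove this statement; it is quoted from Woronowicz's work on self-polar forms, so there is no internal argument to compare with. Your proposal is a correct, self-contained proof in the von Neumann setting. Steps 1 and 2 are sound and in fact give more than the inequality: every such $s$ has the form $s(x,y)=s_\phi(x,Ty)$ with $T$ positive, unital and KMS-symmetric, $s_\phi(Tx,y)=s_\phi(x,Ty)$. The theorem therefore reduces to showing that $\tilde T\colon\phi^{1/4}x\phi^{1/4}\mapsto\phi^{1/4}T(x)\phi^{1/4}$ is a contraction on $L^2(\M)$, i.e.\ that $\norm{\tilde T}\le1$.

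In Step 3, drop the map $x\phi\mapsto T(x)\phi$. It is the restriction of the predual of $T$ only when $T$ is GNS-symmetric, and Step 2 only gives KMS symmetry. The correct $L^1$ endpoint is $\phi^{1/2}x\phi^{1/2}\mapsto\phi^{1/2}T(x)\phi^{1/2}$. Its contractivity follows by duality from KMS symmetry, $T(z^\ast)^\ast=T(z)$ and Russo--Dye:
\begin{equation*}
\abs{\tr(\phi^{1/2}T(x)\phi^{1/2}z)}=\abs{\tr(\phi^{1/2}x\phi^{1/2}T(z))}\le\norm{\phi^{1/2}x\phi^{1/2}}_1\norm{z}.
\end{equation*}
This identity also shows that $T$ is normal. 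With this endpoint, Kosaki's interpolation for the symmetric embedding gives $\norm{\tilde T}\le1$, as you claim.

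Your fallback, made precise, is a more elementary alternative that avoids interpolation entirely. As written, however, its first sentence presupposes the inequality you want to prove, and ``spectral radius'' has no meaning before $\tilde T$ is known to be bounded. Here is the clean version. For $\xi=\phi^{1/4}x\phi^{1/4}$, put $a_k=\norm{\tilde T^k\xi}_2$.
\begin{itemize}
\item Since $\tilde T$ is symmetric and leaves $\phi^{1/4}\M\phi^{1/4}$ invariant, $a_k^2=\langle\tilde T^{k-1}\xi,\tilde T^{k+1}\xi\rangle\le a_{k-1}a_{k+1}$.
\item This log-convexity gives $a_1\le a_0^{1-1/m}a_m^{1/m}$ for every $m$.
\item Moreover $a_m=\norm{\phi^{1/4}T^m(x)\phi^{1/4}}_2\le\norm{T^m(x)}\le\norm{x}$, using that $\phi$ is a state and Russo--Dye.
\item Letting $m\to\infty$ gives $\norm{\tilde T\xi}_2\le\norm{\xi}_2$, hence $s(x,x)=s_\phi(x,Tx)\le s_\phi(x,x)$ by Cauchy--Schwarz.
\end{itemize}
This route uses only KMS symmetry and $\norm{T}\le1$; positivity of $\tilde T$ is not even needed. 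The interpolation route is shorter to state but depends on Kosaki's theorem.
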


Let us also state the following elementary Lemma which is a variant of the Hahn-Banach Theorem (cf. \cite[Lemma 8.5]{BMO20}).
\begin{lemma}\label{A Hahn-Banach type theorem}
    Let $C$ be a convex cone of a real vector space $V$, and let $p,q:V\to\mathbb R$ be maps such that $p,-q$  are sublinear  satisfying $q(c)\leq p(c)$ for $c\in C.$  Then there is a linear functional $\psi$ on $V$ such that $\psi(x)\leq p(x)$ and $q(c)\leq \psi(c)$ for all $x\in V,c\in C.$
\end{lemma}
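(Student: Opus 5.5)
The plan is to reduce the statement to the classical Hahn--Banach extension theorem, which dominates a linear functional by a single sublinear functional. I will build an auxiliary sublinear functional $p'$ that encodes both constraints at once. Define, for $x\in V$,
\begin{equation*}
    p'(x)=\inf\{p(x+c)-q(c): c\in C\}.
\end{equation*}
The idea is that $p'\leq p$ on $V$, while $p'(-c)\leq -q(c)$ for $c\in C$, since we may choose the same $c$ inside the infimum.

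\emph{Finiteness of $p'$.} This is the only point needing some care, because an infimum could be $-\infty$. Sublinearity of $p$ gives $p(c)\leq p(x+c)+p(-x)$. Combining this with the hypothesis $q(c)\leq p(c)$, we get
\begin{equation*}
    p(x+c)-q(c)\geq p(c)-q(c)-p(-x)\geq -p(-x)
\end{equation*}
for every $c\in C$. Hence $p'(x)\geq -p(-x)>-\infty$.

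\emph{Comparison with $p$.} For the upper bound $p'\leq p$, we use that $C$ is a cone. For $c\in C$ and $t>0$ we have $tc\in C$, and positive homogeneity of $p$ and $-q$ gives
\begin{equation*}
    p(x+tc)-q(tc)\leq p(x)+t\bigl(p(c)-q(c)\bigr).
\end{equation*}
Letting $t\searrow 0$ yields $p'(x)\leq p(x)$. This step is immediate if $0\in C$, since $q(0)=0$.

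\emph{Sublinearity of $p'$.} Positive homogeneity follows because $C$ is stable under positive scalars and $p$, $-q$ are positively homogeneous. For subadditivity, take $c_1,c_2\in C$. Then $c_1+c_2\in C$, and using subadditivity of $p$ and superadditivity of $q$,
\begin{equation*}
    p(x+y+c_1+c_2)-q(c_1+c_2)\leq \bigl(p(x+c_1)-q(c_1)\bigr)+\bigl(p(y+c_2)-q(c_2)\bigr).
\end{equation*}
Taking infima over $c_1$ and $c_2$ gives $p'(x+y)\leq p'(x)+p'(y)$.

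\emph{Conclusion.} By the Hahn--Banach theorem, applied to the zero functional on $\{0\}$, there is a linear functional $\psi$ on $V$ with $\psi\leq p'$. Then $\psi\leq p'\leq p$ on $V$. For $c\in C$, choosing the same $c$ in the infimum defining $p'(-c)$ gives
\begin{equation*}
    -\psi(c)=\psi(-c)\leq p'(-c)\leq p(0)-q(c)=-q(c),
\end{equation*}
so $q(c)\leq\psi(c)$, as required.

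I expect the main obstacle to be the finiteness of $p'$, which is exactly where the hypothesis $q\leq p$ on $C$ enters. The other steps are routine consequences of the cone structure of $C$ and the sublinearity of $p$ and $-q$.
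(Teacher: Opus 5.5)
Your proof is correct. The auxiliary functional $p'(x)=\inf_{c\in C}\bigl(p(x+c)-q(c)\bigr)$ is finite, since it is bounded below by $-p(-x)$. It is sublinear, it lies below $p$, and it satisfies $p'(-c)\leq -q(c)$ on $C$, so the classical Hahn--Banach theorem finishes the argument.

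The paper gives no proof of this lemma; it only cites it as a Hahn--Banach variant (\cite[Lemma 8.5]{BMO20}). Your reduction is the standard sandwich-type argument for such statements. The only tacit points are $C\neq\emptyset$ (needed for $p'<\infty$ and for the $t\searrow 0$ step) and $p(0)=q(0)=0$, which follows from positive homogeneity.
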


\subsection{The main theorem}

We are now ready to state and prove the main theorem  of this paper. See Notation \ref{notation} used below.

\begin{theorem}\label{main_theorem}
    Let $\M$ be a $\sigma$-finite von Neumann algebra,  let $\delta\colon \dom(\delta)\subset L^2(\M)\to \H$ be a densely defined closable  modular derivation with associated quantum Markov semigroup $(\Phi_t)_{t\geq 0}$ and let $\B\subseteq\Null(\bar\delta)$. Assume that there exists an $\langle\M,\B\rangle$-$\langle \M,\B\rangle$ correspondence $\K$ and $\langle \M,\B\rangle$-$\B$ correspondences $\K_t$ for $t\geq 0$ such that $_\M\H_\M\prec {_\M\K}_\M$ and $_\M(\H_{\Phi_t})_\B\prec {_\M(\K_t)}_\B$.
    
   For an inclusion $\Q\subseteq 1_\Q\M 1_\Q$ of von Neumann algebras  with expectation, at least one of the following holds:
    \begin{enumerate}
        \item There is a non-zero projection $p\in {\mathcal{Z}(\Q'\cap1_\Q \M1_\Q)}$ such that $p\Q$ is amenable relative to $\B$.
        \item $(\theta_\alpha^{(2)})^\omega$ strongly  converges to the identity on $L^2(\Q'\cap 1_\Q\M^\omega1_\Q)$ as $\alpha\to\infty$ for every cofinal ultrafilter $\omega$ on a directed set.
    \end{enumerate} 
\end{theorem}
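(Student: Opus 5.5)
We argue by contraposition, following Peterson's strategy in \cite{Pet09} and its relative version in \cite{OP10}. Suppose (2) fails: there are a cofinal ultrafilter $\omega$, $\epsilon>0$ and $\xi\in L^2(\Q'\cap 1_\Q\M^\omega 1_\Q)$, which we may take of the form $\xi=x(\phi^\omega)^{1/2}$ with $x\in\Q'\cap 1_\Q\M^\omega1_\Q$ bounded, such that $\|(\theta_\alpha^{(2)})^\omega\xi-\xi\|\geq\epsilon$ along a sequence $\alpha_k\to\infty$. By the equivalence recorded before Corollary \ref{cor; corollary to the technical lemma in the ultrapower language}, this means $\|\tilde\delta_{\alpha_k}^\omega(\xi)\|\geq c>0$. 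We then build, for each $k$, the normalized vectors $\eta_k=\tilde\delta^\omega_{\alpha_k}(\xi)/\|\tilde\delta^\omega_{\alpha_k}(\xi)\|\in\H^\omega$. By Corollary \ref{cor; corollary to the technical lemma in the ultrapower language}, since $x$ commutes with $\Q$, we get $\|\zeta_{\alpha_k}(u)\eta_k-\eta_k\zeta_{\alpha_k}(u)\|\to 0$ for all $u\in\mathcal U(\Q)$, and moreover $\zeta_{\alpha_k}(u)\to u$ strongly since $\zeta_\alpha\to\mathrm{id}$ pointwise. So $(\eta_k)$ is an asymptotically $\Q$-central sequence of unit vectors in $\H^\omega$, modulo the twist by the modular group (here we should really use analytic unitaries from Lemma \ref{lem: density of analytic unitaries} and track $\sigma_{i/2}^\phi$ factors when passing from left to right multiplication on $L^2$).

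Next we convert these vectors into a state on $\langle\M,\B\rangle$. Using the hypotheses $_\M\H_\M\prec{_\M\K_\M}$ with $\K$ an $\langle\M,\B\rangle$-bimodule, and $_\M(\H_{\Phi_t})_\B\prec{_\M(\K_t)_\B}$ (hence, by Remark \ref{rmk:weak_containment_deformations}, the same for $\zeta_\alpha,\theta_\alpha$), we define functionals $\psi_k(T)=\langle\eta_k,\rho(T)\eta_k\rangle$ on $\langle\M,\B\rangle$, where $\rho$ is the extended (possibly non-normal, after passing through weak containment and ultrapowers) action, and take a weak$^\ast$ limit $\psi$. Asymptotic centrality gives the KMS relation $\psi(Tb)=\psi(\sigma_i^\phi(b)T)$ for analytic $b\in\Q$, while $\B\subseteq\Null(\bar\delta)$ ensures that $\tilde\delta_\alpha$ is $\B$-bimodular so that everything is compatible with the basic construction. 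To obtain $\psi|_\M\leq C\phi$ we restrict $\psi$ to $\M$ and bound it via Lemma \ref{lem: the main technical lemma}: the values $\langle\eta_k,y\eta_k\rangle$ for $y\in\M_+$ are controlled by $\langle\xi,y\xi\rangle$-type expressions, and here Woronowicz's Theorem \ref{A result of Woronowicz on self-polar forms} together with the Hahn--Banach variant Lemma \ref{A Hahn-Banach type theorem} is the tool to dominate a bipositive form by the canonical self-polar form $s_\phi$, yielding the domination by $\phi$ (possibly after passing to a corner). The norm estimate Lemma \ref{Norm estimate of involving certain ucp maps} is used to show that $\psi$ is bounded by the $\M\otimes_{\B,\min}\M^{\op}$ norm, hence genuinely extends to $\langle\M,\B\rangle$.

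Finally, the support of $\psi$ on $\mathcal Z(\Q'\cap 1_\Q\M1_\Q)$ gives a nonzero central projection $p$ on which $\psi$ is faithful; applying Theorem \ref{thm:conditions for relative amenability} to $p\Q\subseteq p\M p$ with $\mathcal B=p\langle\M,\B\rangle p$ yields that $p\Q$ is amenable relative to $\B$, i.e.\ (1). The main obstacle I expect is the domination $\psi|_\M\leq C\phi$ together with nontriviality: in the tracial case it follows from a simple Cauchy--Schwarz, but here the left and right actions on $\eta_k$ differ by modular twists, and one must combine the self-polar form bound with a careful choice of $\xi$ (e.g.\ $\xi$ in the $\sigma^{\phi^\omega}$-spectral subspace $[-a,a]$ via Proposition \ref{prop: criteria for elements in the idealizer}) to keep the constants uniform in $k$.
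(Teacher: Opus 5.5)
\textbf{There is a genuine gap.} The overall architecture is right: contraposition, normalized vectors $\eta_k=\tilde\delta^\omega_{\alpha_k}(\xi)/\norm{\tilde\delta^\omega_{\alpha_k}(\xi)}$, a KMS state on $\langle\M,\B\rangle$, then Theorem~\ref{thm:conditions for relative amenability}. But the central step fails as stated, for two reasons.

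First, Corollary~\ref{cor; corollary to the technical lemma in the ultrapower language} gives centrality only for the \emph{deformed} actions. For $u\in\Q_\infty$ one gets $\zeta_{\alpha_k}(u)\eta_k\approx\eta_k\zeta_{\alpha_k}(\sigma_{i/2}(u))$, using $u\xi=\xi\sigma_{i/2}(u)$, which holds because $d\in\Q'$. To replace $\zeta_{\alpha_k}(u)$ by $u$ you would need $\norm{(\zeta_{\alpha_k}(u)-u)\eta_k}\to 0$. This does not follow from $\zeta_\alpha(u)\to u$ strongly, because the vectors $\eta_k$ move with $k$ and the bimodule actions on $\H^\omega$ are not normal. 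Uniform control of $\tilde\delta_\alpha(x\xi)-x\tilde\delta_\alpha(\xi)$ is available only when $x\phi^{1/2}\in\dom(\bar\delta)$ (Lemma~\ref{lemma:estimates on the values in derivation}), and $\Q$ need not meet that domain. The same problem affects your bound on $\langle\eta_k,y\eta_k\rangle$ for $y\in\M_+$.

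Second, there is no action $\rho$ of $\langle\M,\B\rangle$ on $\H^\omega$. The hypothesis ${_\M\H_\M}\prec{_\M\K_\M}$ only gives norm bounds for the $\M$-$\M$ actions. So the state must come from a Hahn--Banach extension, and a general extension need not satisfy the KMS identity on $\langle\M,\B\rangle$ or the bound $\psi|_\M\le C\phi$. Moreover, since only the twisted coefficients are controlled, Lemma~\ref{Norm estimate of involving certain ucp maps} bounds
\[
g\Bigl(\sum_i x_i\otimes y_i^{\op}\Bigr)=\Re\Lim_n\sum_i\langle\eta_n,\zeta_{\alpha_n}(x_i)\eta_n\zeta_{\alpha_n}(y_i)\rangle
\]
by $f(T)=\norm{\sum_i x_i\otimes_\B y_i^{\op}}$ only on the cone $\mathcal C=\{\sum_j x_j^\ast\otimes x_j^{\op}\}$. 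This is why the paper uses the cone version, Lemma~\ref{A Hahn-Banach type theorem}: it produces a state $\psi$ on $\langle\M,\B\rangle\otimes_{\min,\B}\langle\M,\B\rangle^{\op}$ with $\Re\psi\le f$ everywhere and $\Re\psi\ge g$ on $\mathcal C$. It is not used to dominate forms.

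\textbf{The missing idea} is how to transfer the KMS property from the twisted coefficients to this extension. The paper proceeds as follows.
\begin{enumerate}
\item Symmetrize: $\tilde\psi(T)=\frac12\psi(T\otimes_\B 1+1\otimes_\B T^{\op})$.
\item Compare $t(x,y)=\frac12\psi(x\otimes_\B {y^\ast}^{\op}+y^\ast\otimes_\B x^{\op})$ with the form $s$ built from $\Lim_n\langle\tilde\delta^\omega_{\alpha_n}(\xi_n),\tilde\delta^\omega_{\alpha_n}(x\xi_n y^\ast)\rangle$. One has $t(x,x)\ge s(x,x)$. Expanding $t(1+\lambda y,1+\lambda y)\ge s(1+\lambda y,1+\lambda y)$ in $\lambda$ forces $t(x,1)=s(x,1)$, which determines $\tilde\psi|_\M$.
\item The bound $\tilde\psi|_\M\le C_1\phi$ is then elementary, using that $d$ is $\sigma^{\phi^\omega}$-analytic.
\item On $\Q$, the marginal $\phi_1$ satisfies the KMS relation, so $\phi_1=\phi(z\,\cdot)$ with $z\in\mathcal Z(\Q)$. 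This gives a first cut-down, by the support of $z$. Moreover $s(a,a)=r(a,a)$ for $a\in\Q$, where $r$ is the self-polar form of $\phi_1$.
\item Apply Theorem~\ref{A result of Woronowicz on self-polar forms} to the bipositive form $t|_{\Q\times\Q}$, whose marginal is $\phi_1$. This gives $t\le r$, hence $t=s=r$ on $\Q\times\Q$.
\item Consequently $\psi(A^\ast A)=\psi(BB^\ast)=0$ for
\[
A=a\otimes_\B1-1\otimes_\B\sigma_{i/2}(a)^{\op},\qquad B=\sigma_{i/2}(a)\otimes_\B 1-1\otimes_\B a^{\op},
\]
and Cauchy--Schwarz transfers the KMS identity to all $T\in\langle\M,\B\rangle$.
\end{enumerate}

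Your proposed use of Woronowicz, to obtain $\psi|_\M\le C\phi$, cannot work. His inequality compares a form with the self-polar form of its own marginal, so it presupposes control of the marginal rather than providing it.
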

\begin{proof}
We shall prove the result when $1_\Q=1_\M$ just to avoid notational complications. The general case will follow from the same argument (see Remark \ref{rem:proof of the main theorem:general case} below).

 Therefore, for the remainder of the proof, we assume that $1_\Q=1_\M$ and $(\theta_\alpha^{(2)})^\omega$ does not converge to the identity on $L^2(\Q'\cap\M^\omega)$ as $\alpha\to\infty$ for some cofinal ultrafilter $\omega$.
 Let $\varphi$ be a normal faithful state on $\M$ such that $\Q$  is globally invariant under $\sigma^\varphi$. We emphasize here that the derivation $\delta$ need not be modular with respect to $\varphi$, and in particular, $\sigma^\varphi_t$ need not commute with $\theta_\alpha$, $t\in\mathbb R, \alpha>0$. Nevertheless, the following computations go through smoothly without any trouble. Our goal is to construct a state $\psi$ on $\langle\M,\B\rangle$ and a nonzero projection $p\in\mathcal{Z}(\Q'\cap\M)$  such that $\psi_{|_{\mathcal{Z}(p(\Q'\cap\M))}}$ is faithful and satisfies the KMS conditions of Theorem~\ref{thm:conditions for relative amenability}.
 
Clearly, the algebra $\Q'\cap\M^\omega$ is globally invariant under $\sigma^{\varphi^\omega},$ which means that
  the  algebra $\mathcal{A}$ consisting of $\sigma^{\varphi^\omega}$-analytic elements of $\Q'\cap\M^\omega$ is strongly dense in $\Q'\cap\M^\omega$; hence $\mathcal{A}(\varphi^\omega)^{1/2}$ is norm dense in $L^2(\Q'\cap\M^\omega)$. Therefore, there is some $d\in \Q'\cap\M^\omega$ which is $\sigma^{\varphi^\omega}$-analytic such that $(\theta_\alpha^{(2)})^\omega(d(\varphi^\omega)^{1/2})$ does not converge to $d(\varphi^\omega)^{1/2}$. Here $(\phi^\omega)^{1/2}$ denotes the unique vector in  the positive cone $L^2(\M^\omega)_+$ associated to the state $\phi^\omega$.
   Since
   \[\|\tilde\delta^\omega_{\alpha}(d(\varphi^\omega)^{1/2})\|\geq \|(\theta_\alpha^{(2)})^\omega(d(\varphi^\omega)^{1/2})-d(\varphi^\omega)^{1/2}\|,\] it follows that  $\|\tilde\delta^\omega_{\alpha}(d(\varphi^\omega)^{1/2})\|$ does not converge to $0$. Choose $C>0$ and  an increasing sequence $\alpha_n$ of positive numbers converging to $\infty$ such that $\|\delta_{\alpha_n}(d(\varphi^\omega)^{1/2})\|\geq C$ for all $n\geq1$. Set 
    \begin{align*}
     d_n=\frac{d}{\|\tilde\delta^\omega_{\alpha_n}(d(\varphi^\omega)^{1/2})\|} \in \M^\omega,\;\;\;\;   \xi_n:= d_n(\varphi^\omega)^{1/2}\in L^2(\M^\omega)^\omega
    \end{align*}
    so that each ${\tilde\delta^\omega
     _{\alpha_n}(\xi_n)}$ is a unit vector and $\|d_n\|_\infty\leq \frac{\|d\|_\infty}{C}$.
Now we consider the following spaces:
\begin{align*}
    &\mathcal{V}:=\text{span}\{x\otimes y^{\Op}; x,y\in \M\}\subseteq \langle\M,\B\rangle\otimes_{\min,\B}\langle\M,\B\rangle^{\Op},\\
    &\mathcal{C}:=\{\sum_jx_j^\ast \otimes x_j^{\Op}; x_j \in \M\}.
\end{align*}
Then $\mathcal{C}$ is a cone in the  vector space $\mathcal{V}$.  Now consider the following maps on $\mathcal{V}$ given by 
\begin{align*}
&f(T):=\|\sum_ix_i\otimes_\B y_i^{\Op}\|\\
&g(T)=\Re\Lim_n\langle\tilde\delta_{\alpha_n}^\omega(\xi_n), \sum_i\pi_{\H^\omega}(\zeta_{\alpha_n}(x_i)\otimes\zeta_{\alpha_n}(y_i)^{\Op})\tilde\delta_{\alpha_n}^\omega(\xi_n)\rangle\\
&\qquad=\Re\Lim_n\sum_i\langle\tilde\delta_{\alpha_n}^\omega(\xi_n),\zeta_{\alpha_n}(x_i)\tilde\delta_{\alpha_n}^\omega(\xi_n) \zeta_{\alpha_n}(y_i)\rangle\end{align*}
for $T=\sum_ix_i\otimes y_i^{\Op}\in \mathcal{V}$, where $\Lim_n$ is a Banach limit on $\ell^\infty(\mathbb N)$. By Corollary \ref{cor; corollary to the technical lemma in the ultrapower language},  we have
for all $x,y\in \M$ that
\begin{align*}
    \Lim_n\|\zeta_{\alpha_n}(x)\tilde\delta_{\alpha_n}^\omega(d(\varphi^\omega)^{1/2})\zeta_{\alpha_n}(y)-\tilde\delta_{\alpha_n}^\omega(xd(\varphi^\omega)^{1/2}y)\|=0,
\end{align*}
and since $\|\tilde\delta_{\alpha_n}^\omega(d(\phi^\omega)^{1/2})\|\geq C$ for all $n\geq 1$, it follows that
\begin{align}\label{eq:a consequence of technical lemma}
  \Lim_n\|\zeta_{\alpha_n}(a)&\tilde\delta_{\alpha_n}^\omega(\xi_n)\zeta_{\alpha_n}(b)-\tilde\delta_{\alpha_n}^\omega(a\xi_nb)\|\\
  &\leq \frac{1}{C}   \Lim_n\|\zeta_{\alpha_n}(a)\tilde\delta_{\alpha_n}^\omega(d(\varphi^\omega)^{1/2})\zeta_{\alpha_n}(b)-\tilde\delta_{\alpha_n}^\omega(ad(\varphi^\omega)^{1/2}b)\|=0\nonumber,
\end{align}
which can then be used to rewrite the expression for $g$ as follows:
\begin{align*}
 g(x\otimes y^{\Op})=\Re\Lim_n\langle\tilde\delta^\omega_{\alpha_n}(\xi_n), \tilde\delta^\omega_{\alpha_n}(x \xi_n y)\rangle,\quad x,y\in\M.
\end{align*}
We shall repeatedly use the expression \eqref{eq:a consequence of technical lemma} in the sequel without always referring to it. Since $\K$ is a $\langle\M,\B\rangle$-$\langle\M,\B\rangle$ correspondence, and since ${_\M}{\H^\omega}_\M\prec{_\M}{\H}_\M\prec {_\M}\K_\M$ and $_\M(\H_{\zeta_\alpha})_\B\prec {_\M L^2(\M)}\otimes_\B (\K_\alpha)_\B$ for some $\B$-$\B$ correspondence $\K_\alpha$ as discussed in Remark~\ref{rmk:weak_containment_deformations}, it follows from Lemma \ref{Norm estimate of involving certain ucp maps} that 
\begin{align*}
    \norm{\pi_{\H^\omega}(\sum_i\zeta_{\alpha_n}(x_i^\ast)\otimes\zeta_{\alpha_n}(x_i)^{\op})}&\leq \norm{\pi_\K(\sum_i\zeta_{\alpha_n}(x_i^\ast)\otimes\zeta_{\alpha_n}(x_i)^{\op})}\\
    &\leq \|\sum_i\zeta_{\alpha_n}(x_i^\ast)\otimes_\B\zeta_{\alpha_n}(x_i)^{\op}\|\\
    &\leq \|\sum_ix_i^\ast\otimes_\B {x_i}^{\op}\|
\end{align*}
for all $n\in\mathbb N$; hence we have
$g\leq f$ on $\mathcal{C}.$  Moreover, $f,-g$ are sublinear (in fact, $g$ is linear). We use Lemma \ref{A Hahn-Banach type theorem} by considering $\mathcal{V}$ as a real vector space, so  that  there exists a complex  linear map $\psi$ on $\mathcal{V}$ such that 
\[\Re\psi\leq f\;\;\mbox{on }\mathcal{V} \mbox{ and }g\leq \Re\psi\;\;\mbox{on }\mathcal{C}.\]
This implies   that $\|\psi(T)\|\leq \|T\|$ for $T\in \mathcal{V}$, so  $\psi$ extends to a bounded linear map, still denoted by $\psi$ on $\langle\M,\B\rangle\otimes_{\min,\B}\langle\M,\B\rangle^{\Op}$ such that $\|\psi\|\leq 1$. Moreover, since $\zeta_{\alpha_n}$ is unital, it follows that $1=g(1\otimes_\B 1)\leq \psi(1\otimes_\B 1)$; hence $\psi$ is a state on $\langle\M,\B\rangle\otimes_{\min,\B}\langle\M,\B\rangle^{\Op}$.  Define the state $\tilde\psi:\langle\M,\B\rangle\to \C $ by 
  \[\tilde\psi(T)=\frac{1}{2}\psi(T\otimes_\B1+1\otimes_\B T^{\Op}),\;\;T\in \langle\M,\B\rangle.\]
  We choose to work with the above symmetrization  rather than $\psi(T\otimes 1)$ in order to consider certain bi-positive and positive-definite sesquilinear forms in the sequel.
We make the following two claims:
\\
\\
{\em Claim 1}: $\tilde\psi_{|_\M}\leq   C_1\varphi$ for some constant $C_1>0$, and
\\
  {\em Claim 2:} There is a projection $p\in\mathcal{Z}(\Q)$ such that $\tilde\psi(Ta)=\tilde\psi(\sigma^\varphi_i(a)T)$ for $T\in\langle\M,\B\rangle$ and $a\in p\Q_\infty$.
  \\
  \\
 For this purpose,
 consider the sesquilinear forms $s,t$ on $\M\times\M$ given by 
\begin{align*}
&t(x,y)=\frac{1}{2}\psi(x\otimes_\B{y^\ast}^{\Op}+y^\ast \otimes_\B x^{\Op}),\\
& s(x,y)=\frac{1}{2}\Lim_n\left(\langle\tilde\delta^\omega_{\alpha_n}( \xi_n),\tilde\delta^\omega_{\alpha_n}(x \xi_n y^\ast)\rangle+\langle\tilde\delta^\omega_{\alpha_n}( \xi_n), \tilde\delta^\omega_{\alpha_n}(y^\ast \xi_n x)\rangle\right)
\end{align*}
for $x,y\in \M$. For all $x\in \M$, note that $t(x,x),s(x,x)\in \mathbb R$; indeed  $t(x,x)\in\mathbb R$ follows because $\psi$ is a state, while $s(x,x)\in\mathbb R$ follows from observing that 
\begin{align*}
 \Lim_n\langle\tilde\delta^\omega_{\alpha_n}( \xi_n),\tilde\delta^\omega_{\alpha_n}(x \xi_n x^\ast)\rangle&=\Lim_n\langle\tilde\delta^\omega_{\alpha_n}( \xi_n),\zeta_{\alpha_n}(x)\tilde\delta^\omega_{\alpha_n}( \xi_n)\zeta_{\alpha_n}(x^\ast)\rangle\\
 &=\Lim_n\langle \zeta_{\alpha_n}(x^\ast)\tilde\delta^\omega_{\alpha_n}( \xi_n)\zeta_{\alpha_n}(x),\tilde\delta^\omega_{\alpha_n}( \xi_n)\rangle\\
 &=\Lim_n\langle\tilde\delta^\omega_{\alpha_n}(x^\ast \xi_n x),\tilde\delta^\omega_{\alpha_n}( \xi_n)\rangle
\end{align*}
that is $s(x,x)=g(x\otimes {x^\ast}^{\Op})$.
Moreover,  
\begin{align*}
    t(x,x)=\Re\psi(x\otimes {x^\ast}^{\Op})\geq g(x\otimes {x^{\ast}}^{\Op})=s(x,x).
\end{align*}
Now let $y\in \M$ be a self-adjoint element and $\lambda\in\mathbb R$. Since $t(y,1)=t(1,y), s(y,1)=s(1,y)$ and since $ t(1,1)=1=s(1,1)$,
we obtain
 \begin{align*}
     1+2\lambda t( y,1)+\lambda^2t(y,y)&=t(1+\lambda y, 1+\lambda y)\\
     &\geq s(1+\lambda y,1+\lambda y)\\
     &= 1+2\lambda s( y,1)+\lambda^2s(y,y),
 \end{align*}
that is
 \[      2\lambda (t( y,1)-s(y,1))+\lambda^2(t(y,y)-s(y,y))\geq 0.\]
 Since $\lambda\in\mathbb R$ is arbitrary, and since $t(y,y)-s(y,y)\geq0$, it follows that 
 $t( y,1)= s(y,1)$; hence 
 $t(x,1)=s(x,1),\;\forall x\in \M$ 
and thus the state $\tilde\psi$ can be written as
  \begin{align*}
      \tilde\psi(x)=t(x,1)=s(x,1),\;\;x\in\M.
  \end{align*}
  {\em Proof of Claim 1.}
For all $a\in \M$, we have
  \begin{align*}
     \Lim_n\langle{\tilde\delta}^\omega_{\alpha_n}( \xi_n),{\tilde\delta}^\omega_{\alpha_n}(a^\ast a \xi_n)\rangle
      &=\Lim_n\langle{\tilde\delta}^\omega_{\alpha_n}( \xi_n), \zeta_{\alpha_n}(a^\ast){\tilde\delta}^\omega_{\alpha_n}(a \xi_n)\rangle\\
      &=\Lim_n\langle\zeta_{\alpha_n}(a){\tilde\delta}^\omega_{\alpha_n}( \xi_n), {\tilde\delta}^\omega_{\alpha_n}(a \xi_n)\rangle\\
      &=\Lim_n\langle{\tilde\delta}^\omega_{\alpha_n}(a \xi_n), {\tilde\delta}^\omega_{\alpha_n}(a \xi_n)\rangle\\
    &\leq \Lim_n\|\tilde{\delta}^\omega_\alpha\|^2\|a \xi_n\|^2\\
    &\leq \|a(\varphi^\omega)^{1/2}\sigma_{i/2}^{\varphi^\omega}(d_n)\|^2\\
    &\leq \|a(\varphi^\omega)^{1/2}\|^2\|\sigma_{i/2}^{\varphi^\omega}(d_n)\|^2\\
    &\leq  C_1 \varphi(a^\ast a)
  \end{align*}
  where $C_1= \max \{\frac{1}{C^2}\|\sigma_{i/2}^{\varphi^\omega}(d)\|^2_\infty, \frac{1}{C^2}\|d\|^2_\infty\}$, so that  $C_1\geq  \sup_n\|\sigma_{i/2}^{\varphi^\omega}(d_n)\|_\infty^2$ as well as $C_1\geq \sup_n\|d_n\|^2_\infty$.
  Similarly, we have
  \begin{align*}
       \Lim_n\langle{\tilde\delta}^\omega_{\alpha_n}( \xi_n),{\tilde\delta}^\omega_{\alpha_n}( \xi_n a^\ast a)\rangle\leq\sup_n\|d_n\|^2 \|(\varphi^\omega)^{1/2}a^\ast\|^2\leq C_1\varphi(a^\ast a)
  \end{align*}
yielding the following:
\begin{align*}
    \tilde\psi(a^\ast a)=s(a^\ast a,1)\leq   C_1\varphi(a^\ast a).
\end{align*}
 {\em Proof of Claim 2.}  To prove this claim,  we consider the restrictions of the forms $s,t$ to $\Q\times\Q$, and consider  the positive functional $\varphi_1$ on $\Q$ defined by 
\[\varphi_1(a)=\Lim_n\langle\tilde\delta^\omega_{\alpha_n}( \xi_n), \tilde\delta^\omega_{\alpha_n}(a \xi_n)\rangle=\Lim_n\langle\tilde\delta^\omega_{\alpha_n}( \xi_n), \zeta_{\alpha_n}(a)\tilde\delta^\omega_{\alpha_n}( \xi_n)\rangle,\;\;a\in \Q.\]
By a calculation similar to the
one above,  we have that $\varphi_1\leq   C_1\varphi_{|_\Q}$ so that $\varphi_1$ is a normal state on $\Q$.
By noting that $\sigma_t^{\varphi_{|_\Q}}={\sigma^\varphi_t}_{|_\Q}$, we get that $\Q\cap\M_{\infty,\sigma^\varphi}=\Q_{\infty,\sigma^{\phi_{|_\Q}}}:=\Q_\infty$; so we identify $\sigma_\lambda^\varphi(a)$ and $\sigma_\lambda^{\varphi_{|_\Q}}(a)$, $a\in \Q_{\infty},\lambda\in\mathbb C$ and write them as $\sigma_\lambda(a)$. If $c\in \Q_{\infty,\sigma^\varphi}$, then $c$ is $\sigma^{\varphi^\omega}$-analytic as an element in $\M^\omega$ and $\sigma^{\varphi^\omega}_\lambda(c)=\sigma_\lambda^\varphi(c)$ for $\lambda\in\mathbb C$ \cite[Theorem 4.1]{AH14}, so that $c(\varphi^\omega)^{1/2}=(\varphi^\omega)^{1/2}\sigma_{i/2}^\varphi(c)$, and since $d_n\in \Q'\cap\M^\omega$, we have $cd_n(\varphi^\omega)^{1/2}=d_nc(\varphi^\omega)^{1/2}=d_n(\varphi^\omega)^{1/2}\sigma_{i/2}(c)$ which finally yields
\begin{align*}
    c\xi_n=\xi_n\sigma_{i/2}(c).
\end{align*}
Thus for all $a\in \Q$ we obtain
\begin{align*}
\varphi_1(ac)&=\Lim_n\langle\tilde\delta^\omega_{\alpha_n}( \xi_n), \tilde\delta^\omega_{\alpha_n}(ac \xi_n)\rangle\\
&=\Lim_n\langle\tilde\delta^\omega_{\alpha_n}( \xi_n), \tilde\delta^\omega_{\alpha_n}(a\xi_n\sigma_{i/2}(c))\rangle\\
&=\Lim_n\langle\tilde\delta^\omega_{\alpha_n}( \xi_n), \tilde\delta^\omega_{\alpha_n}(a \xi_n)\zeta_{\alpha_n}(\sigma_{i/2}(c))\rangle\\
&=\Lim_n\langle\tilde\delta^\omega_{\alpha_n}( \xi_n)\zeta_{\alpha_n}(\sigma_{-i/2}(c^\ast)), \tilde\delta^\omega_{\alpha_n}(a \xi_n)\rangle\\
&=\Lim_n\langle\tilde\delta^\omega_{\alpha_n}( \xi_n \sigma_{-i/2}(c^\ast)), \tilde\delta^\omega_{\alpha_n}(a \xi_n)\rangle\\
&=\Lim_n\langle\tilde\delta^\omega_{\alpha_n}(\sigma_{-i}(c^\ast) \xi_n ), \tilde\delta^\omega_{\alpha_n}(a \xi_n)\rangle\\
&=\Lim_n\langle\tilde\delta^\omega_{\alpha_n}( \xi_n ), \tilde\delta^\omega_{\alpha_n}(\sigma_i(c)a \xi_n)\rangle\\
&=\varphi_1(\sigma_i(c)a).
\end{align*}
Arguing as in the proof of Proposition \ref{Prop:Equivalent conditions on relative amenability in terms of states with extra assumption on summand}, we get in particular $\varphi_1\circ\sigma_t=\varphi_1$ on $\Q$. Combining this with the fact that $\varphi_1\leq   C_1\varphi_{|_\Q}$ yields  a non-zero positive operator $z\in \Q^{\varphi_{|_\Q}}=\M^\varphi\cap\Q$, the centralizer, such that 
\[\varphi_1(a)=\varphi(za)=\varphi(z^{1/2}az^{1/2}),\] for $a\in \Q.$ Let $\xi_\varphi$ denote the cyclic and separating vector in $L^2(\Q,\varphi_{|_\Q})$ such that $\varphi_{|_\Q}(a)=\langle\xi_\varphi,a\xi_\varphi\rangle,a\in\Q$. We identify $L^2(\Q,\varphi_{|_\Q})$ as a subspace of $L^2(\M,\varphi)$ via the map $a\xi_\varphi\mapsto a\varphi^{1/2}$ for $a\in\Q$.
Under this identification,   for all $a\in\Q,c\in\Q_\infty$ we have
\begin{align*}
    \langle a^\ast\xi_\varphi,zc\xi_\varphi\rangle=\varphi(azc)=\varphi(\sigma_i(c)az)=\varphi_1(\sigma_i(c)a)=\varphi_1(ac)=\varphi(acz)=\langle a^\ast\xi_\varphi, cz\xi_\varphi\rangle
\end{align*}
implying that $zc=cz$; hence $z\in\mathcal{Z}(\Q)$. In particular, $z^{1/2}\xi_\varphi\in L^2(\Q,\varphi_{|_\Q})_+,$ the positive cone of $L^2(\Q)$.  Note also that if $p=s(z)$ is the support of $z$ in $\Q$, then $p$ is also the support of $\varphi_1$ (since $\varphi_1(a)=0$ if and only if $za=0$ for any $a\in \Q_+$); in particular $p\in \mathcal{Z}(\Q)\subseteq\mathcal{Z}(\Q'\cap\M)$.
By working and proving the following results with $p\Q$, we may and we will as well assume that $p=1$, that is, $\varphi_1$ is faithful.

We now consider the self-polar form $r$ on $\Q\times\Q$ associated to the normal faithful  state $\varphi_1$, that is, $r(a,c)=\langle\varphi_1^{1/2}, a\varphi_1^{1/2}c^\ast\rangle$ for $a,c\in Q$, where $\varphi_1^{1/2}=z^{1/2}\xi_\varphi\in L^2(\Q)$.   We have
 \begin{align*}r(a,c)=\langle z^{1/2}\xi_\varphi,az^{1/2} \xi_\varphi c^\ast\rangle=\langle \xi_\varphi,az \xi_\varphi c^\ast\rangle=\langle \varphi^{1/2},a z\varphi^{1/2}  c^\ast\rangle,
 \end{align*}
 for $a,c\in\Q$. Since $d_n\in\Q'\cap\M^\omega$, we next observe the following  for $a\in\Q,c\in\Q_\infty$:   
 \begin{align*}
     2s(a,c)&=\Lim_n\left(\langle\tilde\delta^\omega_{\alpha_n}( \xi_n), \tilde\delta^\omega_{\alpha_n}(a \xi_n c^\ast)\rangle+\langle\tilde\delta_{\alpha_n}( \xi_n), \tilde\delta^\omega_{\alpha_n}(c^\ast \xi_n a)\rangle\right)\\
     &=\Lim_n\left(\langle\tilde\delta^\omega_{\alpha_n}( \xi_n), \tilde\delta^\omega_{\alpha_n}(a\sigma_{-i/2}(c^\ast) \xi_n)\rangle+\langle\tilde\delta_{\alpha_n}( \xi_n), \tilde\delta^\omega_{\alpha_n}(c^\ast\sigma_{-i/2}(a) \xi_n)\rangle\right)\\
     &=\varphi_1(a\sigma_{-i/2}(c^\ast)+c^\ast\sigma_{-i/2}(a))\\
     &=\langle \varphi^{1/2}, a\sigma_{-i/2}(c^\ast)z\varphi^{1/2}\rangle+\langle\varphi^{1/2}, c^\ast\sigma_{-i/2}(a) z\varphi^{1/2} \rangle\\
     &=\langle\varphi^{1/2}, az\varphi^{1/2} c^\ast\rangle+\langle\varphi^{1/2}, c^\ast z\varphi^{1/2} a\rangle\\
     &=r(a,c)+r(c^\ast,a^\ast),
 \end{align*}
 and since both $s,r$ are weak$^\ast$-continuous separately in each coordinate and since $\Q_\infty$ is weak$^\ast$-dense in $\Q$, it follows that $r(a,b)+r(b^\ast,a^\ast)=2s(a,b)$ for all $a,b\in\Q.$ Since  $r(a,a)\geq 0$, we have 
 \[r(a,a)=\overline{r(a,a)}={r(a^\ast,a^\ast)}\] which implies that
 \begin{align*}
     s(a,a)=r(a,a),\;\;a\in\Q.
 \end{align*}
Positive definiteness of $r$ also yields positive definiteness of $t_{|_{\Q\times\Q}}$, as
$t(a,a)\geq s(a,a)\geq 0$ for all $a\in\Q$. Since $\psi$ is a positive functional,
$t(a,b)=\frac{1}{2}\psi(a\otimes b^{\Op}+b\otimes a^{\Op})\geq 0$ for all $a,b\in\Q$ positive. Moreover,  we have \[t(a,1)=s(a,1)=\frac{1}{2}(r(a,1)+r(1,a^\ast))=\varphi_1(a),\;\; a\in\Q.\]
We can then invoke Theorem \ref{A result of Woronowicz on self-polar forms} to conclude that  $t(a,a)\leq r(a,a)$ for all $a\in\Q$, and the reverse inequality being also true from above yields
 $t(a,a)=r(a,a)$. By polarization, we arrive at the following:
\begin{align*}
    r(a,b)=t(a,b)=s(a,b),\;\;\forall a,b\in\Q.
\end{align*}
Thus for $a,b\in\Q_\infty$, we infer
\begin{align*}
    t(a,b^\ast)=\varphi_1(a\sigma_{-i/2}(b)).
\end{align*}
Returning to the state  $\psi$ on $\M\otimes_{\B,\min}\M^{\Op}$, for $a\in\Q_\infty$, put 
\begin{align*}
&A=a\otimes_\B1-1\otimes_\B \sigma_{i/2}(a)^{\text{op}}\\
&B=\sigma_{i/2}(a)\otimes_\B1-1\otimes_\B a^{\text{op}}
\end{align*}
 and compute
\begin{align*}
  &\psi(A^\ast A)=  
      \psi(a^\ast a\otimes_\B1)+\psi(1\otimes_\B (\sigma_{i/2}(a)\sigma_{-i/2}(a^\ast))^{\Op})-2 \Re\psi(a^\ast\otimes_\B \sigma_{i/2}(a)^{\text{op}})\\
      &\psi(BB^\ast)=\psi(\sigma_{i/2}(a)\sigma_{-i/2}(a^\ast) \otimes_\B1)+\psi(1\otimes_\B (a^\ast a)^{\op})-2 \Re\psi(\sigma_{i/2}(a)\otimes_\B {a^\ast}^{\text{op}}).
\end{align*}
Adding the  respective sides of the above equalities as well as using the fact that $s(a,b^\ast)=\varphi_1(a\sigma_{-i/2}(b)+b\sigma_{-i/2}(a))$ for all $a,b\in\Q_\infty$  conclude the following:
\begin{align*}
\psi(A^\ast A+BB^\ast)
  &=2t(a^\ast a,1)+2t(\sigma_{i/2}(a)\sigma_{-i/2}(a^\ast),1)-4\Re t(\sigma_{i/2}(a),a)\\
  &=2\varphi_1(a^\ast a)+2\varphi_1(\sigma_{i/2}(a)\sigma_{-i/2}(a^\ast))-4\Re\varphi_1({\sigma_{i/2}(a)\sigma_{-i/2}(a^\ast)})\\
  &=0.
\end{align*}
This proves that
\begin{align*}
  \psi(A^\ast A)=0=\psi(BB^\ast)
\end{align*}
and hence for all $T\in \langle\M,\B\rangle$, we have
  \begin{align*}
    &\psi(Ta\otimes _\B1)=\psi(T\otimes_\B \sigma_{i/2}(a)^{\op}),\\
   & \psi(a\otimes T^{\op})=\psi(1\otimes_\B T^{\op}\sigma_{i/2}(a)^{\op})=\psi(1\otimes_\B (\sigma_{i/2}(a)T)^{\op}),\\
   & \psi(\sigma_{i/2}(a)T\otimes_\B1)=\psi(T\otimes_\B a^{\op}),\\
    &\psi(\sigma_{i/2}(a)\otimes T^{\op})=\psi(1\otimes a^{\op}T^{\op})=\psi(1\otimes_\B(Ta)^{\op}),
    \end{align*}
   which finally yields the following:
   \begin{align*}
       \psi(Ta\otimes_\B1)=\psi(\sigma_i(a)T\otimes_\B1),\;\;\;\psi(1\otimes (Ta)^{\op})=\psi(1\otimes (\sigma_i(a)T)^{\op}).
   \end{align*}
   Thus we have shown that 
  \begin{align*}
     \tilde \psi(Ta)&=\frac{1}{2}\psi(Ta\otimes_\B1)+\frac{1}{2}\psi(1\otimes(Ta)^{\op})\\
     &=\frac{1}{2}\psi(\sigma_i(a)T\otimes_\B1)+\frac{1}{2}\psi(1\otimes(\sigma_i(a)T)^{\op})\\
     &=\tilde\psi(\sigma_i(a)T)
  \end{align*}
  for all $a\in \Q_\infty,$ $T\in \langle\M,\B\rangle$, which concludes  the proof of Claim 2.
  
 To finish the proof, let
 $p\in \mathcal{Z}(\Q'\cap\M)$ be the smallest non-zero projection such that $\tilde\psi(1-p)=0$, that is, $p$ is the support of $\tilde\psi_{|_{\mathcal{Z}(\Q'\cap\M)}}$. Clearly, $p\neq 0$.  By definition of $p$,  $\tilde\psi_{|_{p\mathcal{Z}(\Q'\cap \M)}}$ is faithful. Clearly, the condition $\tilde\psi_{|_{p\M p}}\leq C_1 \varphi(p\cdot p)$ remains valid. Moreover,  if $\phi_p:=\phi(p\cdot p)/\phi(p)$ is the state on $p\M p$, then the condition $p\in \M^\phi$ implies that
 \[\sigma_t^{\phi_p}(x)=\sigma_t^\phi(x)=p\sigma_t^\phi(x)p,\;\;\forall t\in\mathbb R, x\in p\M p\] and that the $\sigma^{\phi_p}$-analytic elements of $\M$ (resp. $p\Q$) are precisely $p\M_{\infty,\sigma^{\phi}}p$ (resp. $p\Q_\infty$). Therefore the equality  \[\tilde\psi(Tx)=\tilde\psi(\sigma_i^{\phi_p}(x)T),\;\; T\in p\langle\M,\B\rangle p, x\in p\Q\] remains valid as well. After normalizing the functional $\tilde\psi_{|_{p\langle\M,\B\rangle p}}$  by $\tilde\psi(p)$, we have thus shown that $p\Q$ is amenable relative to $\B$ inside $\M$ in view of Theorem~\ref{thm:conditions for relative amenability}.
\end{proof}

\begin{remark}\label{rem:proof of the main theorem:general case}
 We assumed $1_\Q=1_\M$   in the proof of Theorem \ref{main_theorem}. The general case will follow from the same argument as follows: Choose a normal faithful state $\phi$ on $\M$ such that $1_\Q$ lies in the centralizer $\M^\phi$ and $\Q$ is globally invariant under $\sigma^\phi$. Indeed, take any normal faithful state $\psi$ on $1_\Q\M1_\Q$ such that $\Q$ is globally invariant under $\sigma^\psi$, let $\nu$ be a normal faithful state on $(1_\M-1_\Q)\M(1_\M-1_\Q)$ and set
 \[\phi(x)=\frac{1}{2}\left(\psi(1_\Q x1_\Q)+\nu((1_\M-1_\Q)x(1_\M-1_\Q))\right),\;\;\;x\in \M.\]
  Then if (2) in Theorem \ref{main_theorem} is not true, one will find $d\in \Q'\cap 1_\Q\M^\omega 1_\Q$ such that $(\theta_\alpha^{(2)})^\omega(d(\phi^\omega)^{1/2})$ does not converge to $d(\phi^\omega)^{1/2}$. Since $1_\Q\phi^\omega=\phi^\omega 1_\Q$, the  same argument will yield a state $\psi$ on $1_\Q\langle\M,B\rangle1_\Q$ satisfying the KMS condition.
\end{remark}

\begin{definition}
    We say that an expected inclusion $\Q\subseteq1_\Q\M1_\Q$ is {\em completely nonamenable relative to $\B$} if $p\Q$ is not amenable relative to $\B$ for every non-zero projection $p\in\mathcal{Z}(\Q'\cap1_\Q\M1_\Q)$.
\end{definition}
Note that if $\Q$ has no nonzero amenable summand, then $\Q$ is completely nonamenable relative to $\mathbb C$. Indeed, if $p\Q$ is  amenable relative to $\mathbb C$ for some non-zero projection $p\in \mathcal{Z}(\Q'\cap\M)$, then $p\Q$ is amenable. Since $p\Q\cong r\Q$ for some central projection $r$ (simply take $r$ to be the orthogonal complement of $\ker\pi$ where $\pi:\Q\to p\Q$ is the normal $\ast$-homomorphism given by $x\mapsto px$). So $r\Q$ is amenable, hence $r=0$. This gives $p=0$.

Let us also explicitly state Theorem \ref{main_theorem} in the  scalar-valued case. In this situation,  the condition $_\M(\H_{\Phi_t})_\B\prec{_\M(\K_t)_\mathbb C}$  is always satisfied, as discussed in Example~\ref{ex:weak_containment_amenable}.  Consequently, we obtain Theorem \ref{main_theorem_amenable_for introduction} in  Introduction, because for any amenable $\B$, we have  $L^2(\M)\otimes_\B L^2(\M)\prec L^2(\M)\otimes L^2(\M)$ as $\M$-$\M$ correspondence.

\begin{corollary}\label{cor:dichotomy result in amenable case}
    Let $\M$ be a $\sigma$-finite von Neumann algebra and $\delta\colon \dom(\delta)\subset L^2(\M)\to \H$ a densely defined closable modular derivation with $\H\prec L^2(\M)\otimes L^2(\M)$. If $\Q\subset 1_\Q\M1_\Q $ is a von Neumann subalgebra with expectation, at least one of the following is true:
    \begin{enumerate}[1.]
        \item  There is a non-zero projection $p\in {\mathcal{Z}(\Q'\cap1_\Q \M1_\Q)}$ such that $p\Q$ is amenable.
        \item $(\theta_\alpha^{(2)})^\omega$ converges strongly to the identity on $L^2(\Q^\prime\cap \M^\omega)$ as $\alpha\to \infty$ for every cofinal ultrafilter $\omega$ on a directed set.
    \end{enumerate}
\end{corollary}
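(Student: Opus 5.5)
The plan is to deduce Corollary \ref{cor:dichotomy result in amenable case} from Theorem \ref{main_theorem} with $\B=\mathbb C$. We have to check the two weak containment hypotheses of that theorem and then translate "amenable relative to $\mathbb C$" into plain amenability. Since $\mathbb C\subseteq\Null(\bar\delta)$ holds trivially (the unit is fixed by the unital semigroup), the only real inputs are the correspondences $\K$ and $\K_t$.

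\emph{Choice of $\K$.} For $\B=\mathbb C$ the basic construction is $\langle\M,\mathbb C\rangle=\mathbb B(L^2(\M))$. We take $\K=L^2(\M)\otimes L^2(\M)$ with left action $T\otimes 1$ and right action $1\otimes JS^\ast J$ for $T,S\in\mathbb B(L^2(\M))$. This is a normal $\mathbb B(L^2(\M))$-$\mathbb B(L^2(\M))$ correspondence, and its restriction to $\M$ is the coarse correspondence; this is the second part of Lemma \ref{lem:ext_bimodule_action} with $\B=\mathbb C$ and $\K$ trivial. The hypothesis $\H\prec L^2(\M)\otimes L^2(\M)$ then gives ${_\M\H_\M}\prec{_\M\K_\M}$.

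\emph{Choice of $\K_t$.} For each $t\geq0$ we need a $\mathbb B(L^2(\M))$-$\mathbb C$ correspondence $\K_t$ with ${_\M(\H_{\Phi_t})_{\mathbb C}}\prec{_\M(\K_t)_{\mathbb C}}$. Following Example \ref{ex:weak_containment_amenable}, every normal left $\M$-module is contained in some amplification $L^2(\M)\otimes\ell^2(S)$, and that amplification is weakly contained in $L^2(\M)$ as a left $\M$-module. So we take $\K_t=L^2(\M)$ (or an amplification of it) with the obvious left action of $\mathbb B(L^2(\M))$. With this, Theorem \ref{main_theorem} applies and yields the dichotomy: either the deformation converges as in item 2, or some nonzero $p\in\mathcal Z(\Q'\cap1_\Q\M1_\Q)$ makes $p\Q$ amenable relative to $\mathbb C$ inside $\M$.

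\emph{Converting relative amenability.} This is essentially bookkeeping, via the remark after Lemma \ref{lem:existence of certain finite projections}. Amenability relative to $\mathbb C$ means there is a norm-one projection $E\colon\mathbb B(L^2(\M))\to p\Q$ extending the expectation. Since $p\Q\subseteq p\M p$ acts on $pL^2(\M)$, compressing gives a conditional expectation from $\mathbb B(pL^2(\M))$ onto $p\Q$. Hence $p\Q$ is injective, i.e.\ amenable, which is item 1.

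The only step that needs genuine care is the $\K_t$ hypothesis. The statement asks for weak containment as $\M$-$\mathbb C$ correspondences into a module that carries a normal extension of the left action to $\mathbb B(L^2(\M))$. The point to verify is that the minimal Stinespring module $\H_{\Phi_t}$, viewed only as a left $\M$-module, is weakly contained in (indeed contained in an amplification of) $L^2(\M)$; this follows from the structure theory of normal representations of $\M$. Everything else is direct substitution into Theorem \ref{main_theorem}.
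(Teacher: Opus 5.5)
Your proposal is correct and matches the paper's argument: specialize Theorem~\ref{main_theorem} to $\B=\mathbb C$, take $\K$ to be the coarse correspondence extended to a $\mathbb B(L^2(\M))$-bimodule, and obtain the $\K_t$ hypothesis as in Example~\ref{ex:weak_containment_amenable}, since every normal left $\M$-module is weakly contained in $L^2(\M)$. The last step, that amenability of $p\Q$ relative to $\mathbb C$ is plain injectivity, is also the paper's, and your compression to $\mathbb B(pL^2(\M))$ handles the non-unital inclusion $p\Q\subseteq\M$ correctly.
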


We can also prove some variant of the above result as follows in terms of the uniform convergence at the level of centralizer algebras. Compare the following result with \cite[Theorem 4.3]{Pet09}
proved in the tracial setting.

\begin{corollary}\label{cor:dichotomy result, unform convergence at the centralizer level version}
Under the same hypotheses
as  in Theorem \ref{main_theorem},   at least one of the following is true:
    \begin{enumerate}
        \item There is a non-zero projection $p\in {\mathcal{Z}(\Q'\cap1_\Q \M1_\Q)}$ such that $p\Q$ is amenable relative to $\B$.
        \item For every cofinal ultrafilter $\omega$ on a directed set and for every  state $\varphi$ on $\M$ with $\varphi=\varphi\circ E_\Q$, $(\theta_\alpha^{(2)})^\omega$   converges uniformly to the identity in $\|\cdot\|_{\phi^\omega}$-norm on the unit ball of $\Q'\cap(\M^{\omega})^{\phi^\omega}$. 
    \end{enumerate} 
\end{corollary}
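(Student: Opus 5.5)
Assuming that (1) fails, the plan is to deduce (2) from Theorem~\ref{main_theorem}. The route goes through the ultrapower. Fix a cofinal ultrafilter $\omega$ and a state $\varphi$ with $\varphi=\varphi\circ E_\Q$, so that $\Q$ is globally invariant under $\sigma^\varphi$. Suppose uniform convergence on the unit ball of $\Q'\cap(\M^\omega)^{\varphi^\omega}$ fails. Then there are $\epsilon>0$, a sequence $\alpha_k\to\infty$ and elements $x_k$ in the unit ball of $\Q'\cap(\M^\omega)^{\varphi^\omega}$ with
\[
\|(\theta_{\alpha_k}^{(2)})^\omega(x_k(\varphi^\omega)^{1/2})-x_k(\varphi^\omega)^{1/2}\|\geq\epsilon .
\]
The first step is to pass to an iterated ultrapower. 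Choose a free ultrafilter $\mathcal U$ on $\mathbb N$ and set $x=(x_k)^{\mathcal U}\in(\M^\omega)^{\mathcal U}$. This is legitimate because each $x_k$ lies in the centralizer and has norm at most $1$. Consequently $\|x_ky\|_{\varphi^\omega}^\sharp\le\|y\|^\sharp_{\varphi^\omega}$ and $\|yx_k\|^\sharp_{\varphi^\omega}\le\|y\|^\sharp_{\varphi^\omega}$, so $(x_k)$ lies in the idealizer $\mathcal M^{\mathcal U}(\M^\omega)$. Moreover, $x$ commutes with $\Q$, since each $x_k$ does.

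The second step is to identify $(\M^\omega)^{\mathcal U}$ with $\M^{\tilde\omega}$ for a cofinal ultrafilter $\tilde\omega$ on the directed set $\mathbb N\times\mathbb N$ (or a suitable product directed set). This is the standard iterated-ultrapower identification of \cite{AH14}. Under it, $(\theta_\alpha^{(2)})^{\tilde\omega}$ acts coordinatewise and the state becomes $\varphi^{\tilde\omega}$. The identification should be compatible with the deformation because $\theta_\alpha$ preserves $\varphi$ and commutes with the relevant modular groups as needed for the well-definedness of $(\Phi_n)^\omega$ discussed after Corollary~\ref{cor; corollary to the technical lemma in the ultrapower language}. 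I expect this step to be the main obstacle. One has to check carefully that the double ultrapower is again an Ocneanu ultrapower for a cofinal ultrafilter, so that Theorem~\ref{main_theorem}, which is stated for every cofinal ultrafilter on a directed set, applies. If the identification is troublesome, I would instead re-run the proof of Theorem~\ref{main_theorem} verbatim with $\M^\omega$ in place of $\M$. That argument only used Corollary~\ref{cor; corollary to the technical lemma in the ultrapower language}, Lemma~\ref{Norm estimate of involving certain ucp maps}, and the KMS characterization, and all of these pass to the iterated setting.

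Granting this, Theorem~\ref{main_theorem}(2) gives $(\theta_\alpha^{(2)})^{\tilde\omega}(x(\varphi^{\tilde\omega})^{1/2})\to x(\varphi^{\tilde\omega})^{1/2}$. We then pick a fixed $\alpha$ with
\[
\lim_{k\to\mathcal U}\|(\theta_\alpha^{(2)})^\omega(x_k(\varphi^\omega)^{1/2})-x_k(\varphi^\omega)^{1/2}\|<\epsilon .
\]
The monotonicity from the proof of Proposition~\ref{prop:equivalent conditions for L2-rigidity} says that $\|\theta_\beta^{(2)}\xi-\xi\|$ is decreasing in $\beta$. Since $\alpha_k\ge\alpha$ for $\mathcal U$-almost all $k$, this gives a contradiction with the choice of $\epsilon$. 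Hence (2) holds.
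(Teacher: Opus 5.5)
Your argument follows the paper's strategy. You argue by contradiction, turn the failure of uniform convergence on the centralizer part of $\Q'\cap\M^\omega$ into a single element of $\Q'\cap\M^{\tilde\omega}$ at which strong convergence fails, use the monotonicity of $\beta\mapsto\|\theta_\beta^{(2)}\xi-\xi\|$, and invoke Theorem~\ref{main_theorem}. Here $\tilde\omega$ is a new cofinal ultrafilter on a larger directed set.

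The step you flag as the main obstacle, and leave unproved, is exactly the content of Lemma~\ref{lem:pointwise convergece gives uniform convergence at the level of centralizer}. It needs much less than an identification $(\M^\omega)^{\mathcal U}\cong\M^{\tilde\omega}$:
\begin{itemize}
\item Pick uniformly bounded representatives $x_k=(x_{k,j})_j^\omega$. Equip $\mathbb N\times J$ with $\tilde\omega=\mathcal U\otimes\omega$, i.e.\ $A\in\tilde\omega$ iff $\{k:\{j:(k,j)\in A\}\in\omega\}\in\mathcal U$. This ultrafilter is cofinal, and $\lim_{\tilde\omega}=\lim_{k\to\mathcal U}\lim_{j\to\omega}$ on bounded functions.
\item Since each $x_k$ lies in the centralizer, $\lim_{\tilde\omega}\|x_{k,j}\varphi^{1/2}-\varphi^{1/2}x_{k,j}\|=0$. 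Hence $(x_{k,j})\in\mathcal M^{\tilde\omega}(\M)$ by the argument of \cite[Proposition 2.2]{HR15}.
\item Since $[x_k,q]=0$ in $\M^\omega$ for $q\in\Q$, the element $\tilde x=(x_{k,j})^{\tilde\omega}$ lies in $\Q'\cap\M^{\tilde\omega}$.
\item For every $\beta$, $\|(\theta_\beta^{(2)})^{\tilde\omega}(\tilde x(\varphi^{\tilde\omega})^{1/2})-\tilde x(\varphi^{\tilde\omega})^{1/2}\|$ equals $\lim_{k\to\mathcal U}\|(\theta_\beta^{(2)})^{\omega}(x_k(\varphi^{\omega})^{1/2})-x_k(\varphi^{\omega})^{1/2}\|\geq\epsilon$, by monotonicity.
\end{itemize}
The paper does essentially this. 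The difference is that it selects a single representative $y_i=x_{\alpha,j_i}$ for each index $i=(\alpha,F,n)$, and enforces approximate centrality and approximate commutation with the finite set $F\subset(\Q)_1$ by hand. In both versions the centralizer hypothesis is used at the level of representatives, which is what your idealizer computation does one level up.

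Two of your justifications need correcting, although neither changes the outcome.
\begin{itemize}
\item The maps $\theta_\alpha$ are GNS-symmetric for the state with respect to which $\delta$ is modular, not for an arbitrary $\varphi$ with $\varphi=\varphi\circ E_\Q$. So they need not preserve $\varphi$ or commute with $\sigma^\varphi$; the proof of Theorem~\ref{main_theorem} stresses this point. Compatibility with the deformation holds anyway, because $(\theta_\alpha^{(2)})^{\omega}$ is just the Banach-space ultrapower of one fixed bounded operator on $L^2(\M)$.
\item Your fallback of re-running the proof ``with $\M^\omega$ in place of $\M$'' cannot be taken literally. The derivation $\delta$ induces no modular derivation or quantum Markov semigroup on $\M^\omega$, since the ultrapower semigroup is in general not weak$^\ast$-continuous. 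What would work is to keep $\M$, $\delta$ and $\langle\M,\B\rangle$ fixed and let the iterated ultrapower replace $\M^\omega$ as the container of the almost-invariant vectors. By the construction above, even this is unnecessary.
\end{itemize}
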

\begin{proof}
    If $(\theta_\alpha^{(2)})^\omega$ does not converge uniformly on $\Q'\cap(\M^\omega)^{\varphi^\omega}$ for some cofinal ultrafilter $\omega$ and a state $\varphi$ with $\varphi\circ E_\Q=\varphi$, then by Lemma \ref{lem:pointwise convergece gives uniform convergence at the level of centralizer} below $(\theta_\alpha^{(2)})^\omega$ does not converge strongly on $L^2(\Q'\cap\M^{\omega'})$ for some cofinal ultrafilter $\omega'$ on a directed set, and so we are in the situation of Theorem \ref{main_theorem} to  conclude the first statement.
\end{proof}

\begin{lemma}\label{lem:pointwise convergece gives uniform convergence at the level of centralizer}
 Let $\Q\subseteq\M$ be an inclusion of $\sigma$-finite von Neumann algebras,  and let $\varphi$ be a normal faithful state on $\M$ such
that $\Q$ is globally invariant under $\sigma^\varphi$.   Let $(\Psi_\alpha)_{\alpha\in \Lambda}$ be a net of GNS-symmetric ucp maps such that $\norm{\xi-\Psi_\alpha^{(2)}\xi}\leq \norm{\xi-\Psi_\beta^{(2)}}$ if $\alpha\geq \beta$ for all $\xi\in L^2(\M)$. Suppose that $(\Psi_\alpha^{(2)})^\omega$ does not converge uniformly in $\|\cdot\|_{\varphi^\omega}$ norm on  $(\Q'\cap(\M^\omega)^{\varphi^{\omega}})_1$ for some cofinal ultrafilter $\omega$ on a directed set $J$. Then there exists a cofinal ultrafilter $\omega_1$ on a directed set such that $(\Psi_\alpha^{(2)})^{\omega_1}$ does not converge strongly  on $L^2(\Q'\cap\M^{\omega_1})$.
\end{lemma}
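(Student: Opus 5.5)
The plan is a diagonal argument: use the failure of uniform convergence to pick a sequence of ``bad'' centralizer elements, and assemble them into one element of a larger ultrapower on which strong convergence fails. Write $\psi=\varphi^\omega$.

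\textbf{Step 1: bad elements.} Since uniform convergence fails on $(\Q'\cap(\M^\omega)^{\psi})_1$, there are $\epsilon>0$, an increasing sequence $\alpha_k\to\infty$ in $\Lambda$ and elements $x_k\in(\Q'\cap(\M^\omega)^{\psi})_1$ with
\begin{equation*}
\norm{x_k\psi^{1/2}-(\Psi_{\alpha_k}^{(2)})^\omega(x_k\psi^{1/2})}\geq\epsilon\quad\text{for all }k.
\end{equation*}

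\textbf{Step 2: assemble an element of an iterated ultrapower.} Let $\omega'$ be a free ultrafilter on $\mathbb N$. The point of working in the centralizer is that $\sigma^{\psi}_{\pm i/2}(x_k)=x_k$. Hence
\begin{equation*}
\psi(y^\ast x_k^\ast x_ky)\leq\norm{y}_\psi^2\quad\text{and}\quad\psi(x_kyy^\ast x_k^\ast)\leq\norm{y^\ast}_\psi^2,
\end{equation*}
and the same bounds hold with $x_k$ and $y$ in the other order. This is the estimate already used in the proof of Proposition~\ref{prop:equivalent conditions for L2-rigidity}, now with constant $1$. It shows that $(x_k)_k$ lies in the idealizer $\mathcal M^{\omega'}(\M^\omega)$, so $X=(x_k)^{\omega'}\in(\M^\omega)^{\omega'}$ is a well-defined norm-bounded element. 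Because every $x_k$ commutes with $\Q$, so does $X$.

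\textbf{Step 3: non-convergence at $X$.} Fix $\alpha\in\Lambda$. For all $k$ with $\alpha_k\geq\alpha$, the assumed monotonicity, applied to the representatives, gives
\begin{equation*}
\norm{x_k\psi^{1/2}-(\Psi_\alpha^{(2)})^\omega(x_k\psi^{1/2})}\geq\norm{x_k\psi^{1/2}-(\Psi_{\alpha_k}^{(2)})^\omega(x_k\psi^{1/2})}\geq\epsilon .
\end{equation*}
Passing to the limit along $\omega'$ yields $\norm{X(\psi^{\omega'})^{1/2}-\bigl((\Psi_\alpha^{(2)})^\omega\bigr)^{\omega'}X(\psi^{\omega'})^{1/2}}\geq\epsilon$ for every $\alpha$. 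Here the ultrapower maps are well defined by the discussion preceding Corollary~\ref{cor; corollary to the technical lemma in the ultrapower language}, applied to the GNS-symmetric maps $(\Psi_\alpha)^\omega$ on $(\M^\omega,\psi)$. So strong convergence fails at $X\in\Q'\cap(\M^\omega)^{\omega'}$.

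\textbf{Step 4: identify with a single ultrapower.} Finally I would realize $(\M^\omega)^{\omega'}$ inside $\M^{\omega_1}$ for a cofinal ultrafilter $\omega_1$ on the product directed set $J\times\mathbb N$. The candidate is the iterated (Fubini-type) ultrafilter: $A\in\omega_1$ iff $\{k:\{j:(j,k)\in A\}\in\omega\}\in\omega'$. Choose representatives $x_k=(x_{k,j})_j$ and set $X'=(x_{k,j})_{(j,k)}^{\omega_1}$. I then need three facts:
\begin{enumerate}
\item The doubly indexed family lies in $\mathcal M^{\omega_1}(\M)$.
\item $X'$ still commutes with $\Q$.
\item $\norm{X'(\varphi^{\omega_1})^{1/2}-(\Psi_\alpha^{(2)})^{\omega_1}X'(\varphi^{\omega_1})^{1/2}}=\lim_{k\to\omega'}\lim_{j\to\omega}(\cdots)\geq\epsilon$.
\end{enumerate}
Fact 3 follows directly from the iterated-limit structure of $\omega_1$.

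For fact 1 I would use Proposition~\ref{prop: criteria for elements in the idealizer}. Each $x_k$ is $\sigma^\psi$-fixed, so by \cite[Proposition 2.3]{HI17} its representatives can be chosen in a fixed spectral subspace $\M(\sigma^\varphi,[-c,c])$ up to an error of size $1/k$ in $\norm{\cdot}^\sharp_\varphi$, with $c$ independent of $k$. The diagonal family then satisfies the criterion of Proposition~\ref{prop: criteria for elements in the idealizer}.

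Commutation with $\Q$ (fact 2) also holds only approximately at the level of representatives. I would handle it by replacing $\Q$ by a countable strongly dense subset, which suffices when $\M$ has separable predual. In general I would enlarge the directed set to $J\times\mathbb N\times\mathcal F$, with $\mathcal F$ the finite subsets of $\Q$. In either case the indices can be chosen so that $\norm{[x_{k,j},q]}^\sharp_\varphi$ is small along $\omega_1$ for each fixed $q$.

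I expect Step 4 to be the main obstacle, namely making the diagonal representatives simultaneously satisfy the idealizer criterion and approximate commutation with $\Q$. If it becomes too delicate, the fallback is to show directly that $(\M^\omega)^{\omega'}$ embeds with expectation into $\M^{\omega_1}$ compatibly with the states and the maps $\Psi_\alpha$, and then transport $X$ along this embedding.
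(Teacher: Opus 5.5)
Your proposal is correct in outline, but it takes a longer route than the paper. The paper does not form an iterated ultrapower. For each $\alpha\in\Lambda$ it takes a bad $x_\alpha\in(\Q'\cap(\M^\omega)^{\varphi^\omega})_1$ with representative $(x_{\alpha,j})_j$. On the directed set $I=\{(\alpha,F,n)\}$, with $F\subseteq(\Q)_1$ finite, it then selects a \emph{single} coordinate $y_i=x_{\alpha,j_i}$ with three properties:
\begin{itemize}
\item $y_i$ is still bad for $\Psi_\alpha^{(2)}$;
\item $\|y_i\varphi^{1/2}-\varphi^{1/2}y_i\|\leq 1/n$;
\item $\|(y_iq-qy_i)\varphi^{1/2}\|\leq 1/n$ for all $q\in F$.
\end{itemize}
Asymptotic $\varphi$-centrality puts $(y_i)$ in $\mathcal M^{\omega_1}(\M)$, by the criterion adapted from \cite{HR15}. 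Commutation with $\Q$ is encoded in the index set. Monotonicity gives the lower bound for every $\alpha$, since $\{(\beta,F,n):\beta\geq\alpha\}\in\omega_1$.

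Your Step 4 with the Fubini ultrafilter keeps all coordinates instead, and then everything transfers automatically. For bounded $f$ on $J\times\mathbb N$ one has $\lim_{\omega_1}f=\lim_{k\to\omega'}\lim_{j\to\omega}f(j,k)$. Hence $\lim_{\omega_1}\|(x_{k,j}q-qx_{k,j})\varphi^{1/2}\|=0$ for every $q\in\Q$, and $\lim_{\omega_1}\|x_{k,j}\varphi^{1/2}-\varphi^{1/2}x_{k,j}\|=0$, directly from $x_k\in\Q'\cap(\M^\omega)^{\psi}$. This has three consequences:
\begin{itemize}
\item Fact 2 needs neither the enlargement by finite subsets of $\Q$ nor separability.
\item Fact 1 follows from the same asymptotic-centrality criterion once you take representatives with $\sup_{k,j}\|x_{k,j}\|\leq 1$. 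The spectral-subspace representatives from \cite{HI17} are not needed.
\item Steps 2--3 (the element $X\in(\M^\omega)^{\omega'}$) and your fallback are never used in Step 4. Step 4 needs only Step 1 and the monotonicity inequality for each fixed $k$.
\end{itemize}

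One small correction: choosing a sequence $\alpha_k\to\infty$ presupposes that $\Lambda$ has countable cofinality. Otherwise $\{k:\alpha_k\geq\alpha\}$ need not lie in $\omega'$. For a general net, replace $\mathbb N$ by $\Lambda$ with a cofinal ultrafilter, choosing a bad $x_\alpha$ for every $\alpha$; monotonicity makes this possible.

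In short, your product-ultrafilter route makes every $\omega$-level property pass to $\omega_1$ for free, at the cost of a two-level index set. The paper's one-shot diagonal selection avoids product ultrafilters but must build the needed approximate properties into the directed set by hand.
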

\begin{proof}
Put $T_\alpha=\Psi_\alpha^{(2)}$. Since   $T_\alpha^{\omega}$ does not converge uniformly in $\|\cdot\|_{\phi^\omega}$ on the unit ball of $\Q'\cap(\M^\omega)^{\varphi^{\omega}}$, after passing to a subnet if necessary,  there is $C>0$ such that for all $\alpha\in \Lambda$, there is some $x_\alpha\in \Q'\cap(\M^{\omega})^{\varphi^{\omega}}$ satisfying $\|x_\alpha\|< 1$ and $\|T_\alpha^{\omega}(x_\alpha(\varphi^\omega)^{1/2})-x_\alpha(\phi^\omega)^{1/2}\|\geq C$. For each $\alpha\in\Lambda$, let $(x_{\alpha,j})_{j\in J}\in \mathcal{M}^{\omega}(\M)$ be such that $\sup_{j\in J}\|x_{\alpha,j}\|\leq 1$ and $x_\alpha=((x_{\alpha,j})_{j\in J})^{\omega}$. Consider the directed set
 \[I:=\{(\alpha, F, n); \alpha\in \Lambda, F\subseteq(\Q)_1 \mbox{ finite},n\in\mathbb N\}\] 
 with the order $(\alpha, F,n)\leq (\alpha', F',n')$ if and only if $\alpha\leq\alpha', F\subseteq F'$ and $n\leq n'$. For each $i=(\alpha,F,n)\in I$, there is a $j_i\in J$ such that, putting $y_i=x_{\alpha,{j_i}}$, we have
  \begin{align*}
      &\|T_\alpha(y_i\phi^{1/2})-y_i\varphi^{1/2}\|\geq C,\\
      &\|y_i\varphi^{1/2}-\varphi^{1/2}y_i\|\leq \frac{1}{n}\\
      &\|(y_i q-qy_i)\phi^{1/2}\|\leq\frac{1}{n},\;\;\forall q\in F.
  \end{align*}
Let $\omega_1$ be a cofinal ultrafilter on the directed set $I$ and consider the element $(y_i)_{i\in I}\in \ell^\infty(I, \M)$. Since $\lim_{i}\|y_i\varphi^{1/2}-\varphi^{1/2}y_i\|=0$, with a similar proof as in \cite[Proposition 2.2]{HR15}, it follows  that $(y_i)_{i\in I}\in \mathcal{M}^{\omega_1}(\M)$. Set $x=(y_i)^{\omega_1}\in \M^{\omega_1}$. Thus we have
\begin{align*}
 \|T_\alpha^{\omega_1}(x(\varphi^{\omega_1})^{1/2})-x(\phi^{\omega_1})^{1/2}\| \geq C\;\;\;\mbox{and }\;\;\;x\in \Q'\cap(\M^{\omega_1})^{\varphi^{\omega_1}}.  
\end{align*}
 So we have shown that $T_\alpha^{\omega_1}$ does not converge strongly along $\alpha$ to the identity  on $L^2(\Q'\cap(\M^{\omega_1})^{\varphi^{\omega_1}})$ and hence does not converge strongly on $L^2(\Q'\cap\M^{\omega_1})$.
\end{proof}

 Finally, if $I=\mathbb N$ and $(\Psi_n)_{n\in\mathbb N}$ is a sequence in the above Lemma, and $\Q_\ast$ is separable, then we can choose an increasing sequence $F_n$ of finite subsets of the unit ball of $\Q$ such that $\cup F_n$ spans a strongly dense set in $\Q$, and then take $\Lambda=\mathbb N$ which can be identified with the directed subset $\{(n, F_n, n); n\in\mathbb N\}$. So we get the following:

\begin{corollary}
    Let $\Q\subseteq\M$ be an inclusion of $\sigma$-finite von Neumann algebras with expectation such that $\Q_\ast$ is separable,  let $\omega\in\beta\mathbb N\setminus\mathbb N$ and    let $(\Psi_n)_{n\geq1}$ be a sequence of GNS-symmetric ucp maps on $\M$ such that $\norm{\xi-\Psi_{n+1}^{(2)}\xi}\leq\norm{\xi- \Psi_{n}^{(2)}\xi}$ for all $\xi\in L^2(\M),n\in\mathbb N$. If $(\Psi_n^{(2)})^{\omega}$ converges strongly to the identity  on $L^2(\Q'\cap\M^{\omega})$, then $(\Psi_n^{(2)})^\omega$  converges uniformly in $\|\cdot\|_{\varphi^\omega}$-norm on  $(\Q'\cap(\M^\omega)^{\varphi^{\omega}})_1$ for every normal faithful state $\varphi$ on $\M$ such that $\Q$ is globally invariant under $\sigma^\varphi$.
\end{corollary}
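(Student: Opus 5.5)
The plan is to argue by contraposition, using Lemma \ref{lem:pointwise convergece gives uniform convergence at the level of centralizer} together with the remark preceding the corollary, which allows the directed set to be replaced by $\mathbb N$ when $\Q_\ast$ is separable. Fix a normal faithful state $\varphi$ on $\M$ with $\Q$ globally invariant under $\sigma^\varphi$. Suppose $(\Psi_n^{(2)})^\omega$ does not converge uniformly in $\|\cdot\|_{\varphi^\omega}$ on $(\Q'\cap(\M^\omega)^{\varphi^\omega})_1$. We aim to produce an element of $\Q'\cap\M^\omega$, for the \emph{same} ultrafilter $\omega$, on which strong convergence fails.

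\textbf{Step 1: a uniform defect along a sequence.} By the monotonicity hypothesis $\|\xi-\Psi_{n+1}^{(2)}\xi\|\le\|\xi-\Psi_n^{(2)}\xi\|$, failure of uniform convergence gives $C>0$ and, for every $n$, some $x_n\in(\Q'\cap(\M^\omega)^{\varphi^\omega})_1$ with
\[
\|(\Psi_n^{(2)})^\omega(x_n(\varphi^\omega)^{1/2})-x_n(\varphi^\omega)^{1/2}\|\ge C .
\]
Monotonicity also gives $\|(\Psi_m^{(2)})^\omega(x_n(\varphi^\omega)^{1/2})-x_n(\varphi^\omega)^{1/2}\|\ge C$ for all $m\le n$. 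Choose an increasing sequence of finite sets $F_n\subseteq(\Q)_1$ whose union spans a strongly dense subset of $\Q$; separability of $\Q_\ast$ is used here. Represent $x_n=(x_{n,k})_k^\omega$ with $\sup_k\|x_{n,k}\|\le1$.

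\textbf{Step 2: diagonal extraction.} For each $n$, the set of indices $k$ satisfying the following three conditions belongs to $\omega$:
\[
\|\Psi_n^{(2)}(x_{n,k}\varphi^{1/2})-x_{n,k}\varphi^{1/2}\|\ge C/2,
\]
\[
\|x_{n,k}\varphi^{1/2}-\varphi^{1/2}x_{n,k}\|\le 1/n,
\]
\[
\|(x_{n,k}q-qx_{n,k})\varphi^{1/2}\|\le 1/n \quad\text{for } q\in F_n .
\]
The second condition uses that $x_n$ lies in the centralizer, and the third that $x_n\in\Q'$. This step also needs the $\varphi^{1/2}q$ version of the third estimate, which is handled using $\sigma^\varphi$-invariance of $\Q$. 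To recover the ultrafilter $\omega$ rather than an abstract one, I would do a standard diagonal choice. Pick $k_n$ in that set intersected with a prescribed $\omega$-large set $A_n$, where the sets $A_n$ decrease and have empty intersection. Then set $y_k:=x_{n,k}$ for $k\in A_n\setminus A_{n+1}$ satisfying the conditions, so that along $\omega$ the index $n(k)$ tends to infinity.

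\textbf{Step 3: conclusion.} Since $\|y_k\varphi^{1/2}-\varphi^{1/2}y_k\|\to0$ along $\omega$, the argument of \cite[Proposition 2.2]{HR15} (as in Lemma \ref{lem:pointwise convergece gives uniform convergence at the level of centralizer}) shows $(y_k)\in\mathcal M^\omega(\M)$. Then $y=(y_k)^\omega$ lies in $\Q'\cap(\M^\omega)^{\varphi^\omega}$, using density of $\bigcup F_n$ and the asymptotic commutation. For each fixed $m$, monotonicity gives $\|(\Psi_m^{(2)})^\omega(y(\varphi^\omega)^{1/2})-y(\varphi^\omega)^{1/2}\|\ge C/2$. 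This contradicts strong convergence on $L^2(\Q'\cap\M^\omega)$.

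The main obstacle is Step 2: ensuring that the diagonal sequence still defines an element of the ultrapower for the original $\omega$, rather than for a new ultrafilter $\omega_1$ as in the lemma. Keeping the defect, the centralizer condition, and the commutation simultaneously requires choosing the $\omega$-large sets carefully, and it requires the stated monotonicity so that the defect at level $n(k)\to\infty$ propagates to every fixed $m$.
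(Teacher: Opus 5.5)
Your argument is correct. Its overall shape is the paper's: contraposition, the three estimates of the preceding lemma, the criterion of \cite[Proposition 2.2]{HR15} for membership in $\mathcal M^\omega(\M)$, and monotonicity to push the defect down to every fixed level. The only real difference is the diagonal step, and there the paper's route makes your ``main obstacle'' disappear.

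The paper takes $\Lambda=\mathbb N$ with the cofinal sequence $(n,F_n,n)$. For each $n$ it picks a \emph{single} index $j_n$ at which $x_{n,j_n}$ satisfies the three estimates at level $n$, and sets $y_n=x_{n,j_n}$, so the new sequence is indexed by the level itself. The estimates then hold for every $n$, not merely along an ultrafilter. Consequently, for \emph{every} free ultrafilter $\omega'$ on $\mathbb N$:
\begin{itemize}
\item $(y_n)_n$ belongs to $\mathcal M^{\omega'}(\M)$;
\item its class lies in $\Q'\cap(\M^{\omega'})^{\varphi^{\omega'}}$;
\item by monotonicity, its defect at each fixed level $m$ is bounded below by the same constant.
\end{itemize}
In particular this holds for $\omega'=\omega$. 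The original $\omega$ is used only to guarantee that such indices $j_n$ exist.

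Your construction $y_k=x_{n(k),k}$ with $n(k)\to\infty$ along $\omega$ is the standard countable-saturation argument and also works. You need the decreasing $\omega$-sets to sit inside the good sets, e.g.\ $A_n=A_{n-1}\cap S_n\cap[n,\infty)$, where $S_n\in\omega$ is the set of good indices for $x_n$; the preliminary choice of $k_n$ plays no role. This route involves more bookkeeping and only witnesses the failure in $\M^\omega$, whereas the paper's sequence witnesses it in every free ultrapower.

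Finally, you do not need a $\varphi^{1/2}q$ version of the third estimate. The identity $(yq-qy)(\varphi^\omega)^{1/2}=0$ already forces $yq=qy$, because $(\varphi^\omega)^{1/2}$ is separating for $\M^\omega$.
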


\section{A dichotomy result}\label{sec: a dichotomy result}
 In this section, we prove the dichotomy result (Theorem \ref{thm:dichotomy result_Introduction}) between amenability and coarse rigidity for diffuse von Neumann algebras. More precisely, we prove the following:

  \begin{theorem}\label{thm:dichotomy result for subalgebras}
     Let $\M$ be diffuse a von Neumann algebras with  separable predual, and let $\omega\in\beta\mathbb N\setminus\mathbb N$. Exactly one of the following holds true:
     \begin{enumerate}
         \item $\M$ has a non-zero amenable summand.
         \item For every modular derivation  $\delta:\dom(\delta)\subseteq L^2(\M)\to \H$ with $\H\prec L^2(\M)\otimes L^2(\M)$, we have that $(\theta_\alpha^{(2)})^\omega$ converges strongly to the identity on $L^2(\M'\cap\M^\omega)$, as $\alpha\to\infty$.
     \end{enumerate}
 \end{theorem}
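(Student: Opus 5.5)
The plan is to establish the two halves separately: first that at least one of (1), (2) holds, then that they cannot hold simultaneously.

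\emph{At least one holds.} Suppose $\M$ has no non-zero amenable summand. Fix a modular derivation $\delta$ with $\H\prec L^2(\M)\otimes L^2(\M)$ and apply Corollary~\ref{cor:dichotomy result in amenable case} with $\Q=\M$, so that $1_\Q=1_\M$. Alternative~1 of that corollary would give a non-zero projection $p\in\mathcal Z(\M'\cap\M)=\mathcal Z(\M)$ with $p\M$ amenable, i.e.\ a non-zero amenable summand, which is excluded. Hence alternative~2 holds: $(\theta_\alpha^{(2)})^\omega$ converges strongly to the identity on $L^2(\M'\cap\M^\omega)$ for every cofinal ultrafilter, in particular for the given $\omega\in\beta\mathbb N\setminus\mathbb N$. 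This half is immediate.

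\emph{They are mutually exclusive.} Let $z\in\mathcal Z(\M)$ be a non-zero projection with $z\M$ amenable. We must exhibit a modular derivation $\delta$ into a correspondence weakly contained in the coarse one whose resolvents fail to converge on $L^2(\M'\cap\M^\omega)$. The steps, in order, are as follows.
\begin{enumerate}
\item[(a)] Produce non-trivial central sequences. Since $z\M$ is diffuse, amenable and has separable predual, its central sequence algebra $(z\M)'\cap(z\M)^\omega$ is non-trivial. I would argue this via the direct integral decomposition into hyperfinite factors and an abelian part: in every fibre the central sequence algebra is diffuse, since hyperfinite factors are not full and abelian algebras are their own commutant. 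This gives unitaries $u=(u_k)^\omega\in\M'\cap\M^\omega$ supported under $z$ that are far from $\mathcal Z(\M)$.
\item[(b)] Choose the target correspondence. Take $\H=L^2(\M)\otimes L^2(\M)\otimes\ell^2(\mathbb N)$, with the Tomita structure given by the flip conjugation and $\Delta_\phi^{it}\otimes\Delta_\phi^{it}$. It is trivially weakly contained in the coarse correspondence.
\item[(c)] Build the derivation. Set $\delta=\bigoplus_n w_n\,\delta_{\xi_n}$, where $\delta_{\xi_n}(a)=a\xi_n-\xi_n a$ are inner derivations. The vectors $\xi_n\in z L^2(\M)\otimes L^2(\M)z$ are chosen $\mathcal J$-invariant and invariant under $\mathcal U_t$, which can be arranged by averaging over the modular flow, so that each $\delta_{\xi_n}$ is a $\phi$-modular derivation. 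The weights $w_n\to\infty$ are chosen so that $\delta$ is still densely defined and closable on a suitable Tomita subalgebra; for instance the Tomita algebra generated by an increasing sequence of finite-dimensional $\sigma^\phi$-invariant subalgebras, on which only finitely many commutators are large.
\item[(d)] Compare energies. For a fixed element of the Tomita algebra the energy $\sum_n w_n^2\|[a,\xi_n]\|^2$ should be finite, whereas for the central sequences $u_k$ from (a) it should be bounded below by a constant uniformly in $\alpha$. By the identity $\|\tilde\delta_\alpha(\xi)\|^2=\langle\xi,\xi-\theta_\alpha^{(2)}\xi\rangle$ and the surrounding estimates, this gives non-convergence of $(\theta_\alpha^{(2)})^\omega$ at $u(\phi^\omega)^{1/2}$.
\end{enumerate}

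An alternative route to (d) is to arrange that $\theta_\alpha^{(2)}$ is compact on $zL^2(\M)$ and then use the later proposition that compact resolvents together with convergence on central sequences force atomicity, which contradicts diffuseness of $z\M$.

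\emph{Main obstacle.} The hard part is (c)--(d): choosing the $\xi_n$ and the weights $w_n$. We need the commutators $[x,\xi_n]$ to be small on the dense domain, which is where amenability enters, through almost-central vectors in the coarse bimodule of $z\M$. At the same time the commutators with the central sequences $u_k$ must stay large. My first attempt would exploit the AFD structure and let $\xi_n$ be almost central for the $n$-th finite-dimensional subalgebra $A_n$ but not for $A_n'\cap z\M$, with the $u_k$ taken inside the successive relative commutants.
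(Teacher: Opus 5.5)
Your first half is the paper's argument: Corollary~\ref{cor:dichotomy result in amenable case} applied with $\Q=\M$, using $\mathcal Z(\M'\cap\M)=\mathcal Z(\M)$. The second half has a genuine gap. The derivation is never actually produced, and the route you sketch fails exactly where an idea is needed.

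\emph{The filtration.} For non-tracial $z\M$, a dense Tomita algebra generated by finite-dimensional $\sigma^\phi$-invariant subalgebras $A_n$ forces $\phi$ to be almost periodic, since its modular operator then has pure point spectrum. Nothing in your argument produces such a state and filtration for a general diffuse amenable algebra (type III$_0$, non-factorial).

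\emph{Invariant almost central vectors.} You cannot obtain $\mathcal U_t$-invariant almost central vectors by averaging over the non-compact group $\mathbb R$. The ergodic average is the projection onto the $\mathcal U$-fixed vectors. Since $\norm{x\,\mathcal U_t\xi-(\mathcal U_t\xi)y}=\norm{\sigma^\phi_{-t}(x)\xi-\xi\sigma^\phi_{-t}(y)}$, almost-centrality survives only if it holds uniformly along whole modular orbits. Also, the modular inner derivation is $x\phi^{1/2}=\phi^{1/2}y\mapsto x\xi-\xi y$ with $\mathcal J\xi=-\xi$, not a plain commutator.

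\emph{The criterion in (d).} As written, (d) asks for a lower bound on the energies $\mathcal E(u_k\phi^{1/2})$, and this proves nothing. If the energies stay bounded, then $\norm{\tilde\delta_\alpha(u_k\phi^{1/2})}^2\leq\alpha^{-1}\mathcal E(u_k\phi^{1/2})\to 0$ uniformly in $k$. What you need is that the spectral measures of $u_k\phi^{1/2}$ with respect to $\mathcal L_2$ escape to infinity along $\omega$. For instance, it suffices that $u_k\phi^{1/2}$ is an eigenvector with eigenvalue $\lambda_k\to\infty$, which gives $(\theta_\alpha^{(2)})^\omega(u(\phi^\omega)^{1/2})=0$ for every $\alpha$.

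The idea you are missing is that, since $z\M$ is amenable, \emph{every} $z\M$-correspondence is weakly contained in the coarse one, in particular $L^2(z\M)\prec L^2(z\M)\otimes L^2(z\M)$. So no almost-central vectors are needed: the derivation may take values in $L^2(z\M)$ itself. It is then extended by $0$ on $(1-z)\M$, and the target $L^2(z\M)\oplus(L^2((1-z)\M)\otimes L^2((1-z)\M))$ is weakly coarse by Lemma~\ref{lem:direct sum of bimodule and their weak containment}.

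The modular difficulty is removed by McDuff absorption (Lemma~\ref{lem:hyperfinite factor absorption}, which for type III$_0$ uses the classification by the flow of weights). This gives $z\M\cong\bigoplus_n\mathbb B(\K_n)\bar\otimes\A\oplus\N_2\bar\otimes\mathcal R\oplus\N_3\bar\otimes\mathcal R$. One then takes $\id\otimes\delta_0$ with respect to a product state, where $\delta_0=iD$ generates a trace-preserving flow:
\begin{itemize}
\item on $\mathcal R=\bigotimes M_2$, the flow $\bigotimes_n\Ad(e^{ith_n})$;
\item on $\A=L^\infty(\mathbb T^{\mathbb N})$, rotations with speeds $\lambda_n\to\infty$.
\end{itemize}
These are chosen so that the obvious central sequences (flips in the $n$-th tensor factor, resp.\ the coordinate functions $z_n$) are eigenvectors of $\mathcal L_2=D^2$ with eigenvalues tending to infinity; see Corollary~\ref{cor:a derivation on diffuse amenable von Neumann algebras}.

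Incidentally, your inner-derivation scheme does work in the tracial AFD setting. Let $\xi_n$ be the canonical $A_n$-central unit vector in $L^2(A_n)\otimes L^2(A_n)$. Then $\delta_{\xi_n}^\ast\delta_{\xi_n}=2(1-e_{A_n})$, so $\mathcal L_2$ acts by $2\sum_{n<m}w_n^2$ on $L^2(A_m)\ominus L^2(A_{m-1})$, and the flips are eigenvectors. This could replace the flow on $\mathcal R$, but only after the reduction to $\N\bar\otimes\mathcal R$. There, weak coarseness of the target again comes from amenability of $\N$, not from the vectors $\xi_n$.
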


We first prove a few Lemmas. Throughout this section, we denote by $\mathcal{R}$ the unique separable hyperfinite II$_1$ factor.

\begin{lemma}\label{lem: a criteria for existence of certain derivation}
    Let $\M$ be a $\sigma$-finite von Neumann algebra, let $\phi$ be a normal faithful state on $\M$, and let $\alpha:\mathbb R\curvearrowright \M$ be a $\phi$-preserving action  such that $\alpha$ commutes with $\sigma^\phi$. Write $\alpha^{(2)}_t=e^{itD}, t\in \mathbb R$ for a  self-adjoint operator $D$ on $L^2(\M)$. Let $\omega$ be a cofinal ultrafilter on a directed set $I$. If there are a  uniformly bounded net  $\{x_n\}_{n\in I}\in \M$ and $\{\lambda_n\}_{n\in I}$ in $ (0,\infty)$ with $\lambda_n\to\infty$ such that   $D^2(x_n\phi^{1/2})=\lambda_nx_n\phi^{1/2}$ and $x:=(x_n)^\omega$ is a well-defined non-zero element in $\M^\omega$ then there is a $\phi$-modular derivation $\delta:\dom\delta\subseteq L^2(\M)\to L^2(\M)$ such that $\theta_\alpha^\omega(x)$ does not converge strongly to $x$ as $\alpha\to\infty$.
\end{lemma}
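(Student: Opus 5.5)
The derivation will be the generator of the flow $\alpha$ itself. I take $\delta$ to be $iD$ (or simply $D$) restricted to a suitable unital Tomita subalgebra, with values in the Tomita correspondence $\H=L^2(\M)$, $\mathcal J=J$, $\mathcal U_t=\Delta_\phi^{it}$. Then $\delta^\ast\bar\delta=D^2$, so the associated semigroup is $\Phi_t^{(2)}=e^{-tD^2}$. This is the heat semigroup subordinated to the flow: $\Phi_t(y)=\int_{\mathbb R}\alpha_s(y)\,\frac{e^{-s^2/4t}}{\sqrt{4\pi t}}\,ds$.

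The first step is to check that $\delta$ is a $\phi$-modular derivation in the sense of Subsection~\ref{sec:modular derivation}. For the domain I take the $\alpha$- and $\sigma^\phi$-analytic smooth vectors, for instance
\[
\mathfrak T=\Bigl\{\textstyle\int\int f(s)g(t)\,\alpha_s\sigma^\phi_t(y)\phi^{1/2}\,ds\,dt \;:\; y\in\M,\ f,g \text{ Gaussians}\Bigr\},
\]
closed under products and under $J$. This is a unital Tomita subalgebra on which $D$ is defined, and it is dense because the actions are jointly continuous. The Leibniz rule $\delta(ab)=\pi_l(a)\delta(b)+\delta(a)J\pi_r(b)^\ast J$ follows by differentiating $\alpha_t(xy)=\alpha_t(x)\alpha_t(y)$, using that $\alpha^{(2)}_t$ is implemented by $\alpha_t$ acting on both sides of $\phi^{1/2}$ (since $\alpha$ preserves $\phi$). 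The relations $\delta J=J\delta$ (up to the sign convention $i$) and $\delta\Delta_\phi^{iz}=\Delta_\phi^{iz}\delta$ come from $\alpha_t$ commuting with $J$ and with $\Delta_\phi$, which holds because $\alpha$ is $\phi$-preserving and commutes with $\sigma^\phi$. Closability is automatic, since $\delta\subseteq iD$ and $D$ is self-adjoint. By functional calculus (or Lemma~\ref{lem:analytic_core_generator}), $\mathfrak T$ is a core, so $\delta^\ast\bar\delta=D^2$ exactly.

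The second step is the computation of the resolvents. We have $\theta^{(2)}_\beta=\beta(\beta+D^2)^{-1}$, hence
\[
\theta_\beta^{(2)}(x_n\phi^{1/2})=\frac{\beta}{\beta+\lambda_n}\,x_n\phi^{1/2}.
\]
Since $\theta_\beta$ is GNS-symmetric, this means $\theta_\beta(x_n)=\frac{\beta}{\beta+\lambda_n}x_n$. Passing to the ultrapower via the discussion after Corollary~\ref{cor; corollary to the technical lemma in the ultrapower language}, and using $\lambda_n\to\infty$ along $\omega$ for each fixed $\beta$, we get
\[
\theta_\beta^\omega(x)=\Bigl(\tfrac{\beta}{\beta+\lambda_n}x_n\Bigr)^\omega=0 .
\]
The sequence is uniformly bounded and its norm tends to $0$ along $\omega$. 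So $\theta^\omega_\beta(x)=0\neq x$ for every $\beta$, and in particular $\theta^\omega_\beta(x)$ does not converge to $x$, in $L^2$ or strongly.

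I expect the main obstacle to be the first step: producing a domain that is simultaneously a unital Tomita subalgebra and invariant under all the required operations, and verifying the Leibniz rule on the $L^2$ level in the non-tracial setting. The Gaussian smoothing over the commuting $\mathbb R^2$-action $(\alpha,\sigma^\phi)$ should handle this, since all the relevant operators commute with both actions.
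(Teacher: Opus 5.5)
Your proposal follows the paper's proof essentially step for step. You take $\delta=iD$ on a domain of jointly $\alpha$- and $\sigma^\phi$-analytic elements, get $\delta^\ast\bar\delta=D^2$ and $\theta^{(2)}_\beta(x_n\phi^{1/2})=\frac{\beta}{\beta+\lambda_n}x_n\phi^{1/2}$, and conclude that $\theta^\omega_\beta(x)=0$ for every $\beta>0$. The only difference is the domain: the paper uses the whole algebra $\mathcal A_\alpha\cap\mathcal A_\phi\subseteq\M$ of jointly analytic elements, with $\dom(\delta)=(\mathcal A_\alpha\cap\mathcal A_\phi)\phi^{1/2}$. That algebra is automatically closed under products, whereas a set of single Gaussian smoothings need not be, so you should pass to the algebra it generates.
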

\begin{proof}
    Let $\mathcal{A}_\alpha$ and $\mathcal{A}_\phi$ be the algebra of $\alpha$ and $\sigma^\phi$-analytic vectors of $\M$, respectively, and set $\mathcal{A}=\mathcal{A}_\alpha\cap\mathcal{A}_\phi$. Since $\alpha$ and $\sigma^\phi$ commute, $\mathcal{A}$ is a strongly dense unital $\ast$-subalgebra of $\M$; indeed if $x\in \M$ then $x_n:=\frac{n}{\pi}\int_{\mathbb R\times\mathbb R}e^{-nt^2-ns^2}\alpha_t(\sigma_s^\phi(x))dtds$ belongs to $\mathcal{A}$ such that $x_n\to x$ strongly as $n\to\infty$.
    
    Set $\dom(\delta)=\mathcal{A}\phi^{1/2}\subseteq L^2(\M,\phi)$ and define $\delta:\dom(\delta)\to L^2(\M)$ by $\delta=iD_{|_{\dom(\delta)}}$, that is,
\[ \delta(x\phi^{1/2})=\left.\frac{d}{dt}\right|_{t=0}
    \alpha_t(x)\phi^{1/2} =iD(x\phi^{1/2}),\quad x\in\mathcal A.\]
 The formula
 $ \alpha_t(xy)=\alpha_t(x)\alpha_t(y)$ for $ x, y\in \M$ implies the derivation rule for $\delta$.  Since $\alpha$ commutes with $\sigma^\phi$, the operators $D$ and  $\Delta_\phi^{it}$  strongly commute for all $t\in\mathbb R$. It is also easy to see that $J_\phi D=-DJ_\phi$. Hence $\delta$ has the required modular property. Since $D$ is a closed unbounded operator, $\delta$ is closable with the closure being $iD$. Moreover, we have $\mathcal{L}_2=\delta^\ast\bar\delta=D^2$.

 Next, we note that
 \[\theta_\alpha^{(2)}(x_n\phi^{1/2})=\frac{\alpha}{\alpha+\lambda_n}x_n\phi^{1/2},\;\;n\geq1, \alpha>0.\]
Since $\lambda_n\to\infty$ and $\{x_n\}_{n\in I}$ is uniformly bounded, it follows  for all $\alpha>0$ that  $\theta_\alpha^{(2)}(x_n\phi^{1/2})\to 0$ along $n$. 
Thus $ (\theta_\alpha^{(2)})^\omega(x(\phi^\omega)^{1/2})=0$ for all $\alpha>0$, which proves that
$(\theta_\alpha^{(2)})^\omega(x)$ does not converge strongly to $x$, as $\alpha\to\infty$.
\end{proof}

\begin{lemma}\label{lem: a derivation on the hyperfinite}
There exists a trace-symmetric derivation $\delta:\dom\delta\subseteq L^2(\mathcal{R})\to L^2(\mathcal{R})$ such that $(\theta_\alpha^{(2)})^\omega$ does not converge strongly on $L^2(\mathcal{R}'\cap\mathcal{R}^\omega)$ as $\alpha\to\infty$ for any $\omega\in\beta\mathbb N\setminus\mathbb N$.
\end{lemma}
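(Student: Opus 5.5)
We will realize $\mathcal R$ as the infinite tensor product $\bar\bigotimes_{k\ge1}(M_2(\mathbb C),\tr)$ and produce the derivation through Lemma \ref{lem: a criteria for existence of certain derivation}, applied to a trace-preserving $\mathbb R$-action.

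\emph{Step 1: the action.} Fix the self-adjoint unitary $h=\mathrm{diag}(1,-1)\in M_2(\mathbb C)$. Choose real numbers $\mu_k>0$ with $\mu_k\to\infty$. Define $\alpha_t=\bigotimes_k \Ad(e^{it\mu_k h/2})$ on $\mathcal R$; this is a well-defined, pointwise weak$^\ast$ continuous, trace-preserving action. Since $\sigma^\tau$ is trivial, $\alpha$ commutes with the modular group. Its generator $D$ acts on $L^2(\mathcal R)$, and on the $k$-th tensor factor the matrix unit $e_{12}^{(k)}$ satisfies
\[
\alpha_t(e_{12}^{(k)})=e^{it\mu_k}e_{12}^{(k)},
\]
so $D^2(e_{12}^{(k)}\tau^{1/2})=\mu_k^2 e_{12}^{(k)}\tau^{1/2}$. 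The resulting derivation $iD$ is real and trace-symmetric, because $J$ anticommutes with $D$.

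\emph{Step 2: a central sequence.} We need eigenvectors of $D^2$ that form a nonzero central sequence. The natural candidate is $x_k=e_{12}^{(k)}$, or better the unitaries $u_k=e_{12}^{(k)}+e_{21}^{(k)}$. These are not eigenvectors, since $u_k$ is a sum of two eigenvectors with eigenvalues $\pm\mu_k$. However, both pieces have the same eigenvalue $\mu_k^2$ for $D^2$, so $D^2(u_k\tau^{1/2})=\mu_k^2u_k\tau^{1/2}$. The sequence $(u_k)$ is uniformly bounded. It asymptotically commutes with every element of the algebraic tensor product, because it eventually lives in a different tensor factor. By density in $\|\cdot\|_2$ and uniform boundedness, it is therefore a central sequence. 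It is nonzero in $\mathcal R^\omega$ since $\|u_k\|_2=1$. In the tracial setting the sequence automatically lies in $\mathcal M^\omega(\mathcal R)$, so $u=(u_k)^\omega\in\mathcal R'\cap\mathcal R^\omega$ for every free ultrafilter $\omega$.

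\emph{Step 3: conclusion.} Lemma \ref{lem: a criteria for existence of certain derivation} applies with $\lambda_k=\mu_k^2\to\infty$. Its proof shows directly that $(\theta_\alpha^{(2)})^\omega(u\tau^{1/2})=0$ for all $\alpha>0$, since $\theta_\alpha^{(2)}(u_k\tau^{1/2})=\frac{\alpha}{\alpha+\mu_k^2}u_k\tau^{1/2}\to0$. Hence $(\theta_\alpha^{(2)})^\omega$ cannot converge to the identity on $u\tau^{1/2}\in L^2(\mathcal R'\cap\mathcal R^\omega)$. The statement as written only says "does not converge strongly." Convergence to some other limit is also excluded: the monotonicity shown in Proposition \ref{prop:equivalent conditions for L2-rigidity} forces any strong limit on this invariant subspace to be the identity once one checks convergence on a dense set.

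\emph{Main obstacle.} The only delicate point is in Step 1. We must verify that the domain $\mathcal A\tau^{1/2}$ of $\alpha$-analytic elements gives a closable derivation whose closure is exactly $iD$, so that $\mathcal L_2=D^2$. Lemma \ref{lem: a criteria for existence of certain derivation} already asserts this, provided the generator of $\alpha^{(2)}$ is essentially self-adjoint on analytic vectors. That holds by Nelson's theorem, since analytic vectors of a unitary group form a core.
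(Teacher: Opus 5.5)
Your proof is correct and is essentially the paper's own argument. It uses the same product action $\bigotimes_k\Ad(e^{it\mu_k h/2})$ on $\bar\bigotimes_k(M_2(\mathbb C),\tr)$ and the same flip unitaries $u_k=e_{12}^{(k)}+e_{21}^{(k)}$ as a nontrivial central sequence with $D^2(u_k\tau^{1/2})=\mu_k^2u_k\tau^{1/2}$. It then appeals to Lemma~\ref{lem: a criteria for existence of certain derivation} in the same way.

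You should, however, delete the remark in Step~3 that convergence to some other limit is excluded, because it is false. Since $\frac{\alpha}{\alpha+\lambda}$ is increasing in $\alpha$ for $\lambda\geq 0$, the map $\alpha\mapsto(\theta_\alpha^{(2)})^\omega$ is an increasing net of positive contractions. By Vigier's theorem it therefore converges strongly. Indeed, your own Step~3 shows it is identically $0$ on $u\tau^{1/2}$, so it converges there to $0$, not to the identity.

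The lemma is meant, and later used, in the sense that $(\theta_\alpha^{(2)})^\omega$ does not converge strongly to the identity. Your Step~3 already establishes exactly that.
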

\begin{proof}
Take the infinite tensor product decomposition 
\[\mathcal{R}
  =\overline{\bigotimes_{n=1}^\infty}
   (\mathbb M_2, {\tr}_2)\]
of the hyperfinite $\mathrm{II}_1$ factor, with its trace denoted by $\tau$. Here $\tr_2$ denotes the normalized trace on the complex matrix algebra $\mathbb M_2$.
Choose a sequence $\lambda_n\to\infty$, for example $\lambda_n=n$, and set
\[
  h_n
  =\frac{\lambda_n}{2}
   \begin{pmatrix}
     1&0\\
     0&-1
   \end{pmatrix}\in \mathbb M_2.
\]
There is a unique strongly continuous  action $\alpha$ of $\mathbb R$ on $\mathcal{R}$ such that 
\[
  \alpha_t
  =\bigotimes_{n=1}^\infty\Ad(e^{ith_n}),\;\;t\in\mathbb R.
\]
Clearly, $\alpha$ is $\tau$-preserving.  
For each $n\geq1$, let
\[
  u_n=1\otimes\cdots\otimes
   \begin{pmatrix}
     0&1\\
     1&0
   \end{pmatrix}
   \otimes\cdots,
\]
where the nontrivial matrix occurs in the $n$-th tensor factor.  Each $u_n$
is a self-adjoint unitary.  Moreover, $\|u_nx-xu_n\|_2\to0,$as $n\to\infty,$ for all $x\in\mathcal{R},$
because this is immediate for simple tensors and then follows for arbitrary
$x\in\mathcal{R}$ by approximation. 
Hence $u=(u_n)^\omega\in\mathcal{R}'\cap\mathcal{R}^\omega.$

Let $D$ be the self-adjoint operator on $L^2(\mathcal{R},\tau)$ such that $\alpha_t^{(2)}=e^{-itD}.$ For any $y\in \mathbb M_2$, if we set $x=1\otimes\cdots y\otimes\cdots$, where $y$ is in the $n$-th tensor factor, then one verifies
\begin{align*}
    D(x\tau^{1/2})=(yh_n-h_ny)\tau^{1/2}.
\end{align*}
In particular, if $e_{ij}^{(n)}$ for $1\leq i,j\leq 2$ denotes
matrix units in the $n$-th tensor factor, then 
\[
  D(e_{12}^{(n)}\tau^{1/2})=\lambda_ne_{12}^{(n)}\tau^{1/2},
  \qquad
  D(e_{21}^{(n)}\tau^{1/2})=-\lambda_ne_{21}^{(n)}\tau^{1/2}.
\]
Since $u_n=e_{12}^{(n)}+e_{21}^{(n)},$ we obtain
 $ D^2(u_n\tau^{1/2})=\lambda_n^2u_n\tau^{1/2}.$ 
Thus $(u_n)_{n\geq1}$ satisfies the hypothesis of Lemma \ref{lem: a criteria for existence of certain derivation}; hence $\theta_\alpha^\omega(u)$ does not converge strongly to $u$ as $\alpha\to\infty$.
\end{proof}

\begin{lemma}\label{lem: a derivation on diffuse abelian}
If $\A$ is the unique separable diffuse abelian von Neumann algebra $\mathsf A$, then there exists a trace-symmetric derivation $\delta:\dom(\delta)\subseteq L^2(\A)\to L^2(\A)$  such that $(\theta_\alpha^{(2)})^\omega$ does not converge strongly on $L^2(\A'\cap\A^\omega)$ for any $\omega\in\beta\mathbb N\setminus\mathbb N$.
\end{lemma}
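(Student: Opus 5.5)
The plan is to imitate the proof of Lemma \ref{lem: a derivation on the hyperfinite}: build a trace-preserving flow on $\A$ and invoke Lemma \ref{lem: a criteria for existence of certain derivation}. Since $\A$ is abelian, $\A'\cap\A^\omega=\A^\omega$, so we only need a bounded sequence $(x_n)$ in $\A$ with $(x_n)^\omega\neq 0$ that is an eigenvector sequence for $D^2$ with eigenvalues $\lambda_n\to\infty$. Centrality is automatic here, which is the only real simplification compared with the hyperfinite case.

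Concretely, I would realize $\A$ as $L^\infty(\mathbb T^{\mathbb N})$, with $\mathbb T^{\mathbb N}$ carrying the product Haar measure $\tau$. Equivalently, $\A=\overline{\bigotimes}_{n}L^\infty(\mathbb T)$, which is separable and diffuse, hence the unique such algebra. Take $\lambda_n=n$ and define the action
\[
\alpha_t(f)(z_1,z_2,\dots)=f(e^{i\lambda_1 t}z_1,e^{i\lambda_2 t}z_2,\dots).
\]
This is a measure-preserving action of $\mathbb R$ on $\mathbb T^{\mathbb N}$ by rotations, continuous in the product topology, so $\alpha$ is a point-weak$^\ast$ continuous $\tau$-preserving action. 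Because $\tau$ is a trace, $\sigma^\tau$ is trivial and commutes with $\alpha$. Now let $x_n=z_n$ be the $n$-th coordinate function, a unitary in $\A$. Then $\alpha_t(x_n)=e^{i\lambda_n t}x_n$, so $D(x_n\tau^{1/2})=\pm\lambda_n x_n\tau^{1/2}$ and therefore $D^2(x_n\tau^{1/2})=\lambda_n^2x_n\tau^{1/2}$ with $\lambda_n^2\to\infty$.

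Next I would check that $x=(x_n)^\omega$ is a well-defined nonzero element of $\A^\omega$. Since $\A$ is finite, every bounded sequence lies in $\mathcal M^\omega(\A)$, because the $\|\cdot\|_\tau^\sharp$-null sequences form an ideal in $\ell^\infty$. Moreover $x$ is a unitary, since each $x_n$ is, so $x\neq0$. Lemma \ref{lem: a criteria for existence of certain derivation} then produces the derivation $\delta=iD|_{\mathcal A\tau^{1/2}}$, which is trace-symmetric because $\sigma^\tau$ is trivial. For this $\delta$ we get $(\theta^{(2)}_\alpha)^\omega(x\tau^{1/2})=0$ for every $\alpha$, so there is no strong convergence on $L^2(\A^\omega)=L^2(\A'\cap\A^\omega)$, and this holds for every $\omega$.

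The main point needing care is that $\delta$ is real, meaning $\delta(Ja)=J\delta(a)$. This follows from $J D=-DJ$, since $J$ is complex conjugation and $\alpha$ commutes with it, exactly as asserted in the earlier lemma. Otherwise the argument is routine.
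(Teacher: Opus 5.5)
Your proposal is correct and is essentially the paper's proof: realize $\A$ as $L^\infty(\mathbb T^{\mathbb N})$ with the rotation flow $z_n\mapsto e^{i\lambda_n t}z_n$, take the coordinate unitaries $z_n$ as $D^2$-eigenvectors with eigenvalues $\lambda_n^2\to\infty$, note that $(z_n)^\omega$ lies in the abelian algebra $\A^\omega=\A'\cap\A^\omega$, and apply Lemma \ref{lem: a criteria for existence of certain derivation}. As in the paper, read the conclusion as failure of strong convergence \emph{to the identity}: the increasing net of positive contractions $(\theta^{(2)}_\alpha)^\omega$ always has some strong limit, and $(\theta^{(2)}_\alpha)^\omega(x\tau^{1/2})=0$ shows that this limit is not the identity.
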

\begin{proof}
 Let $(X,\mu)$ be a non-atomic probability measure space such that $\A=L^\infty(X,\mu)$.    By uniqueness,   we may identify $  (X,\mu)\cong(\mathbb T^{\mathbb N},m^{\mathbb N})$, where $m$ is the one-dimensional Lebesgue measure on the unit circle $\mathbb T$.
Choose $\lambda_n\to\infty$, and define a measure-preserving flow on $\mathbb T^{\mathbb N}$ by
\[
  T_t((z_n)_n)
  =(e^{i\lambda_n t}z_n)_n,\;\;\;\;(z_n)_n\in\mathbb T^{\mathbb N},\;\;t\in\mathbb R.
\]
Set $\alpha_t(f)=f\circ T_t $ for $f\in L^\infty(\mathbb T^{\mathbb N}, m^{\mathbb N})$, which induces a $\tau$-preserving action of $\mathbb R$ on $L^\infty(\mathbb T^{\mathbb N}, m^{\mathbb N})$, where $\tau$ is the integration with respect to $m^{\mathbb N}$.
For $n\geq1$, consider the unitaries $v_n\in L^\infty(\mathbb T^{\mathbb N}, m^{\mathbb N})$ defined by 
\[
  v_n(z_1,z_2,\ldots)=z_n.
\]
Then $ \alpha_t(v_n)=e^{i\lambda_nt}v_n,\;t\in\mathbb R $
and hence $D^2(v_n\tau^{1/2})
  =\lambda_n^2v_n\tau^{1/2}.$
Clearly  $(v_n)^\omega\in \mathcal{Z}(\A^\omega)\subseteq\A'\cap\A^\omega$ and $v_n$ satisfies the hypothesis of Lemma \ref{lem: a criteria for existence of certain derivation}. 
\end{proof}

\begin{theorem}\label{thm:existence of derivation for McDuff type algebras}
    Let $\M$ be a $\sigma$-finite von Neumann algebra either of the form $\M_1\oplus(\M_2\otimes\mathcal{R})$ or of the form $\M_1\oplus(\M_2\otimes\A)$ for some von Neumann algebras $\M_1,\M_2$ such that $\M_2\neq0$,
    where $\A$ is the unique separable diffuse abelian von Neumann algebra. There exists a densely defined closable modular derivation $\delta:\dom\delta\subseteq L^2(\M)\to L^2(\M)$ such that $(\theta_\alpha^{(2)})^\omega$ does not converge strongly on $L^2(\M'\cap\M^\omega)$ for any $\omega\in\beta\mathbb N\setminus\mathbb N$.
\end{theorem}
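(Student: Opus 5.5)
Both cases follow one plan: take the derivation from Lemma \ref{lem: a derivation on the hyperfinite} (respectively Lemma \ref{lem: a derivation on diffuse abelian}) on the diffuse tensor factor, tensor it with the identity on $\M_2$, and put the zero derivation on $\M_1$. Write $\mathsf P$ for $\mathcal{R}$ or $\A$, with trace $\tau$, action $\beta_t$ (written $\alpha$ in those lemmas) and self-adjoint generator $D$ on $L^2(\mathsf P)$. The sequence $(w_n)$ is $(u_n)$ or $(v_n)$, satisfying $D^2(w_n\tau^{1/2})=\lambda_n^2 w_n\tau^{1/2}$ with $\lambda_n\to\infty$. Fix faithful normal states $\phi_1$ on $\M_1$ and $\phi_2$ on $\M_2$, and set $\phi=\frac12(\phi_1\oplus(\phi_2\otimes\tau))$. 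On $\M$ consider the action $\gamma_t=\id_{\M_1}\oplus(\id_{\M_2}\otimes\beta_t)$. It preserves $\phi$, and it commutes with $\sigma^\phi=\sigma^{\phi_1}\oplus(\sigma^{\phi_2}\otimes\id)$ because $\tau$ is a trace. Its $L^2$-generator is $\tilde D=0\oplus(1\otimes D)$ under the identification $L^2(\M)\cong L^2(\M_1)\oplus L^2(\M_2)\otimes L^2(\mathsf P)$.

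We then apply Lemma \ref{lem: a criteria for existence of certain derivation} with $x_n=0\oplus(1\otimes w_n)$, which is uniformly bounded with $\tilde D^2(x_n\phi^{1/2})=\lambda_n^2x_n\phi^{1/2}$. This gives a densely defined closable $\phi$-modular derivation $\delta=i\tilde D|_{\mathcal A\phi^{1/2}}$ with values in $L^2(\M)$. Its resolvent satisfies $\theta_\alpha^{(2)}(x_n\phi^{1/2})=\frac{\alpha}{\alpha+\lambda_n^2}x_n\phi^{1/2}$, so $(\theta_\alpha^{(2)})^\omega(x(\phi^\omega)^{1/2})=0$ for all $\alpha$, where $x=(x_n)^\omega$. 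It remains to check two things. First, $x$ is a well-defined element of $\M^\omega$ and is nonzero: $\|x(\phi^\omega)^{1/2}\|^2=\frac12\lim_n\tau(w_n^\ast w_n)=\frac12$. Second, $x\in\M'\cap\M^\omega$. For $\M^\omega$-membership, $x_n$ lies in the centralizer $\M^\phi$ (since $\tau$ is a trace, $1\otimes w_n$ is fixed by $\sigma^\phi$), so $(x_n)$ lies in the spectral subspace $\M(\sigma^\phi,\{0\})$. Proposition \ref{prop: criteria for elements in the idealizer} then gives $(x_n)\in\mathcal M^\omega(\M)$; alternatively, $\|x_n y\|_\phi^\sharp$ and $\|yx_n\|_\phi^\sharp$ are bounded by $\|y\|_\phi^\sharp$ for $x_n$ a unitary in the centralizer of the corner. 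Nonvanishing then gives $x\ne0$ and, as a nonzero vector killed by every $(\theta^{(2)}_\alpha)^\omega$, the failure of strong convergence to the identity.

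The asymptotic centrality is the step needing care. We need $\|[x_n,y]\|^\sharp_\phi\to0$ for every $y\in\M$, and it suffices to treat $y=y_1\oplus y_2$ with $y_2$ in the norm-dense $\ast$-algebra spanned by elementary tensors $a\otimes b$, $a\in\M_2$, $b\in\mathsf P$. For such $y$, $[x_n,y]=0\oplus(a\otimes[w_n,b])$, and $\|a\otimes[w_n,b]\|_{\phi_2\otimes\tau}\le\|a\|\,\|[w_n,b]\|_\tau\to0$, using $\|u_nb-bu_n\|_2\to0$ from Lemma \ref{lem: a derivation on the hyperfinite} (trivially in the abelian case). For general $y$ we pass to a bounded strong$^\ast$ approximation by Kaplansky density: the commutators $[x_n,y-y']$ are controlled uniformly in $n$ by $\|y-y'\|^\sharp_\phi$, since the $x_n$ are contractions in the centralizer. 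Hence $x\in\M'\cap\M^\omega$ for every free ultrafilter $\omega$.

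I expect the main obstacle to be making sure that the ultrapower element $x$ genuinely lies in the Ocneanu ultrapower $\M^\omega$ and in the relative commutant $\M'\cap\M^\omega$, rather than merely in a Hilbert ultrapower. The centralizer argument above is what I would rely on there. The modular-derivation properties themselves are delivered by Lemma \ref{lem: a criteria for existence of certain derivation} once $\gamma$ commutes with $\sigma^\phi$.
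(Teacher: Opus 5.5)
Your proposal is correct and is essentially the paper's proof. The derivation $i\tilde D$ you obtain from Lemma \ref{lem: a criteria for existence of certain derivation} is exactly the one whose deformation is $\id_{\M_1}\oplus(\id_{\M_2}\otimes\theta_\alpha)$. Your witness $0\oplus(1\otimes w_n)$ is the element behind the paper's inclusion $(1_{\M_2}\otimes\mathcal{R})'\cap(1_{\M_2}\otimes\mathcal{R})^\omega\subseteq\M'\cap\M^\omega$, and you verify its Ocneanu-ultrapower membership and asymptotic centrality explicitly, where the paper leaves them implicit.

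The only slip is calling the algebraic tensor product norm-dense in $\M_2\bar\otimes\mathsf P$; it is only strong$^\ast$-dense. Your Kaplansky and $\|\cdot\|^\sharp_\phi$ approximation step already covers this.
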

\begin{proof}
 Assume that $\M=\M_1\oplus(\M_2\otimes\mathcal{R}))$ for $\M_2\neq0$.     Let $\delta$  be the closable densely defined modular derivations on $L^2(\mathcal{R})$
     as constructed in Lemma \ref{lem: a derivation on the hyperfinite} and denote by  $(\theta_{\alpha})_{\alpha>0}$ the associated deformation. Define the unique modular derivation $\partial$ on $L^2(M)\cong L^2(\M_1)\oplus \left.(L^2(\M_2)\otimes L^2(\mathcal{R})\right)$
     such that the associated deformation of $\partial$ is
\[\eta_\alpha:=\id_{\M_1}\oplus(\id_{\M_2}\otimes\theta_{\alpha}),\quad \alpha>0.\] 
Clearly, then $(\eta_\alpha^{(2)})^\omega$ does not converge strongly on $L^2(\M'\cap \M^\omega)$ for any $\omega\in\beta\mathbb N\setminus\mathbb N$,  because $(1_{\M_2}\otimes\mathcal{R})'\cap (1_{\M_2}\otimes\mathcal{R})^\omega\subseteq\M'\cap\M^\omega$ and $(\theta_\alpha^{(2)})^\omega$ does not converge strongly on $\mathcal{R}'\cap\mathcal{R}^\omega$. A similar argument yields the assertion if $\M$ is of the form $\M_1\oplus(\M_2\otimes\A)$ for $\M_2\neq0$ in view of Lemma \ref{lem: a derivation on diffuse abelian}.
\end{proof}

\begin{lemma}\label{lem:hyperfinite factor absorption}
    If $\N$ is a diffuse amenable von Neumann algebra with separable predual of either type II or type III, then $\N\cong\N\overline{\otimes}\mathcal{R}$.
\end{lemma}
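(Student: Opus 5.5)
The plan is to reduce to the factor case through the central decomposition, and then glue the fibrewise isomorphisms together measurably.

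Since $\N_\ast$ is separable, I will write
\[
\N=\int_X^\oplus \N_x\,d\mu(x)
\]
over a standard probability space $(X,\mu)$ with $L^\infty(X,\mu)\cong\mathcal Z(\N)$. Standard facts about direct integrals then give three properties of almost every fibre $\N_x$:
\begin{enumerate}
\item it is a factor with separable predual;
\item it is injective, since injectivity of $\N$ passes to almost every fibre and conversely;
\item it has the type of $\N$. Because $\N$ is of type II or type III, it has no type I summand, so almost every $\N_x$ is a diffuse factor of type II$_1$, II$_\infty$ or III$_\lambda$ with $\lambda\in[0,1]$.
\end{enumerate}

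For injective factors with separable predual, Connes' classification together with Haagerup's uniqueness of the III$_1$ factor gives a list: $\mathcal R$, $\mathcal R\bar\otimes B(\ell^2)$, the Powers factors $R_\lambda$, $R_\infty$, and the ITPFI Krieger factors in the III$_0$ case. All of these are McDuff, i.e. $\N_x\cong\N_x\bar\otimes\mathcal R$. For $\mathcal R$ this is $\mathcal R\cong\mathcal R\bar\otimes\mathcal R$. For the ITPFI factors one splits off infinitely many $(\mathbb M_2,\tr_2)$ tensor factors, which recombine to a copy of $\mathcal R$. In the III$_0$ case one can also use that every injective factor is ITPFI by Connes--Krieger. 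So the statement holds fibrewise.

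The main obstacle is to pass from the fibrewise isomorphisms $\N_x\cong\N_x\bar\otimes\mathcal R$ to a global one $\N\cong\N\bar\otimes\mathcal R=\int^\oplus_X \N_x\bar\otimes\mathcal R\,d\mu(x)$. This requires a measurable choice of the isomorphisms $x\mapsto\pi_x$. I would try first to avoid measurable selection by using an intrinsic criterion. By the McDuff-type theorem for von Neumann algebras with separable predual (Connes in the factor case; the non-factor version via the asymptotic centralizer $\N_\omega$, e.g.\ in Ando--Haagerup), it suffices to exhibit a unital copy of $\mathbb M_2$ inside $\N_\omega$.

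Such a copy can be produced fibrewise and glued. For each fibre, McDuffness gives sequences of $2\times 2$ matrix units that asymptotically commute with a fixed countable dense set and asymptotically commute with the state $\phi_x$, where $\phi=\int^\oplus\phi_x$. Given finitely many conditions and a tolerance $\epsilon$, the set of admissible $2\times2$ matrix-unit systems in $\N_x$ is an analytic subset of the standard Borel space of the field. The von Neumann (Jankov) measurable selection theorem then yields a measurable choice, off a null set. Integrating over $X$ gives matrix units $e^{(n)}_{ij}\in\N$ with
\[
\|[e^{(n)}_{ij},y]\|_\phi^\sharp\to 0 \quad\text{and}\quad \|e^{(n)}_{ij}\phi-\phi e^{(n)}_{ij}\|\to0
\]
for all $y$ in a countable dense set. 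Hence $(e^{(n)}_{ij})^\omega$ is a unital copy of $\mathbb M_2$ in $\N_\omega$, and the McDuff criterion yields $\N\cong\N\bar\otimes\mathcal R$.

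If the non-factor criterion is not available in the needed form, I would fall back on Effros' Borel structure on factors together with Sutherland's measurable selection of isomorphisms between measurable fields of factors. These give a measurable field $x\mapsto\pi_x$ of isomorphisms $\pi_x\colon\N_x\to\N_x\bar\otimes\mathcal R$, hence a direct integral isomorphism $\int^\oplus\pi_x$.
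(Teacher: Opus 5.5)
Your proposal is correct. It shares the paper's skeleton: disintegrate over the center, then apply the classification of injective factors fibre by fibre. It departs from the paper in two places.

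\textbf{The type III$_0$ fibres.} The paper does not go through ITPFI factors. It computes
\[
(\N\bar\otimes\mathcal R)\rtimes_{\sigma^{\phi\otimes\tau}}\mathbb R\cong(\N\rtimes_{\sigma^\phi}\mathbb R)\bar\otimes\mathcal R,
\]
with dual action $\theta\otimes\mathrm{id}$, so the flow of weights is unchanged, and then invokes Connes--Krieger. For the other types it uses uniqueness exactly as you do.

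Your justification that ITPFI factors are McDuff, by ``splitting off infinitely many $(\mathbb M_2,\tr_2)$ tensor factors'', is not right as stated. Take $\bigotimes_k(\mathbb M_2,\phi_\lambda)$ with $\lambda\neq1$. Every finite block $(\mathbb M_{2^m},\phi_\lambda^{\otimes m})$ has a density matrix whose two extreme eigenvalues are simple. A block of the form $(\mathbb M_2,\tr_2)\otimes(\cdot)$ would force every multiplicity to be even, so regrouping never produces a tracial $\mathbb M_2$ tensor factor. What regrouping actually gives is matrix units that are only approximately $\phi$-central. The clean fix is to quote that injective factors of type II and III are McDuff, which is part of Connes' classification, or to argue as the paper does.

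\textbf{Passing from the fibres to $\N$.} You identify this as the real issue, whereas the paper disposes of it with ``by disintegration theory''. Both of your routes fill the gap.
\begin{itemize}
\item For the first route you only need the implication ``unital $\mathbb M_2\subset\N_\omega$ $\Rightarrow$ $\N\cong\N\bar\otimes\mathcal R$''. The standard Connes argument for that direction never uses factoriality: it builds commuting copies of $\mathbb M_2$ in the centralizers of slightly perturbed states and approximates a dense sequence. Cite it in that form rather than relying on a non-factor version in Ando--Haagerup, whose precise source you left vague.
\item For the fallback, represent $\N_x$ and $\N_x\bar\otimes\mathcal R$ with properly infinite commutants on one separable Hilbert space. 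Then every isomorphism between them is spatial. Applying Jankov--von Neumann uniformization to the Borel set of implementing unitaries gives the measurable field of isomorphisms.
\end{itemize}

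Your version gives a complete argument for the reduction step, at the cost of measurable-selection machinery. The paper's proof is shorter but leaves that step implicit.
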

\begin{proof}
    By disintegration theory of  separable von Neumann algebras, it suffices to prove the result when  $\N$ is a factor. If $\N$ is an amenable factor of type II or type III$_{\lambda}$, $0<\lambda\leq 1$, then $\N\overline{\otimes}\mathcal{R}$ is also an amenable factor of the same type (see \cite[Proposition 28.4]{Str20}), so   by the uniqueness of separable amenable factors of these types (\cite{Con76, Haa87}), we have $\N\cong\N\overline{\otimes}\mathcal{R}$.
    Finally type III$_0$ amenable factors are classified by their flow of weight (\cite[Corollary 7.6]{Con76}). If $\N$ is a type III$_0$ factor, then  Flow$(\N\overline{\otimes}\mathcal{R})$=Flow$(\mathcal{R})$. 
    Indeed, let $\varphi$ be a faithful normal state on $\N$, and let $\tau$ be the normalized trace on $\mathcal{R}$. Since
$\sigma_t^{\varphi\otimes\tau}
=\sigma_t^\varphi\otimes\mathrm{id}_\mathcal{R},$ there is a canonical identification
\[(\N\bar\otimes \mathcal{R})\rtimes_{\sigma^{\phi\otimes\tau}}\mathbb R
\cong (\N\rtimes_{\sigma^\phi}\mathbb R)\bar\otimes \mathcal{R}
\]under which the dual action becomes $\theta^{\N}\otimes\mathrm{id}_{\mathcal{R}}$. Thus $z\mapsto z\otimes1$ intertwines the actions on the centers, proving that the flows of weights are conjugate. Hence we have  $\N\cong \N\overline{\otimes}\mathcal{R}$.
\end{proof}

Let $\M$ be a diffuse amenable von Neumann algebra with separable predual, and let  $z_1,z_2,z_3\in\mathcal{Z}(\M)$ be the central projections such that $z_1\M$ is of type I, $z_2\M$ is of type II and  $z_3\M$ is of type III. Since $\M$ is diffuse, there are a family of Hilbert spaces $\{\K_n\}_{n\geq 1}$ and  von Neumann algebras $\N_2,\N_3$ such that 
\[z_1\M\cong\bigoplus_{n\geq 1}\mathbb B(\K_n)\otimes\A,\;\;z_2\M\cong \N_2\otimes\mathcal{R},\;\;z_3\M\cong\N_3\otimes\mathcal{R}\]
in view of Lemma \ref{lem:hyperfinite factor absorption}, where $\A$ is as in Lemma \ref{lem: a derivation on diffuse abelian}. Since one of $\mathbb B(\K_n)$, $\N_2$ or $\N_3$ must necessarily be non-zero,  the following result follows in view of Lemma \ref{lem:hyperfinite factor absorption} and Theorem \ref{thm:existence of derivation for McDuff type algebras}.

\begin{corollary}\label{cor:a derivation on diffuse amenable von Neumann algebras}
If $\M$ is a diffuse amenable von Neumann algebra with separable predual, then
there is a densely defined closable modular derivation $ \delta:\dom(\delta)\subseteq L^2(\M)\to L^2(\M)$ such that  $ (\theta_\alpha^{(2)})^\omega$
does not converge strongly to the identity on $L^2(\M'\cap\M^\omega)$ as $\alpha\to\infty$,  for every free ultrafilter $\omega\in\beta\mathbb N\setminus\mathbb N$.
\end{corollary}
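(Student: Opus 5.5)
The statement to prove is Corollary~\ref{cor:a derivation on diffuse amenable von Neumann algebras}. I would derive it from the structure decomposition recorded just before it, together with Theorem~\ref{thm:existence of derivation for McDuff type algebras}.

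Write $1=z_1+z_2+z_3$ with $z_j\in\mathcal{Z}(\M)$ the central projections onto the type I, II and III parts. Since $\M$ is diffuse, amenable and has separable predual, the type I part has no minimal projections. Hence each of its homogeneous summands $\mathbb B(\K_n)\bar\otimes\mathcal{Z}_n$ has a diffuse separable abelian centre $\mathcal{Z}_n$, so $\mathcal{Z}_n\cong\A$. For the type II and type III parts, Lemma~\ref{lem:hyperfinite factor absorption} gives $z_j\M\cong z_j\M\bar\otimes\mathcal{R}$ for $j=2,3$.

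Since $\M\neq 0$, at least one summand is non-zero. I then split into two cases.
\begin{itemize}
\item If $z_2\neq0$ or $z_3\neq0$, pick such a $z_j$ and write $\M\cong (1-z_j)\M\oplus (\N_j\bar\otimes\mathcal{R})$ with $\N_j=z_j\M\neq0$.
\item Otherwise some $\mathbb B(\K_n)\bar\otimes\A$ is non-zero. Isolate it as $\M\cong \M_1\oplus(\mathbb B(\K_n)\bar\otimes\A)$, where $\M_1$ is the complementary summand.
\end{itemize}
In both cases $\M$ has exactly one of the two forms in Theorem~\ref{thm:existence of derivation for McDuff type algebras}, with $\M_2\neq0$.

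Applying that theorem produces a densely defined closable modular derivation $\partial:\dom(\partial)\subseteq L^2(\M)\to L^2(\M)$. Its deformation $(\eta_\alpha^{(2)})^\omega$ fails to converge strongly to the identity on $L^2(\M'\cap\M^\omega)$ for every $\omega\in\beta\mathbb N\setminus\mathbb N$. The constructions of Lemmas~\ref{lem: a derivation on the hyperfinite} and~\ref{lem: a derivation on diffuse abelian} are uniform in $\omega$, so this holds for every free ultrafilter simultaneously.

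It remains to check two points. First, the isomorphism $\M\cong\M_1\oplus(\M_2\bar\otimes\mathsf X)$, with $\mathsf X=\mathcal{R}$ or $\A$, transports modular derivations and central sequences: an isomorphism induces a unitary of standard forms intertwining the modular data, and the corresponding isomorphism of Ocneanu ultrapowers. Second, the derivation in Theorem~\ref{thm:existence of derivation for McDuff type algebras}, namely $0\oplus(1\otimes\delta)$, is genuinely a modular derivation for a product state $\phi_1\oplus(\phi_2\otimes\tau)$. Its domain $\mathfrak T_{\phi_1}\oplus(\mathfrak T_{\phi_2}\odot\dom\delta)$ is a unital Tomita subalgebra, and its closure generates $\id\oplus(\id\otimes\theta_\alpha)$.

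I expect the main obstacle to be this last point: verifying that the tensor extension of $\delta$ is closable and that its associated semigroup really is $\id\otimes\Phi_t$. I would handle it by defining the derivation through the generator $D$ of the flow $\id\otimes\alpha_t$ on $\M_2\bar\otimes\mathsf X$, which commutes with the modular group of $\phi_2\otimes\tau$. Lemma~\ref{lem: a criteria for existence of certain derivation} then applies directly on $\M$, using the central sequence $(0\oplus(1\otimes u_n))^\omega$ or $(0\oplus(1\otimes v_n))^\omega$, which lies in $\M'\cap\M^\omega$.
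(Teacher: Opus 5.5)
Your proposal is correct and follows essentially the same route as the paper. You split $\M$ by type, identify the non-zero type I homogeneous summands with $\mathbb B(\K_n)\bar\otimes\A$, absorb $\mathcal{R}$ into the type II and III parts via Lemma~\ref{lem:hyperfinite factor absorption}, and apply Theorem~\ref{thm:existence of derivation for McDuff type algebras} to a non-zero summand, exactly as the paper does. Your extra step of applying Lemma~\ref{lem: a criteria for existence of certain derivation} directly to the flow $\id\oplus(\id\otimes\alpha_t)$ on $\M$ is a sound way to make the paper's terse construction in that theorem explicit: this flow preserves a direct-sum state built from $\phi_2\otimes\tau$ and commutes with its modular group because $\tau$ is a trace.
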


\noindent
\begin{proof}[Proof of Theorem \ref{thm:dichotomy result for subalgebras}.] If $\M$ has no amenable summand,   the statement $(2)$ follows from Corollary \ref{cor:dichotomy result in amenable case}.

Otherwise, assume that there is a non-zero central  projection $z\in\mathcal{Z}(\M)$ such that $z\M$ is amenable. By Corollary  \ref{cor:a derivation on diffuse amenable von Neumann algebras},  choose $\delta:\dom(\delta)\subseteq L^2(z\M)\to L^2(z\M)$ to be a modular derivation such that the associated deformation $(\theta_\alpha^{(2)})^\omega$ does not converge strongly on $L^2((z\M)'\cap(z\M)^\omega)$.  Consider the zero derivation from $L^2((1-z)\M)$ to $ L^2((1-z)\M)\otimes L^2((1-z)\M)$, and let $\partial=\delta\oplus 0$ be the derivation from $L^2(\M)\cong L^2(z\M)\oplus L^2((1-z)\M)$ to $L^2(z\M)\oplus \left(L^2((1-z)\M)\otimes L^2((1-z)\M)\right)$.  Clearly $(z\M)'\cap(z\M)^\omega\subseteq\M'\cap\M^\omega$, so the associated deformation of $\partial$, which is $(\theta_\alpha^{(2)}\oplus \id_{L^2((1-z)\M)})^\omega$, does not converge strongly to the identity on $L^2(\M'\cap\M^\omega)$ as $\alpha\to\infty$, for any $\omega\in \beta\mathbb N\setminus\mathbb N$.

Since $z\M$ is amenable, we have $L^2(z\M)\prec L^2(z\M)\otimes L^2(z\M)$ as $z\M$-correspondence. It follows from Lemma \ref{lem:direct sum of bimodule and their weak containment} below that as $\M\cong z\M\oplus(1-z)\M$-correspondence, we have
\begin{align*}
    L^2(z\M)\oplus&  \left(L^2((1-z)\M)\otimes L^2((1-z)\M)\right)\\
    &\prec  \left(L^2(z\M)\otimes L^2(z\M)\right)\oplus\left(L^2((1-z)\M)\otimes L^2(1-z)\M)\right).
\end{align*}
However, the correspondence on the right hand side is an $\M$-sub-correspondence of $L^2(\M)\otimes L^2(\M)$, which means that as $\M$-correspondence we have
\begin{align*}
     L^2(z\M)\oplus  \left(L^2((1-z)\M)\otimes L^2((1-z)\M)\right)\prec L^2(\M)\otimes L^2(\M).
\end{align*}
Thus we have produced  a derivation $\partial$ which satisfies the required conditions.
\end{proof}

If $\H_i$ is a correspondence over $\M_i$ for $i=1,2$, then give $\H_1\oplus\H_2$ a bimodule structure over $\M_1\oplus\M_2$ by
\[(x_1\oplus y_1)(\xi_1\oplus\xi_2)(x_2\oplus y_2)=x_1\xi_1 x_2\oplus y_1\xi_2 y_2\]
for $x_i,y_i\in \M_i, \xi_i\in \H_i$ for $i=1,2$.
\begin{lemma}\label{lem:direct sum of bimodule and their weak containment}
    Let $\H_i,\K_i$ be Hilbert correspondences over a von Neumann algebra $\M_i$ for $i=1,2$. If $\H_i\prec\K_i$ as $\M_i$-correspondence, we have $\H_1\oplus\H_2\prec \K_1\oplus\K_2$ as $\M_1\oplus\M_2$-correspondence.
\end{lemma}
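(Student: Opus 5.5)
The natural approach is to characterize weak containment through norms of left-right actions and then reduce to each summand separately. For a von Neumann algebra $\N$ and correspondences ${_\N}\H_\N$, ${_\N}\K_\N$, the relation $\H\prec\K$ means that for every finite family $a_j,b_j\in\N$,
\[
\Bigl\lVert\sum_j \pi_l^{\H}(a_j)\pi_r^{\H}(b_j^{\op})\Bigr\rVert\leq \Bigl\lVert\sum_j \pi_l^{\K}(a_j)\pi_r^{\K}(b_j^{\op})\Bigr\rVert .
\]
Equivalently, the $\ast$-representation of the algebraic tensor product $\N\odot\N^{\op}$ on $\H$ factors through the $C^\ast$-completion coming from $\K$. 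I will work with this norm criterion.

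Set $\N=\M_1\oplus\M_2$ and write $e_1=1\oplus0$, $e_2=0\oplus1$. These are central projections of $\N$. Take a finite sum $T=\sum_j (x_j\oplus y_j)\otimes(x_j'\oplus y_j')^{\op}$ in $\N\odot\N^{\op}$. By the definition of the bimodule structure on $\H_1\oplus\H_2$, its image acts diagonally:
\[
\pi_{\H_1\oplus\H_2}(T)=\Bigl(\sum_j \pi^{\H_1}(x_j\otimes x_j'^{\op})\Bigr)\oplus\Bigl(\sum_j \pi^{\H_2}(y_j\otimes y_j'^{\op})\Bigr).
\]
The cross terms vanish because $\M_2$ acts by zero on $\H_1$, and $\M_1$ acts by zero on $\H_2$. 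The same decomposition holds on $\K_1\oplus\K_2$.

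Since the norm of a direct sum of operators is the maximum of the norms of its pieces, applying the hypothesis $\H_i\prec\K_i$ to each piece gives
\[
\norm{\pi_{\H_1\oplus\H_2}(T)}=\max_i\norm{\pi^{\H_i}(T_i)}\leq\max_i\norm{\pi^{\K_i}(T_i)}=\norm{\pi_{\K_1\oplus\K_2}(T)},
\]
where $T_1$ and $T_2$ denote the components of $T$ in $\M_1\odot\M_1^{\op}$ and $\M_2\odot\M_2^{\op}$ respectively. This is exactly the weak containment $\H_1\oplus\H_2\prec\K_1\oplus\K_2$.

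The only point needing care is the choice of definition. If weak containment is defined instead by approximating vector functionals $x\otimes y^{\op}\mapsto\langle\xi,x\xi y\rangle$ by finite sums of such functionals coming from $\K$, I would argue directly. Write $\xi=\xi_1\oplus\xi_2$. Approximate the coefficient of $\xi_i$ by sums of coefficients of vectors in $\K_i$, viewed inside $\K_1\oplus\K_2$. Adding the two approximations gives an approximation of the coefficient of $\xi$, since mixed terms vanish by centrality of $e_1,e_2$. In either formulation I expect no real obstacle; the argument reduces to the diagonal decomposition above.
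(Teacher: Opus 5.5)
Your proposal is correct and is essentially the paper's argument: an element of $(\M_1\oplus\M_2)\odot(\M_1\oplus\M_2)^{\op}$ acts block-diagonally on both $\H_1\oplus\H_2$ and $\K_1\oplus\K_2$ because the cross terms act by zero. The norm of a block-diagonal operator is the maximum of the norms of its blocks, so the hypothesis $\H_i\prec\K_i$ applied to each diagonal component gives the claim; your remark on the coefficient-based definition of weak containment is a harmless extra.
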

\begin{proof}
 Put $\M=\M_1\oplus\M_2$, $\H=\H_1\oplus\H_2$ and $\K=\K_1\oplus\K_2$, and denote by $\pi_\H,\pi_\K$ the bimodule action of $\M$ on $\H,\K$ respectively. If $T\in \M\otimes_{\alg}\M^{\op}$, then $T=\sum_{i,j=1}^2T_{ij}$ for some $T_{ij}\in \M_i\otimes_{\alg}\M_j^{\op},$ $1\leq i,j\leq 2$. Note that $T_{12}$ and $T_{21}$ act as $0$ on $\H$ and $\K$, that is, $\pi_\H(T_{12})=0=\pi_\H(T_{21})$ and same for $\pi_\K$. Therefore, we have
 \begin{align*}
   \|\pi_\H(T)\|=\norm{\pi_{\H_1}(T_{11})\oplus\pi_{\H_2}(T_{22})}=\max_{i=1,2}\norm{\pi_{\H_i}(T_{ii})}\leq \max_{i=1,2}\norm{\pi_{\K_i}(T_{ii})}=\norm{\pi_\K(T)}
 \end{align*}
which proves that $\H\prec\K$ as $\M$-correspondence.
\end{proof}

\section{Coarse rigidity and  spectral gap}\label{sec:coarse rigidity and spectral gap}
In this section,  we prove a generalization of Peterson's result \cite[Corollary 4.6]{Pet09} regarding $L^2$-rigidity and property $\Gamma$.

\begin{definition}[\cite{Ioa15}]
    An inclusion $\Q\subseteq\M$ of von Neumann algebras with expectation is said to have {\em $w$-spectral gap} if $\Q'\cap\M^\omega=\mathbb (\Q'\cap\M)^\omega$ for some free ultrafilter $\omega\in\beta\mathbb N\setminus\mathbb N$.
\end{definition}

Recall that an $\M$-correspondence $\H$ is called {\em mixing} if for any $\xi,\eta\in \H$ and any uniformly bounded $a_i\in \M$ such that $a_i\to 0$ weakly, we have
\begin{align*}
  \lim_i  \sup_{\|x\|\leq 1}|\langle\xi, a_i\eta x\rangle|=0=\lim_i\sup_{\|x\|\leq 1}|\langle\xi, x\eta a_i\rangle|.
\end{align*}
The coarse correspondence $L^2(\M)\otimes L^2(\M)$ and its amplifications are natural examples of mixing $\M$-correspondences.

\begin{proposition}
Let $\Q\subseteq\M$ be an  inclusion of irreducible factors  such that $\Q$ has separable predual and $\Q$ is nonamenable. Let $\phi$ be a normal faithful state on $\M$ such that $\Q$ is globally invariant under $\sigma^\phi$. Fix $\omega\in\beta\mathbb N\setminus\mathbb N$. If the inclusion $\Q\subseteq\M$ does not have $w$-spectral gap, then for every $\varphi$-modular derivation $\delta:\dom\delta\subseteq L^2(\M)\to\H$ with $\H\prec L^2(\M)\otimes L^2(\M)$ and $\H$ mixing,
  $(\theta_\alpha^{(2)})^\omega$ converges uniformly to the identity on $(\Q^\omega)_1$  as $\alpha\to\infty$.
\end{proposition}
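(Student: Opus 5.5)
The plan follows Peterson's spectral gap argument. Since $\Q$ is a nonamenable factor, $\mathcal{Z}(\Q'\cap\M)=\mathbb C$ by irreducibility, so $\Q$ has no nonzero amenable summand. Corollary \ref{cor:dichotomy result in amenable case} (with $\B=\mathbb C$) therefore gives that $(\theta_\alpha^{(2)})^\omega$ converges strongly to the identity on $L^2(\Q'\cap\M^\omega)$. The failure of $w$-spectral gap gives elements of $\Q'\cap\M^\omega$ that do not lie in $(\Q'\cap\M)^\omega=\mathbb C$. Using the separability of $\Q_\ast$ and a diagonal argument as in Lemma \ref{lem:pointwise convergece gives uniform convergence at the level of centralizer}, I would produce a unitary $v=(v_n)^\omega\in\Q'\cap\M^\omega$, which can be taken $\varphi^\omega$-centralizing, such that $v_n\to 0$ weakly. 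The goal is to use $v$ to transfer the rigidity on $\Q'\cap\M^\omega$ to uniform convergence on $(\Q^\omega)_1$.

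Suppose uniform convergence on $(\Q^\omega)_1$ fails. By Proposition \ref{prop: uniform convergence of GNS-symmetric maps}, this yields $\alpha_k\to\infty$ and $x_k\in(\Q)_1$ with $\|\tilde\delta_{\alpha_k}(x_k\varphi^{1/2})\|\ge c>0$. For each $k$, choose $n_k$ with $v_{n_k}$ almost commuting with $x_k$ in $\|\cdot\|_\varphi^\sharp$ and with $v_{n_k}$ almost mixing against $\tilde\delta_{\alpha_k}(x_k\varphi^{1/2})$. Set $w=(v_{n_k})^\omega$; by the same diagonal construction, $w\in\Q'\cap\M^{\omega}$ is still asymptotically central for $\Q$. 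The key estimate uses Lemma \ref{lemma:estimates on the values in derivation} or Corollary \ref{cor; corollary to the technical lemma in the ultrapower language}:
\[\tilde\delta_{\alpha_k}(v_{n_k}x_k\varphi^{1/2})\approx \zeta_{\alpha_k}(v_{n_k})\tilde\delta_{\alpha_k}(x_k\varphi^{1/2})+\tilde\delta_{\alpha_k}(v_{n_k}\varphi^{1/2})\zeta_{\alpha_k}(x_k).\]
Rigidity on $\Q'\cap\M^\omega$ makes the second term small. For the first, mixing of $\H$ makes $\zeta_{\alpha_k}(v_{n_k})\tilde\delta_{\alpha_k}(x_k\varphi^{1/2})$ asymptotically orthogonal to $\tilde\delta_{\alpha_k}(x_k\varphi^{1/2})$, while $v_{n_k}x_k\approx x_kv_{n_k}$. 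Applying the symmetric right-sided product expansion and comparing it with the left-sided one gives
\[\|\tilde\delta_{\alpha_k}(x_k\varphi^{1/2})\|^2\approx\langle \zeta(v)\tilde\delta(x\varphi^{1/2}),\tilde\delta(x\varphi^{1/2})\zeta(v)\rangle\to 0,\]
which contradicts the lower bound $c$.

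The main obstacle is the last step. Mixing is stated for a fixed pair of vectors, but here the vectors $\tilde\delta_{\alpha_k}(x_k\varphi^{1/2})$ vary with $k$, and $\zeta_{\alpha_k}(v_{n_k})$ is not literally $v_{n_k}$. I would handle the vectors by choosing $n_k$ after $k$ in the diagonalization, so the mixing limit for each fixed vector can be applied. For the difference between $\zeta_{\alpha_k}(v_{n_k})$ and $v_{n_k}$, I would control $\|\zeta_{\alpha_k}(v_{n_k})-v_{n_k}\|_\varphi$ through the strong convergence on $\Q'\cap\M^\omega$, using the inequality $\|(\zeta^{(2)}_\alpha-1)a\|\le\|\tilde\delta_\alpha(a)\|$. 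The weak-containment hypothesis $\H\prec L^2(\M)\otimes L^2(\M)$ is needed only to invoke Corollary \ref{cor:dichotomy result in amenable case}.
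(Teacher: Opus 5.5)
Your outline is the paper's: Corollary~\ref{cor:dichotomy result in amenable case} with $\B=\mathbb C$ (using $\Q'\cap\M=\mathbb C$) gives strong convergence of $(\theta_\alpha^{(2)})^\omega$ on $L^2(\Q'\cap\M^\omega)$. A centralizing unitary in $\Q'\cap\M^\omega$ with zero expectation is then fed into the product estimates, and mixing gives asymptotic orthogonality. However, three steps do not go through as you describe them.

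First, the unitary is the key input, and a diagonal argument of the kind in Lemma~\ref{lem:pointwise convergece gives uniform convergence at the level of centralizer} does not produce it. That re-indexing only handles sequences already asymptotically commuting with $\varphi^{1/2}$. It cannot turn an arbitrary non-scalar element of $\Q'\cap\M^\omega$ into a unitary of $\Q'\cap(\M^\omega)^{\varphi^\omega}$, which in type III is exactly the nontrivial point. The paper imports this from \cite[Lemma 2.5]{HR15}: $\Q'\cap\M^\omega\neq\mathbb C$ forces $\Q'\cap(\M^\omega)^{\varphi^\omega}$ to be diffuse, and this is where separability of $\Q_\ast$ is used. Since $E_\M$ is scalar on $\Q'\cap\M^\omega$, one then gets a unitary $u$ there with $E_\M(u)=0$.

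Second, the lemmas you cite for the product estimate do not apply. Lemma~\ref{lemma:estimates on the values in derivation} requires $v_{n_k}\varphi^{1/2}\in\dom(\bar\delta)$ and is controlled by $\norm{\bar\delta(v_{n_k}\varphi^{1/2})}$, which rigidity does not bound. Corollary~\ref{cor; corollary to the technical lemma in the ultrapower language} concerns a fixed multiplier in $\M$ and is not uniform. The relevant tool is Lemma~\ref{lem: the main technical lemma}. Its error terms $10\norm{\tilde\delta_\alpha(u_n^\ast\varphi^{1/2})}^{1/2}$ and $10\norm{\tilde\delta_\alpha(\varphi^{1/2}u_n)}^{1/2}$ are exactly what the rigidity on $\Q'\cap\M^\omega$ kills. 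Your additive term $\tilde\delta_{\alpha_k}(v_{n_k}\varphi^{1/2})\zeta_{\alpha_k}(x_k)$ is the tracial form of this error, not what the lemma gives. Note also that the left-multiplication inequality in that lemma is stated for vectors $\varphi^{1/2}y$ with $y$ bounded. Applying it to $x_k\varphi^{1/2}$ therefore needs extra justification when $x_k$ is not $\varphi$-centralizing; the paper's chain does the same with $x\varphi^{1/2}u_n$.

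Third, your remedy in the last step fails. Smallness of $\norm{\zeta_{\alpha_k}(v_{n_k})-v_{n_k}}_\varphi$ gives no control of $\norm{(\zeta_{\alpha_k}(v_{n_k})-v_{n_k})\xi}$ for $\xi\in\H$, because $\tilde\delta_{\alpha_k}(x_k\varphi^{1/2})$ is not uniformly right bounded. So you cannot obtain $\norm{\zeta(v)\xi}\approx\norm{\xi}$ this way.

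The paper needs no such comparison:
\begin{itemize}
\item Fix $\alpha$ and $x\in(\Q)_1$, set $\xi=\tilde\delta_\alpha(x\varphi^{1/2})$ and $\psi=\varphi^\omega$.
\item Mixing applied to this single vector, with $\zeta_\alpha(u_n)\to0$ weakly along $\omega$, gives $\lim_{n\to\omega}\langle\zeta_\alpha(u_n^\ast)\xi\zeta_\alpha(u_n),\xi\rangle=0$.
\item Hence $\norm{\xi}\le\lim_{n\to\omega}\norm{\zeta_\alpha(u_n^\ast)\xi\zeta_\alpha(u_n)-\xi}$.
\item The two-sided estimates, together with $xu=ux$ and $u\psi^{1/2}=\psi^{1/2}u$, bound the right side by $20\norm{\tilde\delta^\omega_\alpha(u\psi^{1/2})}^{1/2}$, independently of $x$.
\end{itemize}
This also makes your contradiction argument and the diagonal choice of $(\alpha_k,x_k,n_k)$ unnecessary. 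The varying-vector problem disappears because $\alpha$ and $x$ are fixed before $n\to\omega$, and uniformity in $x$ comes from the bound. Proposition~\ref{prop: uniform convergence of GNS-symmetric maps} then converts this into uniform convergence on $(\Q^\omega)_1$.
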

\begin{proof}
Let $\delta:\dom(\delta)\subseteq L^2(\M)\to\H$ be a $\varphi$-modular derivation with  $\H\prec L^2(\M)\otimes L^2(\M)$. In view of Proposition \ref{prop: uniform convergence of GNS-symmetric maps}, the result follows once we show
\[\lim_{\alpha\to\infty}\sup_{x\in \Q, \|x\|\leq 1}\|\tilde\delta_\alpha(x\varphi^{1/2})\|=0.\]
 For simplicity, write $\psi$ for the state $\varphi^\omega$ on $\M^\omega$.  
   It follows from  Theorem \ref{main_theorem}  that $(\theta_\alpha^{(2)})^\omega$ converges strongly  to the identity on $L^2(\Q'\cap\M^\omega)$ as $\alpha\to\infty$. In other words, 
   \[\tilde\delta_\alpha^\omega(z\psi^{1/2})\to 0 \;\;\mbox{ for all }\;z\in \Q'\cap\M^\omega.\] 
   Since $\Q\subseteq\M$ does not have $w$-spectral gap, we  have $\Q'\cap\M^\omega\neq\mathbb C$; hence by \cite[Lemma 2.5]{HR15}, $\Q'\cap(\M^\omega)^\psi$ is diffuse and there is a unitary $u\in \Q'\cap(\M^\omega)^\psi$ such that $E_\M(u)=0$.
 In particular, for all $\alpha>0$, we have $\zeta_\alpha(u_n)\to 0$ weakly as $n\to\omega$.
Fix $x\in \Q$ with $\|x\|_\infty\leq 1$. Since $\H$ is mixing, 
we have for all $\alpha>0$
\begin{align*}
    \lim_{n\to\omega}\langle\zeta_\alpha(u_n)\tilde\delta_\alpha(x\phi^{1/2})\zeta_\alpha(u_n^\ast),\tilde\delta_\alpha(x\phi^{1/2})\rangle=0
\end{align*}
which further yields
\begin{align*}
     \norm{\tilde\delta_\alpha(x\varphi^{1/2})}&\leq \lim_{n\to\omega}\norm{\zeta_\alpha(u_n^\ast )\tilde\delta_\alpha(x\phi^{1/2})\zeta_\alpha(u_n)-\tilde\delta_\alpha(x\varphi^{1/2})}
\end{align*}
By the technical  Lemma \ref{lem: the main technical lemma}, it follows for all $\alpha>0$ that
 \begin{align*}
        \norm{\zeta_\alpha(u_n^\ast )\tilde\delta_\alpha&(x\phi^{1/2})\zeta_\alpha(u_n)-\tilde\delta_\alpha(x\phi^{1/2})}\\
     &\leq \norm{\zeta_\alpha(u_n^\ast )\tilde\delta_\alpha(x\phi^{1/2})\zeta_\alpha(u_n)-\zeta_\alpha(u_n^\ast )\tilde\delta_\alpha(x\phi^{1/2}u_n)}\\
     &\quad+\norm{\zeta_\alpha(u_n^\ast )\tilde\delta_\alpha(x\phi^{1/2}u_n)-\tilde\delta_\alpha(u_n^*x\varphi^{1/2}u_n)}+\norm{\tilde\delta_\alpha(u_n^*x\varphi^{1/2}u_n)-\tilde\delta_\alpha(x\phi^{1/2})}\\
     &\leq 10\|\tilde\delta_\alpha(\varphi^{1/2}u_n)\|^{1/2}+10\|\tilde\delta_\alpha(u_n^\ast\varphi^{1/2})\|^{1/2}+\norm{\tilde\delta_\alpha(u_n^*x\varphi^{1/2}u_n)-\tilde\delta_\alpha(x\phi^{1/2})}
    \end{align*}
 Thus using $xu^\ast=u^\ast x$ and  $u\psi^{1/2}=\psi^{1/2}u$, we obtain
 \begin{align*}
  \|\tilde\delta_\alpha(x\phi^{1/2})\|&=  \lim_{n\to\omega}\norm{\zeta_\alpha(u_n^\ast )\tilde\delta_\alpha(x\phi^{1/2})\zeta_\alpha(u_n)-\tilde\delta_\alpha(x\varphi^{1/2})}\\
  &\leq 20\|\tilde\delta_\alpha^\omega(u\psi^{1/2})\|^{1/2}.
 \end{align*}
 Since the right hand side is independent of $x$ and converges to $0$ as $\alpha\to\infty$, we obtain the required result.
\end{proof}

If $\Q=\M$ in the above proposition, then $\M'\cap\M^\omega=\mathbb C$ is equivalent to fullness of $\M$.

\begin{corollary}\label{cor:fullness and rigidity}
   If $\M$ is  a nonamenable  factor with separable predual which is not full, in particular when $\M$ is of type III$_0$,  then $\M$ is rigid with respect to modular derivations taking values in mixing weakly coarse correspondences, that is, for every modular derivation $\delta:\dom\delta\subseteq L^2(\M)\to\H$ with $\H\prec L^2(\M)\otimes L^2(\M)$ and $\H$ mixing,
  $(\theta_\alpha^{(2)})^\omega$ converges uniformly to the identity on $(\M^\omega)_1$  as $\alpha\to\infty$.
\end{corollary}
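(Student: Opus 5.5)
This is essentially the special case $\Q=\M$ of the preceding proposition, so the plan is to check its hypotheses and invoke it. Let $\delta\colon\dom\delta\subseteq L^2(\M)\to\H$ be a modular derivation with $\H\prec L^2(\M)\otimes L^2(\M)$ and $\H$ mixing. By definition there is a faithful normal state $\varphi$ on $\M$ for which $\delta$ is $\varphi$-modular. Taking $\Q=\M$, the algebra $\Q$ is trivially globally invariant under $\sigma^\varphi$. It has separable predual and is a nonamenable factor. The inclusion $\M\subseteq\M$ is irreducible because $\M'\cap\M=\mathcal Z(\M)=\mathbb C$.

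It remains to see that $\M\subseteq\M$ fails to have $w$-spectral gap. Here $(\M'\cap\M)^\omega=\mathbb C^\omega=\mathbb C$, so $w$-spectral gap would mean $\M'\cap\M^\omega=\mathbb C$. For a factor with separable predual, this is equivalent to fullness of $\M$; this is the remark made right after the proposition, and it follows from Connes' characterization of full factors via centralizing sequences together with the Ando--Haagerup description of $\M'\cap\M^\omega$. Since $\M$ is assumed not full, $\M'\cap\M^\omega\neq\mathbb C$ for every $\omega\in\beta\mathbb N\setminus\mathbb N$, so there is no $w$-spectral gap. The proposition then gives that $(\theta_\alpha^{(2)})^\omega$ converges uniformly to the identity on $(\M^\omega)_1$.

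For the type III$_0$ claim, two facts are needed.
\begin{enumerate}
\item A type III$_0$ factor with separable predual is never full: Connes showed that full factors have $\tau$-invariant and $S$-invariant incompatible with type III$_0$. I would cite this rather than reprove it.
\item It is nonamenable by hypothesis, since the corollary is stated for nonamenable factors. The phrase ``in particular'' should be read as applying to nonamenable type III$_0$ factors, so I would point out this reading.
\end{enumerate}

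The only genuine obstacle is the non-fullness step: identifying the failure of $w$-spectral gap for $\Q=\M$ with non-fullness in the non-tracial ultrapower. I would handle it by quoting \cite{AH14} for the equivalence between full and $\M'\cap\M^\omega=\mathbb C$, rather than reproving it.
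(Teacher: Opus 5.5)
Your proposal is correct and follows the paper's own route: the corollary is the case $\Q=\M$ of the preceding proposition, with the failure of $w$-spectral gap for every $\omega$ being exactly non-fullness, i.e.\ $\M'\cap\M^\omega\neq\mathbb C$ (as the paper remarks), and type III$_0$ factors being non-full. Your checks of the remaining hypotheses — irreducibility, and $\sigma^\varphi$-invariance of $\M$ for the state $\varphi$ with respect to which $\delta$ is modular — are exactly what is needed.
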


In Corollary \ref{cor:fullness and rigidity}, mixing of $\H$ is a necessary assumption.  Indeed, the derivation as in Example \ref{ex:comparision of different rigidity notion} provides a modular derivation $\partial$ on $\dom\delta\subseteq \ell^2(\mathbb F_2)\otimes L^2(\mathcal{R})$ into a weakly coarse correspondence $\H=\left(\ell^2(\mathbb F_2)\otimes\ell^2(\mathbb F_2)\right)^{\oplus\infty}\otimes L^2(\mathcal{R})$ such that the corresponding deformation $(\eta_\alpha^{(2)})^\omega$ does not 
converge strongly to the identity on $L^2((L\mathbb F_2\bar\otimes\mathcal{R})^\omega)$. Clearly, $\H$ is not mixing.

 The following corollary is a relative version of spectral gap result of \cite[Corollary 4.6]{Pet09}.

\begin{corollary}
    Let $\M$ be a tracial von Neumann algebra, and let $\Q$ be a nonamenable irreducible subalgebra of $\M$ with separable predual. If $\Q\subseteq\M$ does not have $w$-spectral gap, then $\Q\subseteq\M$ is  rigid with respect to modular derivation taking values in weakly coarse mixing correspondences.
\end{corollary}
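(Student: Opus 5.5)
This corollary should be the tracial specialization of the preceding Proposition. I would reduce it to that Proposition rather than redo the argument.

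First I check the hypotheses of the Proposition. The Proposition asks for an inclusion of irreducible factors $\Q\subseteq\M$ with $\Q$ nonamenable and separable predual. Irreducibility $\Q'\cap\M=\mathbb C$ forces both $\Q$ and $\M$ to be factors, since $\mathcal Z(\Q)$ and $\mathcal Z(\M)$ are contained in $\Q'\cap\M$. Since $\M$ is tracial, I would take $\varphi=\tau$, the faithful normal trace. Then $\sigma^\tau$ is trivial, so $\Q$ is globally invariant and the trace-preserving expectation exists. The only genuine point to check is that an arbitrary modular derivation $\delta$ into a weakly coarse mixing $\H$ is covered. The Proposition is stated for $\varphi$-modular derivations with the fixed $\varphi$, whereas a modular derivation may be $\phi$-modular for another faithful normal state $\phi$.

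I would handle this in one of two ways. The first is to observe that the proof of the Proposition never used $\varphi$-modularity. It invokes Theorem~\ref{main_theorem}, which explicitly allows $\delta$ modular for a state different from the one making $\Q$ invariant. It then uses the estimates of Lemma~\ref{lem: the main technical lemma} relative to the state $\phi$ of the derivation. Finally it passes to uniform convergence via Proposition~\ref{prop: uniform convergence of GNS-symmetric maps}, which holds for every normal faithful state. The second is to reorganize the argument with all vectors taken in $L^2$ with respect to $\phi$. The diffuse unitary $u\in\Q'\cap(\M^\omega)^{\tau^\omega}$ from \cite[Lemma 2.5]{HR15} then needs to commute only with $\Q$, and it should still satisfy $u\psi^{1/2}\approx\psi^{1/2}u$ for the relevant state.

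The main obstacle is this centralizer issue. The Proposition needed $u$ in the centralizer of the state used in the $L^2$ computations. If $\phi\neq\tau$, a trace-centralizing $u$ need not centralize $\phi$. I would first try the simplest route: in the tracial setting, interpret the corollary as concerning $\tau$-modular (trace-symmetric) derivations, as in Peterson's framework. Then $\phi=\tau$, and $(\M^\omega)^{\tau^\omega}=\M^\omega$, so the Proposition applies verbatim. Its conclusion is that $(\theta_\alpha^{(2)})^\omega$ converges uniformly to the identity on $(\Q^\omega)_1$. This is exactly the rigidity asserted, and it in particular implies strong convergence on $L^2(\Q^\omega)$. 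If general $\phi$ must be allowed, I would replace $\tau$ by $\phi$ in the choice of state. I would then use that $\Q'\cap\M^\omega\neq\mathbb C$ still yields, by \cite[Lemma 2.5]{HR15} applied to $\phi^\omega$, a unitary in $\Q'\cap(\M^\omega)^{\phi^\omega}$ with $E_\M(u)=0$, and run the same mixing estimate.
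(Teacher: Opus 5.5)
Your committed route is how the paper obtains this corollary (it gives no separate proof): take $\varphi=\tau$, note that $\sigma^\tau$ is trivial so $\Q$ is globally invariant, note that irreducibility forces $\Q$ and $\M$ to be factors, and apply the preceding Proposition, which yields uniform convergence on $(\Q^\omega)_1$ for $\tau$-modular derivations. Drop your side remarks about a general state $\phi$, though. Your ``first way'' claim is false: the Proposition's proof uses that the state of modularity is also the one for which $\Q$ is invariant, since Lemma~\ref{lem: the main technical lemma} is applied to $x\varphi^{1/2}$ and the step $u\psi^{1/2}=\psi^{1/2}u$ needs $u\in\Q'\cap(\M^\omega)^{\varphi^\omega}$. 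Likewise \cite[Lemma 2.5]{HR15} cannot be applied to $\phi^\omega$ unless $\Q$ is globally $\sigma^\phi$-invariant, so neither general-$\phi$ variant is justified.
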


Similar to the primeness result of \cite[Corollary 4.6]{Pet09}, we can prove the following result for nonamenable II$_1$ factors.

\begin{theorem}\label{cor: tensor product of nonamenable factors are coarse rigid}
    Let $\M_1$ and $\M_2$ be two nonamenable II$_1$ factors. Then $\M_1\bar\otimes\M_2$ is coarsely rigid.
\end{theorem}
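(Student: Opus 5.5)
We take $\M=\M_1\bar\otimes\M_2$ with trace $\tau=\tau_1\otimes\tau_2$, $\B=\mathbb C$, and a modular derivation $\delta$ with $\H\prec L^2(\M)\otimes L^2(\M)$. The aim is to show that $(\theta_\alpha^{(2)})^\omega$ converges strongly to the identity on $L^2(\M^\omega)$. The plan follows Peterson's primeness argument: apply Corollary~\ref{cor:dichotomy result in amenable case} twice, with $\Q=\M_1\otimes 1$ and $\Q=1\otimes\M_2$, and then glue the two pieces together using the product estimates of Lemma~\ref{lem: the main technical lemma}.

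\textbf{Step 1.} Since $\M_i$ is a nonamenable II$_1$ factor, $\M_i\otimes 1$ has no nonzero amenable summand; indeed the relevant central projections lie in $\mathcal Z((\M_1\otimes1)'\cap\M)=\mathcal Z(\M_2)=\mathbb C$. Corollary~\ref{cor:dichotomy result in amenable case} therefore gives strong convergence of $(\theta_\alpha^{(2)})^\omega$ to the identity on $L^2((\M_1\otimes1)'\cap\M^\omega)\supseteq L^2((1\otimes\M_2)^\omega)$. Symmetrically, it converges on $L^2((\M_1\otimes1)^\omega)$.

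Since $(\M_1\otimes 1)^\omega$ and $(1\otimes\M_2)^\omega$ are expected subalgebras, Proposition~\ref{prop:equivalent conditions for L2-rigidity} (applied with $\varphi=\tau$, for which analyticity is automatic) upgrades this to uniform convergence:
\[
\sup_{x\in(\M_1)_1}\|\tilde\delta_\alpha(x\tau^{1/2})\|\to0
\quad\text{and}\quad
\sup_{y\in(\M_2)_1}\|\tilde\delta_\alpha(y\tau^{1/2})\|\to0 .
\]
Here we use the equivalence between $\tilde\delta_\alpha\to0$ and $\theta_\alpha^{(2)}\to\mathrm{id}$ on bounded sets, noted before Corollary~\ref{cor; corollary to the technical lemma in the ultrapower language}.

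\textbf{Step 2.} Next we extend this to products. For $x\in(\M_1)_1$ and $y\in(\M_2)_1$, the first inequality of Lemma~\ref{lem: the main technical lemma} gives
\[
\|\tilde\delta_\alpha(x\tau^{1/2}y)-\zeta_\alpha(x)\tilde\delta_\alpha(\tau^{1/2}y)\|\le 10\|\tilde\delta_\alpha(x\tau^{1/2})\|^{1/2}.
\]
Combined with Step~1, this yields
\[
\sup_{x,y}\|\tilde\delta_\alpha((x\otimes y)\tau^{1/2})\|\to0 ,
\]
using that in the trace case $x\tau^{1/2}y=(x\otimes y)\tau^{1/2}$.

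\textbf{Step 3 (main obstacle).} The difficulty is passing from elementary tensors to the whole unit ball of $\M$, or at least to a dense subset of $L^2(\M^\omega)$. Finite sums $\sum_k x_k\otimes y_k$ with a controlled number of terms are not dense in $(\M)_1$ in a uniform sense, so a naive approximation fails. Our first attempt would be to run the ultrapower version of the same argument instead. Take $\xi=z\tau^{1/2}$ with $z\in\M^\omega$. By Corollary~\ref{cor; corollary to the technical lemma in the ultrapower language}, $\tilde\delta^\omega_\alpha$ is asymptotically $\M$-bimodular. Suppose $\|\tilde\delta^\omega_\alpha(\xi)\|\not\to0$. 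Then the Hahn--Banach argument in the proof of Theorem~\ref{main_theorem} produces a KMS (here tracial) state on $\mathbb B(L^2(\M))$. The task is to show this state is $\M_1$-central, which would contradict the nonamenability of $\M_1$.

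The idea is to use the uniform convergence on $(\M_2)_1$ from Step~1 together with the bimodularity estimates to replace $\xi$ by elements in $(\M_1\otimes1)'\cap\M^\omega$. Concretely, we would average $\xi$ over unitaries $u\in\mathcal U(\M_1)$, comparing $u\xi u^\ast$ with $\xi$ via
\[
\|\tilde\delta_\alpha^\omega(u\xi u^\ast)-\zeta_\alpha(u)\tilde\delta_\alpha^\omega(\xi)\zeta_\alpha(u^\ast)\|,
\]
where the error is controlled by $\sup_{u}\|\tilde\delta_\alpha(u\tau^{1/2})\|^{1/2}\to0$ uniformly in $u$. This uniform control is exactly what makes the functional $g$ in the proof of Theorem~\ref{main_theorem} approximately $\M_1$-central, while Lemma~\ref{Norm estimate of involving certain ucp maps} still bounds $g$ by the minimal tensor norm. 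The Woronowicz step then yields a hypertrace for $\M_1$, a contradiction. We expect verifying that the uniformity of Step~1 survives this reduction to be the main work.
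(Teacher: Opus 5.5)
Your Steps 1 and 2 are fine, and Step 1 is exactly how the paper begins. Theorem~\ref{main_theorem} (via Corollary~\ref{cor:dichotomy result in amenable case}) gives strong convergence on $L^2((\M_1\otimes1)^\omega)$ and on $L^2((1\otimes\M_2)^\omega)$. Proposition~\ref{prop:equivalent conditions for L2-rigidity} then upgrades this to uniform convergence on the unit balls of $\M_1\otimes1$ and $1\otimes\M_2$.

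\textbf{The gap is Step~3.} This step carries all the remaining content of the theorem, and you leave it as a plan; moreover, the plan as described does not work. In the proof of Theorem~\ref{main_theorem}, the KMS/centrality of the state comes \emph{only} from $d\in\Q'\cap\M^\omega$. It enters through the relation $c\xi_n=\xi_n\sigma_{i/2}(c)$ for $c\in\Q$, and this relation is what identifies the form $s$ on $\Q\times\Q$ with the self-polar form of $\varphi_1$.

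For an arbitrary $z\in\M^\omega$ no such relation holds, and uniform approximate bimodularity of $\tilde\delta_\alpha$ over $\M_1$ does not replace it. That bimodularity only gives $\tilde\delta^\omega_\alpha(u\xi u^\ast)\approx\zeta_\alpha(u)\tilde\delta^\omega_\alpha(\xi)\zeta_\alpha(u^\ast)$. By contrast, $\M_1$-centrality of the resulting functional would require $u\xi\approx\xi u$. Averaging $\xi$ over $\mathcal U(\M_1)$ does land in $(\M_1\otimes1)'\cap\M^\omega$, but convergence there is already known. Nothing transfers the lower bound $\|\tilde\delta^\omega_{\alpha_n}(\xi)\|\geq C$ to the average, because $\xi$ need not be close to it, and averaging only decreases norms.

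\textbf{The missing idea} is a gluing lemma for commuting subalgebras. The paper proves it as Lemma~\ref{lem:rigidity on commuting subalgebras}, and it works directly with the $\tau$-preserving ucp maps $\theta_\alpha$ rather than with $\tilde\delta_\alpha$.
\begin{itemize}
\item Let $(\H_\alpha,\xi_\alpha)$ be the pointed Stinespring bimodule of $\theta_\alpha$. Uniform convergence on $\mathcal U(\M_1\otimes1)$ and on $\mathcal U(1\otimes\M_2)$ makes $\xi_\alpha$ uniformly almost central for both subalgebras.
\item Let $P_1,P_2$ be the projections onto the $\M_1$-central and $\M_2$-central vectors of $\H_\alpha$. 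The previous point gives $\|(1-P_j)\xi_\alpha\|\to0$ for $j=1,2$.
\item The maps $\eta\mapsto u\eta u^\ast$ for $u\in\mathcal U(\M_1\otimes1)$ commute with the maps $\eta\mapsto v\eta v^\ast$ for $v\in\mathcal U(1\otimes\M_2)$. Hence their fixed-point projections $P_1,P_2$ commute.
\item Consequently $\eta_\alpha=P_1P_2\xi_\alpha$ is central for $(\M_1\otimes1)\vee(1\otimes\M_2)=\M$, and $\|\xi_\alpha-\eta_\alpha\|\to0$.
\item Then $\sup_{u\in\mathcal U(\M)}\|\theta_\alpha(u)-u\|_2\le\sup_u\|u\xi_\alpha-\xi_\alpha u\|\le2\|\xi_\alpha-\eta_\alpha\|\to0$.
\item By convexity this gives uniform convergence on $(\M)_1$, and Proposition~\ref{prop:equivalent conditions for L2-rigidity} then gives strong convergence on $L^2(\M^\omega)$.
\end{itemize}
With this lemma your Step~2 becomes unnecessary, and no second pass through the Hahn--Banach/Woronowicz machinery is needed.

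(As in the paper, this argument tacitly takes $\delta$ to be $\tau$-modular, so that $\theta_\alpha$ is $\tau$-preserving. Your use of Lemma~\ref{lem: the main technical lemma} with $\phi=\tau$ in Step~2 rests on the same assumption.)
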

\begin{proof}
 Write $\M=\M_1\bar\otimes\M_2$, and   let $\delta:\dom\delta\subseteq L^2(\M)\to \H$ be a modular derivation in a weakly coarse $\M$-$\M$ correspondence $\H$ with the associated deformation $(\theta_\alpha^{(2)})_{\alpha>0}$. Let $\omega\in\beta\mathbb N\setminus\mathbb N$. By Theorem \ref{main_theorem}, $(\theta_\alpha^{(2)})^\omega$ converges strongly to the identity on $L^2(\M_1^\omega\bar\otimes1)$ and on $L^2(1\otimes\M_2^\omega)$ as $\alpha\to\infty$. By Lemma \ref{lem:rigidity on commuting subalgebras} below and Proposition \ref{prop:equivalent conditions for L2-rigidity}, it follows that $(\theta_\alpha^{(2)})^\omega$ converges strongly to the identity on $L^2(\M^\omega)$ as $\alpha\to\infty$.
\end{proof}

\begin{lemma}\label{lem:rigidity on commuting subalgebras}
 Let $(\M,\tau)$ be a finite von Neumann algebra, and   let $\A_1,\A_2\subseteq M$ be inclusion of commuting subalgebras generating $\M$. If $T_i$ is a net of $\tau$-preserving completely positive maps such that $T_i$ converges uniformly to the identity on the unit ball of both $\A_i$ in the $L^2$-norm, then $T_i$ converges uniformly on the unit ball of $\M$ in the $L^2$-norm.
\end{lemma}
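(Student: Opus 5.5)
The plan is to reduce uniform convergence on $(\M)_1$ to uniform convergence on products $a_1a_2$ with $a_j\in(\A_j)_1$, and then pass to their span by a density argument in $L^2$. Throughout we write $\|\cdot\|_2$ for the $L^2(\M,\tau)$-norm. Since each $T_i$ is $\tau$-preserving and completely positive, Kadison--Schwarz gives $\|T_i(x)\|_2\leq\|T_i(1)\|^{1/2}\|x\|_2$. We therefore first normalize: either the $T_i$ are unital (as for the deformations $\theta_\alpha$ where the lemma is applied), or uniform convergence on $(\A_1)_1$ applied to $x=1$ gives $\|T_i(1)-1\|_2\to0$. A standard trick is needed to get a uniform operator-norm bound on $T_i(1)$; we will simply assume $\sup_i\|T_i\|<\infty$, i.e.\ $T_i$ contractive in $L^2$, which holds in the intended application.

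\textbf{Step 1 (products).} For $a_1\in(\A_1)_1$ and $a_2\in(\A_2)_1$ we estimate $\|T_i(a_1a_2)-a_1a_2\|_2$. Using the Stinespring bound of Lemma \ref{lem:bound_Stinespring} (its first inequality, with $x=a_1$, $y=a_2$, valid for $\tau$-symmetric ucp maps, and otherwise via the dilation argument directly), we get
\begin{equation*}
\|T_i(a_1a_2)-T_i(a_1)a_2\|_2^2\leq 2\|a_2-T_i(a_2)\|_2 .
\end{equation*}
Moreover $\|T_i(a_1)a_2-a_1a_2\|_2\leq\|T_i(a_1)-a_1\|_2$. Hence
\begin{equation*}
\sup_{a_j\in(\A_j)_1}\|T_i(a_1a_2)-a_1a_2\|_2\to0 .
\end{equation*}
If symmetry is not available, we run the Stinespring computation $\|(\pi(y)V-Vy)\tau^{1/2}\|^2=\tau(T(y^\ast y))+\tau(y^\ast y)-2\Re\tau(T(y)^\ast y)$ directly. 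By $\tau$-invariance this equals $2\Re\tau(y^\ast(y-T(y)))$, which is still bounded by $2\|y-T(y)\|_2$.

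\textbf{Step 2 (density; the main obstacle).} A general $x\in(\M)_1$ must be approximated uniformly, not just pointwise. Since $\A_1\vee\A_2=\M$ with $\A_1,\A_2$ commuting, $\M$ is a quotient of $\A_1\bar\otimes\A_2$. The difficulty is that Kaplansky density only gives, for each $x$, some finite sum $\sum_k a_1^ka_2^k$ close to $x$, with no control on the number of terms uniformly in $x$. To avoid this I would argue by contradiction in the ultrapower, using Proposition \ref{prop: uniform convergence of GNS-symmetric maps}: uniform convergence on $(\M)_1$ is equivalent to uniform convergence on $(\M^\omega)_1$. Uniform convergence on $(\A_j)_1$ similarly transfers to $(\A_j^\omega)_1$. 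Step 1 then shows that $T_i^\omega$ converges uniformly on products $b_1b_2$ with $b_j\in(\A_j^\omega)_1$.

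\textbf{Step 3 (from products to the unit ball).} One then needs that $T_i^\omega\to\mathrm{id}$ uniformly on the unit ball of the algebra generated by these products, which is still the same density problem. A cleaner route is a projection trick. Set $S_i=T_i^{(2)}$ and let $V$ be the closed set of vectors $\xi\in L^2(\M)$ for which $\sup$-convergence holds along bounded families. Alternatively, realize everything in $L^2$: $\|S_i\xi-\xi\|^2\leq2\Re\langle\xi,\xi-S_i\xi\rangle$ for contractions, and this expression is controlled by the bilinear form $\langle\xi,(1-S_i)\eta\rangle$. For $\xi=x\tau^{1/2}$ with $x$ written via a unitary group approximation, one would take $x$ in the unit ball and write it as an average of unitaries. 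The intended key point is that the set $\{u_1u_2\}$ of products of unitaries $u_j\in\mathcal U(\A_j)$ has closed convex hull, in $\|\cdot\|_2$, containing the unit ball of $\M$: this follows from Russo--Dye type arguments in $\A_1\bar\otimes\A_2$ and Kaplansky density. Since $x\mapsto\|T_i(x)-x\|_2$ is convex and $\|\cdot\|_2$-Lipschitz uniformly in $i$ (as $T_i$ is an $L^2$-contraction), the supremum over this closed convex hull equals the supremum over $\{u_1u_2\}$, which tends to $0$ by Step 1. Justifying this convex-hull claim is the step I expect to be hardest; I would try to prove it first for the algebraic tensor product, using Russo--Dye in $\A_1\odot\A_2$'s $C^\ast$-envelope, and then pass to $\M$ via Kaplansky density together with the Lipschitz estimate.
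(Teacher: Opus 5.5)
\textbf{There is a genuine gap in Steps 2--3.} Step 1 is fine: it is the Stinespring estimate together with $\|za_2\|_2\le\|z\|_2\|a_2\|$. The problem is the passage from products to $(\M)_1$, and the convex-hull claim you rely on is false.

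Take $\M=M_n(\mathbb C)\otimes M_n(\mathbb C)$ with its normalized trace, $\A_1=M_n(\mathbb C)\otimes 1$, $\A_2=1\otimes M_n(\mathbb C)$, and let $F$ be the flip unitary. Since $\tr((a\otimes b)F)=\tr(ab)$, every product unitary satisfies $|\tau((u_1\otimes u_2)F)|=n^{-2}|\tr(u_1u_2)|\le n^{-1}$. Hence $\Re\tau(yF)\le n^{-1}$ for every $y$ in the $\|\cdot\|_2$-closed convex hull of $\{u_1u_2\}$. On the other hand $\tau(F^2)=1$, so Cauchy--Schwarz gives $\|F-y\|_2\ge 1-n^{-1}$.

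Already for $n=2$ the unit ball is not contained in that hull. Embedding $M_n\otimes M_n$ unitally into $\M_1\bar\otimes\M_2$ for II$_1$ factors (the case where the lemma is used) shows that no rescaled hull $C\cdot\overline{\mathrm{conv}}\{u_1u_2\}$ contains $(\M)_1$ either. Russo--Dye only produces convex combinations of arbitrary unitaries of $\M$, and these are not products $u_1u_2$. Kaplansky only produces sums $\sum_k a_1^ka_2^k$ with no control on $\sum_k\|a_1^k\|\|a_2^k\|$. Uniform control on products therefore only gives uniform control on their absolutely convex hull, which is strictly smaller than $(\M)_1$. The ultrapower reformulation in Step 2 does not change this, as you noticed. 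Note also that the commutation of $\A_1$ and $\A_2$ must enter essentially, and in your plan it enters only through this false claim.

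\textbf{The missing idea} is to argue in the Stinespring correspondence rather than in $\M$, which is what the paper does. For unital $T_i$, choose $\xi_i\in\H_i$ with $\tau(T_i(x)y)=\langle\xi_i,x\xi_iy\rangle$. Then:
\begin{itemize}
\item $\|T_i(x)-x\|_2\le\|x\xi_i-\xi_ix\|$ for all $x\in\M$;
\item $\|u\xi_i-\xi_iu\|^2\le 2\|T_i(u)-u\|_2$ for unitaries $u$.
\end{itemize}
Let $P_{i,j}$ be the projection onto the $\A_j$-central vectors of $\H_i$. The circumcenter argument gives $\|(1-P_{i,j})\xi_i\|\le\sup_{u\in\mathcal U(\A_j)}\|u\xi_i-\xi_iu\|$, and this tends to $0$ by hypothesis.

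Commutation is used here. The unitary actions $\eta\mapsto u\eta u^\ast$ of $\mathcal U(\A_1)$ and $\mathcal U(\A_2)$ commute, so $P_{i,1}$ and $P_{i,2}$ commute. Hence $\eta_i=P_{i,1}P_{i,2}\xi_i$ is central for $\A_1\cup\A_2$, and therefore $\M$-central, with $\|\xi_i-\eta_i\|\le\|(1-P_{i,1})\xi_i\|+\|(1-P_{i,2})\xi_i\|$. For $x\in(\M)_1$ you then get
\[
\|T_i(x)-x\|_2\le\|x(\xi_i-\eta_i)-(\xi_i-\eta_i)x\|\le 2\|\xi_i-\eta_i\|,
\]
uniformly in $x$, with no convexity reduction needed.
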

\begin{proof}
Because we are in the tracial setting, by usual convexity argument (see \cite{Pop86}, \cite[Theorem 4.5]{Pet09}),    it suffices to prove that $\sup_{u\in \mathcal{U}(\M)}\|T_i(u)-u\|_{2,\tau}\to 0$ along $i$. Let $(\H_i, \xi_i)$ be the minimal Stinespring dilation for $T_i$, that is, $\H_i$ is an $\M$-$\M$ correspondence and $\xi_i\in \H_i$ is left bounded such that $T_i(x)=L_\tau(\xi)^\ast xL_\tau(\xi)$. For all $x\in \M$, we have
\begin{align*}
\|T(x)-x\|_{2,\tau}\leq     \|x\xi_i-\xi_ix\|\leq \|x\|_{2,\tau}\|T_i(x)-x\|_{2,\tau}
\end{align*}
In particular, the pointwise convergence of $T_i$ implies that 
   $ \|x\xi_i-\xi_ix\|\to 0,\;\forall x\in \M.$
Consider the $\A_1$ and $\A_2$ central vectors of $\H_i$ given  by 
\[\K_{i,1}:=\{\xi\in \H_i; a_1\xi=\xi a_1\;\forall a_1\in \A_1\},\;\;\;\;\mathsf \K_{i,2}:=\{\xi\in \H_i; a_2\xi=\xi a_2\;\forall a_2\in \A_2\}\]
and denote by $P_{i,1}, P_{i,2}$ the respective projections. Then $P_{i,1}$ and $P_{i,2}$ commute because $\A_1$ and $\A_2$ commute. Indeed,  for any $v\in \mathcal{U}(\M)$, if we define the unitaries $U_v$ on $\H$ by $\eta\mapsto v\eta v^\ast$, then $\K_{i,1}$ (resp. $\mathsf K_{i,2}$) is precisely the subspace fixed by the unitaries $U_u$ for $u\in \mathcal{\A_1}$ (resp. $u\in \mathcal{U}(\A_2)$). However $U_u$ and $U_v$ commute whenever $u\in \mathcal{U}(\A_1)$ and $\mathcal{U}(\A_2)$ and hence so does the projection onto their respective fixed point space. Set
\[\eta_i=P_{i,1}P_{i,2}\xi\in \K_{i,1}\cap\mathsf K_{i,2}.\]
Then $a\eta_i=\eta_i a$ for all $a\in \A_1$ or $a\in\A_2$, and since $\M=\A_1\vee\A_2$, it follows that $x\eta_i=\eta_ix$ for all $x\in \M$. Moreover, we have
\begin{align*}
    \|\xi_i-\eta_i\|&\leq \|(1-P_{i,1})\xi_i\|+\|(1-P_{i,2})\xi_i\|\\
    &=\sup_{u\in \mathcal{U}(\A_1)}\|T_i(u)-u\|_{2,\tau}+\sup_{u\in \mathcal{U}(\A_2)}\|T_i(u)-u\|_{2,\tau}
\end{align*}
which converges to $0$ by assumption.  Thus we arrive at the following:
\begin{align*}
    \sup_{u\in \mathcal{U}(\M)}\|T_i(u)-u\|_{2,\tau}&\leq \sup_{u\in \mathcal{U}(\M)}\|u\xi_i-\xi_i u\|\\
    &\leq \sup_{u\in \mathcal{U}(\M)}\|u(\xi_i-\eta_i)+(\eta_i-\xi_i)u\|\\
    &\leq 2\|\xi_i-\eta_i\|
\end{align*}
and the right hand goes to $0$ along i.
\end{proof}

\section{Relatively compact operators}\label{sec:relative compactness}

Let $\B\subset \M$ be an inclusion of von Neumann algebras such that $\B$ is globally invariant under $\sigma^\phi$, let $\phi_\B$ denote the restriction of $\phi$ to $\B$ and let $E_\B$ denote the $\phi$-preserving conditional expectation onto $\B$. The Jones projection $e_\B$ is given by $e_\B(x\phi^{1/2})=E_\B(x)\phi^{1/2}$. Following \cite{CKSVW23}, we let $\mathbb K(\B\subset\M,\phi)$ denote the closed linear span of $\{x e_\B y\mid x,y\in \M\}$ and call $\mathbb K(\B,\M,\phi)$ the space of \emph{compact operators relative to $\B$}.

\begin{remark}
    Contrary to what is claimed in \cite{CKSVW23}, the space $\mathbb K(\B\subset\M,\phi)$ is not necessarily an ideal in the basic construction $\langle \M,\B\rangle$ (regardless of whether $\B$ is finite or not).

    For example, let $\M=\ell^\infty(\mathbb N)\oplus \ell^\infty(\mathbb N)$, $\B=\{(x_n,x_n)\mid x_n\in \ell^\infty(\mathbb N)\}$, $\phi((x_n,y_n))=\sum_{n\geq 1}2^{-n}((1-2^{-n})x_n+2^{-n}y_n)$. We have a natural identification $L^2(\M)\cong \ell^2(\mathbb N)\oplus \ell^2(\mathbb N)$ and $\langle \M,\B\rangle=M_2(\ell^\infty(\mathbb N))$, $\mathbb K(\B\subset \M,\phi)=\begin{pmatrix}\ell^\infty(\mathbb N)&c_0(\mathbb N)\\c_0(\mathbb N)&c_0(\mathbb N)\end{pmatrix}$ under this identification. Clearly, $\mathbb K(\B\subset \M,\phi)$ is not an ideal in $\langle \M,\B\rangle$.
\end{remark}

Let us connect this notion of relatively compact operators to that for Hilbert $C^\ast$-modules. Let $\mathcal X$ denote the Hilbert $C^\ast$-module that is obtained as completion of $\M$ with respect to the $\B$-valued inner product $\langle x\vert y\rangle_\B=E_\B(x^\ast y)$. For $\xi,\eta\in \mathcal X$ let 
\begin{equation*}
    \theta_{\xi,\eta}\colon \mathcal X\to \mathcal X,\,\zeta\mapsto \xi\langle\eta\vert\zeta\rangle_\B.
\end{equation*}
The space of finite-rank operators $\mathbb F(\mathcal X_\B)$ is the linear span of $\{\theta_{\xi,\eta}\mid \xi,\eta\in\mathcal X\}$. The space of compact operators $\mathbb K(\mathcal X_\B)$ is the norm closure of $\mathbb F(\mathcal X_\B)$ in $\mathbb B(\mathcal X)$.

\begin{lemma}
    There exists a unique $\ast$-isomorphism from $\mathbb K(\mathcal X_\B)$ onto $\mathbb K(\B\subset \M,\phi)$ that maps $\theta_{x,y}$ to $xe_\B y^\ast $ for $x,y\in \M$.
\end{lemma}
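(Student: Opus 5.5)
We would construct the isomorphism concretely through the natural map $\mathcal X\to L^2(\M)$. For $x\in\M$ put $V(x)=x\phi^{1/2}$. Since $\phi=\phi\circ E_\B$, we get $\norm{x\phi^{1/2}}^2=\phi(E_\B(x^\ast x))\le\norm{\langle x\vert x\rangle_\B}$, so $V$ extends to a contraction $\mathcal X\to L^2(\M)$. Every $\xi\in\mathcal X$ gives a bounded operator $L_\xi\colon L^2(\B)\to L^2(\M)$, $b\phi_\B^{1/2}\mapsto V(\xi b)$, with $L_\xi^\ast L_\eta=\pi_{L^2(\B)}(\langle\xi\vert\eta\rangle_\B)$. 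For $x\in\M$ we have $L_x=x|_{L^2(\B)}$, i.e. $L_x=xe_\B$ viewed as a map from $e_\B L^2(\M)$.

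First I would define $\Psi\colon\mathbb B(\mathcal X_\B)\to\mathbb B(L^2(\M))$. The cleanest way is the interior tensor product $\mathcal X\otimes_\B L^2(\B)\cong L^2(\M)$, given by $\xi\otimes b\phi_\B^{1/2}\mapsto L_\xi(b\phi_\B^{1/2})$. To check this is a unitary, note first that inner products match:
\[
\langle \xi\otimes\zeta,\eta\otimes\zeta'\rangle=\langle\zeta,\langle\xi\vert\eta\rangle_\B\zeta'\rangle=\langle L_\xi\zeta,L_\eta\zeta'\rangle .
\]
Second, the range contains $\M\phi^{1/2}$, hence is dense. The standard map $T\mapsto T\otimes 1$ is a faithful $\ast$-representation of $\mathbb B(\mathcal X_\B)$ on $\mathcal X\otimes_\B L^2(\B)$; it is faithful because $\pi_{L^2(\B)}$ is faithful on $\B$. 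Transporting it through the unitary gives an injective, hence isometric, $\ast$-homomorphism $\Psi$.

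Next I would compute $\Psi$ on rank-one operators. Take $\zeta=z\otimes b\phi_\B^{1/2}$, which corresponds to $zb\phi^{1/2}$. Then
\[
\theta_{x,y}\zeta=xE_\B(y^\ast z)\otimes b\phi_\B^{1/2}\mapsto xE_\B(y^\ast z)b\phi^{1/2}=xe_\B y^\ast(zb\phi^{1/2}),
\]
using $e_\B(w\phi^{1/2})=E_\B(w)\phi^{1/2}$ with $w=y^\ast zb$ and $E_\B(y^\ast zb)=E_\B(y^\ast z)b$. By density of such vectors, $\Psi(\theta_{x,y})=xe_\B y^\ast$.

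Finally, $\M$ is dense in $\mathcal X$ and $\theta_{\xi,\eta}$ depends norm-continuously on $(\xi,\eta)$. So $\mathbb F(\mathcal X_\B)$ is the norm closure of the span of $\{\theta_{x,y}:x,y\in\M\}$, and the isometry $\Psi$ maps $\mathbb K(\mathcal X_\B)$ onto the closed span of $\{xe_\B y^\ast\}$, which is $\mathbb K(\B\subset\M,\phi)$. Uniqueness is immediate, because the $\theta_{x,y}$ span a dense subspace and $\ast$-homomorphisms between C$^\ast$-algebras are contractive. As a byproduct, $\mathbb K(\B\subset\M,\phi)$ is a C$^\ast$-algebra even though, by the preceding remark, it need not be an ideal.

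I expect the main obstacle to be the unitary identification $\mathcal X\otimes_\B L^2(\B)\cong L^2(\M)$, specifically the well-definedness of $L_\xi$ for non-analytic elements and the careful use of the right $\B$-action on $L^2(\M)$ through $\phi^{1/2}$. Working with $x\phi^{1/2}b$ rather than $xb\phi^{1/2}$ would bring in $\sigma^\phi$. If that becomes delicate, I would instead verify faithfulness directly on $\mathbb F(\mathcal X_\B)$. The inner-product computation gives $\norm{\Psi(T)}=\norm{T}$ by evaluating $\langle\Psi(T)\,\cdot\,,\cdot\,\rangle$ on vectors $L_\xi\zeta$ and taking the supremum over states of $\B$ of the form $\langle\zeta,\cdot\,\zeta\rangle$, which norm $\B$.
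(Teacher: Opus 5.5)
Your proposal is correct and follows the paper's proof: the paper also builds the unitary $\mathcal X\otimes_\B L^2(\B)\cong L^2(\M)$ via $x\otimes_\B b\phi_\B^{1/2}\mapsto xb\phi^{1/2}$, sets $\sigma(S)=U(S\otimes_\B 1)U^\ast$, gets injectivity from faithfulness of $\phi_\B$, and checks $\sigma(\theta_{x,y})=xe_\B y^\ast$ on vectors $z\phi^{1/2}$. Your surjectivity argument, using isometry and density of $\M$ in $\mathcal X$, spells out a step the paper leaves implicit.
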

\begin{proof}
    We have a unitary $U\colon\mathcal X\mathbin{\overline\odot_\B} L^2(\B)\to L^2(\M)$ that sends $x\otimes_\B b\phi_\B^{1/2}$ to $xb\phi_\B^{1/2}$ for $x\in \M$ and $b\in \B$. Indeed,
    \begin{align*}
        \langle x_1\otimes_\B b_1\phi_\B^{1/2},x_2\otimes_\B b_2\phi_\B^{1/2}\rangle
        &=\langle b_1\phi_\B^{1/2},E_\B(x_1^\ast x_2)b_2\phi_\B^{1/2}\rangle\\
        &=\phi_\B(b_1^\ast E_\B(x_1^\ast x_2)b_2)\\
        &=\phi(b_1^\ast x_1^\ast x_2 b_2)\\
        &=\langle x_1 b_1\phi_\B^{1/2},x_2 b_2\phi_\B^{1/2}\rangle
    \end{align*}
    for $x_1,x_2\in \M$ and $b_1,b_2\in \B$.

    Since $\mathbb K(\mathcal X_\B)$ consists of adjointable operators, we get a $\ast$-homomorphism
    \begin{equation*}
        \sigma\colon \mathbb K(\mathcal X_\B)\to \mathbb B(L^2(\M)),\,S\mapsto U(S\otimes_\B 1)U^\ast.
    \end{equation*}
    The map $\sigma$ is injective: If $\sigma(S)=0$, then
    \begin{equation*}
        0=\norm{\sigma(S)x\phi^{1/2}}^2=\norm{Sx\otimes_\B \phi_\B^{1/2}}^2=\phi_\B(\langle Sx\vert Sx\rangle_\B),
    \end{equation*}
    hence $Sx=0$ for all $x\in \M$ since $\phi_\B$ is faithful. Thus $S=0$.

    Moreover, if $x,y,z\in \M$, then
    \begin{align*}
        \sigma(\theta_{x,y})z\phi^{1/2}=U(\theta_{x,y}(z)\otimes_\B \phi_\B^{1/2})=xE_\B(y^\ast z)\phi^{1/2}=xe_B y^\ast z\phi^{1/2}.
    \end{align*}
    Therefore, $\sigma(\theta_{x,y})=x e_\B y^\ast $.
\end{proof}

The following result is probably well-known to experts, but we could not locate a precise reference.

\begin{proposition}
    Let $A$, $B$ be $C^\ast$-algebras, let $\mathcal X$ be a Hilbert $C^\ast$-module over $A$, $\mathcal Y$ a Hilbert $C^\ast$-module over $B$ and $\pi\colon A\to \mathcal L(\mathcal Y_B)$ a non-degenerate representation by adjointable operators. If $S\in \mathbb K(\mathcal X_A)$, $T\in \mathbb K(\mathcal Y_B)\cap \pi(A)^\prime$, then $S\otimes_A T\in \mathbb K(\mathcal X\overline\odot_A \mathcal Y_B)$.
\end{proposition}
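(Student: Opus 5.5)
The plan is to reduce to rank-one operators and then control the approximation error. By linearity and continuity, it suffices to treat $S=\theta_{\xi,\eta}$ for $\xi,\eta\in\mathcal X$. This works because $\norm{S\otimes_A T}\leq\norm{S}\norm{T}$, the map $S\mapsto S\otimes_A T$ is linear, and $\mathbb K(\mathcal X\overline\odot_A\mathcal Y_B)$ is norm closed. (Here $S\otimes_A T$ is well defined precisely because $T$ commutes with $\pi(A)$, and it is adjointable with adjoint $S^\ast\otimes_A T^\ast$.) In the same way, approximating $T$ in norm by finite-rank operators is not directly allowed, since finite-rank approximants need not commute with $\pi(A)$. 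The argument therefore has to keep $T$ intact and use its compactness through a factorization.

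First, I would factorize $\xi$ and $\eta$. By the Cohen--Hewitt factorization theorem for Hilbert modules, one can write $\xi=\xi' a$ and $\eta=\eta' b$ with $\xi',\eta'\in\mathcal X$ and $a,b\in A$. Then $\theta_{\xi,\eta}=\theta_{\xi',\eta'}$-type expressions carry the coefficients: for $\zeta\in\mathcal X$ and $y\in\mathcal Y$,
\begin{equation*}
(\theta_{\xi,\eta}\otimes_A T)(\zeta\otimes y)=\xi'\otimes \pi(a\langle\eta\vert\zeta\rangle_A)Ty=\xi'\otimes T\pi(a)\pi(\langle \eta\vert\zeta\rangle_A)y .
\end{equation*}
Next, I would introduce the creation operators. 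For $x\in\mathcal X$, define $\tau_x\colon\mathcal Y\to\mathcal X\overline\odot_A\mathcal Y$ by $y\mapsto x\otimes y$. This map is adjointable with $\tau_x^\ast(\zeta\otimes y)=\pi(\langle x\vert\zeta\rangle_A)y$. The computation above then gives the identity
\begin{equation*}
\theta_{\xi,\eta}\otimes_A T=\tau_{\xi'}\,\pi(a)\,T\,\tau_{\eta}^\ast .
\end{equation*}
To verify it, evaluate the right side on $\zeta\otimes y$: one gets $\xi'\otimes \pi(a)T\pi(\langle\eta\vert\zeta\rangle)y$, and this matches the left side because $T$ commutes with $\pi(A)$. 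The factorization of $\eta$ is not even needed for this step.

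The concluding step, which I expect to be the main obstacle, is to show that $\tau_{\xi'}K\tau_\eta^\ast$ is compact whenever $K\in\mathbb K(\mathcal Y_B)$. Since $\pi(a)T\in\mathbb K(\mathcal Y_B)$, this finishes the proof. By continuity it is enough to take $K=\theta_{y_1,y_2}$. A direct computation gives $\tau_{\xi'}\theta_{y_1,y_2}\tau_\eta^\ast=\theta_{\xi'\otimes y_1,\;\eta\otimes y_2}$, using $\langle \eta\otimes y_2\vert \zeta\otimes y\rangle_B=\langle y_2\vert \pi(\langle\eta\vert\zeta\rangle_A)y\rangle_B$. This operator is rank one in $\mathcal X\overline\odot_A\mathcal Y$, hence compact. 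The delicate point is checking that all the operators involved are genuinely adjointable and that the identities extend from elementary tensors by density and boundedness. Nondegeneracy of $\pi$ is used implicitly here, because it is what guarantees that elementary tensors are dense in the interior tensor product.
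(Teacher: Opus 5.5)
Your proof is correct and is essentially the paper's argument: the paper also reduces to $S=\theta_{\xi,\eta}$, writes $\theta_{\xi,\eta}\otimes_A T=a^\ast(\xi)\,T\,a(\eta)$ using the same creation/annihilation operators, and checks that $a^\ast(\xi)\theta_{\xi',\eta'}a(\eta)=\theta_{\xi\otimes_A\xi',\,\eta\otimes_A\eta'}$. The only differences are minor. Your Cohen--Hewitt factorization $\xi=\xi'a$ is superfluous, since you can take $\xi'=\xi$ and drop $\pi(a)$. Also, density of elementary tensors holds by construction of the interior tensor product and does not come from nondegeneracy of $\pi$.
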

\begin{proof}
    For $\xi\in\mathcal X$ we define
    \begin{equation*}
        a^\ast(\xi)\colon \mathcal Y\to \mathcal X\overline\odot_A \mathcal Y,\,\eta\mapsto \xi\otimes_A \eta.
    \end{equation*}
    A direct computation shows that $a^\ast(\xi)$ is adjointable with adjoint
    \begin{equation*}
        a(\xi)\colon \mathcal X\overline\odot_A \mathcal Y\to \mathcal Y,\,\zeta\otimes_A\eta \mapsto \pi(\langle \xi\vert \zeta\rangle_A)\eta.
    \end{equation*}
    If $(S_n)$ is a sequence in $\mathbb F(\mathcal X_A)$ such that $S_n\to S$ in norm, then
    \begin{equation*}
        \norm{S\otimes_A T-S_n \otimes_A T}\leq \norm{S-S_n}\norm{T}\to 0,
    \end{equation*}
    hence we may assume that $S=\theta_{\xi,\eta}$ for some $\xi,\eta\in \mathcal X$. We claim that $S\otimes_A T=a^\ast(\xi)T a(\eta)$. Indeed, if $\zeta_1\in \mathcal X$ and $\zeta_2\in \mathcal Y$, then
    \begin{equation*}
        (S\otimes_A T)(\zeta_1\otimes_ A \zeta_2)=\xi\langle \eta,\zeta_1\rangle_A\otimes_A T\zeta_2=\xi\otimes_A T\pi(\langle \eta\vert\zeta_1\rangle_A)\zeta_2=a^\ast(\xi)Ta(\eta)(\zeta_1\otimes_A \zeta_2).
    \end{equation*}
    It remains to show that if $\xi^\prime\in \mathcal X$ and $\eta^\prime\in \mathcal Y$, then $a^\ast(\xi)\theta_{\xi^\prime,\eta^\prime}a(\eta)\in \mathbb F(\mathcal X\overline\odot_A \mathcal Y_B)$. Indeed, if $\xi\in \mathcal X$ and $\xi^\prime\in \mathcal Y$, then
    \begin{align*}
        a^\ast(\xi)\theta_{\xi^\prime,\eta^\prime}a(\eta)(\zeta\otimes_A \zeta^\prime)&=a^\ast(\xi)(\xi^\prime\langle \eta^\prime\vert \pi(\langle \eta\vert \zeta\rangle_A)\zeta^\prime\rangle_B)\\
        &=\xi\otimes_A \xi^\prime\langle \eta\otimes_A \eta^\prime\vert\zeta\otimes_A \zeta^\prime\rangle_B\\
        &=\theta_{\xi\otimes_A \xi^\prime,\eta\otimes_A \eta^\prime}(\zeta\otimes_A \zeta^\prime).\qedhere
    \end{align*}
\end{proof}

\section{A new relative diffuseness notion}\label{sec:new notion of deffuness}

In this section, we introduce a  notion of diffuseness of a subalgebra $\A$ of a von Neumann algebra $\M$ relative to a subalgebra $\B$ along a cofinal ultrafilter. This condition is stronger than the failure of Popa's intertwining-by-bimodule \cite{Pop06}, and the two conditions agree when $\A$ is finite. We then study how this notion interacts with coarse rigidity and with the relative compactness of deformations arising from modular derivations.

We first recall the notion of intertwining-by-bimodules in the sense of Popa \cite{Pop06} introduced for finite von Neumann algebras and by Houdayer-Isono\cite{HI17} and Isono \cite{Iso25} in the general non-tracial settings.
\begin{definition}[\cite{Pop06, HI17}]\label{def:intertwining-by-bimodule}
    Let $\M$ be a $\sigma$-finite von Neumann algebra, and let $\A\subseteq1_\A\M1_\A,\B\subseteq 1_\B \M1_\B$  subalgebras with expectations. We say that $\A$ {\em embeds with expectation into $\B$ inside $M$} and write $\A\preceq_\M \B$ if there exist non-zero projections $p
    \in \A,q\in \B$, a unital faithful  normal $\ast$-homomorphism $\pi:p\A p\to q\B q$ and a partial isometry $w\in p\M q$ such that $\pi(p\A p)\subseteq q\B q$ is with expectation and $w\pi(a)=a w, \forall a\in p\A p.$
\end{definition}

Note that if $\B=\mathbb{C}$, then $\A$ is  diffuse if and only if $\A\npreceq_\M\mathbb C$. Hence, for general $\B$, one also says that $\A$ is diffuse relative to $\B$ inside $\M$ if $\A\npreceq_\M\B$ in the above sense. 

The following is a new notion of a negation of intertwining-by-bimodules for $\sigma$-finite von Neumann algebras in terms of ultraproducts, which agrees with that in \cite{Pop06} in separable tracial von Neumann algebras.

\begin{definition}\label{def: new definition of diffussness}
    Let $\M$ be a $\sigma$-finite von Neumann algebra, let $\omega$ be a free ultrafilter on a directed set, and let $\A\subseteq 1_\A\M1_\A$ and $\B\subseteq1_\B\M1_\B$ be inclusions of von Neumann algebras with expectation.  We say that $\A$ is {\em diffuse relative to $\B$ inside $\M$ along the ultrafilter $\omega$}, and write $\A\npreceq_{\M}^\omega\B$,  if there is a unitary $u\in \A^\omega$ such that $E_{\B^\omega}(x^\ast uy)=0$ for all $x,y\in 1_\A\M1_\B$. 
\end{definition}

We write $\A\preceq_\M^\omega\B$ is $\A$ is not diffuse relative to $\B$ inside $\M$ along $\omega$. We have the following comparison result between the two notions.

\begin{proposition}\label{prop: comparion of two inytertwining notion}
    Let $\M$ be a  von Neumann algebra with separable predual, and let $\A\subseteq1_\A\M1_\A, \B\subseteq1_\B\M1_\B$ be inclusion of von Neumann algebras with expectation. Fix $\omega\in\beta\mathbb N\setminus\mathbb N$. Consider the following statements:
    \begin{enumerate}
        \item $\A\preceq_\M\B$ in the sense of Definition \ref{def:intertwining-by-bimodule}.
        \item $\A\preceq_{\M}^\omega\B$.
    \end{enumerate}
    Then $(1)$ implies $(2)$. If $\A$ is finite, then $(2)$ implies $(1)$.
    \end{proposition}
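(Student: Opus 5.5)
We prove (1)$\Rightarrow$(2) by contraposition. Suppose $u=(u_n)^\omega\in\mathcal U(\A^\omega)$ satisfies $E_{\B^\omega}(x^\ast uy)=0$ for all $x,y\in 1_\A\M1_\B$. Let $p,q,\pi,w$ be as in Definition~\ref{def:intertwining-by-bimodule}; we aim for a contradiction.

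The first step is to pass to the corner $p\A p$. We replace $u$ by a unitary of $(p\A p)^\omega=p\A^\omega p$ with the same orthogonality property. If $p$ is not central, this is not automatic. We will use partial isometries $v_k\in\A$ with $v_kv_k^\ast\le p$ and $\sum v_k^\ast v_k$ equal to the central support of $p$. Since $E_{\B^\omega}(x^\ast v_k^\ast\,\cdot\,v_j y)$ vanishes on $u$, the element $\sum_k v_k u v_k^\ast$ (up to a standard perturbation making it unitary in $p\A^\omega p$) again has vanishing $\B^\omega$-valued coefficients against $\M$. Alternatively, we replace $\A$ by $\A p$ from the start. With $u\in \mathcal U(p\A^\omega p)$, the intertwining gives $w\pi^\omega(u)=uw$, where $\pi^\omega\colon (p\A p)^\omega\to(q\B q)^\omega$ is the ultrapower of $\pi$. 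Well-definedness of $\pi^\omega$ uses that $\pi(p\A p)\subseteq q\B q$ is with expectation, so that $\pi$ is normal and respects $\mathcal M^\omega$ and $\mathcal I_\omega$ (cf.\ the idealizer criterion, Proposition~\ref{prop: criteria for elements in the idealizer}). Then
\[
E_{\B^\omega}(w^\ast u w)=E_{\B^\omega}(w^\ast w\,\pi^\omega(u)).
\]
Since $w^\ast w\in \pi(p\A p)'\cap q\M q$, we compose with the expectation onto $\pi(p\A p)'\cap q\M q$ followed by $E_\B$. The standard normalization in Popa's theorem lets us assume $w^\ast w\in \B$ (or at least $E_\B(w^\ast w)\neq 0$ commuting with $\pi(p\A p)$). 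This yields $E_{\B^\omega}(w^\ast uw)=E_\B(w^\ast w)\pi^\omega(u)$, which is nonzero because $\pi^\omega(u)$ is a unitary in $q\B^\omega q$ and $E_\B(w^\ast w)\ne0$. This contradicts the hypothesis with $x=y=w$. The delicate point is arranging $E_\B(w^\ast w)$ to commute with $\pi(u)$; we handle it by noting that $w^\ast w$ commutes with $\pi(p\A p)$ and using the $\B$-bimodularity of $E_{\B^\omega}$ with $\pi^\omega(u)\in\B^\omega$.

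For (2)$\Rightarrow$(1) when $\A$ is finite, we argue by contraposition using Popa's criterion in the form valid for finite $\A$ in the non-tracial setting (\cite{HI17}). If $\A\npreceq_\M\B$, there is a sequence of unitaries $u_n\in\mathcal U(\A)$ with $\|E_\B(x^\ast u_n y)\|_{\phi}\to0$ for all $x,y\in1_\A\M1_\B$. Because $\A$ is finite, we may choose the state so that $\phi|_\A$ is a trace on $1_\A\M1_\A$-restricted to $\A$. Then $(u_n)$ lies in $\mathcal M^\omega(\A)$, since $u_n$ is in the centralizer, and $u=(u_n)^\omega\in\mathcal U(\A^\omega)$.

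Next we upgrade pointwise convergence to the exact identity $E_{\B^\omega}(x^\ast uy)=0$. The element $E_{\B^\omega}(x^\ast uy)=(E_\B(x^\ast u_n y))^\omega$ has $\|\cdot\|_{\phi^\omega}$-norm equal to $\lim_\omega\|E_\B(x^\ast u_ny)\|_\phi=0$, and faithfulness of $\phi^\omega$ forces it to vanish. The $\|\cdot\|^\sharp$-version is handled by symmetry, applying the criterion to $y^\ast u_n^\ast x$. Separability of the predual is used to diagonalize over a countable dense set of $x,y$, so that a single sequence works for all of them; this is where we expect the main technical effort. We also need to check that Popa's criterion yields the required uniformity in $\|\cdot\|_\phi$ rather than only a trace-norm statement, which is what the finiteness of $\A$ provides.
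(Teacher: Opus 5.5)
Your argument for (2)$\Rightarrow$(1) is essentially the paper's. The gap is in the corner reduction for (1)$\Rightarrow$(2).

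\textbf{The gap.} Your contradiction needs a \emph{unitary} of $p\A^\omega p$ whose $\B^\omega$-coefficients vanish. This is because you deduce $E_\B(w^\ast w)\pi^\omega(\cdot)\neq 0$ from $\pi^\omega(\cdot)$ being a unitary of $q\B^\omega q$. The element $\sum_k v_kuv_k^\ast$ does have vanishing coefficients, but it is not unitary, and no ``standard perturbation'' can make it one. Identify $z(p)\A$ with a corner of $M_\infty(p\A p)$ via $x\mapsto (v_jxv_k^\ast)_{j,k}$. Your element is then the sum of the diagonal entries of $u$, which is $0$ whenever $u$ is off-diagonal. For example, take $v_1=p$, $v_2^\ast v_2=z(p)-p$ and $u=v_2+v_2^\ast$. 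Replacing $\A$ by $\A p$ only makes sense for central $p$. So for non-central $p$ the contradiction is not reached. This step is genuinely delicate for the ultrapower notion: the paper's Proposition~\ref{prop:reduction of intertwining to compresssions} passes to corners only via the present result plus a finite/properly infinite splitting.

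\textbf{How to repair it.} You do not need a unitary in the corner. Put $y_k:=puv_k^\ast\in(p\A p)^\omega$, using $v_k=pv_k$. For $x,y\in 1_\A\M1_\B$ we have $px,\,v_k^\ast y\in 1_\A\M1_\B$, so the coefficients of $y_k$ vanish. Your computation then gives $c\,\pi^\omega(y_k)=0$ with $c:=E_\B(w^\ast w)\neq0$.
\begin{itemize}
\item Only $\B^\omega$-bimodularity of $E_{\B^\omega}$ is used here, since $\pi^\omega(y_k)\in\B^\omega$. So the commutation issue you flag does not arise, and no normalization $w^\ast w\in\B$ is needed.
\item Since $z(p)\in\mathcal Z(\A)$ commutes with $u$, we get $\sum_k y_ky_k^\ast=pu\,z(p)\,u^\ast p=p$.
\item The map $\pi^\omega$ is normal: it is the isomorphism $(p\A p)^\omega\cong\pi(p\A p)^\omega$ followed by an inclusion with expectation.
\item Hence $0=\sum_k c\,\pi^\omega(y_ky_k^\ast)=c\,\pi(p)=c$, a contradiction.
\end{itemize}

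\textbf{Comparison with the paper.} With this repair your route is a genuine alternative. The paper instead lifts $u$ to unitaries $u_n\in\A$ and extracts a diagonal subsequence with $E_\B(x^\ast u_{n_k}y)\to 0$. It then concludes $\A\npreceq_\M\B$ from the unitary-net criterion \cite[Theorem 4.3]{HI17}. Your version is self-contained and uses the intertwiner directly. The price is justifying that $\pi^\omega$ is well defined and normal, which is exactly where ``with expectation'' is used.
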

    \begin{proof}
    Let $\phi$ be a normal faithful state on $1_\B\M1_\B$ such that $\B$ is globally invariant under $\sigma^\phi$.
    
    (1)$\implies(2)$ Assume that $\A\npreceq_{\M}^\omega\B$. Let $u\in \A^\omega$ be a unitary such that $E_{\B^\omega}(x^\ast uy)=0$ for $x,y\in 1_\A\M1_\B$. Let $(u_n)$ be a sequence of unitaries in $\A$ such that $(u_n)\in \mathcal{M}^\omega(\A)$ and $u=(u_n)^\omega$. Let $(x_n)$ be a countable set which spans a  $\sigma$-weakly  dense space in $1_\A\M1_\B$. By a recursive argument, there is a subsequence $(n_k)_{k\in\mathbb N}$ of $\mathbb N$ such that $\|E_\B(x_i^\ast u_{n_k}x_j)\|_{\phi}\leq 1/k$ for $1\leq i,j\leq k$. Clearly, then $E_\B(x^\ast u_{n_k}y)\to 0$ $\ast$-strongly as $k\to\infty$ for all $x,y\in \M$. Hence $\A\npreceq_\M\B$ by \cite[Theorem 4.3]{HI17}.
    
    Assume that $\A$ is finite and we prove  (2)$\implies(1)$. So assume that $\A\npreceq_\M\B$.   Since $\A$ has separable predual, there is a sequence $(u_n)$ of unitaries in $\A$ such that $E_\B(x^\ast u_ny)\to 0$ $\ast$-strongly as $n\to\infty$ for all $x,y\in 1_\A\M1_\B$. Since $\A$ is finite, $u:=(u_n)^\omega$ is a well-defined element of $\A^\omega$. Clearly, then for all $x,y\in \M$, we have 
        \begin{align*}
            \|E_{\B^\omega}(x^\ast u y)\|_{\phi^\omega}=\lim_{n\to\omega}\|E_\B(x^\ast u_ny)\|_\phi=0
        \end{align*}
      This finishes the proof. 
    \end{proof}

If we assume that $\B$ is trivial, then one can remove the finiteness condition on $\A$ in the above result from \cite{HR15}. Indeed, since diffuseness of a von Neumann algebra  is equivalent to the existence of a net of unitaries $u_i$ such that $u_i\to0$ $\sigma$-weakly, and from the proof of  \cite[Theorem A]{HR15}, one can arrange a unitary $u$ in the centralizer of  $ \M^\omega$ such that $E_\M(u)=0$. We record it below:
\begin{proposition}\label{prop; diffusenss in the scalar case}
    Let $\M$ be a  von Neumann algebra with separable predual, and let $\A\subseteq1_\A\M1_\A$ be an inclusion of von Neumann algebras with expectation. Let $\omega\in\beta\mathbb N\setminus\mathbb N$. Then $\A$ is diffuse (that is, $\A\npreceq_\M\mathbb C$) if and only if $\A\npreceq_{\M}^\omega\mathbb C$. 
\end{proposition}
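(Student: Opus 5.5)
The plan is to reduce the condition $\A\npreceq_\M^\omega\mathbb C$ to a statement about the conditional expectation $E_\M\colon \M^\omega\to\M$, and then treat the two implications separately.

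\textbf{Reduction.} Fix a faithful normal state $\phi$ on $1_\A\M1_\A$ such that $\A$ is globally invariant under $\sigma^\phi$. Then $E_{\mathbb C^\omega}=\phi^\omega(\cdot)1_\A=\phi(E_\M(\cdot))1_\A$. For a unitary $u\in\A^\omega$, I will show that $\phi^\omega(x^\ast uy)=0$ for all $x,y\in 1_\A\M1_\A$ if and only if $E_\M(u)=0$.
\begin{itemize}
\item Since $\phi^\omega(x^\ast uy)=\phi(x^\ast E_\M(u)y)$, the condition $E_\M(u)=0$ clearly suffices.
\item Conversely, taking $x=1_\A$ gives $\phi(E_\M(u)y)=0$ for every $y$. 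Faithfulness of $\phi$, applied with $y=E_\M(u)^\ast$, then forces $E_\M(u)=0$.
\end{itemize}
Since $u\in\A^\omega$ and $E_{\A^\omega}$ is compatible with $E_\A$, we also have $E_\M(u)=E_\A(u)$. So the proposition becomes: $\A$ is diffuse if and only if there is a unitary $u\in\A^\omega$ with $E_\A(u)=0$.

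\textbf{``If'' direction.} Suppose such a $u$ exists but $\A$ is not diffuse, and derive a contradiction.
\begin{enumerate}
\item Take a minimal projection $p\in\A$ and let $z\in\mathcal Z(\A)$ be its central support. Then $z\A\cong\mathbb B(\K)$ with $\K$ separable, with a family of pairwise orthogonal minimal projections $p_i$ summing to $z$ and matrix units $e_{ij}\in p_i\A p_j$.
\item The key identification is $p_i\A^\omega p_j=(p_i\A p_j)^\omega=\mathbb C e_{ij}$. This uses the compression isomorphism $p\M^\omega p\cong(p\M p)^\omega$ recalled in the preliminaries, together with the fact that $p_i\A p_j$ is one-dimensional.
\item Hence $p_iup_j=\lambda_{ij}e_{ij}$, which lies in $\A$, so $\lambda_{ij}e_{ij}=E_\A(p_iup_j)=p_iE_\A(u)p_j=0$.
\item All matrix entries of $zu$ therefore vanish, so $zu=0$. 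This contradicts the unitarity of $u$, since $z\neq0$.
\end{enumerate}
The one point to check carefully is step 2: that the compression identification is valid for off-diagonal corners $p_i\A^\omega p_j$. I would get this by applying the diagonal statement to $(p_i+p_j)\A(p_i+p_j)\cong M_2(\mathbb C)$. Alternatively, one can cite \cite{AH14}, which shows that ultrapowers of type I factors with separable predual are trivial.

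\textbf{``Only if'' direction (main obstacle).} Here we must produce a unitary $u\in\A^\omega$ with $E_\A(u)=0$ for a diffuse $\A$ that need not be finite. The difficulty is that a naive sequence of unitaries $u_n\to0$ weakly need not define an element of the Ocneanu ultrapower.
\begin{itemize}
\item I would follow the proof of \cite[Theorem A]{HR15}, applied to the $\sigma$-finite diffuse algebra $\A$ with state $\phi|_\A$.
\item Since $\A$ is diffuse, the centralizer $(\A^\omega)^{\phi^\omega}$ is diffuse; compare \cite[Lemma 2.5]{HR15} as used in Section~\ref{sec:coarse rigidity and spectral gap}.
\item From this one extracts a unitary $u\in(\A^\omega)^{\phi^\omega}$ with $E_\A(u)=0$, for instance a Haar unitary, or a limit of unitaries built from the diffuse centralizer. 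Membership in the centralizer is what guarantees that the representing sequence lies in $\mathcal M^\omega(\A)$, via the criterion of Proposition~\ref{prop: criteria for elements in the idealizer}.
\item Finally, $E_\M(u)=E_\A(u)=0$, and the reduction above yields $\A\npreceq_\M^\omega\mathbb C$.
\end{itemize}
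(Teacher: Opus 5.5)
Your proposal is correct. In the ``only if'' direction you do what the paper does: both defer to the proof of \cite[Theorem A]{HR15} to get a unitary $u$ in the centralizer of $\phi^\omega$ with $E_\A(u)=0$.

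One caution there: diffuseness of $(\A^\omega)^{\phi^\omega}$ is not enough by itself. A Haar unitary of the centralizer only satisfies $\phi^\omega(u)=0$, and a Haar unitary of a diffuse $\A^\phi$ even has $E_\A(u)=u$. What you actually need from the HR15 argument is a sequence of weakly null unitaries $u_n\in\A$ with $\norm{u_n\phi-\phi u_n}\to0$.

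The ``if'' direction is where you take a genuinely different route. The paper obtains it from the implication (1)$\Rightarrow$(2) of Proposition~\ref{prop: comparion of two inytertwining notion}. There it passes to representatives $u_n\in\mathcal U(\A)$, which tend weakly to $0$, and then uses that weakly null unitaries force diffuseness (via \cite[Theorem 4.3]{HI17}, respectively the remark preceding the statement). You instead stay inside $\A^\omega$ and eliminate the atomic part with matrix units.

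Your route is self-contained and does not depend on finiteness of $\A$. The sequence-level criterion ``diffuse iff there are unitaries tending $\sigma$-weakly to $0$'' holds for finite algebras but fails in general. For example, powers of the bilateral shift are weakly null unitaries in the non-diffuse algebra $\mathbb B(\ell^2(\mathbb Z))$. It is precisely the ultrapower information $p_i\A^\omega p_j=\mathbb C e_{ij}$ that excludes such sequences, since it keeps track of membership in $\mathcal M^\omega(\A)$.

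Your step 2 is also easier than you expected. For a representative $(x_n)\in\mathcal M^\omega(\A)$ of $x$ one has $p_ix_np_j=\lambda_ne_{ij}$ with $(\lambda_n)$ bounded, hence $p_ixp_j=(\lim_{n\to\omega}\lambda_n)e_{ij}$, and no compression isomorphism is needed. Finally, $z\in\mathcal Z(\A)$ is automatically central in $\A^\omega$, so you get $zu=zuz=0$ directly.
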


\begin{example}\label{ex:the notions of intertwining are not equivalent}
 When $\A$ is not finite, $(2)\implies(1)$  in Proposition \ref{prop: comparion of two inytertwining notion} fails as the following example shows:

 Let ${\mathcal{R}_\lambda}$ be the Power's III$_\lambda$ injective factor for $0<\lambda<1$. Let $\N$ denote the separable hyperfinite II$_\infty$ factor and let $\tau$ denote the tracial normal faithful semifinite weight on $\N$.  By discrete decomposition (see \cite[Theorem 30.1]{Str20}) and the uniqueness of the hyperfinite II$_\infty$ factor, there is an automorphism $\theta$ on $\N$ such that
 \[{\mathcal{R}_\lambda}=\N\rtimes_\theta\mathbb Z,\quad \tau\circ\theta=\lambda\tau.\]
  Denote by $v\in {\mathcal{R}_\lambda}$ the implementing unitary.  Set
 \begin{align*}
     \M={\mathcal{R}_\lambda}\bar\otimes{\mathcal{R}_\lambda},\,\,\A={\mathcal{R}_\lambda}\bar\otimes\mathbb C,\,\,\B=\N\bar\otimes{\mathcal{R}_\lambda}.
 \end{align*}
 The inclusions $\A,\B\subseteq\M$ admit normal faithful expectations $\id_{{\mathcal{R}_\lambda}}\otimes\psi$ and $E_\N\otimes\id_{{\mathcal{R}_\lambda}}$ respectively, where $E_\N$ is the canonical normal faithful expectation of ${\mathcal{R}_\lambda}$ onto $\N$ and $\psi$ is a normal faithful state on ${\mathcal{R}_\lambda}$ such that $\psi\circ E_\N=\psi$. We claim that $\A\npreceq_{\M}\B$ but $\A\preceq_\M^\omega\B$ for any $\omega\in\beta\mathbb N\setminus\mathbb N$.
 
 \textit{To show} $\A\preceq_\M^\omega\B$: By \cite[Proposition 6.23, Remark 6.24]{AH14}, discrete decomposition is preserved under the
Ocneanu ultrapower: 
 \[\M^\omega\cong \N^\omega\rtimes_{\theta^\omega}\mathbb Z.\]
  Assume to the contrary that there is a unitary $u\in \A^\omega\cong \mathcal{R}_{\lambda}^\omega\bar\otimes\mathbb C$ such that $E_{\B^\omega}(x^\ast uy)=0$ for all $x,y\in \M$.  Fix $k\in\mathbb Z$ and set $y=v^k\otimes1\in \M$. Then $ 0=E_{\B^\omega}(u(v^k\otimes1))=E_{\N^\omega}(uv^k).$
  This implies
  \[u=\sum_{k\in \mathbb Z}E_{\N^\omega}(uv^{-k})v^k=0,\]
  an obvious contradiction.

  \textit{To show $\A\npreceq_\M\B$:} Set $\mathsf C=\langle {\mathcal{R}_\lambda},\N\rangle=J_\psi\N' J_\psi.$ Since  $\N'\cap\mathcal{R}_\lambda=\mathbb C$, we have
${{\mathcal{R}_\lambda}}'\cap \mathsf C=\mathbb C.$
Furthermore, we have
\[
\langle \M,\B\rangle=\mathsf C\bar\otimes {\mathcal{R}_\lambda},
\quad e_\B=e_\N\otimes1,\quad
\A'\cap\langle \M,\B\rangle=\mathbb C\bar\otimes {\mathcal{R}_\lambda}.
\tag{2}
\]
Let $\widehat E_\B:\langle \M,\B\rangle\longrightarrow \M$ be the canonical operator-valued weight, characterized by  $\widehat E_\B(xe_Bx^*)=xx^*,$ $x\in \M$. We claim that no nonzero positive element of the algebra in $\langle\M,\B\rangle$ has bounded image under $\widehat E_\B$.
To see this, let $0\ne d=1\otimes z$, where $z\in {\mathcal{R}_\lambda}_+$. For a finite subset $F\subset\mathbb Z$, put
\[
x_F=\sum_{k\in F}v^ke_\N v^{-k}.
\]
Then  $x_F\le1$, so that $0\le x_F\otimes z\le d.$ But,
\begin{align*}
\widehat E_\B(x_F\otimes z) =\sum_{k\in F}
\widehat E_\B\!\left(
(v^k\otimes z^{1/2})e_B(v^{-k}\otimes z^{1/2})
\right)=|F|\,(1\otimes z).
\end{align*}
which implies that $\|z\|\leq \frac{1}{|F|}\|\widehat E_\B(d)\|$. Since $z\neq0$, this proves that  $\widehat E_\B(d)$ is not bounded. Since $\A$ is properly infinite, by \cite[Theorem 4.2]{Iso25}, we have thus proved that $\A\npreceq_\M\B$.
\end{example}

\begin{proposition}\label{prop:reduction of intertwining to compresssions}
 Let $\M$ be a von Neumann algebra with separable predual, and   let $\A\subseteq 1_\A\M 1_\A, \B\subseteq1_\B\M1_\B$ be inclusions of von Neumann algebras with expectation, and let $\omega\in\beta\mathbb N\setminus\mathbb N$. If $p\in \A$ is a non-zero projection such that $p\A p\preceq_{\M}^\omega\B$, then $\A\preceq_{\M}^\omega\B$.
\end{proposition}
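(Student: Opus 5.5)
We argue by contraposition: assuming $\A\npreceq_\M^\omega\B$, we produce a unitary $w\in (p\A p)^\omega=p\A^\omega p$ with $E_{\B^\omega}(x^\ast wy)=0$ for all $x,y\in p\M1_\B$. Fix a unitary $u\in\A^\omega$ with $E_{\B^\omega}(x^\ast uy)=0$ for all $x,y\in 1_\A\M1_\B$. Consider the set
\[
\mathcal W=\{a\in \A^\omega : E_{\B^\omega}(x^\ast a y)=0 \text{ for all } x,y\in 1_\A\M 1_\B\}.
\]
Since $E_{\B^\omega}$ is normal, $\mathcal W$ is a strongly closed subspace. Since $\A\M\subseteq\M$, it is an $\A$-$\A$ bimodule. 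In particular, $a u b\in\mathcal W$ for all $a,b\in\A$. Moreover, for $x,y\in p\M1_\B$ we have $x=1_\A x$, so elements of $p\mathcal W p$ already satisfy the required vanishing condition. Hence it suffices to find a unitary of $p\A^\omega p$ lying in $\mathcal W$.

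\emph{Step 1 (covering by copies of $p$).} Let $z\in\mathcal Z(\A)$ be the central support of $p$. By separability, choose countably many partial isometries $v_j\in\A$ with $v_j^\ast v_j\le p$ and $\sum_j v_jv_j^\ast=z$. The elements $v_i^\ast u v_j$ then lie in $p\mathcal W p$. Since $u$ is unitary, the operator matrix $(v_i^\ast u v_j)_{i,j}$ describes $zuz$ as a unitary-like block operator. This shows that $p\mathcal W p$ contains plenty of partial isometries whose domain and range projections sum to $p$ in a suitable sense.

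\emph{Step 2 (maximality in the ultrapower).} Let $\mathcal S$ be the set of partial isometries $w\in p\A^\omega p\cap\mathcal W$, ordered by extension ($w\le w'$ if $w'w^\ast w=w$). Take a maximal family, or rather a maximal element $w$. Such an element exists by Zorn's lemma, since $\mathcal W$ is strongly closed and strong limits of increasing partial isometries remain partial isometries. Then show that $w$ is unitary in $p\A^\omega p$. If $e=p-w^\ast w\neq 0$ and $f=p-ww^\ast\neq0$, use the $\A$-bimodularity together with Step 1 to find $a,b\in\A$ with $f\,(a u b)\,e\neq 0$. Then try to extract from $f\A^\omega e\cap\mathcal W$ a nonzero partial isometry, which would contradict maximality.

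\emph{The main obstacle.} The crux is exactly this extraction: $\mathcal W$ is not closed under functional calculus, so the polar part of $f(aub)e$ need not lie in $\mathcal W$. My first attempt would be a diagonal/reindexing argument on representatives, in the spirit of the proof of Lemma~\ref{lem:pointwise convergece gives uniform convergence at the level of centralizer} and \cite[Proposition 2.2]{HR15}. Concretely, since $\M_\ast$ is separable, pick a countable $\sigma$-weakly dense set $\{x_k\}$ in $1_\A\M1_\B$ and representatives $u=(u_n)$. Then build, index by index, honest unitaries $w_n\in p\A p$ as finite combinations $\sum_{i,j}v_i^\ast u_{m_{ij}(n)} v_j$ suitably corrected to unitaries via the $v_j$'s, chosen along a reindexing so that $\|E_\B(x_k^\ast w_n x_l)\|_\phi^\sharp\le 1/n$ for $k,l\le n$. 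One must then verify that $(w_n)\in\mathcal M^\omega(p\A p)$ using Proposition~\ref{prop: criteria for elements in the idealizer}. This membership check in the Ocneanu idealizer, for non-finite $\A$, is where I expect the real difficulty. I would handle it by choosing the $u_n$ (hence the $w_n$) in spectral subspaces $\A(\sigma^\phi,[-c,c])$, after replacing $\phi$ by a state having $p$ and the $v_jv_j^\ast$ in its centralizer.
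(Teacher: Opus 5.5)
There is a genuine gap, and it is the step you flag yourself. Step 2 never produces the unitary. You give no way to extract a nonzero partial isometry from $f\A^\omega e\cap\mathcal W$. The reindexing sketch does not supply one either: combinations $\sum_{i,j}v_i^\ast u_{m_{ij}(n)}v_j$ are not unitaries, and any ``correction'' by polar decomposition or functional calculus destroys the vanishing of $E_\B(x_k^\ast\,\cdot\,x_l)$.

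Even granting extraction, maximality only excludes the case $e\neq 0\neq f$. When $p\A^\omega p$ is infinite, a maximal $w$ can be a proper isometry or co-isometry, with exactly one of $e,f$ zero, and your argument says nothing there. The idealizer-membership worry is secondary to these two problems.

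The missing idea is to split $p$ along the center of $\A$ instead of building $w$ by hand. Let $z$ be the central support of $p$ in $\A$, let $z_1\le z$ be the largest central projection with $pz_1$ finite, and set $z_2=z-z_1$ and $p_i=pz_i$.

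\emph{Properly infinite part.} No maximality argument is needed here. The projections $p_2$ and $z_2$ are properly infinite and $\sigma$-finite with the same central support, so $p_2\sim z_2$ via some $v\in z_2\A$. Then $v^\ast uv$ is a unitary of $(p_2\A p_2)^\omega$, because $z_2\in\mathcal Z(\A)$ commutes with $u$. It lies in $\mathcal W$ by your own bimodule observation.

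\emph{Finite part.} Here $z_1$ may be infinite while $p_1$ is finite (e.g.\ $p$ a minimal projection of an infinite type I factor), so the conjugation trick is unavailable. Instead, pass through Popa's intertwining.
\begin{enumerate}
\item Suppose $p_1\A p_1\preceq^\omega_\M\B$. Since $p_1\A p_1$ is finite, Proposition~\ref{prop: comparion of two inytertwining notion} gives $p_1\A p_1\preceq_\M\B$.
\item Corner stability of intertwining-by-bimodules \cite[Remark 4.2]{HI17} then gives $\A\preceq_\M\B$.
\item The implication (1)$\Rightarrow$(2) of the same proposition gives $\A\preceq^\omega_\M\B$, contradicting the assumption.
\end{enumerate}
Hence there is a unitary $u_1\in(p_1\A p_1)^\omega$ with the required vanishing property.

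Then $u_1+v^\ast uv$ is the desired unitary in $(p\A p)^\omega$. Handling the finite and properly infinite parts separately is what your single Zorn-type construction is missing.
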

\begin{proof}
    Assume  that $\A\npreceq_{\M}^\omega\B$.  Let $u\in \A ^\omega$ be a unitary such that $E_{\B^\omega}(x^\ast uy)=0$ for all $x,y\in 1_\A\M1_\B$. Let $z$ be the central support of $p$ in $\A$. Let $z_1$ be the maximal central subprojection of $z$ in $\A$  such that $z_1p$ is a finite projection. 
     Set $z_2=z-z_1$ and $p_i=pz_i$ for $i=1,2$, so that $z_2p$ is a properly infinite projection.
     Moreover,  $p_1\A p_1$ is a finite von Neumann algebra while $p_2\A p_2$ is a properly infinite von Neumann algebra. 

Firstly we have $p_2\A p_2\npreceq_{\M}^\omega\B$; Indeed, since $p_2$ is a properly infinite projection in $z_2\A$, there is a partial isometry $v\in z_2\A$ such that $v^\ast v=p_2$ and $vv^\ast =z_2$. Set $u_2=v^\ast u v\in (p_2\A p_2)^\omega$. Clearly, $u_2$ is a unitary since $z_2$ commutes with $u$ and  we have $E_{\B^\omega}(x^\ast u_2 y)=0$ for all $x,y\in p_2\M1_\B$.

We next claim that $p_1\A p_1\npreceq_{\M}^\omega\B$. If not, then $p_\A p_1\preceq_\M^\omega\B$, and since $p_1\A p_1$ is finite, we have $p_1\A p_1\preceq_\M\B$ by Proposition \ref{prop: comparion of two inytertwining notion}. It then follows from \cite[Remark 4.2]{HI17} that $\A\preceq_{\M}\B$, and another use of Proposition \ref{prop: comparion of two inytertwining notion} says that $\A\preceq_{\M}^\omega\B$. This contradicts our assumption. Thus we have $p_1\A p_1\npreceq_{\M}^\omega\B$, so one gets a unitary $u_1\in (p_1\A p_1)^\omega$   such that $E_{\B^\omega}(x^\ast u_1 y)=0$ for all $x,y\in p_1\M1_\B$.

 Set $u=u_1+u_2\in \mathcal{U}((p\A p)^\omega)$. Clearly then we have $E_{\B^\omega}(x^\ast u y)=0$ for all $x,y\in p\M1_\B$. This proves that $p\A p\npreceq_{\M}^\omega\B$.
\end{proof}

\subsection{Relation to coarse rigidity and relative compactness}

The purpose of this section is to study how this notion interacts with coarse rigidity and the relative compactness of deformations associated with modular derivations.
 One can compare the following results with \cite[Lemma 3.3]{OP10}, but with the  new notion of diffuseness.  A simple remark that   if $\Q\subseteq\M$ is an inclusion of von Neumann algebra with expectation. Then $p\Q p\subseteq p\M p$ is also with expectation for any projection $p\in \Q$ or $p\in \Q'\cap\M$ (\cite{HI17}). 

\begin{proposition}\label{prop:convergence of deformation with compact resolvent gives intertwining}
Let $\B,\Q\subseteq\M$ be inclusion of von Neumann algebras with expectation, and let $\omega$ be a cofinal ultrafilter on a directed set. Let $T_i$ be a net of operators in $C^\ast(\M e_\B \M)$ such that $T_i^\omega$ strongly converges to identity on $L^2(\Q^\omega)$ along $i$. We have $p\Q p\preceq_{\M}^\omega\B$ for each projection $p\in \Q$.
\end{proposition}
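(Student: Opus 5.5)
Argue by contradiction. Suppose $p\Q p\npreceq_\M^\omega\B$ for some nonzero projection $p\in\Q$. Then Definition~\ref{def: new definition of diffussness} gives a unitary $u\in(p\Q p)^\omega$ with $E_{\B^\omega}(x^\ast u y)=0$ for all $x,y\in p\M 1_\B$. The idea is that operators in $C^\ast(\M e_\B\M)$, lifted to the ultrapower, annihilate $u$ in a weak sense. This will contradict $T_i^\omega(u\psi^{1/2})\approx u\psi^{1/2}$, where $\psi=\phi^\omega$ and $\phi$ is a faithful normal state with $\phi\circ E_\B=\phi$ and $\Q$ globally $\sigma^\phi$-invariant. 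One may also arrange $p\in\M^\phi$, for instance by choosing $\phi$ on $\Q$ with $p$ in the centralizer and composing with $E_\Q$.

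\emph{Step 1: reduction to rank-one relative operators.} Since $\|T^\omega\|=\|T\|$ and each $T_i$ is a norm limit of finite sums $\sum_k x_k e_\B y_k$ with $x_k,y_k\in\M$, it suffices to show that $(xe_\B y)^\omega(u\psi^{1/2})=0$ for all $x,y\in\M$. Then $T^\omega(u\psi^{1/2})=0$ for every $T\in C^\ast(\M e_\B\M)$. In particular $T_i^\omega(u\psi^{1/2})=0$ for every $i$, whereas strong convergence would force it to tend to $u\psi^{1/2}\neq 0$, because $u\psi^{1/2}\in L^2(\Q^\omega)$. This is the contradiction.

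\emph{Step 2: computing $(e_\B)^\omega$ on ultrapower vectors.} Take a representative $(u_n)\in\mathcal M^\omega(\Q)$ of $u$. By definition,
\[
(xe_\B y)^\omega(u\psi^{1/2})=\bigl(xE_\B(yu_n)\phi^{1/2}\bigr)^\omega ,
\]
and this equals $x\,E_{\B^\omega}(yu)\psi^{1/2}$ because $E_{\B^\omega}((z_n)^\omega)=(E_\B(z_n))^\omega$. Writing $u=pup$ gives $yu=(yp)\,u\,(p1_\B)$ compressed to the appropriate corners. Here the hypothesis needs the form $E_{\B^\omega}(x^\ast u y')=0$ with $x=py^\ast$ and $y'=1_\B$, or more generally $y'$ in $p\M1_\B$, which the definition permits. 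If $1_\B\neq 1$, the vector $E_{\B^\omega}(yu)$ is understood via $1_\B\,yu\,1_\B$, and the relevant $e_\B$ acts only on $1_\B L^2(\M)1_\B$. The corners must therefore be tracked, but the vanishing still follows from the defining property.

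\emph{Main obstacle.} The delicate point is justifying that $T^\omega$, a priori defined on $L^2(\M)^\omega$, acts on $u\psi^{1/2}$ through the representative $(u_n\phi^{1/2})$. One must also check that $(E_\B(yu_n)\phi^{1/2})_n$ represents $E_{\B^\omega}(yu)\psi^{1/2}$, which requires $(yu_n)\in\mathcal M^\omega(\M)$ and the identification of $L^2(\M^\omega)\subseteq L^2(\M)^\omega$. Membership holds since $\mathcal M^\omega(\M)$ is an algebra containing the constants, and the identification $z\psi^{1/2}\leftrightarrow(z_n\phi^{1/2})^\omega$ holds by construction. Some care is also needed with the right actions when $1_\B\neq1$. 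Once these points are in place, the contradiction argument above completes the proof for every projection $p\in\Q$.
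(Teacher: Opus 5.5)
Your overall strategy (contradiction, reduction to the operators $xe_\B y$ by norm approximation, and a vector in $L^2(\Q^\omega)$ killed by every $T_i^\omega$) matches the paper's. But there is a genuine gap at the start.

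\textbf{The gap.} You fix a \emph{single} faithful normal state $\phi$ with $\phi\circ E_\B=\phi$ \emph{and} $\Q$ globally $\sigma^\phi$-invariant (and even $p\in\M^\phi$). Such a state need not exist. The two conditions amount to $\phi$ being preserved by both $E_\B$ and $E_\Q$, and two expected subalgebras need not admit a common invariant state. For example, take two atomic masas $\B,\Q\subseteq B(\ell^2(\mathbb N))$ with $\B\cap\Q=\mathbb C$: both conditions force the density matrix of $\phi$ into $\B\cap\Q$, which is impossible. Your suggested construction, a state on $\Q$ composed with $E_\Q$, gives $E_\Q$-invariance but loses $E_\B$-invariance.

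Both properties are used essentially in your argument:
\begin{itemize}
\item $\phi\circ E_\B=\phi$ is what gives $e_\B(yu_n\phi^{1/2})=E_\B(yu_n)\phi^{1/2}$ in Step 2;
\item $\Q$ being $\sigma^\phi$-invariant is what puts $u\psi^{1/2}$ into $L^2(\Q^\omega)$, where the convergence hypothesis applies.
\end{itemize}
With only one of them, either the computation of $(xe_\B y)^\omega(u\psi^{1/2})$ or the final contradiction breaks down.

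\textbf{The missing idea} is to decouple the two states, using the fact that the defining property gives $E_{\B^\omega}(x^*uz)=0$ for \emph{arbitrary} right factors $z\in\M$, not only $z=p$ (or $1_\B$). This holds because $u=pup$ and the inclusions are unital, so $1_\B=1$; the corner bookkeeping you worry about is then unnecessary.

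\begin{itemize}
\item Take $\phi$ with $\phi\circ E_\B=\phi$ and $\psi=\phi^\omega$. Your computation then gives $(xe_\B y)^\omega(u\,z\psi^{1/2})=x\,E_{\B^\omega}(yuz)\psi^{1/2}=0$ for all $x,y,z\in\M$.
\item Hence $T^\omega u(z\phi^{1/2})^\omega=0$ for every $T\in C^\ast(\M e_\B\M)$ and every $z\in\M$.
\item The map $\eta\mapsto T^\omega u(\eta)^\omega$ is bounded on $L^2(\M)$ and $\M\phi^{1/2}$ is dense, so $T^\omega u(\eta)^\omega=0$ for every $\eta\in L^2(\M)$.
\item Now choose a second faithful normal state $\phi_1$ with $\Q$ globally $\sigma^{\phi_1}$-invariant, and take $\eta=\phi_1^{1/2}$. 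With $\psi_1=\phi_1^\omega$, the vector $u\psi_1^{1/2}$ lies in $L^2(\Q^\omega)$ and is nonzero, since its squared norm is $\phi_1(p)>0$.
\item Every $T_i^\omega$ annihilates $u\psi_1^{1/2}$, yet by hypothesis $T_i^\omega(u\psi_1^{1/2})\to u\psi_1^{1/2}$. This is the contradiction.
\end{itemize}

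This is exactly how the paper's proof proceeds. The remaining points in your outline (reduction to $xe_\B y$, and identifying $T^\omega$ on representatives inside $L^2(\M)^\omega$) are fine.
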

\begin{proof}
Let $\phi$ be a normal faithful state such that $\phi\circ E_\B=\phi$. Set $\psi=\phi^\omega$.
Assume to the contrary that there is a non-zero projection in $p\in \Q$  such that $p\Q p\npreceq_{\M}^\omega\B$. Then there is a non-zero element $u\in (p\Q p)^\omega$ such that $E_{\B^\omega}(x^\ast u y)=0$ for all $x,y\in \M$. This implies that for all $z\in \M$, we have \[\|(ye_\B x)^\omega
(uz\psi^{1/2})\|=\|ye_{B^\omega}x(uz\psi^{1/2})\|\leq \|y\| \|E_{B^\omega}(xuz)\|_\psi=0\]
and hence
\[T^\omega u(z\phi^{1/2})^\omega=T^\omega (uz\psi^{1/2})=0\] for each $T\in C^\ast(\M e_{\B}\M)$ and $z\in \M$, which further implies that    $T^\omega u(\eta)^\omega=0$ for all $\eta\in L^2(\M)$, since $\M\phi^{1/2}$ is dense in $L^2(\M)$. Let $\phi_1$ be a normal faithful state on $\M$ such that $\Q$ is globally invariant under $\sigma^{\phi_1}$. Set $\psi_1=\phi_1^\omega$. Thus what we have proved implies in particular that 
\[T^\omega(u\psi_1^{1/2})=T^\omega u(\phi_1^{1/2})^\omega=0.\] In particular, we have $T_i^\omega(u\psi_1^{1/2})=0$ for each $i$. But then $u\psi_1^{1/2}\in L^2(\Q^\omega)$; so the hypothesis implies that $T_i^\omega(u\psi_1^{1/2})\to u\psi_1^{1/2}$ along $i$. Consequently, we obtain $u\psi_{1}^{1/2}=0$, an obvious contradiction.
\end{proof}

When $\Q$ is a finite von Neumann algebra in Proposition \ref{prop:convergence of deformation with compact resolvent gives intertwining} above,  one can state the result  as follows without the use of the ultrapower in view of Proposition \ref{prop:equivalent conditions for L2-rigidity} and Proposition \ref{prop: comparion of two inytertwining notion}.

\begin{proposition}\label{prop:convergence of deformation with compact resolvent for tracial Q}
Let $\B,\Q\subseteq\M$ be inclusion of von Neumann algebras with expectation, and let $\omega$ be a cofinal ultrafilter on a directed set.  Let $T_i$ be a net of operators in $C^\ast(\M e_\B \M)$. If $\Q$ is finite and such that $T_i$ uniformly converges along $i$ to identity on the unit ball of $\Q$, then  for each projection $p\in \Q$ we have $p\Q p\preceq_{\M}\B$ in the sense of Definition \ref{def:intertwining-by-bimodule}.
\end{proposition}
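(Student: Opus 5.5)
The plan is to reduce to Proposition \ref{prop:convergence of deformation with compact resolvent gives intertwining}, using Proposition \ref{prop:equivalent conditions for L2-rigidity} to replace uniform convergence by strong convergence on the ultrapower, and then Proposition \ref{prop: comparion of two inytertwining notion} to return from the ultrapower notion to Popa's intertwining for the finite algebra $p\Q p$.

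\textbf{Step 1: pass to the ultrapower.} Fix a faithful normal state $\varphi$ with $\Q$ globally invariant under $\sigma^\varphi$. Since $\Q$ is finite, we may choose $\varphi|_\Q$ to be a trace on $\Q$, so that $\sigma^\varphi$ acts trivially on $\Q$. Then every $x\in\Q$ satisfies $\sigma^\varphi_{\pm i/2}(x)=x$, and the sets in Proposition \ref{prop:equivalent conditions for L2-rigidity}(2) are exactly unit balls of $\Q$.

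The hypothesis gives $\sup_{x\in(\Q)_1}\|T_i x\varphi^{1/2}-x\varphi^{1/2}\|\to0$. The $T_i$ are arbitrary elements of $C^\ast(\M e_\B\M)$, not ucp maps, so Lemma \ref{lem:uniform convergence gives ultrapower pointwise convergence} cannot be quoted verbatim. However, its proof uses only boundedness of the operators. For $a=(x_n)^\omega\in\Q^\omega$ with $\sup_n\|x_n\|\le 1$, we have $a(\varphi^\omega)^{1/2}=(x_n\varphi^{1/2})^\omega$, and therefore
\[
\|T_i^\omega(a(\varphi^\omega)^{1/2})-a(\varphi^\omega)^{1/2}\|\le \sup_{y\in(\Q)_1}\|T_iy\varphi^{1/2}-y\varphi^{1/2}\|\to0 .
\]
Density of $\Q^\omega(\varphi^\omega)^{1/2}$ in $L^2(\Q^\omega)$, together with uniform boundedness of $\|T_i\|$, then gives strong convergence of $T_i^\omega$ to the identity on $L^2(\Q^\omega)$.

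The boundedness of $\sup_i\|T_i\|$ is the delicate point. If it fails, I would instead work directly with the element $a(\varphi^\omega)^{1/2}$ for the specific $a=u$ used in Step 2. There only the displayed estimate is needed, not density, so the boundedness issue disappears.

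\textbf{Step 2: apply the ultrapower obstruction.} Proposition \ref{prop:convergence of deformation with compact resolvent gives intertwining} now yields $p\Q p\preceq^\omega_\M\B$ for every projection $p\in\Q$. Its proof uses only the single vector $u\psi_1^{1/2}$ with $u\in(p\Q p)^\omega$, and as noted in Step 1 that vector suffices.

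\textbf{Step 3: return to Popa's intertwining.} Since $p\Q p$ is finite and the inclusion $p\Q p\subseteq p\M p$ is with expectation, Proposition \ref{prop: comparion of two inytertwining notion} ((2)$\Rightarrow$(1)) converts $p\Q p\preceq_\M^\omega\B$ into $p\Q p\preceq_\M\B$. The main obstacle is that this proposition assumes $\M$ has separable predual, which the statement does not require. If separability is needed, I would reduce to a separable expected subalgebra containing $p\Q p$ and countably many relevant elements. Failing that, I would rerun the direct argument: negate $p\Q p\preceq_\M\B$ via \cite[Theorem 4.3]{HI17} to obtain a net of unitaries $u_j\in p\Q p$ with $E_\B(x^\ast u_jy)\to0$ $\ast$-strongly. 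Finiteness of $p\Q p$ makes $(u_j)$ define an element of $(p\Q p)^\omega$ along a cofinal ultrafilter on the index directed set, which contradicts Step 2 with that ultrafilter.
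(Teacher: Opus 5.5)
Your proposal is correct and follows the paper's own route, which obtains this statement by combining Proposition~\ref{prop:equivalent conditions for L2-rigidity} (uniform convergence on $(\Q)_1$ gives strong convergence on $L^2(\Q^\omega)$), Proposition~\ref{prop:convergence of deformation with compact resolvent gives intertwining}, and Proposition~\ref{prop: comparion of two inytertwining notion} applied to the finite algebra $p\Q p$. You also supply two details the paper leaves implicit: testing only on the vector $u\psi^{1/2}$ removes any need for $\sup_i\|T_i\|<\infty$ or for the ucp structure used in Lemma~\ref{lem:uniform convergence gives ultrapower pointwise convergence}, and using the net form of \cite[Theorem 4.3]{HI17}, with a cofinal ultrafilter on the net's index set, removes the separability and $\omega\in\beta\mathbb N\setminus\mathbb N$ assumptions of the comparison proposition.
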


A similar argument as in the proof of Proposition \ref{prop:convergence of deformation with compact resolvent gives intertwining} can be used to prove the following result:

\begin{proposition}\label{prop:convergence of deformation with compact resolvent gives intertwining on relative commutatant}
Let $\B,\Q\subseteq\M$ be inclusion of von Neumann algebras with expectation, and let $\omega$ be a cofinal ultrafilter on a directed set.  Let $T_i$ be a net of operators in $C^\ast(\M e_\B \M)$ such that $T_i^\omega$ strongly converges to identity on $L^2(\Q'\cap\M^\omega)$ along $i$. Then for every projection $p\in \Q'\cap\M^\omega$ and a unitary $u\in p(\Q'\cap\M^\omega)p$, there is $x\in \M$ such that   $E_{\B^\omega}(x^\ast u x)\neq 0$.
\end{proposition}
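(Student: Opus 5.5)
We argue by contradiction, along the lines of the proof of Proposition \ref{prop:convergence of deformation with compact resolvent gives intertwining}. Fix a normal faithful state $\phi$ on $\M$ with $\phi\circ E_\B=\phi$ and set $\psi=\phi^\omega$. Suppose there are a projection $p\in\Q'\cap\M^\omega$ and a unitary $u\in p(\Q'\cap\M^\omega)p$ with $E_{\B^\omega}(x^\ast ux)=0$ for all $x\in\M$. By polarization ($x\mapsto x+i^k y$, $k=0,\dots,3$) we get $E_{\B^\omega}(x^\ast uy)=0$ for all $x,y\in\M$. We will show that $u$ must vanish, which is absurd since $u$ is a unitary in $p\M^\omega p$ with $p\neq 0$. (If $p=0$ there is nothing to prove, so we assume $p\neq0$.)

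The first step is to show that every $T\in C^\ast(\M e_\B\M)$ annihilates $u(\M\phi^{1/2})^\omega$. Exactly as in the proof of Proposition \ref{prop:convergence of deformation with compact resolvent gives intertwining}, for $x,y,z\in\M$ we compute
\[
\|(ye_\B x)^\omega(uz\psi^{1/2})\|=\|ye_{\B^\omega}(xuz\psi^{1/2})\|\le\|y\|\,\|E_{\B^\omega}(xuz)\|_\psi=0 .
\]
Passing to linear combinations, products and norm limits, we obtain $T^\omega(uz\psi^{1/2})=0$ for every $T\in C^\ast(\M e_\B\M)$ and $z\in\M$. Since $\M\phi^{1/2}$ is dense in $L^2(\M)$ and left multiplication by $u$ is bounded, this gives $T^\omega(u\eta^\omega)=0$ for all $\eta\in L^2(\M)$, where $\eta^\omega$ denotes the constant sequence.

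The second step is to produce a vector of $L^2(\Q'\cap\M^\omega)$ to which the hypothesis applies. Choose a normal faithful state $\phi_1$ on $\M$ such that $\Q$ is globally invariant under $\sigma^{\phi_1}$, and set $\psi_1=\phi_1^\omega$. Then $\Q'\cap\M^\omega$ is globally invariant under $\sigma^{\psi_1}$, and $L^2(\Q'\cap\M^\omega)$ is realized as the closure of $(\Q'\cap\M^\omega)\psi_1^{1/2}$ inside $L^2(\M^\omega)\subseteq L^2(\M)^\omega$. Since $u\in\Q'\cap\M^\omega$, the vector $u\psi_1^{1/2}$ lies in $L^2(\Q'\cap\M^\omega)$. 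Moreover $\psi_1^{1/2}=(\phi_1^{1/2})^\omega$ is the constant sequence, because $E_\M$ implements $\phi_1^\omega=\phi_1\circ E_\M$. By the first step, $T_i^\omega(u\psi_1^{1/2})=0$ for every $i$. On the other hand, the hypothesis says that $T_i^\omega(u\psi_1^{1/2})\to u\psi_1^{1/2}$. Hence $u\psi_1^{1/2}=0$, and since $\psi_1$ is faithful on $\M^\omega$ this forces $u=0$, contradicting $u^\ast u=p\ne0$.

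The main point to watch is the identification of $(\phi_1^{1/2})^\omega$ with $\psi_1^{1/2}$ and the compatibility of the realization $L^2(\M^\omega)\subseteq L^2(\M)^\omega$ with the operators $T^\omega$. This is the same bookkeeping already carried out in Proposition \ref{prop:convergence of deformation with compact resolvent gives intertwining}, and it uses that $\phi_1^{1/2}$ is a fixed vector of $L^2(\M)$ viewed as a constant sequence. The other delicate point is the polarization in the first step: the hypothesis only gives the diagonal condition $E_{\B^\omega}(x^\ast ux)=0$, whereas the annihilation argument needs the off-diagonal terms $E_{\B^\omega}(x^\ast uy)=0$ as well.
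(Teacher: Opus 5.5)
Your proposal is correct and follows essentially the same route as the paper, which only says to adapt the argument of the preceding proposition: first show that $C^\ast(\M e_\B\M)^\omega$ annihilates $u(\M\phi^{1/2})^\omega$, then test the hypothesis on $u\psi_1^{1/2}\in L^2(\Q'\cap\M^\omega)$. Your explicit polarization step, passing from $E_{\B^\omega}(x^\ast ux)=0$ to $E_{\B^\omega}(x^\ast uy)=0$, is exactly what that adaptation needs, and one wording fix: for $p=0$ the conclusion actually fails (then $u=0$), so the statement must be read for nonzero $p$ rather than there being ``nothing to prove''.
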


Specializing to $\B=\mathbb C$, we have the following result.

\begin{proposition}\label{prop:convergence of deformation with compact resolvent gives atomicity}
Let $\Q\subseteq\M$ be inclusion of von Neumann algebras with expectation, and let $\omega\in \beta\mathbb N\setminus\mathbb N$.  Let $T_i$ be a net of compact operators  such that $T_i^\omega\xi\to\xi$  for all $\xi\in L^2(\Q'\cap\M^\omega)$ (resp. $\Q^\omega$) along $i$. Then $\Q'\cap\M^\omega$ and hence $\Q'\cap\M$ (resp. $\Q^\omega$ and hence $\Q$) are completely atomic.
\end{proposition}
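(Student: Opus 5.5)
The plan is to reduce to Proposition \ref{prop:convergence of deformation with compact resolvent gives intertwining on relative commutatant} (resp.\ Proposition \ref{prop:convergence of deformation with compact resolvent gives intertwining}) with $\B=\mathbb C$, and then to show that a von Neumann algebra containing no unitary $u$ in any corner with vanishing expectations against $\M$ must be completely atomic. With $\B=\mathbb C$, the Jones projection $e_{\mathbb C}$ is the rank-one projection onto $\mathbb C\phi^{1/2}$, so $C^\ast(\M e_{\mathbb C}\M)$ is the norm closure of finite-rank operators with vectors in $\M\phi^{1/2}$. This is exactly the compact operators on $L^2(\M)$, since $\M\phi^{1/2}$ is dense. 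Hence the net $T_i$ of compact operators lies in $C^\ast(\M e_\B\M)$, and the hypothesis matches the one in those propositions.

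\emph{Relative commutant case.} Put $\P=\Q'\cap\M^\omega$. Suppose $\P$ is not completely atomic. Then there is a nonzero projection $p\in\P$ with $p\P p$ diffuse. I will produce a unitary $u\in p\P p$, or, after replacing $\omega$, in its ultrapower, with $E_{\mathbb C}(x^\ast u y)=\phi^\omega(x^\ast u y)=0$ for all $x,y\in\M$. The identity $E_{\mathbb C^\omega}=\phi^\omega$ holds because $E_\M$ composed with $\phi$ gives $\phi^\omega$. Such a $u$ contradicts Proposition \ref{prop:convergence of deformation with compact resolvent gives intertwining on relative commutatant}; more directly, it contradicts the argument of Proposition \ref{prop:convergence of deformation with compact resolvent gives intertwining}. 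There one checks $T^\omega(u\eta)=0$ for all compact $T$ and all $\eta\in L^2(\M)$, using that compact $T$ are norm limits of $\xi\langle\eta',\cdot\rangle$ with $\eta'\in\M\phi^{1/2}$. Note that we need $\langle y\phi^{1/2},u z(\phi^\omega)^{1/2}\rangle=\phi^\omega(y^\ast u z)=0$. Then the vector $up\psi^{1/2}\in L^2(\P)$ is killed by every $T_i^\omega$ while being fixed in the limit, so it is $0$, which is absurd.

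\emph{Main obstacle.} The obstacle is constructing $u$: a unitary in a diffuse corner of $\P\subseteq\M^\omega$ such that $\phi^\omega(x^\ast u y)=E$-orthogonality holds against all of $\M$. My first attempt is to take a diffuse abelian subalgebra of $p\P p$ and unitaries $u_k$ with $u_k\to0$ weakly; Haar unitaries in a diffuse abelian algebra with a faithful state work. Then $\phi^\omega(x^\ast u_k y)\to0$ for fixed $x,y$. Next, by separability of $\M_\ast$ (the question is about $\omega\in\beta\mathbb N\setminus\mathbb N$), I take a diagonal subsequence in the spirit of Proposition \ref{prop:
 diffusenss in the scalar case} and \cite[Theorem A]{HR15}. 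Representing $u_k$ by sequences and diagonalizing against a countable dense set of $x,y$ gives an element $u\in\P$ with exactly vanishing pairings; it stays in $\Q'\cap\M^\omega$ because commutation with a countable dense subset of $\Q$ is preserved by diagonalization. The delicate point is membership in $\mathcal M^\omega(\M)$. I will handle it by choosing the $u_k$ in the centralizer of $\psi$, as in \cite[Lemma 2.5]{HR15}, so that the argument of \cite[Proposition 2.2]{HR15} applies, as in Lemma \ref{lem:pointwise convergece gives uniform convergence at the level of centralizer}.

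\emph{Descent to $\Q'\cap\M$ and the case of $\Q$.} Once $\P$ is completely atomic, $\Q'\cap\M$ is completely atomic too. It sits in $\P$ with the expectation $E_\M$ restricted to it, and an expected subalgebra of an atomic algebra is atomic: a diffuse corner of $\Q'\cap\M$ would yield, via Proposition \ref{prop; diffusenss in the scalar case}, a diffuse piece of $\P$. The case of $\Q^\omega$ and $\Q$ is identical. It uses Proposition \ref{prop:convergence of deformation with compact resolvent gives intertwining} with $\B=\mathbb C$, giving $p\Q p\preceq^\omega_\M\mathbb C$ for every projection $p\in\Q$. By Proposition \ref{prop; diffusenss in the scalar case}, no corner of $\Q$ is diffuse, so $\Q$ is completely atomic, and then so is $\Q^\omega$ by the same argument.
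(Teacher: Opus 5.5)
Your use of compactness is sound. For $\B=\mathbb C$ one has $C^\ast(\M e_{\mathbb C}\M)=\mathbb K(L^2(\M))$. The condition $\phi^\omega(x^\ast uy)=0$ for all $x,y\in\M$ is just $E_\M(u)=0$. For such $u$, every $T^\omega$ with $T$ compact annihilates $u(\phi^\omega)^{1/2}$.

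The gap is the step you yourself call the main obstacle: producing a unitary $u$ with $E_\M(u)=0$ in an arbitrary diffuse corner of $\Q'\cap\M^\omega$.
\begin{itemize}
\item Your diagonal sequence only lands in $\mathcal M^\omega(\M)$ if the $u_k$ lie in the centralizer of $\phi^\omega$. So you need a diffuse abelian subalgebra of $p(\Q'\cap\M^\omega)p$ inside $(\M^\omega)^{\phi^\omega}$, and $p$ itself must lie in the centralizer (for instance, central in $\Q'\cap\M^\omega$).
\item Diffuseness alone does not give this, since a diffuse algebra can have trivial centralizer for a given state.
\item In this paper, \cite[Lemma 2.5]{HR15} is invoked only for irreducible inclusions of factors without $w$-spectral gap. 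You give no argument for a general diffuse corner.
\item The diagonalization also uses separability of $\M_\ast$, both to keep commutation with $\Q$ and to get vanishing against all of $\M$. Separability is not among the hypotheses, and $\omega\in\beta\mathbb N\setminus\mathbb N$ does not imply it.
\item The same unproven step is hidden in your ``same argument'' for $\Q^\omega$.
\end{itemize}

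None of this is needed. The paper's proof observes that for compact $T$ one has $T^\omega(L^2(\M)^\omega)\subseteq L^2(\M)$. For $T=\langle b,\cdot\rangle a$ this is the identity $T^\omega((\xi_n)^\omega)=\lim_{n\to\omega}\langle b,\xi_n\rangle\,a$, and the general case follows by norm approximation.
\begin{itemize}
\item Hence $T_i^\omega\xi\in L^2(\M)$ for every $\xi\in L^2(\Q'\cap\M^\omega)$, and $T_i^\omega\xi\to\xi$ forces $\xi\in L^2(\M)$. Your vanishing computation is exactly the special case $\xi\perp L^2(\M)$.
\item Take $\phi$ with $\Q$ globally $\sigma^\phi$-invariant and $\xi=x(\phi^\omega)^{1/2}$ for $x\in\Q'\cap\M^\omega$. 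Then $x=E_\M(x)$, so $\Q'\cap\M^\omega\subseteq\M$.
\item Therefore $\Q'\cap\M^\omega=\Q'\cap\M=(\Q'\cap\M)^\omega$. Complete atomicity follows from \cite[Theorem 2.3]{HR15} applied to the algebra $\Q'\cap\M$ itself. Likewise $\Q^\omega=\Q$ in the other case.
\end{itemize}
With this inclusion in hand, the only unitary you would ever need lies in $(q(\Q'\cap\M)q)^\omega$ for a diffuse corner of $\Q'\cap\M$. That is the scalar-case fact your descent paragraph already uses, and no diagonalization inside $\M^\omega$ is required.
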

\begin{proof}
We prove the result only for $\Q'\cap\M^\omega$. For every compact operator $T$ on $L^2(\M)$, we have
$T^\omega(L^2(\M)^\omega)\subset L^2(\M).$ Indeed, this holds for finite-rank operators, and then follows for compact operators by norm approximation. Thus for all $\xi\in L^2(\Q'\cap\M^\omega)$, each $T_i\xi\in L^2(\M)$ which, by hypothesis, implies that $\xi\in L^2(\M)$. This proves 
$L^2(Q'\cap M^\omega)\subseteq L^2(\M)
$ which further implies $\Q'\cap\M^\omega\subseteq\M$, and hence $\Q'\cap\M=(\Q'\cap\M)^\omega$. By \cite[Theorem 2.3]{HR15}, it follows that $\Q'\cap\M$ is atomic.
\end{proof}

\section{Some results on solidity}\label{sec:solidity}

In his fundamental paper, Ozawa \cite{Oza04}  introduced a remarkable rigidity property of von Neumann algebras, called  solidity.  He shows that
 group von Neumann algebras of biexact groups
are solid. Any nonamenable solid factors are automatically full and prime.  In \cite{Oz06},  he extended the result to relative solidity, showing that crossed
products by trace preserving actions on abelian algebras are solid relative to the base algebra. This has been further generalized to type III von Neumann algebras \cite{Mar17, Iso25}.

In this section, we prove some general results on $\omega$-solidity and stable solidity of von Neumann algebras with separable predual admitting modular derivations with  compact resolvent and satisfying the hypotheses of Theorem \ref{main_theorem}.  We also introduce new notions of relative $\omega$-solidity and relative stable solidity. We apply our general results to examples such as amalgamted free products, operator-valued free Araki-Woods factors and cocycle crossed products.

\subsection{On $\omega$-solidity}

Recall from \cite{HR15} that if $\omega$ is a cofinal ultrafilter, then a diffuse von Neumann algebra $\M$ is called {\em $\omega$-solid} if whenever $\Q$ is a von Neumann subalgebra of $\M$ with expectation such that $\Q'\cap\M^\omega$ is diffuse, we have that $Q$ is amenable.
\begin{proposition}
    If a von Neumann algebra $\M$ admits a modular derivation $\delta\colon \text{dom}(\delta)\subset L^2(\M)\to \H $ with $\H \prec L^2(\M)\otimes L^2(\M)$ such that $\delta^\ast\bar\delta$ has compact resolvent then $\M$ is $\omega$-solid for every cofinal ultrafilter $\omega$ on a directed set  and hence $\M$ is solid.
\end{proposition}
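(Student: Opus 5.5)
Let $\Q\subseteq\M$ be a von Neumann subalgebra with expectation such that $\Q'\cap\M^\omega$ is diffuse. We want to show that $\Q$ is amenable. We argue by contradiction, combining the dichotomy of Corollary \ref{cor:dichotomy result in amenable case} with the compactness obstruction of Proposition \ref{prop:convergence of deformation with compact resolvent gives atomicity}.

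\emph{Reduction to a corner.} Suppose $\Q$ is not amenable. Let $z\in\mathcal Z(\Q)$ be the maximal central projection with $z\Q$ amenable; then $z^\perp\neq 0$ and $z^\perp\Q$ has no nonzero amenable summand. Put $\Q_0=z^\perp\Q\subseteq z^\perp\M z^\perp$, which is an inclusion with expectation. Since $z^\perp\in\Q\cap\Q'$, we have
\[
\Q_0'\cap z^\perp\M^\omega z^\perp=z^\perp(\Q'\cap\M^\omega)z^\perp,
\]
and this corner of a diffuse algebra is again diffuse.

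\emph{Applying the two results.} By the remark following the definition of complete nonamenability, $\Q_0$ is completely nonamenable relative to $\mathbb C$. Corollary \ref{cor:dichotomy result in amenable case} (with $1_\Q=z^\perp$) therefore shows that $(\theta_\alpha^{(2)})^\omega$ converges strongly to the identity on $L^2(\Q_0'\cap z^\perp\M^\omega z^\perp)$ as $\alpha\to\infty$. Because $\delta^\ast\bar\delta$ has compact resolvent, every $\theta_\alpha^{(2)}=\alpha(\alpha+\delta^\ast\bar\delta)^{-1}$ is compact. Taking $T_\alpha=\theta^{(2)}_\alpha$ along a sequence $\alpha_n\to\infty$, Proposition \ref{prop:convergence of deformation with compact resolvent gives atomicity} then forces $\Q_0'\cap z^\perp\M^\omega z^\perp$ to be completely atomic. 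This contradicts its diffuseness, so $\Q$ is amenable.

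\emph{Expected difficulties.} The main technical point is that Proposition \ref{prop:convergence of deformation with compact resolvent gives atomicity} is stated for a unital subalgebra and for $\omega\in\beta\mathbb N\setminus\mathbb N$, whereas we need it for a corner and for a general cofinal ultrafilter. Its proof adapts directly: a compact $T$ satisfies $T^\omega(L^2(\M)^\omega)\subseteq L^2(\M)$, so every $\xi\in L^2(z^\perp(\Q'\cap\M^\omega)z^\perp)$ already lies in $L^2(\M)$. Hence this relative commutant is contained in $\M$, and therefore in $z^\perp(\Q'\cap\M)z^\perp$. A diffuse algebra $D$ cannot satisfy $D\subseteq\M$ in this way. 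Indeed, diffuseness gives a unitary in $D$ with expectation zero onto $\M$ (as in Proposition \ref{prop; diffusenss in the scalar case}, using \cite{HR15}), but an element of $\M$ equals its own expectation onto $\M$, so that unitary would be zero. For a general directed set one could instead run this last argument directly rather than invoke \cite[Theorem 2.3]{HR15}, which should avoid any separability issue.

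\emph{Solidity.} To deduce solidity, take a diffuse $\Q\subseteq\M$ with expectation and assume $\Q'\cap\M$ is diffuse. Then $\Q'\cap\M\subseteq\Q'\cap\M^\omega$, and the latter is diffuse as well, since a diffuse subalgebra with expectation (the constant embedding has expectation $E_\M$) yields diffuseness. Applying $\omega$-solidity to $\Q$ gives that $\Q$ is amenable, which is solidity.
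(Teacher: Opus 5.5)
Your argument is essentially the paper's proof: you cut down to the central summand $z^\perp\Q$ with no amenable summand, obtain strong convergence of $(\theta_\alpha^{(2)})^\omega$ on $L^2(\Q_0'\cap z^\perp\M^\omega z^\perp)$ from Theorem~\ref{main_theorem} (via Corollary~\ref{cor:dichotomy result in amenable case}), and use compactness of $\theta_\alpha^{(2)}$ as in Proposition~\ref{prop:convergence of deformation with compact resolvent gives atomicity} to contradict diffuseness; your handling of the corner and of the passage to solidity is more explicit than the paper's. One imprecision in the last step: Proposition~\ref{prop; diffusenss in the scalar case} applied to $N=D\subseteq\M$ gives a unitary $u\in N^\omega$ with $E_\M(u)=0$, not a unitary in $D$, so you should add that $N^\omega\subseteq\Q_0'\cap z^\perp\M^\omega z^\perp=D\subseteq\M$, whence $u=E_\M(u)=0$ (a diffuse subalgebra of $\M^\omega$ can of course lie in $\M$; the contradiction comes from the relative-commutant structure).
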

\begin{proof}
Fix a cofinal ultrafilter $\omega$ on a directed set.
Let $\Q$ be a subalgebra of $\M$ with expectation such that $\Q$ is not amenable. Let $p\in \mathcal{Z}(\Q)$ be a non-zero projection such that $p\Q$ has no amenable summand.  By  Theorem \ref{main_theorem}, $(\theta_\alpha^{(2)})^\omega$ converges strongly  on $L^2(p(\Q'\cap\M^\omega))$. Since $\theta_\alpha^{(2)}$ is compact for each $\alpha>0$, $p(\Q'\cap\M^\omega)$ is atomic  by Proposition \ref{prop:convergence of deformation with compact resolvent gives intertwining}; hence $\Q'\cap\M^\omega$ is not diffuse.
\end{proof}

More generally, we propose the following definition of relative $\omega$-solidity.  Let $\omega$ be a cofinal ultrafilter on a directed set and $\B\subseteq\M$ be an inclusion of von Neumann algebras  with expectation. 

We say that $\M$ is {\em $\omega$-solid relative to $\B$} if whenever $\Q\subseteq 1_\Q\M1_\Q$ is an inclusion of subalgebras with expectation such that $\Q$ is completely non-amenable relative to $\B$, then for every non-zero projection $p\in \Q'\cap1_\Q\M^\omega1_\Q$, there is  no unitary $u\in \mathcal{U}((p\Q)'\cap p\M^\omega p))$ such that $E_{\B^\omega}(x^\ast u x) =0$ for all $x\in p\M$.


When $\B=\mathbb C$, $\omega\in\beta\mathbb N\setminus\mathbb N$ and $\M$ has separable predual, then the two definitions of $\omega$-solidity agree. Indeed, if $\M$ is $\omega$-solid in the sense of \cite{HR15}, then for  an expected inclusion $\Q\subseteq\M$ such that  $\Q$ is completely non-amenable,  we must have that  $\Q'\cap\M^\omega$ is atomic. Indeed, if there is a projection $p\in\mathcal{Z}(\Q'\cap\M^\omega)$ such that $p(\Q'\cap\M^\omega)$ is diffuse, then by  the proof \cite[Theorem A]{HR15}, there is a unitary $u\in p(\Q'\cap\M^\omega)p$ such that $E_{\M}(u)=0$, which is the same as saying that $E_{\mathbb C^\omega}(x^\ast u x)=0$ for all $x\in p\M$.

\begin{theorem}\label{thm:omega-solidity}
 Let $\B\subseteq\M$ be an inclusion of  von Neumann algebras  with expectation  such that $\M$ has separable predual, and   let $\delta:\dom\delta\subseteq L^2(\M)\to\H$ be a densely defined closable modular derivation such that $\B\subseteq\Null(\bar\delta)$. Assume that there exist $\B$-$\B$ correspondences $\K$ and $\K_t$ for $t\geq 0$ such that $_\M\H_\M\prec {_\M L^2(\M)}\otimes_\B\K\otimes_\B L^2(\M)_\M$ and $_\M(\H_{\Phi_t})_\B\prec {_\M L^2(\M)}\otimes_\B (\K_t)_\B$ for $t\geq 0$. If  $\delta^\ast\bar\delta$ has compact resolvent relative to $\B$, then $\M$ is $\omega$-solid relative to $\B$ for any cofinal ultrafilter $\omega$.
\end{theorem}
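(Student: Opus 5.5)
The plan is to combine Theorem \ref{main_theorem} with the relative-compactness argument of Proposition \ref{prop:convergence of deformation with compact resolvent gives intertwining on relative commutatant}. Fix a cofinal ultrafilter $\omega$ and an expected inclusion $\Q\subseteq 1_\Q\M1_\Q$ that is completely nonamenable relative to $\B$. Suppose, towards a contradiction, that there are a non-zero projection $p\in\Q'\cap 1_\Q\M^\omega 1_\Q$ and a unitary $u\in(p\Q)'\cap p\M^\omega p$ with $E_{\B^\omega}(x^\ast ux)=0$ for all $x\in p\M$.

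\emph{Step 1: verify the hypotheses of Theorem \ref{main_theorem}.} By Lemma \ref{lem:ext_bimodule_action}, $L^2(\M)\otimes_\B\K\otimes_\B L^2(\M)$ carries commuting normal actions of $\langle\M,\B\rangle$ and $\langle\M,\B\rangle^{\op}$ extending the $\M$-bimodule structure. Similarly, $L^2(\M)\otimes_\B\K_t$ is an $\langle\M,\B\rangle$-$\B$ correspondence. The weak-containment hypotheses of Theorem \ref{main_theorem} therefore hold. Since $\Q$ is completely nonamenable relative to $\B$, alternative (1) fails, so $(\theta_\alpha^{(2)})^\omega\to\mathrm{id}$ strongly on $L^2(\Q'\cap1_\Q\M^\omega1_\Q)$. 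In particular $(\theta_\alpha^{(2)})^\omega(u\psi^{1/2})\to u\psi^{1/2}$, where $\psi=\phi^\omega$ for a state $\phi$ with $1_\Q\in\M^\phi$ and $\Q$ globally $\sigma^\phi$-invariant. Here I use that $u\in\Q'\cap p\M^\omega p$ because $p$ commutes with $\Q$.

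\emph{Step 2: use relative compactness to kill $u$.} The hypothesis that $\delta^\ast\bar\delta$ has compact resolvent relative to $\B$ should mean $\theta_\alpha^{(2)}\in\mathbb K(\B\subset\M,\phi)$, the norm closure of $\operatorname{span}\{ye_\B x\}$. Following the proof of Proposition \ref{prop:convergence of deformation with compact resolvent gives intertwining}, it suffices to show $(ye_\B x)^\omega(u\psi^{1/2})=0$ for $x,y\in\M$; norm limits are harmless because $T\mapsto T^\omega$ is isometric. Computing,
\[
(ye_\B x)^\omega(u\psi^{1/2}) = y\,E_{\B^\omega}(xu)\,\psi_\B^{1/2}.
\]
This is where the hypothesis enters. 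Since $u=pu=up$, we can write $E_{\B^\omega}(xu)=E_{\B^\omega}(xpup)$. To use the assumed vanishing, which is only for diagonal expressions $x^\ast ux$, I would polarize: apply it to $x_1+\lambda x_2$ with $\lambda\in\{1,-1,i,-i\}$ and $x_1,x_2\in p\M$, obtaining $E_{\B^\omega}(x_1^\ast ux_2)=0$. The element $pu\,p\,1$ is not of this form, so I would approximate $\psi^{1/2}$ by $\M\phi^{1/2}$ and work with vectors $uz\psi^{1/2}$, $z\in\M$, as in the proof of Proposition \ref{prop:convergence of deformation with compact resolvent gives intertwining}.

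\emph{Step 3: the main obstacle.} The difficulty is that $p\in\M^\omega$ is not in $\M$, while the hypothesis only covers $x\in p\M$. I would handle this by observing that $E_{\B^\omega}(xuz)=E_{\B^\omega}((px^\ast)^\ast u(pz))$, where $px^\ast,pz\in p\M$, so the polarized identity still applies. Taking $x,z\in\M$ arbitrary then gives $T^\omega(uz\psi^{1/2})=0$ for every $T\in\mathbb K(\B\subset\M,\phi)$. Choosing $z$ to approximate the vector $\phi^{1/2}$ defining $\psi$, in particular $z=1$, gives $(\theta^{(2)}_\alpha)^\omega(u\psi^{1/2})=0$ for all $\alpha$. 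Combined with Step 1, this yields $u\psi^{1/2}=0$, contradicting that $u$ is a unitary in $p\M^\omega p$ with $p\ne0$. The remaining point to check carefully is the precise meaning of ``compact resolvent relative to $\B$'' and its compatibility with the chosen state $\phi$; I expect it to be the $\phi$-independent membership of $\theta_\alpha^{(2)}$ in $C^\ast(\M e_\B\M)$.
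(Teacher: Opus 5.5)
Your proposal is correct and follows the paper's proof. The paper likewise invokes Theorem~\ref{main_theorem} to get $(\theta_\alpha^{(2)})^\omega\to\mathrm{id}$ on $L^2(\Q'\cap 1_\Q\M^\omega 1_\Q)$, and then applies Proposition~\ref{prop:convergence of deformation with compact resolvent gives intertwining on relative commutatant}, whose argument you reproduce inline; your identity $E_{\B^\omega}(xuz)=E_{\B^\omega}((px^\ast)^\ast u(pz))$ is exactly what reconciles the ``$x\in p\M$'' formulation with that Proposition.

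The one point to tidy is the one you flagged. Two states are in play: a $\phi_0$ with $\phi_0\circ E_\B=\phi_0$, used to compute $(ye_\B x)^\omega$ through $E_{\B^\omega}$, and the $\Q$-adapted $\phi$ from Step~1. You pass from $T^\omega(uz(\phi_0^\omega)^{1/2})=0$ for all $z\in\M$ to $T^\omega(u\eta)=0$ for all $\eta\in L^2(\M)$, in particular $\eta=\phi^{1/2}$, by density of $\M\phi_0^{1/2}$. This is precisely the step in the proof of Proposition~\ref{prop:convergence of deformation with compact resolvent gives intertwining}.
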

\begin{proof}
 Let $\omega$ be a cofinal ultrafilter on a directed set $I$.    Let $\Q$ be a subalgebra of $1_\Q\M1_\Q$ with expectation $\Q$ is completely nonamenable relative to $\B$. 
 By Theorem \ref{main_theorem}, $(\theta_\alpha^{(2)})^\omega$ converges strongly to the identity on $L^2(\Q'\cap1_\Q\M^\omega1_\Q)$ as $\alpha\to\infty$. By Proposition \ref{prop:convergence of deformation with compact resolvent gives intertwining on relative commutatant}, we have the result.
\end{proof}

\subsection{On stable solidity}
 Let $\B\subseteq\M$ be an inclusion of  von Neumann algebras with expectation and with separable predual, and let $\omega\in \beta\mathbb N\setminus\mathbb N$. Following \cite{DV25},  we say that $\M$ is {\em stably solid relative to $\B$ along $\omega$} if for every  von Neumann algebra $\N$ with separable predual and a von Neumann subalgebra $\Q\subseteq (\M\overline{\otimes}\N)$ with expectation such that $\Q'\cap (\M\overline{\otimes}\N)\npreceq_{\M\overline{\otimes}\N}^\omega\B\otimes\N$, we have that $\Q$ is amenable relative to $\B\otimes\N.$

When $\B=\mathbb C$ and $\M$ is a finite von Neumann algebra, then this definition agrees with that in \cite{DV25} in view of Proposition \ref{prop: comparion of two inytertwining notion}.

\begin{theorem}\label{thm:stable solidity}
    Let $\M$ be a   von Neumann algebra with separable predual and with a densely defined closable modular  derivation $\delta:\dom(\delta)\subseteq L^2(\M)\to\H$, let $\B\subseteq\Null(\bar\delta)$. Assume that there exist $\B$-$\B$ correspondences $\K$ and $\K_t$ for $t\geq 0$ such that $_\M\H_\M\prec {_\M L^2(\M)}\otimes_\B\K\otimes_\B L^2(\M)_\M$, $_\M(\H_{\Phi_t})_\B\prec{_\M L^2(\M)}\otimes_\B(\K_t)_\B$ and that $\delta^\ast\bar\delta$ has a compact resolvent relative to $\B$. Then $\M$ is stably solid relative to $\B$ along $\omega$, for any $\omega\in \beta\mathbb N\setminus\mathbb N$.
\end{theorem}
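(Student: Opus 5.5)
The plan is to transfer the derivation $\delta$ to $\tilde\M:=\M\overline{\otimes}\N$ by tensoring with the identity, and then run the argument of Theorem \ref{thm:omega-solidity}, now with $\Q$ in place of the relative commutant. Fix $\N$ with separable predual and an expected subalgebra $\Q\subseteq\tilde\M$ with $\Q'\cap\tilde\M\npreceq^\omega_{\tilde\M}\B\otimes\N$. Suppose, towards a contradiction, that $\Q$ is not amenable relative to $\tilde\B:=\B\otimes\N$. Fix a faithful normal state $\chi$ on $\N$ and take the state $\phi\otimes\chi$, where $\phi$ is the state for which $\delta$ is modular. Then $\tilde\delta=\delta\otimes\id$ is a densely defined closable $(\phi\otimes\chi)$-modular derivation into $\H\otimes L^2(\N)$, with the Tomita structure $\mathcal J\otimes J_\chi$, $\mathcal U_t\otimes\Delta_\chi^{it}$. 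Its semigroup is $\Phi_t\otimes\id_\N$, so its resolvents are $\theta_\alpha\otimes\id$, and $\tilde\B\subseteq\Null(\overline{\tilde\delta})$.

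The first step is to check the weak containment hypotheses of Theorem \ref{main_theorem} for $(\tilde\M,\tilde\B)$. Tensoring the given containments with ${_\N L^2(\N)_\N}$ gives
\[{_{\tilde\M}}(\H\otimes L^2(\N))_{\tilde\M}\prec {_{\tilde\M}}L^2(\tilde\M)\otimes_{\tilde\B}(\K\otimes L^2(\N))\otimes_{\tilde\B}L^2(\tilde\M)_{\tilde\M}.\]
By Lemma \ref{lem:ext_bimodule_action}, the right-hand side carries commuting actions of $\langle\tilde\M,\tilde\B\rangle=\langle\M,\B\rangle\overline{\otimes}\mathbb B(L^2(\N))$, which is what Theorem \ref{main_theorem} requires. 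The same argument handles $\H_{\Phi_t\otimes\id}=\H_{\Phi_t}\otimes L^2(\N)$. Relative compactness also passes to the tensor product: $\theta_\alpha^{(2)}\in C^\ast(\M e_\B\M)$, and so $\theta_\alpha^{(2)}\otimes 1$ lies in the norm closure of $\mathrm{span}\,\tilde\M e_{\tilde\B}\tilde\M$, since $e_{\tilde\B}=e_\B\otimes1$. This is where the choice $\tilde\B=\B\otimes\N$, rather than $\B\otimes\mathbb C$, is essential.

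The second step applies Theorem \ref{main_theorem} with the roles reversed. Since $\Q$ is not amenable relative to $\tilde\B$, I expect there is a nonzero projection $z\in\mathcal Z(\Q'\cap\tilde\M)$ such that $z\Q$ is completely nonamenable relative to $\tilde\B$. To get it, take $z$ to be the complement of the largest central projection under which $\Q$ is relatively amenable; that such a largest projection exists needs the usual gluing argument for relative amenability over central projections, via Theorem \ref{thm:conditions for relative amenability}. Theorem \ref{main_theorem}, in the corner form of Remark \ref{rem:proof of the main theorem:general case}, then shows that $(\theta_\alpha^{(2)}\otimes1)^\omega\to\id$ strongly on $L^2((z\Q)'\cap z\tilde\M^\omega z)$. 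This space contains $L^2(z(\Q'\cap\tilde\M)z)^\omega$, because constant sequences of commutant elements lie in the ultrapower relative commutant.

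In the last step I will combine Proposition \ref{prop:convergence of deformation with compact resolvent gives intertwining} with Proposition \ref{prop:reduction of intertwining to compresssions}. The first is applied to the algebra $\mathsf P:=z(\Q'\cap\tilde\M)z$ and the operators $T_\alpha=\theta^{(2)}_\alpha\otimes1\in C^\ast(\tilde\M e_{\tilde\B}\tilde\M)$, and it gives $\mathsf P\preceq^\omega_{\tilde\M}\tilde\B$. Since $\mathsf P=z(\Q'\cap\tilde\M)z$, the second then gives $\Q'\cap\tilde\M\preceq^\omega_{\tilde\M}\tilde\B$, contradicting the hypothesis.

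I expect the main obstacle to be the second step. The theorem yields convergence on $L^2$ of the relative commutant in the ultrapower, but Proposition \ref{prop:convergence of deformation with compact resolvent gives intertwining} needs convergence on $L^2(\mathsf P^\omega)$. One has $\mathsf P^\omega\subseteq(z\Q)'\cap z\tilde\M^\omega z$ only if elements of $\mathsf P^\omega$ commute with $\Q$ in $\tilde\M^\omega$. This holds because $\mathsf P^\omega$ is represented by sequences in $\Q'$, but it has to be checked inside the Ocneanu ultrapower, using that $\mathsf P\subseteq z\tilde\M z$ is with expectation so that $\mathsf P^\omega\subseteq\tilde\M^\omega$. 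I would verify this first.
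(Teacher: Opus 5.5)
Your proposal is correct and follows the same route as the paper. You tensor $\delta$ with the identity on $\N$ and work relative to $\tilde\B=\B\overline{\otimes}\N$, so that $\theta_\alpha^{(2)}\otimes 1$ stays relatively compact. You then cut down by a central projection $z$ making $z\Q$ completely nonamenable, and apply Theorem~\ref{main_theorem} followed by Proposition~\ref{prop:convergence of deformation with compact resolvent gives intertwining}. You also make explicit three steps the paper leaves implicit: the maximality argument producing $z$, the inclusion $(z(\Q'\cap\tilde\M))^\omega\subseteq (z\Q)'\cap z\tilde\M^\omega z$, and the passage from the corner back to $\Q'\cap\tilde\M$.

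There is one harmless slip: $\langle\tilde\M,\tilde\B\rangle=\langle\M,\B\rangle\overline{\otimes}\N$, whereas your $\langle\M,\B\rangle\overline{\otimes}\mathbb B(L^2(\N))$ is the basic construction of $\B\otimes 1$. Since you only need Lemma~\ref{lem:ext_bimodule_action} applied abstractly to $\tilde\B\subseteq\tilde\M$, the argument is unaffected.
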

\begin{proof}
  Let $\N$ be a $\sigma$-finite von Neumann algebra with faithful normal state $\psi$ and $\Q\subseteq \M\bar\otimes\N$  be a subalgebra with expectation such that $\Q'\cap(\M\overline{\otimes\N})\npreceq^\omega_{\M\overline{\otimes}\N} \B\otimes\N.$ Consider the tensor product derivation $\delta_1:=\delta\otimes \mathrm{id}$ on $\dom(\delta)\odot\N_\infty \psi^{1/2}$ taking values in $\H_1:=\H\otimes L^2(\N)$ whose semigroup is given by $e^{-t\delta^\ast\bar\delta}\otimes 1.$ Then $\delta_1$ is a modular derivation and $\B\overline\otimes\N=\Null(\bar\delta_1).$ Moreover, $\delta_1^\ast\bar\delta_1$ has compact resolvent relative to $\B\bar\otimes\N$. Give $\K\otimes L^2(\N)$ the obvious $\B\overline{\otimes}\N$-bimodule structure. Then  as an $\M\bar\otimes\N$ correspondence, we have 
  \begin{align*}
       \H_1&\prec (L^2(\M)\otimes_\B \K\otimes_\B L^2(\M))\otimes L^2(\N)\\
       &\cong (L^2(\M)\otimes L^2(\N))\otimes_{\B\overline{\otimes}\N}(\K\otimes L^2(\N))\otimes_{\B\overline{\otimes}\N} (L^2(\M)\otimes L^2(\N)).
  \end{align*}
  Now if $\Q$ is not amenable relative to $\B\bar\otimes\N$, there is a projection $q\in \mathcal{Z}(\Q'\cap (\M\bar\otimes\N))$ such that $q\Q$ has no amenable summand relative to $\B\bar\otimes\N$. This means by Theorem \ref{main_theorem} that $(\theta_\alpha^{(2)}\otimes 1_\N)^\omega$ converge uniformly on $q\Q'\cap (\M\overline{\otimes}\N)^\omega$ for every cofinal ultrafilter on a directed set.  By Proposition \ref{prop:convergence of deformation with compact resolvent gives intertwining}, it follows that $\Q'\cap(\M\overline{\otimes\N})\preceq_{\M\overline{\otimes}\N}^\omega\B\bar\otimes\N$, a contradiction.
\end{proof}

In view of Proposition \ref{prop: comparion of two inytertwining notion}, the following stable solidity result can be proven for finite von Neumann algebras. In fact, one can remove the separability assumption.
\begin{corollary}\label{thm:stable solidity_finite case}
    Let $\M$ be a tracial  von Neumann algebra with   a densely defined closable modular  derivation $\delta:\dom(\delta)\subseteq L^2(\M)\to\H$, let $\B=\Null(\bar\delta)$. Assume that there exist $\B$-$\B$ correspondences $\K$ and $\K_t$ for $t\geq 0$ such that $_\M\H_\M\prec {_\M L^2(\M)}\otimes_\B\K\otimes_\B L^2(\M)_\M$, $_\M(\H_{\Phi_t})_\B\prec{_\M L^2(\M)}\otimes_\B(\K_t)_\B$ and that $\delta^\ast\bar\delta$ has a compact resolvent relative to $\B$. Then for every finite von Neumann subalgebra $\Q\subseteq (\M\overline{\otimes}\N)$  such that $\Q'\cap (\M\overline{\otimes}\N)\npreceq_{\M\overline{\otimes}\N}\B\otimes\N$, we have that $\Q$ is amenable relative to $\B\otimes\N.$
\end{corollary}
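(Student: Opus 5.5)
The plan is to deduce Corollary \ref{thm:stable solidity_finite case} by rerunning the proof of Theorem \ref{thm:stable solidity} and replacing the ultrapower intertwining notion by Popa's intertwining through Proposition \ref{prop: comparion of two inytertwining notion}. The finiteness of $\Q$ will make all ultrapower objects well behaved, and will also let us drop separability.

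\textbf{Step 1: tensor the derivation with $\N$.} Let $\N$ be a tracial von Neumann algebra (as in the statement, with trace $\tau_\N$). Form $\delta_1=\delta\otimes\mathrm{id}$ with values in $\H\otimes L^2(\N)$. Its semigroup is $\Phi_t\otimes\mathrm{id}_\N$, and $\Null(\bar\delta_1)=\B\bar\otimes\N$. Exactly as in the proof of Theorem \ref{thm:stable solidity}, the weak containments ${_\M\H_\M}\prec L^2(\M)\otimes_\B\K\otimes_\B L^2(\M)$ and ${_\M(\H_{\Phi_t})_\B}\prec L^2(\M)\otimes_\B\K_t$ pass to $\M\bar\otimes\N$ with $\B\bar\otimes\N$ and $\K\otimes L^2(\N)$, $\K_t\otimes L^2(\N)$. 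Moreover, each resolvent $\theta_\alpha^{(2)}\otimes 1$ lies in the norm closure of $C^\ast((\M\bar\otimes\N)e_{\B\bar\otimes\N}(\M\bar\otimes\N))$, because $e_{\B\bar\otimes\N}=e_\B\otimes 1$.

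\textbf{Step 2: apply the main theorem to $\Q$.} Suppose $\Q$ is not amenable relative to $\B\bar\otimes\N$. Then there is a projection $q\in\mathcal Z(\Q'\cap(\M\bar\otimes\N))$ such that $q\Q$ is completely nonamenable relative to $\B\bar\otimes\N$. Theorem \ref{main_theorem} (with the reduction of Remark \ref{rem:proof of the main theorem:general case}) then shows that $(\theta_\alpha^{(2)}\otimes1)^\omega$ converges strongly to the identity on $L^2((q\Q)'\cap q(\M\bar\otimes\N)^\omega q)$ for every $\omega$. This space contains $L^2((q\Q'q\cap q(\M\bar\otimes\N)q)^\omega)$. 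Everything is tracial here, so there are no centralizer issues.

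\textbf{Step 3: convert to Popa intertwining.} Put $\mathsf P=q(\Q'\cap(\M\bar\otimes\N))q$, which is finite. Proposition \ref{prop:convergence of deformation with compact resolvent gives intertwining} (strong convergence on $L^2(\mathsf P^\omega)$ by compact-type operators) gives $\mathsf P\preceq^\omega\B\bar\otimes\N$. Since $\mathsf P$ is finite, Proposition \ref{prop: comparion of two inytertwining notion}, or more directly Proposition \ref{prop:convergence of deformation with compact resolvent for tracial Q} via the uniform convergence of Proposition \ref{prop:equivalent conditions for L2-rigidity}, upgrades this to $\mathsf P\preceq_{\M\bar\otimes\N}\B\bar\otimes\N$. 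Corner compression then gives $\Q'\cap(\M\bar\otimes\N)\preceq\B\bar\otimes\N$ by \cite[Remark 4.2]{HI17}. This contradicts the hypothesis.

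\textbf{Main obstacle.} The delicate point is removing the separability assumption, which Proposition \ref{prop: comparion of two inytertwining notion} uses to pick a sequence. I would avoid that proposition entirely. Instead I would use the tracial uniform version: strong convergence on ultrapowers over all directed sets is equivalent to uniform $\|\cdot\|_2$-convergence on $(\mathsf P)_1$ (Proposition \ref{prop:equivalent conditions for L2-rigidity}). That uniform convergence, combined with the relative compactness of $\theta_\alpha$, gives Popa's criterion directly. Namely, if $\mathsf P\npreceq\B\bar\otimes\N$, there is a net of unitaries $u_i$ with $\|E_{\B\bar\otimes\N}(x^\ast u_i y)\|_2\to0$. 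Along this net $\|\theta_\alpha(u_i)\|_2\to0$ for each fixed $\alpha$, which is incompatible with uniform convergence. Checking this compactness step for nets rather than sequences is the part I expect to need care.
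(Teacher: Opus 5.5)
Your proposal is correct and follows the paper's route. You rerun the proof of Theorem~\ref{thm:stable solidity} for $\delta\otimes\mathrm{id}$, use the finiteness of $q(\Q'\cap(\M\bar\otimes\N))$ to upgrade $\preceq^\omega$ to Popa's $\preceq$ via Proposition~\ref{prop: comparion of two inytertwining notion}, and drop separability through the uniform-convergence version, Proposition~\ref{prop:convergence of deformation with compact resolvent for tracial Q}, exactly as the paper indicates. The finiteness actually used is that of $\Q'\cap(\M\bar\otimes\N)$, which is automatic once $\N$ is tracial, so your reading of $\N$ as tracial is the one under which the statement makes sense.
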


We also explicitly state the result when $\B$ is amenable.

\begin{corollary}\label{thm:stable solidity_amenable_case}
    Let $\B\subseteq\M$ be an inclusion of   von Neumann algebras with separable predual and with expectation such that $\B$ is amenable. If  $\delta:\dom\delta\subseteq L^2(\M)\to\H$ is a densely defined closable modular  derivation  such that $\H\prec L^2(\M)\otimes_\B L^2(\M)$ and $\delta^\ast\bar\delta$ has a compact resolvent relative to $\B$, then $\M$ is stably solid relative to $\B$ along $\omega$ for $\omega\in \beta\mathbb N\setminus\mathbb N$.
\end{corollary}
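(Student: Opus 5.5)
The plan is to deduce the corollary from Theorem~\ref{thm:stable solidity} by showing that, when $\B$ is amenable, the weak containment hypotheses of that theorem hold automatically with suitable $\B$-$\B$ correspondences. We also have to check $\B\subseteq\Null(\bar\delta)$. This is implicit in the statement, since compactness of the resolvent relative to $\B$ only makes sense when the semigroup fixes $\B$. So we read the statement as assuming, as in Theorem~\ref{thm:stable solidity}, that $\B\subseteq\Null(\bar\delta)$, and we record this explicitly.

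\textbf{Step 1: the hypothesis on $\H$.} We take $\K=L^2(\B)$. The absorption property gives
\[
{_\M L^2(\M)}\otimes_\B L^2(\B)\otimes_\B L^2(\M)_\M\cong {_\M L^2(\M)}\otimes_\B L^2(\M)_\M .
\]
Hence the assumption $\H\prec L^2(\M)\otimes_\B L^2(\M)$ is exactly ${_\M\H_\M}\prec {_\M L^2(\M)}\otimes_\B\K\otimes_\B L^2(\M)_\M$. No amenability is needed for this step.

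\textbf{Step 2: the hypothesis on $\H_{\Phi_t}$.} Fix $t\ge0$. The minimal Stinespring correspondence $\H_{\Phi_t}$ of Remark~\ref{rmk:weak_containment_deformations} is in particular an $\M$-$\B$ correspondence. Since $\B$ is amenable, Example~\ref{ex:weak_containment_amenable} applies to any $\M$-$\B$ correspondence and yields
\[
{_\M(\H_{\Phi_t})_\B}\prec {_\M L^2(\M)}\otimes_\B L^2(\B)\otimes L^2(\B)_\B .
\]
So we may take $\K_t=L^2(\B)\otimes L^2(\B)$ with its natural $\B$-$\B$ structure, independent of $t$. This is the only place amenability of $\B$ is used. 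It is the step to check most carefully, because Example~\ref{ex:weak_containment_amenable} passes through the chain $\H\otimes L^2(\B)\prec L^2(\M)\otimes L^2(\B)$ as left $\M$-, right $\B$-modules. That chain uses only the structure theory of normal representations of $\M$, so it applies verbatim to $\H_{\Phi_t}$.

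\textbf{Step 3: conclusion.} With both weak containments established, the compact resolvent relative to $\B$, and $\B\subseteq\Null(\bar\delta)$, every hypothesis of Theorem~\ref{thm:stable solidity} holds. That theorem then shows $\M$ is stably solid relative to $\B$ along every $\omega\in\beta\mathbb N\setminus\mathbb N$.

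We expect no genuine obstacle. The only points needing care are the bookkeeping of the correspondences in Step~2 and the implicit null-space assumption noted above.
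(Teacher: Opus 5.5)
Your proposal is correct and follows the route the paper intends. The corollary is Theorem~\ref{thm:stable solidity} with $\K=L^2(\B)$, obtained by absorption, and with $\K_t=L^2(\B)\otimes L^2(\B)$ supplied by Example~\ref{ex:weak_containment_amenable}; amenability of $\B$ is used only for the Stinespring condition. The null-space condition you flag is in fact automatic, not an extra reading of the statement: relative compactness puts $\theta_\alpha^{(2)}$ in $\langle\M,\B\rangle=(J\B J)'$, so with $\phi$ the state for which $\delta$ is modular one gets $\phi^{1/2}\theta_\alpha(b)=\theta_\alpha^{(2)}(\phi^{1/2}b)=\theta_\alpha^{(2)}(\phi^{1/2})b=\phi^{1/2}b$ for $b\in\B$, hence $\theta_\alpha(b)=b$ and $b\in\Null(\bar\delta)$.
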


\section{A few examples}\label{sec:Examples}

In this  final section, we present two classes of examples for which we verify the hypotheses of Theorem \ref{main_theorem} and then apply the general solidity results from the previous sections.

\subsection{Finite-index inclusions and amalgamated free products}

Let $\B\subset \M$ be a unital inclusion of von Neumann algebras and $\phi$ a faithful normal state on $\M$ such that $\B$ is globally invariant under $\sigma^\phi$. Suppose that $\mathrm{id}_{L^2(\M)}\in\mathbb K(\B\subset\M,\phi)$. If $\M$ is a $\mathrm{II}_1$ factor, this condition is equivalent to $[\M:\B]<\infty$ by \cite[Proposition 1.3]{PP86}.

Let $\phi_\B$ denote the restriction of $\phi$ to $\B$. We have a bounded densely defined modular derivation $\delta\colon \M_\infty\phi^{1/2}\to L^2(\M)\otimes_\B L^2(\M)$ given by $\delta(a)=a\otimes_{\phi_\B}\phi^{1/2}-\phi^{1/2}\otimes_{\phi_\B}a$. The associated quantum Markov semigroup is given by $\Phi_t=e^{-t}\mathrm{id}_\M+(1-e^{-t})E_\B\in\mathbb K(\B\subset \M,\phi)$ and the Stinespring correspondence is $\H_{\Phi_t}=L^2(\M)\otimes_\B L^2(\M)$. Thus the solidity results from Section~\ref{sec:solidity} are applicable.

Examples of infinite index can be created using amalgamated free products. The setup is as follows. Let $\M_1$, $\M_2$ be $\sigma$-finite von Neumann algebras with a common von Neumann subalgebra $\B$ and assume that there exists a faithful normal conditional expectation $E_j\colon \M_j\to \B$ for $j\in \{1,2\}$. Let $\phi_\B$ be a faithful normal state on $\B$ and let $\phi_j=\phi_\B\circ E_j$ for $j\in\{1,2\}$.

Let $L^2_0(\M_j)=L^2(\M_j)\ominus \overline{\B\phi_j^{1/2}}$, which is a $\B$-$\B$ correspondence, and let $\mathbf J_n=\{\mathbf j\in \{1,2\}^n\mid j_k\neq j_{k+1}\text{ for }1\leq k\leq n-1\}$. Let
\begin{align*}
    \H&=L^2(\B)\oplus\bigoplus_{n\geq 1}\bigoplus_{\mathbf j\in \mathbf J_n}L^2_0(\M_{j_1})\otimes_\B\dots\otimes_\B L^2_0(\M_{j_n})
    \intertext{and}
    \H(j)&=L^2(\B)\oplus \bigoplus_{n\geq 1}\bigoplus_{\substack{\mathbf j\in\mathbf J_n\\j_1\neq j}}L^2_0(\M_{j_1})\otimes_\B\dots\otimes_\B L^2_0(\M_{j_n})
\end{align*}
for $j\in \{1,2\}$. With the canonical isomorphism $_\B L^2(\B)\otimes_\B \K_\Q\cong _\B\K_\Q$ valid for any $\B$-$\Q$ correspondence $\K$, we see that
\begin{align*}
    L^2(\M_j)\otimes_\B \H(j)\cong L^2(\B)\otimes_\B \H(j)\oplus L^2_0(M_j)\otimes_\B \H_j\cong \H.
\end{align*}
Using this isomorphism $U_j\colon L^2(\M_j)\otimes_\B \H(j)\to \H$, we get a faithful $\ast$-representation 
\begin{equation*}
    \lambda_j\colon \langle\M_j,e_\B\rangle\to \mathbb B(\H),\,\lambda_j(x)=U_j(x\otimes_\B 1)U_j^\ast
\end{equation*}
for $j\in \{1,2\}$. The \emph{amalgamated free product} $\M_1\bar\ast_\B \M_2$ of $\M_1$ and $\M_2$ is defined to be the von Neumann algebra generated by $\lambda_1(\M_1)\cup\lambda_2(\M_2)$. In the following, we identify $\M_j$ with its image under $\lambda_j$. The vector $\phi_\B^{1/2}\in L^2(\B)$ is cyclic and separating for $\M_1\bar\ast_\B M_2$, which gives a canonical identification $\H\cong L^2(\M_1\bar\ast_\B M_2)$. If $(j_1,\dots,j_n)\in\mathbf J_n$ and $x_k\in \ker E_{j_k}$ for $1\leq k\leq n$, then
\begin{equation*}
    \lambda_{j_1}(x_1)\dots \lambda_{j_n}(x_n)\phi_\B^{1/2}=x_1\phi_{j_1}^{1/2}\otimes_{\phi_\B}\otimes\dots\otimes_{\phi_\B} x_{j_n}\phi_{j_n}^{1/2}.
\end{equation*}

If $\iota_\B\colon L^2(\B)\to \H$ denotes the inclusion map, then $E_\B(x)=\iota_\B^\ast x\iota_\B$ for $x\in \M_1\bar\ast_\B \M_2$ defines a faithful normal conditional expectation from $\M_1\bar\ast_\B \M_2$ onto $\B$. The von Neumann algebras $\M_1$ and $\M_2$ are \emph{free} with respect to $E_\B$, that is, if $(j_1,\dots,j_n)\in\mathbf J_n$ and $x_{k}\in \ker E_{j_k}$ for $1\leq k\leq n$, then $E_\B(x_1\dots x_n)=0$. The free product state $\phi_1\ast_\B \phi_2$ is defined as $\phi_1\ast_\B \phi_2=\phi_\B\circ E_\B$, which is the same as the vector state induced by $\phi_\B^{1/2}$ on $\M_1\bar\ast_\B \M_2$.

The associated modular conjugation is  characterized by $J_\phi|_{L^2(\B)}=J_{\phi_\B}$ and
\begin{align*}
    J_\phi(a_1\otimes_{\phi_\B}\dots\otimes_{\phi_\B}a_n)=J_{\phi_{j_n}}a_n\otimes_{\phi_\B}\dots_{\phi_\B}J_{\phi_{j_1}}a_1
\end{align*}
for $a_k\in L^2_0(\M_{j_k})$, $1\leq k\leq n$, and $(j_1,\dots,j_n)\in\mathbf J_n$. The modular operator is characterized by
    \begin{equation*}
        \Delta_\phi^{it}=\Delta_{\phi_\B}^{it}\oplus\bigoplus_{n\geq 1}\bigoplus_{(j_1,\dots,j_n)\in \mathbf J_n}\Delta_{\phi_{j_1}}^{it}\otimes_{\phi_\B}\dots\otimes_{\phi_\B}\Delta_{\phi_{j_n}}^{it},
    \end{equation*}
for $t\in\mathbb R$.

For $j\in \{1,2\}$ let $\Phi_j\colon \M_j\to \M_j$ be a $\phi_j$-preserving completely positive map such that $\Phi_j|_\B=\id_\B$. In this case, $\B$ belongs to the multiplicative domain of $\Phi_j$, which implies that $\Phi_j$ and $\Phi_j^{(2)}$ are $\B$-modular. It follows from \cite[Theorem 3.8]{BD01} that there exists a unique $\phi_1\ast_\B\phi_2$-preserving completely positive map $\Phi_1\ast_\B \Phi_2$ on $\M_1\bar\ast_\B \M_2$ such that
\[\Phi_1\ast_\B \Phi_2|_\B=\id_\B\;\;\mbox{and }\;\;(\Phi_1\ast_\B \Phi_2)(x_1\dots x_n)=\Phi_{j_1}(x_1)\dots \Phi_{j_n}(x_n)\] whenever $(j_1,\dots,j_n)\in\mathbf J_n$ and $x_k\in \ker E_k$ for $1\leq k\leq n$. Its $L^2$ implementation is given by
\begin{equation*}
    \id_{L^2(\B)}\oplus\bigoplus_{n\geq 1}\bigoplus_{(j_1,\dots,j_n)\in\mathbf J_n}\Phi_{j_1}^{(2)}|_{L^2_0(\M_{j_1})}\otimes_\B\dots\otimes_\B \Phi_{j_n}^{(2)}|_{L^2_0(\M_{j_n})}.
\end{equation*}
If the map $\Phi_1$ (or $\Phi_2$) has a spectral gap, then relative compactness is preserved under amalgamated free products. This is \cite[Theorem 8.2]{CKSVW23} (note that the finiteness assumption of $\B$ is not used in the proof).

\begin{proposition}
    If $\Phi_j\colon \M_j\to \M_j$ is a $\phi$-preserving ucp map with $\Phi_j^{(2)}\in \mathbb K(\B\subset \M_j,\phi_j)$ for $j\in \{1,2\}$ and $\norm{\Phi_1^{(2)}|_{L^2_0(\M_1)}}<1$, then $(\Phi_1\ast_\B \Phi_2)^{(2)}\in\mathbb K(\B\subset \M_1\ast_\B \M_2,\phi))$.
\end{proposition}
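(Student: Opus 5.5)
The plan is to follow the proof of \cite[Theorem 8.2]{CKSVW23}, working directly with the explicit $L^2$ implementation
\[
T:=(\Phi_1\ast_\B\Phi_2)^{(2)}=\id_{L^2(\B)}\oplus\bigoplus_{n\geq1}\bigoplus_{\mathbf j\in\mathbf J_n}T_{j_1}\otimes_\B\dots\otimes_\B T_{j_n},
\]
where $T_j=\Phi_j^{(2)}|_{L^2_0(\M_j)}$. I will show that each summand lies in $\mathbb K(\B\subset\M_1\bar\ast_\B\M_2,\phi)$ and that the tails of the series are small in norm. Since $\mathbb K(\B\subset\M,\phi)$ is norm closed, this suffices.

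The first step handles the tails. Each $T_j$ is a contraction, $\norm{T_1}<1$, and in a word in $\mathbf J_n$ the letter $1$ occurs at least $\lfloor n/2\rfloor$ times. Hence
\[
\norm{T_{j_1}\otimes_\B\dots\otimes_\B T_{j_n}}\leq\norm{T_1}^{\lfloor n/2\rfloor}.
\]
This uses $\norm{S\otimes_\omega R}\leq\norm S\norm R$, valid here because $\Phi_j^{(2)}$ is $\B$-bimodular. The summands act on mutually orthogonal subspaces, so the block-diagonal operator $\bigoplus_{n>N}$ has norm at most $\norm{T_1}^{\lfloor (N+1)/2\rfloor}\to0$. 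It therefore remains to prove that each finite piece $P_{\mathbf j}:=T_{j_1}\otimes_\B\dots\otimes_\B T_{j_n}$ (extended by $0$) is relatively compact, and likewise the projection $e_\B$ onto $L^2(\B)$.

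The second step does this via the lemma identifying $\mathbb K(\B\subset\M,\phi)$ with $\mathbb K(\mathcal X_\B)$ and the proposition on tensor products of compact operators. The hypothesis $\Phi_j^{(2)}\in\mathbb K(\B\subset\M_j,\phi_j)$ translates into $\Phi_j$ being a compact operator on the $C^\ast$-module $\mathcal X_j$ over $\B$, and $T_j=(1-e_\B)\Phi_j^{(2)}(1-e_\B)$ is still compact because $e_\B\in\langle\M_j,\B\rangle$ is adjointable and $\B$-bimodular. The Hilbert $\B$-module for $\M=\M_1\bar\ast_\B\M_2$ decomposes as $\B\oplus\bigoplus_{\mathbf j}\mathcal X^0_{j_1}\otimes_\B\dots\otimes_\B\mathcal X^0_{j_n}$, which mirrors the $L^2$ decomposition. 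Applying the proposition iteratively, with $\pi$ the left $\B$-action, which commutes with the $T_j$ by $\B$-bimodularity, shows that $T_{j_1}\otimes_\B\dots\otimes_\B T_{j_n}$ is compact on the interior tensor product. Composing with the inclusion of the summand as a compact-compatible adjointable map then places $P_{\mathbf j}$ in $\mathbb K(\mathcal X_\B)$, so $P_{\mathbf j}\in\mathbb K(\B\subset\M,\phi)$. Finally, $e_\B=1\cdot e_\B\cdot1$ lies there trivially.

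I expect the main obstacle to be the identification in step two. One has to check that the module-level tensor product of compacts corresponds, under the unitary $U$ from the lemma, to the $L^2$-level operator $P_{\mathbf j}$ on $L^2(\M_1\bar\ast_\B\M_2)$, and that each summand of the decomposition is a complemented submodule of $\mathcal X$. I would first try to verify this concretely on elementary tensors $x_1\cdots x_n\phi^{1/2}$ with $x_k\in\ker E_{j_k}$, where $\theta_{x,y}$ becomes $xe_\B y^\ast$, using freeness to compute $E_\B$ of products. Non-finiteness of $\B$ plays no role, since only $\B$-bimodularity and boundedness are used.
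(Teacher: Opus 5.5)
Your overall architecture is sound and is essentially the standard argument behind \cite[Theorem 8.2]{CKSVW23}, which the paper invokes without reproducing. You split $(\Phi_1\ast_\B\Phi_2)^{(2)}$ along the word sectors $L^2_{\mathbf j}=L^2_0(\M_{j_1})\otimes_\B\dots\otimes_\B L^2_0(\M_{j_n})$, control the tail by $\norm{T_1}^{\lfloor n/2\rfloor}$, and show each block is relatively compact via the $C^\ast$-module lemma and the tensor-product proposition of Section~\ref{sec:relative compactness}. The tail estimate, the translation of the hypothesis into $\Phi_j\in\mathbb K(\mathcal X_j)$, and the iterated tensor product of compacts are fine.

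\textbf{The embedding step fails as written.} For the von Neumann amalgamated free product, $\mathcal X$ is \emph{not} the $C^\ast$-module direct sum $\B\oplus\bigoplus_{\mathbf j}\mathcal Y_{\mathbf j}$, where $\mathcal Y_{\mathbf j}=\mathcal X^0_{j_1}\otimes_\B\dots\otimes_\B\mathcal X^0_{j_n}$. Moreover, the $\mathcal Y_{\mathbf j}$ need not be complemented in $\mathcal X$, so there is no adjointable inclusion to compose with. Example: let $\B=\ell^\infty(\mathbb N)$ and $\M_j=\B\bar\otimes L(\mathbb F_\infty)$ with $E_j=\id\otimes\tau$, with free generators $(s_i)$ for $j=1$ and $(t_i)$ for $j=2$. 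Then $\M=\B\bar\otimes L(\mathbb F_\infty\ast\mathbb F_\infty)$. Put $x=(w_k)_k$ with $w_k=k^{-1/2}\sum_{i=1}^k s_it_i$. The $s_it_i$ freely generate a free subgroup, so $\norm{w_k}\le 2$ by Akemann--Ostrand; hence $x\in\M$ and $x\phi^{1/2}\in L^2_{(1,2)}$. Here the $\mathcal X$-norm is $\sup_k\norm{\cdot}_2$. An $m$-term element of $\operatorname{span}(\ker E_1\cdot\ker E_2)$ has, in each coordinate, a coefficient tensor of rank at most $m$ in $L^2_0\otimes L^2_0$. By contrast, $w_k$ has $k$ singular values equal to $k^{-1/2}$. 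So $x$ has distance $1$ from $\mathcal Y_{(1,2)}$. A complement would split $x=y+z$ with $y\in\mathcal Y_{(1,2)}$ and $z\perp\mathcal Y_{(1,2)}$. The $L^2$-image of $z$ would then lie in $L^2_{(1,2)}\cap(L^2_{(1,2)})^\perp=0$, forcing $z=0$, a contradiction.

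The same misconception appears in your justification for $T_j$. Elements of $\langle\M_j,\B\rangle$ act on the self-dual completion of $\mathcal X_j$ but need not preserve $\mathcal X_j$; this is why $\mathbb K(\B\subset\M,\phi)$ is not an ideal in $\langle\M,\B\rangle$. There the correct reason is simply that $e_\B$ corresponds to $\theta_{1,1}\in\mathbb K(\mathcal X_j)$.

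\textbf{The repair: adjointability is not needed.} Let $S_j\in\mathbb K(\mathcal X^0_j)$ be the module version of $T_j$. Let $U_{\mathbf j}\colon\mathcal Y_{\mathbf j}\mathbin{\overline\odot_\B}L^2(\B)\to L^2_{\mathbf j}$ be the unitary $x_1\otimes\dots\otimes x_n\otimes b\phi_\B^{1/2}\mapsto x_1\cdots x_nb\phi^{1/2}$ for $x_k\in\ker E_{j_k}$; it is isometric by freeness. Set $\sigma_{\mathbf j}(S)=U_{\mathbf j}(S\otimes_\B 1)U_{\mathbf j}^\ast\oplus 0$ on $L^2_{\mathbf j}\oplus(L^2_{\mathbf j})^\perp$. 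This is a contractive $\ast$-homomorphism on $\mathbb K(\mathcal Y_{\mathbf j})$, and it has two key properties:
\begin{itemize}
\item For $x,y$ in the span of products $x_1\cdots x_n$ with $x_k\in\ker E_{j_k}$, one has $\sigma_{\mathbf j}(\theta_{x,y})=xe_\B y^\ast$, because $xe_\B y^\ast$ vanishes on $(L^2_{\mathbf j})^\perp$.
\item $\sigma_{\mathbf j}(S_{j_1}\otimes_\B\dots\otimes_\B S_{j_n})=P_{\mathbf j}$, as you see by evaluating on elementary tensors.
\end{itemize}
Since $\mathbb K(\mathcal Y_{\mathbf j})$ is the closed span of such $\theta_{x,y}$, this gives $P_{\mathbf j}\in\mathbb K(\B\subset\M,\phi)$. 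Equivalently, for any closed submodule $\mathcal Y\subseteq\mathcal X$, the assignment $\theta_{\xi,\eta}\mapsto\theta_{\xi,\eta}$ extends isometrically to $\mathbb K(\mathcal Y)\to\mathbb K(\mathcal X)$, since norms of finite-rank operators depend only on Gram matrices. With this replacement the rest of your argument goes through.
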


Now let $(\Phi_{j,t})_{t\geq 0}$ be a quantum Markov semigroup on $\M_j$ that is GNS-symmetric with respect to $\phi_j$ and leaves $\B$ invariant. Note that $E_j \Phi_{j,t}E_j=\Phi_{j,t}E_j$, which together with the GNS symmetry implies that $\Phi_{j,t}$ commutes with $E_j$. It is not hard to see that $(\Phi_{1,t}\ast_\B \Phi_{2,t})_{t\geq 0}$ is a quantum Markov semigroup on $\M_1\bar\ast_\B \M_2$ that is GNS-symmetric with respect to $\phi$. In the following, we write $\Phi_t$ for $\Phi_{1,t}\ast_\B \Phi_{2,t}$.

For $j\in \{1,2\}$ let $\mathcal E_j$ be the quantum Dirichlet form associated with $(\Phi_{j,t})_{t\geq 0}$, let $\mathfrak A_j=\mathfrak A_{\mathcal E_j}$ and $\mathfrak A$ be the linear span of $\B_\infty \phi_\B^{1/2}$ and
\begin{equation*}
    \bigcup_{n\geq 1}\bigcup_{(j_1,\dots,j_n)\in\mathbf J_n}\{a_1\otimes_{\phi_\B}\dots\otimes_{\phi_\B} a_n\mid a_k\in \mathfrak A_{j_k}\cap L^2_0(\M_{j_k})\text{ for }1\leq k\leq n\}.
\end{equation*}

\begin{lemma}
    If $\mathcal E$ is the quantum Dirichlet form associated with $(\Phi_t)_{t\geq 0}$, then $\mathfrak A$ is a Tomita subalgebra of the maximal Tomita algebra associated with $\phi$, it satisfies $\mathfrak A\subset \mathfrak A_{\mathcal E}$ and it is a form core for $\mathcal E$.
\end{lemma}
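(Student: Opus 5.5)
We must show three things: $\mathfrak A$ is a unital Tomita subalgebra of $\mathfrak T_\phi$, $\mathfrak A\subset\mathfrak A_{\mathcal E}$, and $\mathfrak A$ is a form core for $\mathcal E$. The approach is to use the explicit descriptions of $J_\phi$, $\Delta_\phi^{it}$ and $\Phi_t^{(2)}$ on the free product Hilbert space $\H\cong L^2(\M_1\bar\ast_\B\M_2)$ recalled above. Each of these operators acts ``letter by letter'' on the summands $L^2_0(\M_{j_1})\otimes_\B\dots\otimes_\B L^2_0(\M_{j_n})$. The corresponding properties of $\mathfrak A_j=\mathfrak A_{\mathcal E_j}$, proved in \cite{Wir22}, should therefore transfer to elementary tensors.

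\emph{Tomita subalgebra.} First I would check that elements $a_1\otimes_{\phi_\B}\dots\otimes_{\phi_\B}a_n$ with $a_k\in\mathfrak A_{j_k}\cap L^2_0(\M_{j_k})$ lie in $\M\phi^{1/2}$. Writing $a_k=x_k\phi_{j_k}^{1/2}$ with $x_k\in\ker E_{j_k}$ analytic, the displayed formula gives $a_1\otimes\dots\otimes a_n=x_1\cdots x_n\phi^{1/2}$. Next, invariance under $\Delta_\phi^z$ follows from the tensor formula for $\Delta_\phi^{it}$, extended analytically, together with the $\Delta_{\phi_j}^z$-invariance of $\mathfrak A_j$; the latter preserves $L^2_0$ because $\Delta_{\phi_j}$ commutes with $e_\B$. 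Invariance under $J$ follows from the reversal formula for $J_\phi$ and the $J_{\phi_j}$-invariance of $\mathfrak A_j$. We have $\phi^{1/2}\in\B_\infty\phi_\B^{1/2}$. For products, $\pi_l(a)b$ corresponds to $x_1\cdots x_n y_1\cdots y_m$. If $j_n=j'_1$, decompose $x_ny_1=E_{j_n}(x_ny_1)+(x_ny_1-E_{j_n}(x_ny_1))$. Since $\mathfrak A_j$ is an algebra and is $E_j$-invariant (because $\Phi_{j,t}$ commutes with $E_j$), both pieces remain in the appropriate class. The $\B$-part $E_{j_n}(x_ny_1)$ is then absorbed into $x_{n-1}$ or $y_2$, using that $\B_\infty\mathfrak A_j\B_\infty\subset\mathfrak A_j$. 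Induction on the total length $n+m$ closes this step. A preliminary point is that $E_j$ maps $\mathfrak A_j$ into $\B_\infty\phi_\B^{1/2}$, where $\mathcal E_j$ vanishes since $\Phi_{j,t}|_\B=\id$.

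\emph{Inclusion $\mathfrak A\subset\mathfrak A_{\mathcal E}$.} The $L^2$ generator $\cL_2$ of $\Phi_t$ acts on each summand as $\sum_k 1\otimes\dots\otimes\cL_{j_k,2}\otimes\dots\otimes 1$, because $\Phi_t^{(2)}$ is the tensor product of the $\Phi_{j_k,t}^{(2)}$ restricted to $L^2_0$. Hence for an elementary tensor
\[\mathcal E(a_1\otimes\dots\otimes a_n)\le \sum_k \mathcal E_{j_k}(a_k)\prod_{l\neq k}\norm{L_{\phi_\B}(a_l)}^2,\]
which is finite. Relative tensor norms are controlled by the left-bounded operators $L_{\phi_\B}(a_l)$, which are bounded because $a_l\in\M_{j_l}\phi_{j_l}^{1/2}$. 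Combined with the $\Delta^z$-invariance just proved, this gives $\mathfrak A\subset\mathfrak A_{\mathcal E}$.

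\emph{Form core.} This is the step I expect to be the main obstacle. I would show that $\mathfrak A$ is dense in $\dom(\cL_2)$ with respect to the graph norm, or alternatively that $\mathfrak A$ is dense and invariant under $\Phi^{(2)}_t$, and then apply \cite[Lemma 3.7]{Wir22} as in Lemma \ref{lem:analytic_core_generator}. Invariance holds because $\Phi_{j,t}^{(2)}$ preserves $\mathfrak A_j\cap L^2_0(\M_{j})$. For density in $L^2$, each $\mathfrak A_j\cap L^2_0(\M_j)=(1-e_\B)\mathfrak A_j$ is dense in $L^2_0(\M_j)$. The delicate point is that density of algebraic tensors of dense subspaces in a relative tensor product requires control via the left-bounded vectors. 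This is handled by the strong approximation of \cite[Theorem 6.3]{Wir22}, which gives $L_{\phi_\B}$-bounded approximants converging strongly, so that the approximation passes through $\otimes_{\phi_\B}$.
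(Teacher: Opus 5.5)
Your proposal is correct in substance, and two of its three parts follow the paper. For the Tomita-subalgebra property, the paper only says it is ``not hard to see'' from the formulas for $J_\phi$ and $\Delta_\phi^{it}$; your word-reduction argument supplies exactly the missing detail. Your form-core step is the paper's argument: density plus $\Phi_t^{(2)}$-invariance, with \cite[Lemma 3.4]{Wir22} as the criterion. Since elements of $\mathfrak A$ need not lie in $\dom(\cL_2)$, it is your second option (dense and invariant) that applies, not graph-norm density.

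The genuine difference is the inclusion $\mathfrak A\subset\mathfrak A_{\mathcal E}$. The paper inducts on word length. The case $n=1$ is immediate, because $\Phi_t^{(2)}$ and $\Delta_\phi^{it}$ restrict to $\Phi_{j,t}^{(2)}$ and $\Delta_{\phi_j}^{it}$ on $L^2_0(\M_j)$. The inductive step is just $a_1\otimes_{\phi_\B}\dots\otimes_{\phi_\B}a_{n+1}=\pi_l(a_1)(a_2\otimes_{\phi_\B}\dots\otimes_{\phi_\B}a_{n+1})$, together with the fact that $\mathfrak A_{\mathcal E}$ is itself a Tomita algebra and hence closed under $(a,b)\mapsto\pi_l(a)b$. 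So the paper never computes $\mathcal E$ on longer words.

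Your route instead bounds $\mathcal E$ directly via $\mathcal E\leq\sum_k\mathcal E^{(k)}$ for the commuting letter-wise semigroups. This is more hands-on and gives a quantitative estimate, but it needs relative tensor norm bounds, and the bound you display is not right as written. For $k<n$ the vector $(1-\Phi^{(2)}_{j_k,t})^{1/2}a_k$ need not be left bounded, so the letters to its right must be controlled by right boundedness. The correct estimate is $\mathcal E^{(k)}(a_1\otimes_{\phi_\B}\dots\otimes_{\phi_\B}a_n)\leq\prod_{l<k}\norm{L_{\phi_\B}(a_l)}^2\,\mathcal E_{j_k}(a_k)\,\norm{R_{\phi_\B}(a_{k+1}\otimes_{\phi_\B}\dots\otimes_{\phi_\B}a_n)}^2$. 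This is still finite, since each $a_l$ lies in a Tomita algebra and is therefore both left and right bounded.

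Your density step needs one extra line. \cite[Theorem 6.3]{Wir22} only approximates vectors $x\phi_j^{1/2}$ with $x\in\M_j$, so you must still know that such vectors, tensored with the remaining letters, are total. The paper gets this by asserting $L_{\phi_\B}(\xi)=x\iota_{j_1}$ for every left-bounded $\xi$, which is false in general. For $\B=\mathbb C$ every vector is left bounded, so the assertion would force $L^2_0(\M_{j_1})\subset\M_{j_1}\phi_{j_1}^{1/2}$.

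The simplest fix is to take $\eta$ right bounded in the remaining factor. Then $\norm{\xi\otimes_{\phi_\B}\eta}\leq\norm{\xi}\,\norm{R_{\phi_\B}(\eta)}$, so plain $L^2$-density of $(1-e_\B)\mathfrak A_{j_1}$ in $L^2_0(\M_{j_1})$ suffices. The estimate $\norm{a\otimes_{\phi_\B}(\eta-\eta_k)}\leq\norm{L_{\phi_\B}(a)}\,\norm{\eta-\eta_k}$ then brings in the induction hypothesis.
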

\begin{proof}
    From the description of the modular conjugation and modular operator, it is not hard to see that $\mathfrak A$ is a Tomita subalgebra of the maximal Tomita algebra associated with $\phi$.
    
    Since $\Phi_t(b)=b$ for $b\in \B$, we have $\B_\infty \phi_\B^{1/2}\subset \mathfrak A_{\mathcal E}$.
    
    To show that 
    \begin{equation*}
        \bigcup_{(j_1,\dots,j_n)\in\mathbf J_n}\{a_1\otimes_{\phi_\B}\dots\otimes_{\phi_\B} a_n\mid a_k\in \mathfrak A_{j_k}\cap L^2_0(\M_{j_k})\text{ for }1\leq k\leq n\}\subset \mathfrak A_{\mathcal E},
    \end{equation*}
    we proceed by induction over $n\in\mathbb N$. Since 
    \begin{equation*}
        \Phi_t^{(2)}=\id_{L^2(\B)}\oplus\bigoplus_{n\geq 1}\bigoplus_{(j_1,\dots,j_n)\in\mathbf J_n}\Phi_{j_1,t}^{(2)}|_{L^2_0(\M_{j_1})}\otimes_\B\dots\otimes_\B \Phi_{j_n,t}^{(2)}|_{L^2_0(\M_{j_n})}
    \end{equation*}
    and
    \begin{equation*}
        \Delta_\phi^{it}=\Delta_{\phi_\B}^{it}\oplus\bigoplus_{n\geq 1}\bigoplus_{(j_1,\dots,j_n)\in \mathbf J_n}\Delta_{\phi_{j_1}}^{it}\otimes_{\phi_\B}\dots\otimes_{\phi_\B}\Delta_{\phi_{j_n}}^{it},
    \end{equation*}
    the case $n=1$ is immediate.
    
    Assume that we have proven the claim for $n\in\mathbb N$. If $(j_1,\dots,j_{n+1})\in\mathbf J_{n+1}$ and $a_k\in \mathfrak A_{j_k}\cap L^2_0(\M_{j_k})$ for $1\leq k\leq n+1$, then 
    \begin{equation*}
        a_1\otimes_{\phi_\B}\otimes \dots \otimes_{\phi_\B}a_{n+1}=\pi_l(a_1)a_2\otimes_{\phi_\B}\otimes \dots \otimes_{\phi_\B}a_{n+1}.
    \end{equation*}
    Since $\mathfrak A_{\mathcal E}$ is a left Hilbert algebra and $a_1,a_2\otimes_{\phi_\B}\otimes \dots \otimes_{\phi_\B}a_{n+1}\in \mathfrak A_{\mathcal E}$, we conclude that $a_1\otimes_{\phi_\B}\otimes \dots \otimes_{\phi_\B}a_{n+1}\in\mathfrak A_{\mathcal E}$.

    To show that $\mathfrak A$ is a form core for $\mathcal E$, we first show that it is dense in $\H$. For this purpose, we prove by induction over $n\in\mathbb N$ that for every $(j_1,\dots,j_n)\in\mathbf J_n$ the set
    \begin{equation*}
        \operatorname{span} \{a_1\otimes_{\phi_\B}\dots\otimes_{\phi_\B} a_n\mid a_k\in \mathfrak A_{j_k}\cap L^2_0(\M_{j_k})\text{ for }1\leq k\leq n\}
    \end{equation*}
    is dense in $L^2_0(\M_{j_1})\otimes_{\phi_\B}\otimes\dots\otimes_{\phi_\B}L^2_0(\M_{j_n})$.

    For the case $n=1$, let $P_j\colon L^2(\M_j)\to L^2_0(\M_j)$ be the orthogonal projection. Since $\Phi_{j,t}$ commutes with $E_j$, it follows easily that $P_j(\mathfrak A_j)\subset \mathfrak A_j$. Combined with the fact that $\mathfrak A_j$ is dense in $L^2(\M_j)$, we obtain the claim for $n=1$.

    Now suppose that the claim is true for $n$ and let $(j_1,\dots,j_{n+1})\in \mathbf J_{n+1}$, $\xi\in \mathcal D(L^2_0(\M_{j_1})_\B,\phi_\B)$, $\eta\in L^2_0(\M_{j_2})\otimes_{\phi_\B}\otimes\dots\otimes_{\phi_\B}L^2_0(\M_{j_{n+1}})$. Note that $L_{\phi_\B}(\xi)\in \mathcal L(L^2(\B)_\B,L^2(\M_{j_1})_\B)=\mathcal L(L^2(\M_{j_1})_\B)\iota_{j_1}$. Since $\mathcal L(L^2(\M_{j_1})_\B)$ is the weak$^\ast$ closed linear span of $\{ye_\B z\mid y,z\in \M_{j_1}\}$ and $y e_\B z\iota_{j_1}=yE_{j_1}(z)\iota_{j_1}\in \M_{j_1}\iota_{j_1}$, there exists $x\in \M_{j_1}$ such that $L_{\phi_\B}(\xi)\iota_{j_1}=x\iota_{j_1}$.
    
    By the induction hypothesis, there exists a sequence $(\eta_k)$ in $\operatorname{span}\{a_2\otimes_{\phi_\B}\dots\otimes_{\phi_\B} a_{n+1}\mid a_k\in \mathfrak A_{j_k}\cap L^2_0(\M_{j_k})\text{ for }2\leq k\leq n+1\}$ that converges to $\eta$. Moreover, by \cite[Theorem 6.3]{Wir22} there exists a sequence $(a_k)$ in $\mathfrak A_1$ such that $\norm{\pi_l(a_k)}\leq \norm{x}$ and $\pi_l(a_k)\to x$ in strong operator topology. Therefore,
    \begin{align*}
        \norm{\xi\otimes_{\phi_\B}\eta-a_k\otimes_{\phi_\B}\eta_k}&\leq \norm{(\xi-a_k)\otimes_{\phi_\B}\eta}+\norm{a_k\otimes_{\phi_{\B}}(\eta-\eta_k)}\\
        &\leq\langle \eta,\pi_l^\K(\abs{ (x-\pi_l(a_k))\iota_{j_1}}^2)\eta\rangle^{1/2}+\norm{x}\norm{\eta-\eta_k},
    \end{align*}
    where $\K=L^2_0(\M_{j_2})\otimes_{\phi_\B}\otimes\dots\otimes_{\phi_\phi}L^2_0(\M_{j_{n+1}})$. By assumption, the right side converges to zero.

    Thus we have proven that $\mathfrak A$ is dense in $\H$. Moreover, it is evidently invariant under $\Phi_t^{(2)}$ for all $t\geq 0$. Hence it is a form core by \cite[Lemma 3.4]{Wir22}.
\end{proof}

By the previous result, the modular derivation $\delta$ associated with $(\Phi_t)_{t\geq 0}$ is uniquely determined by its values on $\mathfrak A$. For the next lemma, recall that for a Tomita correspondence $\K$ over $(\M_j,\phi_j)$ there exists an expected inclusion $\N_j\supset \M_j$ such that $\K_j\subset L^2(\N_j)\ominus L^2(\M_j)$ as Tomita correspondences. Hence we can define the codomain of the modular derivation $\delta_j$ to be $L^2(\N_j)\ominus L^2(\M_j)$.

\begin{lemma}
    If $\delta_j\colon \mathfrak A_{\mathcal E_j}\to L^2(\N_j)\ominus L^2(\M_j)$ is the $\phi_j$-modular derivation associated with $(\Phi_t^{(j)})_{t\geq 0}$ and
    \begin{equation*}
        \delta\colon \mathfrak A\to L^2(\N_1\ast_\B \N_2)\ominus L^2(\M_1\ast_\B \M_2)
    \end{equation*}
    is given by $\delta|_{\B_\infty\phi^{1/2}}=0$ and
    \begin{equation*}
        \delta(a_1\otimes_{\phi_\B}\dots\otimes_{\phi_\B}a_n)=\sum_{k=1}^n a_1\otimes_{\phi_\B}\dots\otimes_{\phi_\B}a_{k-1}\otimes_{\phi_\B}\delta_{j_k}(a_k)\otimes_{\phi_\B}a_{k+1}\otimes_{\phi_\B}\dots\otimes_{\phi_\B}a_n
    \end{equation*}
    for $a_k\in \mathfrak A_{j_k}\cap L^2_0(\M_{j_k})$ and $(j_1,\dots,j_n)\in\mathbf J_n$, then $\delta$ is a $\phi$-modular derivation and
    \begin{equation*}
        \mathcal E(a)=\norm{\delta(a)}^2
    \end{equation*}
    for $a\in\mathfrak A$.
\end{lemma}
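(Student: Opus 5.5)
The plan is to realize $\delta$ as the derivation obtained from the free product of the semicircular-type dilations, and to verify the claims through a reduced-word computation. I will first check that the formula is well defined on $\mathfrak A$. The expression is multilinear in $(a_1,\dots,a_n)$, and the relative tensor product only identifies $a_kb\otimes a_{k+1}$ with $a_k\otimes ba_{k+1}$ for $b\in\B$. Since $\B\subseteq\Null(\bar\delta_j)$, the product rule for $\delta_j$ gives $\delta_j(a_kb)=\delta_j(a_k)b$ and $\delta_j(ba_{k+1})=b\delta_j(a_{k+1})$, so the sum is compatible with these identifications. I will also place the codomain correctly. The term $a_1\otimes\dots\otimes\delta_{j_k}(a_k)\otimes\dots\otimes a_n$ lies in a reduced tensor of $L^2_0$-spaces of the $\N_j$'s, because $\delta_{j_k}(a_k)\in L^2(\N_{j_k})\ominus L^2(\M_{j_k})\subseteq L^2_0(\N_{j_k})$. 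Moreover, it has at least one letter outside $L^2_0(\M_j)$, so it lies in $L^2(\N_1\bar\ast_\B\N_2)\ominus L^2(\M_1\bar\ast_\B\M_2)$.

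Next I will verify the modular derivation axioms. Compatibility with $J_\phi$ and $\Delta_\phi^{iz}$ follows from the explicit descriptions of $J_\phi$ and $\Delta_\phi^{it}$ on reduced words given above: $J$ reverses the order and applies $J_{\phi_j}$ letterwise, $\Delta^{it}$ acts letterwise, and each $\delta_j$ intertwines these with $\mathcal J_j=J_{\phi_j\circ E}$ and $\mathcal U_{j,t}$. The corresponding structure on $L^2(\N_1\bar\ast_\B\N_2)$ is given by the same letterwise formulas, so the identities hold term by term.

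For the Leibniz rule I will use the standard product formula for reduced words. Let $a=a_1\otimes\dots\otimes a_n$ and $b=b_1\otimes\dots\otimes b_m$. If $j_n\neq j'_1$, then $\pi_l(a)b$ is the concatenated word, and the Leibniz rule is immediate from the sum formula. If $j_n=j'_1$, write $\pi_l(a_n)b_1=c+\beta$ with $c\in L^2_0(\M_j)$ and $\beta\in\B$-part $E_j(\cdot)$. Then $\pi_l(a)b$ equals the word with middle letter $c$, plus the recursively reduced product of the shortened words with $\beta$ absorbed. The key input is that $\delta_j$ satisfies Leibniz on $\mathfrak A_j$ and that $E\circ\delta_j=0$ on $\B$-parts. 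The latter holds because $\delta_j(\text{analytic }b)=0$, and it also needs the fact that in $\N_j$ the product $\delta_j(a_n)\cdot b_1$ splits in a way compatible with $L^2_0(\N_j)$. I would proceed by induction on $n+m$.

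Finally, I will show $\mathcal E(a)=\|\delta(a)\|^2$. Since both sides are quadratic forms on $\mathfrak A$, it suffices to compare the associated sesquilinear forms on pairs of reduced words. Different word-types are orthogonal on both sides, because $\Phi_t^{(2)}$ preserves the decomposition and $\delta$ maps distinct letter-patterns to orthogonal summands. For a fixed type, $\Phi_t^{(2)}$ acts as $\Phi^{(2)}_{j_1,t}\otimes_\B\dots\otimes_\B\Phi^{(2)}_{j_n,t}$. Differentiating at $t=0$ gives $\mathcal E(a)=\sum_k\langle a,\,a_1\otimes\cdots\otimes\mathcal L_{j_k}a_k\otimes\cdots\rangle$, which is justified on $\mathfrak A$ since each $a_k\in\dom(\mathcal E_j)$ and the expression can be obtained via the quadratic-form limit $t^{-1}\langle a,a-\Phi_t^{(2)}a\rangle$. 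On the other side, $\|\delta(a)\|^2$ is a double sum over $k,l$. The terms with $k\neq l$ vanish: the inner product reduces to $\langle \delta_{j_k}(a_k),\,x\,a_k\rangle$-type pairings, and these are zero because $\delta_j(a_k)\perp L^2(\M_j)$. The diagonal terms give $\langle\delta_j(a_k),\,y\,\delta_j(a_k)\,\cdot\rangle$ sandwiched by the $\B$-valued inner products of the other letters, which via the carré du champ identity equals the $k$-th term of $\mathcal E(a)$.

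The main obstacle is the off-diagonal and sandwiched terms. They require the $\B$-bimodularity of the $\B$-valued inner product $\langle\delta_j(a),\delta_j(a')\rangle_\B$, i.e. $\Gamma_j$ restricted to $\B$ matching $\mathcal E_j$ with $\B$-valued weights. I would handle this by writing the $\B$-valued inner products as $L_{\phi_\B}(\cdot)^\ast L_{\phi_\B}(\cdot)$ and invoking $\langle\delta(a),\delta(b)y\rangle=\Gamma(a,b)(y)$ from Section \ref{sec:modular derivation} together with $\Phi_{j,t}$ being $\B$-bimodular.
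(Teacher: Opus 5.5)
Your proposal is correct and follows essentially the paper's route: distinct word types are orthogonal, the cross terms $k\neq l$ vanish because $\delta_j(a_k)\perp L^2(\M_j)$, and the diagonal terms are identified with the $k$-th term of $\mathcal E(a,a')$ via the $\B$-bimodularity of $\delta_j$ and $\Phi_{j,t}^{(2)}$. For the diagonal terms the carr\'e du champ is not needed, since the bimodularity of $\delta_j$ you already derived gives $\langle\delta_j(a_k),b\,\delta_j(a_k')\,b'\rangle=\mathcal E_j(a_k,ba_k'b')$ directly. Your induction on $n+m$ for the Leibniz rule supplies detail the paper only asserts as ``not hard to see''. The sesquilinear identity $\langle\delta(w),\delta(w')\rangle=\mathcal E(w,w')$ on elementary words also yields well-definedness of $\delta$ on $\mathfrak A$, which is more robust than checking balanced relations, since those relations may pass through vectors outside $\dom(\delta_j)$.
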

\begin{proof}
    From the description of the modular data for $\phi$, it is not hard to see that $\delta$ is a $\phi$-modular derivation. Moreover, if $a_k\in \mathfrak A_{j_k}\cap L^2_0(\M_{j_k})$ for $1\leq k\leq n$ and $(j_1,\dots,j_n)\in\mathbf J_n$ and $a_k^\prime\in \mathfrak A_{_k}\cap L^2_0(\M_{i_k})$ for $1\leq k\leq m$ and $(i_1,\dots,i_m)\in\mathbf J_n$, then 
    \begin{equation*}
        \langle\delta(a_1\otimes_{\phi_\B} \dots\otimes_{\phi_\B}a_n),\delta(a_1^\prime\otimes_{\phi_\B}\dots\otimes_{\phi_\B}a_m^\prime)\rangle=0
    \end{equation*}
    unless $m=n$ and $j_1=i_1$.

    In this case, write $a_{<k}$ for $a_1\otimes_{\phi_\B}\dots\otimes_{\phi_\B}a_{k-1}$ and $a_{>k}$ for $a_{k+1}\otimes_{\phi_\B}\dots\otimes_{\phi_\B}a_n$. We have
    \begin{align*}
        &\quad\langle\delta(a_1\otimes_{\phi_\B} \dots\otimes_{\phi_\B}a_n),\delta(a_1^\prime\otimes_{\phi_\B}\dots\otimes_{\phi_\B}a_n^\prime)\rangle\\
        &=\sum_{k=1}^n \langle a_{<k}\otimes_{\phi_\B}\delta_{j_k}(a_k)\otimes_{\phi_\B}a_{>k}, a_{<k}^\prime\otimes_{\phi_\B}\delta_{j_k}(a_k^\prime)\otimes_{\phi_\B}a_{>k}^\prime\rangle\\
        &=\sum_{k=1}^n \langle \delta_{j_k}(a_k),\delta_{j_k}(L_{\phi_\B}(a_{<k})^\ast L_{\phi_\B}(a_{<k}^\prime)\cdot a_k^\prime\cdot J_{\phi_\B}R_{\phi_\B}(a_{>k}^\prime)^\ast R_{\phi_\B}(a_{>k})J_{\phi_\B})\rangle\\
        &=-\frac{d}{dt}\bigg|_{t=0}\sum_{k=1}^n \langle a_k,\Phi_{t,j_k}^{(2)}(L_{\phi_\B}(a_{<k})^\ast L_{\phi_\B}(a_{<k}^\prime)\cdot a_k^\prime\cdot J_{\phi_\B}R_{\phi_\B}(a_{>k}^\prime)^\ast R_{\phi_\B}(a_{>k})J_{\phi_\B})\rangle\\
        &=-\frac{d}{dt}\bigg|_{t=0}\sum_{k=1}^n \langle a_k,L_{\phi_\B}(a_{<k})^\ast L_{\phi_\B}(a_{<k}^\prime)\cdot(\Phi_{t,j_k}^{(2)} a_k^\prime)\cdot J_{\phi_\B}R_{\phi_\B}(a_{>k}^\prime)^\ast R_{\phi_\B}(a_{>k})J_{\phi_\B}\rangle\\
        &=-\frac{d}{dt}\bigg|_{t=0}\sum_{k=1}^n \langle a_{<k}\otimes_{\phi_\B}a_k\otimes_{\phi_\B}a_{>k},a_{<k}^\prime\otimes_{\phi_\B}\Phi_{t,j_k}^{(2)}(a_k^\prime)\otimes_{\phi_\B}a_{>k}^\prime\rangle\\
        &=-\frac{d}{dt}\bigg|_{t=0}\langle a_1\otimes_{\phi_\B}\dots\otimes_{\phi_\B}a_n,\Phi_t^{(2)}(a_1\prime\otimes_{\phi_\B}\dots\otimes_{\phi_\B}a_n^\prime)\rangle\\
        &=\mathcal E(a_1\otimes_{\phi_\B}\dots\otimes_{\phi_\B}a_n,a_1\prime\otimes_{\phi_\B}\dots\otimes_{\phi_\B}a_n^\prime),
    \end{align*}
    where we used that $\Phi_{t,j_k}^{(2)}$ and $\delta_{j_k}$ are $\B$-bimodule maps.
\end{proof}

\begin{remark}
    As discussed in Subsection \ref{subsec:bimodules}, it is not restrictive to assume that $\delta_j$ takes values in $L^2(\N_j)\ominus L^2(\M_j)$. In the special case when $\delta_j\colon \mathfrak A_{\mathcal E_j}\to L^2(\M_j)\otimes_\B L^2(\M_j)$, the formula from the previous lemma shows that $\operatorname{ran}\delta\subset (L^2(\M_1\bar\ast_\B M_2)\otimes_\B L^2(\M_1\bar\ast_\B \M_2))^{\oplus 2}$. In particular, this is the case for the derivation $\delta_j$ associated with the dephasing semigroup $(\Phi_{j,t}=e^{-t}\mathrm{id}_{\M_j}+(1-e^{-t})E_\B$ discussed in the beginning of the section. Thus, if $\mathrm{id}_{L^2(\M_j)}\in\mathbb K(\B\subset \M_j,\phi_j)$, then the solidity results from Section~\ref{sec:solidity} apply to $\M_1\bar\ast_\B \M_2$.
\end{remark}

\begin{theorem}\label{thm:solidity for amalgamated free products}
    Let $\M_1,\M_2$ be $\sigma$-finite von Neumann algebras with a common von Neumann subalgebra $\B$ with expectations such that $\mathrm{id}_{L^2(\M_j)}\in\mathbb K(\B\subset \M_j,\phi_j)$ for $j=1,2$. Then $\M_1\bar\ast_\B\M_2$ is $\omega$-solid relative to $\B$ and stable solid relative to $\B$ along $\omega$ for any $\omega\in \beta\mathbb N\setminus\mathbb N$.
\end{theorem}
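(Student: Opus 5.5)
The plan is to apply Theorem~\ref{thm:omega-solidity} and Theorem~\ref{thm:stable solidity} to $\M=\M_1\bar\ast_\B\M_2$, using the free product of the dephasing semigroups. For $j=1,2$, let $(\Phi_{j,t})_{t\geq0}$ be given by $\Phi_{j,t}=e^{-t}\mathrm{id}_{\M_j}+(1-e^{-t})E_j$. This semigroup is GNS-symmetric with respect to $\phi_j=\phi_\B\circ E_j$ and fixes $\B$. Its derivation $\delta_j(a)=a\otimes_{\phi_\B}\phi_j^{1/2}-\phi_j^{1/2}\otimes_{\phi_\B}a$ is bounded and takes values in $L^2(\M_j)\otimes_\B L^2(\M_j)$. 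Set $\Phi_t=\Phi_{1,t}\ast_\B\Phi_{2,t}$, and let $\delta$ be the derivation from the preceding lemma, built on the core $\mathfrak A$.

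First I verify the hypotheses on $\delta$.

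\textbf{Null space.} Since $\delta$ vanishes on $\B_\infty\phi^{1/2}$ and $\Phi_t|_\B=\mathrm{id}$, we get $\B\subseteq\Null(\bar\delta)$.

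\textbf{Correspondence of the derivation.} By the remark after the lemma, $\operatorname{ran}\delta\subset(L^2(\M)\otimes_\B L^2(\M))^{\oplus2}$ as $\M$-$\M$ correspondences. So ${_\M}\H_\M\prec {_\M}L^2(\M)\otimes_\B\K\otimes_\B L^2(\M)_\M$ with $\K=L^2(\B)^{\oplus 2}$.

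\textbf{Stinespring correspondences.} On a word $x_1\cdots x_n$ with $x_k\in\ker E_{j_k}$, the map $\Phi_t$ acts by multiplication by $e^{-nt}$, and it is the identity on $\B$. Hence
\[
\Phi_t=\sum_{n\geq0}e^{-nt}P_n,
\]
where $P_n$ is the projection onto words of length $n$. Each $P_n$ is $\B$-bimodular. From this I would derive ${_\M}(\H_{\Phi_t})_\B\prec{_\M}L^2(\M)\otimes_\B(\K_t)_\B$. My first attempt is via Lemma~\ref{lem:ext_bimodule_action}: show that $\pi_l$ on $\H_{\Phi_t}$ extends to $\langle\M,\B\rangle$ commuting with the right $\B$-action. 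The Stinespring space of each $\Phi_{j,t}$ sits inside $L^2(\M_j)\oplus L^2(\M_j)\otimes_\B L^2(\M_j)$. By the free product construction (\cite{BD01}), the dilation of $\Phi_t$ is then a free-product-type correspondence that is a direct integral/sum of pieces of the form $L^2(\M)\otimes_\B(\cdot)$, together with the trivial summand $L^2(\M)$. The trivial summand is where the bound degenerates: $L^2(\M)$ is only weakly contained in $L^2(\M)\otimes_\B\K_t$ when the $\B$-index is finite. However, $\mathrm{id}_{L^2(\M_j)}\in\mathbb K(\B\subset\M_j,\phi_j)$ is exactly the finite-index type hypothesis that puts $\mathrm{id}_{L^2(\M_j)}$ into the relative compacts, hence into $\langle\M_j,\B\rangle$-type structure.

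Second, compact resolvent relative to $\B$. By the formula above, $\theta_\alpha^{(2)}=\sum_n\frac{\alpha}{\alpha+n}P_n$, so it suffices to show each $P_n\in\mathbb K(\B\subset\M,\phi)$; the tail has norm at most $\alpha/(\alpha+n)\to0$ as $n\to\infty$. Since $P_n$ restricted to the tensor of $L^2_0(\M_{j_k})$ is $\B$-bimodular and $\mathrm{id}_{L^2(\M_{j_k})}$ is relatively compact, I would use the Hilbert-module proposition on tensor products of compact operators, $S\otimes_A T\in\mathbb K$, inductively to obtain relative compactness on each tensor length. Alternatively, I can apply the proposition just before the lemmas (\cite[Theorem 8.2]{CKSVW23}) to $\Phi_{j,t}$: for $t>0$ it is relatively compact and has norm $e^{-t}<1$ on $L^2_0$. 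This gives $\Phi_t^{(2)}\in\mathbb K$, and then $\theta_\alpha^{(2)}=\int_0^\infty\alpha e^{-\alpha t}\Phi_t^{(2)}\,dt$ is relatively compact as a norm-convergent integral.

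Finally, invoke Theorem~\ref{thm:omega-solidity} and Theorem~\ref{thm:stable solidity}. The main obstacle is the weak containment for $\H_{\Phi_t}$ relative to $\B$. I expect to resolve it by writing $\Phi_t=e^{-t}\mathrm{id}+(1-e^{-t})\Psi_t$-type decompositions word by word, and bounding the identity part using the relative compactness of $\mathrm{id}_{L^2(\M_j)}$, which places $L^2(\M_j)$ inside $L^2(\M_j)\otimes_\B L^2(\M_j)$ up to weak containment.
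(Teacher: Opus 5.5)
Your route is the paper's (the remark preceding the theorem). You use the dephasing semigroups and their $\B$-free product. You take the derivation with range in $(L^2(\M)\otimes_\B L^2(\M))^{\oplus 2}$. You get relative compactness of $\Phi_t^{(2)}$, hence of $\theta_\alpha^{(2)}$, from \cite[Theorem 8.2]{CKSVW23}, via $\Phi_{j,t}^{(2)}=e^{-t}\mathrm{id}+(1-e^{-t})e_\B\in\mathbb K(\B\subset\M_j,\phi_j)$ and $\norm{\Phi_{1,t}^{(2)}|_{L^2_0(\M_1)}}=e^{-t}<1$. Finally you apply Theorems~\ref{thm:omega-solidity} and~\ref{thm:stable solidity}. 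The gap is the step you leave open, ${}_\M(\H_{\Phi_t})_\B\prec {}_\M L^2(\M)\otimes_\B(\K_t)_\B$, which the paper does not spell out for the free product either. There both your diagnosis and your repair are wrong.

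The condition concerns $\M$-$\B$ correspondences only. By absorption, ${}_\M L^2(\M)_\B\cong{}_\M L^2(\M)\otimes_\B L^2(\B)_\B$ for every inclusion, so the ``trivial summand'' costs nothing and no index enters. For instance, ${}_{\M_j}(\H_{\Phi_{j,t}})_\B\subset {}_{\M_j}L^2(\M_j)\otimes_\B(L^2(\B)\oplus L^2(\M_j))_\B$ for an arbitrary expected inclusion $\B\subset\M_j$. The assumption $\mathrm{id}_{L^2(\M_j)}\in\mathbb K(\B\subset\M_j,\phi_j)$ is used only for relative compactness. Your repair, embedding $L^2(\M_j)$ into $L^2(\M_j)\otimes_\B L^2(\M_j)$, is a statement about $\M_j$-$\M_j$ bimodules and cannot be transported to $\M$. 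The corresponding $\M$-$\M$ weak containment would make $\M$ amenable relative to $\B$. That fails already for $\B=\mathbb C$, $\M_j=L(\mathbb Z/3)$, where the Stinespring hypothesis holds trivially anyway.

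What you need is that the Stinespring correspondence of $\Phi_t=\Phi_{1,t}\ast_\B\Phi_{2,t}$ is induced from $\B$.

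Take the dilations $\H_j=L^2(\M_j)\otimes_\B\K_j$ of $\Phi_{j,t}$, with $\K_j=L^2(\B)\oplus L^2(\M_j)$ and $V_j\eta=(e^{-t/2}\eta,(1-e^{-t})^{1/2}\phi_j^{1/2}\otimes_\B\eta)$. Then $\xi_j=V_j\phi_j^{1/2}=\phi_j^{1/2}\otimes_\B\eta_j$, where $\eta_j$ generates a copy of ${}_\B L^2(\B)_\B$. With $\mathring\K_j=\K_j\ominus\overline{\B\eta_j}$ one has $\mathring\H_j\cong L^2_0(\M_j)\oplus L^2(\M_j)\otimes_\B\mathring\K_j$.

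As in Boca's construction of free products of ucp maps (cf.\ \cite{BD01}), one checks that $\Phi_t=V^\ast\pi(\cdot)V$. Here $\pi$ is the free product representation on the $\B$-free product $\H$ of the pointed correspondences $(\H_j,\xi_j)$, and $V=\ast_\B V_j$ is right $\B$-linear. Hence ${}_\M(\H_{\Phi_t})_\B\subset{}_\M\H_\B$.

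Now group the reduced tensors of $\H$ according to the first leg lying in some $L^2(\M_j)\otimes_\B\mathring\K_j$. The preceding legs together with that copy of $L^2(\M_j)$ reassemble to $L^2(\M)$. One checks that this gives an $\M$-$\B$ isomorphism ${}_\M\H_\B\cong{}_\M L^2(\M)\otimes_\B\bigl(L^2(\B)\oplus\bigoplus_j\mathring\K_j\otimes_\B\H(j)\bigr)_\B$. Here $\H(j)$ is spanned by $L^2(\B)$ and the reduced tensors not beginning with index $j$. This yields the hypothesis, even with containment, and with it your argument is complete.
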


\subsection{Operator-valued free Araki--Woods algebras}\label{subsec:op-valued_Araki-Woods}

Let $\B$ be a von Neumann algebra with a faithful normal state $\phi_\B\in \B_\ast$ and let $(\H,\mathcal J,(\mathcal U_t)_{t\in\mathbb R})$ be a Tomita correspondence over $(\B,\phi_\B)$. The Fock correspondence $\mathcal F(_\B\H_\B)$ is defined as
\begin{equation*}
    \mathcal F(_\B\H_\B)=L^2(\B)\oplus\bigoplus_{n\geq 1}\H^{\otimes_{\phi_\B} n}
\end{equation*}
If $\xi\in \mathcal D(\H,\phi_\B)$, define the left creation operator $a^\ast(\xi)\in \mathbb B(\mathcal F(_\B\H_\B))$ by $a^\ast(\xi)_{|_{L^2(\B)}}=L_{\phi_\B}(\xi)$ and 
\begin{equation*}
    a^\ast(\xi)\xi_1\otimes_{{\phi_\B}}\dots\otimes_{\phi_\B}\xi_n=\xi\otimes_{\phi_\B}\xi_1\otimes_{\phi_\B}\dots\otimes_{\phi_\B}\xi_n
\end{equation*}
for $\xi_1,\dots,\xi_n\in \mathcal D(\H,{\phi_\B})$. The left annihilation operator $a(\xi)$ is the adjoint of $a^\ast(\xi)$ and the left field operator $s(\xi)$ is defined as $s(\xi)=a^\ast(\xi)+a(\xi)$.

The {\em operator-valued free Araki--Woods algebra} $\Gamma(_\B\H_\B,\mathcal J,(\mathcal U_t)_{t\in\mathbb R})$ is the von Neumann algebra generated by $\pi_l^{\mathcal F(_\B\H_\B)}(\B)$ and $\{s(\xi)\mid \xi\in \mathcal D(\H,{\phi_\B})\cap\dom(\mathcal U_{-i/2}),\,\mathcal J\mathcal U_{-i/2}\xi=\xi\}$. In the following, we denote $\Gamma(_\B\H_\B,\mathcal J,(\mathcal U_t)_{t\in\mathbb R})$ by $\M$. The vector ${\phi_\B}^{1/2}\in L^2(\B)\subset\mathcal F(_\B\H_\B)$ is cyclic and separating for $\M$, and we denote the induced vector state on $\M$ by $\phi$.

If $T\colon \H\to \H$ is a contractive $\B$-bimodule map, then the second quantization $\mathcal F(T)\in \mathbb B(\mathcal F(_\B\H_\B))$ is the map given by $\mathcal F(T)_{|_{L^2(\B)}}=\mathrm{id}_{L^2(\B)}$ and $\mathcal F(T)_{|_{\H^{\otimes_\B n}}}=T^{\otimes_\B n}$ for $n\in\mathbb N$.

By \cite[Theorem 3.2.5]{Ske01} there exists a quasi-orthonormal basis for $\mathcal D(\H,\phi_\B)$, that is, a family $(\xi_\lambda,p_\lambda)_{\lambda\in \Lambda}$ with $\xi_\lambda\in \mathcal D(\H,{\phi_\B})$ and  a projection $p_\lambda\in \B$ such that $L_{\phi_\B}(\xi_\lambda)^\ast L_{\phi_\B}(\xi_\mu)=\delta_{\lambda,\mu}p_\lambda$ for all $\lambda,\mu\in\Lambda$ and
\begin{equation*}
\sum_{\lambda\in \Lambda}L_{\phi_\B}(\xi_\lambda)L_{\phi_\B}(\xi_\lambda)^\ast=1_\H
\end{equation*}
in the strong operator topology. In the following, we work under the assumption that $\mathcal D(\H,\phi_\B)$ admits a finite quasi-orthonormal basis that is also compatible with the Tomita correspondence structure.

\begin{lemma}\label{lem:rel_compact_Fock_space}
    Assume that there exists a finite family $(\xi_\lambda,p_\lambda)_{\lambda\in \Lambda}$ with $\xi_\lambda\in \mathcal D(\H,\phi_\B)\cap \dom(\mathcal U_{-i/2})$, $\mathcal J\mathcal U_{-i/2}\xi_\lambda\in \mathcal D(\H,\phi_\B)$ and $p_\lambda\in \B$ a projection for $\lambda \in \Lambda$ such that $L_{\phi_\B}(\xi_\lambda)^\ast L_{\phi_\B}(\xi_\mu)=\delta_{\lambda,\mu}p_\lambda$ for all $\lambda,\mu\in \Lambda$ and
    \begin{equation*}
        \sum_{\lambda\in \Lambda}L_{\phi_\B}(\xi_\lambda) L_{\phi_\B}(\xi_\lambda)^\ast=1_\H.
    \end{equation*}
    The operator $\mathcal F(e^{-t})$ belongs to $\mathbb K(\B\subset \M,\phi)$ for all $t>0$.
\end{lemma}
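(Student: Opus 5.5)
The plan is to show that, for each $n$, the orthogonal projection $P_n$ onto $\H^{\otimes_{\phi_\B}n}$ (with $P_0=e_\B$ the projection onto $L^2(\B)$) lies in $\mathbb K(\B\subset\M,\phi)$. Since
\[
\mathcal F(e^{-t})=\sum_{n\geq0}e^{-tn}P_n,
\]
and the series converges in operator norm for $t>0$ (the tail is bounded by $e^{-tN}$), the closedness of $\mathbb K(\B\subset\M,\phi)$ then gives the claim. The case $P_0=e_\B$ is immediate, because $e_\B=1\cdot e_\B\cdot 1$. Throughout I use the identification $\H^{\otimes n}\cong\H\otimes_\B\H^{\otimes(n-1)}$ and the quasi-orthonormal basis to write
\[
P_n=\sum_{\lambda\in\Lambda}a^\ast(\xi_\lambda)\,P_{n-1}\,a(\xi_\lambda),\qquad n\geq1.
\]
This identity follows from $\sum_\lambda L(\xi_\lambda)L(\xi_\lambda)^\ast=1_\H$ tensored with the identity, together with $a(\xi_\lambda)$ mapping $\H^{\otimes n}$ into $\H^{\otimes(n-1)}$ via $L(\xi_\lambda)^\ast\otimes 1$. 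Since $\Lambda$ is finite, it suffices to show that $a^\ast(\xi_\lambda)\,T\,a(\xi_\lambda)\in\mathbb K$ whenever $T\in\mathbb K$ and $T$ is supported on $\bigoplus_{k\leq n-1}\H^{\otimes k}$.

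The key step is to replace the creation operators by elements of $\M$ up to terms already handled. Put $\eta_\lambda=\mathcal J\mathcal U_{-i/2}\xi_\lambda$. The hypotheses make $\xi_\lambda$ and $\eta_\lambda$ left bounded, and I would form the symmetrized combinations $\xi_\lambda+\eta_\lambda$ and $i(\xi_\lambda-\eta_\lambda)$, which are fixed by $\mathcal J\mathcal U_{-i/2}$. This yields field operators $s(\cdot)\in\M$ whose span contains $a^\ast(\xi_\lambda)+a(\eta_\lambda)$. Hence
\[
a^\ast(\xi_\lambda)=x_\lambda-a(\eta_\lambda),\qquad x_\lambda\in\M.
\]
Restricted to $\bigoplus_{k\leq n-1}\H^{\otimes k}$, the annihilation part $a(\eta_\lambda)$ lowers degree.

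I would then run an induction on $n$ with the following hypothesis: every operator of the form $A\,P_k\,B$, where $k<n$ and $A,B$ are products of at most $2n$ creation/annihilation operators of the vectors $\xi_\lambda$, $\eta_\lambda$, together with elements of $\M$, lies in $\mathbb K$. Expanding $a^\ast(\xi_\lambda)P_{n-1}a(\xi_\lambda)$ via $a^\ast=x-a(\eta)$ on the left and its adjoint on the right produces $x_\lambda P_{n-1}x_\lambda^\ast$, which lies in $\mathbb K$ because $\mathbb K$ is an $\M$-bimodule. The remaining terms involve $a(\eta_\lambda)P_{n-1}=P_{n-2}a(\eta_\lambda)P_{n-1}$, which are again of the same shape with strictly lower projection index. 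The bimodule property of $\mathbb K$ under left and right multiplication by $\M$, and under sandwiching by bounded operators that arise from $\M$, is clear from the definition as the closed span of $\M e_\B\M$.

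I expect the main obstacle to be this induction bookkeeping. Pure annihilation operators $a(\eta)$ are not in $\M$, so terms such as $P_{k}a(\eta)P_{k+1}$ must themselves be rewritten as $P_k(x-a^\ast(\xi'))P_{k+1}$ with suitable $\xi'$ (using $\mathcal J\mathcal U_{-i/2}$-invariance in reverse), until every term reduces to words in $\M$ surrounding $e_\B$. To organize this, I would first prove a cleaner statement by induction on degree: for all $\zeta_1,\dots,\zeta_k$ in the finite set $\{\xi_\lambda,\eta_\lambda\}$, the operators $a^\ast(\zeta_1)\cdots a^\ast(\zeta_k)\,e_\B$ lie in $\overline{\M e_\B}$. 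This works because $x e_\B$ for a product $x$ of field operators equals the top creation word plus lower-degree words applied to $e_\B$, which is the standard Wick-type expansion. Then $P_n$ is a finite sum of $W e_\B W^\ast$ for such words $W$, and the conclusion follows.
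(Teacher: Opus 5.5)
Your final route is the paper's proof. The paper builds $x_{(i_1,\dots,i_n)}\in\M$ by the recursion $x_{(i_1,\dots,i_n)}=x_{i_1}x_{(i_2,\dots,i_n)}-b\,x_{(i_3,\dots,i_n)}$, with $x_i=a^\ast(\xi_i)+a(\mathcal J\mathcal U_{-i/2}\xi_i)$ and $b\in\B$, so that $x_{(i)}\phi_\B^{1/2}=\xi_{i_1}\otimes_{\phi_\B}\dots\otimes_{\phi_\B}\xi_{i_n}$; this is exactly your $We_\B=x_{(i)}e_\B$, and it concludes $P_n=\sum x_{(i)}e_\B x_{(i)}^\ast$. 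Two remarks: the $AP_kB$ bookkeeping is unnecessary, and the Wick step should be run one field operator at a time as in that recursion, since $a(\mathcal J\mathcal U_{-i/2}\zeta_1)a^\ast(\zeta_2)=L_{\phi_\B}(\mathcal J\mathcal U_{-i/2}\zeta_1)^\ast L_{\phi_\B}(\zeta_2)\in\B$ then sits at the far left of a shorter word over the same finite set, whereas expanding a full product of field operators inserts $\B$-coefficients between creation operators, outside your stated induction hypothesis.
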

\begin{proof}
    Let $P_n$ be the orthogonal projection from $\mathcal F(_\M \H _M)$ onto $\H^{\otimes_\B n}$. Since $\mathcal F(e^{-t})=\sum_{n=0}^\infty e^{-nt}P_n$, it suffices to prove that $P_n\in \mathbb K(\B\subset M,\phi)$ for all $n\in\mathbb N$.

    For $(i_1,\dots,i_n)\in \Lambda^n$ let $\xi_{(i_1,\dots,i_n)}=\xi_{i_1}\otimes_{\phi_\B}\dots\otimes_{\phi_\B}\xi_{i_n}$and define inductively $x_{\emptyset}=1$, $x_i=a^\ast(\xi_i)+a(\mathcal J\mathcal U_{-i/2})$ and $x_{(i_1,\dots,i_n)}=x_{i_1} x_{(i_2,\dots,i_n)}-(L_{\phi_\B}(\xi_{i_1})^\ast L_{\phi_\B}(\xi_{i_2}))\cdot x_{(i_3,\dots,i_n)}$ for $n\geq 2$. By definition, $x_{(i_1,\dots,i_n)}\in \M$ and one checks recursively that $x_{(i_1,\dots,i_n)}\phi_\B^{1/2}=\xi_{(i_1,\dots,i_n)}$ for all $(i_1,\dots,i_n)\in \Lambda^n$ and $n\in\mathbb N$.
    
    The set $(\xi_{(i_1,\dots,i_n)},p_{(i_1,\dots,i_n)})_{(i_1,\dots,i_n)\in \Lambda^n}$ forms a quasi-orthonormal basis of $\Lambda^n$, which implies 
    \begin{equation*}
        P_n=\sum_{(i_1,\dots,i_n)\in \Lambda^n}x_{(i_1,\dots,i_n)} e_\B x_{(i_1,\dots,i_n)}^\ast\in C^\ast(\M e_\B \M)
    \end{equation*}
    for all $n\in\mathbb N$.
\end{proof}

We will show next that there exists a GNS-symmetric quantum Markov semigroup $(\Phi_t)_{t\geq 0}$ with fixed-point algebra $\B$ such that $\Phi_t^{(2)}=\mathcal F(e^{-t})$ for $t\geq 0$.

\begin{proposition}\label{prop:construction of QMS on Free-Araki Woods}
    Let $T\colon \H\to \H$ be a bounded self-adjoint $\B$-bimodule map with $\norm{T}\leq 1$ such that $\mathcal J T=T\mathcal J$ and $\mathcal U_t T=T\mathcal U_t$ for all $t\in\mathbb R$. There exists a unique GNS-symmetric ucp map $\Phi$ on $\Gamma(_\B\H_\B,\mathcal J,(\mathcal U_t)_{t\in\mathbb R})$ such that $\mathcal F(T)=\Phi^{(2)}$.

    Moreover, if $\H_\Phi$ denotes the minimal Stinespring correspondence for $\Phi$, then there exists a $\B$-$\B$ correspondence $\K$ such that $_\M(\H_\Phi)_\B\subset {_\M \mathcal F(\H)}\otimes_\B \K_\B$.
\end{proposition}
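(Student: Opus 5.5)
The plan is to realize $\Phi$ as a compression of a $\ast$-homomorphism, following the classical second quantization argument for free Araki--Woods factors. First I dilate $T$ to a unitary-type bimodule map. Since $T$ is a self-adjoint contraction commuting with $\mathcal J$ and $\mathcal U_t$, put $\tilde\H=\H\oplus\H$ with the diagonal Tomita structure $\mathcal J\oplus\mathcal J$, $\mathcal U_t\oplus\mathcal U_t$ and the diagonal $\B$-bimodule actions. Define
\[V\colon \H\to\tilde\H,\qquad V\xi=T\xi\oplus (1-T^2)^{1/2}\xi .\]
Here $(1-T^2)^{1/2}$ is a $\B$-bimodule map commuting with $\mathcal J$ and $\mathcal U_t$ by functional calculus, and $V$ is an isometric $\B$-bimodule map intertwining the Tomita structures. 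Let $\iota\colon\H\to\tilde\H$ be the inclusion into the first summand. Then $\iota^\ast V=T$.

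Next I use functoriality of the operator-valued free Araki--Woods construction. An isometric $\B$-bimodule map $W$ intertwining the Tomita structures maps $\mathcal D(\H,\phi_\B)$ into $\mathcal D(\tilde\H,\phi_\B)$, and it preserves the real subspace $\{\xi:\mathcal J\mathcal U_{-i/2}\xi=\xi\}$. Hence $\mathcal F(W)s(\xi)=s(W\xi)\mathcal F(W)$ and $\mathcal F(W)$ restricts to the identity on $L^2(\B)$. It follows that $\pi_W(x)=\mathcal F(W)^\ast$-compression, more precisely the map $x\mapsto$ (the unique element of $\Gamma(\tilde\H)$ with $\pi_W(x)\mathcal F(W)=\mathcal F(W)x$), is a normal injective $\ast$-homomorphism $\Gamma(\H)\to\Gamma(\tilde\H)$ that preserves the vacuum states. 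Concretely, $\pi_W$ sends $s(\xi)\mapsto s(W\xi)$ and $b\mapsto b$. Its image is expected, with the expectation implemented by $\mathcal F(W)^\ast(\cdot)\mathcal F(W)$. I then set
\[\Phi=E_\iota\circ\pi_V,\qquad E_\iota(y)=\mathcal F(\iota)^\ast y\,\mathcal F(\iota)\ \text{ pulled back to }\M,\]
where $E_\iota$ is the vacuum-preserving conditional expectation of $\Gamma(\tilde\H)$ onto $\pi_\iota(\M)$. Then $\Phi$ is ucp and $\phi$-preserving. On $L^2$ it acts as $\mathcal F(\iota)^\ast\mathcal F(V)=\mathcal F(\iota^\ast V)=\mathcal F(T)$, using multiplicativity of second quantization on tensor powers. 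GNS-symmetry follows because $\mathcal F(T)$ is self-adjoint and $\Phi^{(2)}$ determines $\Phi$. Uniqueness is immediate from injectivity of $\Phi\mapsto\Phi^{(2)}$ (Section \ref{subsec:QMS}).

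The main obstacle is checking that $E_\iota$ exists and is implemented by $\mathcal F(\iota)^\ast$. This requires verifying that $\pi_\iota(\M)$ is globally invariant under the modular group of the vacuum state on $\Gamma(\tilde\H)$. I expect this to follow because that modular group is $\mathcal F(\mathcal U_t\oplus\mathcal U_t)$, which commutes with $\mathcal F(\iota)$. The identity $\Phi^{(2)}=\mathcal F(\iota)^\ast\mathcal F(V)$ then follows by evaluating on vectors $x\phi^{1/2}$.

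For the Stinespring statement, I take the dilation $\pi_V$ composed with the compression. Then $\H_\Phi$ embeds as an $\M$-$\B$ subcorrespondence of $\overline{\pi_V(\M)\mathcal F(\iota)L^2(\M)}\subset\mathcal F(\tilde\H)$. Using the Fock decomposition $\mathcal F(\H\oplus\H)\cong\mathcal F(\H)\otimes_\B\K$ as left $\Gamma(\H)$- and right $\B$-modules, with $\K=L^2(\B)\oplus\bigoplus_{n}(\text{words starting in the second copy})$, we get the same isomorphism as in the amalgamated free product section. This gives ${_\M}(\H_\Phi)_\B\subset {_\M}\mathcal F(\H)\otimes_\B\K_\B$.
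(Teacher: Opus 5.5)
Your construction of $\Phi$ is the paper's construction in different notation. The paper dilates $T$ to the self-adjoint unitary $\tilde T=\begin{pmatrix}T&(1-T^2)^{1/2}\\(1-T^2)^{1/2}&-T\end{pmatrix}$ on $\H\oplus\H$ and sets $\alpha=\mathrm{ad}\,\mathcal F(\tilde T)$ and, in your notation, $\Phi=\pi_\iota^{-1}\circ E\circ\alpha\circ\pi_\iota$. Since your isometry is $V=\tilde T\iota$, you have $\pi_V=\alpha\circ\pi_\iota$, and the two maps coincide. The first part of your argument is therefore fine. This includes $E_\iota(y)=\mathcal F(\iota)^\ast y\,\mathcal F(\iota)$, which follows from $eye=E(y)e$ for the Jones projection $e=\mathcal F(\iota)\mathcal F(\iota)^\ast$. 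A minor point: under the paper's convention that $\mathcal F(\cdot)$ is the identity on $L^2(\B)$, the vacuum modular group is $\Delta_{\phi_\B}^{it}$ on $L^2(\B)$ and $\tilde{\mathcal U}_t^{(n)}$ on tensor powers, not literally $\mathcal F(\tilde{\mathcal U}_t)$. It still commutes with $\mathcal F(\iota)$, so nothing breaks.

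\textbf{The gap is in the Stinespring part.}

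\emph{The mismatch.} The minimal Stinespring correspondence is $\H_\Phi=\overline{\pi_V(\M)\mathcal F(\iota)L^2(\M)}$, and its left $\M$-action is through $\pi_V$. The Fock decomposition you invoke is $\mathcal F(\H\oplus\H)\cong\mathcal F(\H)\otimes_\B\K$, with $\K=L^2(\B)\oplus\{\text{words starting in the second copy}\}$, realized by concatenation. This decomposition intertwines $\pi_\iota(x)$ (generated by the $s(\xi\oplus0)$) with $x\otimes_\B 1$. It does not intertwine $\pi_V(x)$ with $x\otimes_\B 1$.

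\emph{Concrete failure.} Apply $a^\ast(V\xi)$ to a word $w$ starting in the second copy. The result contains the component $(0\oplus(1-T^2)^{1/2}\xi)\otimes_\B w$, which again starts in the second copy. So $\pi_V$ does not act on the first tensor factor alone, and ${_\M}(\H_\Phi)_\B\subset{_\M}\mathcal F(\H)\otimes_\B\K_\B$ does not follow from what you wrote.

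\emph{How the paper handles it.} This is exactly where the paper needs the unitary rather than just the isometry. $\mathcal F(\tilde T)^\ast$ is a right $\B$-linear unitary with $\mathcal F(\tilde T)^\ast\alpha(\pi_\iota(x))=\pi_\iota(x)\mathcal F(\tilde T)^\ast$. It therefore carries $(\mathcal F(\tilde\H),\pi_V)$ onto $(\mathcal F(\tilde\H),\pi_\iota)$, and your decomposition applies to the latter.

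\emph{How to fix it within your framework.} Split along $\tilde\H=V\H\oplus(V\H)^\perp$ instead of along the two copies.
\begin{enumerate}
\item $V\H$ is a closed sub-bimodule invariant under $\tilde{\mathcal J}$ and $\tilde{\mathcal U}_t$, so $(V\H)^\perp$ is a Tomita sub-correspondence.
\item One then has $\mathcal F(\tilde\H)\cong\mathcal F(V\H)\otimes_\B\bigl(L^2(\B)\oplus(V\H)^\perp\otimes_\B\mathcal F(\tilde\H)\bigr)$, equivariantly for the left action of $\pi_V(\M)$ on the first factor.
\item $\mathcal F(V)\colon\mathcal F(\H)\to\mathcal F(V\H)$ intertwines $x$ with $\pi_V(x)$, which gives the required inclusion.
\end{enumerate}
Either way, you need an explicit step relating the $\pi_V$-action to a splitting in which it acts only on the first tensor factor.
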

\begin{proof}
    The proof of the first part is a simple modification of \cite[Theorem 2.11]{BKS97}. Let $_\B\tilde \H_\B={_\B\H_\B}\oplus {_\B\H_\B}$, $\tilde{\mathcal J}=\mathcal J\oplus \mathcal J$, $\tilde{\mathcal U_t}=\mathcal U_t\oplus\mathcal U_t$, $\tilde\M=\Gamma(_\B\tilde \H_\B,\tilde{\mathcal J},(\tilde{\mathcal U}_t)_{t\in\mathbb R})$ and
    \begin{equation*}
        \tilde T=\begin{pmatrix}T&(1-T^2)^{1/2}\\1-T^2)^{1/2}&-T\end{pmatrix}\in \mathbb B(\tilde\H).
    \end{equation*}
    Clearly, $\tilde T$ is a $\B$-bimodule map that commutes with $\tilde{\mathcal J}$ and $\tilde{\mathcal U_t}$ for all $t\in\mathbb R$. Moreover, $\tilde T$ is unitary.

    Let $\iota\colon \M\to\tilde \M$ be the $\ast$-homomorphism that maps $s(\xi)$ to $s(\xi,0)$ for $\xi\in \mathcal D(\H,\phi_\B)\cap \dom(\mathcal U_{-i/2})$ with $\mathcal J\mathcal U_{-i/2}\xi=\xi$ and $\pi_l^{\mathcal F(_\B\H_\B)}(b)$ to $\pi_l^{\mathcal F(_\B\tilde \H_\B)}(b)$ for $b\in \B$.
    
    Let $\tilde\phi$ be the vector state induced by $\phi_\B^{1/2}$ on $\tilde\M$. By the description of the modular theory of $\phi$ in \cite[Section 5.2]{KW25}, the image $\iota(\M)$ is globally invariant under $\sigma^{\tilde\phi}$. Let $E\colon \tilde\M\to\iota(\M)$ be the $\tilde\phi$-preserving conditional expectation.
    
    Finally, let $\alpha=\mathrm{ad}(\mathcal F(\tilde T))$. We have $\alpha(s(\xi))=s(\tilde T \xi)$ for $\xi\in \mathcal D(\tilde\H,\phi_\B)\cap\dom(\tilde{\mathcal U}_{-i/2})$ with $\tilde{\mathcal J}\tilde{\mathcal U}_{-i/2}\xi=\xi$ and $\alpha(b)=b$ for $b\in \B$ so that $\alpha$ leaves $\M$ globally invariant. We define $\Phi=\iota^{-1}E\alpha \iota$. Note that $\Phi$ commutes with $\sigma^\phi$. The fact that $\Phi^{(2)}=\mathcal F(T)$ follows by a direct calculation.

    Let $W\colon \H\to \tilde\H$, $\xi\mapsto (\xi,0)$. For $x,y\in \M$ we have
    \begin{align*}
        \langle \phi_\B^{1/2},\alpha(\iota(x))\phi_\B^{1/2}\iota(y)\rangle&=\langle \mathcal F(W)(\phi_\B^{1/2}y^\ast),\mathcal F(\tilde T) \mathcal F(W)x\phi_\B^{1/2}\rangle\\
        &=\langle \phi_\B^{1/2}\iota(y^\ast),\iota(\Phi(x))\phi_\B^{1/2}\rangle\\
        &=\langle \phi_\B^{1/2},\Phi(x)\phi_\B^{1/2}y\rangle.
    \end{align*}
    Hence a minimal Stinespring dilation of $\Phi$ is given by $\H_\Phi=\overline{\mathrm{span}}\{\alpha(\iota(x))\phi_\B^{1/2}\iota(y)\mid x,y\in \M\}$, $V=\mathcal F(W)$ and $\pi=\alpha\iota$. Since $\alpha|_\B=\id_\B$, $\mathcal F(\tilde T)^\ast\colon \H_\Phi\to \mathcal F(_\B\tilde \H_\B)$ is a right $\B$-module map such that $\mathcal F(\tilde T)^\ast \alpha(\iota(x))=\iota(x)\mathcal F(\tilde T)^\ast$. Hence ${_\M(\H_\Phi)_\B}\subset {_\M\mathcal F(_\B\tilde \H_\B)_\B}$, where the left action of $\M$ on $\mathcal F(_\B\tilde \H_\B)$ is given by $\iota$. Finally,
    \begin{equation*}
        _\M\mathcal F(_\B\tilde \H_\B)_\B\cong {_\M}\mathcal F(_\B\H_\B)\otimes_\B(L^2(\B)_\B\oplus \H\otimes_\B\mathcal F(_\B\H_\B)_\B).\qedhere
    \end{equation*}
\end{proof}

Let $\mathcal F^{(0)}(_\B\H_\B)$ denote the linear span of $\B_\infty {\phi_\B}^{1/2}$ and $\bigcup_{n\geq 1}\{\xi_1\otimes_{\phi_\B}\dots\otimes_{\phi_\B}\xi_n\mid \xi_k\in \bigcap_{z\in \mathbb C}\dom(\mathcal U_z),\,\mathcal U_z\xi_k\in\mathcal D(\H,{\phi_\B})\cap \mathcal D^\prime(\H,{\phi_\B})\text{ for all }z\in\C,1\leq k\leq n\}$. As a consequence of \cite[Proposition 4.8]{Wir22}, the set $\mathcal F^{(0)}(_\B\H_\B)$ is dense in $\mathcal F(_\B\H_\B)$. Since it is clearly invariant under $\mathcal F(e^{-t})$, it follows from \cite[Lemma 3.4]{Wir22} that $\mathcal F^{(0)}(_\B\H_\B)$ is a form core for $\mathcal E$. If $\xi\in \bigcap_{z\in \mathbb C}\dom(\mathcal U_z)$ with $\mathcal U_z\xi\in\mathcal D(\H,{\phi_\B})\cap \mathcal D^\prime(\H,{\phi_\B})$ for all $z\in\C$, then it follows from \cite[Theorem 5.18]{KW25} that $s(\xi)\phi_\B^{1/2}\in \mathfrak T_\phi$. Then one can show by induction that $\mathcal F^{(0)}(_\B\H_\B)\subset \mathfrak T_\phi$.

Let
\begin{equation*}
    \delta\colon \mathcal F^{(0)}(_\B\H_\B)\to \mathcal F(_\B\H_\B)\otimes_{\phi_\B}\H\otimes_{\phi_\B}\mathcal F(_\B\H_\B),
\end{equation*}
be given by $\delta|_{B_\infty \phi_\B^{1/2}}=0$ and 
\begin{equation*}
    \delta(\xi_1\otimes_{\phi_\B}\dots\otimes_{\phi_\B}\xi_n)=\sum_{k=1}^n \underbrace{\xi_1\otimes_{\phi_\B \dots}\otimes_{\phi_\B}\xi_{k-1}}_{\in \mathcal F(_\B\H_\B)}\otimes_{\phi_\B}\underbrace{\xi_k}_{\in \H}\otimes_{\phi_\B}\underbrace{\xi_{k+1}\otimes_{\phi_\B}\dots\otimes_{\phi_\B}\xi_n}_{\in\mathcal F(_\B\H_\B)}.
\end{equation*}
As in the scalar-valued case \cite[Section 8.4]{Wir22}, one shows that $\delta$ is a $\phi$-modular derivation such that $\mathcal E(a)=\norm{\delta(a)}^2$ for $a\in \mathcal F^{(0)}(_\B\H_\B)$. Note that the codomain $\mathcal F(_\B\H_\B)\otimes_{\phi_\B}\H\otimes_{\phi_\B}\mathcal F(_\B\H_\B)$ of $\delta$ carries a natural structure of an $\langle \M,\B\rangle$-$\langle \M,\B\rangle$ correspondence.

\begin{corollary}\label{cor:solidity of free Araki-Woods with finitely generated bimodule}
    If $(\H,\mathcal J,(\mathcal U_t)_{t\in\mathbb R})$ is a Tomita correspondence over $\B$ that satisfies the condition from Lemma~\ref{lem:rel_compact_Fock_space}, then $\Gamma(_\B\H_\B,\mathcal J,(\mathcal U_t)_{t\in\mathbb R})$ is stably solid relative to $\B$ along $\omega$ and is $\omega$-solid relative to $\B$ for any $\omega\in\beta\mathbb N\setminus\mathbb N$.
\end{corollary}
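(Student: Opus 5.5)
The plan is to reduce the statement to Theorem~\ref{thm:omega-solidity} and Theorem~\ref{thm:stable solidity}. Both theorems need three inputs for the free Araki--Woods algebra $\M=\Gamma(_\B\H_\B,\mathcal J,(\mathcal U_t)_{t\in\mathbb R})$ with the vector state $\phi$: a densely defined closable modular derivation $\delta$ with $\B\subseteq\Null(\bar\delta)$, the weak containment hypotheses, and compactness of the resolvent of $\delta^\ast\bar\delta$ relative to $\B$. The separability of the predual comes from the setting assumed in the introduction (countable sums of finitely generated sub-correspondences over separable $\B$).

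First, I build the semigroup. Apply Proposition~\ref{prop:construction of QMS on Free-Araki Woods} to $T=e^{-t}\,\mathrm{id}_\H$, which is self-adjoint, $\B$-bimodular, contractive and commutes with $\mathcal J$ and $\mathcal U_s$. This gives GNS-symmetric ucp maps $\Phi_t$ with $\Phi_t^{(2)}=\mathcal F(e^{-t})$. The semigroup law and weak$^\ast$ continuity follow from the corresponding properties of $\mathcal F(e^{-t})$ and Cipriani's correspondence. On $L^2(\B)$ we have $\mathcal F(e^{-t})=\mathrm{id}$, so $\Phi_t|_\B=\mathrm{id}_\B$ and hence $\B\subseteq\Null(\bar\delta)$. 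Here $\delta$ is the derivation defined just before the corollary, with $\mathcal E(a)=\norm{\delta(a)}^2$ on the form core $\mathcal F^{(0)}(_\B\H_\B)$, so $\delta^\ast\bar\delta=\mathcal L_2$ is the number operator $N$. Its closability follows because $\mathcal E$ is a closed form.

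Second, I check the weak containments. The target of $\delta$ is $\mathcal F(\H)\otimes_\B\H\otimes_\B\mathcal F(\H)$. Since $L^2(\M)\cong\mathcal F(_\B\H_\B)$ as $\M$-$\B$ (resp.\ $\B$-$\M$) correspondences, this target equals ${_\M L^2(\M)}\otimes_\B\H\otimes_\B L^2(\M)_\M$, which is the required form with $\K=\H$. The second part of Proposition~\ref{prop:construction of QMS on Free-Araki Woods} gives ${_\M(\H_{\Phi_t})_\B}\subset{_\M L^2(\M)}\otimes_\B(\K_t)_\B$, and containment implies weak containment.

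Third, I show relative compactness of the resolvent. Lemma~\ref{lem:rel_compact_Fock_space} gives $P_n\in\mathbb K(\B\subset\M,\phi)$ for every $n$. Then $\theta_\alpha^{(2)}=\alpha(N+\alpha)^{-1}=\sum_n\frac{\alpha}{n+\alpha}P_n$ converges in norm, because the coefficients tend to $0$ as $n\to\infty$. So $\theta_\alpha^{(2)}\in\mathbb K(\B\subset\M,\phi)\subseteq C^\ast(\M e_\B\M)$, which is what Proposition~\ref{prop:convergence of deformation with compact resolvent gives intertwining} and its relative-commutant variant use.

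With these inputs, Theorems~\ref{thm:omega-solidity} and~\ref{thm:stable solidity} give the conclusion. The hard part is the finite quasi-orthonormal basis hypothesis. If $\H$ is only a countable direct sum of finitely generated Tomita sub-correspondences $\H_k$, each $P_n$ is no longer relatively compact. In that case I would instead use $T=\bigoplus_k e^{-t\lambda_k}\mathrm{id}_{\H_k}$ with $\lambda_k\to\infty$, so that $\mathcal F(T)$ is a norm limit of operators supported on finitely generated pieces. One must then check that the finite pieces are still relatively compact: the words $x_{(i_1,\dots,i_n)}$ from Lemma~\ref{lem:rel_compact_Fock_space} should still produce $P$-blocks in $C^\ast(\M e_\B\M)$. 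I would also need the derivation to be the weighted version $\delta=\bigoplus_k\lambda_k^{1/2}\delta_k$, while the correspondence bound stays unchanged.
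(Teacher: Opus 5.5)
Your proposal is correct and follows the paper's own (implicit) route: it feeds four inputs into Theorems~\ref{thm:omega-solidity} and~\ref{thm:stable solidity}. These are the derivation $\delta$ into $\mathcal F(_\B\H_\B)\otimes_{\phi_\B}\H\otimes_{\phi_\B}\mathcal F(_\B\H_\B)\cong L^2(\M)\otimes_\B\H\otimes_\B L^2(\M)$, the semigroup and Stinespring containment from Proposition~\ref{prop:construction of QMS on Free-Araki Woods}, relative compactness from Lemma~\ref{lem:rel_compact_Fock_space}, and separability of the predual, which, like the paper, you have to import from the standing assumption that $\B$ has separable predual; your norm-convergent expansion $\theta_\alpha^{(2)}=\sum_n\frac{\alpha}{\alpha+n}P_n$ is a clean way to pass from the $P_n$ to the resolvent. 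Your final paragraph on countable direct sums is not needed, since the finite quasi-orthonormal basis is a hypothesis of this corollary; that case is the content of the subsequent Corollary~\ref{cor:solidity of free Araki-Woods}.
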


\begin{corollary}\label{cor:solidity of free Araki-Woods}
    If $(\H,\mathcal J,(\mathcal U_t)_{t\in\mathbb R})$ is a Tomita correspondence over $\B$ such that $\H=\oplus_{j\in \Lambda}\H_j$ for some countable set $\Lambda$, where each $\H_j$
is invariant under the left and right actions, the
Tomita conjugation, and the modular group, and satisfies the condition from Lemma~\ref{lem:rel_compact_Fock_space},
    then $\Gamma(_\B\H_\B,\mathcal J,(\mathcal U_t)_{t\in\mathbb R})$ is stably solid relative to $\B$ along $\omega$ and is $\omega$-solid relative to $\B$ for any $\omega\in\beta\mathbb N\setminus\mathbb N$.
\end{corollary}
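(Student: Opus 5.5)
The plan is to reduce the decomposable case to the finitely generated one, Corollary~\ref{cor:solidity of free Araki-Woods with finitely generated bimodule}, via the general solidity theorems. The route goes through Theorem~\ref{thm:omega-solidity} and Theorem~\ref{thm:stable solidity}, so we must produce a modular derivation $\delta$ on $\M=\Gamma({_\B}\H_\B,\mathcal J,(\mathcal U_t))$ with four properties:
\begin{enumerate}
\item $\B\subseteq\Null(\bar\delta)$;
\item the codomain is weakly contained in $L^2(\M)\otimes_\B\K\otimes_\B L^2(\M)$;
\item the Stinespring correspondences of the semigroup satisfy ${_\M}(\H_{\Phi_t})_\B\prec {_\M}L^2(\M)\otimes_\B(\K_t)_\B$;
\item $\delta^\ast\bar\delta$ has compact resolvent relative to $\B$.
\end{enumerate}
Since $\H$ is no longer finitely generated, the number operator $\mathcal F(e^{-t})$ is not relatively compact. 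We therefore replace it by a weighted number operator.

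Choose weights $0<\lambda_j\to\infty$ along an enumeration of $\Lambda$, and set $T_t=\bigoplus_j e^{-t\lambda_j}\mathrm{id}_{\H_j}$. Each $T_t$ is a self-adjoint contractive $\B$-bimodule map commuting with $\mathcal J$ and $\mathcal U_s$, because each $\H_j$ is invariant under all of these structures. Proposition~\ref{prop:construction of QMS on Free-Araki Woods} then yields GNS-symmetric ucp maps $\Phi_t$ with $\Phi_t^{(2)}=\mathcal F(T_t)$. These form a semigroup, since $\mathcal F(T_sT_t)=\mathcal F(T_s)\mathcal F(T_t)$, and the semigroup fixes $\B$. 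The same proposition gives the Stinespring containment ${_\M}(\H_{\Phi_t})_\B\subset {_\M}\mathcal F(\H)\otimes_\B\K_\B$, which is property (3).

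The derivation is the weighted analogue of the one defined just before the corollary. On elementary tensors $\xi_1\otimes\dots\otimes\xi_n$ with $\xi_k\in\H_{j_k}$ analytic, set
\[
\delta(\xi_1\otimes\dots\otimes\xi_n)=\sum_k \xi_{<k}\otimes\lambda_{j_k}^{1/2}\xi_k\otimes\xi_{>k},
\]
with values in $\mathcal F(\H)\otimes_{\phi_\B}\H\otimes_{\phi_\B}\mathcal F(\H)$. As in \cite[Section 8.4]{Wir22}, one checks that this is a $\phi$-modular derivation with $\|\delta(a)\|^2=\mathcal E(a)$. That it is modular uses the invariance of each $\H_j$ under $\mathcal J$ and $\mathcal U_z$. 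The codomain carries an $\langle\M,\B\rangle$-bimodule structure, so by Lemma~\ref{lem:ext_bimodule_action} it is of the form $L^2(\M)\otimes_\B\K\otimes_\B L^2(\M)$, which gives property (2). Property (1) holds because $\delta$ vanishes on $\B_\infty\phi_\B^{1/2}$.

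The main obstacle is relative compactness, namely showing that $\theta_\alpha^{(2)}=\alpha(\alpha+\mathcal L_2)^{-1}$ lies in $\mathbb K(\B\subset\M,\phi)$. The generator acts on the summand $\H_{j_1}\otimes_\B\dots\otimes_\B\H_{j_n}$ as multiplication by $\lambda_{j_1}+\dots+\lambda_{j_n}\geq n\min_j\lambda_j$. Since $\lambda_j\to\infty$ and the minimum is positive, for each $N$ only finitely many tuples $(j_1,\dots,j_n)$ have eigenvalue at most $N$. Hence $\theta_\alpha^{(2)}$ is the norm limit of finite sums of $c\,P_{(j_1,\dots,j_n)}$, where $P_{(j_1,\dots,j_n)}$ is the projection onto the corresponding tensor summand. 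It remains to show each $P_{(j_1,\dots,j_n)}\in C^\ast(\M e_\B\M)$. For this I would repeat the proof of Lemma~\ref{lem:rel_compact_Fock_space}, using the finite quasi-orthonormal bases of the $\H_{j_k}$: products of those bases give a quasi-orthonormal basis of the summand, and the same recursively defined Wick-type elements $x_{(i_1,\dots,i_n)}\in\M$ satisfy $P=\sum x_{\mathbf i}e_\B x_{\mathbf i}^\ast$. The delicate points are that the mixed tensor summands have finite bases, and that the Wick recursion still works when the indices come from different $\H_j$. With properties (1)--(4) established, Theorem~\ref{thm:omega-solidity} and Theorem~\ref{thm:stable solidity} give the conclusion.
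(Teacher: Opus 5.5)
Your proposal is essentially the paper's proof. It uses the same weighted second quantization $T_t=e^{-tS}$ with $S|_{\H_j}=\lambda_j\to\infty$ fed into Proposition~\ref{prop:construction of QMS on Free-Araki Woods}, the same weighted derivation $\delta_S$, and the same appeal to Theorems~\ref{thm:omega-solidity} and~\ref{thm:stable solidity}. Your summand-by-summand expansion of $\theta_\alpha^{(2)}$ over $\H_{j_1}\otimes_\B\dots\otimes_\B\H_{j_n}$ is a more explicit version of the paper's brief appeal to Lemma~\ref{lem:rel_compact_Fock_space}.

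One minor correction at the point you flagged: tensor products of quasi-orthonormal bases need not be quasi-orthonormal, because the cross terms $L_{\phi_\B}(\eta_\mu)^\ast p_\lambda L_{\phi_\B}(\eta_{\mu'})$ need not vanish. However, all you actually use is the identity $\sum_{\mathbf i}L_{\phi_\B}(\xi_{\mathbf i})L_{\phi_\B}(\xi_{\mathbf i})^\ast=P_{(j_1,\dots,j_n)}$. That identity does hold for the product family, so $P_{(j_1,\dots,j_n)}=\sum_{\mathbf i}x_{\mathbf i}e_\B x_{\mathbf i}^\ast$ still follows.
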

\begin{proof}
We assume that $\Lambda$ is infinite. Choose positive real numbers $\lambda_j\geq 1$ such that $\lambda_j\to\infty$ as $j\to\infty$, and define the operator $S:\H\to \H$ and $(T_t:\H\to\H)_{t\geq 0}$ by 
\[
S_{|_{H_j}}=\lambda_j,\qquad T_t=e^{-tS}.
\]
For $t>0$, each  ${T_t}_{|_{\H_j}}$ is a relatively compact $\B$-bimodule map because $\H_j$ is finitely generated as right $\B$-module, and since $e^{-t\lambda_j}\to 0$, it follows that  $T_t$ is compact relative to $\B$.  Moreover, it is clear that $T_t$ commutes with the Tomita data, and $\|T_t\|\leq \sup_{j\in \Lambda}e^{-t\lambda_j}<1$.

On the Fock module,
\[
\mathcal F(T_t)=1_{L^2(B)}\oplus\bigoplus_{n\geq1}T_t^{\otimes_B n}
\]
is relatively compact by Lemma~\ref{lem:rel_compact_Fock_space}. Second quantization gives a GNS-symmetric QMS by the construction of Proposition \ref{prop:construction of QMS on Free-Araki Woods}. As above, consider the subspace $\mathfrak A$ spanned by  $\B_\infty {\phi_\B}^{1/2}$ and $\bigcup_{n\geq 1}\{\xi_1\otimes_{\phi_\B}\dots\otimes_{\phi_\B}\xi_n\mid $ where
\[\xi_k\in \bigcap_{z\in \mathbb C}\dom(\mathcal U_z)\cap\dom(S^{1/2}),\,\,\,\mathcal U_z\xi_k\in\mathcal D(\H,{\phi_\B})\cap \mathcal D^\prime(\H,{\phi_\B})\]
for all $z\in\C,1\leq k\leq n$ and define the linear map $ \delta_S\colon \mathfrak A\to \mathcal F(_\B\H_\B)\otimes_{\phi_\B}\H\otimes_{\phi_\B}\mathcal F(_\B\H_\B),$
by $\delta|_{B_\infty \phi_\B^{1/2}}=0$ and
\begin{equation*}
    \delta_S(\xi_1\otimes_{\phi_\B}\dots\otimes_{\phi_\B}\xi_n)=\sum_{k=1}^n \underbrace{\xi_1\otimes_{\phi_\B \dots}\otimes_{\phi_\B}\xi_{k-1}}_{\in \mathcal F(_\B\H_\B)}\otimes_{\phi_\B}\underbrace{S^{1/2}\xi_k}_{\in \H}\otimes_{\phi_\B}\underbrace{\xi_{k+1}\otimes_{\phi_\B}\dots\otimes_{\phi_\B}\xi_n}_{\in\mathcal F(_\B\H_\B)}.
\end{equation*}
Again as above, one shows that $\delta_S$ is a $\phi$-modular derivation such that $\mathcal{F}(T_t)=e^{-t{\delta_S}^\ast\bar\delta_S}$. Note that the codomain $\mathcal F(_\B\H_\B)\otimes_{\phi_\B}\H\otimes_{\phi_\B}\mathcal F(_\B\H_\B)$ of $\delta_S$ carries a natural structure of an $\langle \M,\B\rangle$-$\langle \M,\B\rangle$ correspondence.
The results now follow from the general results of Section \ref{sec:solidity}.
\end{proof}

As an immediate corollary, for $\B=\mathbb C$, we get the following well-known $\omega$-solidity result about free Araki-Woods factors from \cite{HR15}, while the result about stable solidity seems to be new.

\begin{corollary}
    If $(U_t)_{t\in \mathbb R}$ is a one-parameter group of orthogonal transformations on a separable Hilbert space $\H_{\mathbb R}$ such that $U_t$ is almost-periodic, then $\Gamma(U_t,\H_{\mathbb R})$ is stably solid along $\omega$ and is $\omega$-solid for any $\omega\in\beta\mathbb N\setminus\mathbb N$
\end{corollary}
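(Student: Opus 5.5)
The plan is to reduce this statement to Corollary~\ref{cor:solidity of free Araki-Woods}, taking $\B=\mathbb C$ and $\phi_\B$ the trivial state. The first task is to express $\Gamma(U_t,\H_{\mathbb R})$ as an operator-valued free Araki--Woods algebra over $\mathbb C$. Let $\H=\H_{\mathbb R}\otimes_{\mathbb R}\mathbb C$ be the complexification, and let $A$ be the positive self-adjoint generator with $U_t=A^{it}$. Following Shlyakhtenko's construction, I equip $\H$ with the deformed inner product $\langle\xi,\eta\rangle_U=\langle \xi,2(1+A^{-1})^{-1}\eta\rangle$ and complete to obtain $\H_U$. The Tomita data are then $\mathcal U_t=U_t$ extended to $\H_U$, together with the anti-unitary $\mathcal J$ obtained from the polar decomposition of complex conjugation. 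With these choices $(\H_U,\mathcal J,(\mathcal U_t))$ is a Tomita correspondence over $(\mathbb C,1)$, and $\Gamma({}_{\mathbb C}\H_{U\,\mathbb C},\mathcal J,\mathcal U)$ coincides with $\Gamma(U_t,\H_{\mathbb R})$: its field operators $s(\xi)$ with $\mathcal J\mathcal U_{-i/2}\xi=\xi$ are exactly Shlyakhtenko's generators $s(h)$, $h\in\H_{\mathbb R}$. This identification is the only place where one must check conventions carefully, and it is the step I expect to need the most care.

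The second step is to decompose along almost periodicity. Since $U_t$ is almost periodic and $\H_{\mathbb R}$ is separable, $\H_{\mathbb R}=\bigoplus_{j\in\Lambda}K_j$ for a countable family, where each $K_j$ is either a one-dimensional trivial subrepresentation or a two-dimensional block on which $U_t$ acts as rotation by angle $t\log\lambda_j$. The complexifications $\H_j$ are orthogonal in $\H_U$ and are invariant under $\mathcal U_t$ and $\mathcal J$; over $\B=\mathbb C$ the bimodule invariance is automatic. Each $\H_j$ is finite-dimensional, so it is a Tomita correspondence finitely generated as a right $\mathbb C$-module.

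The final step is to check the hypothesis of Lemma~\ref{lem:rel_compact_Fock_space} for each $\H_j$, namely a finite quasi-orthonormal basis $(\xi_\lambda,1)$ with $\xi_\lambda\in\dom(\mathcal U_{-i/2})$ and $\mathcal J\mathcal U_{-i/2}\xi_\lambda\in\mathcal D(\H_j,1)$. Over $\mathbb C$ every vector is left bounded, and on a finite-dimensional $\H_j$ all $\mathcal U_z$ are everywhere defined, so any orthonormal basis of $\H_j$ works. If some block is three-dimensional or higher this causes no issue, since finite dimension is all that is used. Corollary~\ref{cor:solidity of free Araki-Woods} then yields stable solidity along $\omega$ and $\omega$-solidity relative to $\mathbb C$.

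It remains to observe that relative to $\B=\mathbb C$ these are exactly the unrelativized notions. Amenability relative to $\mathbb C$ is plain amenability, as in the remark after Theorem~\ref{thm:conditions for relative amenability}. For separable predual, relative $\omega$-solidity over $\mathbb C$ agrees with the notion of \cite{HR15}, as discussed after the definition of relative $\omega$-solidity. For stable solidity, the definition with $\B=\mathbb C$ is precisely the intended one. When $\Lambda$ is finite, the same argument applies with $S$ bounded, or one can use Corollary~\ref{cor:solidity of free Araki-Woods with finitely generated bimodule} directly.
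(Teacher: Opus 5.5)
Your proposal is correct and is essentially the paper's argument: the paper obtains this statement as an immediate specialization of Corollary~\ref{cor:solidity of free Araki-Woods} to $\B=\mathbb C$, with almost periodicity supplying the countable decomposition into finite-dimensional Tomita sub-correspondences, and Corollary~\ref{cor:solidity of free Araki-Woods with finitely generated bimodule} covering the finite case; you have simply made explicit the identification, the verification of Lemma~\ref{lem:rel_compact_Fock_space}, and the reduction of the relative notions over $\mathbb C$ to the unrelativized ones. The sign convention you flag (identifying $\mathcal U_t$ with $U_t$ versus $U_{-t}$) is harmless here, since $(U_{-t})$ is also almost periodic and has the same invariant finite-dimensional blocks.
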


\begin{example}
    Let $G$ be a discrete group and let $\pi\colon G\to O(H)$ be a strongly continuous orthogonal representation. If $(U_t)_{t\in \mathbb R}$ is a strongly continuous almost periodic orthogonal group on $H$ that commutes with $\pi(g)$ for all $g\in G$, then we obtain a Gaussian action $g\mapsto \Gamma(\pi(g))$ on the free Araki--Woods factor $\Gamma(H^\mathbb C,J,(U_t)_{t\in\mathbb R})$, where $H^\mathbb C$ is the complexification of $H$ and $J$ is the induced complex conjugation.

    Let $\B=L(G)$ and $\H=H^\mathbb C\otimes \ell^2(G)$ with left and right actions given by $\pi_l^\H(\lambda(g))=\pi(g)\otimes \lambda(g)$ and $\pi_r^\H(\lambda(g)^\op)=1\otimes\rho(g^{-1})$. One can turn $\H$ into a Tomita correspondence over $(\B,\tau_G)$ by defining $\mathcal J(\eta\otimes 1_g)=J\eta\otimes 1_{g^{-1}}$ and $\mathcal U_t=U_t\otimes 1$. As shown in \cite[Proposition 6.7]{KW25}, there exists an isomorphism $\alpha\colon \Gamma(H,J,(U_t)_{t\in\mathbb R})\rtimes_{\Gamma(\pi)} G\to \Gamma(_\B\H_\B,\mathcal J,(\mathcal U_t)_{t\in\mathbb R})$ that preserves the canonical copies of $\B$.
    
    Since $(U_t)_{t\in\mathbb R}$ is almost periodic, we have $H^{\mathbb C}=\bigoplus_{i\in I}H_i^{\mathbb C}$ with $H_i^{\mathbb C}$ a finite-dimensional eigenspace of $(U_t)_{t\in\mathbb R}$. If $(\xi_k^{(i)})_{1\leq k\leq n_i}$ is an orthonormal basis of $H_i^\mathbb C$, then $(\xi_k^{(i)}\otimes 1_e,1_\B)_{1\leq k\leq n_i}$ is a finite quasi-orthonormal basis of $\mathcal D(H_i^\mathbb C\otimes \ell^2(G),\tau_G)$. Thus $\Gamma(\H,J,(U_t))\rtimes_{\Gamma(\pi)}G$ is stably solid relative to $L(G)$.
\end{example}

\subsection{Cocycle crossed product von Neumann algebras}

Let $\Gamma$ be a countable discrete group   and let $\alpha:\Gamma\to \text{Aut}(\B)$ and $\nu:\Gamma\times\Gamma\to\mathcal{U}(\B)$ be a cocycle action on a von Neumann algebra $\B$, that is,
\begin{align}
&\alpha_e=\id,\quad \nu_{e,g}=\nu_{g,e}=1,\label{eq:normalization}\\
&\alpha_g\circ\alpha_h=\Ad(\nu_{g,h})\circ\alpha_{gh},\label{eq:action}\\
&\nu_{g,h}\nu_{gh,k}=\alpha_g(\nu_{h,k})\nu_{g,hk}.\label{eq:nu}
\end{align}
for any $g,h,k\in \Gamma$.
Here $\Ad(v)(a)=vav^*$ for $v\in\mathcal{U}(\B), a\in \B$.
 We write $(\alpha, \nu)\curvearrowright \B$ for such a cocycle action. One must distinguish the inverse automorphism $\alpha_g^{-1}$ from the automorphism  $\alpha_{g^{-1}}$;  in fact,
\[\alpha_{g^{-1}}=\Ad(\nu_{g^{-1},g})\circ \alpha_g^{-1},\;\;\;\alpha_g(\nu_{g^{-1},g})=\nu_{g,g^{-1}}.\]
 Let $\M=\B\rtimes_{\alpha,\nu} \Gamma$ (or simply $\B\rtimes\Gamma$) be the {\em (cocycle) crossed product} which is the von Neumann subalgebra of $\mathbb B(\ell^2(\Gamma)\otimes L^2(\B))$ generated by the set $\{L(a), u_g; a\in \B, g\in \Gamma\}$ where the operators $L(a)$ and $ u_g$ are defined on $\ell^2(\Gamma)\otimes L^2(\B)$ by 
\begin{align*}
L(a)(\delta_h\otimes\xi)
 &=\delta_h\otimes\alpha_h^{-1}(a)\xi\\
u_g(\delta_h\otimes\xi)
 &=\delta_{gh}\otimes\alpha_{gh}^{-1}(\nu_{g,h})\xi.
\end{align*}
for $a\in \B, g,h\in \Gamma$ and $\xi\in L^2(\B)$. Clearly, $u_g$ is a unitary.
The following identities hold:
\begin{align}
&u_gL(a)u_g^*=L(\alpha_g(a))\label{eq:formula involving both u and L},\\
&u_gu_h=L(\nu_{g,h})u_{gh},\label{eq:formulas for products of u}\\ 
&u_g^\ast
    =u_{g^{-1}}L(\nu_{g,g^{-1}}^\ast).\label{eq:formula for adjoint}
\end{align}
To verify this, consider the map $\kappa:\Gamma\times\Gamma\to\mathcal{U}(\B)$ given by 
\[\kappa_{g,h}=\alpha_{gh}^{-1}(\nu_{g,h}),\;\;\;g,h\in \Gamma.\]
\begin{lemma}\label{lem:automorphisms}
For $g,h,k\in\Gamma$,
\begin{align}
&\alpha_{gh}^{-1}\circ\alpha_g
 =\Ad(\kappa_{g,h})\circ\alpha_h^{-1},\label{eq:intertwiner}\\
&\alpha_h^{-1}\circ\alpha_g^{-1}
 =\Ad(\kappa_{g,h}^*)\circ\alpha_{gh}^{-1}.\label{eq:inverseaction}\\
 &\kappa_{g,hk}\kappa_{h,k}
 =\alpha_{ghk}^{-1}(\nu_{g,h})\kappa_{gh,k}.\label{eq:blockcocycle}
\end{align}
\end{lemma}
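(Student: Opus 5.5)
The three identities are pure algebra in $\mathrm{Aut}(\B)$ and $\mathcal U(\B)$. The only inputs are the cocycle relations \eqref{eq:action} and \eqref{eq:nu} together with the definition $\kappa_{g,h}=\alpha_{gh}^{-1}(\nu_{g,h})$. The basic tool will be the elementary rule $\beta\circ\Ad(v)=\Ad(\beta(v))\circ\beta$, valid for any automorphism $\beta$ and unitary $v$; equivalently $\beta^{-1}\circ\Ad(v)\circ\beta=\Ad(\beta^{-1}(v))$. I will prove the identities in the stated order, since the second follows from the first.

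For \eqref{eq:intertwiner}, start from \eqref{eq:action}, which says $\alpha_g\circ\alpha_h=\Ad(\nu_{g,h})\circ\alpha_{gh}$. Compose on the left with $\alpha_{gh}^{-1}$ and on the right with $\alpha_h^{-1}$. This gives
\[\alpha_{gh}^{-1}\circ\alpha_g=\alpha_{gh}^{-1}\circ\Ad(\nu_{g,h})\circ\alpha_{gh}\circ\alpha_h^{-1}.\]
By the conjugation rule with $\beta=\alpha_{gh}$, the right-hand side equals $\Ad(\alpha_{gh}^{-1}(\nu_{g,h}))\circ\alpha_h^{-1}=\Ad(\kappa_{g,h})\circ\alpha_h^{-1}$.

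For \eqref{eq:inverseaction}, take inverses of both sides of \eqref{eq:intertwiner}. The left side becomes $\alpha_g^{-1}\circ\alpha_{gh}$ and the right side becomes $\alpha_h\circ\Ad(\kappa_{g,h}^*)$. Composing on the left with $\alpha_h^{-1}$ and on the right with $\alpha_{gh}^{-1}$ yields $\alpha_h^{-1}\circ\alpha_g^{-1}=\Ad(\kappa_{g,h}^*)\circ\alpha_{gh}^{-1}$, as claimed.

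For \eqref{eq:blockcocycle}, apply the automorphism $\alpha_{ghk}^{-1}$ to the cocycle identity \eqref{eq:nu}. On the right this produces
\[\alpha_{ghk}^{-1}(\alpha_g(\nu_{h,k}))\,\alpha_{ghk}^{-1}(\nu_{g,hk}) =\alpha_{ghk}^{-1}(\alpha_g(\nu_{h,k}))\,\kappa_{g,hk},\]
and on the left it produces $\alpha_{ghk}^{-1}(\nu_{g,h})\,\kappa_{gh,k}$. This step is where I expect the main obstacle: one must rewrite $\alpha_{ghk}^{-1}\circ\alpha_g$. Using \eqref{eq:intertwiner} with the pair $(g,hk)$ gives
\[\alpha_{ghk}^{-1}(\alpha_g(\nu_{h,k}))=\kappa_{g,hk}\,\alpha_{hk}^{-1}(\nu_{h,k})\,\kappa_{g,hk}^*=\kappa_{g,hk}\,\kappa_{h,k}\,\kappa_{g,hk}^*.\]
The right-hand side therefore simplifies to $\kappa_{g,hk}\kappa_{h,k}$, and \eqref{eq:blockcocycle} follows. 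The care needed throughout is the order of factors and keeping $\alpha_g^{-1}$ distinct from $\alpha_{g^{-1}}$.
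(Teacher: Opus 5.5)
Your proof is correct and follows essentially the same route as the paper: \eqref{eq:intertwiner} by conjugating \eqref{eq:action} with $\alpha_{gh}^{-1}$, \eqref{eq:inverseaction} by rearranging \eqref{eq:intertwiner}, and \eqref{eq:blockcocycle} by moving \eqref{eq:nu} through $\alpha_{ghk}^{\pm 1}$ using the twisted action rule for the pair $(g,hk)$. For the third identity, you apply $\alpha_{ghk}^{-1}$ to \eqref{eq:nu} and invoke \eqref{eq:intertwiner}; this is slightly more direct than the paper, which applies $\alpha_{ghk}$ to both sides and detours through \eqref{eq:inverseaction} for $(gh,k)$.
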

\begin{proof}
Compose \eqref{eq:action}  on the left with $\alpha_{gh}^{-1}$.
Since any automorphism $\theta$ on $\B$ and $v\in\mathcal{U}(\B)$ satisfies
$\theta\circ\Ad(v)=\Ad(\theta(v))\circ\theta$, this gives
\[
\alpha_{gh}^{-1}\circ\alpha_g\circ\alpha_h
 =\alpha_{gh}^{-1}\circ\Ad(\nu_{g,h})\circ\alpha_{gh}
 =\Ad(\alpha_{gh}^{-1}(\nu_{g,h}))
 =\Ad(\kappa_{g,h}).
\]
Compose on the right with $\alpha_h^{-1}$ to obtain
\eqref{eq:intertwiner}. The second identity easily follows from the first by composing by $\alpha_g^{-1}$ from the right.

Finally we  prove \eqref{eq:blockcocycle} by applying the 
automorphism $\alpha_{ghk}$ to both sides.
On one hand,
the twisted action identity \eqref{eq:action} for the pair $(g,hk)$  yields
\begin{equation*}
\alpha_{ghk}\circ\alpha_{hk}^{-1}
 =\Ad(\nu_{g,hk}^*)\circ\alpha_g
\end{equation*}
which further yields
\begin{align*}
\alpha_{ghk}\bigl(\kappa_{g,hk}\kappa_{h,k}\bigr)
 &=\nu_{g,hk}\,
       \alpha_{ghk}\bigl(\alpha_{hk}^{-1}(\nu_{h,k})\bigr)\\
 &=\nu_{g,hk}\,
       \nu_{g,hk}^*\alpha_g(\nu_{h,k})\nu_{g,hk}\\
 &=\alpha_g(\nu_{h,k})\nu_{g,hk}.
\end{align*}
On the other hand, apply \eqref{eq:inverseaction} with $(g,h)$ replaced by $(gh,k)$ to obtain:
\[
\alpha_k^{-1}\circ\alpha_{gh}^{-1}
 =\Ad(\kappa_{gh,k}^*)\circ\alpha_{ghk}^{-1}.
\]
Composing on the left with $\alpha_{ghk}$  gives
\begin{equation*}
\alpha_{ghk}\circ\alpha_k^{-1}\circ\alpha_{gh}^{-1}
=\Ad(\alpha_{ghk}(\kappa_{gh, k}^\ast)) =\Ad(\nu_{gh,k}^*).
\end{equation*}
Consequently,
\begin{align*}
\alpha_{ghk}\bigl(\kappa_{gh,k}\alpha_k^{-1}(\kappa_{g,h})\bigr)
&=\nu_{gh,k}\,
  (\alpha_{ghk}\circ\alpha_k^{-1}\circ\alpha_{gh}^{-1})(\nu_{g,h})\\
&=\nu_{gh,k}\,\nu_{gh,k}^*\nu_{g,h}\nu_{gh,k}\\
&=\nu_{g,h}\nu_{gh,k}\\
&=\nu_{g,h}\alpha_{ghk}(\kappa_{gh,k}).
\end{align*}
Thus we obtain
\eqref{eq:blockcocycle}.
\end{proof}

We now prove \eqref{eq:formula involving both u and L}.
For any $g,k\in \Gamma$ and $a\in \B$, we use \eqref{eq:intertwiner} to obtain 
\begin{align*}
    \alpha_{gk}^{-1}(\alpha_g(a))\kappa_{g,k}=\kappa_{g,k}\alpha_k^{-1}(a).
\end{align*}
which further yields
\begin{align*}u_gL(a)(\delta_k\otimes\xi)&=u_g(\delta_k\otimes\alpha_k^{-1}(a)\xi)=\delta_{gk}\otimes \kappa_{g,k}\alpha_k^{-1}(a)\xi\\
    &=\delta_{gk}\otimes\alpha_{gk}^{-1}(\alpha_g(a))\kappa_{g,k}\xi=L(\alpha_g(a))u_g(\delta_k\otimes\xi).
\end{align*}
proving that $u_gL(a)=L(\alpha_g(a))u_g$. To show \eqref{eq:formulas for products of u}, we have
\begin{align*}
u_gu_h(\delta_k\otimes\xi)
 &=u_g(\delta_{hk}\otimes\kappa_{h,k}\xi)\\
 &=\delta_{ghk}\otimes\kappa_{g,hk}\kappa_{h,k}\xi\\
 &=\delta_{ghk}\otimes
    \alpha_{ghk}^{-1}(\nu_{g,h})\kappa_{gh,k}\xi
       &&\text{by \eqref{eq:blockcocycle}}\\
 &=L(\nu_{g,h})(\delta_{ghk}\otimes\kappa_{gh,k}\xi)\\
 &=L(\nu_{g,h})u_{gh}(\delta_k\otimes\xi).
\end{align*}
Equality on the dense span of elementary tensors proves the bounded
operator identity.

In the sequel, we shall suppress the symbol $L$ and simply write $a$ instead of $L(a)$.

\subsubsection{Modular theory of crossed product}
Let $(L^2(\B), L^2(\B)_+,J_\B)$ be the standard form for $\B$.
Let $\varphi$ be a normal faithful state on $\B$, and we do not assume that $\alpha$ is $\varphi$-preserving. 
Consider the dual state $\tilde{\varphi}$ on $\B\rtimes\Gamma$ determined by the formula
\begin{align*}
    \tilde{\varphi}(\sum_{g\in \Gamma}u_ga_g)=\varphi(a_e).
\end{align*}
The canonical faithful normal conditional expectation $E_\B:\M\to\B$
is characterized by
\[
E_\B(u_ga)=\begin{cases}a,&g=e,\\0,&g\ne e\end{cases}
\]
so that we have $\tilde\phi=\phi\circ E_\B$. For each $g\in \Gamma$, let $ V_g $ be the unique unitary on $L^2(\B)$ implementing $\alpha_g$, that is,
\[\alpha_g(a)=V_g a V_g ^\ast,\;\;\;V_g(L^2(\B)_+)=L^2(\B)_+,\;\;\;J_{\B} V_g =V_g J_{\B},\;\;\; a\in \B.\]
The uniqueness of the unitary implementation yields
\begin{equation}\label{eq:rel_nu and V}
V_gV_h=\nu_{g,h}J_{\B}\nu_{g,h}J_{\B}\,V_{gh},
\quad
V_g^*=\nu_{g^{-1},g}^\ast J_{\B}\nu_{g^{-1},g}^\ast J_{\B} V_{g^{-1}}.
\end{equation}
Indeed, the standard implementation of $\Ad(v)$ is $vJ_{\B} vJ_{\B}$ for any $v\in \mathcal{U}(\M)$.
Consequently $V_g^*$ need not equal $V_{g^{-1}}$, and
$V_gV_h=\nu_{g,h}V_{gh}$ is not the standard implementation identity.
In the untwisted case, $g\mapsto V_g$ is  a unitary representation.

\begin{proposition}
If we set $d_g(t)=\left(\frac{D(\varphi\circ\alpha_g)}{D\varphi}\right)_t $ for $g\in \Gamma, t\in\mathbb R$ to be the Connes cocycle derivative, then the modular objects of $\tilde\phi$ satisfy
\begin{align}
\Delta_{\tilde\phi}^{i t}(\delta_g\otimes\xi)
 &=\delta_g\otimes d_g(t)\Delta_\varphi^{i t}\xi,
       \label{eq:Delta}\\
J_{\tilde\phi}(\delta_g\otimes\xi)
 &=\delta_{g^{-1}}\otimes
       \nu_{g,g^{-1}}^*V_gJ_{\B}\xi,
       \label{eq:J}
\end{align}
for $\xi\in L^2(\B)$. Moreover,
\begin{equation}\label{eq:sigma}
\sigma_t^{\tilde\phi}(a)=\sigma_t^\varphi(a),\quad
\sigma_t^{\tilde\phi}(u_g)=u_gd_g(t),\quad a\in\B.
\end{equation}
\end{proposition}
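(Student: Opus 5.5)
The plan is to realize the GNS construction of $\tilde\phi$ directly on $\ell^2(\Gamma)\otimes L^2(\B)$, compute the Tomita operator $S_{\tilde\phi}$ fibrewise, and read off its polar decomposition from the relative modular theory of the pair $(\varphi\circ\alpha_g,\varphi)$ on $\B$.

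\textbf{Step 1: GNS realization.} Put $\Omega=\delta_e\otimes\varphi^{1/2}$. The normalization \eqref{eq:normalization} gives $\kappa_{g,e}=1$, so the defining formulas yield
\[
u_g a\Omega=\delta_g\otimes a\varphi^{1/2},\qquad a\in\B,\ g\in\Gamma .
\]
Hence $\langle\Omega,u_ga\Omega\rangle=\delta_{g,e}\varphi(a)=\tilde\phi(u_ga)$, so $\Omega$ implements $\tilde\phi$. The vectors $u_ga\Omega$ span a dense subspace, so $\Omega$ is cyclic. Since $\tilde\phi$ is faithful, $\Omega$ is also separating. Consequently $(\ell^2(\Gamma)\otimes L^2(\B),\Omega)$ is a GNS triple for $\tilde\phi$.

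\textbf{Step 2: the Tomita operator on each fibre.} For $x=u_ga$, use \eqref{eq:formula for adjoint} to write $x^*=a^*u_{g^{-1}}\nu_{g,g^{-1}}^*$. Applying the defining formulas for $u_{g^{-1}}$ and $L(a^*)$ gives
\[
S(\delta_g\otimes a\varphi^{1/2})=\delta_{g^{-1}}\otimes\alpha_{g^{-1}}^{-1}(a^*)\,\nu_{g,g^{-1}}^*\varphi^{1/2}.
\]
By the identity $\alpha_{g^{-1}}=\Ad(\nu_{g^{-1},g})\circ\alpha_g^{-1}$ together with $\alpha_g(\nu_{g^{-1},g})=\nu_{g,g^{-1}}$, we have $\alpha_{g^{-1}}^{-1}(y)=\nu_{g,g^{-1}}^*\alpha_g(y)\nu_{g,g^{-1}}$. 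Therefore
\[
S(\delta_g\otimes a\varphi^{1/2})=\delta_{g^{-1}}\otimes\nu_{g,g^{-1}}^*\,\alpha_g(a^*)\varphi^{1/2}.
\]
Next write $\alpha_g(a^*)\varphi^{1/2}=V_g\,a^*\,V_g^*\varphi^{1/2}$. Because $V_g$ is the standard implementation, $V_g^*\varphi^{1/2}=(\varphi\circ\alpha_g)^{1/2}$. Thus, restricted to the fibre $\delta_g\otimes L^2(\B)$,
\[
S_{\tilde\phi}=(\text{flip }g\mapsto g^{-1})\otimes \nu_{g,g^{-1}}^*V_g\,S_{\varphi\circ\alpha_g,\varphi},
\]
where $S_{\varphi\circ\alpha_g,\varphi}$ denotes the closure of the relative Tomita map $a\varphi^{1/2}\mapsto a^*(\varphi\circ\alpha_g)^{1/2}$.

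\textbf{Step 3: polar decomposition.} The relative Tomita map has polar decomposition $S_{\varphi\circ\alpha_g,\varphi}=J_\B\,\Delta_{\varphi\circ\alpha_g,\varphi}^{1/2}$, and $\nu^*_{g,g^{-1}}V_g$ is unitary. So we obtain a candidate polar decomposition of $S_{\tilde\phi}$ with
\begin{itemize}
\item antiunitary part $\delta_g\otimes\xi\mapsto\delta_{g^{-1}}\otimes\nu_{g,g^{-1}}^*V_gJ_\B\xi$, and
\item positive part acting on the fibre $\delta_g$ by $\Delta_{\varphi\circ\alpha_g,\varphi}^{1/2}$.
\end{itemize}
By uniqueness of the polar decomposition, this gives \eqref{eq:J} and $\Delta_{\tilde\phi}^{it}|_{\delta_g\otimes L^2(\B)}=\Delta^{it}_{\varphi\circ\alpha_g,\varphi}$. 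Connes' cocycle theorem then gives $\Delta^{it}_{\varphi\circ\alpha_g,\varphi}=d_g(t)\Delta_\varphi^{it}$, which is \eqref{eq:Delta}.

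\textbf{Step 4: the modular group.} Formula \eqref{eq:sigma} follows by checking $\Delta^{it}_{\tilde\phi}\,x\,\Delta^{-it}_{\tilde\phi}$ on the vectors $\delta_h\otimes\xi$ for $x=a$ and for $x=u_g$.
\begin{itemize}
\item For $x=a$, one uses $\sigma_t^{\varphi\circ\alpha_h}\circ\alpha_h^{-1}=\alpha_h^{-1}\circ\sigma_t^\varphi$.
\item For $x=u_g$, one uses the cocycle identity $d_{gh}(t)=\alpha_h^{-1}(d_g(t))\,d_h(t)$, up to conjugation by $\kappa_{g,h}$, which comes from the chain rule for Connes cocycles applied to $\varphi\circ\alpha_{gh}$ and $\varphi\circ\alpha_g\circ\alpha_h$.
\end{itemize}

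\textbf{Main obstacle.} The hard part is Step 3. The fibrewise formula for $S$ is only established on the core $\M\Omega$. One has to show that the closure of the fibrewise-defined map agrees with the direct sum of the closures of the relative maps. One also has to verify that the proposed antiunitary part is an involution, i.e.\ that $\nu_{g^{-1},g}^*V_{g^{-1}}J_\B\,\nu_{g,g^{-1}}^*V_gJ_\B=1$. I would check this identity using \eqref{eq:rel_nu and V} together with $J_\B V_g=V_gJ_\B$.
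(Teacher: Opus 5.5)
Your Steps 1--3 follow the paper's proof. The paper also identifies $S_{\tilde\phi}$ on the fibre $\delta_g\otimes L^2(\B)$ with $C_gD_g^{1/2}$, where $C_g=\nu_{g,g^{-1}}^*V_gJ_\B$ and $D_g=\Delta_{\varphi\circ\alpha_g,\varphi}$, using $V_g(\varphi\circ\alpha_g)^{1/2}=\varphi^{1/2}$. It then concludes by uniqueness of the polar decomposition and Connes' cocycle theorem.

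The closure issue you flag is settled there in two steps. By Kaplansky density, the finite Fourier sums applied to $\tilde\phi^{1/2}$ form a core for $S_{\tilde\phi}$. Moreover, $\B\varphi^{1/2}$ is a core for each $D_g^{1/2}$ by definition of the relative Tomita operator, so the closure is the direct sum of the fibrewise closures.

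Your separate check that the antiunitary part squares to $1$ is unnecessary. Once $S_{\tilde\phi}=C\,(\bigoplus_g D_g^{1/2})$ with $C$ antiunitary and the positive factor nonsingular, uniqueness gives $C=J_{\tilde\phi}$, and hence $C^2=1$ automatically.

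Where you genuinely differ is Step 4. The paper obtains the formula for $\sigma^{\tilde\phi}$ in one line from the formula for $\Delta_{\tilde\phi}^{it}$. Since $\Delta_{\tilde\phi}^{-it}\tilde\phi^{1/2}=\tilde\phi^{1/2}$,
\[\sigma_t^{\tilde\phi}(u_ga)\tilde\phi^{1/2}=\Delta_{\tilde\phi}^{it}(\delta_g\otimes a\varphi^{1/2})=\delta_g\otimes d_g(t)\sigma_t^\varphi(a)\varphi^{1/2}=u_gd_g(t)\sigma_t^\varphi(a)\tilde\phi^{1/2},\]
and the separating property of $\tilde\phi^{1/2}$ finishes the argument (take $g=e$ or $a=1$).

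Your conjugation computation on all fibres $\delta_h\otimes L^2(\B)$ also works, but it needs the twisted chain rule
\[d_{gh}(t)\,\sigma_t^\varphi(\kappa_{g,h})=\kappa_{g,h}\,\alpha_h^{-1}(d_g(t))\,d_h(t).\]
This follows from $\varphi\circ\alpha_g\circ\alpha_h=(\varphi\circ\alpha_{gh})(\kappa_{g,h}\,\cdot\,\kappa_{g,h}^*)$. Note that it is not a plain conjugation by $\kappa_{g,h}$, as you wrote, since $\sigma_t^\varphi(\kappa_{g,h})$ appears rather than $\kappa_{g,h}$.

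Your route therefore gives an independent consistency check of the $\Delta^{it}$-formula against the crossed product relations. It costs extra cocycle bookkeeping that the paper's separating-vector argument avoids entirely.
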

\begin{proof}
    We have $\tilde\phi^{1/2}=\delta_e\otimes\phi^{1/2}$ and $u_ga\tilde\phi^{1/2}=\delta_g\otimes a\phi^{1/2}$ for $g\in \Gamma, a\in \B$, so the set
    \[\mathcal{D}_0:=\text{span}\{\delta_g\otimes a\phi^{1/2}; g\in \Gamma, a\in \B\}\] lies in the domain of the Tomita operator $S_{\tilde\phi}$ such that
\begin{align*}
S_{\tilde\phi}(\delta_g\otimes a\phi^{1/2})
 =S(u_ga\tilde\phi^{1/2})
 =(u_ga)^*\tilde\phi^{1/2}
 =\delta_{g^{-1}}\otimes
       \nu_{g,g^{-1}}^*\alpha_g(a^*)\phi^{1/2}.
\end{align*}
By Kaplansky's density theorem, the algebraic crossed product is strongly-$\ast$ dense in $\M$ with
bounded approximants. Hence for every $x\in\M$, finite Fourier
sums $x_i$ can be chosen so that both
$x_i\tilde\phi^{1/2}\to x\tilde\phi^{1/2}$ and $x_i^*\tilde\phi^{1/2}\to x^*\tilde\phi^{1/2}$.
It follows that $\mathcal D_0$ is a core for $S_{\tilde\phi}$.

For each $g\in \Gamma$, write $H_g=\delta_g\otimes L^2(\B)$ so that we have $\ell^2(\Gamma)\otimes L^2(\B)=\oplus_{g\in \Gamma}H_g$. Put
\[
\psi_g=\varphi\circ\alpha_g,\quad
D_g=\Delta_{\psi_g,\varphi},\quad
C_g=\nu_{g,g^{-1}}^*V_gJ_{\B}.
\]
where $D_g$ is the operator $S_{\psi_g,\varphi}^\ast S_{\psi_g,\varphi}$. Here the relative Tomita operator $S_{\psi_g,\varphi}$ is the closure of the map 
$a\phi^{1/2}\mapsto a^*\psi_g^{1/2}, a\in\B,$
and has polar decomposition
$S_{\psi_g,\varphi}=J_{\B} D_g^{1/2}$ (see \cite[Proposition 10.3]{Hia21}).
The operator $C_g$ is antiunitary on $L^2(\B)$ and $\B\phi^{1/2}$ is a core for each $D_g^{1/2}$, by the
definition of the relative Tomita operator.
Since $V_g$ is the standard implementation of $\alpha_g$,
$V_g\psi_g^{1/2}=\phi^{1/2}.$
Thus
\begin{align*}
\nu_{g,g^{-1}}^*\alpha_g(a^*)\phi^{1/2}
 =\nu_{g,g^{-1}}^*V_g(a^*\psi_g^{1/2})=\nu_{g,g^{-1}}^*V_gS_{\psi_g,\varphi}(a\phi^{1/2})
 =C_gD_g^{1/2}(a\phi^{1/2}).
\end{align*} 
Thus we obtain
\begin{equation}\label{eq:Sclosed}
S_{\tilde\phi}(\oplus_{g\in \Gamma}\delta_g\otimes \xi_g)=\oplus_{g\in \Gamma}\delta_{g^{-1}}\otimes C_gD_g^{1/2}\xi_g
\end{equation}
with domain
\begin{equation*}
\dom(S_{\tilde\phi})=\left\{\oplus_{g\in \Gamma}\delta_g\otimes\xi_g\in \ell^2(\Gamma)\otimes L^2(\B): \xi_g\in \dom(D_g^{1/2}), \sum_{g\in \Gamma}\|D_g^{1/2}\xi_g\|^2<\infty\right\}
\end{equation*}
We now compute the formula for the adjoint $S_{\tilde\phi}^\ast$.
Because $C_g$ is antiunitary and $D_g^{1/2}$ is self-adjoint,
$
(C_gD_g^{1/2})^*=D_g^{1/2}C_g^{-1}.
$
The adjoint reverses the direction of the original block:
\begin{equation}\label{eq:Sstarblock}
S^*(\delta_{g^{-1}}\otimes\eta)
 =\delta_g\otimes D_g^{1/2}C_g^{-1}\eta,
\quad C_g^{-1}\eta\in\dom(D_g^{1/2}).
\end{equation}
More explicitly, $\oplus_{g\in \Gamma}\delta_g\otimes\eta_g\in\dom(S^*)$ precisely when
\[
C_g^{-1}\eta_{g^{-1}}\in\dom(D_g^{1/2})\quad\text{for every }g,
\quad
\sum_g\|D_g^{1/2}C_g^{-1}\eta_{g^{-1}}\|^2<\infty.
\]
For $\xi\in\dom(D_g)$, formulas \eqref{eq:Sclosed} and
\eqref{eq:Sstarblock} now give
\begin{align}\label{eq:directsum}
S_{\tilde\phi}^*S_{\tilde\phi}(\delta_g\otimes\xi)
 =S_{\tilde\phi}^*(\delta_{g^{-1}}\otimes C_gD_g^{1/2}\xi)
 =\delta_g\otimes D_g^{1/2}C_g^{-1}C_gD_g^{1/2}\xi
 =\delta_g\otimes D_g\xi.
\end{align}
This shows that
\[\Delta_{\tilde\phi}=\oplus_{g\in \Gamma}{1\otimes D_g}\]
with
\[
\dom(\Delta_{\tilde\phi})=\left\{\oplus_{g\in \Gamma}\delta_g\otimes\xi_g\in \ell^2(\Gamma)\otimes L^2(\B):
\xi_g\in\dom(D_g)\;\forall g,\;\sum_g\|D_g\xi_g\|^2<\infty\right\}.
\]
Define the antiunitary $C$ on $\ell^2(\Gamma)\otimes L^2(\B)$ by
\[
C(\delta_g\otimes\xi)=\delta_{g^{-1}}\otimes C_g\xi.
\]
Equations \eqref{eq:Sclosed} and \eqref{eq:directsum} say that
$S_{\tilde\phi}=C\Delta_{\tilde\phi}^{1/2}$. Since $\Delta_{\tilde\phi}$ is nonsingular,
uniqueness of the polar decomposition yields $C=J_{\tilde\phi}$,
proving \eqref{eq:J}. In particular $C^2=1$.
Functional calculus in \eqref{eq:directsum} gives
\[
\Delta_{\tilde\phi}^{i  t}(\delta_g\otimes\xi)
 =\delta_g\otimes D_g^{i  t}\xi.
\]
The Connes derivative identity
$d_g(t)=D_g^{i  t}\Delta_\varphi^{-i  t}
$
implies $D_g^{i  t}=d_g(t)\Delta_{\varphi}^{i  t}$ (see \cite[Proposition 10.11]{Hia21}),
which proves \eqref{eq:Delta}.
Finally, we have
\begin{align*}
\sigma_t^{\tilde\phi}(u_ga)\tilde\phi^{1/2}
 &=\Delta_{\tilde\phi}^{i  t}(u_ga\tilde\phi^{1/2})\\
 &=\delta_g\otimes d_g(t)\Delta_\varphi^{i  t}(a\phi^{1/2})\\
 &=\delta_g\otimes d_g(t)\sigma_t^\varphi(a)\phi^{1/2}\\
 &=u_gd_g(t)\sigma_t^\varphi(a)\tilde\phi^{1/2}.
\end{align*}
and the separating property of $\tilde\phi^{1/2}$ yields \eqref{eq:sigma}.
\end{proof}

\begin{remark}
If $\varphi\circ\alpha_g=\varphi$ for every $g$, then $d_g(t)=1$
and $\Delta_{\tilde\phi}^{i t}=1\otimes\Delta_\phi^{i t}$.
The cocycle factors in the formulas for the modular operators are still present.
\end{remark}

\subsubsection{Correspondences of crossed product}
The right action of $\M$ on $L^2(\M)\cong\ell^2(\Gamma)\otimes L^2(\B)$ is determined by $J_{\tilde{\varphi}}.$ 
Concretely, the formula for the right action  is given by 
\begin{align*}
   (\delta_h\otimes \xi)u_ga=J_{\tilde\phi}(u_ga)^\ast J_{\tilde\phi}(\delta_h\otimes \xi)=\delta_{hg}\otimes\kappa_{h,g}J_{\B} a^\ast J_{\B} V_g^\ast \xi
\end{align*}
for $\xi\in L^2(\B), a\in \B$, $g,h\in \Gamma$. Indeed, 
\begin{align*}
    (\delta_h\otimes \xi)a=J_{\tilde\phi}a^\ast J_{\tilde\phi}u_h(\delta_e\otimes \xi)=u_hJ_{\tilde\phi}a^\ast J_{\tilde\phi}(\delta_e\otimes \xi)=u_h(\delta_e\otimes J_{\B} a^\ast J_{\B} \xi)=\delta_h\otimes J_{\B} a^\ast J_{\B} \xi
\end{align*}
and 
\begin{align*}
    (\delta_h\otimes \xi)u_g&=u_hJ_{\tilde\phi}u_g^\ast J_{\tilde\phi}(\delta_e\otimes \xi)=u_hJ_{\tilde\phi}u_{g^{-1}}\nu_{g,g^{-1}}^\ast (\delta_e\otimes J_{\B} \xi)=u_hJ_{\tilde\phi}(\delta_{g^{-1}}\otimes  \nu_{g,g^{-1}}^\ast J_{\B} \xi)\\
    &=u_h(\delta_g\otimes\nu_{g^{-1},g}^\ast V_{g^{-1}}J_{\B} \nu_{g,g^{-1}}^\ast J_{\B} \xi)=\delta_{hg}\otimes\kappa_{h,g} V_g^\ast \xi
\end{align*}
where we have used \eqref{eq:rel_nu and V}  to write
\[V_g^\ast=\nu_{g^{-1},g}^\ast J_{\B}\nu_{g^{-1},g}^\ast J_{\B} V_{g^{-1}}=\nu_{g^{-1},g}^\ast V_{g^{-1}} J_{\B}\alpha_{g^{-1}}^{-1}(\nu_{g^{-1},g}^\ast) J_{\B} =\nu_{g^{-1},g}^\ast V_{g^{-1}}J_{\B} \nu_{g,g^{-1}}^\ast J_{\B}.\]
More generally, if $(\pi,\H_\pi)$ is a unitary representation of the group $\Gamma$, then $L^2(\M)\otimes\H_\pi\cong\ell^2(\Gamma)\otimes L^2(\B)\otimes\H_\pi$ can be turned into an $\M$-correspondence by the following actions: 
\begin{align*}
    \pi_\ell(u_ga)f\otimes \eta=(u_ga)f\otimes\pi_g\eta,\;\;\;\pi_r(u_ga)f\otimes\eta=(J_{\tilde\phi}(u_ga)^\ast J_{\tilde\phi}f)\otimes\eta,\quad f\in L^2(\M),\eta\in\H_\pi
\end{align*}
that is, for $\xi\in L^2(\B)$, $h\in \Gamma$,
\begin{align*}
    &\pi_\ell(u_ga)(\delta_h\otimes \xi\otimes\eta)=\delta_{gh}\otimes \kappa_{g,h}\alpha_h^{-1}(a)\xi\otimes\pi_g\eta,\\
    &\pi_r(u_ga)(\delta_h\otimes \xi\otimes\eta)=\delta_{hg}\otimes \kappa_{h,g}J_{\B} a^\ast J_{\B} V_g^\ast \xi\otimes\eta.
\end{align*}
Thus,  on the second leg of the tensor $L^2(\M)\otimes\H_\pi$, $\M$ acts trivially from   the right.

\subsubsection{Construction of a derivation on the crossed product}

Let $\pi:\Gamma\to B(\K_{\mathbb R})$ be an orthogonal representation of $\Gamma$ on a Hilbert space $\K_{\mathbb R}$. Set $\K=\K_{\mathbb R}\otimes_{\mathbb R}\mathbb C$ to be its complexification and write $I$ for the conjugation map $\xi\mapsto\bar\xi$ on $\K$. Consider $\H=L^2(\M)\otimes \K=\ell^2(\Gamma)\otimes L^2(\B)\otimes\K$ as an $\M$-correspondence with the action defined above. On $\H$, define the operators
\begin{align*}
    \mathcal{U}_t&=\Delta_{\tilde\varphi}^{it}\otimes 1,\\
    \mathcal{J}(\delta_h\otimes\xi\otimes\eta)&=J_{\tilde\varphi}(\delta_h\otimes\xi)\otimes \pi_{h^{-1}} (I\eta)=\delta_{h^{-1}}\otimes \nu_{h,h^{-1}}^*V_hJ_{\B}\xi\otimes\pi_{h^{-1}}(I\eta).
\end{align*}
Then the triple $(\H, (\mathcal{U}_t)_{t\in \mathbb R}, \mathcal{J})$ is a Tomita correspondence. 

We now identify a Tomita algebra inside $L^2(\M,\tilde\phi)$.
Let $\M_{\mathrm{an}}$ be the entire analytic elements for
$\sigma^{\tilde\phi}$ and put
\begin{equation}\label{eq:analyticcore}
\mathcal{A}=\text{span}\{u_ga; g\in \Gamma, a\in \B \mbox{ such that }u_ga\in \M_{\mathrm{an}}\},
\quad \mathfrak A=\mathcal{A}\tilde\phi^{1/2}.
\end{equation}
Then $\mathcal{A}$ is a unital, $\ast$- subalgebra of $\M$ with finite Fourier
support and  it is closed under $\sigma_z^{\tilde\phi}$ for every $z\in\C$. Moreover, $\mathcal{A}$ is strongly dense in $\M$. Indeed, density can be obtained  by Gaussian
smoothing: if $x=u_g a$, then
\begin{equation}\label{eq:smoothing}
x_n=\sqrt{\frac n\pi}\int_{\R}e^{-nt^2}\sigma_t^{\tilde\phi}(x)\,dt=\sqrt{\frac n\pi}u_g\int_{\R}e^{-nt^2}d_g(t)\sigma_t^{\phi}(a)\,dt
 \in u_g\B\cap\M_{\mathrm{an}} \subseteq\mathcal{A}
\end{equation}
such that $x_n\to x$ strongly. In particular, $\mathfrak A$ is a  dense Tomita algebra.

Now we define a modular derivation associated to a 1-cocycle.
Let $c:\Gamma\to\K_\mathbb R$ be a 1-cocycle of the representation $\pi$, that is, $c$ satisfies $c(gh)=c(g)+\pi_g c(h)$ for $g,h\in \Gamma$.  
Consider the densely defined operator $\partial$ on $L^2(\M,\tilde\phi)$ with domain
\[\dom\partial:=\text{span}\{\delta_g\otimes \xi; g\in \Gamma, \xi\in L^2(\B)\}\]
and the formula  
\begin{align*}
\partial(\delta_g\otimes\xi)=i\delta_g\otimes\xi\otimes c(g),\;\;\;\;g\in\Gamma,\xi\in L^2(\B).
\end{align*}
The smoothing argument above also shows that $\partial(x_n\tilde\phi^{1/2})\to \partial(x\tilde\phi^{1/2})$ for $x=u_ga$, which means  that $\mathfrak A$ forms a core for $\partial$.
\begin{lemma}
  The operator  $\partial_{|_{\mathfrak{A}}}$ is a $\tilde\phi$-modular derivation.
\end{lemma}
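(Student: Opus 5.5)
The plan is to verify the four defining properties of a $\tilde\phi$-modular derivation directly on the spanning vectors $u_ga\tilde\phi^{1/2}=\delta_g\otimes a\phi^{1/2}$ of $\mathfrak A$, with $u_ga\in\M_{\mathrm{an}}$. Throughout we use the explicit formulas \eqref{eq:Delta}, \eqref{eq:J}, \eqref{eq:sigma} for the modular data of $\tilde\phi$ and the description of the bimodule actions on $\H=L^2(\M)\otimes\K$. That $\mathfrak A$ is a unital Tomita subalgebra of $\mathfrak T_{\tilde\phi}$ is already settled above: $\mathcal A$ is a unital $\ast$-algebra of analytic elements, closed under $\sigma_z^{\tilde\phi}$, and $J_{\tilde\phi}\mathfrak A=\mathcal A^\ast\tilde\phi^{1/2}$ after applying $\Delta^{1/2}$.

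\textbf{Modular group.} By \eqref{eq:Delta}, $\Delta_{\tilde\phi}^{it}$ preserves each fibre $\delta_g\otimes L^2(\B)$. The operator $\partial$ acts on that fibre by the fixed vector $i\,c(g)$ placed in the $\K$-leg. Hence $\partial\Delta^{it}_{\tilde\phi}=(\Delta^{it}_{\tilde\phi}\otimes1)\partial=\mathcal U_t\partial$ on $\mathfrak A$. Analytic continuation to $\Delta^{iz}$, $z\in\mathbb C$, follows because the same fibrewise structure persists and $\mathfrak A$ consists of entire vectors.

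\textbf{Conjugation.} Using \eqref{eq:J},
\[
\mathcal J\partial(\delta_h\otimes\xi)=\mathcal J(i\,\delta_h\otimes\xi\otimes c(h))=-i\,\delta_{h^{-1}}\otimes\nu_{h,h^{-1}}^\ast V_hJ_\B\xi\otimes\pi_{h^{-1}}c(h),
\]
since $c(h)$ is real and $\mathcal J$ is antilinear. The cocycle identity gives $0=c(e)=c(h^{-1})+\pi_{h^{-1}}c(h)$, so $\pi_{h^{-1}}c(h)=-c(h^{-1})$. The right-hand side therefore equals $i\,\delta_{h^{-1}}\otimes\nu^\ast_{h,h^{-1}}V_hJ_\B\xi\otimes c(h^{-1})=\partial J_{\tilde\phi}(\delta_h\otimes\xi)$.

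\textbf{Leibniz rule.} It suffices to check
\[
\partial(\pi_l(a)b)=\pi_l(a)\partial(b)+\partial(a)\cdot J\pi_r(b)^\ast J
\]
for $a=u_g x\tilde\phi^{1/2}$ and $b=u_h y\tilde\phi^{1/2}$. First, $\pi_l(a)b=u_gxu_hy\tilde\phi^{1/2}=u_gu_h\alpha_h^{-1}(x)y\tilde\phi^{1/2}$, which by \eqref{eq:formulas for products of u} is supported on the fibre $\delta_{gh}$. Hence $\partial(\pi_l(a)b)=(\pi_l(a)b)\otimes i c(gh)$. Second, $\pi_l(a)\partial(b)$ equals $u_gx$ acting on $\delta_h\otimes y\phi^{1/2}\otimes ic(h)$, and this produces $(\pi_l(a)b)\otimes i\pi_gc(h)$. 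Third, the right action of $u_hy$ is trivial on the $\K$-leg and equals the standard right action on $L^2(\M)$. Since $\partial(a)=a\otimes ic(g)$, the last term is $(a\cdot u_hy)\otimes ic(g)=(\pi_l(a)b)\otimes ic(g)$. Summing and using $c(gh)=c(g)+\pi_gc(h)$ gives the identity.

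The main obstacle is bookkeeping. One must confirm that in $\pi_l(u_gx)(\delta_h\otimes\cdot\otimes\eta)$ the $\K$-leg picks up exactly $\pi_g$, with no twisting coming from $\kappa$ or $\nu$; these only act on the $L^2(\B)$ leg. One must also confirm that the identity $J\pi_r(b)^\ast J=u_hy$ in the right action is consistent with the explicit formulas. The conjugation step additionally requires $\mathcal J^2=1$ and the Tomita-correspondence axioms, which were asserted above, and we take them as given.
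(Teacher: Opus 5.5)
Your proposal is correct and follows essentially the same route as the paper. Both arguments check the identities fibrewise on $\delta_g\otimes L^2(\B)$, and in both the Leibniz rule reduces to the cocycle identity $c(gh)=c(g)+\pi_g c(h)$. Your use of $\pi_{h^{-1}}c(h)=-c(h^{-1})$ is exactly what makes the identity $\partial J_{\tilde\phi}=\mathcal J\partial$ work, which the paper only calls ``straightforward''.

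One intermediate claim is wrong, though harmless. For $b=u_hy\tilde\phi^{1/2}$ one has $J\pi_r(b)^\ast J=\sigma^{\tilde\phi}_{i/2}(u_hy)$, not $u_hy$. Consequently $a\cdot u_hy\neq\pi_l(a)b$ in general; already for $a=\tilde\phi^{1/2}$ this equality would force $u_hy$ to lie in the centralizer of $\tilde\phi$. The fix is immediate. The right action is trivial on the $\K$-leg, and $a\cdot J\pi_r(b)^\ast J=\pi_r(b)a=\pi_l(a)b$ by the very definition of $\pi_r$. Hence $\partial(a)\cdot J\pi_r(b)^\ast J=(\pi_l(a)b)\otimes ic(g)$, as you wanted. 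The paper sidesteps this point by writing the second factor in right-bounded form $\tilde\phi^{1/2}u_hb$ and computing $\partial(u_ga\tilde\phi^{1/2}u_hb)$ directly.

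The paper's proof also does more than you do. It shows closability by computing $\partial^\ast(\delta_g\otimes\xi\otimes\eta)=-i\langle c(g),\eta\rangle\,\delta_g\otimes\xi$, and it identifies $\partial^\ast\bar\partial$. Closability is not part of the definition of a modular derivation, so your argument proves the lemma as stated. However, that extra information is what the paper relies on immediately afterwards.
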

\begin{proof}
For any $g\in\Gamma, \xi\in L^2(\B),\eta\in \K$, one verifies that $\delta_g\otimes\xi\otimes\eta\in \dom(\partial^\ast)$ such that 
\[\partial^\ast(\delta_g\otimes\xi\otimes\eta)=-i\langle c(g),\eta\rangle \delta_g\otimes\xi\]
This shows that $\dom(\partial^\ast)$ is dense in $L^2(\M)\otimes\K$; hence $\partial$ is closable and so is $\partial_{|_\mathfrak{A}}$. This also shows that
\[\partial^\ast\bar\partial(\delta_g\otimes\xi)=\|c(g)\|^2\delta_g\otimes\xi,\;\;g\in \Gamma,\xi\in L^2(\B).\]
Since $\partial\subseteq \overline{\partial_{|_\mathfrak{A}}}$, it suffices to show the required derivation formulas  for $\partial$ restricted to the set $\text{span}\{\delta_g\otimes b\phi^{1/2}; g\in \Gamma, b\in \B\}$. The following identities are straightforward to verify:
\begin{align*}
    \partial\circ\Delta_{\tilde\varphi}^{it}=\mathcal{U}_t\circ\partial,\;\;\partial\circ J_{\tilde\varphi}=\mathcal{J}\circ\partial,\;\;t\in\mathbb R.
\end{align*}
To show the derivation rule, we first write the following formula for $g,h\in \Gamma$ and $a,b\in \B$: 
\begin{align*}
   u_g a\tilde\varphi^{1/2}u_hb&=u_ga(\delta_h\otimes J_{\B} b^\ast J_{\B} V_h^\ast\phi^{1/2})=\delta_{gh}\otimes\kappa_{g,h}\alpha_h^{-1}(a)J_{\B} b^\ast J_{\B} V_h^\ast\phi^{1/2}\\
   &=(\delta_g\otimes a\phi^{1/2})u_h b=\delta_{gh}\otimes \kappa_{g,h} J_{\B} b^\ast J_{\B} V_h^\ast a\phi^{1/2}
\end{align*}
On the one hand, we have 
\begin{align*}
    \partial(u_g a\tilde\varphi^{1/2}u_hb)=i\delta_{gh}\otimes\kappa_{g,h}\alpha_h^{-1}(a)J_{\B} b^\ast J_{\B} V_h^\ast\phi^{1/2}\otimes c(gh).
\end{align*}
On the other hand, a similar calculation of the left-right action on $\H$ yields  
\begin{align*}
    \mathcal{J}&(u_h b)^\ast\mathcal{J}
\partial(u_ga\tilde\varphi^{1/2})+u_ga\partial(\tilde\varphi^{1/2}u_hb)
    \\
    &= \mathcal{J}(u_h b)^\ast\mathcal{J}(i\delta_g\otimes a\phi^{1/2}\otimes c(g))+iu_ga(\delta_h\otimes J_{\B} b^\ast J_{\B} V_h^\ast\phi^{1/2}\otimes c(h))\\
    &=i\delta_{gh}\otimes \kappa_{g,h} J_{\B} b^\ast J_{\B} V_h^\ast a\phi^{1/2}\otimes c(g)+i\delta_{gh}\otimes \kappa_{g,h}\alpha_h^{-1}(a)J_{\B} b^\ast J_{\B} V_h^\ast\phi^{1/2}\otimes\pi_g c(h)\\
    &=i\delta_{gh}\otimes\kappa_{g,h}\alpha_h^{-1}(a)J_{\B} b^\ast J_{\B} V_h^\ast\phi^{1/2}\otimes c(gh).
\end{align*}
So we get $ \partial(u_g a\tilde\varphi^{1/2}u_hb)=\mathcal{J}(u_h b)^\ast\mathcal{J}
\partial(u_ga\tilde\varphi^{1/2})+u_ga\partial(\tilde\varphi^{1/2}u_hb)$ which is precisely the required derivation condition.
\end{proof}


\subsubsection{Solidity results on crossed products}
We now prove relative solidity results for crossed productds with under additional hypotheses on the cocycle and the representation. 
Let $(\alpha,\nu)\curvearrowright\B$ be a cocycle action and $\M:=\B\rtimes\Gamma$ be the cocycle crossed product.

We now assume that $c$ is a  cocycle of a representation $(\pi,\H_\pi)$ such that $\pi\prec\lambda_\Gamma$, then verify the main hypothesis of Theorem \ref{main_theorem} for the derivation $\partial$ constructed above on $\B\rtimes\Gamma$. As noted above, we have
\[\mathcal{L}_2(\delta_g\otimes\xi)=\partial^\ast\bar\partial(\delta_g\otimes\xi)=\|c(g)\|^2\delta_g\otimes\xi,\;\;g\in\Gamma, \xi\in L^2(\B).\]
Therefore, by Lemma \ref{lem:multiplies of positive definite functiona}, the associated  deformations are  the Fourier multipliers given by 
\[
  \Phi_t(u_ga)=e^{-t\lVert c(g)\rVert^2}u_ga,\quad \theta_\alpha(u_ga)=\frac{\alpha}{\alpha+\|c(g)\|^2}u_ga,\quad g\in\Gamma, t,\alpha>0.
\]
\begin{lemma}\label{lem:multiplies of positive definite functiona}
    Let $f:\Gamma\to\mathbb C$ be a normalized positive definite function. Then  the assignment $\Phi:u_ga\mapsto f(g)u_ga$ for $g\in \Gamma, a\in \B$ extends to a normal $\tilde\phi$-preserving unital completely positive map on $\B\rtimes\Gamma$. If $f$ is real-valued, then $\Phi$ is $\tilde\phi$-GNS symmetric.
\end{lemma}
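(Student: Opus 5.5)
The plan is to realize $\Phi$ as a compression of a $\ast$-homomorphism by an isometry, in the spirit of the classical Fourier multiplier argument. Since $f$ is normalized and positive definite, the GNS construction gives a Hilbert space $\K_f$, a unitary representation $\rho\colon\Gamma\to\mathcal U(\K_f)$ and a unit vector $\eta\in\K_f$ with $f(g)=\langle\eta,\rho_g\eta\rangle$. On $L^2(\M)\otimes\K_f\cong\ell^2(\Gamma)\otimes L^2(\B)\otimes\K_f$ we use the left action $\pi_\ell$ constructed in the subsection on correspondences of crossed products: $\pi_\ell(u_ga)=u_ga\otimes\rho_g$. We check that this is a normal unital $\ast$-representation of $\M$. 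Indeed, $\pi_\ell$ is unitarily equivalent to the representation of $\B\rtimes_{\alpha,\nu}\Gamma$ on $\ell^2(\Gamma)\otimes L^2(\B)\otimes\K_f$ given by the Fell absorption unitary $W(\delta_h\otimes\xi\otimes\zeta)=\delta_h\otimes\xi\otimes\rho_h\zeta$. A direct check on elementary tensors shows
\[W^\ast(u_ga\otimes\rho_g)W=u_ga\otimes 1 .\]
The cocycle factors $\kappa_{g,h}$ act only on the $L^2(\B)$ leg, so they cause no trouble. Hence $x\mapsto W(x\otimes1)W^\ast$ is a normal unital $\ast$-homomorphism extending $\pi_\ell$.

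Next, let $V\colon L^2(\M)\to L^2(\M)\otimes\K_f$ be the map $\xi\mapsto\xi\otimes\eta$, which is an isometry. Then
\[V^\ast\pi_\ell(u_ga)V(\delta_h\otimes\xi)=\langle\eta,\rho_g\eta\rangle\,u_ga(\delta_h\otimes\xi)=f(g)u_ga(\delta_h\otimes\xi).\]
So $\Phi:=V^\ast\pi_\ell(\cdot)V$ is a normal ucp map on $\B(\ell^2(\Gamma)\otimes L^2(\B))$ restricted to $\M$, and it acts on Fourier monomials as claimed. We need $\Phi(\M)\subseteq\M$. The range is contained in the weak closure of the span of the $f(g)u_ga$, which lies in $\M$. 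Equivalently, $\Phi(x)$ commutes with $J_{\tilde\phi}\M J_{\tilde\phi}$, because $V$ intertwines the right actions: the right action on the $\K_f$-leg is trivial, so $V(\xi y)=(V\xi)y$, and $\pi_\ell$ commutes with right multiplication. Unitality follows from $f(e)=1$.

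For invariance of the state, we have $\tilde\phi(\Phi(u_ga))=f(g)\varphi(a)\delta_{g,e}=\tilde\phi(u_ga)$. By normality and density of the algebraic crossed product, $\tilde\phi\circ\Phi=\tilde\phi$. For GNS symmetry when $f$ is real, we compute on monomials. The adjoint formula \eqref{eq:formula for adjoint} shows that $(u_ga)^\ast$ has Fourier support at $g^{-1}$, and $u_ga\,u_hb$ has support at $gh$. This gives
\[\tilde\phi(\Phi(u_ga)^\ast u_hb)=\overline{f(g)}\,\tilde\phi((u_ga)^\ast u_hb),\qquad \tilde\phi((u_ga)^\ast\Phi(u_hb))=f(h)\,\tilde\phi((u_ga)^\ast u_hb).\]
Both expressions vanish unless $g=h$, so when $f$ is real they agree. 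Symmetry then extends to all of $\M$ by Kaplansky density with bounded approximants, exactly as in the proof of the modular theory proposition, together with normality of $\Phi$ and of $\tilde\phi$. The only genuinely delicate step is the Fell absorption identity in the twisted setting, namely verifying that $W$ really intertwines $u_g\otimes\rho_g$ with $u_g\otimes1$ in the presence of the cocycle $\nu$.
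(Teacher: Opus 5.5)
Your proposal is correct and is essentially the paper's argument: the map $T\mapsto V^\ast W(T\otimes 1)W^\ast V$ on $\mathbb B(\ell^2(\Gamma)\otimes L^2(\B))$ is exactly the paper's Schur multiplier with kernel $f(hg^{-1})$, so your Fell-absorption dilation just makes explicit the complete positivity that the paper asserts without detail. Likewise, your Fourier-coefficient computation for GNS symmetry is the paper's observation that $\Phi^{(2)}$ acts by the scalar $f(g)$ on $\delta_g\otimes L^2(\B)$, and is therefore self-adjoint when $f$ is real.
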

\begin{proof}
   Define the bounded operator $\Phi$ on $\mathbb B(\ell^2(\Gamma)\otimes L^2(\B))$ by 
    the assignment 
    \[\langle\delta_h\otimes\eta,\Phi(T)(\delta_g\otimes\xi)\rangle=f(hg^{-1})\langle\delta_h\otimes\eta,T(\delta_g\otimes\xi)\rangle,\] 
    $g,h\in \Gamma, \eta,\xi\in L^2(\B), T\in \mathbb B(\ell^2(\Gamma)\otimes L^2(\B))$
 One easily verifies that $\Phi$ is a normal completely positive map using the positive-definiteness of $f$.  A simple calculation shows that $\Phi$ maps $\M$ to itself and we have $\Phi(u_ga)=f(g)u_ga$ for $g\in \Gamma, a\in \B$. The condition $f(e)=1$ implies that $\Phi$ is $\tilde\phi$-preserving. When $f$ is real-valued,  the $L^2$-implementation of $\Phi$ on $L^2(\B\rtimes\Gamma)$ is self-adjoint implying that $\Phi$ is GNS symmetric.
\end{proof}

Coming back to $\partial$, the exact null algebra is
\[
  \Null(\overline\partial)=B\rtimes\ker c
\]
where  $\ker c:=\{g\in\Gamma; c(g)=0\}$ is a  subgroup. In particular, the inclusion $\B\subseteq \B\rtimes\Sigma$ is with expectation.

Since $\pi\prec{\lambda_\Gamma}$ by assumption and the right action on the second leg of the tensor is trivial, it follows that as $\M$-correspondence,
\[L^2(\M)\otimes\H_\pi\prec L^2(\M)\otimes\ell^2(\Gamma)\cong L^2(\M)\otimes_\B L^2(\M),\]
where the last isomorphism is a consequence of  Fell-absorption principle. Indeed, if we identify  $L^2(\M)=\ell^2(\Gamma)\otimes L^2(\B)$, then the map $U:L^2(\M)\otimes_\B L^2(\M)\to L^2(\M)\otimes\ell^2(\Gamma)$ by
\[U((\delta_g\otimes b\phi^{1/2})\otimes_\B(\delta_h\otimes \xi))=\delta_{gh}\otimes\kappa_{g,h}\alpha_{h}^{-1}(b)\xi\otimes\delta_{g} ,\;\;\;b\in\B,\xi\in L^2(\B)\;g,h\in \Gamma\] 
extends to a $\M$-bilinear unitary operator.

For $t\geq0$, let $(\rho_t,\mathsf K_t,\xi_t)$ be the GNS representation of the positive-definite function $f_t(g)=e^{-t\|c(g)\|^2},g\in \Gamma$. Then $(L^2(\M)\otimes\K_t, \tilde\phi^{1/2}\otimes\xi_t)$ is the minimal Stinespring dilation of $\Phi_t$. Now make $\mathsf L_t:=L^2(\B)\otimes\K_t$ into a $\B$-$\B$ correspondence with the standard left-right action of $\B$ on $L^2(\B)$ and trivial action on $\K_t$.  Since the right action of $\B$  on the second leg of $L^2(\M)\otimes\K_t$ is trivial, again by Fell absorption principle,  we have the following isomorphism as $\M$-$\B$-correspondence:
\begin{align*}
    L^2(\M)\otimes_\B\mathsf L_t\cong L^2(\M)\otimes\K_t. 
\end{align*}
Indeed, the map $V:L^2(\M)\otimes_\B\mathsf L_t\to L^2(\M)\otimes\K_t$ by 
\begin{align*}
    V((\delta_g\otimes b\phi^{1/2})\otimes_\B(\xi\otimes\eta))=\delta_g\otimes b\xi\otimes\rho_t(g)\eta,\;\;\;g\in \Gamma,\; b\in\B,\xi\in L^2(\B),\;\eta\in \K_t
\end{align*}
extends to a $\M$-$\B$ linear unitary.
Thus we have shown that the Stinespring representation $\H_{\Phi_t}$ is isomorphic to $ L^2(\M)\otimes_\B\mathsf L_t$ for a $\B$-correspondence $\mathsf L_t$. Therefore, the hypothesis of Theorem  is satisfied.

We next assume that the cocycle is {\em proper}, that is,  for every $R>0$, the set $\{g\in\Gamma; \|c(g)\|\leq R\}$ is finite. In other words, $\|c(g)\|\to\infty$ as $g\to\infty$. Since
\[\theta_\alpha^{(2)}(\delta_g\otimes \xi)=\frac{\alpha}{\alpha+\|c(g)\|^2}(\delta_g\otimes\xi),\;\;g\in \Gamma, \xi\in L^2(\B),\]
it follows that 
\[\theta_\alpha^{(2)}=\sum_{g\in \Gamma}\frac{\alpha}{\alpha+\|c(g)\|^2} u_ge_\B u_g^\ast\]
where the sum converges in norm because of properness of $c$. This proves that $\theta_\alpha^{(2)}$ is compact relative to $\B$.

Combining this with Theorems \ref{thm:omega-solidity} and \ref{thm:stable solidity} gives the following result, which generalizes \cite[Theorem E]{Iso25} removing the amenability of the base algebra albeit with the extra hypothesis on the group. Our result applies to the more general cocycle crossed products.

\begin{theorem}\label{prop:omega solidity in crossed products}
    Let $\Gamma$ be a countable discrete group admitting a cocycle $c$ into a representation weakly contained in the left regular representation, and let $(\alpha,\nu)$ be a cocycle  action on a von Neumann algebra $\B$ with separable predual. 
    If $c$ is proper, then $\B\rtimes\Gamma$ is stably solid and $\omega$-solid relative to $\B$ for $\omega\in\beta\mathbb N\setminus\mathbb N$.
\end{theorem}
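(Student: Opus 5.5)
The proof combines the verifications carried out just above with the general solidity results, Theorems \ref{thm:omega-solidity} and \ref{thm:stable solidity}. Both theorems apply to $\M=\B\rtimes_{\alpha,\nu}\Gamma$ once we exhibit a densely defined closable modular derivation $\delta$ and a subalgebra $\B\subseteq\Null(\bar\delta)$ such that three conditions hold. First, $_\M\H_\M\prec {_\M L^2(\M)}\otimes_\B\K\otimes_\B L^2(\M)_\M$ for some $\B$-$\B$ correspondence $\K$. Second, $_\M(\H_{\Phi_t})_\B\prec {_\M L^2(\M)}\otimes_\B(\K_t)_\B$. Third, $\delta^\ast\bar\delta$ has compact resolvent relative to $\B$. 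The natural choice is $\delta=\overline{\partial|_{\mathfrak A}}$, built from the proper cocycle $c$ into $\pi\prec\lambda_\Gamma$, with values in $\H=L^2(\M)\otimes\K$. Here $\K$ is the complexification of $\H_\pi$, and by the preceding lemma this $\delta$ is a $\tilde\phi$-modular derivation.

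The steps, in order, are as follows.

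\begin{enumerate}
\item \textbf{Null algebra.} Show $\B\subseteq\Null(\bar\partial)=\B\rtimes\ker c$. This is immediate because $\partial(\delta_e\otimes\xi)=i\,\delta_e\otimes\xi\otimes c(e)$ and $c(e)=0$.
\item \textbf{Weak containment of $\H$.} Since $\pi\prec\lambda_\Gamma$ and the right action on the $\H_\pi$-leg is trivial, we get $L^2(\M)\otimes\H_\pi\prec L^2(\M)\otimes\ell^2(\Gamma)$. The Fell-absorption unitary $U$ constructed above identifies the latter with $L^2(\M)\otimes_\B L^2(\M)\cong L^2(\M)\otimes_\B L^2(\B)\otimes_\B L^2(\M)$. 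So the first condition holds with $\K=L^2(\B)$.
\item \textbf{Stinespring correspondences.} Lemma \ref{lem:multiplies of positive definite functiona} identifies $\Phi_t$ as the Fourier multiplier by the real positive definite function $e^{-t\|c(g)\|^2}$. Its minimal Stinespring correspondence is $L^2(\M)\otimes\K_t$, and the unitary $V$ identifies this with $L^2(\M)\otimes_\B\mathsf L_t$ as $\M$-$\B$ correspondences. This gives the second condition. The two weak containments are exactly the inputs Theorem \ref{main_theorem} needs via Lemma \ref{Norm estimate of involving certain ucp maps}.
\item \textbf{Relative compactness.} Write
\[\theta_\alpha^{(2)}=\sum_{g\in\Gamma}\frac{\alpha}{\alpha+\|c(g)\|^2}\,u_ge_\B u_g^\ast.\]
Properness of $c$ makes the coefficients tend to $0$ as $g\to\infty$. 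The projections $u_ge_\B u_g^\ast$ are mutually orthogonal, onto the blocks $\delta_g\otimes L^2(\B)$, so the tails have norm equal to the supremum of the remaining coefficients. Hence the series converges in norm, and $\theta_\alpha^{(2)}\in C^\ast(\M e_\B\M)$.
\end{enumerate}

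With all three conditions verified, Theorem \ref{thm:omega-solidity} gives $\omega$-solidity relative to $\B$ and Theorem \ref{thm:stable solidity} gives stable solidity relative to $\B$ along $\omega$, for every $\omega\in\beta\mathbb N\setminus\mathbb N$. Separability of $\M_\ast$, which both theorems require, follows from $\B_\ast$ separable and $\Gamma$ countable.

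I expect the main obstacle to be the Fell-absorption identifications in the twisted setting. One has to check that $U$ and $V$ are genuinely bimodular, which means tracking the cocycle terms $\kappa_{g,h}$ and the implementing unitaries $V_g$, since $\alpha$ need not preserve $\varphi$. It must also be checked that weak containment in $L^2(\M)\otimes_\B L^2(\M)$ is the form Theorem \ref{main_theorem} accepts. The paper's stated hypothesis asks for an $\langle\M,\B\rangle$-correspondence, and Lemma \ref{lem:ext_bimodule_action} supplies exactly that extension for $L^2(\M)\otimes_\B L^2(\M)$.
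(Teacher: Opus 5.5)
Your proposal is correct and follows the paper's own argument. You check $\B\subseteq\Null(\bar\partial)$, obtain the two weak containments through the Fell-absorption unitaries $U$ and $V$, prove relative compactness via the norm-convergent sum $\theta_\alpha^{(2)}=\sum_{g}\frac{\alpha}{\alpha+\|c(g)\|^2}u_ge_\B u_g^\ast$, and then invoke Theorems \ref{thm:omega-solidity} and \ref{thm:stable solidity}, exactly as the paper does, while also supplying two details it leaves implicit (norm convergence via the mutually orthogonal block projections onto $\delta_g\otimes L^2(\B)$, and separability of $\M_\ast$).
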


Let us also explicitly state the above result for $\B=\mathbb C$. Although our definition of stable solidity requires tensoring with von Neumann algebras with separable predual, the  following result on finite von Neumann algebras can be proven in a similar fashion by requiring the tensoring with only with finite von Neumann algebras possibly without the separability assumptions (cf. Proposition \ref{prop:convergence of deformation with compact resolvent for tracial Q}).  This recovers  \cite[Theorem 3.1 (iii)]{DonVae25}, \cite[Theorem 4.13]{DV25} for the following class of von Neumann algebras. 

\begin{corollary}
     Let $\Gamma$ be a countable discrete group admitting a proper 1-cocycle $c$ into a representation weakly contained in the left regular representation, let $\nu:\Gamma\times\Gamma\to \mathbb T$ be a 2-cocycle, and let $\omega\in\beta\mathbb N\setminus\mathbb N$. Then $L_\nu(\Gamma)$ is $\omega$-solid and stably solid in the sense of \cite{DV25}.
\end{corollary}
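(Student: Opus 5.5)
The plan is to specialize Theorem~\ref{prop:omega solidity in crossed products} to the trivial base $\B=\mathbb C$, and then translate the resulting relative notions, stated along $\omega$, into the classical notions of \cite{HR15} and \cite{DV25}.

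\textbf{Step 1: recognizing $L_\nu(\Gamma)$ as a cocycle crossed product.} A $\mathbb T$-valued 2-cocycle $\nu$ together with the trivial action $\alpha_g=\id_{\mathbb C}$ is a cocycle action on $\B=\mathbb C$. The identities \eqref{eq:normalization}--\eqref{eq:nu} reduce to the 2-cocycle identity, after normalizing $\nu$ as usual. The operators $u_g$ then act on $\ell^2(\Gamma)$ by $u_g\delta_h=\nu_{g,h}\delta_{gh}$, so $\mathbb C\rtimes_{\alpha,\nu}\Gamma=L_\nu(\Gamma)$. The dual state $\tilde\phi$ is the canonical trace, and the predual is separable because $\Gamma$ is countable.

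\textbf{Step 2: checking the hypotheses.} I will re-check the hypotheses of the crossed-product construction in this case.
\begin{itemize}
\item The derivation $\partial$ built from $c$ is a trace-symmetric modular derivation with $\Null(\bar\partial)=L_\nu(\ker c)\supseteq\mathbb C$.
\item Its target satisfies $\H\prec L^2(\M)\otimes\ell^2(\Gamma)\cong L^2(\M)\otimes L^2(\M)$, since $\pi\prec\lambda_\Gamma$.
\item Because $c$ is proper, $\theta_\alpha^{(2)}=\sum_g\frac{\alpha}{\alpha+\|c(g)\|^2}u_ge_{\mathbb C}u_g^\ast$ is a norm limit of finite-rank operators. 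Hence it is compact relative to $\mathbb C$, which here just means compact.
\end{itemize}
Since $\mathbb C$ is amenable, Corollary~\ref{thm:stable solidity_amenable_case} and Theorem~\ref{thm:omega-solidity} apply. Equivalently, Theorem~\ref{prop:omega solidity in crossed products} applies directly. The conclusion is that $L_\nu(\Gamma)$ is $\omega$-solid relative to $\mathbb C$, and stably solid relative to $\mathbb C$ along $\omega$.

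\textbf{Step 3: translating to \cite{HR15} and \cite{DV25}.}

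For $\omega$-solidity: the paper already observes, just before Theorem~\ref{thm:omega-solidity}, that for $\B=\mathbb C$, $\omega\in\beta\mathbb N\setminus\mathbb N$ and separable predual, relative $\omega$-solidity coincides with $\omega$-solidity in the sense of \cite{HR15}. The argument uses the unitary $u$ with $E_\M(u)=0$ coming from the proof of \cite[Theorem A]{HR15}. If $\Gamma$ is finite, $L_\nu(\Gamma)$ is not diffuse and the statement is vacuous, so I assume $\Gamma$ is infinite.

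For stable solidity: let $\N$ be finite and $\Q\subseteq L_\nu(\Gamma)\bar\otimes\N$ with $\Q'\cap(L_\nu(\Gamma)\bar\otimes\N)\npreceq \N$ in Popa's sense. The relative commutant is finite because the ambient algebra is tracial. So Proposition~\ref{prop: comparion of two inytertwining notion} converts Popa non-intertwining into $\npreceq^\omega$, which is exactly the hypothesis in our definition. The conclusion, that $\Q$ is amenable relative to $\N$, is the same in both definitions.

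\textbf{Main obstacle.} The hard part is removing separability of $\N$, since \cite{DV25} allows arbitrary finite $\N$. I would not pass through ultrapowers indexed by $\mathbb N$. Instead I would rerun the proof of Theorem~\ref{thm:stable solidity} in the tracial form of Corollary~\ref{thm:stable solidity_finite case}. The steps are:
\begin{enumerate}
\item Tensor the derivation as $\partial\otimes\mathrm{id}$, which keeps relative compactness over $\N$.
\item Apply Theorem~\ref{main_theorem}, which is valid for cofinal ultrafilters on arbitrary directed sets.
\item Obtain uniform convergence on the relative commutant via Proposition~\ref{prop:equivalent conditions for L2-rigidity}.
\item Conclude intertwining with Proposition~\ref{prop:convergence of deformation with compact resolvent for tracial Q}, which requires neither separability nor ultrafilters on $\mathbb N$.
\end{enumerate}
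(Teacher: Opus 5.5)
Your proposal is correct and follows the paper's route. The paper likewise specializes Theorem~\ref{prop:omega solidity in crossed products} to $\B=\mathbb C$, relies on its remark identifying relative $\omega$-solidity with $\omega$-solidity in the sense of \cite{HR15}, and handles arbitrary tracial $\N$ by rerunning the stable-solidity argument through Proposition~\ref{prop:convergence of deformation with compact resolvent for tracial Q}. One point needs tightening: the paper justifies that tracial proposition via Proposition~\ref{prop: comparion of two inytertwining notion}, which assumes separable predual, so for non-separable $\N$ you should prove it directly from Popa's criterion with nets of unitaries (your use of the sequential ultrapower in Proposition~\ref{prop:equivalent conditions for L2-rigidity} is harmless, since that step needs no separability).
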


\section*{Acknowledgment}
The first-named author was partially supported by the Research Foundation - Flanders (FWO)
through a Postdoctoral fellowship (1221025N). We thank Stefaan Vaes for useful comments on the intial draft of this paper.

ChatGPT 6.0 Astra was used for proofreading a final draft version and to construct Example~\ref{ex:the notions of intertwining are not equivalent}, which was afterwards verified by the authors. All the other content of this article was created by the authors without LLM assistance.

\bibliography{ref}

@misc{Wir22,
      title={The Differential Structure of Generators of GNS-symmetric Quantum Markov Semigroups}, 
      author={Melchior Wirth},
      year={2022},
      howpublished={arXiv.2207.09247},
      eprint={2207.09247},
      archivePrefix={arXiv},
      primaryClass={math.OA},
      url={https://arxiv.org/abs/2207.09247}, 
}

@book {BH91,
    AUTHOR = {Bouleau, Nicolas and Hirsch, Francis},
     TITLE = {Dirichlet forms and analysis on {W}iener space},
    SERIES = {De Gruyter Studies in Mathematics},
    VOLUME = {14},
 PUBLISHER = {Walter de Gruyter \& Co., Berlin},
      YEAR = {1991},
     PAGES = {x+325},
      ISBN = {3-11-012919-1},
   MRCLASS = {60H07 (31C25 60G30 60J45)},
MRREVIEWER = {David\ Nualart},
       DOI = {10.1515/9783110858389},
       URL = {https://doi.org/10.1515/9783110858389},
}

@article {OP10,
    AUTHOR = {Ozawa, Narutaka and Popa, Sorin},
     TITLE = {On a class of {${\rm II}_1$} factors with at most one {C}artan
              subalgebra},
   JOURNAL = {Ann. of Math. (2)},
  FJOURNAL = {Annals of Mathematics. Second Series},
    VOLUME = {172},
      YEAR = {2010},
    NUMBER = {1},
     PAGES = {713--749},
      ISSN = {0003-486X,1939-8980},
   MRCLASS = {46L10 (37A20)},
MRREVIEWER = {Stuart\ A.\ White},
       DOI = {10.4007/annals.2010.172.713},
       URL = {https://doi.org/10.4007/annals.2010.172.713},
}

@article {Cip97,
    AUTHOR = {Cipriani, Fabio},
     TITLE = {Dirichlet forms and {M}arkovian semigroups on standard forms
              of von {N}eumann algebras},
   JOURNAL = {J. Funct. Anal.},
  FJOURNAL = {Journal of Functional Analysis},
    VOLUME = {147},
      YEAR = {1997},
    NUMBER = {2},
     PAGES = {259--300},
      ISSN = {0022-1236,1096-0783},
   MRCLASS = {46L50 (31C25 47D07)},
MRREVIEWER = {Charles\ Batty},
       DOI = {10.1006/jfan.1996.3063},
       URL = {https://doi.org/10.1006/jfan.1996.3063},
}

@book {Pis,
    AUTHOR = {Pisier, Gilles},
     TITLE = {Tensor products of {$C^*$}-algebras and operator spaces---the
              {C}onnes-{K}irchberg problem},
    VOLUME = {96},
 PUBLISHER = {Cambridge University Press, Cambridge},
      YEAR = {2020},
     PAGES = {x+484},
      ISBN = {978-1-108-74911-4; 978-1-108-47901-1},
   MRCLASS = {46-02 (46L06 46L07 47L25)},
MRREVIEWER = {Hiroyuki\ Osaka},
       DOI = {10.1017/9781108782081},
       URL = {https://doi.org/10.1017/9781108782081},
}

@article {KW25,
    AUTHOR = {Kumar, R. Rahul and Wirth, Melchior},
     TITLE = {Operator-valued twisted {A}raki-{W}oods algebras},
   JOURNAL = {Comm. Math. Phys.},
  FJOURNAL = {Communications in Mathematical Physics},
    VOLUME = {406},
      YEAR = {2025},
    NUMBER = {5},
     PAGES = {Paper No. 110, 47},
      ISSN = {0010-3616,1432-0916},
   MRCLASS = {46L10 (46L65 81R10)},
MRREVIEWER = {Manish\ Kumar},
       DOI = {10.1007/s00220-025-05285-7},
       URL = {https://doi.org/10.1007/s00220-025-05285-7},
}

@article {CKSVW23,
    AUTHOR = {Caspers, Martijn and Klisse, Mario and Skalski, Adam and Vos,
              Gerrit and Wasilewski, Mateusz},
     TITLE = {Relative {H}aagerup property for arbitrary von {N}eumann
              algebras},
   JOURNAL = {Adv. Math.},
  FJOURNAL = {Advances in Mathematics},
    VOLUME = {421},
      YEAR = {2023},
     PAGES = {Paper No. 109017, 61},
      ISSN = {0001-8708,1090-2082},
   MRCLASS = {46L10},
MRREVIEWER = {Martin\ Bohata},
       DOI = {10.1016/j.aim.2023.109017},
       URL = {https://doi.org/10.1016/j.aim.2023.109017},
}

@article{PP86,
 author = {Pimsner, Mihai and Popa, Sorin},
 title = {Entropy and index for subfactors},
 fjournal = {Annales Scientifiques de l'{\'E}cole Normale Sup{\'e}rieure. Quatri{\`e}me S{\'e}rie},
 journal = {Ann. Sci. {\'E}c. Norm. Sup{\'e}r. (4)},
 issn = {0012-9593},
 volume = {19},
 number = {1},
 pages = {57--106},
 year = {1986},
 language = {English},
 doi = {10.24033/asens.1504},
 url = {https://eudml.org/doc/82174},
 zbMATH = {4054342},
 Zbl = {0646.46057}
}

@article {BKS97,
    AUTHOR = {Bo\.zejko, Marek and K\"ummerer, Burkhard and Speicher,
              Roland},
     TITLE = {{$q$}-{G}aussian processes: non-commutative and classical
              aspects},
   JOURNAL = {Comm. Math. Phys.},
  FJOURNAL = {Communications in Mathematical Physics},
    VOLUME = {185},
      YEAR = {1997},
    NUMBER = {1},
     PAGES = {129--154},
      ISSN = {0010-3616,1432-0916},
   MRCLASS = {81S05 (46L50 81S25)},
MRREVIEWER = {Philippe\ Biane},
       DOI = {10.1007/s002200050084},
       URL = {https://doi.org/10.1007/s002200050084},
}

@article {Wor74,
    AUTHOR = {Woronowicz, S. L.},
     TITLE = {Selfpolar forms and their applications to the
              {$C\sp*$}-algebra theory},
   JOURNAL = {Rep. Mathematical Phys.},
  FJOURNAL = {Reports on Mathematical Physics},
    VOLUME = {6},
      YEAR = {1974},
    NUMBER = {3},
     PAGES = {487--495},
      ISSN = {0034-4877},
   MRCLASS = {46L05},
MRREVIEWER = {W.\ B.\ Arveson},
       DOI = {10.1016/s0034-4877(74)80012-1},
       URL = {https://doi.org/10.1016/s0034-4877(74)80012-1},
}

@article {Sau83,
    AUTHOR = {Sauvageot, Jean-Luc},
     TITLE = {Sur le produit tensoriel relatif d'espaces de {H}ilbert},
   JOURNAL = {J. Operator Theory},
  FJOURNAL = {Journal of Operator Theory},
    VOLUME = {9},
      YEAR = {1983},
    NUMBER = {2},
     PAGES = {237--252},
      ISSN = {0379-4024},
   MRCLASS = {46L10 (46M05 46M15)},
MRREVIEWER = {Marc\ A.\ Rieffel},
}

@article {AD95,
    AUTHOR = {Anantharaman-Delaroche, C.},
     TITLE = {Amenable correspondences and approximation properties for von
              {N}eumann algebras},
   JOURNAL = {Pacific J. Math.},
  FJOURNAL = {Pacific Journal of Mathematics},
    VOLUME = {171},
      YEAR = {1995},
    NUMBER = {2},
     PAGES = {309--341},
      ISSN = {0030-8730,1945-5844},
   MRCLASS = {46L10 (46L55)},
MRREVIEWER = {Paul\ Jolissaint},
       URL = {http://projecteuclid.org/euclid.pjm/1102368918},
}

@online{Ske01,
Author={Skeide, Michael},
Title={{Hilbert Modules and Applications in Quantum Probability}},
Institution={TU Cottbus},
Type={Habilitationsschrift},
Year={2001},
URL={http://web.unimol.it/skeide/_MS/downloads/habil.pdf},
}

@article {Con76,
    AUTHOR = {Connes, Alain},
     TITLE = {Classification of injective factors. {C}ases {$II\sb{1},$}
              {$II\sb{\infty },$} {$III\sb{\lambda },$} {$\lambda \not=1$}},
   JOURNAL = {Ann. of Math. (2)},
  FJOURNAL = {Annals of Mathematics. Second Series},
    VOLUME = {104},
      YEAR = {1976},
    NUMBER = {1},
     PAGES = {73--115},
      ISSN = {0003-486X},
   MRCLASS = {46L10},
MRREVIEWER = {Fran\c cois\ Combes},
       DOI = {10.2307/1971057},
       URL = {https://doi.org/10.2307/1971057},
}

@article {CS03,
    AUTHOR = {Cipriani, Fabio and Sauvageot, Jean-Luc},
     TITLE = {Derivations as square roots of {D}irichlet forms},
   JOURNAL = {J. Funct. Anal.},
  FJOURNAL = {Journal of Functional Analysis},
    VOLUME = {201},
      YEAR = {2003},
    NUMBER = {1},
     PAGES = {78--120},
      ISSN = {0022-1236,1096-0783},
   MRCLASS = {46L53 (47D07 60J25)},
MRREVIEWER = {Michael\ Skeide},
       DOI = {10.1016/S0022-1236(03)00085-5},
       URL = {https://doi.org/10.1016/S0022-1236(03)00085-5},
}

@article {Wir24,
    AUTHOR = {Wirth, Melchior},
     TITLE = {Modular completely {D}irichlet forms as squares of
              derivations},
   JOURNAL = {Int. Math. Res. Not. IMRN},
  FJOURNAL = {International Mathematics Research Notices. IMRN},
      YEAR = {2024},
    NUMBER = {14},
     PAGES = {10597--10614},
      ISSN = {1073-7928,1687-0247},
   MRCLASS = {46L10 (46L57)},
MRREVIEWER = {Ioannis\ Zarakas},
       DOI = {10.1093/imrn/rnae092},
       URL = {https://doi.org/10.1093/imrn/rnae092},
}

@article {Shl99,
    AUTHOR = {Shlyakhtenko, Dimitri},
     TITLE = {{$A$}-valued semicircular systems},
   JOURNAL = {J. Funct. Anal.},
  FJOURNAL = {Journal of Functional Analysis},
    VOLUME = {166},
      YEAR = {1999},
    NUMBER = {1},
     PAGES = {1--47},
      ISSN = {0022-1236,1096-0783},
   MRCLASS = {46L54 (46L10 46L35)},
MRREVIEWER = {Ken\ Dykema},
       DOI = {10.1006/jfan.1999.3424},
       URL = {https://doi.org/10.1006/jfan.1999.3424},
}

@article {Wat79,
    AUTHOR = {Watanabe, Seiji},
     TITLE = {Ergodic theorems for dynamical semigroups on operator
              algebras},
   JOURNAL = {Hokkaido Math. J.},
  FJOURNAL = {Hokkaido Mathematical Journal},
    VOLUME = {8},
      YEAR = {1979},
    NUMBER = {2},
     PAGES = {176--190},
      ISSN = {0385-4035},
   MRCLASS = {46L55},
MRREVIEWER = {Gr.\ Arsene},
       DOI = {10.14492/hokmj/1381758269},
       URL = {https://doi.org/10.14492/hokmj/1381758269},
}

@article {Iso22,
    AUTHOR = {Isono, Yusuke},
     TITLE = {Unitary conjugacy for type {${\rm III}$} subfactors and {$\rm
              W^*$}-superrigidity},
   JOURNAL = {J. Eur. Math. Soc. (JEMS)},
  FJOURNAL = {Journal of the European Mathematical Society (JEMS)},
    VOLUME = {24},
      YEAR = {2022},
    NUMBER = {5},
     PAGES = {1679--1721},
      ISSN = {1435-9855,1435-9863},
   MRCLASS = {46L36 (37A55 46L10 46L55)},
MRREVIEWER = {Mateusz\ Wasilewski},
       DOI = {10.4171/jems/1135},
       URL = {https://doi.org/10.4171/jems/1135},
}

@article {HR15,
    AUTHOR = {Houdayer, Cyril and Raum, Sven},
     TITLE = {Asymptotic structure of free {A}raki-{W}oods factors},
   JOURNAL = {Math. Ann.},
  FJOURNAL = {Mathematische Annalen},
    VOLUME = {363},
      YEAR = {2015},
    NUMBER = {1-2},
     PAGES = {237--267},
      ISSN = {0025-5831,1432-1807},
   MRCLASS = {46L10 (46L54)},
MRREVIEWER = {Liguang\ Wang},
       DOI = {10.1007/s00208-015-1168-1},
       URL = {https://doi.org/10.1007/s00208-015-1168-1},
}

@article {AH14,
    AUTHOR = {Ando, Hiroshi and Haagerup, Uffe},
     TITLE = {Ultraproducts of von {N}eumann algebras},
   JOURNAL = {J. Funct. Anal.},
  FJOURNAL = {Journal of Functional Analysis},
    VOLUME = {266},
      YEAR = {2014},
    NUMBER = {12},
     PAGES = {6842--6913},
      ISSN = {0022-1236,1096-0783},
   MRCLASS = {46M07 (46L10)},
MRREVIEWER = {E.\ St\o rmer},
       DOI = {10.1016/j.jfa.2014.03.013},
       URL = {https://doi.org/10.1016/j.jfa.2014.03.013},
}

@article {BMO20,
    AUTHOR = {Bannon, Jon and Marrakchi, Amine and Ozawa, Narutaka},
     TITLE = {Full factors and co-amenable inclusions},
   JOURNAL = {Comm. Math. Phys.},
  FJOURNAL = {Communications in Mathematical Physics},
    VOLUME = {378},
      YEAR = {2020},
    NUMBER = {2},
     PAGES = {1107--1121},
      ISSN = {0010-3616,1432-0916},
   MRCLASS = {46L10 (22F10 46L36 46L40 46L55)},
MRREVIEWER = {Robert\ S.\ Doran},
       DOI = {10.1007/s00220-020-03816-y},
       URL = {https://doi.org/10.1007/s00220-020-03816-y},
}

@misc {DV25,
      title={W$^*$-correlations of {${\rm II}_1$} factors and rigidity of tensor products and graph products}, 
      author={Drimbe, Daniel and Vaes, Stefaan},
      year={2025},
      howpublished={arXiv:2507.04691},
      eprint={2507.04691},
      archivePrefix={arXiv},
      primaryClass={math.OA},
      url={https://arxiv.org/abs/2507.04691}, 
}

@misc {Iso25,
      title={Weak relative Dixmier property and Popa's intertwining technique for type {${\rm III}$} subfactors}, 
      author={Isono, Yusuke},
      year={2025},
      howpublished={arXiv:2508.17592},
      eprint={2508.17592},
      archivePrefix={arXiv},
      primaryClass={math.OA},
      url={https://arxiv.org/abs/2508.17592}, 
}

@article {Mar25,
    AUTHOR = {Marrakchi, Amine},
     TITLE = {Kadison's problem for type {III} subfactors and the bicentralizer conjecture},
   JOURNAL = {Invent. Math.},
  FJOURNAL = {Inventiones Mathematicae},
    VOLUME = {239},
      YEAR = {2025},
    NUMBER = {1},
     PAGES = {79--163},
      ISSN = {0020-9910,1432-1297},
   MRCLASS = {46L10 (46L36 46L37 46M07)},
MRREVIEWER = {Sofya\ S.\ Masharipova},
       DOI = {10.1007/s00222-024-01299-5},
       URL = {https://doi.org/10.1007/s00222-024-01299-5},
}

@article {AHHM20,
    AUTHOR = {Ando, Hiroshi and Haagerup, Uffe and Houdayer, Cyril and
              Marrakchi, Amine},
     TITLE = {Structure of bicentralizer algebras and inclusions of type
              {III} factors},
   JOURNAL = {Math. Ann.},
  FJOURNAL = {Mathematische Annalen},
    VOLUME = {376},
      YEAR = {2020},
    NUMBER = {3-4},
     PAGES = {1145--1194},
      ISSN = {0025-5831,1432-1807},
   MRCLASS = {46L10 (46L30 46L36 46L37 46L55)},
MRREVIEWER = {Sven\ Raum},
       DOI = {10.1007/s00208-019-01939-9},
       URL = {https://doi.org/10.1007/s00208-019-01939-9},
}

@article {BD01,
    AUTHOR = {Blanchard, Etienne F. and Dykema, Kenneth J.},
     TITLE = {Embeddings of reduced free products of operator algebras},
   JOURNAL = {Pacific J. Math.},
  FJOURNAL = {Pacific Journal of Mathematics},
    VOLUME = {199},
      YEAR = {2001},
    NUMBER = {1},
     PAGES = {1--19},
      ISSN = {0030-8730,1945-5844},
   MRCLASS = {46L09 (46L05)},
MRREVIEWER = {Emmanuel\ C.\ Germain},
       DOI = {10.2140/pjm.2001.199.1},
       URL = {https://doi.org/10.2140/pjm.2001.199.1},
}

@article {Pet09,
    AUTHOR = {Peterson, Jesse},
     TITLE = {{$L^2$}-rigidity in von {N}eumann algebras},
   JOURNAL = {Invent. Math.},
  FJOURNAL = {Inventiones Mathematicae},
    VOLUME = {175},
      YEAR = {2009},
    NUMBER = {2},
     PAGES = {417--433},
      ISSN = {0020-9910,1432-1297},
   MRCLASS = {46L10 (46L54 46L55 46L57)},
MRREVIEWER = {Narutaka\ Ozawa},
       DOI = {10.1007/s00222-008-0154-6},
       URL = {https://doi.org/10.1007/s00222-008-0154-6},
}

@article {KFGV77,
    AUTHOR = {Kossakowski, Andrzej and Frigerio, Alberto and Gorini,
              Vittorio and Verri, Maurizio},
     TITLE = {Quantum detailed balance and {KMS} condition},
   JOURNAL = {Comm. Math. Phys.},
  FJOURNAL = {Communications in Mathematical Physics},
    VOLUME = {57},
      YEAR = {1977},
    NUMBER = {2},
     PAGES = {97--110},
      ISSN = {0010-3616,1432-0916},
   MRCLASS = {82.47 (46L99)},
MRREVIEWER = {Panayotis\ Tsilimigras},
       URL = {http://projecteuclid.org/euclid.cmp/1103901281},
}

@article {Ioa15,
    AUTHOR = {Ioana, Adrian},
     TITLE = {Cartan subalgebras of amalgamated free product {${\rm II}_1$}
              factors},
      NOTE = {With an appendix by Ioana and Stefaan Vaes},
   JOURNAL = {Ann. Sci. \'Ec. Norm. Sup\'er. (4)},
  FJOURNAL = {Annales Scientifiques de l'\'Ecole Normale Sup\'erieure.
              Quatri\`eme S\'erie},
    VOLUME = {48},
      YEAR = {2015},
    NUMBER = {1},
     PAGES = {71--130},
      ISSN = {0012-9593,1873-2151},
   MRCLASS = {46L36 (28D15 37A20 46L10 60B15)},
MRREVIEWER = {Sven\ Raum},
       DOI = {10.24033/asens.2239},
       URL = {https://doi.org/10.24033/asens.2239},
}

@article {CJ85,
    AUTHOR = {Connes, A. and Jones, V.},
     TITLE = {Property {$T$} for von {N}eumann algebras},
   JOURNAL = {Bull. London Math. Soc.},
  FJOURNAL = {The Bulletin of the London Mathematical Society},
    VOLUME = {17},
      YEAR = {1985},
    NUMBER = {1},
     PAGES = {57--62},
      ISSN = {0024-6093,1469-2120},
   MRCLASS = {46L35 (22D25)},
MRREVIEWER = {Colin\ E.\ Sutherland},
       DOI = {10.1112/blms/17.1.57},
       URL = {https://doi.org/10.1112/blms/17.1.57},
}

@incollection {Sau90,
    AUTHOR = {Sauvageot, Jean-Luc},
     TITLE = {Quantum {D}irichlet forms, differential calculus and
              semigroups},
 BOOKTITLE = {Quantum probability and applications, {V} ({H}eidelberg,
              1988)},
    SERIES = {Lecture Notes in Math.},
    VOLUME = {1442},
     PAGES = {334--346},
 PUBLISHER = {Springer, Berlin},
      YEAR = {1990},
      ISBN = {3-540-53026-6},
   MRCLASS = {46Lxx},
       DOI = {10.1007/BFb0085527},
       URL = {https://doi.org/10.1007/BFb0085527},
}

@article {dHV89,
    AUTHOR = {de la Harpe, Pierre and Valette, Alain},
     TITLE = {La propri\'et\'e{} {$(T)$} de {K}azhdan pour les groupes
              localement compacts (avec un appendice de {M}arc {B}urger)},
      NOTE = {With an appendix by M. Burger},
   JOURNAL = {Ast\'erisque},
  FJOURNAL = {Ast\'erisque},
    NUMBER = {175},
      YEAR = {1989},
     PAGES = {158},
      ISSN = {0303-1179,2492-5926},
   MRCLASS = {22-02 (05C99 22D10 22E99 28C10)},
MRREVIEWER = {Michael\ Cowling},
}

@article {PP05,
    AUTHOR = {Peterson, Jesse and Popa, Sorin},
     TITLE = {On the notion of relative property ({T}) for inclusions of von
              {N}eumann algebras},
   JOURNAL = {J. Funct. Anal.},
  FJOURNAL = {Journal of Functional Analysis},
    VOLUME = {219},
      YEAR = {2005},
    NUMBER = {2},
     PAGES = {469--483},
      ISSN = {0022-1236,1096-0783},
   MRCLASS = {46L10 (22E40)},
MRREVIEWER = {Paul\ Jolissaint},
       DOI = {10.1016/j.jfa.2004.04.017},
       URL = {https://doi.org/10.1016/j.jfa.2004.04.017},
}

@article {PV14,
    AUTHOR = {Popa, Sorin and Vaes, Stefaan},
     TITLE = {Unique {C}artan decomposition for {$\rm II_1$} factors arising
              from arbitrary actions of free groups},
   JOURNAL = {Acta Math.},
  FJOURNAL = {Acta Mathematica},
    VOLUME = {212},
      YEAR = {2014},
    NUMBER = {1},
     PAGES = {141--198},
      ISSN = {0001-5962,1871-2509},
   MRCLASS = {46L36 (37A05 46L10 46L37)},
MRREVIEWER = {Liguang\ Wang},
       DOI = {10.1007/s11511-014-0110-9},
       URL = {https://doi.org/10.1007/s11511-014-0110-9},
}

@article {Pop06,
    AUTHOR = {Popa, Sorin},
     TITLE = {On a class of type {${\rm II}_1$} factors with {B}etti numbers
              invariants},
   JOURNAL = {Ann. of Math. (2)},
  FJOURNAL = {Annals of Mathematics. Second Series},
    VOLUME = {163},
      YEAR = {2006},
    NUMBER = {3},
     PAGES = {809--899},
      ISSN = {0003-486X,1939-8980},
   MRCLASS = {46L10 (37A20)},
MRREVIEWER = {Paul\ Jolissaint},
       DOI = {10.4007/annals.2006.163.809},
       URL = {https://doi.org/10.4007/annals.2006.163.809},
}

@misc{Pop86,
      title={Correspondences}, 
      author={Popa, Sorin},
      year={1986},
      howpublished={unpublished},
      primaryClass={math.OA}, 
}

@article {Haa87,
    AUTHOR = {Haagerup, Uffe},
     TITLE = {Connes' bicentralizer problem and uniqueness of the injective
              factor of type {${\rm III}_1$}},
   JOURNAL = {Acta Math.},
  FJOURNAL = {Acta Mathematica},
    VOLUME = {158},
      YEAR = {1987},
    NUMBER = {1-2},
     PAGES = {95--148},
      ISSN = {0001-5962,1871-2509},
   MRCLASS = {46L35},
MRREVIEWER = {Steve\ Wright},
       DOI = {10.1007/BF02392257},
       URL = {https://doi.org/10.1007/BF02392257},
}

@book {Str20,
    AUTHOR = {Str\u{a}til\u{a}, \c{S}erban Valentin},
     TITLE = {Modular theory in operator algebras},
    SERIES = {Cambridge-IISc Series},
   EDITION = {Second},
 PUBLISHER = {Cambridge University Press, Delhi},
      YEAR = {2020},
     PAGES = {xii+447},
      ISBN = {978-1-108-48960-7},
   MRCLASS = {46-02 (46L10 46L37 46L40 46L45)},
       DOI = {10.1017/9781108489607},
       URL = {https://doi.org/10.1017/9781108489607},
}

@article {HI17,
    AUTHOR = {Houdayer, Cyril and Isono, Yusuke},
     TITLE = {Unique prime factorization and bicentralizer problem for a
              class of type {III} factors},
   JOURNAL = {Adv. Math.},
  FJOURNAL = {Advances in Mathematics},
    VOLUME = {305},
      YEAR = {2017},
     PAGES = {402--455},
      ISSN = {0001-8708,1090-2082},
   MRCLASS = {46L36 (46L10 46L30 46L54)},
MRREVIEWER = {Alain\ Valette},
       DOI = {10.1016/j.aim.2016.09.030},
       URL = {https://doi.org/10.1016/j.aim.2016.09.030},
}

@book {Hia21,
    AUTHOR = {Hiai, Fumio},
     TITLE = {Lectures on selected topics in von {N}eumann algebras},
    SERIES = {EMS Series of Lectures in Mathematics},
 PUBLISHER = {EMS Press, Berlin},
      YEAR = {[2021] \copyright 2021},
     PAGES = {viii+241},
      ISBN = {978-3-98547-004-4},
   MRCLASS = {46-02 (46L10 46L51)},
MRREVIEWER = {Eusebio\ Gardella},
       DOI = {10.4171/ELM/32},
       URL = {https://doi.org/10.4171/ELM/32},
}

@article {DonVae25,
    AUTHOR = {Donvil, Milan and Vaes, Stefaan},
     TITLE = {{${\rm W}^*$}-superrigidity for cocycle twisted group von
              {N}eumann algebras},
   JOURNAL = {Invent. Math.},
  FJOURNAL = {Inventiones Mathematicae},
    VOLUME = {240},
      YEAR = {2025},
    NUMBER = {1},
     PAGES = {193--260},
      ISSN = {0020-9910,1432-1297},
   MRCLASS = {46L10 (20E22 20E34 20F65)},
MRREVIEWER = {Paul\ Jolissaint},
       DOI = {10.1007/s00222-025-01320-5},
       URL = {https://doi.org/10.1007/s00222-025-01320-5},
}

@article {Oza04,
    AUTHOR = {Ozawa, Narutaka},
     TITLE = {Solid von {N}eumann algebras},
   JOURNAL = {Acta Math.},
  FJOURNAL = {Acta Mathematica},
    VOLUME = {192},
      YEAR = {2004},
    NUMBER = {1},
     PAGES = {111--117},
      ISSN = {0001-5962,1871-2509},
   MRCLASS = {46L10},
MRREVIEWER = {Dorin\ Ervin\ Dutkay},
       DOI = {10.1007/BF02441087},
       URL = {https://doi.org/10.1007/BF02441087},
}

@article {Oz06,
    AUTHOR = {Ozawa, Narutaka},
     TITLE = {A {K}urosh-type theorem for type {$\rm II_1$} factors},
   JOURNAL = {Int. Math. Res. Not.},
  FJOURNAL = {International Mathematics Research Notices},
      YEAR = {2006},
     PAGES = {Art. ID 97560, 21},
      ISSN = {1073-7928,1687-0247},
   MRCLASS = {46L10 (20F67 46L09 46L35 46L55)},
MRREVIEWER = {Claire\ Anantharaman-Delaroche},
       DOI = {10.1155/IMRN/2006/97560},
       URL = {https://doi.org/10.1155/IMRN/2006/97560},
}

@article {Mar17,
    AUTHOR = {Marrakchi, Amine},
     TITLE = {Solidity of type {III} {B}ernoulli crossed products},
   JOURNAL = {Comm. Math. Phys.},
  FJOURNAL = {Communications in Mathematical Physics},
    VOLUME = {350},
      YEAR = {2017},
    NUMBER = {3},
     PAGES = {897--916},
      ISSN = {0010-3616,1432-0916},
   MRCLASS = {37A20 (37A15 46L80)},
MRREVIEWER = {Lewis\ Bowen},
       DOI = {10.1007/s00220-016-2717-5},
       URL = {https://doi.org/10.1007/s00220-016-2717-5},
}
\bibliographystyle{amsplain}

\end{document}